\spnewtheorem{lem}{Lemma}[section]{\bfseries}{\itshape}
\spnewtheorem{thm}[lem]{Theorem}{\bfseries}{\itshape}
\spnewtheorem{prop}[lem]{Proposition}{\bfseries}{\itshape}
\spnewtheorem{cor}[lem]{Corollary}{\bfseries}{\itshape}
\spnewtheorem{defn}[lem]{Definition}{\bfseries}{}
\spnewtheorem{exple}[lem]{Example}{\bfseries}{}
\spnewtheorem{remk}[lem]{Remark}{\bfseries}{}
\spnewtheorem{notation}[lem]{Notation}{\bfseries}{}
\spnewtheorem*{snote}{Note}{\itshape}{}
\numberwithin{equation}{section}
\newcommand{\pup}[1]{\textup{(}{#1}\textup{)}}
\newcommand{\tui}{\textup{(i)}}
\newcommand{\tuii}{\textup{(ii)}}
\newcommand{\tuiii}{\textup{(iii)}}
\newcommand{\tuiv}{\textup{(iv)}}
\newcommand{\tuv}{\textup{(v)}}
\newcommand{\tua}{\textup{(a)}}
\newcommand{\tub}{\textup{(b)}}
\newcommand{\dzero}{\dot{0}}
\newcommand{\done}{\dot{1}}
\newcommand{\da}{{\dot{a}}}
\newcommand{\db}{{\dot{b}}}
\newcommand{\dx}{{\dot{x}}}
\newcommand{\du}{{\dot{u}}}
\newcommand{\dA}{{\dot{A}}}
\newcommand{\bu}{{\boldsymbol{u}}}
\newcommand{\bx}{{\boldsymbol{x}}}
\newcommand{\by}{{\boldsymbol{y}}}
\newcommand{\tA}{\tilde{A}}
\newcommand{\tB}{\tilde{B}}
\newcommand{\tX}{\tilde{X}}
\newcommand{\tU}{\tilde{U}}
\newcommand{\tV}{\tilde{V}}
\newcommand{\tf}{\tilde{f}}
\newcommand{\tg}{\tilde{g}}
\newcommand{\tu}{\tilde{u}}
\newcommand{\tv}{\tilde{v}}
\newcommand{\xf}{\boldsymbol{f}}
\newcommand{\xg}{\boldsymbol{g}}
\newcommand{\xu}{\boldsymbol{u}}
\newcommand{\xv}{\boldsymbol{v}}
\newcommand{\xw}{\boldsymbol{w}}
\newcommand{\bA}{{\boldsymbol{A}}}
\newcommand{\bB}{{\boldsymbol{B}}}
\newcommand{\bC}{{\boldsymbol{C}}}
\newcommand{\bD}{{\boldsymbol{D}}}
\newcommand{\bF}{{\boldsymbol{F}}}
\newcommand{\bI}{{\boldsymbol{I}}}
\newcommand{\bL}{{\boldsymbol{L}}}
\newcommand{\bM}{{\boldsymbol{M}}}
\newcommand{\bN}{{\boldsymbol{N}}}
\newcommand{\bS}{{\boldsymbol{S}}}
\newcommand{\bU}{{\boldsymbol{U}}}
\newcommand{\bV}{{\boldsymbol{V}}}
\newcommand{\bX}{{\boldsymbol{X}}}
\newcommand{\bY}{{\boldsymbol{Y}}}
\newcommand{\bZ}{{\boldsymbol{Z}}}
\newcommand{\fra}{{\mathfrak{a}}}
\newcommand{\frb}{{\mathfrak{b}}}
\newcommand{\vx}{\mathsf{x}}
\newcommand{\vy}{\mathsf{y}}
\newcommand{\vz}{\mathsf{z}}
\newcommand{\vu}{\mathsf{u}}
\newcommand{\vE}{\mathsf{E}}
\newcommand{\vF}{\mathsf{F}}
\newcommand{\vR}{\mathsf{R}}
\newcommand{\vX}{\mathsf{X}}
\newcommand{\vY}{\mathsf{Y}}
\newcommand{\ga}{\alpha}
\newcommand{\gb}{\beta}
\newcommand{\gc}{\gamma}
\newcommand{\gd}{\delta}
\newcommand{\gf}{\varphi}
\newcommand{\gi}{\iota}
\newcommand{\gy}{\psi}
\newcommand{\gk}{\kappa}
\newcommand{\gl}{\lambda}
\newcommand{\gm}{\mu}
\newcommand{\gp}{\pi}
\newcommand{\gq}{\theta}
\newcommand{\gr}{\rho}
\newcommand{\vgr}{\varrho}
\newcommand{\gs}{\sigma}
\newcommand{\gt}{\tau}
\newcommand{\gx}{\xi}
\newcommand{\gh}{\eta}
\newcommand{\go}{\omega}
\newcommand{\gos}{\omega\setminus\set{0}}
\newcommand{\tgb}{{\tilde{\gb}}}
\newcommand{\tbgb}{{\boldsymbol{\tilde{\gb}}}}
\newcommand{\bchi}{\boldsymbol{\chi}}
\newcommand{\bga}{{\boldsymbol{\ga}}}
\newcommand{\bgb}{{\boldsymbol{\gb}}}
\newcommand{\bgc}{{\boldsymbol{\gc}}}
\newcommand{\bgf}{{\boldsymbol{\gf}}}
\newcommand{\bgp}{{\boldsymbol{\gp}}}
\newcommand{\bgq}{{\boldsymbol{\gq}}}
\newcommand{\bgx}{{\boldsymbol{\gx}}}
\newcommand{\bgh}{{\boldsymbol{\gh}}}
\newcommand{\cA}{\mathcal{A}}
\newcommand{\cB}{\mathcal{B}}
\newcommand{\cC}{\mathcal{C}}
\newcommand{\cD}{\mathcal{D}}
\newcommand{\scC}{\mathscr{C}}
\newcommand{\scL}{\mathscr{L}}
\newcommand{\scT}{\mathscr{T}}
\newcommand{\cK}{\mathcal{K}}
\newcommand{\cP}{\mathcal{P}}
\newcommand{\cR}{\mathcal{R}}
\newcommand{\cS}{\mathcal{S}}
\newcommand{\cI}{\mathcal{I}}
\newcommand{\cU}{\mathcal{U}}
\newcommand{\cV}{\mathcal{V}}
\newcommand{\bveA}[2]
{{\Vert{#1}\nobreak=\nobreak{#2}\Vert}_{\bA}^{\cV}}
\newcommand{\bveAb}[2]
{{\Vert{#1}\nobreak=\nobreak{#2}\Vert}_{\oll{\bA}}^{\cV}}
\newcommand{\bveAi}[2]
{{\Vert{#1}\nobreak=\nobreak{#2}\Vert}_{\bA_i}^{\cV}}
\newcommand{\bveB}[2]
{{\Vert{#1}\nobreak=\nobreak{#2}\Vert}_{\bB}^{\cV}}
\newcommand{\bveX}[2]
{{\Vert{#1}\nobreak=\nobreak{#2}\Vert}_{\bX}^{\cV}}
\newcommand{\bvrA}[1]
{{\Vert{#1}\Vert}_{\bA}^{\cV}}
\newcommand{\bvrAb}[1]
{{\Vert{#1}\Vert}_{\oll{\bA}}^{\cV}}
\newcommand{\bvrAi}[1]
{{\Vert{#1}\Vert}_{\bA_i}^{\cV}}
\newcommand{\bvrB}[1]
{{\Vert{#1}\Vert}_{\bB}^{\cV}}
\newcommand{\bvrX}[1]
{{\Vert{#1}\Vert}_{\bX}^{\cV}}
\newcommand{\conj}{\mathbin{\bigwedge\mkern-15mu\bigwedge}}
\newcommand{\les}{\leqslant}
\DeclareMathOperator{\kur}{kur}
\DeclareMathOperator{\card}{card}
\DeclareMathOperator{\cf}{cf}
\DeclareMathOperator{\crit}{crit}
\DeclareMathOperator{\critr}{crit_r}
\DeclareMathOperator{\Lift}{Lift}
\DeclareMathOperator{\Ob}{Ob}
\DeclareMathOperator{\Mor}{Mor}
\DeclareMathOperator{\Min}{Min}
\DeclareMathOperator{\Max}{Max}
\DeclareMathOperator{\dom}{dom}
\DeclareMathOperator{\rng}{rng}
\DeclareMathOperator{\Con}{Con}
\DeclareMathOperator{\Res}{Res}
\DeclareMathOperator{\Conc}{Con_c}
\DeclareMathOperator{\Concr}{Con_{c,r}}
\newcommand{\ConV}{\operatorname{\Con^{\cV}}}
\newcommand{\ConcV}{\operatorname{\Con_{\mathrm{c}}^{\cV}}}
\newcommand{\ConA}{\operatorname{\Con^{\cA}}}
\newcommand{\ConcA}{\operatorname{\Con_{\mathrm{c}}^{\cA}}}
\newcommand{\ConcB}{\operatorname{\Con_{\mathrm{c}}^{\cB}}}
\DeclareMathOperator{\Id}{Id}
\DeclareMathOperator{\NId}{NId}
\DeclareMathOperator{\Idc}{Id_c}
\DeclareMathOperator{\Ids}{Id_s}
\DeclareMathOperator{\J}{J}
\DeclareMathOperator{\At}{At}
\DeclareMathOperator{\Ult}{Ult}
\DeclareMathOperator{\Clop}{Clop}
\DeclareMathOperator{\ari}{ar}
\newcommand{\Pow}{\mathfrak{P}}
\newcommand{\todot}{\overset{\boldsymbol{.}}{\rightarrow}}
\newcommand{\Todot}{\overset{\boldsymbol{.}\ }{\Rightarrow}}
\newcommand{\Bool}{\mathbf{Bool}}
\newcommand{\BTop}{\mathbf{BTop}}
\newcommand{\Var}{\operatorname{\mathbf{Var}}}
\newcommand{\CLOS}{\mathrm{CLOS}}
\newcommand{\CLOSr}{\mathrm{CLOS}^{\mathrm{r}}}
\newcommand{\PROD}{\mathrm{PROD}}
\newcommand{\PRES}{\mathrm{PRES}}
\newcommand{\PROJ}{\mathrm{PROJ}}
\newcommand{\CONT}{\mathrm{CONT}}
\newcommand{\LS}{\mathrm{LS}}
\newcommand{\LSb}{\mathrm{LS}^{\mathrm{b}}}
\newcommand{\LSr}{\mathrm{LS}^{\mathrm{r}}}
\newcommand{\utr}{\trianglelefteq}
\newcommand{\res}{\mathbin{\restriction}}
\newcommand{\norm}[1]{\Vert{#1}\Vert}
\newcommand{\xA}{\mathbf{A}}
\newcommand{\xB}{\mathbf{B}}
\newcommand{\xD}{\mathbf{D}}
\newcommand{\xE}{\mathbf{E}}
\newcommand{\xF}{\mathbf{F}}
\newcommand{\xS}{\mathbf{S}}
\newcommand{\LL}{\mathbb{L}}
\newcommand{\VV}{\mathbb{V}}
\newcommand{\FF}{\mathbb{F}}
\newcommand{\ZZ}{\mathbb{Z}}
\newcommand{\jirr}{join-ir\-re\-duc\-i\-ble}
\newcommand{\mirr}{meet-ir\-re\-duc\-i\-ble}
\newcommand{\Mirr}{Meet-ir\-re\-duc\-i\-ble}
\newcommand{\cm}{commutative monoid}
\newcommand{\gqv}{generalized quasivariety}
\newcommand{\gqvs}{generalized quasivarieties}
\newcommand{\eps}{\varepsilon}
\newcommand{\es}{\varnothing}
\newcommand{\into}{\hookrightarrow}
\newcommand{\onto}{\twoheadrightarrow}
\newcommand{\mono}{\rightarrowtail}
\newcommand{\famm}[2]{\left(#1\mid#2\right)}
\newcommand{\set}[1]{\{#1\}}
\newcommand{\setm}[2]{\set{#1\mid#2}}
\newcommand{\seq}[1]{\langle{#1}\rangle}
\newcommand{\SET}{\mathbf{Set}}
\newcommand{\REG}{\mathbf{Reg}}
\newcommand{\SCML}{\mathbf{SCML}}
\newcommand{\MOD}{\mathbf{Mod}}
\newcommand{\SEM}{\mathbf{Sem}_{\vee,0}}
\newcommand{\MIND}{\mathbf{MInd}}
\newcommand{\MALG}{\mathbf{MAlg}}
\newcommand{\METR}{\mathbf{Metr}}
\newcommand{\ol}[1]{\overline{#1}}
\newcommand{\oll}[1]{\,\overline{\!#1}}
\newcommand{\zero}{\mathbf{0}}
\newcommand{\one}{\mathbf{1}}
\newcommand{\two}{\mathbf{2}}
\newcommand{\dnw}{\mathbin{\downarrow}}
\newcommand{\ddnw}{\mathbin{\downdownarrows}}
\newcommand{\uupw}{\mathbin{\upuparrows}}
\newcommand{\upw}{\mathbin{\uparrow}}
\newcommand{\Dnw}{\mathbin{\Downarrow}}
\newcommand{\Upw}{\mathbin{\Uparrow}}
\newcommand{\sor}{\mathbin{\triangledown}}
\newcommand{\Sor}{\mathbin{\bigtriangledown}}
\newcommand{\id}{\mathrm{id}}
\newcommand{\jz}{$(\vee,0)$}
\newcommand{\jzu}{$(\vee,0,1)$}
\newcommand{\jzs}{\jz-semi\-lat\-tice}
\newcommand{\jzus}{\jzu-semi\-lat\-tice}
\newcommand{\jh}{join-ho\-mo\-mor\-phism}
\newcommand{\jzh}{\jz-ho\-mo\-mor\-phism}
\newcommand{\jzuh}{\jzu-ho\-mo\-mor\-phism}
\newcommand{\jze}{\jz-em\-bed\-ding}
\newcommand{\jzue}{\jzu-em\-bed\-ding}
\newcommand{\js}{join-sem\-i\-lat\-tice}
\newcommand{\ajs}{al\-most join-sem\-i\-lat\-tice}
\newcommand{\pjs}{pseu\-do join-sem\-i\-lat\-tice}
\newcommand{\fin}{\mathrm{fin}}
\newcommand{\op}{\mathrm{op}}
\DeclareMathOperator{\Lg}{Lg}
\DeclareMathOperator{\Cst}{Cst}
\DeclareMathOperator{\Op}{Op}
\DeclareMathOperator{\Rel}{Rel}
\newcommand{\rB}{\mathrm{B}}
\newcommand{\rF}{\mathrm{F}}
\DeclareMathOperator{\Ker}{Ker}
\DeclareMathOperator{\Mat}{M}
\DeclareMathOperator{\Sub}{Sub}
\newcommand{\FL}{\operatorname{\rF_{\mathbf{L}}}}
\begin{document}

\title{{}From objects to diagrams for ranges of functors}

\author{Pierre Gillibert and Friedrich Wehrung\\
\\
\normalsize \textbf{Address} (Gillibert):\\
Charles University in Prague\\
Faculty of Mathematics and Physics\\
Department of Algebra\\
Sokolovsk\'a 83\\
186 00 Praha\\
Czech Republic\\
\textbf{e-mail} (Gillibert): \texttt{gilliber@karlin.mff.cuni.cz}, \texttt{pgillibert@yahoo.fr}\\
\textbf{URL} (Gillibert): \texttt{http://www.math.unicaen.fr/\~{}giliberp/}\\
\\
\normalsize \textbf{Address} (Wehrung):\\
LMNO, CNRS UMR 6139\\
D\'epartement de Math\'ematiques, BP 5186\\
Universit\'e de Caen, Campus 2\\
14032 Caen cedex\\
France\\
\textbf{e-mail} (Wehrung): \texttt{wehrung@math.unicaen.fr}, \texttt{fwehrung@yahoo.fr}\\
\textbf{URL} (Wehrung): \texttt{http://www.math.unicaen.fr/\~{}wehrung/}}

\thanks{The first author was partially supported by the institutional grant MSM 0021620839}

\maketitle
\date{\today}

\noindent\textbf{2010 \textit{Mathematics Subject Classification}}: Primary 18A30, 18A25, 18A20, 18A35; Secondary 03E05, 05D10, 06A07, 06A12, 06B20, 08B10, 08A30, 08B25, 08C15, 20E10.

\keywords{Category; functor; larder; lifter; condensate; L\"owenheim-Skolem Theorem; weakly presented; Armature Lemma; Buttress Lemma; Condensate Lifting Lemma; Kuratowski's Free Set Theorem; lifter; Erd\H{o}s cardinal; critical point; product; colimit; epimorphism; monomorphism; section; retraction; retract; projection; projectable; \pjs; \ajs; projectability witness; quasivariety; semilattice; lattice; congruence; distributive; modular}

\tableofcontents

\foreword{
The aim of the present work is to introduce a general method, applicable to various fields of mathematics, that enables us to gather information on the range of a functor~$\Phi$, thus making it possible to solve previously intractable representation problems with respect to~$\Phi$. This method is especially effective in case the problems in question are ``cardinality-sensitive'', that is, an analogue of the cardinality function turns out to play a crucial role in the description of the members of the range of~$\Phi$.

Let us first give a few examples of such problems. The first three belong to the field of universal algebra, the fourth to the field of ring theory (nonstable K-theory of rings).

\begin{description}
\item[\textbf{Context~1.}] The classical Gr\"atzer-Schmidt\index{c}{Gr\"atzer, G.}\index{c}{Schmidt, E.\,T.} Theorem, in universal algebra, states that every \jzs\ is isomorphic to the compact congruence lattice of some algebra\index{i}{algebra!universal}. Can this result be extended to \emph{diagrams} of \jzs s?

\item[\textbf{Context~2.}] For a member~$\bA$ of a quasivariety~$\cA$ of algebraic systems, we denote by~$\ConcA\bA$\index{s}{compcongV@$\ConcV$ functor} the \jzs\ of all compact elements of the lattice of all congruences of~$\bA$ with quotient in~$\cA$; further, we denote by~$\Concr\cA$\index{s}{compcongVcr@$\Concr\cV$} the class of all isomorphic copies of~$\ConcA\bA$ where $\bA\in\cA$. For quasivarieties~$\cA$ and~$\cB$ of algebraic systems, we denote by~$\critr(\cA;\cB)$\index{s}{critrAB@$\critr(\cA;\cB)$} (\emph{relative critical point}\index{i}{critical point!relative} between~$\cA$ and~$\cB$) the least possible cardinality, if it exists, of a member of $(\Concr\cA)\setminus(\Concr\cB)$, and~$\infty$ otherwise. What are the possible values of $\critr(\cA;\cB)$, say for~$\cA$ and~$\cB$ both with finite language?

\item[\textbf{Context~3.}] Let~$\cV$ be a nondistributive variety\index{i}{distributive!non-${}_{-}$ variety} of lattices and let~$F$ be the free lattice in~$\cV$ on~$\aleph_1$\index{s}{aleph0@$\aleph_{\ga}$} generators. Does~$F$ have a congruence-permutable, congruence-preserving extension\index{i}{congruence-preserving extension}?

\item[\textbf{Context 4.}] Let $E$ be an exchange ring. Is there a (von~Neumann) regular ring, or a C*-algebra of real rank zero, $R$ with the same nonstable K-theory as~$E$?
\end{description}

It turns out that each of these problems can be reduced to a category-theoretical problem of the following general kind.

Let $\cA$, $\cB$, $\cS$ be categories, let $\Phi\colon\cA\to\cS$ and $\Psi\colon\cB\to\cS$ be functors. We are also given a subcategory~$\cS^\Rightarrow$\index{s}{RightarrowCat@$\cS^\Rightarrow$} of~$\cS$, of which the arrows will be called \emph{double arrows}\index{i}{double arrow} and written $f\colon X\Rightarrow Y$\index{s}{AtorightarrowB@$f\colon A\Rightarrow B$}. We assume that for ``many'' \emph{objects}~$A$ of~$\cA$, there are an object~$B$ of~$\cB$ and a double arrow $\chi\colon\Psi(B)\Rightarrow\Phi(A)$. We also need to assume that our categorical data forms a so-called \emph{larder}\index{i}{larder}. In such a case, we establish that under certain combinatorial assumptions on a poset~$P$, for ``many'' \emph{diagrams}~$\overrightarrow{A}=\famm{A_p,\ga_p^q}{p\leq q\text{ in }P}$ from~$\cA$, a similar conclusion holds at the diagram $\Phi\overrightarrow{A}$,} that is, there are a $P$-indexed diagram~$\overrightarrow{B}$ from~$\cB$ and a double arrow $\overrightarrow{\chi}\colon\Psi\overrightarrow{B}\Rightarrow\Phi\overrightarrow{A}$ from~$\cS^P$. The combinatorial assumptions on~$P$ imply that every principal ideal\index{i}{ideal!of a poset}\index{i}{ideal!principal ${}_{-}$, of a poset} of~$P$ is a \js\ and the set of all upper bounds of any finite subset is a finitely generated\index{i}{finitely generated!upper subset} upper subset\index{i}{upper subset}.

We argue by concentrating all the relevant properties of the diagram~$\overrightarrow{A}$ into a \emph{condensate} of~$\overrightarrow{A}$, which is a special kind of directed colimit of finite products of the~$A_p$ for $p\in P$. Our main result, the \emph{Condensate Lifting Lemma} (CLL)\index{i}{Condensate Lifting Lemma (CLL)}, reduces the liftability of a diagram to the liftability of a condensate, modulo a list of elementary verifications of categorical nature. The impact of CLL on the four problems above can be summarized as follows:

\begin{description}
\item[\textbf{Context~1.}] The Gr\"atzer-Schmidt\index{c}{Gr\"atzer, G.}\index{c}{Schmidt, E.\,T.} Theorem can be extended to any diagram of \jzs s and \jzh s indexed by a finite poset (resp., assuming a proper class of Erd\H os cardinals\index{i}{Erd\H{o}s cardinal}\index{c}{Erdos@Erd\H os, P.}, an arbitrary poset), lifting\index{i}{diagram!lifting} with algebras of variable similarity type.

\item[\textbf{Context~2.}] We prove that in a host of situations, either $\critr(\cA;\cB)<\nobreak\aleph_\go$\index{s}{aleph0@$\aleph_{\ga}$} or $\critr(\cA;\cB)=\infty$\index{s}{critrAB@$\critr(\cA;\cB)$}. This holds, in particular, if~$\cA$ is a locally finite\index{i}{quasivariety!locally finite} quasivariety with finitely many relations while~$\cB$ is a finitely generated, congruence-modular variety with finite similarity type\index{i}{variety!finitely generated}\index{i}{variety!congruence-modular} of algebras\index{i}{algebra!universal} of finite type (e.g., \emph{groups}, \emph{lattices}, \emph{modules over a finite ring}).

\item[\textbf{Context~3.}] The free $\cV$-lattice~$F$ has no congruence-permutable, con\-gru\-ence-preserving extension\index{i}{congruence-preserving extension} in any similarity type containing the lattice type. Due to earlier work by Gr\"atzer, Lakser, and Wehrung~\cite{GLWe}\index{c}{Gr\"atzer, G.}\index{c}{Lakser, H.}\index{c}{Wehrung, F.}, if~$\cV$ is locally finite\index{i}{variety!locally finite}, then the cardinality~$\aleph_1$\index{s}{aleph0@$\aleph_{\ga}$} is optimal.

\item[\textbf{Context~4.}] By using the results of the present work, the second author proves in Wehrung~\cite{VLift}\index{c}{Wehrung, F.} that the answer is no (both for regular rings and for C*-algebras of real rank zero), with a counterexample of cardinality~$\aleph_3$\index{s}{aleph0@$\aleph_{\ga}$}.

\end{description}

We also pave the way to solutions of further beforehand intractable open problems:
\begin{description}
\item[---] the determination of all the possible critical points\index{i}{critical point} between finitely generated\index{i}{variety!finitely generated} varieties of lattices (Gillibert~\cite{Gill3})\index{c}{Gillibert, P.}, then between varieties~$\cA$ and~$\cB$ of algebras such~$\cA$ is locally finite while~$\cB$ is finitely generated with finite similarity type and omits tame congruence theory types~$\mathbf{1}$ and~$\mathbf{5}$ (Gillibert~\cite{Gill5}).

\item[---] the problem whether every lattice of cardinality~$\aleph_1$\index{s}{aleph0@$\aleph_{\ga}$}, in the variety generated by~$\bM_3$, has a congruence $m$-permutable, congruence-preserving extension\index{i}{congruence-preserving extension} for some positive integer~$m$ (Gillibert~\cite{Gill4})\index{c}{Gillibert, P.};

\item[---] a 1962 problem by J\'onsson\index{c}{Jonsson@J\'onsson, B.} about coordinatizability of sectionally complemented\index{i}{lattice!sectionally complemented} modular\index{i}{lattice!modular} lattices without unit (Wehrung~\cite{Banasch2}\index{c}{Wehrung, F.}).

\end{description}

\chapter{Background}\label{Ch:Bckgrnd}

\section{Introduction}\label{S:Intro}
The present work originates in a collection of attempts to solving various open problems, in different topics in mathematics (mainly, but not restricted to, universal algebra and lattice theory), all related by a common feature:

\begin{quote}\normalsize\em
How large can the range of a functor be? If it is large on objects, then is it also large on diagrams?
\end{quote}

``Largeness'' is meant here in the sense of containment (not cardinality): for example, largeness of a class~$\cC$ of sets could mean that~$\cC$ contains, as an element, every finite set; or every countable set; or every cartesian product of~$17$ sets; and so on. Our primary interests are in \emph{lattice theory}, \emph{universal algebra}, and \emph{ring theory} (\emph{nonstable K-theory}). Our further aims include, but are not restricted to, \emph{group theory} and \emph{module theory}. One of our main tools is \emph{infinite combinatorics}, articulated around the new notion of a \emph{$\gl$-lifter}.\index{i}{lifter ($\gl$-)}

Our main goal is to introduce a new method, of categorical nature, originally designed to solve functorial representation problems in the above-mentioned topics. Our main result, which we call the \emph{Condensate Lifting Lemma}, CLL\index{i}{Condensate Lifting Lemma (CLL)} in abbreviation (Lemma~\ref{L:CLL}), as well as its main precursor the \emph{Armature Lemma} (Lemma~\ref{L:Armature}), are theorems of \emph{category theory}. They enable us to solve several until now seemingly intractable open problems, outside the field of category theory, by reducing them to the verification of a list of mostly elementary categorical properties.

The largest part of this work belongs to the field of category theory. Nevertheless, its prime intent is strongly oriented towards outside applications, including the aforementioned topics. Hence, unlike most works in category theory, these notes will show at some places more detail in the statements and proofs of the purely categorical results.

\subsection{The search for functorial solutions to certain representation problems}\label{Su:FunctSol}
Roughly speaking, the kind of problem that the present book aims to help solving is the following. We are given categories~$\cB$ and~$\cS$ together with a functor $\Psi\colon\cB\to\cS$. We are trying to describe the members of the (categorical) range of~$\Psi$, that is, the objects of~$\cS$ that are isomorphic to~$\Psi(B)$ for some object~$B$ of~$\cB$. Our methods are likely to shed some light on this problem, even in some cases solving it completely, mainly in case the answer turns out to be \emph{cardinality-sensitive}. At this point, cardinality is meant somewhat heuristically, involving an appropriate notion of ``size'' for objects of~$\cS$.

Yet we need to be more specific in the formulation of our general problem, thus inevitably adding some complexity to our statements. This requires a slight recasting of our problem. We are now given categories~$\cA$, $\cB$, $\cS$ together with functors $\Phi\colon\cA\to\cS$ and $\Psi\colon\cB\to\cS$, as illustrated on the left hand side of Figure~\ref{Fig:Prelarder}.
\begin{figure}[htb]
 \[
 \def\labelstyle{\displaystyle}
 \xymatrix{
 & \cS & & & & \cS &\\
 \cA\ar[ur]^{\Phi} & & \cB\ar[ul]_{\Psi} & & \cA\ar[ur]^{\Phi}\ar[rr]^{\Gamma}
  & & \cB\ar[ul]_{\Psi}
 }
 \]
\caption{A few categories and functors}
\label{Fig:Prelarder}
\end{figure}

We assume that for ``many'' objects~$A$ of~$\cA$, there exists an object~$B$ of~$\cB$ such that $\Phi(A)\cong\Psi(B)$. We ask to what extent the assignment $(A\mapsto B)$ can be made \emph{functorial}. Ideally, we would be able to prove the existence of a functor $\Gamma\colon\cA\to\cB$ such that~$\Phi$ and~$\Psi\circ\Gamma$ are isomorphic functors---then we say that~$\Gamma$ \emph{lifts~$\Phi$ with respect to~$\Psi$}\index{i}{diagram!lifting}, as illustrated on the right hand side of Figure~\ref{Fig:Prelarder}. (Two functors are \emph{isomorphic} if there is a natural transformation from one to the other whose components are all isomorphisms).

This highly desirable goal is seldom reached. Here are a few instances where this is nevertheless the case (and then this never happens for trivial reasons), and some other instances where it is still not known whether it is the case. In both Examples~\ref{Ex:Schmidt81} and~\ref{Ex:Ruzi04}, $\cS$ is the category of all distributive\index{i}{distributive!semilattice} \jzs s with \jzh s, $\cA$ is the subcategory of~$\cS$ consisting of all distributive\index{i}{lattice!distributive}\index{i}{distributive!lattice|seeonly{lattice, distributive}} \emph{lattices} with zero and $0$-preserving \emph{lattice embeddings}, and~$\Phi$ is the inclusion functor from~$\cA$ into~$\cS$.

While Example~\ref{Ex:Schmidt81} is in lattice theory, Examples~\ref{Ex:Ruzi04} and~\ref{Ex:nsKth} are in ring theory, and Example~\ref{Ex:Lampe} is in universal algebra. Examples~\ref{Ex:RTWe} and~\ref{Ex:PosetMeas} involve $S$-valued versions of a discrete set and a poset, respectively, where~$S$ is a given \jzs.

\begin{exple}\label{Ex:Schmidt81}
$\cB$ is the category of all lattices with lattice homomorphisms, and $\Psi$ is the natural extension to a functor of the assignment that to every lattice~$L$ assigns the \jzs\ $\Conc L$\index{s}{compcon1@$\Conc\bA$, $\Conc f$} of all compact (i.e., finitely generated) congruences of~$L$. The statement $(\forall D\in\cA)(\exists L\in\cB)(D\cong\Conc L)$ was first established by Schmidt~\cite{Schm81}\index{c}{Schmidt, E.\,T.}. The stronger statement that the assignment $(D\mapsto L)$ can be made functorial was established by Pudl\'ak~\cite{Pudl85}\index{c}{Pudl\'ak, P.}. That is, there exists a functor~$\Gamma$, from distributive\index{i}{lattice!distributive} $0$-lattices with $0$-lattice embeddings to lattices and lattice embeddings, such that $\Conc\Gamma(D)\cong D$\index{s}{compcon1@$\Conc\bA$, $\Conc f$} naturally on all distributive\index{i}{lattice!distributive} $0$-lattices~$D$. Furthermore, in Pudl\'ak~\cite{Pudl85}\index{c}{Pudl\'ak, P.}, $\Gamma(D)$ is finite atomistic in case~$D$ is finite.
\end{exple}

Pudl\'ak's approach was motivated by the search for the solution of the \emph{Congruence Lattice Problem} CLP\index{i}{Congruence Lattice Problem (CLP)}, posed by Dilworth in the forties, which asked whether every distributive\index{i}{distributive!semilattice} \jzs\ is isomorphic to~$\Conc L$\index{s}{compcon1@$\Conc\bA$, $\Conc f$} for some lattice~$L$. Pudl\'ak asked in~\cite{Pudl85}\index{c}{Pudl\'ak, P.} the stronger question whether CLP\index{i}{Congruence Lattice Problem (CLP)} could have a functorial answer. This approach proved extremely fruitful, although it gave rise to unpredictable developments. For instance, Pudl\'ak's question was finally answered, in the negative, in T\r{u}ma and Wehrung~\cite{Bowtie}\index{c}{Tuma@T\r{u}ma, J.}\index{c}{Wehrung, F.}, but this was not of much help for the full negative solution of CLP\index{i}{Congruence Lattice Problem (CLP)} in Wehrung~\cite{CLP}\index{c}{Wehrung, F.}, which required even much more work.

\begin{exple}\label{Ex:Ruzi04}
An algebra\index{i}{algebra!over a field}~$R$ over a field~$\FF$ is
\begin{itemize}
\item\emph{matricial}\index{i}{algebra!matricial|ii} if it is isomorphic to a finite product of full matrix rings over~$\FF$;

\item\emph{locally matricial}\index{i}{algebra!locally matricial|ii} if it is a directed colimit of matricial algebras\index{i}{algebra!matricial}.
\end{itemize}
Denote by $\Idc R$\index{s}{idcomp@$\Idc R$, $R$ ring|ii} the \jzs\ of all compact (i.e., finitely generated) two-sided ideals\index{i}{ideal (in a ring)!two-sided} of a ring~$R$. Then~$\Idc$\index{s}{idcomp@$\Idc R$, $R$ ring} extends naturally to a functor from rings with ring homomorphisms to \jzs s with \jzh s.

Let $\cB$ be the category of locally matricial $\FF$-algebras\index{i}{algebra!locally matricial} and let~$\Psi$ be the~$\Idc$\index{s}{idcomp@$\Idc R$, $R$ ring} functor from~$\cB$ to~$\cS$. R\r{u}\v{z}i\v{c}ka proved in~\cite{Ruzi1}\index{c}{Ruzicka@R\r{u}\v{z}i\v{c}ka, P.} that for every distributive\index{i}{lattice!distributive} $0$-lattice~$D$, there exists a locally matricial $\FF$-algebra\index{i}{algebra!locally matricial}~$R$ such that $\Idc R\cong D$\index{s}{idcomp@$\Idc R$, $R$ ring}. Later on, R\r{u}\v{z}i\v{c}ka extended his result in~\cite{Ruzi2}\index{c}{Ruzicka@R\r{u}\v{z}i\v{c}ka, P.} by proving that the assignment $(D\mapsto R)$ can be made functorial, from distributive $0$-lattices with $0$-preserving lattice embeddings to locally matricial rings (over a given field) and ring embeddings.
\end{exple}

\begin{exple}\label{Ex:nsKth}
For a (not necessarily unital) ring~$R$ and a positive integer~$n$, we denote by $\Mat_n(R)$\index{s}{Mat@$\Mat_n(R)$, $\Mat_{\infty}(R)$|ii} the ring of all $n\times n$ square matrices with entries from~$R$, and we identity $\Mat_n(R)$ with a subring of $\Mat_{n+1}(R)$ \emph{via} the embedding $x\mapsto\begin{pmatrix}x&0\\ 0&0\end{pmatrix}$. Setting $\Mat_\infty(R):=\bigcup_{n>0}\Mat_n(R)$\index{s}{Mat@$\Mat_n(R)$, $\Mat_{\infty}(R)$}, we say that idempotent matrices $a,b\in\Mat_\infty(R)$ are \emph{equivalent}, in notation $a\sim b$, if there are $x,y\in\Mat_\infty(R)$ such that $a=xy$ and $b=yx$\index{i}{equivalence of idempotents|ii}. The relation~$\sim$ is an equivalence relation, and we denote by~$[a]$ the $\sim$-equivalence class of an idempotent matrix $a\in\Mat_\infty(R)$. Equivalence classes can be added, \emph{via} the rule
 \[
 [a]+[b]:=\left[\begin{pmatrix}a&0\\ 0&b\end{pmatrix}\right]\,,
 \]
for all idempotent matrices $a,b\in\Mat_\infty(R)$. This way, we get a \cm,\index{s}{VV@$\VV$ functor|ii}
 \[
 \VV(R):=\setm{[a]}{a\in\Mat_\infty(R)\text{ idempotent}}\,,
 \]
that encodes the so-called \emph{nonstable K-theory}\index{i}{nonstable K- (or K$_0$-) theory|ii} (or, sometimes, the \emph{nonstable K$_0$-theory}) of~$R$. It is easy to see that~$\VV$ extends to a \emph{functor}, from rings with ring homomorphisms to \cm s with monoid homomorphisms, \emph{via} the formula $\VV(f)([a])=[f(a)]$ for any homomorphism $f\colon R\to S$ of rings and any idempotent $a\in\Mat_\infty(R)$. The monoid~$\VV(R)$ is \emph{conical}\index{i}{monoid!conical|ii}, that is, it satifies the following statement:
 \[
 (\forall\vx,\vy)(\vx+\vy=0\Rightarrow\vx=\vy=0)\,,
 \]
For example, if~$R$ is a division ring, then~$\VV(R)$ is isomorphic to the monoid $\ZZ^+:=\set{0,1,2,\dots}$\index{s}{ZZplus@$\ZZ^+$|ii} of all non-negative integers, while if~$R$ is the endomorphism ring of a vector space of infinite dimension~$\aleph_\ga$\index{s}{aleph0@$\aleph_{\ga}$} (over an arbitrary division ring), then
$\VV(R)\cong\ZZ^+\cup\setm{\aleph_\xi}{\xi\leq\ga}$.

Every conical \cm\ is isomorphic to $\VV(R)$ for some hereditary ring~$R$: this is proved in Theorems~6.2 and~6.4 of Bergman~\cite{Berg74}\index{c}{Bergman, G.\,M.} for the finitely generated case, and in Bergman and Dicks \cite[page~315]{BeDi78}\index{c}{Bergman, G.\,M.}\index{c}{Dicks, W.} for the general case with order-unit. The general, non-unital case is proved in Ara and Goodearl \cite[Proposition~4.4]{ArGo11}\index{c}{Ara, P.}\index{c}{Goodearl, K.\,R.}. In light of this result, it is natural to ask whether this solution can be made \emph{functorial}, that is, whether there exist a functor~$\Gamma$, from the category of conical \cm s with monoid homomorphisms to the category of rings and ring homomorphisms, such that $\VV\circ{\Gamma}$ is isomorphic to the identity.

This problem is still open. Variants are discussed in Wehrung~\cite{VLift}\index{c}{Wehrung, F.}.
\end{exple}

\begin{exple}\label{Ex:Lampe}
Define both~$\cA$ and~$\cS$ as the category of all \jzus s with \jzue s, $\Phi$ as the identity functor on~$\cA$, $\cB$ as the category of all \emph{groupoids} (a groupoid\index{i}{groupoid|ii} is a nonempty set endowed with a binary operation), and~$\Psi$ as the~$\Conc$\index{s}{compcon1@$\Conc\bA$, $\Conc f$} functor from~$\cB$ to~$\cS$. Lampe proves in~\cite{Lamp82}\index{c}{Lampe, W.\,A.} that for each \jzus\ $S$ there exists a groupoid\index{i}{groupoid}~$G$ such that~$\Conc G\cong S$\index{s}{compcon1@$\Conc\bA$, $\Conc f$}. However, it is not known whether the assignment $(S\mapsto G)$ can be made functorial, from \jzus s with \jzue s to groupoids\index{i}{groupoid} and their embeddings.
\end{exple}

\begin{exple}\label{Ex:RTWe}
For a \jzs\ $S$, an \emph{$S$-valued distance}\index{i}{distance!semilattice-valued|ii} on a set~$\Omega$ is a map $\gd\colon\Omega\times\Omega\to S$ such that
\begin{align*}
\gd(x,x)&=0\,,\\
\gd(x,y)&=\gd(y,x)\,,\\
\gd(x,z)&\leq\gd(x,y)\vee\gd(y,z)&&(\text{\emph{triangular inequality}}),
\end{align*}
for all $x,y,z\in\Omega$. Furthermore, for a positive integer~$n$, we say that~$\gd$ is a \emph{V-distance of type~$n$}\index{i}{distance!semilattice-valued V-|ii}\index{i}{distance!semilattice-valued V-${}_{-}$ of type $n$|ii}\index{i}{V-distance|seeonly{distance, semilattice-valued~V-}} if for all $x,y\in\Omega$ and all $\ga_0,\ga_1\in S$, $\gd(x,y)\leq\ga_0\vee\ga_1$ implies the existence of $z_0,\dots,z_{n+1}\in\Omega$ such that $z_0=x$, $z_{n+1}=y$, and $\gd(z_i,z_{i+1})\leq\ga_{i\,\mathrm{mod}\,2}$ for each $i\leq n$.

It is not hard to modify the proof in J\'onsson~\cite{Jons53}\index{c}{Jonsson@J\'onsson, B.} to obtain that every \jzs\ with modular\index{i}{lattice!modular} ideal lattice is the range of a V-distance\index{i}{distance!semilattice-valued V-} of type~2\index{i}{distance!semilattice-valued V-${}_{-}$ of type $n$} on some set. On the other hand, it is proved in R\r{u}\v{z}i\v{c}ka, T\r{u}ma, and Wehrung~\cite{RTW}\index{c}{Ruzicka@R\r{u}\v{z}i\v{c}ka, P.}\index{c}{Tuma@T\r{u}ma, J.}\index{c}{Wehrung, F.} that this result can be made functorial on \emph{distributive}\index{i}{distributive!semilattice} semilattices. It is also proved in that paper that there exists a distributive\index{i}{distributive!semilattice} \jzus, of cardinality~$\aleph_2$\index{s}{aleph0@$\aleph_{\ga}$}, that is not generated by the range of any V-distance\index{i}{distance!semilattice-valued V-} of type~1\index{i}{distance!semilattice-valued V-${}_{-}$ of type $n$} on any set.
\end{exple}

\begin{exple}\label{Ex:PosetMeas}
For a \jzs\ $S$, an \emph{$S$-valued measure}\index{i}{measure (semilattice-valued)|ii} on a poset~$P$ is a map $\gm\colon P\times P\to S$ such that
 \[
 \gm(x,y)=0\text{ in case }x\leq y\,,\quad\text{and}\quad
 \gm(x,z)\leq\gm(x,y)\vee\gm(y,z)\,,
 \]
for all $x,y,z\in\Omega$.

It is proved in Wehrung~\cite{PosetMeas}\index{c}{Wehrung, F.} that every distributive\index{i}{distributive!semilattice} \jzs~$S$ is join-generated by the range of an $S$-valued measure on some poset~$P$, which is also a meet-semilattice with zero, such that for all $x\leq y$ in~$P$ and all $\ga_0,\ga_1\in S$, if $\gm(y,x)\leq\ga_0\vee\ga_1$, then there are a positive integer $n$ and
a decomposition $x=z_0\leq z_1\leq\cdots\leq z_n=y$ such that $\gm(z_{i+1},z_i)\leq\ga_{i\,\mathrm{mod}\,2}$ for each $i<n$.

However, although the assignment $(S\mapsto(P,\gm))$ can be made ``functorial on lattice-indexed diagrams'', we do not know whether it can be made functorial (starting with \jzs s with \emph{\jze s}).
\end{exple}

\subsection{Partially functorial solutions to representation problems}\label{Su:PartFunctSol}

Let us consider again the settings of Section~\ref{Su:FunctSol}, that is, categories~$\cA$, $\cB$, and~$\cS$ together with functors $\Phi\colon\cA\to\cS$ and $\Psi\colon\cB\to\cS$. The most commonly encountered situation is that for all ``not too large'', but not all, objects~$A$ of~$\cA$ there exists an object~$B$ of~$\cB$ such that $\Phi(A)\cong\Psi(B)$. It turns out that the extent of ``not too large'' is often related to combinatorial properties of the functors~$\Phi$ and~$\Psi$. More precisely, a large part of the present book aims at explaining a formerly mysterious relation between the two following statements:

\begin{itemize}
\item For each object~$A$ of~$\cA$ of ``cardinality'' at most~$\aleph_n$\index{s}{aleph0@$\aleph_{\ga}$}, there exists an object~$B$ of~$\cB$ such that $\Phi(A)\cong\Psi(B)$.

\item For every diagram~$\overrightarrow{A}$ of ``finite'' objects in~$\cA$, indexed by $\set{0,1}^{n+1}$, there are a $\set{0,1}^{n+1}$-indexed diagram~$\overrightarrow{B}$ in~$\cB$ such that $\Phi\overrightarrow{A}\cong\Psi\overrightarrow{B}$.
\end{itemize}

\begin{exple}\label{Ex:PermCong}
In R\r{u}\v{z}i\v{c}ka, T\r{u}ma, and Wehrung~\cite{RTW}\index{c}{Ruzicka@R\r{u}\v{z}i\v{c}ka, P.}\index{c}{Tuma@T\r{u}ma, J.}\index{c}{Wehrung, F.} the question whether every distributive\index{i}{distributive!semilattice} \jzs\ is isomorphic to~$\Conc\bA$\index{s}{compcon1@$\Conc\bA$, $\Conc f$} for some con\-gru\-ence-permutable algebra\index{i}{algebra!congruence-permutable}~$\bA$ is solved in the negative. For instance, if~$\FL(X)$\index{s}{FreeLatt@$\FL(X)$|ii} denotes the free lattice on a set~$X$, then $\Conc\FL(\aleph_2)$\index{s}{compcon1@$\Conc\bA$, $\Conc f$}\index{s}{FreeLatt@$\FL(X)$}\index{s}{aleph0@$\aleph_{\ga}$} is not isomorphic to~$\Conc\bA$\index{s}{compcon1@$\Conc\bA$, $\Conc f$}, for any congruence-permutable algebra\index{i}{algebra!congruence-permutable}~$\bA$. In particular, the \jzs\ $\Conc\FL(\aleph_2)$\index{s}{compcon1@$\Conc\bA$, $\Conc f$}\index{s}{aleph0@$\aleph_{\ga}$}\index{s}{FreeLatt@$\FL(X)$} is neither isomorphic to the finitely generated normal subgroup \jzs\ of any group, nor to the finitely generated submodule lattice of a module. In these results the cardinality~$\aleph_2$\index{s}{aleph0@$\aleph_{\ga}$} is optimal.

On the other hand, every $\set{0,1}^2$-indexed diagram (we say \emph{square}\index{i}{square (shape of a diagram)}) of finite distributive\index{i}{distributive!semilattice} \jzs s can be lifted\index{i}{diagram!lifted}, with respect to the~$\Conc$\index{s}{compcon1@$\Conc\bA$, $\Conc f$} functor, by a square\index{i}{square (shape of a diagram)} of finite relatively complemented\index{i}{lattice!relatively complemented} (thus congruence-permutable\index{i}{algebra!congruence-permutable}) lattices, see T\r{u}ma~\cite{Tuma}\index{c}{Tuma@T\r{u}ma, J.} and Gr\"atzer, Lakser, and Wehrung~\cite{GLWe}\index{c}{Gr\"atzer, G.}\index{c}{Lakser, H.}\index{c}{Wehrung, F.}. It is proved in R\r{u}\v{z}i\v{c}ka, T\r{u}ma, and Wehrung~\cite{RTW}\index{c}{Ruzicka@R\r{u}\v{z}i\v{c}ka, P.}\index{c}{Tuma@T\r{u}ma, J.}\index{c}{Wehrung, F.} that this result does not extend to \emph{cubes}\index{i}{cube (shape of a diagram)|ii}, that is, diagrams indexed by $\set{0,1}^3$.
\end{exple}

\begin{exple}\label{Ex:IdLattRR}
It is proved in Plo\v{s}\v{c}ica, T\r{u}ma, and Wehrung~\cite{PTW}\index{c}{Plo\v{s}\v{c}ica, M.}\index{c}{Tuma@T\r{u}ma, J.}\index{c}{Wehrung, F.} that there is no (von Neumann) regular ring\index{i}{regular ring}~$R$ such that the ideal lattice of~$R$ is isomorphic to $\Con\FL(\aleph_2)$\index{s}{aleph0@$\aleph_{\ga}$}\index{s}{conA@$\Con\bA$}\index{s}{FreeLatt@$\FL(X)$}. On the other hand, it is proved in Wehrung~\cite{WReg}\index{c}{Wehrung, F.} that every square\index{i}{square (shape of a diagram)} of finite Boolean semilattices and \jzh s can be lifted\index{i}{diagram!lifted}, with respect to the~$\Idc$\index{s}{idcomp@$\Idc R$, $R$ ring} functor, by a square\index{i}{square (shape of a diagram)} of regular rings\index{i}{regular ring}. As all rings have permutable congruences, it follows from~\cite[Corollary~7.3]{RTW}\index{c}{Ruzicka@R\r{u}\v{z}i\v{c}ka, P.}\index{c}{Tuma@T\r{u}ma, J.}\index{c}{Wehrung, F.} that this result cannot be extended to the cube\index{i}{cube (shape of a diagram)} denoted there by~$\cD_{\mathrm{ac}}$\index{s}{Dac@$\cD_{\mathrm{ac}}$}.
\end{exple}

\begin{exple}\label{Ex:Momega}
Denote by~$\LL(R)$\index{s}{LLR@$\LL(R)$, $\LL(f)$} the lattice of all principal right ideals\index{i}{ideal (in a ring)!right} of a regular ring\index{i}{regular ring}~$R$. The assignment $(R\mapsto\LL(R))$ can be naturally extended to a functor, from regular rings\index{i}{regular ring} with ring homomorphisms to sectionally complemented\index{i}{lattice!sectionally complemented} modular\index{i}{lattice!modular} lattices with $0$-lattice homomorphisms (see Chapter~\ref{Ch:RegRngLard} for details). A lattice is \emph{coordinatizable}\index{i}{lattice!coordinatizable|ii} if it is isomorphic to~$\LL(R)$\index{s}{LLR@$\LL(R)$, $\LL(f)$} for some regular ring\index{i}{regular ring}~$R$.

Denote by $\bM_\go$\index{s}{Momeg@$\bM_\omega$|ii} the lattice of length two with~$\go$ atoms~$a_n$, with $n<\go$. The assignment $(a_n\mapsto a_{n+1})$ defines an endomorphism~$\gf$ of~$\bM_\go$. The second author proves in~\cite{CXCoord}\index{c}{Wehrung, F.} that
\begin{itemize}
\item one cannot have rings~$R$ and~$S$, a unital ring homomorphism $f\colon R\to S$, and a natural equivalence between the diagrams $\LL(f)\colon\LL(R)\to\LL(S)$\index{s}{LLR@$\LL(R)$, $\LL(f)$} and $\gf\colon M_\go\to M_\go$;

\item there exists a non-coordinatizable\index{i}{lattice!coordinatizable} $2$-distributive\index{i}{lattice!$2$-distributive} complemented\index{i}{lattice!complemented} modular\index{i}{lattice!modular} lattice, of cardinality~$\aleph_1$\index{s}{aleph0@$\aleph_{\ga}$}, containing a copy of~$\bM_\go$ with the same zero and the same unit (we say \emph{spanning}\index{i}{spanning (sublattice)|ii}).
\end{itemize}

The cardinality~$\aleph_1$\index{s}{aleph0@$\aleph_{\ga}$} is optimal in the second point above, as the second author proved, by methods extending those of Wehrung~\cite{CXCoord}\index{c}{Wehrung, F.} (and mentioned there without proof), that \emph{Every countable $2$-distributive\index{i}{lattice!$2$-distributive} complemented\index{i}{lattice!complemented} modular\index{i}{lattice!modular} lattice with a spanning\index{i}{spanning (sublattice)}~$\bM_\go$\index{s}{Momeg@$\bM_\omega$} is coordinatizable}\index{i}{lattice!coordinatizable}.
\end{exple}

\begin{exple}\label{Ex:NonstKTh}
In nonstable K$_0$-theory\index{i}{nonstable K- (or K$_0$-) theory}, three particular classes of rings enjoy a special importance: namely, the \emph{\pup{von Neumann} regular rings}\index{i}{regular ring}, the \emph{C*-algebras of real rank zero}\index{i}{C*-algebra}, and the \emph{exchange rings}\index{i}{exchange ring}. Every regular ring is an exchange ring, the converse fails; and a C*-algebra has real rank zero if{f} it is an exchange ring.

The second author finds in Wehrung~\cite{VLift}\index{c}{Wehrung, F.} a diagram, indexed by the powerset $\set{0,1}^3$ of a three-element set, that can be lifted both by a commutative diagram of C*-algebras of real rank one and by a commutative diagram of exchange rings, but that cannot be lifted by any commutative diagram of either regular rings or C*-algebras of real rank zero. This leads, in the same paper, to the construction of a dimension group with order-unit whose positive cone can be represented as~$\VV(A)$ for a C*-algebra~$A$ of real rank one, and also as~$\VV(R)$ for an exchange ring~$R$, but never as~$\VV(B)$ for either a C*-algebra of real rank zero or a regular ring~$B$. Due to the cube $\set{0,1}^3$ having order-dimension three, the cardinality of this counterexample jumps up to~$\aleph_3$\index{s}{aleph0@$\aleph_{\ga}$}. It is conceivable, although yet unknown, that refining the methods used could yield a counterexample of cardinality~$\aleph_2$\index{s}{aleph0@$\aleph_{\ga}$}. On the other hand, it is well-known that positive cones of dimension groups of cardinality~$\aleph_1$\index{s}{aleph0@$\aleph_{\ga}$} do not separate the nonstable K$_0$-theories\index{i}{nonstable K- (or K$_0$-) theory} of exchange rings\index{i}{exchange ring}, C*-algebras\index{i}{C*-algebra} of real rank zero, and regular rings\index{i}{regular ring}.
\end{exple}

\begin{exple}\label{Ex:CLPaleph2}
The original solution to CLP\index{i}{Congruence Lattice Problem (CLP)} (see Wehrung~\cite{CLP}\index{c}{Wehrung, F.}) produces a distributive\index{i}{distributive!semilattice} \jzus\ of cardinality~$\aleph_{\go+1}$\index{s}{aleph0@$\aleph_{\ga}$} that is not isomorphic to the compact congruence semilattice of any lattice. The bound is improved to the optimal one, namely~$\aleph_2$\index{s}{aleph0@$\aleph_{\ga}$}, by R\r{u}\v{z}i\v{c}ka\index{c}{Ruzicka@R\r{u}\v{z}i\v{c}ka, P.} in~\cite{Ruzi08}.

On the other hand, every square\index{i}{square (shape of a diagram)} of finite \jzs s with \jzh s can be lifted\index{i}{diagram!lifted} with respect to the~$\Conc$\index{s}{compcon1@$\Conc\bA$, $\Conc f$} functor on lattices, see Gr\"atzer, Lakser, and Wehrung~\cite{GLWe}\index{c}{Gr\"atzer, G.}\index{c}{Lakser, H.}\index{c}{Wehrung, F.}. It is not known whether this result can be extended to $\set{0,1}^3$-indexed diagrams.
\end{exple}

Many of the combinatorial patterns encountered in Examples~\ref{Ex:PermCong}--\ref{Ex:CLPaleph2} appear in the following notion, first formulated within Problem~5 in the survey paper T\r{u}ma and Wehrung~\cite{CLPSurv}\index{c}{Tuma@T\r{u}ma, J.}\index{c}{Wehrung, F.} and then extensively studied by the first author in \cite{GillTh,Gill1,Gill2,Gill3,Gill4}\index{c}{Gillibert, P.}.

\begin{defn}\label{D:CritPtVar}
For varieties~$\cA$ and~$\cB$ of algebras\index{i}{algebra!universal} \pup{not necessarily on the same similarity type}, we define
\begin{itemize}
\item $\Conc\cA$\index{s}{compcon2@$\Conc\cA$|ii} is the class of all \jzs s that are isomorphic to~$\Conc\bA$\index{s}{compcon1@$\Conc\bA$, $\Conc f$}, for some $\bA\in\cA$;

\item $\crit(\cA;\cB)$\index{s}{critAB@$\crit(\cA;\cB)$|ii}, the \emph{critical point}\index{i}{critical point|ii} of~$\cA$ and~$\cB$, is the least possible cardinality of a \jzs\ in $(\Conc\cA)\setminus(\Conc\cB)$\index{s}{compcon2@$\Conc\cA$} if $\Conc\cA\not\subseteq\Conc\cB$, $\infty$ otherwise.
\end{itemize}
\end{defn}

The following result is proved by the first author in \cite[Corollary~7.13]{Gill1}\index{c}{Gillibert, P.}.

\begin{thm}[Gillibert]\label{T:critptalephn}
Let~$\cA$ and~$\cB$ be varieties of algebras with~$\cA$ locally finite\index{i}{variety!locally finite} and~$\cB$ finitely generated\index{i}{variety!finitely generated} congruence-distributive\index{i}{variety!congruence-distributive}. Then $\Conc\cA\not\subseteq\Conc\cB$\index{s}{compcon2@$\Conc\cA$} implies that $\crit(\cA;\cB)<\aleph_\go$\index{s}{critAB@$\crit(\cA;\cB)$}\index{s}{aleph0@$\aleph_{\ga}$}.
\end{thm}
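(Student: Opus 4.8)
The plan is to deduce this from the machinery developed in the body of the paper (CLL together with the Armature and Buttress Lemmas), applied to the functor $\Conc$ restricted to the relevant varieties. First I would set up the categorical data: take $\cS$ to be the category of \jzs s with \jzh s, let $\Phi$ be the $\Conc$ functor on $\cA$, and let $\Psi$ be the $\Conc$ functor on $\cB$; the double arrows of $\cS$ should be the surjective \jzh s (or the identities, depending on the normalization fixed earlier). The heart of the argument is to verify that this triple, together with the natural projectability/congruence-theoretic structure coming from $\cB$ being congruence-distributive, forms a \emph{larder} in the sense of the paper, so that CLL applies. Congruence-distributivity of $\cB$ is exactly what supplies the Jónsson terms needed to make the relevant finiteness and amalgamation conditions (the ``larder'' axioms, and in particular the ones controlling behaviour of $\Conc$ on finite products and directed colimits) hold; local finiteness of $\cA$ guarantees that the objects $A\in\cA$ of bounded cardinality decompose, up to the condensate construction, into diagrams of \emph{finite} algebras indexed by a suitable poset $P$.

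Next I would argue by contraposition. Suppose $\crit(\cA;\cB)\geq\aleph_\go$, i.e.\ every \jzs\ in $\Conc\cA$ of cardinality $<\aleph_\go$ already lies in $\Conc\cB$; the goal is to show $\Conc\cA\subseteq\Conc\cB$. Pick an arbitrary $\bA\in\cA$ and write $\Conc\bA$ as a directed colimit of its compact (finite) subsemilattices, or rather realize $\bA$ itself, via local finiteness, as a directed union of finite subalgebras, giving a $P$-indexed diagram $\overrightarrow{A}$ of finite algebras in $\cA$ for an appropriate poset $P$ with the combinatorial properties described in the foreword (every principal ideal a \js, finite subsets having finitely generated sets of upper bounds). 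By the hypothesis on critical points, each finite $\Conc A_p$ — indeed each $\Conc$ of a bounded-cardinality subdiagram — lifts through $\Psi$, i.e.\ equals $\Conc B_p$ for some $B_p\in\cB$; the point of CLL is to upgrade these \emph{object-level} liftings to a \emph{diagram-level} lifting $\Psi\overrightarrow{B}\Rightarrow\Phi\overrightarrow{A}$, whence passing to the colimit and using that $\Conc$ preserves directed colimits yields $\Conc\bA\cong\Conc\bB$ for $\bB=\varinjlim\overrightarrow{B}\in\cB$. This is where the $\aleph_\go$ bound enters: the relevant $\gl$-lifter exists for all $P$ of the required shape precisely because only finitely many ``levels'' $\aleph_n$ are consumed, so no large-cardinal hypothesis (Erdős cardinals) is needed, matching the statement that the conclusion is $<\aleph_\go$ rather than merely $<\infty$.

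The main obstacle, and the step I expect to require the most care, is the verification that the chosen categorical data genuinely constitutes a larder — in particular checking the ``projectability'' and continuity conditions on $\Psi=\Conc|_{\cB}$. One has to confirm that $\Conc$ turns finite products in $\cB$ into finite products of semilattices (which is standard, since $\Con$ of a product of finitely many algebras is the product of the $\Con$'s), that it preserves the directed colimits used in forming condensates, and — the delicate point — that congruence-distributivity of $\cB$ provides the term-condition machinery making the double arrows behave correctly under these operations, so that a condensate of $\overrightarrow{A}$ lifts. A secondary technical point is the bookkeeping that translates ``bounded cardinality subdiagrams lift'' (the contrapositive hypothesis) into the precise hypothesis required to feed CLL, i.e.\ matching the index posets and the ``many objects lift'' quantifier of the foreword to the actual critical-point statement; this is essentially the content of the first author's Chapter on critical points, and I would expect the paper to isolate it as a corollary of CLL applied to a canonical poset such as a finite power of a chain or the poset of finite subsets of a cardinal.
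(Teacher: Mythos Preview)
Your overall architecture---contraposition, larder verification, CLL, then colimit---matches the paper's approach (which actually cites Theorem~\ref{T:critptalephn} from~\cite{Gill1} and proves the generalization Theorem~\ref{T:DichotCritPt}). But there is a genuine gap in your plan, and a misidentification of the role of congruence-distributivity.

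The gap is the compactness step. You write~$\bA$ as a directed union of finite subalgebras indexed by an \emph{infinite} poset~$P$, and then intend to apply CLL to this~$P$. That does not work: for infinite~$P$ a $\gl$-lifter need not exist without large-cardinal hypotheses (cf.\ Corollary~\ref{C:LiftnotWFrestr} and the discussion around Corollary~\ref{C:CLLnoLF}). The paper's route is two-stage. First, for each \emph{finite} \jz-subsemilattice~$Q$ of~$P$, CLL (via Theorem~\ref{T:RelCritalephn}) gives: if the $Q$-indexed finite diagram $\ConcA(\overrightarrow{\bA}\res_Q)$ were unliftable in~$\cB$, then $\crit(\cA;\cB)\leq\aleph_{\kur_0(Q)-1}<\aleph_\go$, contradicting the hypothesis. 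So every finite restriction lifts. Second, a compactness argument (Lemma~\ref{L:CompCatLift}) assembles these finite liftings into a lifting of the full infinite diagram, and \emph{this} is where strong congruence-properness of~$\cB$ is indispensable: one needs that for each finite $\Conc\bA_p$ there are only finitely many candidate~$\bB\in\cB$ with isomorphic~$\Conc$, so that an ultrafilter selection stabilizes. Your sentence ``the relevant $\gl$-lifter exists for all~$P$ of the required shape'' papers over exactly this issue.

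Relatedly, you misdescribe what congruence-distributivity buys. It does not supply ``amalgamation conditions'' for the larder axioms; the right $\aleph_0$-larder (Theorem~\ref{T:1stordLardCtble}) needs only that~$\cB$ be locally finite and congruence-proper. What congruence-distributivity actually provides, via J\'onsson's Lemma, is that a finitely generated congruence-distributive variety has only finitely many subdirectly irreducibles and hence is \emph{strongly} congruence-proper (Proposition~\ref{P:FinResBdCongPp} and Remark~\ref{Rk:FinResBd}); this is precisely the hypothesis driving both the right larder and the compactness step above. Minor point: the double arrows in~$\cS$ are the \emph{ideal-induced} \jzh s, not merely the surjective ones (cf.\ Claim~\ref{Cl:GrSchleftlard} and Theorem~\ref{T:1stordLardCtble}); this matters for the projectability witnesses (Theorem~\ref{T:GQV2ProjWit}).
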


The results of the present book make it possible to extend Theorem~\ref{T:critptalephn} considerably in Gillibert~\cite{Gill5}\index{c}{Gillibert, P.}, yielding the following result (proved there in the more general context of \emph{quasivarieties} and \emph{relative congruence lattices}).

\begin{thm}[Gillibert]\label{T:critptaleph2}
Let~$\cA$ and~$\cB$ be locally finite\index{i}{variety!locally finite} varieties of algebras such that for each $\bA\in\cA$ there are only finitely many \pup{up to isomorphism} $\bB\in\cB$ such that $\Conc\bA\cong\Conc\bB$, and every such~$\bB$ is finite. Then $\Conc\cA\not\subseteq\Conc\cB$\index{s}{compcon2@$\Conc\cA$} implies that $\crit(\cA;\cB)\leq\aleph_2$\index{s}{aleph0@$\aleph_{\ga}$}\index{s}{critAB@$\crit(\cA;\cB)$}.
\end{thm}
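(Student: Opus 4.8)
The plan is a contrapositive argument organised around the Condensate Lifting Lemma (Lemma~\ref{L:CLL}). First I would package the data into a larder: take $\cS=\SEM$, the category of \jzs s with \jzh s; let $\Phi$ (on $\cA$) and $\Psi$ (on $\cB$) both be the compact congruence functor $\Conc$, which, $\cA$ and $\cB$ being varieties, coincides with the relative versions $\ConcA$ and $\ConcB$; and take as double arrows of $\cS$ an appropriate class of \jzh s, for which the surjective ones will serve. The verification that this data forms a larder is the promised list of routine categorical checks: $\Conc$ preserves directed colimits and finite nonempty products; finite algebras have finite, hence compactly generated, congruence lattices; congruences pull back and push forward along homomorphisms; and local finiteness of $\cA$ and $\cB$ makes every object the directed colimit of its finite subalgebras, the finite ones being finitely (indeed weakly) presented. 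Alongside, I would fix once and for all a finite poset $P$ — the square $\set{0,1}^2$, or a variant — meeting the combinatorial hypotheses of CLL (principal ideals are \js s; the upper bounds of a finite subset generate a finitely generated upper set, automatic here), together with an $\aleph_1$-lifter $(X,\boldsymbol X)$ of $P$; the existence of this lifter rests on Kuratowski's Free Set Theorem, with $\card X\le\aleph_2$, and this is what pins the exponent at $2$.

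Assume, towards a contradiction, that $\crit(\cA;\cB)>\aleph_2$; then every member of $\Conc\cA$ of cardinality at most $\aleph_2$ lies in $\Conc\cB$, and since $\card\Conc A\le\card A$ for every algebra $A$, for each $\cA$-object $A$ of cardinality at most $\aleph_2$ there is $B\in\cB$ with a double arrow $\Psi(B)\Rightarrow\Phi(A)$. The substantive reduction is then to extract, from the assumed failure $\Conc\cA\not\subseteq\Conc\cB$, a $P$-indexed diagram $\overrightarrow{A}=\famm{A_p,\ga_p^q}{p\leq q\text{ in }P}$ of \emph{finite} $\cA$-algebras whose image $\Phi\overrightarrow{A}$ admits \emph{no} lifting, with respect to $\Psi$, by a $P$-indexed diagram from $\cB$. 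Granting this, every object that CLL touches in the course of lifting $\Phi\overrightarrow{A}$ — finite products of the $A_p$, and condensates built from them along the $\aleph_1$-lifter, all of cardinality at most $\aleph_2$ — satisfies the object-level hypothesis just recorded; so CLL returns a $P$-indexed diagram $\overrightarrow{B}$ from $\cB$ together with a double arrow $\Psi\overrightarrow{B}\Rightarrow\Phi\overrightarrow{A}$, contradicting the choice of $\overrightarrow{A}$. Hence $\crit(\cA;\cB)\le\aleph_2$.

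The step I expect to be the main obstacle — and where the hypotheses on $\cB$ (only finitely many $\bB\in\cB$, all of them finite, with $\Conc\bB\cong\Conc\bA$ for each $\bA\in\cA$) are indispensable — is precisely the extraction of that non-liftable finite $P$-diagram. If some finite $\bA\in\cA$ already has $\Conc\bA\notin\Conc\cB$, we have a finite witness and are done, so one may assume every finite $\cA$-algebra lifts over $\cB$; the finiteness hypothesis then makes the set of finite $\cB$-lifts of each $\Conc A_p$ finite, so that the search for a coherent $P$-indexed lifting $\overrightarrow{B}$ over $\cB$ becomes a selection problem with finite fibres. A König/Kuratowski-style argument, routed through the combinatorics attached to the $\aleph_1$-lifter, shows that if \emph{every} finite $P$-diagram of $\cA$-algebras lifted over $\cB$, then $\Conc\bA$ would lift for \emph{every} $\bA\in\cA$, i.e.\ $\Conc\cA\subseteq\Conc\cB$; contraposing yields the required diagram. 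The delicate point — and the reason the index poset can be kept two-dimensional, so that the uniform bound $\aleph_2$ replaces the parameter-dependent $\aleph_\go$ of Theorem~\ref{T:critptalephn} — is that this argument must be routed through \emph{diagrams} rather than through the possibly enormous algebra $\bA$ itself, whose own colimit presentation has unbounded order-dimension; it is the finiteness of the fibres, taking over the role played by congruence-distributivity of $\cB$ in Theorem~\ref{T:critptalephn}, that makes the two-dimensional combinatorics suffice.
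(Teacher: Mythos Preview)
First, a framing point: this theorem is not proved in the present paper. It is stated in the introduction as a result of Gillibert~\cite{Gill5}, with the paper's own machinery (Theorems~\ref{T:RelCritalephn} and~\ref{T:DichotCritPt}) reaching only the weaker bound $\aleph_\omega$. So there is no in-paper proof to compare against; I can only assess your sketch against what the paper's tools actually deliver.

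Your architecture---larder from $\Conc$, CLL, contrapositive---is sound, and the cardinal arithmetic is correct: $\kur(\set{0,1}^2)=2$, so an $\aleph_1$-lifter of $\set{0,1}^2$ has size~$\aleph_2$, and the condensate has cardinality at most~$\aleph_2$. But the step you yourself flag as ``the main obstacle'' is a genuine gap, and it is precisely the gap between $\aleph_\omega$ and~$\aleph_2$. You claim that ``if every finite $P$-diagram of $\cA$-algebras lifted over $\cB$, then $\Conc\bA$ would lift for every $\bA\in\cA$'' with $P$ \emph{fixed} as $\set{0,1}^2$. The paper's Dichotomy Theorem (Theorem~\ref{T:DichotCritPt}) proves exactly this implication, but only by ranging over \emph{all} finite \js s~$Q$: it writes each $\bA\in\cA$ as a colimit indexed by the \js\ of its finite substructures, lifts each finite restriction $\overrightarrow{\bA}\res_Q$ via Theorem~\ref{T:RelCritalephn}(ii), and glues using the compactness Lemma~\ref{L:CompCatLift}. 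Since $\kur_0(Q)$ is unbounded as~$Q$ grows, this yields only $\aleph_\omega$. Your ``K\"onig/Kuratowski-style argument'' does not explain how liftability of all $\set{0,1}^2$-shaped diagrams suffices to reconstruct liftability of an arbitrary~$\bA$; the finiteness-of-fibres hypothesis enables Lemma~\ref{L:CompCatLift}, but that lemma consumes liftability of all finite restrictions of one large diagram, not liftability of all square diagrams. Collapsing the indexing dimension from arbitrary finite to~$2$ is the entire content of~\cite{Gill5}, and requires substantially new ideas (involving diagrams of partial algebras, cf.~\cite{Gill4}) beyond a direct application of CLL.

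A minor point: your double arrows should be the ideal-induced \jzh s (the class $\SEM^{\mathrm{idl}}$), not merely the surjective ones; otherwise projectability (Theorem~\ref{T:GQV2ProjWit}) does not apply, and CLL returns only a natural transformation $\Psi\overrightarrow{B}\Rightarrow\Phi\overrightarrow{A}$, not the isomorphism you need to contradict non-liftability.
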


Due to known examples with varieties of lattices, the bound~$\aleph_2$\index{s}{aleph0@$\aleph_{\ga}$} in Theorem~\ref{T:critptaleph2} is sharp. If~$\cB$ is, in addition, finitely generated with finite similarity type, then, due to results from \cite{HoMK88}\index{c}{Hobby, D.}\index{c}{McKenzie, R.\,N.}, the condition of Theorem~\ref{T:critptaleph2} holds in case~$\cB$ omits tame congruence theory types~$\mathbf{1}$ and~$\mathbf{5}$, which in turns holds if~$\cB$ satisfies a nontrivial congruence lattice identity. In particular, this holds in case~$\cB$ is a finitely generated variety of groups, lattices, loops, or modules over a finite ring.

As to the present writing, the only known possibilities for the critical point\index{i}{critical point} between two varieties of algebras, on finite similarity types, are either finite, $\aleph_0$, $\aleph_1$, $\aleph_2$\index{s}{aleph0@$\aleph_{\ga}$}, or~$\infty$ (cf. Problem~\ref{Pb:DichotCritPt}). The proofs of Theorems~\ref{T:critptalephn} and~\ref{T:critptaleph2} involve a deep analysis of the relationship between liftability of \emph{objects} and liftability of \emph{diagrams} with respect to the~$\Conc$\index{s}{compcon1@$\Conc\bA$, $\Conc f$} functor on algebras.

\subsection{Contents of the book}\label{Su:Contents}
The main result of this work, the Condensate Lifting Lemma (CLL, Lemma~\ref{L:CLL})\index{i}{Condensate Lifting Lemma (CLL)}, is a complex, unfriendly-looking categorical statement, that most readers might, at first sight, discard as very unlikely to have any application to any previously formulated problem. Such is CLL's\index{i}{Condensate Lifting Lemma (CLL)} primary precursor, the \emph{Armature Lemma} (Lemma~\ref{L:Armature})\index{i}{Armature Lemma}. The formulation of CLL's\index{i}{Condensate Lifting Lemma (CLL)} secondary precursor, the \emph{Buttress Lemma} (Lemma~\ref{L:Buttress})\index{i}{Buttress Lemma}, is even more technical, although its proof is easy and its intuitive content can be very roughly described as a diagram version of the L\"owenheim-Skolem Theorem\index{i}{Lowenheim@L\"owenheim-Skolem Theorem} of model theory.

Therefore, although most of the technical difficulty of our book lies in thirty pages of a relatively easy preparation in category theory, plus a combined six-page proof of CLL\index{i}{Condensate Lifting Lemma (CLL)} together with its precursors (the Armature Lemma\index{i}{Armature Lemma}---Lemma~\ref{L:Armature}, and the Buttress Lemma\index{i}{Buttress Lemma}---Lemma~\ref{L:Buttress}), together with two sections on infinite combinatorics, a large part of our book will consist of defining and using contexts of possible applications of CLL\index{i}{Condensate Lifting Lemma (CLL)}.

Our largest such ``applications'' chapter is Chapter~\ref{Ch:FirstOrd2Lard}, that deals with first-order structures. We are having in mind the Gr\"atzer-Schmidt\index{c}{Gr\"atzer, G.}\index{c}{Schmidt, E.\,T.} Theorem~\cite{GrSc62}\index{c}{Gr\"atzer, G.}\index{c}{Schmidt, E.\,T.}, that says that every \jzs\ is isomorphic to~$\Conc\bA$\index{s}{compcon1@$\Conc\bA$, $\Conc f$} for some algebra\index{i}{algebra!universal}~$\bA$, with no possibility of assigning a specified similarity type to the algebra~$\bA$ (this caveat being due to the main result of Freese, Lampe, and Taylor~\cite{FLT}\index{c}{Freese, R.}\index{c}{Lampe, W.\,A.}\index{c}{Taylor, W.}). Hence we consider the category of all first-order structures as a whole, with the existence of a homomorphism from~$\bA$ to~$\bB$ requiring the language (i.e., similarity type) of~$\bA$ to be contained in the language of~$\bB$. We denote by~$\MIND$\index{s}{Mind@$\MIND$} the category thus formed, and we call it the category of all \emph{monotone-indexed structures}. As applications of CLL\index{i}{Condensate Lifting Lemma (CLL)} in that context, we point the following:

\begin{itemize}
\item\textup{(Extending the Gr\"atzer-Schmidt\index{c}{Gr\"atzer, G.}\index{c}{Schmidt, E.\,T.} Theorem to poset-indexed diagrams of \jzs s and \jzh s, Theorem~\ref{T:MindConcLift})} {\em Every diagram of \jzs s and \jzh s, indexed by a finite poset~$P$, can be lifted\index{i}{diagram!lifted}, with respect to the~$\Conc$\index{s}{compcon1@$\Conc\bA$, $\Conc f$} functor, by a diagram of algebras\index{i}{algebra!universal} \pup{with variable similarity types}}. In the presence of large cardinals \pup{e.g., existence of a proper class of Erd\H os cardinals}\index{i}{Erd\H{o}s cardinal}\index{c}{Erdos@Erd\H os, P.}, the finiteness assumption on~$P$ can be removed. We emphasize that it sounds reasonable that a closer scrutiny of the proof of the Gr\"atzer-Schmidt\index{c}{Gr\"atzer, G.}\index{c}{Schmidt, E.\,T.} Theorem could conceivably lead to a direct functorial proof of that result. However, our proof that the result for objects implies the result for diagrams requires no knowledge about the details of the proof of the Gr\"atzer-Schmidt\index{c}{Gr\"atzer, G.}\index{c}{Schmidt, E.\,T.} Theorem.

\item Theorem~\textup{\ref{T:critptalephn}} \pup{about critical points\index{i}{critical point} being either~$\infty$ or less than~$\aleph_\go$}\index{s}{aleph0@$\aleph_{\ga}$} can be, in a host of situations, extended to quasivarieties of algebraic systems\index{i}{algebraic system} and the relative congruence lattice functor \textup{(cf. Theorem~\ref{T:DichotCritPt})}. By using the results of \emph{commutator theory for congruence-modular varieties}~\cite{FrMK87}\index{c}{Freese, R.}\index{c}{McKenzie, R.\,N.}\index{i}{commutator theory}, we prove that this applies to~$\cA$ being a \emph{locally finite quasivariety with finitely many relation symbols}\index{i}{quasivariety!locally finite} and~$\cB$ a \emph{finitely generated, \pup{say} congruence-modular variety of algebras with finite type}\index{i}{variety!locally finite}\index{i}{variety!congruence-modular} (cf. Theorem~\ref{T:DichotNonType15}). In particular, $\cB$ can be a finitely generated variety of \emph{groups}, \emph{modules} (over a finite ring), \emph{loops}, \emph{lattices}, and so on. Actually, by using the results of \cite{HoMK88}\index{c}{Hobby, D.}\index{c}{McKenzie, R.\,N.}, we can even formulate our result in the more general context of varieties avoiding the \emph{tame congruence theory}\index{i}{tame congruence theory} types~$\mathbf{1}$ and~$\mathbf{5}$ (instead of just congruence-modular); on the other hand it is not so easy to find such examples which are not congruence-modular (Polin's variety is such an example). However, even for groups, modules, or lattices, the result of Theorem~\ref{T:DichotNonType15} is highly non-trivial.
\end{itemize}

The main part of Chapter~\ref{Ch:FirstOrd2Lard} consists of checking, one after another, the many conditions that need to be verified for our various applications of~CLL\index{i}{Condensate Lifting Lemma (CLL)}, as well as for new potential ones. Most of these verifications are elementary, with the possible exception of a condition, called the ``L\"owenheim-Skolem Condition''\index{i}{Lowenheim@L\"owenheim-Skolem Condition} and denoted by either $(\LS_\gm(B))$\index{s}{LS@$(\LS_\gm(B))$} (for larders)\index{i}{larder} or $(\LSr_{\gm}(B))$\index{s}{LSr@$(\LSr_\gm(B))$} (for right larders)\index{i}{larder!right}.

Another application chapter of CLL\index{i}{Condensate Lifting Lemma (CLL)} is Chapter~\ref{Ch:CongPres}. Its main purpose is to solve the problem, until now open, whether every lattice of cardinality~$\aleph_1$\index{s}{aleph0@$\aleph_{\ga}$} has a congruence-permutable\index{i}{algebra!congruence-permutable}, congruence-preserving extension. (By definition, an algebra\index{i}{algebra!universal}~$\bB$ is a \emph{congruence-preserving extension}\index{i}{congruence-preserving extension|ii} of a subalgebra~$\bA$ if every congruence of~$\bA$ extends to a unique congruence of~$\bB$.) The solution turns out to be negative:

\begin{itemize}\em
\item Let $\cV$ be a nondistributive\index{i}{distributive!non-${}_{-}$ variety} variety of lattices. Then the free lattice \pup{resp. free bounded lattice} on~$\aleph_1$\index{s}{aleph0@$\aleph_{\ga}$} generators within~$\cV$ has no congruence-per\-mut\-a\-ble, congruence-preserving extension\index{i}{congruence-preserving extension} \textup{(cf. Corollary~\textup{\ref{C:NoCPCP}})}.
\end{itemize}

Once again, the largest part of Chapter~\ref{Ch:CongPres} consists of checking, one after another, the various conditions that need to be verified for our application of~CLL\index{i}{Condensate Lifting Lemma (CLL)}. Most of these verifications are elementary. Once they are performed, a diagram counterexample, described in Section~\ref{S:UnliftMetr}, can be turned to an object counterexample.

Our final application chapter is Chapter~\ref{Ch:RegRngLard}, which deals with the context of (von~Neumann) regular rings\index{i}{regular ring}. The functor in question is the one, denoted here by~$\LL$, that sends every regular ring\index{i}{regular ring}~$R$ to the lattice~$\LL(R)$\index{s}{LLR@$\LL(R)$, $\LL(f)$} of all its principal right ideals\index{i}{ideal (in a ring)!right}. Aside from paving the road for further work on coordinatization, one of the goals of Chapter~\ref{Ch:RegRngLard} is to provide ``black box''-like tools. One of these tools enables the second author to prove in~\cite{Banasch2}\index{c}{Wehrung, F.} the following statement, thus solving a problem of J\'onsson stated in his 1962 paper~\cite{Jons62}\index{c}{Jonsson@J\'onsson, B.}:

\begin{itemize}\em
\item There exists a sectionally complemented\index{i}{lattice!sectionally complemented} modular\index{i}{lattice!modular} lattice, with a large $4$-frame, of cardinality~$\aleph_1$\index{s}{aleph0@$\aleph_{\ga}$}, that is not coordinatizable\index{i}{lattice!coordinatizable}.
\end{itemize}

Let us now go back to the categorical framework underlying CLL\index{i}{Condensate Lifting Lemma (CLL)}. The context in which this result lives is very much related to the one of the monograph by Ad\'amek and Rosick\'y~\cite{AdRo}\index{c}{Ad\'amek, J.}\index{c}{Rosick\'y, J.}. The starting assumptions of CLL\index{i}{Condensate Lifting Lemma (CLL)} are categories~$\cA$, $\cB$, and~$\cS$, together with functors~$\Phi\colon\cA\to\cS$ and $\Psi\colon\cB\to\cS$. We are also given a poset (=partially ordered set)~$P$ and a diagram
 \[
 \overrightarrow{A}=\famm{A_p,\ga_p^q}{p\leq q\text{ in }P}
 \]
from~$\cA$. The \emph{raison d'\^etre} of CLL\index{i}{Condensate Lifting Lemma (CLL)} is to construct a certain object~$A$ of~$\cA$ such that any lifting\index{i}{diagram!lifting}, with respect to the functor~$\Psi$, of the \emph{object}~$\Phi(A)$ creates a lifting\index{i}{diagram!lifting}, with respect to~$\Psi$, of the \emph{diagram} $\Phi\overrightarrow{A}$.

Of course, this can be done only under additional conditions, on the categories~$\cA$, $\cB$, $\cS$, on the functors~$\Phi$ and~$\Psi$, but also on the poset~$P$.

The object~$A$, denoted in the statement of CLL\index{i}{Condensate Lifting Lemma (CLL)} by the notation~$\xF(X)\otimes\overrightarrow{A}$\index{s}{FxX@$\xF(X)$}\index{s}{otimAS@$\bA\otimes\overrightarrow{S}$, $\gf\otimes\overrightarrow{S}$}, is a so-called \emph{condensate}\index{i}{condensate} of the diagram~$\overrightarrow{A}$. It concentrates, in one object, enough properties of the diagram~$\overrightarrow{A}$ to imply our statement on liftability. It is the directed colimit of a suitable diagram of finite products of the~$A_p$, for $p\in P$ (cf. Section~\ref{S:AtensS}). The notion of \emph{lifting}\index{i}{diagram!lifting} itself requires a modification, \emph{via} the introduction of a suitable subcategory, denoted by~$\cS^\Rightarrow$\index{s}{RightarrowCat@$\cS^\Rightarrow$}, of~$\cS$, whose arrows are called \emph{double arrows}\index{i}{double arrow}. While, in applications such as Theorems~\ref{T:MindConcLift} and~\ref{T:DichotCritPt}, double arrows can always be reduced to single arrows (isomorphisms) \emph{via} a condition called \emph{projectability} (cf. Section~\ref{S:ProjWit}), double arrows cannot always be eliminated, the most prominent example being Corollary~\ref{C:NoCPCP} (about lattices of cardinality~$\aleph_1$\index{s}{aleph0@$\aleph_{\ga}$} without congruence-permutable\index{i}{algebra!congruence-permutable} congruence-preserving extensions\index{i}{congruence-preserving extension}).

The construction and basic properties of the condensate\index{i}{condensate} $\xF(X)\otimes\overrightarrow{A}$\index{s}{FxX@$\xF(X)$}\index{s}{otimAS@$\bA\otimes\overrightarrow{S}$, $\gf\otimes\overrightarrow{S}$} are explained in Chapter~\ref{Ch:PscaledBAs}. This requires a slight expansion of the category of Boolean algebras\index{i}{algebra!Boolean}. This expansion involves a poset parameter~$P$, and it is the dual of the notion of what we shall call a \emph{$P$-normed Boolean space}\index{i}{normed (Boolean) space}\index{i}{PnormBS@$P$-normed Boolean space|seeonly{Boolean space, normed}} (cf. Definition~\ref{D:Pnorm}). By definition, a $P$-normed\index{i}{normed (Boolean) space} space is a topological space~$X$, endowed with a map~$\nu$ (the ``norm'') from~$X$ to the set~$\Id P$\index{s}{IdP@$\Id P$, $P$ poset} of \emph{ideals}\index{i}{ideal!of a poset} (nonempty, directed lower subsets\index{i}{lower subset}) of~$P$ such that $\setm{x\in X}{p\in\nu(x)}$ is open for each $p\in P$.

However, the statement of CLL\index{i}{Condensate Lifting Lemma (CLL)} involves directed \emph{colimits}, not directed \emph{limits}. Hence we chose to formulate most of the results of Chapter~\ref{Ch:PscaledBAs} in the context of the dual objects of $P$-normed\index{i}{normed (Boolean) space} Boolean spaces, which we shall call \emph{$P$-scaled Boolean algebras}\index{i}{algebra!Pscaled Boolean@$P$-scaled Boolean} (Definition~\ref{D:BoolP}). The duality between the category~$\BTop_P$\index{s}{BTop@$\BTop_P$} of $P$-normed\index{i}{normed (Boolean) space} Boolean spaces and the category~$\Bool_P$\index{s}{BoolP@$\Bool_P$} of $P$-scaled Boolean algebras\index{i}{algebra!Pscaled Boolean@$P$-scaled Boolean} is (easily) established in Section~\ref{S:Pnorm}. All the other results of Chapter~\ref{Ch:PscaledBAs} are formulated ``algebraically'', that is, within the category~$\Bool_P$\index{s}{BoolP@$\Bool_P$}. Furthermore, the introduction of the ``free $P$-scaled Boolean algebras''\index{i}{algebra!Pscaled Boolean@$P$-scaled Boolean} (on certain generators and relations) $\xF(X)$\index{s}{FxX@$\xF(X)$} (cf. Section~\ref{S:twoxF}) is far more natural in the algebraic context~$\Bool_P$\index{s}{BoolP@$\Bool_P$} than in the topological one~$\BTop_P$\index{s}{BTop@$\BTop_P$}. Chapter~\ref{Ch:PscaledBAs} is still to be considered only as an introduction of the CLL\index{i}{Condensate Lifting Lemma (CLL)} framework.

The statement of CLL\index{i}{Condensate Lifting Lemma (CLL)} involves three categorical add-ons to the initial data $\cA$, $\cB$, $\cS$, $\Phi$, and~$\Psi$. One of them we already discussed is the subcategory~$\cS^\Rightarrow$\index{s}{RightarrowCat@$\cS^\Rightarrow$} of ``double arrows''\index{i}{double arrow} in~$\cS$. The other two are the class~$\cA^\dagger$ (resp., $\cB^\dagger$) of all ``small'' objects of~$\cA$ (resp., $\cB$). In case the cardinal parameter~$\gl$ is equal to~$\aleph_0$\index{s}{aleph0@$\aleph_{\ga}$}, then~$\cA^\dagger$ (resp., $\cB^\dagger$) may be thought of as the ``finite'' objects of~$\cA$ (resp.~$\cB$).

The statement of CLL\index{i}{Condensate Lifting Lemma (CLL)} also involves a condition on the poset~$P$. This condition is stated as~$P$ having a ``$\gl$-lifter''\index{i}{lifter ($\gl$-)}. This condition, introduced in Definition~\ref{D:Lifter}, has combinatorial nature, inspired by Kuratowski's\index{c}{Kuratowski, C.} Free Set Theorem~\cite{Kura51}.

There are posets, even finite, for which this is forbidden---that is, there are no lifters\index{i}{lifter ($\gl$-)}. Actually, the finite posets for which there are lifters are exactly the finite disjoint unions of finite posets with zero in which every principal ideal\index{i}{ideal!of a poset}\index{i}{ideal!principal ${}_{-}$, of a poset} is a \js\ (cf. Corollary~\ref{C:LiftFinPos}). An example of a finite poset for which this does not hold is represented on the right hand side of Figure~\ref{Fig:posets}, page~\pageref{Fig:posets}. For infinite posets, the situation is more complicated: even for the dual chain of $\go+1=\set{0,1,2,\dots}\cup\set{\go}$\index{s}{omega1op@$(\omega+1)^{\op}$}, which is trivially a poset with zero in which every principal ideal\index{i}{ideal!of a poset}\index{i}{ideal!principal ${}_{-}$, of a poset} is a \js, there is no $(2^{\aleph_0})^+$-lifter\index{s}{aleph0@$\aleph_{\ga}$}\index{i}{lifter ($\gl$-)} (cf. Corollary~\ref{C:LiftnotWFrestr}).

Nevertheless our theory can be developed quite far, even for infinite posets. Three classes of posets emerge: the \emph{\pjs s}\index{i}{pseudo join-semilattice}, the \emph{supported posets}\index{i}{poset!supported}, and the \emph{\ajs s}\index{i}{almost join-semilattice} (Definition~\ref{D:PJS}). \emph{Intriguingly, some of our definitions are closely related to definitions used in domain theory\index{i}{continuous domain}\index{i}{continuous domain}, see, in particular, \textup{\cite[Chapter~4]{AbJu}}}\index{c}{Abramsky, S.}\index{c}{Jung, A.}. Every \ajs\index{i}{almost join-semilattice}\ is supported\index{i}{poset!supported}, and every supported\index{i}{poset!supported} poset is a \pjs\index{i}{pseudo join-semilattice}; none of the converse implications hold. Every poset with a $\gl$-lifter\index{i}{lifter ($\gl$-)} (where~$\gl$ is an infinite cardinal) is the disjoint union of finitely many \ajs s\index{i}{almost join-semilattice} with zero (Proposition~\ref{P:NoBowTie}). In the finite case, or even in the infinite case provided~$P$ is \emph{lower finite}\index{i}{poset!lower finite} (meaning that every principal ideal\index{i}{ideal!of a poset}\index{i}{ideal!principal ${}_{-}$, of a poset} of~$P$ is finite) and in the presence of a suitable large cardinal assumption, the converse holds (Corollary~\ref{C:CharLift}). Some of the infinite combinatorial aspects of lifters, involving our $(\gk,{<}\gl)\leadsto P$\index{s}{arr0x@$(\gk,{<}\gl)\leadsto P$} notation (cf. Definition~\ref{D:InfCombP}), are of independent interest and are developed further in our paper~\cite{GiWe1}\index{c}{Gillibert, P.}\index{c}{Wehrung, F.}.

Part of the conclusion of CLL\index{i}{Condensate Lifting Lemma (CLL)}, namely the one about going from ``object representation'' to ``diagram representation'', can be reached even for diagrams indexed by posets without lifters\index{i}{lifter ($\gl$-)} (but still \ajs s\index{i}{almost join-semilattice}), at the expense of losing CLL's\index{i}{Condensate Lifting Lemma (CLL)} condensate\index{i}{condensate} $\xF(X)\otimes\overrightarrow{A}$\index{s}{FxX@$\xF(X)$}\index{s}{otimAS@$\bA\otimes\overrightarrow{S}$, $\gf\otimes\overrightarrow{S}$}. This result is presented in Corollary~\ref{C:CLLnoLF}. This corollary involves the large cardinal axiom denoted, using the notation from the Erd\H{o}s, Hajnal, M\'at\'e, and Rado monograph~\cite{EHMR}\index{c}{Erd\H{o}s, P.}\index{c}{Hajnal, A.}\index{c}{Mate@M\'at\'e, A.}\index{c}{Rado, R.}, by $(\gk,{<}\go,\gl)\rightarrow\gl$\index{s}{arr0free@$(\gk,{<}\go,\gl)\rightarrow\gr$}. It is, for example, involved in the proof of Theorem~\ref{T:MindConcLift}.

\subsection{How not to read the book}\label{S:NottoRead}
In the present section we would like to give some hints about how to use the present work as a toolbox, ideally a list of ``black box principles'' that would enable the reader who, although open-minded, is not necessarily in the mood to go through all the details of our book, to solve his or her own problem. As our work involves a fair amount of topics as distant as category theory, universal algebra, or infinite combinatorics, such qualms are not unlikely to occur.

In order for our work to have any relevance to our reader's problem, it is quite likely that this problem would still involve categories~$\cA$, $\cB$, $\cS$ together with functors $\Phi\colon\cA\to\cS$ and $\Psi\colon\cB\to\cS$. In addition, the reader's problem might also involve suitable choices of a subcategory~$\cS^\Rightarrow$\index{s}{RightarrowCat@$\cS^\Rightarrow$} of~$\cS$ (the ``double arrows''\index{i}{double arrow} of~$\cS$) together with classes of ``small objects'' $\cA^\dagger\subseteq\cA$, $\cB^\dagger\subseteq\cB$.

Then, the reader may need to relate liftability, with respect to the functor~$\Psi$ (and possibly the class of all double arrows\index{i}{double arrow} of~$\cS$), of \emph{objects} of the form~$\Phi(A)$, and of \emph{diagrams} of the form~$\Phi\overrightarrow{A}$.

A typical case is the following. The reader may have found, through previous research, a diagram~$\overrightarrow{A}$ from the category~$\cA$, indexed by a poset~$P$, such that there are no $P$-indexed diagram~$\overrightarrow{B}$ from~$\cB$ and no double arrow\index{i}{double arrow} $\Psi\overrightarrow{B}\Rightarrow\Phi\overrightarrow{A}$. This is, for example, the case for the family of square\index{i}{square (shape of a diagram)} diagrams of Lemma~\ref{L:UnliftSqMetr} (about CPCP-retracts\index{i}{CPCP-retract}) or a certain $\go_1$-indexed diagram of sectionally complemented\index{i}{lattice!sectionally complemented} modular\index{i}{lattice!modular} lattices in~\cite{Banasch2}\index{c}{Wehrung, F.}.

A preliminary question that needs to be addressed is the following. \emph{Is the poset~$P$ an \ajs}\index{i}{almost join-semilattice} (cf. Definition~\ref{D:PJS}), or, at least, can the problem be reduced to the case where~$P$ is an \ajs\index{i}{almost join-semilattice}\ (like in Theorem~\ref{T:MindConcLift})?

If not, then, as to the present writing, there is no form of CLL\index{i}{Condensate Lifting Lemma (CLL)} that can be used (cf. Problem~\ref{Pb:CLLBowTie} in Chapter~\ref{Ch:Discussion}).

If yes, then this looks like a case where CLL\index{i}{Condensate Lifting Lemma (CLL)} (or its pair of ancestors, the Armature Lemma\index{i}{Armature Lemma} and the Buttress Lemma\index{i}{Buttress Lemma}) could be applied. The hard core of the present book lies in the proof of CLL\index{i}{Condensate Lifting Lemma (CLL)} (Lemma~\ref{L:CLL}), however the reader may not need to go through the details thereof.

The setting up of the cardinal parameters~$\gl$ and $\gk:=\card X$ depends of the nature of the problem. The infinite combinatorial aspects, which can be looked up in Section~\ref{S:Pos2Lift} and~\ref{S:LiftRetr}, are articulated around the notion of a \emph{$\gl$-lifter}\index{i}{lifter ($\gl$-)}. The existence of a lifter is an essential assumption in the statement of both CLL and the Armature Lemma. A more amenable variant of the existence of a lifter, the $(\gk,{<}\gl)\leadsto P$\index{s}{arr0x@$(\gk,{<}\gl)\leadsto P$} relation, is introduced in our paper Gillibert and Wehrung~\cite{GiWe1}\index{c}{Gillibert, P.}\index{c}{Wehrung, F.} and recalled in Definition~\ref{D:InfCombP}.

As most of the difficulty underlying the formulation of CLL\index{i}{Condensate Lifting Lemma (CLL)} (Lemma~\ref{L:CLL}) is related to the definition of a larder\index{i}{larder}, we split that definition in two parts, namely \emph{left larders}\index{i}{larder!left} and \emph{right larders}\index{i}{larder!right} (cf. Section~\ref{S:LeftRightL}). Roughly speaking, the left larder corresponds to the left hand side arrow $\Phi\colon\cA\to\cS$ of the left part of the diagram of Figure~\ref{Fig:Prelarder}, while the right larder corresponds to the right hand side arrow $\Psi\colon\cB\to\cS$.

In all cases encountered so far, left larderhood\index{i}{larder!left} never led to any difficulty in verification. The situation is different for right larders\index{i}{larder!right}, that involve a deeper analysis of the structures involved. In such applications as the one in Wehrung~\cite{Banasch2}\index{c}{Wehrung, F.}, it is important to keep some control on how the $P$-scaled Boolean algebra\index{i}{algebra!Pscaled Boolean@$P$-scaled Boolean}~$\xF(X)$\index{s}{FxX@$\xF(X)$} and the condensate\index{i}{condensate} $\xF(X)\otimes\overrightarrow{A}$\index{s}{FxX@$\xF(X)$}\index{s}{otimAS@$\bA\otimes\overrightarrow{S}$, $\gf\otimes\overrightarrow{S}$} are created. For such notions, the relevant part of our work to look up is Chapter~\ref{Ch:PscaledBAs}.

In writing Chapter~\ref{Ch:FirstOrd2Lard}, we made the bet that many right larders\index{i}{larder!right} would involve \emph{first-order structures} and \emph{congruence lattices} (with relations possibly allowed), and so we included in that chapter some detail about most of the basic tools required for checking those larders\index{i}{larder}. In that chapter, the functor~$\Psi$ has to be thought of as the relative compact congruence semilattice functor\index{i}{relative!compact congruence semilattice functor} (cf. Definition~\ref{D:RelCritPoint}) in a given quasivariety, or even in a \emph{\gqv}\index{i}{generalized quasivariety} (cf. Definition~\ref{D:GQV}). While some results in that chapter are already present in references such as Gorbunov's monograph~\cite{Gorb}\index{c}{Gorbunov, V.\,A.}, the references for some other items can be quite hard to trace---such as the description of directed colimits in~$\MIND$\index{s}{Mind@$\MIND$} (Proposition~\ref{P:DirColimMIND}) or the relative congruence lattice functor on a \gqv\index{i}{generalized quasivariety}\ (Section~\ref{S:ConFunct}). Further results of Chapter~\ref{Ch:FirstOrd2Lard} also appear in print here for the first time (to our knowledge)---such as the preservation of all small directed colimits by the~$\ConcV$\index{s}{compcongV@$\ConcV$ functor} functor (Theorem~\ref{T:ConcVPresDirColim}). Most results in Chapter~\ref{Ch:FirstOrd2Lard} are articulated around Theorem~\ref{T:MindConcLift} (diagram version of the Gr\"atzer-Schmidt\index{c}{Gr\"atzer, G.}\index{c}{Schmidt, E.\,T.} Theorem), Theorem~\ref{T:RelCritalephn} (estimates of relative critical points \emph{via} combinatorial properties of indexing posets), and Theorem~\ref{T:DichotCritPt} (Dichotomy Theorem\index{i}{Dichotomy Theorem} for relative critical points between quasivarieties).

Chapter~\ref{Ch:RegRngLard} gives another class of right larders\index{i}{larder!right}, this time arising from the~$\LL$\index{s}{LLR@$\LL(R)$, $\LL(f)$} functor on regular rings\index{i}{regular ring} (cf. Example~\ref{Ex:Momega}). We also wrote that chapter in our ``toolbox'' spirit. Furthermore, although we believe that all of the basic results presented there already appeared in print somewhere in the case of unital regular rings\index{i}{regular ring}, this does not seem to be the case for non-unital rings, in particular Lemma~\ref{L:xRyxRNeutr} and thus Proposition~\ref{P:NIdequiv2Id} ($\Id R\cong\NId\LL(R)$)\index{s}{LLR@$\LL(R)$, $\LL(f)$}\index{s}{IdR@$\Id R$, $R$ ring}.

It is certainly Chapter~\ref{Ch:CongPres} that involves the most unusual kind of larder\index{i}{larder}, tailored to solve a specific problem about congruence-permutable\index{i}{algebra!congruence-permutable}, congruence-preserving extensions\index{i}{congruence-preserving extension} of lattices, thus perhaps providing the best illustration of the versatility of our larder tools. Indeed, $\Psi$ can be described there as the forgetful functor from a category of pairs~$(B^*,B)$ to the first component~$B^*$.

We end the present section with a list of all the places in our work stating larderhood\index{i}{larder} of either a structure or a class of structures.

\begin{itemize}
\item Proposition~\ref{P:MalgMetrLeftLard} (left larder, from algebras\index{i}{algebra!universal} and semilattice-metric spaces\index{i}{semilattice-metric!space}).

\item Theorem~\ref{T:MindConcLift} (Claim~\ref{Cl:GrSchleftlard} for a left larder, Claim~\ref{Cl:GrSchrightlard} for a right $\gl$-larder, for regular uncountable~$\gl$, from the compact congruence lattice functor on the category $\MALG_1$\index{s}{Malg1@$\MALG_1$} of all unary algebras).

\item Theorem~\ref{T:1stordLardCtble} ($\aleph_0$-larder\index{s}{aleph0@$\aleph_{\ga}$}, from the relative compact congruence semilattice functor\index{i}{relative!compact congruence semilattice functor} on a congruence-proper\index{i}{congruence-proper} and locally finite quasivariety\index{i}{quasivariety!locally finite}).

\item Theorem~\ref{T:Algorightlard} ($\gl$-larder, for any uncountable cardinal~$\gl$, from the relative compact congruence semilattice functor\index{i}{relative!compact congruence semilattice functor} on a quasivariety on a $\gl$-small language).

\item Proposition~\ref{P:MetrLarder} (right $\aleph_0$-larder\index{s}{aleph0@$\aleph_{\ga}$}, from a forgetful functor from semi\-lat\-tice-metric covers to semilattice-metric spaces\index{i}{semilattice-metric!space}).

\item Theorem~\ref{T:RR2Lard} (right $\gl$-larder, from the~$\LL$\index{s}{LLR@$\LL(R)$, $\LL(f)$} functor on a class of regular rings\index{i}{regular ring} with mild directed colimits-related assumptions and closure under homomorphic images).
\end{itemize}

The (trivial) statement that brings together left and right larders into larders\index{i}{larder} is Proposition~\ref{P:LR2Larder}.

\section{Basic concepts}\label{S:Basic}
\subsection{Set theory}\label{Su:BasicSet}
We shall use basic set-theoretical notation and terminology about ordinals and cardinals. 
We denote by $\dom f$\index{s}{dom f@$\dom f$|ii} (resp., $\rng f$\index{s}{rng f@$\rng f$|ii}) the domain (resp., the range) of a function~$f$, and by~$f``(X)$\index{s}{fiiimX@$f``(X)$, $f``X$|ii}, or~$f``X$ (resp., $f^{-1}X$)\index{s}{fiiivX@$f^{-1}X$|ii} the image (resp., inverse image) of a set~$X$ under~$f$. Cardinals are initial ordinals. We denote by~$\cf(\ga)$\index{s}{cofin@$\cf(\ga)$|ii} the cofinality\index{i}{cofinality} of an ordinal~$\ga$. We denote by $\go:=\set{0,1,2,\dots}$\index{s}{omega@$\omega$|ii} the first limit ordinal, mostly denoted by~$\aleph_0$ in case it is viewed as a cardinal. We denote by~$\gk^+$\index{s}{kappa+@$\gk^+$, $\gk^{+\ga}$|ii} the successor cardinal of a cardinal~$\gk$, and we define~$\gk^{+\ga}$, for an ordinal~$\ga$, by~$\gk^{+0}:=\gk$, $\gk^{+(\ga+1)}:=(\gk^{+\ga})^+$, and $\gk^{+\gl}:=\sup_{\ga<\gl}\gk^{+\ga}$ for every limit ordinal~$\gl$. We set $\aleph_\ga:=(\aleph_0)^{+\ga}$\index{s}{aleph0@$\aleph_{\ga}$|ii}, for each ordinal~$\ga$. We denote by~$\Pow(X)$\index{s}{PowX@$\Pow(X)$|ii} the powerset of a set~$X$, and we put\index{s}{la0@$[X]^\gk$|ii}\index{s}{la1@$[X]^{<\gk}$|ii}\index{s}{la2@$[X]^{\les\gk}$|ii}
 \begin{align*}
 [X]^\gk&:=\setm{Y\in\Pow(X)}{\card Y=\gk}\,,\\
 [X]^{<\gk}&:=\setm{Y\in\Pow(X)}{\card Y<\gk}\,,\\
 [X]^{{\les}\gk}&:=\setm{Y\in\Pow(X)}{\card Y\leq\gk}\,,
 \end{align*}
for every cardinal~$\gk$. A set~$X$ is \emph{$\gk$-small}\index{i}{kapsmall@$\gk$-small!set|ii} if $\card X<\nobreak\gk$. We shall often add an extra largest element~$\infty$ to the class of all cardinals, and we shall say, by convention, ``small'' (resp., ``finite'') instead of ``$\infty$-small'' (resp., ``$\aleph_0$-small\index{s}{aleph0@$\aleph_{\ga}$}'').

\subsection{Stone duality for Boolean algebras}\label{Su:Stone}
\index{i}{Stone duality}
A topological space is \emph{Boolean} if it is compact Hausdorff and every open subset is a union of clopen (i.e., closed open) subsets. We denote by $\Clop X$\index{s}{ClopX@$\Clop X$, $\Clop\bX$, $\Clop f$|ii} the Boolean algebra\index{i}{algebra!Boolean} of clopen subsets of a topological space~$X$. We denote by $\Ult B$\index{s}{UltB@$\Ult B$, $\Ult\bB$, $\Ult\gf$|ii} the Boolean space of all ultrafilters of a Boolean algebra\index{i}{algebra!Boolean}~$B$ (a topological space is \emph{Boolean} if it is compact Hausdorff and it has a basis consisting of clopen sets). The pair $(\Ult,\Clop)$ can be extended to the well-known \emph{Stone duality}\index{i}{Stone duality} between the category~$\Bool$\index{s}{Bool@$\Bool$|ii} of Boolean algebras\index{i}{algebra!Boolean} with homomorphisms of Boolean algebras\index{i}{algebra!Boolean} and the category~$\BTop$\index{s}{Btop@$\BTop$|ii} of Boolean spaces with continuous maps, in the following way. For a homomorphism~$\gf\colon A\to B$ of Boolean algebras\index{i}{algebra!Boolean}, we put\index{s}{UltB@$\Ult B$, $\Ult\bB$, $\Ult\gf$|ii}
 \begin{equation}\label{Eq:DefUltf}
 \Ult\gf\colon\Ult B\to\Ult A\,,\quad\frb\mapsto\gf^{-1}\frb\,.
 \end{equation}
For a continuous map $f\colon X\to Y$ between Boolean spaces, we put\index{s}{ClopX@$\Clop X$, $\Clop\bX$, $\Clop f$|ii}
 \begin{equation}\label{Eq:DefClopf}
 \Clop f\colon\Clop Y\to\Clop X\,,\quad V\mapsto f^{-1}V\,.
  \end{equation}
We denote by $\At A$\index{s}{AtA@$\At A$|ii} the set of \emph{atoms} of a Boolean algebra\index{i}{algebra!Boolean}~$A$.

\subsection{Partially ordered sets (posets) and lattices}
\label{Su:Posets}
All our posets will be nonempty. A poset~$P$ is the \emph{disjoint union} of a family $\famm{P_i}{i\in I}$ of sub-posets if $P=\bigcup\famm{P_i}{i\in I}$ and any element of~$P_i$ is incomparable with any element of~$P_j$, for all distinct indices~$i,j\in I$.
For posets~$P$ and~$Q$, a map $f\colon P\to Q$ is \emph{isotone}\index{i}{isotone|ii} (resp., \emph{antitone}\index{i}{antitone|ii}) if $x\leq y$ implies that $f(x)\leq f(y)$ (resp., $f(x)\geq f(y)$), for all $x,y\in P$. For $x,y\in P$, let $x\prec y$\index{s}{prec@$x\prec y$|ii} hold if $x<y$ and there is no~$z\in P$ such that $x<z<y$. We also say that~$y$ is an \emph{upper cover}\index{i}{cover!upper|ii} of~$x$ and that~$x$ is a \emph{lower cover}\index{i}{cover!lower|ii} of~$y$.

We denote by~$0_P$ the least element of~$P$ if it exists, and by~$\Min P$\index{s}{MinP@$\Min P$|ii} (resp., $\Max P$\index{s}{MaxP@$\Max P$|ii}) the set of all minimal (resp., maximal) elements of~$P$.
An element~$p$ in a poset~$P$ is \emph{\jirr}\index{i}{irreducible!join-|ii} if $p=\bigvee X$ implies that $p\in X$, for every (possibly empty) finite subset~$X$ of~$P$; we denote by $\J(P)$\index{s}{JP@$\J(P)$, $P$ poset|ii}
the set of all \jirr\ elements of~$P$, endowed with the induced partial ordering. \emph{\Mirr}\index{i}{irreducible!meet-|ii} elements are defined dually. We set\index{s}{arrd@$Q\dnw X$|ii}\index{s}{arru@$Q\upw X$|ii}\index{s}{arrdd@$Q\ddnw X$|ii}\index{s}{arruu@$Q\uupw X$|ii}\index{s}{arrD@$Q\Dnw X$|ii}\index{s}{arrU@$Q\Upw X$|ii}
 \begin{align*}
 Q\dnw X&:=\setm{q\in Q}{(\exists x\in X)(q\leq x)}\,,&
 Q\upw X&:=\setm{q\in Q}{(\exists x\in X)(q\geq x)}\,,\\
 Q\ddnw X&:=\setm{q\in Q}{(\exists x\in X)(q<x)}\,,&
 Q\uupw X&:=\setm{q\in Q}{(\exists x\in X)(q>x)}\,,\\
 Q\Dnw X&:=\setm{q\in Q}{(\forall x\in X)(q\leq x)}\,,&
 Q\Upw X&:=\setm{q\in Q}{(\forall x\in X)(q\geq x)}\,,
 \end{align*}
for all subsets~$Q$ and~$X$ of~$P$; in case $X=\set{a}$ is a singleton, then we shall write $Q\dnw a$ instead of~$Q\dnw\set{a}$, and so on. A subset~$Q$ of~$P$ is a \emph{lower subset of~$P$}\index{i}{lower subset|ii} (\emph{upper subset of~$P$}\index{i}{upper subset|ii}, \emph{cofinal in~$P$}\index{i}{subset!cofinal|ii}, respectively) if~$P\dnw Q=Q$ ($P\upw Q=Q$, $P\dnw Q=P$, respectively). A lower subset~$Q$ of~$P$ is \emph{finitely generated}\index{i}{finitely generated!lower subset|ii} if $Q=P\dnw X$ for some finite subset~$X$ of~$P$. The dual definition holds for finitely generated\index{i}{finitely generated!upper subset|ii} upper subsets of~$P$.

Joins (=suprema) and meets (=infima) in posets are denoted by~$\vee$ and~$\wedge$, respectively.

We say that the poset~$P$ is
\begin{itemize}
\item \emph{lower finite}\index{i}{poset!lower finite|ii} if $P\dnw a$ is finite for each $a\in P$;

\item \emph{well-founded}\index{i}{poset!well-founded|ii} if every nonempty subset of~$P$ has a minimal element (equivalently, $P$ has no strictly decreasing $\go$-sequence);

\item \emph{monotone $\gs$-complete}\index{i}{poset!monotone $\gs$-complete|ii} if every increasing sequence (indexed by the set~$\go$ of all natural numbers) of elements of~$P$ has a join;

\item \emph{directed}\index{i}{poset!directed|ii} if every finite subset of~$P$ has an upper bound in~$P$.
\end{itemize}

An \emph{ideal}\index{i}{ideal!of a poset|ii} of~$P$ is a nonempty, directed\index{i}{poset!directed}, lower subset\index{i}{lower subset} of~$P$; we denote by $\Id P$\index{s}{IdP@$\Id P$, $P$ poset|ii} the set of all ideals of~$P$, ordered by containment. Observe that $\Id P$\index{s}{IdP@$\Id P$, $P$ poset} need be neither a meet- nor a \js. A \emph{principal ideal}\index{i}{ideal!principal ${}_{-}$, of a poset|ii} of~$P$ is a subset of~$P$ of the form $P\dnw x$, for $x\in P$.

For a cardinal~$\gl$, we say that the poset~$P$ is 
\begin{itemize}
\item \emph{lower $\gl$-small}\index{i}{poset!lower $\gl$-small|ii} if $\card(P\dnw a)<\gl$ for each $a\in P$;

\item \emph{$\gl$-directed}\index{i}{poset!directedl@$\gl$-directed|ii} if every $X\subseteq P$ such that $\card X<\gl$ has an upper bound in~$P$.
\end{itemize}

\begin{defn}\label{D:glClosedCof}
For a monotone $\gs$-complete\index{i}{poset!monotone $\gs$-complete} poset~$P$, a subset $X\subseteq P$ is \emph{$\gs$-closed cofinal}\index{i}{subset!$\gs$-closed cofinal|ii} if the following conditions hold:
\begin{description}
\item[\tui] $X$ is cofinal in~$P$;

\item[\tuii] the least upper bound of any increasing sequence of elements of~$X$ belongs to~$X$.
\end{description}
\end{defn}

We will sometimes abbreviate the statement~(ii) above by saying that~$X$ is \emph{$\gs$-closed}.

(This notion is named after the corresponding classical notion of closed cofinal subsets in uncountable regular cardinals.)

\begin{prop}\label{P:ClCofClos}
Let~$P$ be a monotone $\gs$-complete\index{i}{poset!monotone $\gs$-complete} poset. Then the intersection of any at most countable collection of $\gs$-closed cofinal\index{i}{subset!$\gs$-closed cofinal} subsets of~$P$ is $\gs$-closed cofinal.
\end{prop}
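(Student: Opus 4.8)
The plan is to verify, for $X:=\bigcap_{n\in\go}X_n$, the two conditions defining a $\gs$-closed cofinal subset (Definition~\ref{D:glClosedCof}). We may assume the given family is a sequence $\famm{X_n}{n\in\go}$ of $\gs$-closed cofinal subsets of~$P$: if it is empty then $X=P$ and there is nothing to prove, and a finite nonempty family can be written in this form by allowing repetitions. The $\gs$-closedness of~$X$ is then immediate: any increasing sequence $\famm{a_k}{k\in\go}$ of elements of~$X$ has a join~$a$ because~$P$ is monotone $\gs$-complete, and $a\in X_n$ for each~$n$ by the $\gs$-closedness of~$X_n$, whence $a\in X$.

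The substance of the proof is the cofinality of~$X$, which I would establish by a diagonal construction. Fix $p\in P$ and a surjection $f\colon\go\to\go$ each of whose fibers $f^{-1}\set{n}$ is infinite (obtained, say, from any bijection $\go\cong\go\times\go$). Define an increasing sequence $\famm{p_k}{k\in\go}$ of elements of~$P$ by $p_0:=p$ and, given~$p_k$, by choosing $p_{k+1}\in X_{f(k)}$ with $p_{k+1}\geq p_k$, which is possible since~$X_{f(k)}$ is cofinal in~$P$. Set $q:=\bigvee_{k\in\go}p_k$, which exists by monotone $\gs$-completeness and satisfies $q\geq p_0=p$. To see that $q\in X$, fix~$n$ and enumerate $f^{-1}\set{n}$ as $k_0<k_1<\cdots$; then $\famm{p_{k_j+1}}{j\in\go}$ is an increasing sequence of elements of~$X_n$, and since $k_j\to\infty$ it is cofinal in $\famm{p_k}{k\in\go}$, so it has the same set of upper bounds and hence the same least upper bound~$q$. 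By $\gs$-closedness of~$X_n$ we conclude $q\in X_n$; as~$n$ was arbitrary, $q\in X$.

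The only point requiring a moment's care is the elementary observation, used at the last step, that a cofinal subsequence of an increasing $\go$-sequence in a poset has exactly the same upper bounds as the whole sequence, so that the two share the same join whenever it exists; everything else is bookkeeping around the enumeration~$f$. I do not anticipate any genuine obstacle here.
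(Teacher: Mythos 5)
Your proof is correct and follows essentially the same route as the paper: the surjection $f\colon\go\to\go$ with infinite fibers plays exactly the role of the paper's reindexing of the $X_n$'s so that each set recurs infinitely often, after which both arguments build the same diagonal increasing sequence and take its supremum. The only difference is that you make explicit the (correct) final observation that a cofinal subsequence shares the join of the whole sequence, which the paper leaves implicit.
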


\begin{proof}
Let $\famm{X_n}{n<\go}$ be a sequence of $\gs$-closed cofinal\index{i}{subset!$\gs$-closed cofinal} subsets of~$P$. As the intersection $X:=\bigcap\famm{X_n}{n<\go}$ is obviously $\gs$-closed, it remains to prove that it is cofinal. By reindexing the~$X_n$s, we may assume that the set $\setm{n<\go}{X_n=X_m}$ is infinite for each $m<\go$. Let $p_0\in P$. If~$p_n$ is already constructed, pick $p_{n+1}\in X_n$ such that $p_n\leq p_{n+1}$. The supremum $\bigvee\famm{p_n}{n<\go}$ lies above~$p_0$ and it belongs to~$X$.
\qed\end{proof}

We shall denote by $P^{\op}$\index{s}{Pop@$P^{\mathrm{op}}$|ii} the \emph{dual} of a poset~$P$, that is, the poset with the same underlying set as~$P$ and opposite order.

An element $a$ in a lattice~$L$ is \emph{compact}\index{i}{compact!element in a lattice|ii} if for every subset~$X$ of~$L$ such that~$\bigvee X$ exists and $a\leq\bigvee X$, there exists a finite subset~$Y$ of~$X$ such that $a\leq\bigvee Y$. We say that~$L$ is \emph{algebraic}\index{i}{lattice!algebraic|ii}\index{i}{algebraic lattice|seeonly{lattice, algebraic}} if it is complete and every element of~$L$ is a join of compact elements. A subset of a lattice~$L$ is \emph{algebraic}\index{i}{algebraic subset|ii} (see Gorbunov~\cite{Gorb}\index{c}{Gorbunov, V.\,A.}) if it is closed under arbitrary meets and arbitrary nonempty directed joins; in particular, any algebraic subset\index{i}{algebraic subset} of an algebraic lattice is an algebraic lattice\index{i}{lattice!algebraic} under the induced ordering (see \cite[Section~1.3]{Gorb}\index{c}{Gorbunov, V.\,A.}).

A lattice~$L$ with zero is \emph{sectionally complemented}\index{i}{lattice!sectionally complemented|ii} if for all $x\leq y$ in~$L$ there exists $z\in L$ such that $x\vee z=y$ while $x\wedge z=0$ (abbreviation: $y=x\oplus z$\index{s}{oplus@$y=x\oplus z$|ii}). Elements~$x$ and~$y$ in a lattice~$L$ are \emph{perspective}, in notation $x\sim y$\index{s}{sim@$x\sim y$|ii}, if there exists $z\in L$ such that $x\vee z=y\vee z$ while $x\wedge z=y\wedge z$. In case~$L$ is sectionally complemented\index{i}{lattice!sectionally complemented} and modular\index{i}{lattice!modular}, we may assume that $x\wedge z=y\wedge z=0$ while $x\vee z=y\vee z=x\vee y$. An ideal\index{i}{ideal!of a poset}~$I$ in a lattice is \emph{neutral}\index{i}{ideal!neutral|ii} if the sublattice of the ideal lattice of~$L$ generated by $\set{I,X,Y}$ is distributive\index{i}{lattice!distributive}, for all ideals~$X$ and~$Y$ of~$L$. If this holds, then~$I$ is a \emph{distributive ideal}\index{i}{distributive!ideal|ii}\index{i}{ideal!distributive|seeonly{distributive, ideal}} of~$L$, that is, the binary relation~$\equiv_I$ on~$L$ defined by
 \[
 x\equiv_Iy\ \Leftrightarrow\ (\exists u\in I)(x\vee u=y\vee u)\,,\text{for all }
 x,y\in L\,,
 \]
is a lattice congruence of~$L$. Then we denote by~$L/I$ the quotient lattice~$L/{\equiv_I}$, and we denote by $x/I:=x/{\equiv_I}$ the $\equiv_I$-equivalence class of~$x$, for each $x\in L$. In case~$L$ is sectionally complemented\index{i}{lattice!sectionally complemented} and modular\index{i}{lattice!modular}, $I$ is distributive\index{i}{distributive!ideal} if{f} it is neutral\index{i}{ideal!neutral}, if{f} $x\sim y$ and $x\in I$ implies that $y\in I$, for all $x,y\in L$ (cf. \cite[Theorem~III.13.20]{Birk79}\index{c}{Birkhoff, G.}).

For further unexplained notions in lattice theory we refer to the monographs by Gr\"atzer~\cite{GLT2,LTF}\index{c}{Gr\"atzer, G.}.

\subsection{Category theory}\label{Su:Categories}
Our categorical background will be mainly borrowed from Mac
Lane~\cite{McLa}\index{c}{Mac Lane, S.}, Ad\'amek and Rosick\'y \cite{AdRo}\index{c}{Ad\'amek, J.}\index{c}{Rosick\'y, J.}, Johnstone~\cite{John86}\index{c}{Johnstone, P.\,T.}, and from the second author's earlier papers \cite{Ultra,RetrLift}\index{c}{Wehrung, F.}.
We shall often use special symbols for special sorts of arrows in a given category~$\cC$:
\begin{itemize}
\item $f\colon A\mono B$\index{s}{AtomonoB@$f\colon A\mono B$|ii} for \emph{monomorphisms}, or (for concrete categories) \emph{one-to-one maps};

\item $f\colon A\onto B$\index{s}{AtoonB@$f\colon A\onto B$|ii} for \emph{epimorphisms}, or (for concrete categories) \emph{surjective maps};

\item $f\colon A\into B$\index{s}{AtoinB@$f\colon A\into B$|ii} for \emph{embeddings}, that is, usually, special classes of monomorphisms preserving some additional structure (e.g., \emph{order-embeddings});

\item $f\colon A\Rightarrow B$\index{s}{AtorightarrowB@$f\colon A\Rightarrow B$|ii} for \emph{double arrows}\index{i}{double arrow}, that is, morphisms in a distinguished subcategory~$\cC^\Rightarrow$\index{s}{RightarrowCat@$\cS^\Rightarrow$} of~$\cC$ (see the end of the present section);

\item $f\colon\xA\todot\xB$ for \emph{natural transformations}\index{s}{AtorightarrowdotB@$f\colon\xA\todot\xB$|ii} between functors;

\item $f\colon\xA\Todot\xB$\index{s}{AtoRightarrowdotB@$f\colon\xA\Todot\xB$|ii} for natural transformations consisting of families of double arrows.
\end{itemize}

We denote by $\Ob\cA$\index{s}{ObA@$\Ob\cA$|ii} (resp., $\Mor\cA$\index{s}{MorA@$\Mor\cA$|ii}) the class of objects (resp., morphisms) of a category~$\cA$. We say that~$\cA$ \emph{has small hom-sets} (resp., \emph{has $\gk$-small hom-sets}, where~$\gk$ is a cardinal) if the class of all morphisms from~$X$ to~$Y$ is a set (resp., a set with less than~$\gk$ elements), for any objects~$X$ and~$Y$ of~$\cA$.

A morphism $f\colon A\to B$ in a category~$\cC$ \emph{factors through
$g\colon B'\to B$} (or, if~$g$ is understood, \emph{factors through~$B'$}) if there exists $h\colon A\to B'$ such that $f=g\circ h$. For an object~$C$ of~$\cC$ and a subcategory~$\cC'$ of~$\cC$, the \emph{comma category}\index{i}{comma category|ii}\index{s}{comma@$\cC'\dnw C$|ii} $\cC'\dnw C$\index{s}{arrz@$\cC'\dnw C$|ii} is the category whose objects are the morphisms of the form $x\colon X\to C$ in~$\cC$ where~$X$ is an object of~$\cC'$, and where the morphisms from $x\colon X\to C$ to $y\colon Y\to C$ are the morphisms $f\colon X\to Y$ in~$\cC'$ such that $y\circ f=x$. We write $x\utr y$\index{s}{triangle@$x\utr y$|ii} the statement that there exists such a morphism~$f$.

We identify, the usual way, preordered sets with categories in which there exists at most one morphism from any object to any other object.
For a poset~$P$, a \emph{$P$-indexed diagram}\index{i}{diagram!poset-indexed|ii} in a category~$\cC$ is a functor from~$P$ (viewed as a category) to~$\cC$. It can be viewed as a system $\overrightarrow{S}=\famm{S_p,\gs_p^q}{p\leq q\text{ in }P}$ where
$\gs_p^q\colon S_p\to S_q$ in~$\cC$, $\gs_p^p=\id_{S_p}$, and
$\gs_p^r=\gs_q^r\circ\gs_p^q$, for all $p\leq q\leq r$ in~$P$. Hence a cocone above~$\overrightarrow{S}$ consists of a family $\famm{T,\gt_p}{p\in P}$, for an object~$T$ of~$\cC$ and morphisms $\gt_p\colon S_p\to T$ such that $\gt_p=\gt_q\circ\gs_p^q$, for all $p\leq q$ in~$P$. In case~$\overrightarrow{S}$ has a colimit with underlying object~$S$, we shall often denote by~$\gs_p\colon S_p\to S$ the corresponding limiting morphism, for $p\in P$. A \emph{subdiagram}\index{i}{diagram!sub-|ii} of~$\overrightarrow{S}$ is a composite of the form $\overrightarrow{S}\circ\varphi$, where~$\varphi$ is an order-embedding from some poset into~$P$ (of course~$\varphi$ is viewed as a functor).

For a category~$\cA$ and a functor~$\Phi\colon\cA\to\cS$, we say that a diagram~$\overrightarrow{S}$ in~$\cS$ is \emph{liftable}\index{i}{liftable!diagram|seeonly{diagram, liftable}}\index{i}{diagram!liftable|ii} \emph{with respect to~$\Phi$} if there exists a diagram~$\overrightarrow{A}$ in~$\cA$ such that $\Phi\overrightarrow{A}\cong\overrightarrow{S}$. We say then that~$\overrightarrow{S}$ is \emph{lifted}\index{i}{diagram!lifted|ii} by~$\cA$, or that~$\cA$ is a \emph{lifting}\index{i}{diagram!lifting|ii} of~$\cS$.

For an infinite cardinal~$\gk$, we say that~$\overrightarrow{S}$ is

\begin{itemize}
\item \emph{$\gk$-directed}\index{i}{diagram!directedl@$\gl$-directed|ii} if the poset~$P$ is $\gk$-directed\index{i}{poset!directedl@$\gl$-directed};

\item \emph{continuous}\index{i}{diagram!continuous|ii} if~$\overrightarrow{S}$ preserves all small directed colimits, that is, $S_p=\varinjlim_{q\in X}S_q$ (with the obvious transition and limiting morphisms), for every nonempty directed\index{i}{poset!directed} subset~$X$ of~$P$ with least upper bound~$p=\bigvee X$.

\item \emph{$\gs$-continuous}\index{i}{diagram!$\gs$-continuous|ii} if $\overrightarrow{S}$ preserves all directed colimits of \emph{increasing sequences} in~$P$.
 \end{itemize}
 
A colimit is \emph{monomorphic}\index{i}{monomorphic colimit|ii} if all its limiting morphisms (thus also all its transition morphisms) are monic.

\begin{defn}\label{D:SmallDirColim}
Let~$\gl$ be an infinite cardinal and let~$\cC^\dagger$ be a full subcategory of a category~$\cC$. We say that
\begin{description}
\item[\tui] \emph{$\cC^\dagger$ has all $\gl$-small directed colimits within~$\cC$}\index{i}{having all $\gl$-small directed colimits within~$\cC$|ii} if every diagram in~$\cC^\dagger$ indexed by a nonempty \emph{directed}\index{i}{poset!directed} $\gl$-small poset~$P$ has a colimit in~$\cC$ whose underlying object belongs to~$\cC^\dagger$;

\item[\tuii] \emph{$\cC$ has all $\gl$-small directed colimits} if it has all $\gl$-small directed colimits\index{i}{having all $\gl$-small directed colimits within~$\cC$} within itself.
\end{description}

\end{defn}

According to \cite[Theorem~1.5]{AdRo}\index{c}{Ad\'amek, J.}\index{c}{Rosick\'y, J.} (see also \cite[Theorem~B.2.6.13]{John02}\index{c}{Johnstone, P.\,T.}), $\cC^\dagger$ has all $\gl$-small directed colimits within\index{i}{having all $\gl$-small directed colimits within~$\cC$}~$\cC$ if{f} every diagram in~$\cC^\dagger$, indexed by a nonempty $\gl$-small \emph{filtered category}, has a colimit whose underlying object belongs to~$\cC^\dagger$. (A category is \emph{filtered} if for all objets~$A$ and~$B$ there are an object~$C$ with morphisms $f\colon A\to C$ and $g\colon B\to C$, and for all objects~$A$ and~$B$ with morphisms $u,v\colon A\to B$ there are an object~$C$ and a morphism $f\colon B\to C$ such that $f\circ u=f\circ v$.)\index{i}{filtered category|ii}

For a category~$\cC$ and a poset~$P$, we denote by~$\cC^P$ the category of all $P$-indexed diagrams in~$\cC$, with natural transformations as morphisms. In particular, in case $P=\two=\set{0,1}$\index{s}{two@$\two$|ii}, the two-element chain, we obtain the category~$\cC^\two$ of arrows of~$\cC$.

\begin{defn}\label{D:projection}
Let~$\cC$ be a category. A \emph{projection}\index{i}{projection (in a category)|ii} in~$\cC$ is the canonical projection from a nonempty finite product in~$\cC$ to one of its factors.

An arrow $f\colon A\to B$ in~$\cC$ is an \emph{extended projection}\index{i}{extended projection (in a category)|ii} if there are diagrams $\overrightarrow{A}=\famm{A_i,\ga_i^j}{i\leq j\text{ in }I}$ and $\overrightarrow{B}=\famm{B_i,\gb_i^j}{i\leq j\text{ in }I}$ in~$\cC$, both indexed by a directed\index{i}{poset!directed} poset~$I$, together with colimits
 \begin{align*}
 \famm{A,\ga_i}{i\in I}&=\varinjlim\overrightarrow{A}\,,\\
 \famm{B,\gb_i}{i\in I}&=\varinjlim\overrightarrow{B}\,,
 \end{align*}
and a natural transformation $\famm{f_i}{i\in I}$ from~$\overrightarrow{A}$ to~$\overrightarrow{B}$ with limit morphism~$f$, such that each~$f_i$, for $i\in I$, is a projection.
\end{defn}

In particular, every isomorphism is a projection, and every extended projection is a directed colimit, in the category~$\cC^{\two}$\index{s}{two@$\two$} of all arrows of~$\cC$, of projections in~$\cC$.

\begin{defn}\label{D:ClosDirColim}
Let~$\gl$ be an infinite cardinal.
A subcategory~$\cC'$ of a category~$\cC$ is \emph{closed under $\gl$-small directed colimits}\index{i}{closed under $\gl$-small $\varinjlim$|ii}\index{i}{closed under $\gl$-small $\varinjlim$|ii}, if for every $\gl$-small set~$I$ and all diagrams $\overrightarrow{A}=\famm{A_i,\ga_i^j}{i\leq j\text{ in }I}$ and $\overrightarrow{B}=\famm{B_i,\gb_i^j}{i\leq j\text{ in }I}$ in~$\cC'$, together with colimits in~$\cC$
 \begin{align*}
 \famm{A,\ga_i}{i\in I}&=\varinjlim\overrightarrow{A}\,,\\
 \famm{B,\gb_i}{i\in I}&=\varinjlim\overrightarrow{B}\,,
 \end{align*}
and a natural transformation $\famm{f_i}{i\in I}$ from~$\overrightarrow{A}$ to~$\overrightarrow{B}$ with limit morphism~$f\colon A\to\nobreak B$, if each~$f_i$, for $i\in I$, is an arrow of~$\cC'$, then so is~$f$.

In case we formulate the above requirement with~$\overrightarrow{A}$ (resp., $\overrightarrow{B}$) \emph{constant} (i.e., all morphisms are the identity), we say that~$\cC'$ is \emph{left \pup{resp., right} closed under $\gl$-small directed colimits}\index{i}{closed (left, right-) under $\gl$-small $\varinjlim$|ii}. For example, $\cC'$ is right closed under $\gl$-small directed colimits if{f} for every directed colimit diagram
 \[
 \famm{A,\ga_i}{i\in I}=\varinjlim\famm{A_i,\ga_i^j}{i\leq j\text{ in }I}
 \]
in~$\cA$ with $\card I<\gl$, every object~$B$ in~$\cB$, and every $f\colon A\to B$, if $f\circ\ga_i$ is a morphism of~$\cC'$ for each $i\in I$, then so is~$f$.
\end{defn}

In case~$\cC'$ is a full subcategory of~$\cC$, this is equivalent to verifying that any colimit cocone in~$\cC$ of any directed poset-indexed diagram in~$\cC'$ is contained in~$\cC'$.

\subsubsection*{\textbf{Double arrows}}\index{i}{double arrow|ii}
We shall often encounter the situation of a subcategory~$\cS^\Rightarrow$\index{s}{RightarrowCat@$\cS^\Rightarrow$|ii} of a category~$\cS$, of which the arrows are denoted in the form $f\colon X\Rightarrow Y$\index{s}{AtorightarrowB@$f\colon A\Rightarrow B$|ii} and called \emph{the double arrows\index{i}{double arrow|ii} of $\cS$}. In such a context, for a poset~$P$ and objects of~$\cS^P$ (i.e., $P$-indexed diagrams) $\overrightarrow{X}$ and~$\overrightarrow{Y}$, a double arrow\index{i}{double arrow} from~$\overrightarrow{X}$ to~$\overrightarrow{Y}$ will be defined as an arrow $\overrightarrow{f}\colon\overrightarrow{X}\to\overrightarrow{Y}$ in~$\cS^P$ (i.e., a natural transformation from~$\overrightarrow{X}$ to~$\overrightarrow{Y}$), say $\overrightarrow{f}=\famm{f_p}{p\in P}$, such that~$f_p\colon X_p\Rightarrow Y_p$\index{s}{AtorightarrowB@$f\colon A\Rightarrow B$} for each $p\in P$; and then we shall write $\overrightarrow{f}\colon\overrightarrow{X}\Todot\overrightarrow{Y}$\index{s}{AtoRightarrowdotB@$f\colon\xA\Todot\xB$}.

\subsection{Directed colimits of first-order structures}\label{Su:DirColimFirstOrd}
We shall use standard definitions and facts about first-order structures as presented, for example, in Chang and Keisler~\cite[Section~1.3]{ChKe}\index{c}{Chang, C.\,C.}\index{c}{Keisler, H.\,J.}. We shall still denote by~$\Lg(\bA)$\index{s}{LgA@$\Lg(\bA)$|ii} the language of a first-order structure~$\bA$. This set breaks up as the set~$\Cst(\bA)$\index{s}{CstA@$\Cst(\bA)$|ii} of all constant symbols of~$\bA$, the set $\Op(\bA)$\index{s}{OpA@$\Op(\bA)$|ii} of all operation symbols of~$\bA$, and the set~$\Rel(\bA)$\index{s}{RelA@$\Rel(\bA)$|ii} of all relation symbols of~$\bA$. We denote by $\ari(s)$\index{s}{aris@$\ari(s)$|ii} the arity of a symbol $s\in\Lg(\bA)$; hence~$\ari(s)$ is a positive integer, unless $s\in\Cst(\bA)$ in which case $\ari(s)=0$.

We shall also denote the universe of a first-order structure by the corresponding lightface character, so, for instance, $A$ will be the universe of~$\bA$. We denote by~$s^\bA$ the interpretation of a symbol~$s$ of~$\Lg(\bA)$ in the structure~$\bA$, so that~$s^\bA$ is an element of~$A$ if~$s$ is a constant symbol and an $n$-ary operation (resp., relation) on~$A$ if~$s$ is an $n$-ary operation (resp., relation) symbol.

A map $f\colon\bA\to\bB$ between first-order structures of the same language~$\scL$ is an \emph{elementary embedding} if $\bA\models\vF(a_1,\dots,a_n))$ if{f} $\bB\models\vF(f(a_1),\dots,f(a_n))$ for every first-order formula~$\vF$ of~$\scL$, of arity~$n$, and all $a_1,\dots,a_n\in A$. Requiring this condition only for~$\vF$ \emph{atomic} means that~$f$ is an \emph{embedding}. Both conditions trivially imply that~$f$ is one-to-one.

We shall need the following description of directed colimits in the category~$\MOD_\scL$\index{s}{Mod@$\MOD_\scL$|ii} of all models for a given first-order language~$\scL$, see for example \cite[Section~1.2.5]{Gorb}\index{c}{Gorbunov, V.\,A.}. For a directed poset\index{i}{poset!directed}~$I$ and a poset-indexed diagram
$\famm{\bA_i,\gf_i^j}{i\leq j\text{ in }I}$ in $\MOD_\scL$\index{s}{Mod@$\MOD_\scL$}, we define an equivalence relation~$\equiv$ on~$\cA:=\bigcup\famm{A_i\times\set{i}}{i\in I}$ by
 \[
 (x,i)\equiv(y,j)\ \Longleftrightarrow\ (\exists k\in I\Upw\set{i,j})
 \bigl(\gf_i^k(x)=\gf_j^k(y)\bigr)\,,
 \]
then we set $A:=\cA/{\equiv}$ and we define $(\gf_i\colon A_i\to A$, $x\mapsto(x,i)/{\equiv})$, for each $i\in I$. Then the following relation holds in the category~$\SET$\index{s}{Set@\textbf{Set}|ii} of sets (with maps as homomorphisms):
 \begin{equation}\label{Eq:varinjliminSet}
 \famm{A,\gf_i}{i\in I}=\varinjlim\famm{A_i,\gf_i^j}{i\leq j\text{ in }I}
 \end{equation}
The directed colimit cocone $\famm{A,\gf_i}{i\in I}$ is also uniquely characterized by the properties
 \begin{align}
 A&=\bigcup\famm{\gf_i``(A_i)}{i\in I}\,,\label{Eq:ADirUngfiAi}\\
 \gf_i(x)=\gf_i(y)&\Leftrightarrow(\exists j\in I\upw i)
 \bigl(\gf_i^j(x)=\gf_i^j(y)\bigr)\,,\quad\text{for all }i\in I
 \text{ and all }x,y\in A_i\,.\label{Eq:gfix=gfiyimpl}
 \end{align}
The set~$A$ can be given a unique structure of model of~$\scL$ such that
\begin{description}
\item[\tui] For each constant symbol~$c$ in~$\scL$, $c^\bA=\gf_i(c^{\bA_i})$ for each~$i\in I$.

\item[\tuii] For each operation symbol~$f$ in~$\scL$, say of arity~$n$, and for each~$i\in I$, the equation
 \[
 f^\bA(\gf_i(x_1),\dots,\gf_i(x_n))=
 \gf_i\bigl(f^{\bA_i}(x_1,\dots,x_n)\bigr)
 \]
holds for all $x_1,\dots,x_n\in A_i$.

\item[\tuiii] For each relation symbol~$R$ in~$\scL$, say of arity~$n$, and for each $i\in I$, the following equivalence holds:
 \begin{multline}\label{Eq:RelatOnDirColim}
 (\gf_i(x_1),\dots,\gf_i(x_n))\in R^\bA\ \Longleftrightarrow\ 
 (\exists j\in I\upw i)
 \bigl((\gf_i^j(x_1),\dots,\gf_i^j(x_n))\in R^{\bA_j}\bigr)\,,\\
 \quad\text{for all }x_1,\dots,x_n\in A_i\,.
 \end{multline}
\end{description}
Furthermore, the following relation holds in the category~$\MOD_\scL$\index{s}{Mod@$\MOD_\scL$}:
 \begin{equation}\label{Eq:DirLimMODscL}
 \famm{\bA,\gf_i}{i\in I}=\varinjlim\famm{\bA_i,\gf_i^j}
 {i\leq j\text{ in }I}\,.
 \end{equation}
The conditions (i)--(iii) above, together with~\eqref{Eq:ADirUngfiAi} and~\eqref{Eq:gfix=gfiyimpl}, determine the colimit up to isomorphism. In categorical terms, the discussion above means that \emph{the forgetful functor from $\MOD_\scL$\index{s}{Mod@$\MOD_\scL$} to~$\SET$\index{s}{Set@\textbf{Set}} creates and preserves all small directed colimits}.

For a theory~$\scT$ in the language~$\scL$, we denote by $\MOD(\scT)$\index{s}{Modt@$\MOD(\scT)$|ii} the full subcategory of~$\MOD_\scL$\index{s}{Mod@$\MOD_\scL$} consisting of the models that satisfy all the axioms of~$\scT$. It is well-known that if~$\scT$ consists only of axioms of the form
 \begin{equation}\index{s}{AtorightarrowB@$f\colon A\Rightarrow B$}\label{Eq:SpecialForm}
 (\forall\overrightarrow{\vx})\bigl(\vE(\overrightarrow{\vx})\Rightarrow(\exists\overrightarrow{\vy})
 \vF(\overrightarrow{\vx},\overrightarrow{\vy})\bigr)\,,
 \end{equation}
with each of the formulas~$\vE$ and~$\vF$ either a tautology, or an antilogy, or a conjunction of atomic formulas, then $\MOD(\scT)$\index{s}{Modt@$\MOD(\scT)$} is closed under directed colimits (see, for example, the easy direction of \cite[Exercise~5.2.24]{ChKe}\index{c}{Chang, C.\,C.}\index{c}{Keisler, H.\,J.}). While there are more general first-order sentences preserving directed colimits, those of the form~\eqref{Eq:SpecialForm} have the additional advantage of preserving direct products, which will be of importance in the sequel.

\begin{exple}\label{Ex:QuasivarSpecial}
If~$\scT$ consists of \emph{universal Horn sentences} (cf. \cite[Section~1.2.2]{Gorb}\index{c}{Gorbunov, V.\,A.}), then all the axioms in~$\scT$ have the form~\eqref{Eq:SpecialForm}, thus $\MOD(\scT)$\index{s}{Modt@$\MOD(\scT)$} is closed under directed colimits. This is the case for the following examples:
\begin{description}
\item[\tui] $\scT$ is the theory of all groups, in the language $\set{\cdot,1,{}^{-1}}$, where~$\cdot$ is a binary operation symbol, $1$ is a constant symbol, and ${}^{-1}$ is a unary operation symbol.

\item[\tuii] $\scT$ is the theory of all partially ordered abelian groups, in the language $\set{-,0,\leq}$, where~$-$ is a binary operation symbol, $0$ is a constant symbol, and~$\leq$ is a binary relation symbol.

\item[\tuiii] $\scT$ is the theory of all \jzs s, in the language $\set{\vee,0}$, where~$\vee$ is a binary operation symbol and~$0$ is a constant symbol.
\end{description}
Many other examples are given in Gorbunov~\cite{Gorb}\index{c}{Gorbunov, V.\,A.}.
\end{exple}

\begin{exple}\label{Ex:DSLatSpecial}
Let $\scL:=\set{\vee,0}$, where~$\vee$ is a binary operation symbol and~$0$ is a constant symbol, and let~$\scT$ consist of the axioms for \jzs s (i.e., idempotent commutative monoids) together with the axiom
 \[
 (\forall\vx,\vy,\vz)
 \bigl(\vz\vee\vx\vee\vy=\vx\vee\vy\Rightarrow(\exists\vx',\vy')
 (\vx'\vee\vx=\vx\text{ and }\vy'\vee\vy=\vy\text{ and }
 \vz=\vx'\vee\vy'\bigr)\,.
 \]
Then all axioms of~$\scT$ have the form~\eqref{Eq:SpecialForm} and the models of~$\scT$ are exactly the \emph{distributive \jzs s}\index{i}{distributive!semilattice}.
\end{exple}

\begin{exple}\label{Ex:CRMSpecial}
Let $\scL:=\set{+,0}$, where~$+$ is a binary operation symbol, $0$ is a constant symbol, and let~$\scT$ consist of all the axioms of the theory of commutative monoids, together with the \emph{refinement axiom}\index{s}{conj@$\conj$|ii},
 \begin{multline*}
 (\forall\vx_0,\vx_1,\vy_0,\vy_1)\Bigl(\vx_0+\vx_1=\vy_0+\vy_1
 \Rightarrow\\
 (\exists_{i,j<2}\vz_{i,j})\conj_{i<2}
 (\vx_i=\vz_{i,0}+\vz_{i,1}\text{ and }\vy_i=\vz_{0,i}+\vz_{1,i})\Bigr)\,,
 \end{multline*}
where~$\conj$\index{s}{conj@$\conj$} denotes conjunction of a list of formulas. Then all axioms of~$\scT$ have the form~\eqref{Eq:SpecialForm} and the models of~$\scT$ are exactly the \emph{refinement monoids}\index{i}{monoid!refinement|ii}. If we add to~$\scT$ the \emph{conicality axiom}\index{i}{monoid!conical} (cf. Example~\ref{Ex:nsKth})
 \[
 (\forall\vx,\vy)(\vx+\vy=0\Rightarrow\vx=\vy=0)\,,
 \]
then we obtain the theory of \emph{conical refinement monoids}\index{i}{monoid!conical}\index{i}{monoid!refinement}.
\end{exple}

\begin{exple}\label{Ex:RegRngSpecial}
Let $\scL:=\set{-,\cdot,0}$, where~$-$ and~$\cdot$ are both binary operation symbols and~$0$ is a constant symbol, and~$\scT$ consists of all axioms of rings (they are identities) together with the axiom
 \[
 (\forall\vx)(\exists\vy)(\vx\cdot\vy\cdot\vx=\vx)\,.
 \]
Then all axioms of~$\scT$ have the form~\eqref{Eq:SpecialForm} and the models of~$\scT$ are exactly the (not necessarily unital) \emph{regular rings}\index{i}{regular ring}.
\end{exple}

The given description of directed colimits gives a particularly simple way to check whether a cocone $\famm{\bA,\gf_i}{i\in I}$, of models of a theory whose axioms all have the form~\eqref{Eq:SpecialForm}, above a directed poset-indexed diagram $\famm{\bA_i,\gf_i^j}{i\leq j\text{ in }I}$, is a colimit of that diagram: namely, we only need to check the statements~\eqref{Eq:ADirUngfiAi}, \eqref{Eq:gfix=gfiyimpl}, and~\eqref{Eq:RelatOnDirColim}. In particular, in all the examples described above except the one of partially ordered abelian groups, the directed colimit is characterized by~\eqref{Eq:ADirUngfiAi} and \eqref{Eq:gfix=gfiyimpl}; in the case of partially ordered abelian groups, one also needs to check that the statement
 \[
 \gf_i(x)\geq 0\Rightarrow(\exists j\in I\upw i)\bigl(\gf_i^j(x)\geq0\bigr)
 \]
holds for all $i\in I$ and all $x\in A_i$.

\section{Kappa-presented and weakly kappa-presented objects}
\label{S:WeaklKapPres}

In the present section we shall introduce (cf. Definition~\ref{D:wkapppres}) a weakening of the usual definition of a $\gk$-presented\index{i}{presented!$\gl$-} object in a category. We first recall that definition, as given in Gabriel and Ulmer \cite[Definition~6.1]{GaUl}\index{c}{Gabriel, P.}\index{c}{Ulmer, F.}, see also Ad\'amek and Rosick\'y \cite[Definitions~1.1 and~1.13]{AdRo}\index{c}{Ad\'amek, J.}\index{c}{Rosick\'y, J.}.

\begin{defn}\label{D:kapppres}
For an infinite regular cardinal~$\gk$, an object $A$ in a category
$\cC$ is \emph{$\gk$-presented}\index{i}{presented!$\gl$-|ii} if for every $\gk$-directed\index{i}{diagram!directedl@$\gl$-directed} colimit
 \[
 \famm{B,b_i}{i\in I}=\varinjlim\famm{B_i,b_i^j}{i\leq j\text{ in }I}
 \quad\text{in }\cC\,,
 \]
the following statements hold:

\begin{description}
\item[\tui] For every $f\colon A\to B$, there exists $i\in I$ such that
$f$ factors through $B_i$ (i.e., $f=b_i\circ f'$ for some morphism $f'\colon A\to B_i$).

\item[\tuii] For each $i\in I$ and morphisms $f,g\colon A\to B_i$ such that $b_i\circ f=b_i\circ g$, there exists~$j\geq i$ in~$I$ such that $b_i^j\circ f=b_i^j\circ g$.
\end{description}
In case~$\gk=\aleph_0$\index{s}{aleph0@$\aleph_{\ga}$}, we will say \emph{finitely presented}\index{i}{presented!finitely|ii} instead of $\gk$-presented\index{i}{presented!$\gl$-}.
\end{defn}

In the following definition we do not assume regularity of the cardinal~$\gk$.

\begin{defn}\label{D:wkapppres}
For an infinite cardinal~$\gk$, an object $A$ in a category
$\cC$ is \emph{weakly $\gk$-presented}\index{i}{presented!weakly $\gl$-|ii} if for every set~$\Omega$ and every \emph{continuous}\index{i}{diagram!continuous} directed colimit
$\famm{B,b_X}{X\in[\Omega]^{<\gk}}=
\varinjlim\famm{B_X,b_X^Y}{X\subseteq Y\text{ in }[\Omega]^{<\gk}}$
in~$\cC$, every morphism from~$A$ to~$B$ factors through $b_X\colon B_X\to B$ for some~$X\in[\Omega]^{<\gk}$.
\end{defn}

Trivially, in case~$\gk$ is regular, $\gk$-presented\index{i}{presented!$\gl$-} implies weakly $\gk$-presented\index{i}{presented!weakly $\gl$-}.
For \emph{cocomplete} categories (i.e., categories in which every small---not necessarily directed---diagram has a colimit), weakly $\gk$-presented structures\index{i}{presented!weakly $\gl$-} can be characterized as follows.

\begin{prop}\label{P:WgkPresCoCplte}
Let~$\gk$ be an infinite cardinal and let~$\cC$ be a cocomplete category. Then an object~$A$ of~$\cC$ is weakly $\gk$-presented\index{i}{presented!weakly $\gl$-} if{f} for every \pup{non necessarily directed}\index{i}{poset!directed} poset~$I$ and every colimit cocone
 \[
 \famm{B,b_i}{i\in I}=\varinjlim\famm{B_i,b_i^j}{i\leq j\text{ in }I}
 \quad\text{in }\cC\,,
 \]
every morphism from~$A$ to~$B$ factors through $\varinjlim_{i\in J}B_i$ for some $\gk$-small subset~$J$ of~$I$.
\end{prop}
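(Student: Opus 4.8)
The plan is to prove the two implications of the equivalence separately, with essentially all of the work in the direction that derives the poset-factorization property from weak $\gk$-presentedness. For that direction, assume $A$ is weakly $\gk$-presented, and let $\famm{B,b_i}{i\in I}=\varinjlim\famm{B_i,b_i^j}{i\le j\text{ in }I}$ be a colimit cocone in~$\cC$ over an arbitrary poset~$I$, together with a morphism $f\colon A\to B$. The idea is to rewrite $B$ as a continuous directed colimit indexed by~$[I]^{<\gk}$ and then apply the hypothesis. Concretely, I would put $\Omega:=I$ and, using cocompleteness of~$\cC$, set $B_X:=\varinjlim(\overrightarrow{B}\res X)$ for each $X\in[\Omega]^{<\gk}$, where $\overrightarrow{B}\res X$ is the subdiagram of $\overrightarrow{B}$ indexed by $X$ with the order induced from~$I$; for $X\subseteq Y$ the limiting cocone of $\overrightarrow{B}\res Y$ restricts to a cocone above $\overrightarrow{B}\res X$ and hence yields a canonical transition morphism $b_X^Y\colon B_X\to B_Y$, while the cocone $\famm{B,b_i}{i\in X}$ induces a morphism $\beta_X\colon B_X\to B$. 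A short verification shows these data are coherent, so one gets a diagram $\famm{B_X,b_X^Y}{X\subseteq Y\text{ in }[\Omega]^{<\gk}}$ over the directed poset $[\Omega]^{<\gk}$, together with a cocone $\famm{B,\beta_X}{X\in[\Omega]^{<\gk}}$ above it.

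The crux is to show that this diagram is continuous and that $\famm{B,\beta_X}{X\in[\Omega]^{<\gk}}$ is its colimit. Both are instances of the general ``iterated colimit'' principle: if a poset~$Q$ is the directed union $Q=\bigcup\famm{Q_d}{d\in D}$ of sub-posets, then $\varinjlim(\overrightarrow{B}\res Q)\cong\varinjlim_{d\in D}\varinjlim(\overrightarrow{B}\res Q_d)$, compatibly with the evident comparison morphisms. Applied to a directed subset $\cD\subseteq[\Omega]^{<\gk}$ having a least upper bound~$W$ in $[\Omega]^{<\gk}$ (necessarily $W=\bigcup\cD$), this gives $B_W=\varinjlim_{X\in\cD}B_X$, which is continuity; applied to $\cD=[\Omega]^{<\gk}$, whose union is~$I$ because $\set{i}\in[\Omega]^{<\gk}$ for every $i\in I$, it gives $\varinjlim_{X\in[\Omega]^{<\gk}}B_X=\varinjlim(\overrightarrow{B}\res I)=B$. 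Weak $\gk$-presentedness of~$A$, applied to this continuous directed colimit (over the index set $\Omega=I$), then provides some $X\in[\Omega]^{<\gk}$ and a morphism $h\colon A\to B_X$ with $f=\beta_X\circ h$; since $B_X=\varinjlim_{i\in X}B_i$, since $\beta_X$ is precisely the canonical morphism $\varinjlim_{i\in X}B_i\to B$, and since $X$ is $\gk$-small, this is the desired factorization with $J:=X$.

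For the converse, assume the poset-factorization property, and let $\famm{B,b_X}{X\in[\Omega]^{<\gk}}=\varinjlim\famm{B_X,b_X^Y}{X\subseteq Y\text{ in }[\Omega]^{<\gk}}$ be a continuous directed colimit and $f\colon A\to B$. Applying the hypothesis with $I:=[\Omega]^{<\gk}$ yields a $\gk$-small $J\subseteq[\Omega]^{<\gk}$ together with $g\colon A\to C:=\varinjlim_{X\in J}B_X$ such that $f=c\circ g$, where $c\colon C\to B$ is the canonical morphism. Set $Z:=\bigcup J$. When~$\gk$ is regular, $Z\in[\Omega]^{<\gk}$, and $Z$ is the least upper bound of the directed set $\widehat{J}$ of finite unions of members of~$J$, so continuity gives $B_Z=\varinjlim_{W\in\widehat{J}}B_W$; the cocone $\famm{b_X^Z}{X\in J}$ then induces a morphism $d\colon C\to B_Z$ with $b_Z\circ d=c$, whence $f=b_Z\circ(d\circ g)$ factors through $b_Z\colon B_Z\to B$. (When~$\gk$ is singular, $\card(\bigcup J)$ may reach~$\gk$, so this last reduction of $\bigcup J$ to a member of $[\Omega]^{<\gk}$ requires an extra argument, which I would treat separately; in all the intended applications~$\gk$ is regular.)

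The one step I expect not to be pure bookkeeping is the continuity claim in the first implication: one must carefully establish the iterated-colimit identity $\varinjlim(\overrightarrow{B}\res Q)\cong\varinjlim_{d}\varinjlim(\overrightarrow{B}\res Q_d)$ and check that it is natural in the comparison morphisms~$b_X^Y$ and~$\beta_X$, so that the constructed $[\Omega]^{<\gk}$-diagram genuinely qualifies as a continuous directed colimit with colimit~$B$. Granting that, both implications reduce to straightforward diagram chases.
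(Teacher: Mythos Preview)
Your forward direction (weakly $\gk$-presented $\Rightarrow$ factorization property) is essentially the paper's argument, carried out with a bit more care about continuity than the paper bothers to write down.

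In the converse direction you have a real gap, and it is exactly the one you flag. The statement is meant to hold for \emph{every} infinite~$\gk$, singular ones included; ``I would treat this separately'' is not a proof, and your route does not obviously close. The paper's trick is short and worth internalizing: given a continuous directed colimit indexed by $[\Omega]^{<\gk}$, first use continuity to rewrite each $B_X$ as $\varinjlim_{Y\in[X]^{<\go}}B_Y$, so that in fact
\[
\famm{B,b_X}{X\in[\Omega]^{<\go}}=\varinjlim\famm{B_X,b_X^Y}{X\subseteq Y\text{ in }[\Omega]^{<\go}}\,.
\]
Now apply the factorization hypothesis with $I:=[\Omega]^{<\go}$; you obtain a $\gk$-small $J\subseteq[\Omega]^{<\go}$ through which~$f$ factors. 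The point is that every member of~$J$ is \emph{finite}, so $\bigcup J$ has cardinality at most $\card J\cdot\aleph_0<\gk$ regardless of whether~$\gk$ is regular, and~$f$ factors through~$B_{\bigcup J}$. Passing down to finite indices before invoking the hypothesis is precisely what makes the argument uniform in~$\gk$.
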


\begin{proof}
Suppose that~$A$ satisfies the given condition and consider a morphism $f\colon A\to B$ with a continuous\index{i}{diagram!continuous} directed colimit
 \[
 \famm{B,b_X}{X\in[\Omega]^{<\gk}}=
 \varinjlim\famm{B_X,b_X^Y}{X\subseteq Y\text{ in }[\Omega]^{<\gk}}\quad\text{in }
 \cC\,.
 \]
As this colimit is continuous\index{i}{diagram!continuous}, $B_X=\varinjlim_{Y\in[X]^{<\go}}B_Y$ for each $X\in[\Omega]^{<\gk}$, thus
 \[
 \famm{B,b_X}{X\in[\Omega]^{<\go}}=
 \varinjlim\famm{B_X,b_X^Y}{X\subseteq Y\text{ in }[\Omega]^{<\go}}\quad\text{in }
 \cC\,.
 \]
By assumption, there exists a $\gk$-small subset~$I$ of $[\Omega]^{<\go}$ such that~$f$ factors through $\varinjlim_{X\in I}B_X$. It follows that the union~$X$ of all the elements of~$I$ is a $\gk$-small subset of~$\Omega$ (\emph{as every member of~$I$ is finite, this does not require any regularity assumption on~$\gk$}) and~$f$ factors through~$B_X$.

Conversely, suppose that~$A$ is weakly $\gk$-presented\index{i}{presented!weakly $\gl$-}. Let
 \[
 \famm{B,b_i}{i\in I}=\varinjlim\famm{B_i,b_i^j}{i\leq j\text{ in }I}\quad\text{in }\cC\,.
 \]
As~$\cC$ is cocomplete, we can define $C_X:=\varinjlim_{i\in X}B_i$ for each $X\in[I]^{<\gk}$, with the obvious transition and limiting morphisms. As $B=\varinjlim_{X\in[I]^{<\gk}}C_X$ is a continuous\index{i}{diagram!continuous} directed colimit and~$A$ is weakly $\gk$-presented\index{i}{presented!weakly $\gl$-}, every morphism from~$A$ to~$B$ factors through~$C_J=\varinjlim_{i\in J}B_i$, for some $J\in[I]^{<\gk}$.
\qed\end{proof}

\begin{cor}\label{C:WgkPresCoCplte}
Let $\gl$ and~$\gk$ be infinite cardinals with $\gl\leq\gk$ and let~$\cC$ be a cocomplete category. Then every weakly $\gl$-presented\index{i}{presented!weakly $\gl$-} object of~$\cC$ is also weakly $\gk$-presented\index{i}{presented!weakly $\gl$-}.
\end{cor}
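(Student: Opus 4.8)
The plan is to reduce everything to the characterization of weakly $\gk$-presented objects in cocomplete categories furnished by Proposition~\ref{P:WgkPresCoCplte}, and then to exploit the trivial fact that every $\gl$-small set is $\gk$-small whenever $\gl\leq\gk$.

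First I would invoke Proposition~\ref{P:WgkPresCoCplte} with the cardinal~$\gl$: since $A$ is weakly $\gl$-presented and $\cC$ is cocomplete, for every poset~$I$, every colimit cocone $\famm{B,b_i}{i\in I}=\varinjlim\famm{B_i,b_i^j}{i\leq j\text{ in }I}$ in~$\cC$, and every morphism $f\colon A\to B$, there is a $\gl$-small subset~$J$ of~$I$ such that $f$ factors through $\varinjlim_{i\in J}B_i$.

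Next, since $\gl\leq\gk$, any $\gl$-small subset~$J$ of~$I$ is automatically $\gk$-small. Hence the very same factorization witnesses the condition of Proposition~\ref{P:WgkPresCoCplte} for the cardinal~$\gk$: for every poset-indexed colimit cocone in~$\cC$, every morphism from~$A$ to its underlying object factors through $\varinjlim_{i\in J}B_i$ for some $\gk$-small subset~$J$ of the index poset. Finally I would apply Proposition~\ref{P:WgkPresCoCplte} in the converse direction, now with the cardinal~$\gk$, to conclude that~$A$ is weakly $\gk$-presented.

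There is essentially no obstacle here; the only point requiring a moment's care is that Proposition~\ref{P:WgkPresCoCplte} is available precisely because~$\cC$ is assumed cocomplete, which is part of the hypothesis, so both directions of that equivalence may be used freely. (One could alternatively argue straight from Definition~\ref{D:wkapppres}, passing from a continuous directed colimit indexed by $[\Omega]^{<\gk}$ to its restriction along the inclusion $[\Omega]^{<\gl}\hookrightarrow[\Omega]^{<\gk}$, but routing through Proposition~\ref{P:WgkPresCoCplte} is cleaner and avoids having to re-examine continuity of the restricted diagram.)
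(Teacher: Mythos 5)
Your proof is correct and is exactly the argument the paper intends: the corollary is stated immediately after Proposition~\ref{P:WgkPresCoCplte} precisely so that one passes through its characterization, notes that a $\gl$-small subset $J$ is $\gk$-small when $\gl\leq\gk$, and applies the equivalence in the reverse direction. Nothing further is needed.
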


\begin{exple}\label{Ex:wkapppres}
For an infinite cardinal~$\gk$, we find a complete lattice (thus a cocomplete category)~$L_\gk$ in which the unit is weakly $\gk$-presented\index{i}{presented!weakly $\gl$-} in~$L_\gk$ but not weakly $\ga$-presented\index{i}{presented!weakly $\gl$-} for any $\ga<\gk$.

Denote by~$A_\gk$ the set of all infinite cardinals smaller than~$\gk$ and endow the set
 \[
 L^*_\gk:=\setm{(\ga,x)}{\ga\in A_\gk\text{ and }x\in[\ga]^{<\ga}}
 \]
with the partial ordering defined by
 \[
 (\ga,x)\leq(\gb,y)\ \Longleftrightarrow\ (\ga=\gb\text{ and }x\subseteq y)\,.
 \]
We set $L_\gk:=L^*_\gk\cup\set{0,1}$, for a new smallest (resp., largest) element~$0$ (resp., $1$). Then~$L_\gk$ is a complete lattice, thus, viewed as a category, $L_\gk$ is cocomplete. Nontrivial joins in~$L_\gk$ are given by
 \begin{align}
 (\ga,x)\vee(\gb,y)&=1\quad\text{if }\ga\neq\gb\,,\label{Eq:JoinDistLayers}\\
 \bigvee\famm{(\ga,x_i)}{i\in I}&=\begin{cases}
 \bigl(\ga,\bigcup\famm{x_i}{i\in I}\bigr)\,,&\text{if }
 \card\bigcup\famm{x_i}{i\in I}<\ga\,,\\
 1\,,&\text{if }\card\bigcup\famm{x_i}{i\in I}=\ga\,.\label{Eq:JoinSameLayer}
 \end{cases} 
 \end{align}
Suppose that the unit of~$L_\gk$ is the colimit (i.e., join) of a family $\famm{(\ga_i,x_i)}{i\in I}$ of elements of~$L^*_\gk$, we must find a $\gk$-small subset~$J$ if~$I$ such that $1=\bigvee\famm{(\ga_i,x_i)}{i\in J}$. If $\ga_i\neq\ga_j$ for some~$i,j$ then, using~\eqref{Eq:JoinDistLayers}, $1=(\ga_i,x_i)\vee(\ga_j,x_j)$ and we are done. Now suppose that $\ga_i=\ga$ for all $i\in I$. It follows from~\eqref{Eq:JoinSameLayer} that the union~$x$ of all~$x_i$ has cardinality~$\ga$. Pick $i_\gx\in I$ such that $\gx\in x_{i_\gx}$, for each $\gx\in x$. Then the set $J:=\setm{i_\gx}{\gx\in x}$ has cardinality at most~$\ga$ (thus it is $\gk$-small), and $1=\bigvee\famm{(\ga_i,x_i)}{i\in J}$. Therefore, by Proposition~\ref{P:WgkPresCoCplte}, $1$ is weakly $\gk$-presented\index{i}{presented!weakly $\gl$-} in~$L_\gk$.

Now let $\ga\in A_\gk$. Then $1=\bigvee\famm{(\ga,\set{\gx})}{\gx<\ga}$ in~$L_\gk$, but (using~\eqref{Eq:JoinSameLayer}) there is no $\ga$-small subset~$u$ of~$\ga$ such that $1=\bigvee\famm{(\ga,\set{\gx})}{\gx\in u}$. Therefore, by Proposition~\ref{P:WgkPresCoCplte}, $1$ is not weakly $\ga$-presented\index{i}{presented!weakly $\gl$-} in~$L_\gk$.
\end{exple}

\section{Extension of a functor by directed colimits}\label{S:ExtFunct}

The main result of the present section, Proposition~\ref{P:ArrObj2Diag}, states a very intuitive and probably mostly well-known fact, parts of which are already present in the literature such as Pudl\'ak~\cite{Pudl85}\index{c}{Pudl\'ak, P.}, although we could not trace a reference where it is stated explicitly in the strong form that we shall require. We are given categories~$\cA$ and~$\cS$, together with a full subcategory~$\cA^\dagger$ of~$\cA$ and a functor $\Phi\colon\cA^\dagger\to\cS$. We wish to extend~$\Phi$ to a functor, which preserves all small directed colimits, from all small directed colimits of diagrams from~$\cA^\dagger$ to~$\cS$. Proposition~\ref{P:ArrObj2Diag} states sufficient conditions under which this can be done.

\begin{lem}\label{L:ArrObj2Diag}
Let~$\cA^\dagger$ be a full subcategory of a category~$\cA$, let~$\cS$ be a category with all small directed colimits, and let~$\Phi\colon\cA^\dagger\to\cS$ be a functor. Let $f\colon A\to B$ be a morphism in~$\cA$, together with colimits
 \begin{align}
 \famm{A,\ga_i}{i\in I}&=\varinjlim\overrightarrow{A}\,,\quad\text{with }
 \overrightarrow{A}=\famm{A_i,\ga_i^{i'}}{i\leq i'\text{ in }I}\,,
 \label{Eq:Aascolim}\\
 \famm{B,\gb_j}{j\in J}&=\varinjlim\overrightarrow{B}\,,\quad\text{with }
 \overrightarrow{B}=\famm{B_j,\gb_j^{j'}}{j\leq j'\text{ in }J},
 \label{Eq:Bascolim}
 \end{align}
with both posets~$I$ and~$J$ directed\index{i}{poset!directed}, all~$A_i$ finitely presented\index{i}{presented!finitely}, and all~$A_i$ and~$B_j$ belonging to~$\cA^\dagger$. Set
 \begin{align*}
 K:=\setm{(i,j,x)}{(i,j)\in I\times J,&\ x\colon A_i\to B_j,\text{ and }
 \gb_j\circ x=f\circ\ga_i}\,,\notag\\
 \famm{\oll{A},\ol{\ga}_i}{i\in I}&:=\varinjlim\Phi\overrightarrow{A}\,,\\
 \famm{\ol{B},\ol{\gb}_j}{j\in J}&:=\varinjlim\Phi\overrightarrow{B}\,.
 \end{align*}
Then there exists a unique morphism~$\ol{f}\colon\oll{A}\to\ol{B}$ such that $\ol{\gb}_j\circ\Phi(x)=\ol{f}\circ\ol{\ga}_i$ for each $(i,j,x)\in K$.
\end{lem}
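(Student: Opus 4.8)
The plan is to construct $\ol{f}$ from the universal property of the colimit $\famm{\oll{A},\ol{\ga}_i}{i\in I}=\varinjlim\Phi\overrightarrow{A}$. Concretely, I would first produce, for each $i\in I$, a morphism $g_i\colon\Phi(A_i)\to\ol{B}$, then check that the family $\famm{\ol{B},g_i}{i\in I}$ is a cocone above $\Phi\overrightarrow{A}$, and finally let $\ol{f}\colon\oll{A}\to\ol{B}$ be the unique morphism with $\ol{f}\circ\ol{\ga}_i=g_i$ for all $i$. Throughout, I use that $\cA^\dagger$ is a \emph{full} subcategory containing all the $A_i$ and all the $B_j$, so that any morphism $A_i\to B_j$, $A_i\to A_{i'}$, or $B_j\to B_{j'}$ automatically lies in $\cA^\dagger$ and may be fed to $\Phi$.

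To define $g_i$: since $A_i$ is finitely presented and $B=\varinjlim\overrightarrow{B}$ is a directed (hence $\aleph_0$-directed) colimit, clause \tui\ of Definition~\ref{D:kapppres} applied to $f\circ\ga_i\colon A_i\to B$ yields $j\in J$ and $x\colon A_i\to B_j$ with $\gb_j\circ x=f\circ\ga_i$, i.e. $(i,j,x)\in K$; in particular $K$ meets $\set{i}\times J\times\Mor\cA$ for every $i$. I then set $g_i:=\ol{\gb}_j\circ\Phi(x)$. The one genuine point of the argument is that this is \emph{independent of the choice of $(j,x)$}. Given a second pair $(i,j',x')\in K$, I would take $j''\in J$ above both $j$ and $j'$; then $\gb_{j''}$ equalises $\gb_j^{j''}\circ x$ and $\gb_{j'}^{j''}\circ x'$ (each becomes $f\circ\ga_i$ after post-composing with $\gb_{j''}$), so clause \tuii\ of finite presentability of $A_i$ produces $k\geq j''$ in $J$ with $\gb_j^k\circ x=\gb_{j'}^k\circ x'$; applying $\Phi$, post-composing with $\ol{\gb}_k$, and using the cocone identities for $\famm{\ol{B},\ol{\gb}_j}{j\in J}$ gives $\ol{\gb}_j\circ\Phi(x)=\ol{\gb}_{j'}\circ\Phi(x')$. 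Cocone compatibility $g_{i'}\circ\Phi(\ga_i^{i'})=g_i$ for $i\leq i'$ is then routine: picking $(i',j,x')\in K$, the triple $(i,j,x'\circ\ga_i^{i'})$ lies in $K$, and both sides evaluate to $\ol{\gb}_j\circ\Phi(x')\circ\Phi(\ga_i^{i'})$.

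With $\ol{f}$ obtained from the universal property, the required identity $\ol{\gb}_j\circ\Phi(x)=\ol{f}\circ\ol{\ga}_i$ for $(i,j,x)\in K$ is immediate, since $\ol{f}\circ\ol{\ga}_i=g_i=\ol{\gb}_j\circ\Phi(x)$ by the definition of $g_i$ together with the well-definedness just shown. For uniqueness, any $\ol{f}'$ satisfying the same identities has $\ol{f}'\circ\ol{\ga}_i=\ol{\gb}_j\circ\Phi(x)=g_i=\ol{f}\circ\ol{\ga}_i$ for some (hence, by well-definedness, any) choice of $(i,j,x)\in K$, for each $i\in I$; since the $\ol{\ga}_i$ are collectively epic as limiting morphisms of the colimit $\oll{A}$, we conclude $\ol{f}'=\ol{f}$. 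I expect the \emph{well-definedness of $g_i$} to be the only step requiring real care — it is precisely there that both halves of ``$A_i$ finitely presented'' are invoked — while everything else is bookkeeping with fullness of $\cA^\dagger$ and the cocone relations.
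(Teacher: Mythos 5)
Your proof is correct, and it reaches the conclusion by a genuinely different organization than the paper's. The paper equips the (possibly proper-class-sized) set~$K$ with a partial order, proves that~$K$ is directed, shows that the two projections $K\to I$ and $K\to J$ are isotone with cofinal range, and then invokes the fact that cofinal reindexings preserve colimits (citing Ad\'amek--Rosick\'y) so that $\famm{\Phi(x)}{(i,j,x)\in K}$ becomes a natural transformation between two $K$-indexed colimit diagrams, to which the universal property is applied. You instead build the cocone $\famm{\ol{B},g_i}{i\in I}$ directly over the original index poset~$I$, with the well-definedness of~$g_i$ carrying the load; the computations you perform there (factoring $f\circ\ga_i$ through some~$B_j$ via clause \tui\ of finite presentability, then equalizing two such factorizations over a common~$j''$ via clause \tuii) are exactly the computations the paper uses to prove directedness of~$K$, so the mathematical content is the same. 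What your route buys is economy: no appeal to the cofinality lemma, no reindexing over~$K$, and no need to address the set-theoretic caveat that~$K$ may be a proper class (the paper flags this in a Note; your argument only ever \emph{chooses} elements of~$K$ rather than indexing a diagram by it). What the paper's route buys is that the directedness of~$K$ and the cofinality of the projections are established as reusable facts, and the natural-transformation formulation makes the functoriality arguments in the subsequent Proposition~\ref{P:ArrObj2Diag} slightly more uniform. Your uniqueness argument via the joint epimorphicity of the limiting morphisms $\ol{\ga}_i$ is also correct and matches the role the universal property plays in the paper's version.
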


\begin{snote}
While~$I$ and~$J$ are assumed to be sets, $K$ may be a proper class.
\end{snote}

\begin{proof}
We define a partial ordering~$\leq$ on the class~$K$ by the rule
 \[
 (i,j,x)\leq(i',j',x')\ \Longleftrightarrow\ (i\leq i',\ j\leq j',\text{ and }
 \gb_j^{j'}\circ x=x'\circ\ga_i^{i'})\,.
 \]
We claim that~$K$ is directed\index{i}{poset!directed}. Let $(i_0,j_0,x_0),(i_1,j_1,x_1)\in K$. We pick $i\geq i_0,i_1$ in~$I$. As~$A_i$ is finitely presented\index{i}{presented!finitely}, there exists $j\in J$ such that $j\geq j_0,j_1$ and the morphism $f\circ\ga_i\colon A_i\to B$ factors through~$B_j$; the latter means that there exists $x\colon A_i\to B_j$ such that $\gb_j\circ x=f\circ\ga_i$. It follows that
 \[
 \gb_j\circ x\circ\ga_{i_0}^i=f\circ\ga_i\circ\ga_{i_0}^i=f\circ\ga_{i_0}
 =\gb_{j_0}\circ x_0=\gb_j\circ\gb_{j_0}^j\circ x_0\,,
 \]
and thus, as~$A_{i_0}$ is finitely presented\index{i}{presented!finitely}, there exists $j'\in J\upw j$ such that
 \[
 \gb_j^{j'}\circ x\circ\ga_{i_0}^i=\gb_j^{j'}\circ\gb_{j_0}^j\circ x_0\,,
 \]
that is,
 \[
 \gb_j^{j'}\circ x\circ\ga_{i_0}^i=\gb_{j_0}^{j'}\circ x_0\,.
 \]
This implies that $(i,j',\gb_j^{j'}\circ x)$ belongs to $K\upw(i_0,j_0,x_0)$. A similar argument, using the finite presentability of~$A_{i_1}$, yields an element $j''\in J\upw j'$ such that $(i,j'',\gb_j^{j''}\circ x)$ belongs to $K\upw(i_1,j_1,x_1)$. Therefore, $(i,j'',\gb_j^{j''}\circ x)$ is an element of~$K$ above both elements $(i_0,j_0,x_0)$ and $(i_1,j_1,x_1)$, which completes the proof of our claim.

Both projections $\gl\colon K\to I$, $(i,j,x)\mapsto i$ and $\gm\colon K\to J$, $(i,j,x)\mapsto j$ are isotone. Let $i\in I$. As~$A_i$ is finitely presented\index{i}{presented!finitely}, there exists $j\in J$ such that the morphism $f\circ\ga_i\colon A_i\to B$ factors through~$B_j$; furthermore, $j$ can be chosen above any given element of~$J$. By definition, there exists~$x\colon A_i\to B_j$ such that $(i,j,x)\in K$. This proves that~$\gl$ is surjective (thus, \emph{a fortiori}, it has cofinal range) and~$\gm$ has cofinal range.

As both~$\gl$ and~$\gm$ are isotone with cofinal range and~$K$ is directed\index{i}{poset!directed},
it follows from \cite[Section~0.11]{AdRo}\index{c}{Ad\'amek, J.}\index{c}{Rosick\'y, J.} that
 \begin{align*}
 \famm{\oll{A},\ol{\ga}_i}{(i,j,x)\in K}&=\varinjlim\Phi\overrightarrow{A}\gl\,,\\
 \famm{\ol{B},\ol{\gb}_j}{(i,j,x)\in K}&=\varinjlim\Phi\overrightarrow{B}\gm\,.
 \end{align*}
By the definition of the ordering of~$K$, the family $\famm{\Phi(x)}{(i,j,x)\in K}$ defines a natural transformation from~$\Phi\overrightarrow{A}\gl$ to~$\Phi\overrightarrow{B}\gm$. The universal property of the colimit gives the desired conclusion.
\qed\end{proof}

\begin{prop}\label{P:ArrObj2Diag}
Let~$\cA^\dagger$ be a full subcategory of finitely presented\index{i}{presented!finitely} objects in a category~$\cA$, let~$\cS$ be a category with all small directed colimits. We assume that every object in~$\cA$ is a small directed colimit of objects in~$\cA^\dagger$. Then every functor $\Phi\colon\cA^\dagger\to\cS$ extends to a functor $\ol{\Phi}\colon\cA\to\cS$ which preserves all colimits of directed poset-indexed diagrams in~$\cA^\dagger$. Furthermore, if~$\cA^\dagger$ has small hom-sets, then~$\ol{\Phi}$ preserves all small directed colimits.
\end{prop}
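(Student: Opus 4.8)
The plan is to build $\ol{\Phi}$ explicitly, using Lemma~\ref{L:ArrObj2Diag} as the engine, and then to verify functoriality together with the two preservation statements. First I would fix, for every object~$A$ of~$\cA$, a directed colimit representation $\famm{A,\ga_i}{i\in I}=\varinjlim\overrightarrow{A}$ with $\overrightarrow{A}=\famm{A_i,\ga_i^{i'}}{i\le i'\text{ in }I}$ a diagram in~$\cA^\dagger$ over a small directed poset~$I$ (such a representation exists by hypothesis; for $A\in\cA^\dagger$ I take the one-object diagram, so that the construction will extend~$\Phi$). Since~$\cS$ has all small directed colimits, I set $\ol{\Phi}(A):=\varinjlim\Phi\overrightarrow{A}$, with limiting morphisms $\ol{\ga}_i\colon\Phi(A_i)\to\ol{\Phi}(A)$. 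For a morphism $f\colon A\to B$ in~$\cA$, applying Lemma~\ref{L:ArrObj2Diag} to the chosen representations of~$A$ and~$B$ (every object of~$\cA^\dagger$ is finitely presented, so its hypotheses hold) yields a unique $\ol{f}\colon\ol{\Phi}(A)\to\ol{\Phi}(B)$ such that $\ol{\gb}_j\circ\Phi(x)=\ol{f}\circ\ol{\ga}_i$ whenever $\gb_j\circ x=f\circ\ga_i$; I put $\ol{\Phi}(f):=\ol{f}$.

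Functoriality is then forced by the uniqueness clause of Lemma~\ref{L:ArrObj2Diag}, via the recurring observation that if $\ga_j\circ x=\ga_i$ with $i,j$ in the index poset of the chosen representation of~$A$, then $\ol{\ga}_j\circ\Phi(x)=\ol{\ga}_i$ (choose $k\ge i,j$, use finite presentability of~$A_i$ to produce $k'\ge k$ with $\ga_i^{k'}=\ga_j^{k'}\circ x$ in~$\cA^\dagger$, then apply~$\Phi$ and compose with $\ol{\ga}_{k'}$). From this, $\id_{\ol{\Phi}(A)}$ satisfies the equations characterizing $\ol{\id_A}$, so $\ol{\Phi}(\id_A)=\id$; and for $f\colon A\to B$, $g\colon B\to C$ one checks that $\ol{\Phi}(g)\circ\ol{\Phi}(f)$ satisfies the equations characterizing $\ol{g\circ f}$ by factoring $f\circ\ga_i$ through some~$B_j$ and $g\circ\gb_j$ through some stage of the chosen representation of~$C$ (finite presentability again), then reconciling the two resulting factorizations of $g\circ f\circ\ga_i$. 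For $A\in\cA^\dagger$ the one-object representation gives $\ol{\Phi}(A)=\Phi(A)$ and $\ol{\Phi}(f)=\Phi(f)$, so $\ol{\Phi}$ extends~$\Phi$.

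For preservation, consider first a directed poset-indexed diagram $\overrightarrow{A}$ in~$\cA^\dagger$ with colimit $\famm{A,\ga_i}{i\in I}$. When $\overrightarrow{A}$ is the chosen representation of~$A$, the observation above gives $\ol{\Phi}(\ga_i)=\ol{\ga}_i$, so the colimit is preserved; for a general such~$\overrightarrow{A}$, I compare $\varinjlim\Phi\overrightarrow{A}$ with $\ol{\Phi}(A)$ by building, out of the morphisms $\ol{\Phi}(\ga_i)$ and out of factorizations through the~$A_i$ of the limiting morphisms of the chosen representation of~$A$, mutually inverse comparison morphisms, inversion being tested against the two jointly epic limiting cocones and relying once more on finite presentability. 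Now assume $\cA^\dagger$ has small hom-sets. Then each comma category $\cA^\dagger\dnw A$ is essentially small, since any arrow $C\to A$ with $C\in\cA^\dagger$ equals $\ga_i\circ v$ for some $i$ in the chosen representation of~$A$ and some $v\in\cA^\dagger(C,A_i)$, and the latter form a set; hence the chosen representation is cofinal in $\cA^\dagger\dnw A$ and $\ol{\Phi}(A)$ is, up to canonical isomorphism, the colimit of~$\Phi$ over $\cA^\dagger\dnw A$. Given then an arbitrary small directed colimit $\famm{A,\ga_i}{i\in I}=\varinjlim\overrightarrow{A}$ in~$\cA$ with the~$A_i$ arbitrary, and a cocone $\famm{T,\tau_i}{i\in I}$ over $\ol{\Phi}\overrightarrow{A}$, I factor each limiting morphism~$a_l$ of the chosen representation of~$A$ through some~$A_i$ by $y_l\colon A'_l\to A_i$ with $\ga_i\circ y_l=a_l$, set $\sigma_l:=\tau_i\circ\ol{\Phi}(y_l)$, check as before that $\famm{T,\sigma_l}{l}$ is a well-defined cocone over $\Phi\famm{A'_l}{l}$, obtain the induced $u\colon\ol{\Phi}(A)\to T$, and verify $u\circ\ol{\Phi}(\ga_i)=\tau_i$ together with the uniqueness of~$u$; to test against the limiting cocones of the $\ol{\Phi}(A_i)$ here I use the already-established preservation of directed colimits of diagrams in~$\cA^\dagger$, applied to each~$A_i$. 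This shows $\famm{\ol{\Phi}(A),\ol{\Phi}(\ga_i)}{i\in I}=\varinjlim\ol{\Phi}\overrightarrow{A}$.

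The main obstacle is the bookkeeping, not any single idea: each identity to be verified — that the arrow supplied by Lemma~\ref{L:ArrObj2Diag} respects identities and composition, that the two comparison morphisms in the $\cA^\dagger$-case are mutually inverse, that~$u$ is compatible with the cocone over~$\overrightarrow{A}$ — comes down to pushing one equation far enough along a directed diagram by invoking finite presentability of the appropriate $\cA^\dagger$-objects, and the delicate point is keeping track of \emph{which} directed diagram and \emph{which} stage is being used. The small-hom-sets hypothesis enters precisely to keep the comma categories $\cA^\dagger\dnw A$ essentially small, so that the reduction of an arbitrary small directed colimit to the already-treated $\cA^\dagger$-indexed case is legitimate.
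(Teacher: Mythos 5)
Your construction of $\ol{\Phi}$, the functoriality check, and the preservation of colimits of directed poset-indexed diagrams in~$\cA^\dagger$ (via mutually inverse comparison morphisms supplied by Lemma~\ref{L:ArrObj2Diag}) coincide with the paper's argument. Where you genuinely diverge is the final step. The paper treats an arbitrary small directed colimit $A=\varinjlim_{i\in I}A_i$ by assembling the chosen $\cA^\dagger$-representations of all the~$A_i$ into a single category~$\cP$ of double indices $(i,j)$, proving that~$\cP$ is filtered, showing $A=\varinjlim\bA$ over~$\cP$, and then invoking \cite[Theorem~1.5]{AdRo} to replace~$\cP$ by a cofinal small directed poset so that the already-established preservation applies; the small-hom-sets hypothesis is used there precisely to make~$\cP$ a \emph{small} category. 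You instead verify the universal property directly: from a cocone $\famm{T,\tau_i}{i\in I}$ over $\ol{\Phi}\overrightarrow{A}$ you manufacture a cocone over the canonical representation of~$A$ by factoring its limiting morphisms through the~$A_i$ (finite presentability), then check compatibility, the identity $u\circ\ol{\Phi}(\ga_i)=\tau_i$, and uniqueness by pushing equations forward along the directed systems. This is sound and arguably more elementary, since it bypasses the filtered-category machinery. Two caveats, though. First, your claim that $\cA^\dagger\dnw A$ is \emph{essentially small} does not follow from small hom-sets: the objects of~$\cA^\dagger$ may form a proper class up to isomorphism, and factoring every arrow $C\to A$ through some~$A_i$ does not produce isomorphisms in the comma category. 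What your argument actually establishes, and all it uses, is that the chosen representation is cofinal in $\cA^\dagger\dnw A$. Second, precisely because your route avoids~$\cP$, the small-hom-sets hypothesis nearly vanishes from your proof, surviving only in the set-theoretic hygiene of selecting the factorizations~$y_l$ from possibly large hom-classes (note these are hom-classes of~$\cA$, not of~$\cA^\dagger$, so the hypothesis does not even bound them directly). This is consistent with Remark~\ref{Rk:SmSetTh}, which observes that the hypothesis can be removed under mild set-theoretic assumptions, but you should state explicitly where, if anywhere, your argument uses it, rather than attributing its role to an essential smallness that does not hold.
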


\begin{proof}
The first part of the proof, up to the preservation of directed colimits from~$\cA^\dagger$, is established in a similar fashion as in the proof of the Corollary in Pudl\'ak~\cite[page~101]{Pudl85}\index{c}{Pudl\'ak, P.}.

For every object~$A$ of~$\cA$, we pick a representation of the form
 \begin{equation}\label{Eq:CanReprA}
  \famm{A,\ga_i}{i\in I}=\varinjlim\famm{A_i,\ga_i^{i'}}{i\leq i'\text{ in }I}
  \text{ in }\cA\,,
 \end{equation}
which we shall call the \emph{canonical representation of~$A$}, in such a way that~$I$ is directed\index{i}{poset!directed}, $A_i\in\cA^\dagger$ for all~$i\in I$, and $I=\set{\bot}$ with $A_\bot=A$ in case~$A\in\cA^\dagger$. We define~$\ol{\Phi}$ on objects by picking a cocone $\famm{\ol{\Phi}(A),\ol{\ga}_i}{i\in I}$ such that
 \[
 \famm{\ol{\Phi}(A),\ol{\ga}_i}{i\in I}=
 \varinjlim\famm{\Phi(A_i),\Phi(\ga_i^{i'})}{i\leq i'\text{ in }I}\text{ in }\cS\,.
 \]
By Lemma~\ref{L:ArrObj2Diag}, for each morphism~$f\colon A\to B$ in~$\cA$ with canonical representations~\eqref{Eq:Aascolim} and~\eqref{Eq:Bascolim}, there exists a unique morphism $\ol{\Phi}(f)\colon\ol{\Phi}(A)\to\ol{\Phi}(B)$ in~$\cS$ such that $\ol{\gb}_j\circ\Phi(x)=\ol{\Phi}(f)\circ\ol{\ga}_i$ for all $(i,j)\in I\times J$ and all $x\colon A_i\to B_j$ such that $\gb_j\circ x=f\circ\ga_i$.

We first prove that $\ol{\Phi}$ is a functor. It is clear that $\ol{\Phi}$ sends identities to identities. Now let $f\colon A\to B$ and $g\colon B\to C$ in~$\cA$, put $h:=g\circ f$, and let the canonical representations of~$A$, $B$, and~$C$ be respectively given by~\eqref{Eq:Aascolim}, \eqref{Eq:Bascolim}, and
 \[
 \famm{C,\gc_k}{k\in K}=
 \varinjlim\famm{C_k,\gc_k^{k'}}{k\leq k'\text{ in }K}\,.
 \]
Let $(i,k)\in I\times K$ and $z\colon A_i\to C_k$ such that $\gc_k\circ z=h\circ\ga_i$. As~$A_i$ is finitely presented\index{i}{presented!finitely}, there are~$j\in J$ and~$x\colon A_i\to B_j$ such that $\gb_j\circ x=f\circ\ga_i$. As~$B_j$ is finitely presented\index{i}{presented!finitely}, there are $k'\geq k$ in~$K$ and $y\colon B_j\to C_{k'}$ such that $\gc_{k'}\circ y=g\circ\gb_j$. As
 \[
 \gc_{k'}\circ y\circ x=g\circ\gb_j\circ x=g\circ f\circ\ga_i=
 h\circ\ga_i=\gc_k\circ z
 \]
and~$A_i$ is finitely presented\index{i}{presented!finitely}, there exists $k''\geq k'$ in~$K$ such that $\gc_{k'}^{k''}\circ y\circ x=\gc_k^{k''}\circ z$. Hence, by replacing~$k'$ by~$k''$ and~$y$ by~$\gc_{k'}^{k''}\circ y$, we may assume that~$k'=k''$, and hence $y\circ x=\gc_k^{k'}\circ z$ (see Figure~\ref{Fig:olPhiFunct}).

\begin{figure}[htb]
 \[
 \def\labelstyle{\displaystyle}
 \xymatrix{
 & & C_k\ar[d]_{\gc_k^{k'}}\ar@/^1pc/[dd]^{\gc_k}
 &&&&& \Phi(C_k)\ar[d]_{\Phi(\gc_k^{k'})}\ar@/^2pc/[dd]^(.7){\ol{\gc}_k}\\
 A_i\ar[r]|-x\ar[d]_{\ga_i}\ar[rru]^z & B_j\ar[r]|-y\ar[d]_{\gb_j} &
 C_{k'}\ar[d]_{\gc_{k'}} &
 \Phi(A_i)\ar[rr]|-{\Phi(x)}\ar[d]_{\ol{\ga}_i}\ar[rrrru]^{\Phi(z)}
 && \Phi(B_j)\ar[rr]|-{\Phi(y)}\ar[d]_{\ol{\gb}_j} &&
 \Phi(C_{k'})\ar[d]_{\ol{\gc}_{k'}}\\
 A\ar[r]_f & B\ar[r]_g & C &
 \ol{\Phi}(A)\ar[rr]_{\ol{\Phi}(f)} && \ol{\Phi}(B)\ar[rr]_{\ol{\Phi}(g)} && \ol{\Phi}(C)
 }
 \]
\caption{Proving that $\ol{\Phi}$ is a functor}
\label{Fig:olPhiFunct}
\end{figure}

It follows (see again Figure~\ref{Fig:olPhiFunct}) that
 \begin{align*}
 \ol{\Phi}(g)\circ\ol{\Phi}(f)\circ\ol{\ga}_i&=\ol{\Phi}(g)\circ\ol{\gb}_j\circ\Phi(x)\\
 &=\ol{\gc}_{k'}\circ\Phi(y)\circ\Phi(x)\\
 &=\ol{\gc}_{k'}\circ\Phi(\gc_k^{k'})\circ\Phi(z)\\
 &=\ol{\gc}_k\circ\Phi(z)\,. 
 \end{align*}
Therefore, by definition, $\ol{\Phi}(g)\circ\ol{\Phi}(f)=\ol{\Phi}(h)=\ol{\Phi}(g\circ f)$, and so~$\ol{\Phi}$ is a functor.

Now let~$A$ be an object of~$\cA$, with canonical representation~\eqref{Eq:CanReprA} and for which another representation
 \[
 \famm{A,\gb_j}{j\in J}=\varinjlim\famm{B_j,\gb_j^{j'}}{j\leq j'\text{ in }J}\,,
 \]
is given, where $J$ is directed\index{i}{poset!directed} and $B_j\in\cA^\dagger$ for all~$j\in J$. We shall prove that
 \begin{equation}\label{Eq:wtdcolA}
 \famm{\ol{\Phi}(A),\ol{\Phi}(\gb_j)}{j\in J}=
 \varinjlim\famm{\Phi(B_j),\Phi(\gb_j^{j'})}{j\leq j'\text{ in }J}\,.
 \end{equation}
We put
 \begin{align}
 \famm{\ol{B},\ol{\gb}_j}{j\in J}&:=
 \varinjlim\famm{\Phi(B_j),\Phi(\gb_j^{j'})}{j\leq j'\text{ in }J}\,,\label{Eq:DefolBA}\\
 U&:=\setm{(i,j,x)}{(i,j)\in I\times J,\ x\colon A_i\to B_j,\text{ and }
 \gb_j\circ x=\ga_i}\,,\notag\\
 V&:=\setm{(i,j,y)}{(i,j)\in I\times J,\ y\colon B_j\to A_i,\text{ and }
 \gb_j=\ga_i\circ y}\,.\notag
 \end{align}
By Lemma~\ref{L:ArrObj2Diag}, there are unique morphisms $u\colon\ol{\Phi}(A)\to\ol{B}$ and\linebreak $v\colon\ol{B}\to\ol{\Phi}(A)$ such that $\ol{\gb}_j\circ\Phi(x)=u\circ\ol{\ga}_i$ for all $(i,j,x)\in U$ and $\ol{\ga}_i\circ\Phi(y)=v\circ\ol{\gb}_j$ for all $(i,j,y)\in V$. As in the paragraph above, it follows that $v\circ u\circ\ol{\ga}_i=\ol{\ga}_i$ for each $i\in I$; whence $v\circ u=\id_{\ol{\Phi}(A)}$. Similarly, $u\circ v=\id_{\ol{B}}$, and thus~$u$ and~$v$ are mutually inverse isomorphisms. For all~$j\in J$, $\ol{\Phi}(\gb_j)$ is the unique morphism from~$\Phi(B_j)$ to~$\ol{\Phi}(A)$ such that $\ol{\Phi}(\gb_j)=\ol{\ga}_i\circ\Phi(y)$ for each $i\in I$ and all $y\colon B_j\to A_i$ with $\gb_j=\ga_i\circ y$. It follows that $\ol{\Phi}(\gb_j)=v\circ\ol{\gb}_j$, and hence, by~\eqref{Eq:DefolBA} and as~$v$ is an isomorphism, \eqref{Eq:wtdcolA} follows.

So far we have proved that~$\ol{\Phi}$ preserves all colimits of directed poset-indexed diagrams in~$\cA^\dagger$. Now we assume that~$\cA^\dagger$ has small hom-sets. We are given an arbitrary small directed colimit in~$\cA$ as in~\eqref{Eq:CanReprA}, where the~$A_i$ are no longer assumed to be in~$\cA^\dagger$, we must prove that the following statement holds:
 \begin{equation}\label{Eq:olPhi(colimA)}
 \famm{\ol{\Phi}(A),\ol{\Phi}(\ga_i)}{i\in I}=
 \varinjlim\famm{\ol{\Phi}(A_i),\ol{\Phi}(\ga_i^{i'})}{i\leq i'\text{ in }I}\text{ in }\cS\,.
 \end{equation}
For each~$i\in I$, we pick a representation
 \begin{equation}\label{Eq:CanReprAi}
  \famm{A_i,\ga_{i,j}^i}{j\in J_i}=
  \varinjlim\famm{A_{i,j},\ga_{i,j}^{i,j'}}{j\leq j'\text{ in }J_i}
  \text{ in }\cA\,,
 \end{equation}
where the poset~$J_i$ is directed\index{i}{poset!directed} and all~$A_{i,j}$ belong to~$\cA^\dagger$. We put
 \[
 P:=\bigcup\famm{\set{i}\times J_i}{i\in I}\,.
 \]
For $(i,j),(i',j')\in P$, we define a morphism from~$(i,j)$ to~$(i',j')$ as a morphism $x\colon A_{i,j}\to A_{i',j'}$ in~$\cA^\dagger$ such that $\ga_{i',j'}^{i'}\circ x=\ga_i^{i'}\circ\ga_{i,j}^i$ (this requires~$i\leq i'$). This defines a category~$\cP$ with underlying set~$P$. As~$\cA^\dagger$ has small hom-sets, $\cP$ is a small category. We put
 \[
 \ga_{i,j}^{i'}:=\ga_i^{i'}\circ\ga_{i,j}^i\,,\qquad\text{for all }i\leq i'\text{ in }I
 \text{ and all }j\in J_i\,.
 \]

\begin{claim}
The category~$\cP$ is filtered.
\end{claim}

\begin{proof}
First let $(i_0,j_0),(i_1,j_1)\in P$. There exists $i\geq i_0,i_1$ in~$I$. As~$A_{i_l,j_l}$ is finitely presented\index{i}{presented!finitely}, there exists~$j'_l\in J_i$ such that~$\ga_{i_l,j_l}^i$ factors through~$A_{i,j'_l}$, for all $l<2$. Hence, taking $j\geq j'_0,j'_1$ in~$J_i$, both morphisms~$\ga_{i_0,j_0}^i$ and~$\ga_{i_1,j_1}^i$ factor through~$A_{i,j}$, which yields $x_l\colon(i_l,j_l)\to(i,j)$, for all $l<2$. Part of the argument can be followed on Figure~\ref{Fig:CommSqAij}.

\begin{figure}[htb]\index{i}{square (shape of a diagram)}
 \[
 \def\labelstyle{\displaystyle}\xymatrixcolsep{1.5pc}
 \xymatrix{
 A_{i_l,j_l}\ar[rr]^{\ga_{i_l,j_l}^{i_l}}
 \ar[d]_{x_l}\ar[rrd]|-{\ga_{i_l,j_l}^i} &&
 A_{i_l}\ar[d]^{\ga_{i_l}^i}\\
 A_{i,j}\ar[rr]_{\ga_{i,j}^i} && A_i 
 }
 \]
\caption{A commutative square in~$\cA$}
\label{Fig:CommSqAij}
\end{figure}

Next, let $x,y\colon(i_0,j_0)\to(i_1,j_1)$ in~$\cP$, so
$\ga_{i_0,j_0}^{i_1}=\ga_{i_1,j_1}^{i_1}\circ x=\ga_{i_1,j_1}^{i_1}\circ y$. As~$A_{i_0,j_0}$ is finitely presented\index{i}{presented!finitely}, there exists~$j\geq j_1$ in~$J_{i_1}$ such that
$\ga_{i_1,j_1}^{i_1,j}\circ x=\ga_{i_1,j_1}^{i_1,j}\circ y$. Therefore, $\ga_{i_1,j_1}^{i_1,j}\colon(i_1,j_1)\to(i_1,j)$ coequalizes~$x$ and~$y$ in~$\cP$.
\qed\ Claim\end{proof}

Now we put
 \[
 \ga_{i,j}:=\ga_i\circ\ga_{i,j}^i\,,\qquad\text{for all }(i,j)\in P\,,
 \]
and we define a functor $\bA\colon\cP\to\cA$ by $\bA(i,j):=A_{i,j}$ for $(i,j)\in P$ and $\bA(x):=x$ for every morphism~$x$ in~$\cP$.
It is straightforward to verify that $\famm{A_{i,j},\ga_{i,j}}{(i,j)\in P}$ is a cocone above~$\bA$.

We shall now establish the statement
 \begin{equation}\label{Eq:A=colimbA}
 \famm{A,\ga_{i,j}}{(i,j)\in P}=\varinjlim\bA\,.
 \end{equation}
Let $\famm{B,\gb_{i,j}}{(i,j)\in P}$ be a cocone above~$\bA$. In particular, 
for all~$i\in I$, the family $\famm{B,\gb_{i,j}}{j\in J_i}$ is a cocone above
$\famm{A_{i,j},\ga_{i,j}^{i,j'}}{j\leq j'\text{ in }J_i}$, thus, by~\eqref{Eq:CanReprAi}, there exists a unique morphism~$\gb_i\colon A_i\to B$ such that $\gb_{i,j}=\gb_i\circ\ga_{i,j}^i$ for all $j\in J_i$. Let $i\leq i'$ in~$I$ and let $j\in J_i$. As $\ga_i^{i'}\circ\ga_{i,j}^i\colon A_{i,j}\to A_{i'}=\varinjlim_{j'\in J_{i'}}A_{i',j'}$ and~$A_{i,j}$ is finitely presented\index{i}{presented!finitely}, there are $j'\in J_{i'}$ and $x\colon(i,j)\to(i',j')$ in~$\cP$. Hence,
 \begin{align*}
 \gb_{i'}\circ\ga_i^{i'}\circ\ga_{i,j}^i&=\gb_{i'}\circ\ga_{i',j'}^{i'}\circ x\\
 &=\gb_{i',j'}\circ x\\
 &=\gb_{i,j}\\
 &=\gb_i\circ\ga_{i,j}^i\,. 
 \end{align*}
As this holds for all $j\in J_i$, it follows that $\gb_{i'}\circ\ga_i^{i'}=\gb_i$. Hence, as $A=\varinjlim_{i\in I}A_i$, there exists a unique morphism $\gf\colon A\to B$ such that $\gb_i=\gf\circ\ga_i$ for each $i\in I$. It follows that
 \[
 \gf\circ\ga_{i,j}=\gf\circ\ga_i\circ\ga_{i,j}^i
 =\gb_i\circ\ga_{i,j}^i=\gb_{i,j}\,,\qquad\text{for all }(i,j)\in P\,.
 \]
Now let $\gy\colon A\to B$ such that
 \[
 \gy\circ\ga_{i,j}=\gb_{i,j}\,,\qquad\text{for all }(i,j)\in P\,.
 \]
For $i\in I$ and $j\in J_i$, we compute
 \[
 \gy\circ\ga_i\circ\ga_{i,j}^i=\gy\circ\ga_{i,j}=\gb_{i,j}=\gb_i\circ\ga_{i,j}^i\,.
 \]
As this holds for all $j\in J_i$, it follows that $\gy\circ\ga_i=\gb_i$. As this holds for all~$i\in I$ and by the uniqueness statement defining~$\gf$, it follows that~$\gf=\gy$, thus completing the proof of~\eqref{Eq:A=colimbA}.

As, by the claim above, $\cP$ is a small filtered category\index{i}{filtered category}, it follows from~\cite[Theorem~1.5]{AdRo}\index{c}{Ad\'amek, J.}\index{c}{Rosick\'y, J.} that there are a small directed\index{i}{poset!directed} poset~$\ol{P}$ and a cofinal functor from~$\ol{P}$ to~$\cP$. As we have seen that $\ol{\Phi}$ preserves all colimits of directed poset-indexed diagrams in~$\cA^\dagger$, it follows from~\cite[Section~0.11]{AdRo}\index{c}{Ad\'amek, J.}\index{c}{Rosick\'y, J.} and~\eqref{Eq:A=colimbA} that
 \begin{equation}\label{Eq:PhiA=colimPhibA}
 \famm{\ol{\Phi}(A),\ol{\Phi}(\ga_{i,j})}{(i,j)\in P}=\varinjlim\Phi\bA\,.
 \end{equation}
Now we can conclude the proof of \eqref{Eq:olPhi(colimA)}. Let
$\famm{S,\gs_i}{i\in I}$ be a cocone in~$\cS$ above
$\famm{\ol{\Phi}(A_i),\ol{\Phi}(\ga_i^{i'})}{i\leq i'\text{ in }I}$.
Put $\gs_{i,j}:=\gs_i\circ\ol{\Phi}(\ga_{i,j}^i)$, for all $(i,j)\in P$. For every morphism $x\colon(i,j)\to(i',j')$ in~$\cP$,
 \begin{align*}
 \gs_{i',j'}\circ\Phi(x)&=\gs_{i'}\circ\ol{\Phi}(\ga_{i',j'}^{i'})\circ\Phi(x)\\
 &=\gs_{i'}\circ\ol{\Phi}(\ga_i^{i'})\circ\ol{\Phi}(\ga_{i,j}^i)\\
 &=\gs_i\circ\ol{\Phi}(\ga_{i,j}^i)\\
 &=\gs_{i,j}\,. 
 \end{align*}
This proves that $\famm{S,\gs_{i,j}}{(i,j)\in P}$ is a cocone above~$\Phi\bA$, thus, by~\eqref{Eq:PhiA=colimPhibA}, there exists a unique $\gf\colon\ol{\Phi}(A)\to S$ such that $\gs_{i,j}=\gf\circ\ol{\Phi}(\ga_{i,j})$ for all $(i,j)\in P$. For all $(i,j)\in P$,
 \[
 \gs_i\circ\ol{\Phi}(\ga_{i,j}^i)=\gs_{i,j}=\gf\circ\ol{\Phi}(\ga_{i,j})
 =\gf\circ\ol{\Phi}(\ga_i)\circ\ol{\Phi}(\ga_{i,j}^i)\,.
 \]
Fix $i\in I$. As the equation above is satisfied for all~$j\in J_i$, it follows that~$\gs_i=\gf\circ\ol{\Phi}(\ga_i)$.

Finally, let $\gy\colon\ol{\Phi}(A)\to S$ satisfy $\gs_i=\gy\circ\ol{\Phi}(\ga_i)$ for all~$i\in I$. Then $\gs_{i,j}=\gy\circ\ol{\Phi}(\ga_{i,j})$ for all $(i,j)\in P$, and thus, by the uniqueness statement defining~$\gf$, we get~$\gf=\gy$, which concludes the proof.
\qed\end{proof}

\begin{remk}\label{Rk:SmSetTh}
Under mild set-theoretical assumptions, it is easy to remove from the statement of the final sentence of Proposition~\ref{P:ArrObj2Diag} the hypothesis that~$\cA^\dagger$ has small hom-sets. In the proof of Proposition~\ref{P:ArrObj2Diag}, we need to replace~$\cP$ by a \emph{small} subcategory~$\cP^*$ satisfying the following conditions:
\begin{description}
\item[\tui] $\Ob\cP^*=P$.

\item[\tuii] The morphism $\ga_{i,j_0}^{i,j_1}$ belongs to $\Mor_{\cP^*}((i,j_0),(i,j_1))$, for all~$i\in I$ and all~$j_0\leq j_1$ in~$J_i$.

\item[\tuiii] $\Mor_{\cP}((i,j),(i',j'))\neq\es$ iff $\Mor_{\cP^*}((i,j),(i',j'))\neq\es$, for all elements~$(i,j)$ and~$(i',j')$ in~$P$.
\end{description}

For example, such a $\cP^*$ can be constructed in case the ambient set-theoretical universe satisfies the Bernays-G\"odel class theory with axiom of foundation. However, as we will need Proposition~\ref{P:ArrObj2Diag} only in case $\cA=\Bool_P$\index{s}{BoolP@$\Bool_P$} (cf. Section~\ref{S:Pnorm}), which has small hom-sets, we shall not expand on this further here.
\end{remk}

\begin{remk}\label{Rk:Restrkappa}
Let~$\gk$ be an infinite regular cardinal. If we assume only, in the statement of Proposition~\ref{P:ArrObj2Diag}, that~$\cS$ has all $\gk$-small directed colimits, that every object in~$\cA$ is a colimit of a $\gk$-small directed poset-indexed diagram in~$\cA^\dagger$, and that~$\cA^\dagger$ has $\gk$-small hom-sets, then the following analogue of the conclusion of Proposition~\ref{P:ArrObj2Diag} remains valid: \emph{The functor~$\Phi$ extends to a functor~$\ol{\Phi}\colon\cA\to\cS$ that preserves all~$\gk$-small directed colimits}. We shall use this result only in case $\cA=\Bool_P$\index{s}{BoolP@$\Bool_P$} (cf. Section~\ref{S:BoolP}), where these assumptions will be automatically satisfied.
\end{remk}

\begin{remk}\label{Rk:ExtPhi}
It is not hard to verify that any two extensions of~$\Phi$ to~$\cA$ preserving all colimits of directed poset-indexed diagrams in~$\cA^\dagger$ are isomorphic above~$\Phi$. Hence we shall call the functor~$\ol{\Phi}$ constructed in Proposition~\ref{P:ArrObj2Diag} the \emph{natural extension of~$\Phi$ to~$\cA$}.
\end{remk}

\section{Projectability witnesses}\label{S:ProjWit}

Given categories~$\cA$ and~$\cB$ together with a functor $\Psi\colon\cA\to\cB$, it will often be the case (though not always, a notable exception being developed in Chapter~\ref{Ch:CongPres}) that certain arrows of the form $\gf\colon\Psi(A)\to B$ can be turned into \emph{isomorphisms} of the form $\eps\colon\Psi(\oll{A})\to B$, for a ``canonical quotient''~$\oll{A}$ of~$A$. Lemma~\ref{L:LiftProj} will be our key idea to turn statements involving the ``double arrows''\index{i}{double arrow}, mentioned in Section~\ref{Su:Contents} and introduced formally in Section~\ref{S:CLL}, to isomorphisms. Roughly speaking, the existence of projectability witnesses is a categorical combination of the First and Second Isomorphism Theorems\index{i}{Isomorphism Theorem (First ${}_{-}$)}\index{i}{Isomorphism Theorem (Second ${}_{-}$)} for algebraic systems\index{i}{algebraic system} (cf. Lemmas~\ref{L:FirstIsomThm} and~\ref{L:SecIsomThm}).

We recall a definition from Wehrung~\cite{RetrLift}\index{c}{Wehrung, F.}.

\begin{defn}\label{D:ProjFunct}
Let $\Psi$ be a functor from a category~$\cA$ to a category~$\cB$,
let $A\in\Ob\cA$, $B\in\Ob\cB$, and $\gf\colon\Psi(A)\to B$.
A \emph{projectability witness}\index{i}{projectability witness} for $(\gf,A,B)$
(or, abusing notation, ``for $\gf\colon\Psi(A)\to B$'') with respect to~$\Psi$ is a pair $(a,\eps)$ satisfying the following conditions:
\begin{description}
\item[\tui] $a\colon A\onto\oll{A}$\index{s}{AtoonB@$f\colon A\onto B$} is an epimorphism in $\cA$.

\item[\tuii] $\eps\colon\Psi(\oll{A})\to B$ is an isomorphism in $\cB$.

\item[\tuiii] $\gf=\eps\circ\Psi(a)$.

\item[\tuiv] For every $f\colon A\to X$ in $\cA$ and every
$\gh\colon\Psi(\oll{A})\to\Psi(X)$ such that
$\Psi(f)=\gh\circ\Psi(a)$, there exists $g\colon\oll{A}\to X$ in
$\cA$ such that $f=g\circ a$ and $\gh=\Psi(g)$.
\end{description}
\end{defn}

We observe that the morphism $g$ in (iii) above is necessarily
\emph{unique} (because~$a$ is an epimorphism). Furthermore, the
projectability witness\index{i}{projectability witness} $(a,\eps)$ is unique up to isomorphism.

Definition~\ref{D:ProjFunct} is illustrated on Figure~\ref{Fig:ProjxF}.

\begin{figure}[htb]
 \[
 \def\labelstyle{\displaystyle}
 \xymatrix{
 \Psi(A)\ar[r]^{\gf}\ar[d]_{\Psi(a)} & B &
 \Psi(A)\ar[d]_{\Psi(a)}\ar[drr]^{\Psi(f)} & & &
 A\ar@{->>}[d]_a\ar[dr]^f & \\
 \Psi(\oll{A})\ar[ru]|-{\eps\cong} & &
 \Psi(\oll{A})\ar[rr]_{\gh=\Psi(g)} & & \Psi(X)
 & \oll{A}\ar[r]_g & X
 }
 \]
\caption{$(a,\eps)$ is a projectability witness for $\gf\colon\Psi(A)\to B$}
\label{Fig:ProjxF}
\end{figure}

Loosely speaking, the following property says that the existence of enough projectability witnesses\index{i}{projectability witness} entails the existence of liftings\index{i}{diagram!lifting}.

\begin{lem}\label{L:LiftProj}
Let $\cI$, $\cA$, and $\cS$ be categories, let $\xD\colon\cI\to\cA$,
$\Psi\colon\cA\to\cS$, and $\xS\colon\cI\to\cS$ be functors. Let
$\gt\colon\Psi\xD\todot\xS$\index{s}{AtorightarrowdotB@$f\colon\xA\todot\xB$} be a natural transformation such that
$\gt_i\colon\Psi\xD(i)\to\nobreak\xS(i)$ has a projectability witness\index{i}{projectability witness} $(a_i,\gh_i)$,
with $a_i\colon\xD(i)\onto\xE(i)$\index{s}{AtoonB@$f\colon A\onto B$}, for each $i\in\Ob\cI$. Then~$\xE$ can be uniquely extended to a functor from~$\cI$ to~$\cA$ such that $a\colon\xD\todot\xE$ is a natural transformation and $\gh\colon\Psi\xE\todot\xS$\index{s}{AtorightarrowdotB@$f\colon\xA\todot\xB$} is a natural equivalence with $\gt_i=\gh_i\circ\Psi(a_i)$ for each $i\in\Ob\cI$.
\end{lem}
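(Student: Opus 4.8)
The plan is to define~$\xE$ on morphisms by invoking clause~\tuiv\ of the appropriate projectability witness, using a comparison morphism in~$\cS$ assembled from the isomorphisms~$\gh_i$; everything else is then forced by the uniqueness built into clause~\tuiv\ and by the fact that the~$a_i$ are epimorphisms.

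\emph{Construction of~$\xE$ on morphisms.} Fix $u\colon i\to j$ in~$\cI$. I would put $f:=a_j\circ\xD(u)\colon\xD(i)\to\xE(j)$ in~$\cA$ and $\gh:=\gh_j^{-1}\circ\xS(u)\circ\gh_i\colon\Psi\xE(i)\to\Psi\xE(j)$ in~$\cS$, which makes sense because~$\gh_i$ and~$\gh_j$ are isomorphisms by clause~\tuii\ of Definition~\ref{D:ProjFunct}. The one computation to perform is that $\Psi(f)=\gh\circ\Psi(a_i)$: expanding~$\gh$, using clause~\tuiii\ (i.e.\ $\gt_k=\gh_k\circ\Psi(a_k)$ for $k\in\set{i,j}$) together with the naturality square $\xS(u)\circ\gt_i=\gt_j\circ\Psi\xD(u)$ of~$\gt$, one gets $\gh\circ\Psi(a_i)=\gh_j^{-1}\circ\gt_j\circ\Psi\xD(u)=\Psi(a_j)\circ\Psi\xD(u)=\Psi(f)$. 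Hence clause~\tuiv\ applied to the projectability witness $(a_i,\gh_i)$ of~$\gt_i$ produces a morphism $\xE(u)\colon\xE(i)\to\xE(j)$ with $\xE(u)\circ a_i=a_j\circ\xD(u)$ and $\Psi\xE(u)=\gh$; this~$\xE(u)$ is moreover the unique arrow satisfying $\xE(u)\circ a_i=a_j\circ\xD(u)$, since~$a_i$ is an epimorphism. The first equation is exactly naturality of~$a$ at~$u$, and composing the second on the left with~$\gh_j$ gives $\gh_j\circ\Psi\xE(u)=\xS(u)\circ\gh_i$, which is naturality of~$\gh$ at~$u$.

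\emph{$\xE$ is a functor, and the two transformations work.} Preservation of identities and composites follows from the uniqueness just noted: $\id_{\xE(i)}$ obviously satisfies $\id_{\xE(i)}\circ a_i=a_i=a_i\circ\xD(\id_i)$, so $\xE(\id_i)=\id_{\xE(i)}$; and for $u\colon i\to j$ and $v\colon j\to k$ one computes $(\xE(v)\circ\xE(u))\circ a_i=\xE(v)\circ a_j\circ\xD(u)=a_k\circ\xD(v)\circ\xD(u)=a_k\circ\xD(v\circ u)$, whence $\xE(v)\circ\xE(u)=\xE(v\circ u)$ by epicness of~$a_i$. Thus~$\xE$ is a functor, $a=\famm{a_i}{i\in\Ob\cI}$ is a natural transformation $\xD\todot\xE$ by the construction step, and $\gh=\famm{\gh_i}{i\in\Ob\cI}$ is a natural transformation $\Psi\xE\todot\xS$ all of whose components are isomorphisms (clause~\tuii), i.e.\ a natural equivalence; finally $\gt_i=\gh_i\circ\Psi(a_i)$ is clause~\tuiii.

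\emph{Uniqueness of the extension.} If $\xE'$ is any functor extending the object assignment $i\mapsto\xE(i)$ for which $a$ is a natural transformation, then $\xE'(u)\circ a_i=a_j\circ\xD(u)=\xE(u)\circ a_i$ forces $\xE'(u)=\xE(u)$, again because~$a_i$ is an epimorphism. I do not expect any real obstacle in this argument; the only step needing an idea is spotting the comparison morphism $\gh=\gh_j^{-1}\circ\xS(u)\circ\gh_i$ to feed into clause~\tuiv, after which the verifications are routine diagram chases structured by the uniqueness in that clause.
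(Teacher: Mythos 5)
Your proposal is correct and follows essentially the same route as the paper: you identify the same comparison morphism $\gh_j^{-1}\circ\xS(u)\circ\gh_i$, verify the same compatibility equation via naturality of~$\gt$, and invoke clause~(iv) of Definition~\ref{D:ProjFunct} to produce~$\xE(u)$, with all remaining verifications forced by the epicness of the~$a_i$. The only difference is that you spell out the functoriality and uniqueness checks that the paper dismisses as routine.
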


\begin{proof}
Note that $\gh_i\colon\Psi\xE(i)\overset{\cong}{\to}\xS(i)$ and
$\gt_i=\gh_i\circ\Psi(a_i)$, for each $i\in\Ob\cI$. We need to extend~$\xE$ to a functor. Let $f\colon i\to j$ in $\cI$. As
 \begin{align*}
 (\gh_j^{-1}\circ\xS(f)\circ\gh_i)\circ\Psi(a_i)&=
 \gh_j^{-1}\circ\xS(f)\circ\gt_i\\
 &=\gh_j^{-1}\circ\gt_j\circ\Psi\xD(f)\\
 &=\Psi(a_j\circ\xD(f))\,, 
 \end{align*}
there exists a unique $\xE(f)\colon\xE(i)\to\xE(j)$ such that
$\Psi\xE(f)=\gh_j^{-1}\circ\xS(f)\circ\gh_i$ and
$\xE(f)\circ a_i=a_j\circ\xD(f)$, see Figure~\ref{Fig:FunctxE}.

\begin{figure}[htb]
 \[
 \def\labelstyle{\displaystyle}
 \xymatrix{
 \xD(i)\ar[r]^{\xD(f)}\ar@{->>}[d]_{a_i} & \xD(j)\ar@{->>}[d]^{a_j} & &
 \Psi\xE(i)\ar[rr]^{\Psi\xE(f)}\ar[d]_{\gh_i}^{\cong} & &
 \Psi\xE(j)\ar[d]^{\gh_j}_{\cong}\\
 \xE(i)\ar[r]^{\xE(f)} & \xE(j) & & \xS(i)\ar[rr]^{\xS(f)} & & \xS(j)
 }
 \]
\caption{The functor $\xE$ created from projectability witnesses}
\label{Fig:FunctxE}
\end{figure}
The uniqueness statement on~$\xE(f)$, together with routine calculations, show easily that~$\xE$ is a functor.
\qed\end{proof}

\chapter[Boolean algebras scaled with respect to a poset]{Boolean algebras that are scaled with respect to a poset}\label{Ch:PscaledBAs}

\textbf{Abstract.} Our main result, CLL\index{i}{Condensate Lifting Lemma (CLL)} (Lemma~\ref{L:CLL}), involves a construction that turns a \emph{diagram}~$\overrightarrow{A}$, indexed by a poset~$P$, from a category~$\cA$, to an \emph{object} of~$\cA$, called a \emph{condensate}\index{i}{condensate} of~$\overrightarrow{A}$ (cf. Definition~\ref{D:Condensate}). A condensate\index{i}{condensate} of~$\overrightarrow{A}$ will be written in the from $\bB\otimes\overrightarrow{A}$\index{s}{otimAS@$\bA\otimes\overrightarrow{S}$, $\gf\otimes\overrightarrow{S}$}, where~$\bB$ is a Boolean algebra\index{i}{algebra!Boolean} with additional structure---we shall say a \emph{$P$-scaled Boolean algebra}\index{i}{algebra!Pscaled Boolean@$P$-scaled Boolean} (Definition~\ref{D:BoolP}). It will turn out (cf. Proposition~\ref{P:StoneDualP}) that $P$-scaled Boolean algebras\index{i}{algebra!Pscaled Boolean@$P$-scaled Boolean} are the dual objects of topological objects called \emph{$P$-normed Boolean spaces}\index{i}{normed (Boolean) space} (cf. Definition~\ref{D:Pnorm}). By definition, a $P$-normed\index{i}{normed (Boolean) space} topological space is a topological space~$X$ endowed with a map (the ``norm function'') from~$X$ to~$\Id P$\index{s}{IdP@$\Id P$, $P$ poset} which is \emph{continuous} with respect to the given topology of~$X$ and the Scott topology on~$\Id P$\index{s}{IdP@$\Id P$, $P$ poset}. In case~$X$ is a one-point space with norm an ideal\index{i}{ideal!of a poset}~$H$ of~$P$ and~$\bB$ is the corresponding $P$-scaled Boolean algebra\index{i}{algebra!Pscaled Boolean@$P$-scaled Boolean}, $\bB\otimes\overrightarrow{A}=\varinjlim_{p\in H}A_p$\index{s}{otimAS@$\bA\otimes\overrightarrow{S}$, $\gf\otimes\overrightarrow{S}$}. In case~$X$ is finite and $\nu(x)=P\dnw f(x)$ (where $f(x)\in P$) for each $x\in X$, then $\bB\otimes\overrightarrow{A}=\prod\famm{A_{f(x)}}{x\in X}$\index{s}{otimAS@$\bA\otimes\overrightarrow{S}$, $\gf\otimes\overrightarrow{S}$}. The latter situation describes the case where~$\bB$ is a \emph{finitely presented}\index{i}{presented!finitely} $P$-scaled Boolean algebra\index{i}{algebra!Pscaled Boolean@$P$-scaled Boolean} (cf. Definition~\ref{D:CompactBP} and Corollary~\ref{C:FinPres=Comp}). In the general case, there is a directed colimit representation $\bB=\varinjlim_{i\in I}\bB_i$ where all the~$\bB_i$ are finitely presented\index{i}{presented!finitely} (cf. Proposition~\ref{P:FinRepresBP}) and then $\bB\otimes\overrightarrow{A}$\index{s}{otimAS@$\bA\otimes\overrightarrow{S}$, $\gf\otimes\overrightarrow{S}$} is defined as the corresponding directed colimit of the $\bB_i\otimes\overrightarrow{A}$\index{s}{otimAS@$\bA\otimes\overrightarrow{S}$, $\gf\otimes\overrightarrow{S}$}. That this can be done, and that the resulting functor $\bB\mapsto\bB\otimes\overrightarrow{A}$\index{s}{otimAS@$\bA\otimes\overrightarrow{S}$, $\gf\otimes\overrightarrow{S}$} preserves all small directed colimits, will follow from Proposition~\ref{P:ArrObj2Diag}.

\section[Pseudo join-semilattices]{Pseudo join-semilattices, supported posets, and almost join-semilattices}\label{S:TPS}

The statement of CLL\index{i}{Condensate Lifting Lemma (CLL)} (Lemma~\ref{L:CLL}) involves posets~$P$ for which there exists a ``$\gl$-lifter''\index{i}{lifter ($\gl$-)} $(X,\bX)$; here $X$ is a poset, endowed with an isotone map $\partial\colon X\to P$, and~$\bX$ is a certain set of ideals\index{i}{ideal!of a poset} of~$X$. Defining the condensate\index{i}{condensate} $\xF(X)\otimes\overrightarrow{A}$\index{s}{FxX@$\xF(X)$}\index{s}{otimAS@$\bA\otimes\overrightarrow{S}$, $\gf\otimes\overrightarrow{S}$}, involved in the statement of CLL\index{i}{Condensate Lifting Lemma (CLL)}, requires the construction of a certain $P$-scaled Boolean algebra\index{i}{algebra!Pscaled Boolean@$P$-scaled Boolean}, defined by generators and relations, $\xF(X)$\index{s}{FxX@$\xF(X)$}. And the definition of~$\xF(X)$\index{s}{FxX@$\xF(X)$} will require~$X$ be a \emph{\pjs}\index{i}{pseudo join-semilattice} (cf. Definition~\ref{D:PJS}). Together with \pjs s\index{i}{pseudo join-semilattice}, we will also need to introduce \emph{supported posets}\index{i}{poset!supported} and \emph{\ajs s}\index{i}{almost join-semilattice}.

\begin{notation}\label{Not:sor}
Let~$X$ be a subset in a poset~$P$. We denote by $\Sor X$, or~$\Sor_PX$ in case~$P$ needs to be specified, the set of all minimal elements of~$P\Upw X$. We shall write $a_0\sor\cdots\sor a_{n-1}$\index{s}{minmaj@$a_0\sor\cdots\sor a_{n-1}$, $X_0\sor\cdots\sor X_{n-1}$|ii}, or $\Sor_{i<n}a_i$\index{s}{Minmaj@$\Sor_{i<n}a_i$, $\Sor X$, $\Sor_PX$|ii}, instead of $\Sor\set{a_0,\dots,a_{n-1}}$.

For subsets $X_0$, \dots, $X_{n-1}$ of~$P$, we set
 \[
 X_0\sor\cdots\sor X_{n-1}:=\bigcup\famm{\Sor_{i<n}a_i}{a_i\in X_i\text{ for all }i<n}\,.
 \]
\end{notation}

\begin{defn}\label{D:PJS}
We say that a subset~$X$ in a poset~$P$ is \emph{$\sor$-closed}\index{i}{closssedsor@$\sor$-closed|ii} if $\Sor Y\subseteq\nobreak X$ for any finite $Y\subseteq X$. The \emph{$\sor$-closure}\index{i}{closssuresor@$\sor$-closure|ii} of a subset~$X$ of a poset~$P$ is the least~$\sor$-closed subset of~$P$ containing~$X$. We say that~$P$ is
\begin{itemize}
\item a \emph{\pjs}\index{i}{pseudo join-semilattice|ii} if the subset $P\Upw X$ is a finitely generated\index{i}{finitely generated!upper subset} upper subset of~$P$ (cf. Section~\ref{Su:Posets}), for every subset~$X$ of~$P$ which is either empty or a two-element set (\emph{and thus for every finite subset~$X$ of~$P$, cf. Lemma~\textup{\ref{L:PUpwX}}}).

\item \emph{supported}\index{i}{poset!supported|ii} if~$P$ is a \pjs\ and the $\sor$-closure of every finite subset of~$P$ is finite.

\item an \emph{\ajs}\index{i}{almost join-semilattice|ii} if~$P$ is a \pjs\ and~$P\dnw a$ is a \js\ for each $a\in P$.
\end{itemize}
\end{defn}

In particular, taking $X:=\es$ in the definition of a \pjs, observe that every \pjs\index{i}{pseudo join-semilattice}\ has only finitely many minimal elements and that every element lies above one of those minimal elements.

Some of these definitions are closely related to definitions used in domain theory\index{i}{continuous domain}. For example, a poset is supported\index{i}{poset!supported} if{f} it is \emph{mub-complete} as defined on~\cite[Definition~4.2.1]{AbJu}\index{c}{Abramsky, S.}\index{c}{Jung, A.}. Also, pointed continuous domains\index{i}{continuous domain} in which every principal ideal\index{i}{ideal!of a poset}\index{i}{ideal!principal ${}_{-}$, of a poset} is a \js\ are called \emph{L-domains} in~\cite[Definition~4.1.1]{AbJu}\index{c}{Abramsky, S.}\index{c}{Jung, A.}.

Our definition of a supported\index{i}{poset!supported} poset is equivalent to the one presented in Gillibert~\cite{Gill1}\index{c}{Gillibert, P.}. Every supported\index{i}{poset!supported} poset is a \pjs\index{i}{pseudo join-semilattice}, and we shall see in Corollary~\ref{C:TrSupp} that every \ajs\index{i}{almost join-semilattice}\ is supported\index{i}{poset!supported}.
The infinite poset represented on the left hand side of Figure~\ref{Fig:posets} is not a \pjs\index{i}{pseudo join-semilattice}, the one in the middle is a non-supported\index{i}{poset!supported} \pjs\index{i}{pseudo join-semilattice}, while the one on the right hand side is (as every finite poset) supported\index{i}{poset!supported}, while it is not an \ajs\index{i}{almost join-semilattice}.

\begin{figure}[htb]
\includegraphics{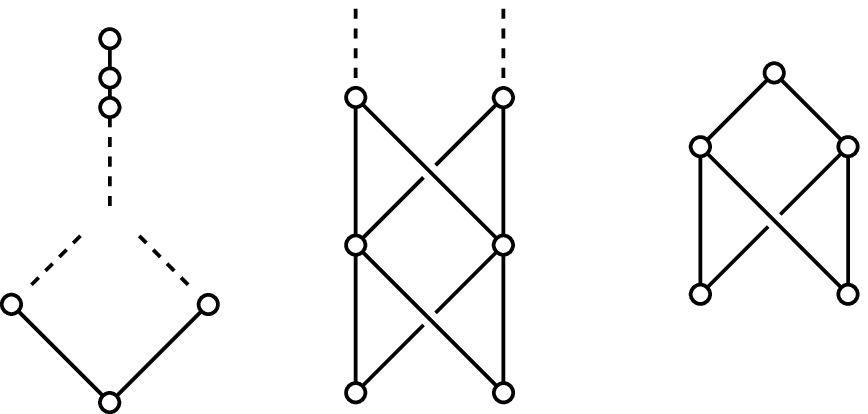}
\caption{A few posets that are not \ajs s}
\label{Fig:posets}
\end{figure}

Observe that for a finite subset~$X$ in a poset~$P$, $P\Upw X$ is a finitely generated\index{i}{finitely generated!upper subset} upper subset of~$P$ if{f} $\Sor X$ is finite and every element of $P\Upw X$ lies above some element of~$\Sor X$.

\begin{lem}\label{L:PUpwX}
Let $P$ be a \pjs\index{i}{pseudo join-semilattice}. Then $P\Upw X$ is a finitely generated\index{i}{finitely generated!upper subset} upper subset of~$P$, for every finite subset~$X$ of~$P$.
\end{lem}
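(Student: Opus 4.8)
The plan is to prove this by induction on the cardinality of the finite subset~$X$ of~$P$, reducing the general case to the two defining instances of a \pjs\ given in Definition~\ref{D:PJS}, namely $X=\es$ and $\card X=2$. The only ingredient needed beyond the definition is the elementary fact that a finite union of finitely generated upper subsets of~$P$ is again finitely generated: if $Q_i=P\upw G_i$ with each $G_i$ finite, then $\bigcup_iQ_i=P\upw\bigl(\bigcup_iG_i\bigr)$.

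For the base cases: if $X=\es$ or $\card X=2$, then $P\Upw X$ is a finitely generated upper subset of~$P$ by the definition of a \pjs, and if $X=\set{a}$ is a singleton, then $P\Upw X=P\upw a$ is generated by the finite set~$\set a$. For the inductive step, I would take $X$ with $\card X=n+1$ for some $n\geq 2$, pick any $a\in X$, and write $X=Y\cup\set a$ with $Y:=X\setminus\set a$ of cardinality~$n$. The induction hypothesis provides a finite subset~$F$ of~$P$ with $P\Upw Y=P\upw F$, and then the key computation is
\[
P\Upw X=(P\Upw Y)\cap(P\upw a)=\bigcup_{f\in F}\bigl((P\upw f)\cap(P\upw a)\bigr)=\bigcup_{f\in F}P\Upw\set{f,a}\,,
\]
which exhibits $P\Upw X$ as a finite union of sets of the form $P\Upw\set{f,a}$, each of which is a finitely generated upper subset of~$P$; combined with the fact recalled above, this finishes the induction.

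There is no real obstacle here; the only point requiring a sliver of care is that in the union $\bigcup_{f\in F}P\Upw\set{f,a}$ one may have $f=a$, so that $\set{f,a}$ is a singleton rather than a genuine two-element set and the \pjs\ hypothesis does not literally apply — but in that case $P\Upw\set{f,a}=P\upw a$ is visibly finitely generated, so the bound still holds. (Alternatively, one could run the same argument through the minimal-upper-bound description $P\Upw X=P\upw\Sor X$ together with the observation made just before the lemma, but the inductive union argument seems the most transparent.)
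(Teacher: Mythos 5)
Your proof is correct and follows essentially the same route as the paper's: induction on $\card X$, peeling off one element $a$ and combining the inductive hypothesis for $X\setminus\set{a}$ with the two-element instance $P\Upw\set{f,a}$ of the \pjs\ definition, using that a finite union of finitely generated upper subsets is finitely generated. The remark about the degenerate case $f=a$ is a harmless refinement that the paper glosses over.
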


\begin{proof}
The statement is trivial if $X=\es$. Suppose that~$X\neq\es$ and that the statement has been proved for all subsets of~$P$ of cardinality smaller than $\card X$. Pick $x\in X$, and set $Y:=X\setminus\set{x}$. By the induction hypothesis, there exists a finite subset~$V$ of~$P$ such that $P\Upw Y=P\upw V$. By assumption, for each $v\in V$ there exists a finite subset~$U_v$ of~$P$ such that $P\Upw\set{x,v}=P\upw U_v$. Then $U:=\bigcup\famm{U_v}{v\in V}$ is a finite subset of~$P$, and $P\Upw X=P\upw U$.
\qed\end{proof}

It follows that in the context of Lemma~\ref{L:PUpwX}, $\Sor X$ is finite in case~$X$ is finite. Every nonempty $\sor$-closed subset~$X$ in a \pjs\index{i}{pseudo join-semilattice}~$P$ is also a \pjs\index{i}{pseudo join-semilattice}, and $u\sor_Xv=u\sor_Pv$, for all $u,v\in X$.

\begin{lem}\label{L:AssocSor}
For a \pjs\index{i}{pseudo join-semilattice}~$P$, a positive integer~$n$, finite subsets $X_0$, \dots, $X_{n-1}$ of~$P$, and $X=\bigcup_{i<n}X_i$, the following statements hold:
\begin{description}
\item[\tui] $\Sor X\subseteq(\Sor X_0)\sor\cdots\sor(\Sor X_{n-1})$.

\item[\tuii] In case $P$ is an \ajs\index{i}{almost join-semilattice}, $\Sor X=(\Sor X_0)\sor\cdots\sor(\Sor X_{n-1})$.
\end{description}
\end{lem}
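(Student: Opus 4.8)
The statement has two parts. Part (i) — the inclusion $\Sor X \subseteq (\Sor X_0)\sor\cdots\sor(\Sor X_{n-1})$ — should hold for any \pjs, and I would prove it by induction on $n$, reducing immediately to the case $n=2$. For $n=2$, let $X = X_0 \cup X_1$ and take $z \in \Sor X$, so $z$ is a minimal element of $P\Upw X = (P\Upw X_0)\cap(P\Upw X_1)$. Since $z \in P\Upw X_0 = P\upw(\Sor X_0)$ (using Lemma~\ref{L:PUpwX} and the remark after it that $P\Upw X_0$ is generated as an upper set by its minimal elements), there is $a_0 \in \Sor X_0$ with $a_0 \le z$; similarly $a_1 \in \Sor X_1$ with $a_1 \le z$. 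Then $z \in P\Upw\set{a_0,a_1}$, so $z$ lies above some element $w \in \Sor_{i<2} a_i$. But $w \in P\Upw\set{a_0,a_1} \subseteq (P\Upw X_0)\cap(P\Upw X_1) = P\Upw X$, and $w \le z$ with $z$ minimal in $P\Upw X$ forces $w = z$. Hence $z \in (\Sor X_0)\sor(\Sor X_1)$. The general $n$ then follows by writing $X = (X_0\cup\cdots\cup X_{n-2})\cup X_{n-1}$, applying the inductive hypothesis to the first bunch, and applying the $n=2$ case — here one needs the associativity-type fact that $\sor$ behaves well under iteration, which is where Lemma~\ref{L:AssocSor} is presumably being bootstrapped; I would either prove a small associativity lemma for $\sor$ first, or simply unfold the definition of $X_0\sor\cdots\sor X_{n-1}$ as the union of $\Sor_{i<n}a_i$ over all choices $a_i \in X_i$ and argue directly.

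Part (ii) — the reverse inclusion $(\Sor X_0)\sor\cdots\sor(\Sor X_{n-1}) \subseteq \Sor X$ when $P$ is an \ajs — is where the extra hypothesis is needed, and I again reduce to $n=2$. Take $a_0 \in \Sor X_0$, $a_1 \in \Sor X_1$, and $w \in \Sor_{i<2} a_i$, i.e. $w$ minimal in $P\Upw\set{a_0,a_1}$. We must show $w \in \Sor X = \Min(P\Upw X)$. First, $w \in P\Upw X$: indeed $a_0 \le w$ and $a_0 \in P\Upw X_0$ give $w \in P\Upw X_0$, similarly $w \in P\Upw X_1$, so $w \in P\Upw X$. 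For minimality, suppose $v \le w$ with $v \in P\Upw X$. Then $v \in P\Upw X_0 = P\upw(\Sor X_0)$, so there is $a_0' \in \Sor X_0$ with $a_0' \le v \le w$; similarly $a_1' \in \Sor X_1$ with $a_1' \le v \le w$. Now I want to replace $a_0', a_1'$ by $a_0, a_1$. The key leverage of the \ajs\ hypothesis: inside the principal ideal $P\dnw w$, which is a \js, the elements $a_0, a_0'$ both lie below $w$, hence $a_0 \vee a_0'$ exists in $P\dnw w$; and since $a_0' \in \Sor X_0$ and $a_0' \le a_0 \vee a_0' \in P\Upw X_0$... wait, one must be careful here. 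The cleaner route: $a_0' \le w$ and $a_0 \le w$, so in the join-semilattice $P\dnw w$ we can consider $a_0 \vee a_0'$. Since $a_0' \le a_0 \vee a_0'$ and $a_0' \in \Sor X_0 \subseteq \Min(P\Upw X_0)$... that gives nothing directly either. Let me reconsider: I think the right statement is that $v$ lies above $a_0'$ and $a_1'$, so $v \in P\Upw\set{a_0', a_1'}$, hence $v$ lies above some $w' \in \Sor_{i<2} a_i'$; and one shows using the lattice structure of $P\dnw w$ that such a $w'$ can be taken to coincide with $w$ (or that $w \le w'$, forcing $w = w'$ since both $\le v \le w$). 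The \ajs\ hypothesis is exactly what lets us compute these bounds-of-upper-bounds inside the principal ideal $P\dnw w$ as genuine joins, so that $\Sor$ of a finite set inside $P\dnw w$ is a singleton.

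The main obstacle is the second inclusion: the first is essentially formal once Lemma~\ref{L:PUpwX} is in hand, but (ii) requires genuinely using that each $P\dnw a$ is a \js\ to collapse the set-valued operation $\Sor$ to a single-valued join on principal ideals, and then tracking that the minimality of $w$ survives the substitution of the $a_i'$ by the $a_i$. The likely intended argument is: work throughout inside $P\dnw w$; there $\Sor_{i<2}$ of any pair is the singleton given by the binary join; so $a_0' \vee a_1'$ (computed in $P\dnw w$, exists since both are $\le w$) equals the unique element of $\Sor_{i<2} a_i'$ below $w$; one shows $a_0' \vee a_1' \le w$ and $a_0' \vee a_1' \in P\Upw X$, and by minimality considerations plus $a_0 \vee a_1 = w$ (since $w$ was minimal above $a_0, a_1$ and $a_0 \vee a_1 \in P\dnw w$ is such an upper bound), conclude $v \ge a_0' \vee a_1' \ge$ something forcing $v = w$. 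I would write Lemma~\ref{L:AssocSor} by first recording the lemma "in an \ajs, for $a_0,\dots,a_{k-1}$ with a common upper bound, $\Sor_{i<k} a_i = \set{\bigvee_{i<k} a_i}$" (computed in any principal ideal containing all of them), and then deduce both the associativity used in (i) and the reverse inclusion in (ii) from it; the $n>2$ cases follow by the same induction as in (i), using that $\sor$ is, on an \ajs, literally the binary-join operation lifted to sets, hence associative.
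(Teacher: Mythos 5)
Your treatment of part (i) is essentially sound: the ``argue directly'' fallback you mention (pick $a_i\in\Sor X_i$ with $a_i\le z$ for each $i<n$, then check minimality of $z$ in $P\Upw\set{a_0,\dots,a_{n-1}}$ by observing that any $x$ with $a_i\le x\le z$ for all $i$ satisfies $X\subseteq P\dnw x$) is exactly the paper's proof, carried out for all $n$ at once. That is the version you should write, because the $n$-ary expression $(\Sor X_0)\sor\cdots\sor(\Sor X_{n-1})$ is \emph{defined} as a union of sets $\Sor_{i<n}a_i$ over all choices of the $a_i$, not as an iterated binary operation, so your inductive formulation would indeed require a separate associativity lemma that you never supply.

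The genuine gap is in part (ii), where you visibly stall and never complete the minimality argument. The observation you are missing is this: given $w\in a_0\sor a_1$ with each $a_i\in\Sor X_i$, the set $\set{a_i}\cup X_i$ is contained in the \js\ $P\dnw w$, so the join $j:=\bigvee X_i$ computed in $P\dnw w$ exists; since $j\le a_i$ and $j\in P\Upw X_i$ while $a_i$ is \emph{minimal} in $P\Upw X_i$, in fact $a_i$ \emph{is} the join of $X_i$ in $P\dnw w$. Minimality of $w$ in $P\Upw X$ is then immediate: if $v\le w$ and $X\subseteq P\dnw v$, then $v$ is an upper bound of each $X_i$ inside $P\dnw w$, hence $a_i\le v$ for each $i$, hence $v=w$ because $w\in\Sor\set{a_0,a_1}$. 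This bypasses your detour through fresh elements $a_0',a_1'$ chosen below $v$; to identify those with $a_0,a_1$ you would need the very same join computation, which is why each of your attempted routes ``gives nothing directly.'' Finally, the auxiliary lemma you propose to record, $\Sor_{i<k}a_i=\set{\bigvee_{i<k}a_i}$, is false as stated: an \ajs\ need not be a \js, so a finite subset with an upper bound may have several incomparable minimal upper bounds. Only the relativized statement is true — for each upper bound $b$ of $\set{a_0,\dots,a_{k-1}}$, the join computed in $P\dnw b$ is the unique element of $\Sor_{i<k}a_i$ below $b$ — and that relativized form is precisely what the argument above uses.
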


\begin{proof}
(i). Let $a\in\Sor X$. For all $i<n$, it follows from Lemma~\ref{L:PUpwX} that $P\Upw X_i$ is a finitely generated\index{i}{finitely generated!upper subset} upper subset of~$P$; as~$X_i\subseteq P\dnw a$, there exists $a_i\in\Sor X_i$ such that $a_i\leq a$. If~$x\in P$ such that $a_i\leq x\leq a$ for each $i<n$, then $X_i\subseteq P\dnw x$ for each $i<n$, thus $X\subseteq P\dnw x$, and thus, as $a\in\Sor X$ and $x\leq a$, we get that $x=a$; whence $a\in\Sor_{i<n}a_i$. So $a\in(\Sor X_0)\sor\cdots\sor(\Sor X_{n-1})$.

(ii). Let $a\in(\Sor X_0)\sor\cdots\sor(\Sor X_{n-1})$; by definition, $a\in\Sor_{i<n}a_i$, for some $a_i\in\Sor X_i$ for each $i<n$. As $a_i\in\Sor X_i$ and $\set{a_i}\cup X_i$ is contained in the \js\ $P\dnw a$, we obtain that
 \begin{equation}\label{Eq:aiXiPda}
 a_i\text{ is the join of }X_i\text{ in }P\dnw a\,.
 \end{equation}
Now let $x\in P\dnw a$ such that $P\dnw x$ contains~$X$. Hence all $X_i$s are below~$x$ in $P\dnw a$, thus, by~\eqref{Eq:aiXiPda}, $a_i\leq x$ for each $i<n$. As $x\leq a$ and $a\in\Sor_{i<n}a_i$, we obtain that $x=a$. This proves that $a\in\Sor X$.
\qed\end{proof}

\begin{cor}\label{C:TrSupp}
Every \ajs\index{i}{almost join-semilattice}\ $P$ is supported\index{i}{poset!supported}. Furthermore, the $\sor$-closure of any subset~$X$ of~$P$ is $\bigcup\famm{\Sor Y}{Y\in[X]^{<\go}}$.
\end{cor}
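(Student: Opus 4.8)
Every almost join-semilattice $P$ is supported, and the $\sor$-closure of any subset $X$ of $P$ equals $\bigcup\famm{\Sor Y}{Y\in[X]^{<\go}}$.

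Let me sketch how I'd prove this.

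We already have Lemma~\ref{L:AssocSor}(ii): for a finite union $X = \bigcup_{i<n} X_i$ of finite subsets of an AJS, $\Sor X = (\Sor X_0) \sor \cdots \sor (\Sor X_{n-1})$. The plan is to use this to show that the collection $C := \bigcup\famm{\Sor Y}{Y\in[X]^{<\go}}$ is already $\sor$-closed, whence it is the $\sor$-closure of $X$ (it obviously contains $X$, since $\Sor\{x\} = \{x\}$, and any $\sor$-closed set containing $X$ contains every $\Sor Y$ for finite $Y\subseteq X$, hence contains $C$).

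First I would verify $C$ is $\sor$-closed. Take a finite subset $Z \subseteq C$, say $Z = \{a_0,\dots,a_{n-1}\}$ with each $a_i \in \Sor Y_i$ for some finite $Y_i \subseteq X$. I want $\Sor Z \subseteq C$. Set $Y := \bigcup_{i<n} Y_i$, a finite subset of $X$. By Lemma~\ref{L:AssocSor}(ii) applied to the decomposition $Y = \bigcup_{i<n} Y_i$, we get $\Sor Y = (\Sor Y_0)\sor\cdots\sor(\Sor Y_{n-1})$. Since $a_i \in \Sor Y_i$ for each $i$, by the definition (Notation~\ref{Not:sor}) of the $\sor$ operation on subsets, $\Sor_{i<n} a_i \subseteq (\Sor Y_0)\sor\cdots\sor(\Sor Y_{n-1}) = \Sor Y$; that is, $\Sor Z \subseteq \Sor Y \subseteq C$. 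So $C$ is $\sor$-closed, which gives the description of the $\sor$-closure.

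For the ``supported'' assertion: $P$ is a \pjs\ by hypothesis, and I must check the $\sor$-closure of any finite subset is finite. But if $X$ is finite then $[X]^{<\go} = \Pow(X)$ is finite, and each $\Sor Y$ is finite by (the remark following) Lemma~\ref{L:PUpwX} — indeed Lemma~\ref{L:PUpwX} says $P\Upw Y$ is a finitely generated upper subset, so its set $\Sor Y$ of minimal elements is finite. Hence $C = \bigcup\famm{\Sor Y}{Y\subseteq X}$ is a finite union of finite sets, thus finite, and equals the $\sor$-closure of $X$ by the first part. Therefore $P$ is supported.

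I don't anticipate any real obstacle here: the corollary is essentially a bookkeeping consequence of the associativity identity Lemma~\ref{L:AssocSor}(ii). The only mild point of care is making sure the definition of $(\Sor Y_0)\sor\cdots\sor(\Sor Y_{n-1})$ as a union over all choices of one element from each factor really does contain $\Sor_{i<n} a_i$ for the particular choice $a_i \in \Sor Y_i$ — which is immediate from Notation~\ref{Not:sor} — and checking the trivial edge cases ($X$ empty, or $Y_i$ empty so that $\Sor Y_i$ may be a set of minimal elements of $P$ rather than a singleton, which causes no trouble).
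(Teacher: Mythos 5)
Your proof is correct and follows essentially the same route as the paper's: both reduce everything to showing that $\bigcup\famm{\Sor Y}{Y\in[X]^{<\go}}$ is $\sor$-closed via Lemma~\ref{L:AssocSor}(ii), with finiteness of each $\Sor Y$ coming from Lemma~\ref{L:PUpwX}. The only (harmless) difference is that you verify closure for arbitrary finite subsets $Z$ directly, whereas the paper checks only pairs $u,v$.
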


\begin{proof}
As~$P$ is a \pjs\index{i}{pseudo join-semilattice}, all the subsets $\Sor Y$, for $Y\in\nobreak[P]^{<\go}$, are finite (cf. Lemma~\ref{L:PUpwX}), and thus it is sufficient to establish the second statement. It suffices in turn to prove that the subset $\ol{X}:=\bigcup\famm{\Sor Y}{Y\in[X]^{<\go}}$ is $\sor$-closed. For $u,v\in\ol{X}$, there are finite subsets~$U$ and~$V$ of~$X$ such that $u\in\Sor U$ and $v\in\Sor V$. It follows from Lemma~\ref{L:AssocSor}(ii) that $u\sor v\subseteq\Sor(U\cup V)$, and thus $u\sor v\subseteq\ol{X}$. The conclusion follows.
\qed\end{proof}

\section[$P$-normed spaces, $P$-scaled Boolean algebras]{$P$-normed spaces and $P$-scaled Boolean algebras}\label{S:Pnorm}

A condensate\index{i}{condensate} of a diagram~$\overrightarrow{A}$, indexed by a poset~$P$, will be a structure of the form $\bB\otimes\overrightarrow{A}$\index{s}{otimAS@$\bA\otimes\overrightarrow{S}$, $\gf\otimes\overrightarrow{S}$}, where~$\bB$ is a Boolean algebra\index{i}{algebra!Boolean} with an additional structure---we shall say a \emph{$P$-scaled Boolean algebra}\index{i}{algebra!Pscaled Boolean@$P$-scaled Boolean} (cf. Definition~\ref{D:BoolP}). Many readers may find it more intuitive to describe $P$-scaled Boolean algebras\index{i}{algebra!Pscaled Boolean@$P$-scaled Boolean} by their dual (topological) spaces, the \emph{$P$-normed Boolean spaces}\index{i}{normed (Boolean) space} (cf. Definition~\ref{D:Pnorm}). However, as the statement of CLL\index{i}{Condensate Lifting Lemma (CLL)} will involve directed colimits, as opposed to direct limits, we chose to write all our proofs algebraically, that is, formulated in the language of $P$-scaled Boolean algebras\index{i}{algebra!Pscaled Boolean@$P$-scaled Boolean} rather than $P$-normed\index{i}{normed (Boolean) space} Boolean spaces. The present section is mainly devoted to introducing the duality between $P$-scaled Boolean algebras\index{i}{algebra!Pscaled Boolean@$P$-scaled Boolean} and $P$-normed\index{i}{normed (Boolean) space} Boolean spaces.

Throughout this section we shall fix a poset~$P$.

\begin{defn}\label{D:Pnorm}
A \emph{$P$-norm} on a topological space~$X$ is a map $\nu\colon X\to\Id P$\index{s}{IdP@$\Id P$, $P$ poset} such that $\setm{x\in X}{p\in\nu(x)}$ is open for each $p\in P$. A \emph{$P$-normed space}\index{i}{normed (Boolean) space|ii} is a pair $\bX=(X,\nu)$, where $\nu$ is a $P$-norm on a topological space~$X$. Then we shall call the \emph{norm} of an element~$x$ of~$X$ the ideal\index{i}{ideal!of a poset} $\nu(x)$, and often denote it by $\norm{x}$\index{s}{normx@$\Vert{x}\Vert$, $\Vert{x}\Vert_\bX$, $x$ point|ii}, or $\norm{x}_\bX$ in case~$\bX$ needs to be specified.
\end{defn}

\begin{defn}\label{D:morphPnorm}
For $P$-normed\index{i}{normed (Boolean) space} spaces~$\bX$ and~$\bY$, a \emph{morphism from~$\bX$ to~$\bY$} is a continuous map $f\colon X\to Y$ such that $\norm{f(x)}_{\bY}\subseteq\norm{x}_\bX$\index{s}{normx@$\Vert{x}\Vert$, $\Vert{x}\Vert_\bX$, $x$ point} for all~$x\in X$. We shall denote by $\BTop_P$\index{s}{BTop@$\BTop_P$|ii} the category of $P$-normed\index{i}{normed (Boolean) space} Boolean spaces with the morphisms defined above.
\end{defn}

The following definition gives a description of the dual objects to $P$-normed\index{i}{normed (Boolean) space} Boolean spaces. This duality will be achieved in Proposition~\ref{P:StoneDualP}.
\goodbreak

\begin{defn}\label{D:BoolP}
Denote by $\Bool_P$\index{s}{BoolP@$\Bool_P$|ii} the category described as follows.
\begin{itemize}
\item The objects of $\Bool_P$, called \emph{$P$-scaled Boolean algebras}\index{i}{algebra!Pscaled Boolean@$P$-scaled Boolean|ii}, are the families of the form $\bA=\bigl(A,\famm{A^{(p)}}{p\in P}\bigr)$\index{s}{Aidp@$A^{(p)}$|ii}, where~$A$ is a Boolean algebra\index{i}{algebra!Boolean} and $A^{(p)}$ is an ideal\index{i}{ideal!of a poset} of~$A$ for all $p\in P$, and the following conditions are satisfied:
\begin{description}
\item[\tui] $A=\bigvee\famm{A^{(p)}}{p\in P}$.

\item[\tuii] $A^{(p)}\cap A^{(q)}=\bigvee\famm{A^{(r)}}{r\geq p,q\text{ in }P}$, for all $p,q\in P$.
\end{description}

\item For objects~$\bA$ and $\bB$ in~$\Bool_P$\index{s}{BoolP@$\Bool_P$}, a \emph{morphism} from~$\bA$ to~$\bB$ is a morphism $f\colon A\to B$ of Boolean algebras\index{i}{algebra!Boolean} such that $f``(A^{(p)})\subseteq B^{(p)}$, for all $p\in P$.
\end{itemize}
\end{defn}

In (i) and (ii) above, joins are, of course, evaluated in the lattice~$\Id A$\index{s}{IdP@$\Id P$, $P$ poset} of ideals\index{i}{ideal!of a poset} of~$A$. For example, (i) means that there exists a decomposition in~$A$ of the form $1=\bigvee\famm{a_p}{p\in Q}$, where~$Q$ is a finite subset of~$P$ and $a_p\in A^{(p)}$, for all $p\in Q$. Observe also that $\famm{A^{(p)}}{p\in P}$ is necessarily \emph{antitone} (i.e., $p\leq q$ implies that $A^{(q)}\subseteq A^{(p)}$, for all $p,q\in P$).

For $P$-scaled Boolean algebra\index{i}{algebra!Pscaled Boolean@$P$-scaled Boolean} $\bA$, we put\index{s}{normfa@$\Vert{\fra}\Vert$, $\Vert{\fra}\Vert_\bA$, $\fra$ ultrafilter|ii}
 \begin{equation}\label{Eq:normfra}
 \norm{\fra}_\bA:=\setm{p\in P}{\fra\cap A^{(p)}\neq\es}\,,
 \qquad\text{for each }\fra\in\Ult A\,.
 \end{equation}
We will write~$\norm{\fra}$\index{s}{normfa@$\Vert{\fra}\Vert$, $\Vert{\fra}\Vert_\bA$, $\fra$ ultrafilter} instead of~$\norm{\fra}_\bA$ in case~$\bA$ is understood.

\begin{lem}\label{L:Ult2Pnorm}
The subset~$\norm{\fra}$ is an ideal\index{i}{ideal!of a poset} of~$P$, for every ultrafilter~$\fra$ of~$A$. Furthermore, the map $\fra\mapsto\norm{\fra}$\index{s}{normfa@$\Vert{\fra}\Vert$, $\Vert{\fra}\Vert_\bA$, $\fra$ ultrafilter} is a $P$-norm on~$\Ult A$\index{s}{UltB@$\Ult B$, $\Ult\bB$, $\Ult\gf$}.
\end{lem}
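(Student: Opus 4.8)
The plan is to verify the two assertions in turn, using the defining conditions (i) and (ii) of a $P$-scaled Boolean algebra together with the fact that $\fra$ is a (proper) ultrafilter. First I would check that $\norm{\fra}$ is a lower subset of $P$. Since $\famm{A^{(p)}}{p\in P}$ is antitone (as noted right after Definition~\ref{D:BoolP}), if $q\leq p$ and $\fra\cap A^{(p)}\neq\es$, then $\fra\cap A^{(q)}\supseteq\fra\cap A^{(p)}\neq\es$, so $q\in\norm{\fra}$. Next, nonemptiness: by condition~(i) we have a decomposition $1=\bigvee\famm{a_p}{p\in Q}$ with $Q\subseteq P$ finite and $a_p\in A^{(p)}$; since $\fra$ is an ultrafilter containing $1$, and $1$ is the finite join of the $a_p$, some $a_p$ lies in $\fra$ (ultrafilters are prime), whence that $p\in\norm{\fra}$ and $\norm{\fra}\neq\es$.

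The main point is directedness. Suppose $p,q\in\norm{\fra}$, so there are $a\in\fra\cap A^{(p)}$ and $b\in\fra\cap A^{(q)}$. Then $a\wedge b\in\fra$ (filter) and $a\wedge b\in A^{(p)}\cap A^{(q)}$ (each $A^{(p)}$ is an ideal, hence closed under meets with arbitrary elements, a fortiori $a\wedge b\leq a\in A^{(p)}$ and $\leq b\in A^{(q)}$). By condition~(ii), $A^{(p)}\cap A^{(q)}=\bigvee\famm{A^{(r)}}{r\geq p,q}$, so $a\wedge b$ belongs to the ideal generated by $\bigcup\famm{A^{(r)}}{r\geq p,q}$; hence there is a finite set $R$ of elements $r\geq p,q$ and elements $c_r\in A^{(r)}$ with $a\wedge b\leq\bigvee\famm{c_r}{r\in R}$. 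Since $a\wedge b\in\fra$ and $\fra$ is upward closed, $\bigvee\famm{c_r}{r\in R}\in\fra$, and by primeness some $c_r\in\fra$, giving $r\in\norm{\fra}$ with $r\geq p,q$. Thus $\norm{\fra}$ is directed, completing the proof that it is an ideal of $P$.

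Finally, for the $P$-norm assertion I would show that $\setm{\fra\in\Ult A}{p\in\norm{\fra}}$ is open in $\Ult A$ for each $p\in P$. By~\eqref{Eq:normfra}, $p\in\norm{\fra}$ iff $\fra\cap A^{(p)}\neq\es$, i.e.\ iff $a\in\fra$ for some $a\in A^{(p)}$; in Stone-duality terms this says $\fra$ lies in the basic clopen set determined by some $a\in A^{(p)}$. Hence $\setm{\fra}{p\in\norm{\fra}}=\bigcup\famm{\setm{\fra\in\Ult A}{a\in\fra}}{a\in A^{(p)}}$ is a union of (basic) clopen subsets of $\Ult A$, so it is open. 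This is exactly the condition in Definition~\ref{D:Pnorm}, so $\fra\mapsto\norm{\fra}$ is a $P$-norm on $\Ult A$.

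I do not anticipate any serious obstacle: the only slightly delicate step is handling the join in condition~(ii), where one must remember that the join in $\Id A$ of a family of ideals consists of all elements below a \emph{finite} join of members of the union, and then invoke primeness of the ultrafilter to descend to a single generator.
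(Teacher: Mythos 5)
Your proof is correct and follows essentially the same route as the paper's: nonemptiness and directedness of $\norm{\fra}$ both come from the decompositions provided by conditions (i) and (ii) of Definition~\ref{D:BoolP} together with primeness of the ultrafilter, and openness is the obvious union of basic clopen sets. The only cosmetic difference is that you pass through $a\wedge b\leq\bigvee\famm{c_r}{r\in R}$ and upward closure, whereas the paper writes the element directly as a join of members of the $A^{(r)}$; both are standard readings of the join in $\Id A$.
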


\begin{proof}
It is obvious that $\norm{\fra}$\index{s}{normfa@$\Vert{\fra}\Vert$, $\Vert{\fra}\Vert_\bA$, $\fra$ ultrafilter} is a lower subset\index{i}{lower subset} of~$P$.

As there exists a decomposition in~$A$ of the form $1=\bigvee\famm{a_p}{p\in Q}$, for a finite subset~$Q$ of~$P$ and elements~$a_p\in A^{(p)}$, for all $p\in Q$, and as~$\fra$ is an ultrafilter of~$A$, there exists~$p\in Q$ such that $a_p\in\fra$, so~$p\in\norm{\fra}$, and so~$\norm{\fra}$\index{s}{normfa@$\Vert{\fra}\Vert$, $\Vert{\fra}\Vert_\bA$, $\fra$ ultrafilter} is nonempty.

Let $p,q\in\norm{\fra}$. Pick $u\in\fra\cap A^{(p)}$ and $v\in\fra\cap A^{(q)}$. Then~$w:=u\wedge v$ belongs to $\fra\cap(A^{(p)}\cap A^{(q)})$, and so there exists a decomposition of the form $w=\bigvee\famm{w_r}{r\in Q}$ in~$A$, where~$R$ is a finite subset of $P\Upw\set{p,q}$ and~$w_r\in A^{(r)}$ for all~$r\in R$. As~$\fra$ is an ultrafilter, there exists~$r\in R$ such that $w_r\in\fra$, and so $r\in\norm{\fra}$\index{s}{normfa@$\Vert{\fra}\Vert$, $\Vert{\fra}\Vert_\bA$, $\fra$ ultrafilter}, with $r\geq p,q$. This proves that~$\norm{\fra}$ is directed. Therefore, $\norm{\fra}$\index{s}{normfa@$\Vert{\fra}\Vert$, $\Vert{\fra}\Vert_\bA$, $\fra$ ultrafilter} is an ideal\index{i}{ideal!of a poset} of~$P$.

For each $p\in P$, $\setm{\fra\in\Ult A}{p\in\norm{\fra}}=\setm{\fra\in\Ult A}{\fra\cap A^{(p)}\neq\es}$\index{s}{normfa@$\Vert{\fra}\Vert$, $\Vert{\fra}\Vert_\bA$, $\fra$ ultrafilter}\index{s}{UltB@$\Ult B$, $\Ult\bB$, $\Ult\gf$} is obviously an open subset of $\Ult A$\index{s}{UltB@$\Ult B$, $\Ult\bB$, $\Ult\gf$}.
\qed\end{proof}

In case $\fra=A\upw a$, for an atom~$a$ of~$A$, we obtain the following particular case of Lemma~\ref{L:Ult2Pnorm}:

\begin{lem}\label{L:normaId}
For every object~$\bA$ of~$\Bool_P$\index{s}{BoolP@$\Bool_P$} and every atom~$a$ of~$A$, the set
\index{s}{norma@$\Vert{a}\Vert$, $\Vert{a}\Vert_\bA$, $a$ atom|ii}
 \begin{equation}\label{Eq:normAtom}
 \norm{a}:=\setm{p\in P}{a\in A^{(p)}}
 \end{equation}
is an ideal\index{i}{ideal!of a poset} of~$P$.
\end{lem}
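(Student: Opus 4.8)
The plan is to obtain this as the announced special case of Lemma~\ref{L:Ult2Pnorm}, applied to the principal ultrafilter at the atom~$a$. First I would recall the elementary fact that, for an atom~$a$ of a Boolean algebra~$A$, the principal filter $\fra:=A\upw a$ is an ultrafilter of~$A$: it is a proper filter, and for any $b\in A$ atomicity gives $a\wedge b\in\set{0,a}$, so exactly one of $b$, $1\setminus b$ lies above~$a$, hence in~$\fra$. Thus $\fra\in\Ult A$, and Lemma~\ref{L:Ult2Pnorm} applies to it.

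Next I would check that the set $\norm{\fra}_\bA$ defined by~\eqref{Eq:normfra} coincides with the set $\norm{a}$ defined by~\eqref{Eq:normAtom}. If $p\in\norm{a}$, i.e. $a\in A^{(p)}$, then $a\in\fra\cap A^{(p)}$, so $\fra\cap A^{(p)}\neq\es$ and $p\in\norm{\fra}_\bA$. Conversely, if $p\in\norm{\fra}_\bA$, pick $u\in\fra\cap A^{(p)}$; by definition of~$\fra$ we have $a\leq u$, and since $A^{(p)}$ is an ideal of~$A$ (in particular a lower subset), it follows that $a\in A^{(p)}$, i.e. $p\in\norm{a}$. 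Hence $\norm{a}=\norm{\fra}_\bA$.

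Finally, Lemma~\ref{L:Ult2Pnorm} (its first assertion, with $\fra=A\upw a$) says that $\norm{\fra}_\bA$ is an ideal of~$P$; by the equality just established, $\norm{a}$ is that same set, hence an ideal of~$P$, which is exactly the claim. There is essentially no obstacle here: the only two points that deserve a word are the standard characterization of principal ultrafilters at atoms and the identification of the two descriptions of the norm via the fact that the~$A^{(p)}$ are ideals, both of which are immediate.
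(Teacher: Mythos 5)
Your proof is correct and follows exactly the route the paper takes: it presents Lemma~\ref{L:normaId} as the particular case of Lemma~\ref{L:Ult2Pnorm} for the principal ultrafilter $\fra=A\upw a$, noting (as you verify) that $\norm{A\upw a}=\norm{a}$ because each $A^{(p)}$ is a lower subset of~$A$. Nothing to add.
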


Observe that the equation $\norm{A\upw a}=\norm{a}$\index{s}{normfa@$\Vert{\fra}\Vert$, $\Vert{\fra}\Vert_\bA$, $\fra$ ultrafilter}\index{s}{norma@$\Vert{a}\Vert$, $\Vert{a}\Vert_\bA$, $a$ atom} is satisfied for every atom~$a$ of~$A$, where~$\norm{a}$ is evaluated using~\eqref{Eq:normAtom} and $\norm{A\upw a}$ is evaluated using~\eqref{Eq:normfra}.

It is easy to construct examples where $\norm{a}$ has no largest element, even in case~$A$ is finite. Denote by $|a|$\index{s}{nora@$\vert a\vert$, $a$ atom|ii} the largest element of $\norm{a}$\index{s}{norma@$\Vert{a}\Vert$, $\Vert{a}\Vert_\bA$, $a$ atom} if it exists.

We shall denote by $\Ult\bA$\index{s}{UltB@$\Ult B$, $\Ult\bB$, $\Ult\gf$|ii} the Boolean space $\Ult A$ endowed with the $P$-norm defined in~\eqref{Eq:normfra}, and we shall call it the \emph{dual $P$-normed space}\index{i}{normed (Boolean) space}\index{i}{dual $P$-normed space|ii} of~$\bA$.

The proof of the following lemma is straightforward (see \eqref{Eq:DefUltf} for the definition of $\Ult\gf$).

\begin{lem}\label{L:UltfMorph}
The map $\Ult\gf\colon\Ult\bB\to\Ult\bA$\index{s}{UltB@$\Ult B$, $\Ult\bB$, $\Ult\gf$} is a morphism in $\BTop_P$\index{s}{BTop@$\BTop_P$}, for every morphism $\gf\colon\bA\to\bB$ in $\Bool_P$\index{s}{BoolP@$\Bool_P$}.
\end{lem}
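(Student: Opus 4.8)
The assertion has two components: that $\Ult\gf$ is continuous, and that it decreases norms in the sense of Definition~\ref{D:morphPnorm}. The first is nothing but ordinary Stone duality: for any $a\in A$, the preimage under $\Ult\gf$ of the basic clopen set $\setm{\fra\in\Ult A}{a\in\fra}$ equals $\setm{\frb\in\Ult B}{\gf(a)\in\frb}$, which is clopen in $\Ult B$; so $\Ult\gf$ is continuous. The plan is therefore to concentrate on the norm inequality, keeping in mind that, by construction of the dual $P$-normed space, the norm on $\Ult\bA$ (resp.\ $\Ult\bB$) is the map $\fra\mapsto\norm{\fra}_\bA$ (resp.\ $\frb\mapsto\norm{\frb}_\bB$) given by~\eqref{Eq:normfra}.

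\textbf{Key step.} Fix $\frb\in\Ult B$; I must show $\norm{\Ult\gf(\frb)}_{\Ult\bA}\subseteq\norm{\frb}_{\Ult\bB}$, that is, $\norm{\gf^{-1}\frb}_\bA\subseteq\norm{\frb}_\bB$. So let $p\in\norm{\gf^{-1}\frb}_\bA$. By~\eqref{Eq:normfra} there is an element $a\in(\gf^{-1}\frb)\cap A^{(p)}$. Since $\gf$ is a morphism in $\Bool_P$, the inclusion $\gf``(A^{(p)})\subseteq B^{(p)}$ gives $\gf(a)\in B^{(p)}$; and $a\in\gf^{-1}\frb$ means exactly $\gf(a)\in\frb$. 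Hence $\gf(a)\in\frb\cap B^{(p)}$, so $\frb\cap B^{(p)}\neq\es$, i.e.\ $p\in\norm{\frb}_\bB$. This proves the desired inclusion, and together with continuity it shows that $\Ult\gf$ is a morphism in $\BTop_P$.

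\textbf{Main obstacle.} There is essentially none: the proof is a direct unravelling of the definitions, which is why the excerpt calls it straightforward. The only point requiring a moment of care is bookkeeping—correctly matching the source and target norms ($\Ult\bB$ is the domain, $\Ult\bA$ the codomain, so the norm must \emph{decrease} along $\Ult\gf$) and recalling that $\norm{\fra}_\bA$ for an ultrafilter is computed via~\eqref{Eq:normfra} rather than via the atom formula~\eqref{Eq:normAtom}. Continuity can simply be cited from the Stone duality recalled in Section~\ref{Su:Stone}, so no topological work beyond the one-line verification above is needed.
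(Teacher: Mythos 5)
Your proof is correct and is exactly the direct verification the paper has in mind when it declares the lemma's proof straightforward and omits it: continuity is ordinary Stone duality, and the norm inclusion $\norm{\gf^{-1}\frb}_\bA\subseteq\norm{\frb}_\bB$ follows from $\gf``(A^{(p)})\subseteq B^{(p)}$ precisely as you argue. The direction of the inequality and the use of \eqref{Eq:normfra} are both handled correctly.
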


Hence $\Ult$ defines a contravariant functor from $\Bool_P$\index{s}{BoolP@$\Bool_P$} to $\BTop_P$\index{s}{BTop@$\BTop_P$}.

For a $P$-normed\index{i}{normed (Boolean) space} Boolean space~$\bX$ and $A:=\Clop X$\index{s}{ClopX@$\Clop X$, $\Clop\bX$, $\Clop f$}, we shall put
\index{s}{normx@$\Vert{x}\Vert$, $\Vert{x}\Vert_\bX$, $x$ point}
 \begin{equation}\label{Eq:DefClopbX}
 A^{(p)}:=\setm{U\in\Clop X}{(\forall x\in U)(p\in\norm{x})}\,,\qquad\text{for all }p\in P\,. 
 \end{equation}
 
\begin{lem}\label{L:ClopbXinBP}
The structure $\Clop\bX:=\left(A,\famm{A^{(p)}}{p\in P}\right)$\index{s}{ClopX@$\Clop X$, $\Clop\bX$, $\Clop f$|ii} is a $P$-scaled Boolean algebra\index{i}{algebra!Pscaled Boolean@$P$-scaled Boolean}. 
\end{lem}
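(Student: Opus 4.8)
The claim is that $\Clop\bX=\bigl(A,\famm{A^{(p)}}{p\in P}\bigr)$, with $A=\Clop X$ and $A^{(p)}$ defined by \eqref{Eq:DefClopbX}, satisfies the two axioms (i) and (ii) of Definition~\ref{D:BoolP}. I would first record the easy structural facts: each $A^{(p)}$ is genuinely an ideal of the Boolean algebra $A$. It is a lower subset because if $U\in A^{(p)}$ and $V\subseteq U$ is clopen, then every $x\in V$ lies in $U$, hence satisfies $p\in\norm{x}$; it is closed under binary joins because $x\in U\cup U'$ forces $x\in U$ or $x\in U'$, and in either case $p\in\norm x$; and $\es\in A^{(p)}$ vacuously. (The antitonicity noted after Definition~\ref{D:BoolP} is likewise immediate from the fact that $\norm x$ is a lower subset of $P$, by Lemma~\ref{L:Ult2Pnorm}.)

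For axiom (i), $A=\bigvee\famm{A^{(p)}}{p\in P}$ in $\Id A$, it suffices to exhibit a finite decomposition $1_A=X=\bigvee_{p\in Q}U_p$ with $U_p\in A^{(p)}$. Here I would use compactness of $X$ together with the defining property of a $P$-norm: for each $x\in X$ the norm $\norm x=\nu(x)$ is a \emph{nonempty} ideal of $P$ (Definition~\ref{D:Pnorm} of a $P$-normed space requires $\nu(x)\in\Id P$, and ideals are nonempty), so we may pick some $p_x\in\norm x$; then the set $W_{p_x}:=\setm{y\in X}{p_x\in\norm y}$ is open by the $P$-norm condition and contains $x$, so the $W_{p_x}$ cover $X$. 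Since $X$ is Boolean (compact, with a basis of clopen sets), refine to a finite clopen cover, i.e.\ choose finitely many $p_1,\dots,p_n\in P$ and clopen $U_k\subseteq W_{p_k}$ with $X=U_1\cup\cdots\cup U_n$; by construction $U_k\in A^{(p_k)}$, which gives (i). I can clean this up by first shrinking to a partition if desired, but it is not needed.

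For axiom (ii), $A^{(p)}\cap A^{(q)}=\bigvee\famm{A^{(r)}}{r\geq p,q}$, the inclusion $\supseteq$ is immediate: if $r\geq p,q$ and $U\in A^{(r)}$ then for $x\in U$ we have $r\in\norm x$, and since $\norm x$ is a lower subset of $P$ (Lemma~\ref{L:Ult2Pnorm}), also $p,q\in\norm x$, so $U\in A^{(p)}\cap A^{(q)}$; a supremum of such ideals is still contained in $A^{(p)}\cap A^{(q)}$. For $\subseteq$, take $U\in A^{(p)}\cap A^{(q)}$, so every $x\in U$ has $p,q\in\norm x=\nu(x)$; as $\nu(x)$ is \emph{directed}, there is $r_x\in\nu(x)$ with $r_x\geq p,q$. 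Then $W_{r_x}:=\setm{y\in X}{r_x\in\norm y}$ is open, contains $x$, and its intersection with $U$ is a neighbourhood of $x$ inside $U$ on which the constant $r_x$ works; shrinking to a clopen $V_x$ with $x\in V_x\subseteq U\cap W_{r_x}$ (possible since $X$ has a clopen basis) gives $V_x\in A^{(r_x)}$. The $V_x$ cover the clopen set $U$, which is compact, so finitely many $V_{x_1},\dots,V_{x_m}$ cover $U$; then $U=\bigvee_{j}(V_{x_j}\cap U)$, and after replacing each $V_{x_j}$ by $V_{x_j}\cap U$ (still clopen, still in the ideal $A^{(r_{x_j})}$ since $A^{(r_{x_j})}$ is a lower subset) we have written $U$ as a finite join of elements of the $A^{(r)}$ with $r\geq p,q$, hence $U\in\bigvee\famm{A^{(r)}}{r\geq p,q}$.

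The argument is entirely routine; the only mild subtlety — and the one point I would be careful to state — is the double use of the structure of $\nu(x)$ as an \emph{ideal} of $P$: nonemptiness drives the covering in axiom (i), and directedness drives the local choice of an upper bound $r\geq p,q$ in axiom (ii), while the lower-subset property (already packaged into Lemma~\ref{L:Ult2Pnorm}, or directly from $\nu(x)\in\Id P$) handles the trivial inclusions. Compactness of $X$ (and of clopen subsets of $X$) together with the clopen basis is what converts these pointwise choices into the required finite Boolean decompositions. No genuine obstacle is expected; this is the dual, spelled out, of Lemma~\ref{L:Ult2Pnorm}.
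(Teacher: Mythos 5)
Your proof is correct and follows essentially the same route as the paper's: nonemptiness of each $\norm{x}$ plus compactness and the clopen basis of $X$ give axiom~(i), and directedness of each $\norm{x}$ plus compactness of the clopen set $U$ give the nontrivial inclusion in axiom~(ii). You merely spell out in more detail the steps the paper compresses (the verification that each $A^{(p)}$ is an ideal, and the extraction of the finite clopen refinements), so nothing further is needed.
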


\begin{proof}
Obviously, $p\leq q$ implies that $A^{(q)}\subseteq A^{(p)}$, for all $p,q\in P$.

As $\norm{x}$\index{s}{normx@$\Vert{x}\Vert$, $\Vert{x}\Vert_\bX$, $x$ point} is nonempty for all $x\in X$, we get $X=\bigcup\famm{X_p}{p\in P}$, where we set
 \[
 X_p:=\setm{x\in X}{p\in\norm{x}}\,,\qquad\text{for all }p\in P\,.
 \]
As all $X_p$s are open and~$X$ is Boolean, there are a finite $Q\subseteq P$ and clopen subsets $U_p\subseteq X_p$, for $p\in Q$, such that $X=\bigcup\famm{U_p}{p\in Q}$. Hence $A=\bigvee\famm{A^{(p)}}{p\in P}$ in $\Id A$\index{s}{IdP@$\Id P$, $P$ poset}.

Finally let $p,q\in P$ and let $U\in A^{(p)}\cap A^{(q)}$. As~$\norm{x}$\index{s}{normx@$\Vert{x}\Vert$, $\Vert{x}\Vert_\bX$, $x$ point} is directed\index{i}{poset!directed} for all~$x\in X$, we get $U\subseteq\bigcup\famm{X_r}{r\geq p,q}$. As~$U$ is clopen and~$X$ is Boolean, there are a finite $R\subseteq P\Upw\set{p,q}$ and clopen subsets $U_r\subseteq X_r$, for $r\in R$, such that $U=\bigcup\famm{U_r}{r\in R}$. It follows that $A^{(p)}\cap A^{(q)}$ is contained in $\bigvee\famm{A^{(r)}}{r\geq p,q}$.
\qed\end{proof}

The proof of the following lemma is straightforward (see \eqref{Eq:DefClopf} for the definition of $\Clop f$\index{s}{ClopX@$\Clop X$, $\Clop\bX$, $\Clop f$}).

\begin{lem}\label{L:ClopfMorph}
The map $\Clop f\colon\Clop\bY\to\Clop\bX$\index{s}{ClopX@$\Clop X$, $\Clop\bX$, $\Clop f$} is a morphism in $\Bool_P$\index{s}{BoolP@$\Bool_P$}, for every morphism $f\colon\bX\to\bY$ in $\BTop_P$\index{s}{BTop@$\BTop_P$}.
\end{lem}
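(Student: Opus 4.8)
The plan is to unpack the two things that have to be checked: first that $\Clop f$ is a homomorphism of Boolean algebras, and second that it respects the $P$-scaling, i.e.\ that it carries $(\Clop\bY)^{(p)}$ into $(\Clop\bX)^{(p)}$ for every $p\in P$. The first point is just the contravariant half of classical Stone duality: since $f\colon X\to Y$ is continuous, the preimage $f^{-1}V$ of a clopen subset $V\subseteq Y$ is clopen in $X$, and the assignment $V\mapsto f^{-1}V$ commutes with finite unions, finite intersections, and complementation, so it is a morphism $\Clop Y\to\Clop X$ in $\Bool$. Accordingly, write $A:=\Clop X$ and $B:=\Clop Y$, with the ideals $A^{(p)}$ and $B^{(p)}$ defined by~\eqref{Eq:DefClopbX}; by Lemma~\ref{L:ClopbXinBP} these are indeed $P$-scaled Boolean algebras.

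For the second point, fix $p\in P$ and $V\in B^{(p)}$; I must show $f^{-1}V\in A^{(p)}$. By the definition~\eqref{Eq:DefClopbX} of $A^{(p)}$ this amounts to verifying that $p\in\norm{x}_\bX$ for every $x\in f^{-1}V$. So let $x\in f^{-1}V$, i.e.\ $f(x)\in V$. Since $V\in B^{(p)}$, applying~\eqref{Eq:DefClopbX} to the point $f(x)$ gives $p\in\norm{f(x)}_\bY$. Now invoke the hypothesis that $f$ is a morphism in $\BTop_P$, which by Definition~\ref{D:morphPnorm} means exactly that $\norm{f(x)}_\bY\subseteq\norm{x}_\bX$; hence $p\in\norm{x}_\bX$, as required. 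This establishes $(\Clop f)``(B^{(p)})\subseteq A^{(p)}$ for every $p\in P$, so $\Clop f$ satisfies the defining condition of a morphism in $\Bool_P$ from Definition~\ref{D:BoolP}.

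There is essentially no obstacle here; the lemma is routine, as the paper indicates. The only things to be attentive to are bookkeeping matters: that $\Clop\bX$ is legitimately an object of $\Bool_P$ (cited from Lemma~\ref{L:ClopbXinBP}), and the \emph{direction} of the inclusion in Definition~\ref{D:morphPnorm}, namely $\norm{f(x)}_\bY\subseteq\norm{x}_\bX$ — this is precisely the inclusion that lets the ``$p$ belongs to the norm of every point'' condition transfer backwards along preimages, and reversing it would break the argument.
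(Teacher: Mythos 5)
Your proof is correct and is precisely the routine verification the paper has in mind when it declares the lemma ``straightforward'' and omits the argument: the Boolean-algebra part is classical Stone duality, and the scaling part follows by chasing a point $x\in f^{-1}V$ through the inclusion $\norm{f(x)}_\bY\subseteq\norm{x}_\bX$ from Definition~\ref{D:morphPnorm}. Nothing to add.
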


Hence $\Clop$ defines a contravariant functor from~$\BTop_P$\index{s}{BTop@$\BTop_P$} to~$\Bool_P$\index{s}{BoolP@$\Bool_P$}. Our next result shows that the pair $(\Ult,\Clop)$\index{s}{UltB@$\Ult B$, $\Ult\bB$, $\Ult\gf$}\index{s}{ClopX@$\Clop X$, $\Clop\bX$, $\Clop f$} extends the classical Stone duality\index{i}{Stone duality} between Boolean algebras\index{i}{algebra!Boolean} and Boolean spaces.

\begin{prop}\label{P:StoneDualP}
The pair $(\Ult,\Clop)$\index{s}{UltB@$\Ult B$, $\Ult\bB$, $\Ult\gf$}\index{s}{ClopX@$\Clop X$, $\Clop\bX$, $\Clop f$} defines a duality between the categories~$\Bool_P$\index{s}{BoolP@$\Bool_P$} and~$\BTop_P$\index{s}{BTop@$\BTop_P$}.
\end{prop}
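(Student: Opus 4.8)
The plan is to reduce everything to the classical Stone duality between $\Bool$ and $\BTop$, of which $(\Ult,\Clop)$ is, in its enriched form, an extension. Recall that classical Stone duality is witnessed by the two natural isomorphisms $\eta_A\colon A\to\Clop\Ult A$, $a\mapsto\setm{\fra\in\Ult A}{a\in\fra}$, and $\eps_X\colon X\to\Ult\Clop X$, $x\mapsto\setm{U\in\Clop X}{x\in U}$. We already know, by Lemmas~\ref{L:UltfMorph} and~\ref{L:ClopfMorph}, that $\Ult$ and $\Clop$ are contravariant functors $\Bool_P\to\BTop_P$ and $\BTop_P\to\Bool_P$; and the squares expressing naturality of $\eta$ and $\eps$ already commute in $\Bool$ and $\BTop$, hence \emph{a fortiori} in $\Bool_P$ and $\BTop_P$. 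So the only thing to check is that, once $A$ carries a $P$-scaling and $X$ a $P$-norm, $\eta_A$ and $\eps_X$ are isomorphisms \emph{in the enriched categories}. Since a bijective morphism of $\Bool_P$ (or of $\BTop_P$) need not have a morphism for its inverse, the crux is to prove \emph{equalities}, not mere inclusions, of the relevant additional structures.

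On the algebra side, I would fix a $P$-scaled Boolean algebra~$\bA$ and prove $\eta_A``(A^{(p)})=(\Clop\Ult\bA)^{(p)}$ for every $p\in P$. Unfolding~\eqref{Eq:DefClopbX} applied to the dual $P$-normed space $\Ult\bA$ and~\eqref{Eq:normfra}, the right-hand side is $\setm{U\in\Clop\Ult A}{(\forall\fra\in U)(\fra\cap A^{(p)}\neq\es)}$. The inclusion $\subseteq$ is immediate: if $a\in A^{(p)}$ and $a\in\fra$, then $a\in\fra\cap A^{(p)}$. For the reverse inclusion, suppose $a\notin A^{(p)}$; then the principal filter $A\upw a$ is disjoint from the ideal $A^{(p)}$, so by the prime ideal theorem for Boolean algebras it extends to an ultrafilter $\fra$ with $a\in\fra$ and $\fra\cap A^{(p)}=\es$, which witnesses $\eta_A(a)\notin(\Clop\Ult\bA)^{(p)}$. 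Hence $\eta_A$ restricts to a bijection $A^{(p)}\to(\Clop\Ult\bA)^{(p)}$ for each~$p$, so both $\eta_A$ and $\eta_A^{-1}$ are morphisms of $\Bool_P$; thus $\eta_A$ is an isomorphism of $\Bool_P$, naturally in~$\bA$.

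On the space side, I would fix a $P$-normed Boolean space~$\bX$ and prove $\norm{\eps_X(x)}_{\Ult\Clop\bX}=\norm{x}_\bX$ for all $x\in X$. By~\eqref{Eq:normfra} (for $\bA=\Clop\bX$) together with~\eqref{Eq:DefClopbX}, one has $p\in\norm{\eps_X(x)}_{\Ult\Clop\bX}$ if{f} there is a clopen $U$ with $x\in U$ and $U\subseteq\setm{y\in X}{p\in\norm{y}_\bX}$. The inclusion $\norm{\eps_X(x)}_{\Ult\Clop\bX}\subseteq\norm{x}_\bX$ is then trivial, while the reverse inclusion uses exactly that, by the defining property of a $P$-norm, the set $\setm{y\in X}{p\in\norm{y}_\bX}$ is open, hence---$X$ being Boolean---a union of clopen subsets, one of which contains~$x$. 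Therefore $\eps_X$ and its inverse both preserve norms, so $\eps_X$ is an isomorphism in $\BTop_P$, naturally in~$\bX$.

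Putting the two halves together, $\Clop\Ult$ is naturally isomorphic to $\mathrm{Id}_{\Bool_P}$ and $\Ult\Clop$ to $\mathrm{Id}_{\BTop_P}$, which is the asserted duality. The only genuinely delicate points are the two $\supseteq$-type arguments: I expect the (minor) obstacle to be ensuring that the inverses of $\eta_A$ and $\eps_X$ stay inside the enriched categories, which is precisely why one must establish the displayed equalities rather than settle for the easy inclusions---these rest on the prime ideal theorem on the algebra side and on the clopen basis of a Boolean space on the topological side.
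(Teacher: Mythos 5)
Your proof is correct and follows essentially the same route as the paper's: both reduce to the classical Stone duality and verify that the two unit/counit bijections preserve the extra structure exactly, using the separation of the principal filter $A\upw a$ from the ideal $A^{(p)}$ by an ultrafilter on the algebra side, and the clopen basis of a Boolean space on the topological side. No gaps.
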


\begin{proof}
For a $P$-scaled Boolean algebra\index{i}{algebra!Pscaled Boolean@$P$-scaled Boolean}~$\bA$ and $\bX:=\Ult\bA$\index{s}{UltB@$\Ult B$, $\Ult\bB$, $\Ult\gf$}, the natural isomorphism
$\eps\colon A\to\Clop X$\index{s}{ClopX@$\Clop X$, $\Clop\bX$, $\Clop f$} is given by
 \[
 \eps(u):=\setm{\fra\in X}{u\in\fra}\,,\qquad\text{for all }u\in A\,.
 \]
We claim that $\eps``(A^{(p)})=(\Clop X)^{(p)}$, for all $p\in P$. Let~$u\in A$. If $u\in A^{(p)}$, then $\fra\cap A^{(p)}$ is nonempty (because $u$ belongs there) for all $\fra\in\eps(u)$, thus $p\in\norm{\fra}$\index{s}{normfa@$\Vert{\fra}\Vert$, $\Vert{\fra}\Vert_\bA$, $\fra$ ultrafilter}; whence $\eps(u)\in(\Clop X)^{(p)}$. Conversely, suppose that $\eps(u)\in(\Clop X)^{(p)}$, that is, $(\forall\fra\in X)(u\in\fra\Rightarrow\fra\cap A^{(p)}\neq\es)$. If $u\notin A^{(p)}$, then, as~$A^{(p)}$ is an ideal\index{i}{ideal!of a poset} of~$A$, there exists an ultrafilter~$\fra$ of~$A$ such that~$u\in\fra$ and $\fra\cap A^{(p)}=\es$; a contradiction. So, $u\in A^{(p)}$, which proves our claim. Therefore, $\eps$ defines an isomorphism from~$\bA$ onto~$\Clop\bX$\index{s}{ClopX@$\Clop X$, $\Clop\bX$, $\Clop f$}.

For a $P$-normed\index{i}{normed (Boolean) space} Boolean space~$\bX$ and $\bA:=\Clop\bX$\index{s}{ClopX@$\Clop X$, $\Clop\bX$, $\Clop f$}, the natural homeomorphism $\gh\colon X\to\Ult A$\index{s}{UltB@$\Ult B$, $\Ult\bB$, $\Ult\gf$} is given by
 \[
 \gh(x):=\setm{U\in A}{x\in U}\,,\qquad\text{for all }x\in X\,.
 \]
We claim that\index{s}{normx@$\Vert{x}\Vert$, $\Vert{x}\Vert_\bX$, $x$ point} $\norm{x}=\norm{\gh(x)}$, for all $x\in X$. For every $p\in P$\index{s}{normx@$\Vert{x}\Vert$, $\Vert{x}\Vert_\bX$, $x$ point},\index{s}{ClopX@$\Clop X$, $\Clop\bX$, $\Clop f$}
 \begin{align*}
 p\in\norm{\gh(x)}&\Leftrightarrow \gh(x)\cap A^{(p)}\neq\es\\
 &\Leftrightarrow (\exists U\in\Clop X)(x\in U\text{ and }U\in A^{(p)})\\
 &\Leftrightarrow (\exists U\in\Clop X)\bigl(x\in U\text{ and }(\forall y\in U)(p\in\norm{y})\bigr)\\
 &\Leftrightarrow p\in\norm{x}\\
 &\qquad\qquad(\text{because }
 \setm{y\in X}{p\in\norm{y}}\text{ is open and }X\text{ is Boolean})\,, 
 \end{align*}
which proves our claim. Therefore, $\gh$ defines an isomorphism from~$\bX$ onto~$\Ult\bA$\index{s}{UltB@$\Ult B$, $\Ult\bB$, $\Ult\gf$}.
\qed\end{proof}

\section[Directed colimits and finite products in $\Bool_P$]{Directed colimits and finite products of $P$-scaled Boolean algebras}\label{S:BoolP}

Throughout this section we shall fix a poset~$P$. In order to be able to manipulate condensates\index{i}{condensate} conveniently we shall need the following easy result.

\begin{prop}\label{P:DirColim}
The category~$\Bool_P$\index{s}{BoolP@$\Bool_P$} has arbitrary small directed colimits.
\end{prop}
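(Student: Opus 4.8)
The plan is to construct directed colimits in $\Bool_P$ by carrying them out in $\Bool$ and then equipping the colimit Boolean algebra with the appropriate scaling. Concretely, let $\overrightarrow{\bA}=\famm{\bA_i,\gf_i^j}{i\leq j\text{ in }I}$ be a diagram in $\Bool_P$ indexed by a nonempty directed poset $I$, where $\bA_i=\bigl(A_i,\famm{A_i^{(p)}}{p\in P}\bigr)$. First I would forget the scaling and form the directed colimit $\famm{A,\gf_i}{i\in I}=\varinjlim\famm{A_i,\gf_i^j}{i\leq j\text{ in }I}$ in the category $\Bool$ of Boolean algebras; this exists because $\Bool$ is a variety, hence has all directed colimits, and the concrete description from Section~\ref{Su:DirColimFirstOrd} applies (the forgetful functor $\Bool\to\SET$ creates directed colimits). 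Then, for each $p\in P$, I would define $A^{(p)}:=\bigcup\famm{\gf_i``(A_i^{(p)})}{i\in I}$, the ``union'' of the images of the ideals $A_i^{(p)}$ along the limiting maps.

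The key verifications are then: (1) each $A^{(p)}$ is an ideal of $A$; (2) the pair $\bigl(A,\famm{A^{(p)}}{p\in P}\bigr)$ satisfies conditions (i) and (ii) of Definition~\ref{D:BoolP}; (3) each $\gf_i$ is a morphism in $\Bool_P$, i.e. $\gf_i``(A_i^{(p)})\subseteq A^{(p)}$, which is immediate from the definition of $A^{(p)}$; and (4) the resulting cocone is universal in $\Bool_P$. For (1): $A^{(p)}$ is closed downward and under finite joins because $I$ is directed, each $A_i^{(p)}$ is an ideal, and any two elements of $A^{(p)}$ come from a common $A_i^{(p)}$ after pushing forward far enough along the diagram (using that $\gf_i^j$ maps $A_i^{(p)}$ into $A_j^{(p)}$). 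For (2)(i): pick any $i\in I$; since $1\in A_i=\bigvee\famm{A_i^{(p)}}{p\in P}$ there is a finite $Q\subseteq P$ and $a_p\in A_i^{(p)}$ with $1=\bigvee\famm{a_p}{p\in Q}$ in $A_i$; applying $\gf_i$ gives $1=\bigvee\famm{\gf_i(a_p)}{p\in Q}$ with $\gf_i(a_p)\in A^{(p)}$. For (2)(ii): given $a\in A^{(p)}\cap A^{(q)}$, choose $i$ large enough that $a=\gf_i(a')$ with $a'\in A_i^{(p)}$ and simultaneously (pushing forward) $a'\in A_i^{(q)}$ as well — here one uses that an element of $\gf_i``(A_i^{(p)})\cap\gf_j``(A_j^{(q)})$ can, after moving to a common upper bound $k\geq i,j$ and using the characterization \eqref{Eq:gfix=gfiyimpl} of when $\gf_i(x)=\gf_i(y)$, be realized as $\gf_k$ of a single element lying in both $A_k^{(p)}$ and $A_k^{(q)}$; then $a'\in A_k^{(p)}\cap A_k^{(q)}=\bigvee\famm{A_k^{(r)}}{r\geq p,q}$ by condition (ii) in $\bA_k$, and applying $\gf_k$ yields the desired decomposition of $a$ over $\bigvee\famm{A^{(r)}}{r\geq p,q}$. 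For (4): a cocone $\famm{\bB,\gy_i}{i\in I}$ in $\Bool_P$ is in particular a cocone in $\Bool$, so there is a unique Boolean-algebra morphism $\gy\colon A\to B$ with $\gy\circ\gf_i=\gy_i$; it remains to check $\gy``(A^{(p)})\subseteq B^{(p)}$, which follows because every element of $A^{(p)}$ has the form $\gf_i(a')$ with $a'\in A_i^{(p)}$, whence $\gy(\gf_i(a'))=\gy_i(a')\in B^{(p)}$.

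The main obstacle — really the only place requiring care — is the argument in (2)(ii) that $A^{(p)}\cap A^{(q)}$ is controlled by $\bigvee\famm{A^{(r)}}{r\geq p,q}$: one must be careful that membership of $a$ in both $A^{(p)}$ and $A^{(q)}$, each witnessed at a possibly different stage with possibly different preimages, can be amalgamated to a single stage $k$ and a single preimage lying in $A_k^{(p)}\cap A_k^{(q)}$, and this is exactly where the colimit formula \eqref{Eq:gfix=gfiyimpl} (two preimages of the same element become equal further along) and the directedness of $I$ are used. Everything else is routine bookkeeping. I would present the proof by defining the $A^{(p)}$, verifying (1)–(4) in that order, and invoking Proposition~\ref{P:StoneDualP} only implicitly (we argue entirely on the algebraic side). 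The statement is ``easy'' precisely because directed colimits commute with the finitary ideal-membership conditions defining $\Bool_P$.
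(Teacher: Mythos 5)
Your proposal is correct and follows essentially the same route as the paper: form the colimit in $\Bool$, set $A^{(p)}:=\bigcup_{i}\gf_i``\bigl(A_i^{(p)}\bigr)$, and verify the ideal property, conditions (i)--(ii) of Definition~\ref{D:BoolP}, and universality. The only cosmetic difference is in condition~(ii), where the paper avoids amalgamating the two preimages via \eqref{Eq:gfix=gfiyimpl} by simply replacing them with their meet $x'\wedge x''$, which already lies in $A_i^{(p)}\cap A_i^{(q)}$ and has the same image.
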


\begin{proof}
We must prove that every poset-indexed diagram
 \[
 \overrightarrow{\bA}=\famm{\bA_i,\ga_i^j}{i\leq j\text{ in }I}
 \]
in~$\Bool_P$\index{s}{BoolP@$\Bool_P$}, with~$I$ a directed poset\index{i}{poset!directed}, has a colimit. The colimit
 \[
 \famm{A,\ga_i}{i\in I}:=\varinjlim\overrightarrow{A}\,,\qquad\text{where}\qquad
 \overrightarrow{A}=\famm{A_i,\ga_i^j}{i\leq j\text{ in }I}
 \]
in the category of Boolean algebras\index{i}{algebra!Boolean} is characterized, among cocones above~$\overrightarrow{A}$, by the statements
 \begin{align*}
 A&=\bigcup\famm{\ga_i``(A_i)}{i\in I}\,,\\
 \ga_i(x)\leq\ga_i(y)&\Leftrightarrow(\exists j\geq i)
 \bigl(\ga_i^j(x)\leq\ga_i^j(y)\bigr)\,,&&\text{for all }i\in I\text{ and all }x,y\in A_i
 \end{align*}
(cf. Section~\ref{Su:DirColimFirstOrd}).
Put $A^{(p)}:=\bigcup\famm{\ga_i\bigl(A_i^{(p)}\bigr)}{i\in I}$, for each $p\in P$. We start by verifying that~$A^{(p)}$ is a lower subset\index{i}{lower subset} of~$A$. Let $\ol{x}\in A$ and~$\ol{y}\in A^{(p)}$ such that $\ol{x}\leq\ol{y}$. There are $i\in I$, $x\in A_i$, and $y\in A_i^{(p)}$ such that $\ol{x}=\ga_i(x)$ and $\ol{y}=\ga_i(y)$. As $\ol{x}\leq\ol{y}$, we may augment~$i$ in such a way that $x\leq y$. Hence $x\in A_i^{(p)}$, and so $\ol{x}\in A^{(p)}$. As $A^{(p)}$ is a \jz-subsemilattice of~$A$ (for the union defining~$A^{(p)}$ is directed), it follows that~$A^{(p)}$ is an ideal\index{i}{ideal!of a poset} of~$A$.

As $I$ is nonempty, we may pick $i\in I$. {}From $A_i=\bigvee\famm{A_i^{(p)}}{p\in P}$ it follows that there exists a decomposition in~$A_i$ of the form
$1=\bigvee\famm{a_p}{p\in Q}$, where~$Q$ is a finite subset of~$P$ and $a_p\in A_i^{(p)}$ for all $p\in Q$. Therefore, $1=\bigvee\famm{\ga_i(a_p)}{p\in Q}$ in~$A$, with $\ga_i(a_p)\in A^{(p)}$ for all~$p\in Q$, and so $A=\bigvee\famm{A^{(p)}}{p\in P}$.

Finally, let $p,q\in P$ and let $\ol{x}\in A^{(p)}\cap A^{(q)}$. There are $i\in I$ and $(x',x'')\in A_i^{(p)}\times A_i^{(q)}$ such that $\ol{x}=\ga_i(x')=\ga_i(x'')$. Hence $\ol{x}=\ga_i(x)$, where $x:=x'\wedge x''$ belongs to $A_i^{(p)}\cap A_i^{(q)}=\bigvee\famm{A_i^{(r)}}{r\geq p,q}$, and so $x=\bigvee\famm{x_r}{r\in R}$ in $A_i$, for some finite set~$R$ of upper bounds of $\set{p,q}$ and elements $x_r\in A_i^{(r)}$, for all $r\in R$. Therefore, $\ol{x}=\bigvee\famm{\ga_i(x_r)}{r\in R}$ belongs to $\bigvee\famm{A^{(r)}}{r\geq p,q}$. This completes the proof that
$\bA=\left(A,\famm{A^{(p)}}{p\in P}\right)$ is a $P$-scaled Boolean algebra\index{i}{algebra!Pscaled Boolean@$P$-scaled Boolean}.

The verification of the statement
 \[
 \famm{\bA,\ga_i}{i\in I}=\varinjlim\famm{\bA_i,\ga_i^j}{i\leq j\text{ in }I}
 \]
is routine.
\qed\end{proof}

The statement of CLL\index{i}{Condensate Lifting Lemma (CLL)} assumes the category~$\cA$ of Lemma~\ref{L:CLL} being closed under nonempty finite products. Hence we record for further use the following easy fact.

\begin{prop}\label{P:ArbProd}
The category~$\Bool_P$\index{s}{BoolP@$\Bool_P$} has arbitrary nonempty finite products, and even arbitrary nonempty small products in case~$P$ is finite.
\end{prop}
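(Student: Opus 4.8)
The plan is to construct the product componentwise, building the underlying Boolean algebra as the ordinary product of Boolean algebras and then defining the ideals $A^{(p)}$ as the componentwise products of the ideals $A_i^{(p)}$, and finally to verify conditions (i) and (ii) of Definition~\ref{D:BoolP}. So, given a nonempty finite family $\famm{\bA_i}{i\in F}$ of $P$-scaled Boolean algebras, with $\bA_i=\bigl(A_i,\famm{A_i^{(p)}}{p\in P}\bigr)$, set $A:=\prod\famm{A_i}{i\in F}$ in the category~$\Bool$ of Boolean algebras, with canonical projections $\gp_i\colon A\to A_i$, and put
 \[
 A^{(p)}:=\prod\famm{A_i^{(p)}}{i\in F}=\setm{a\in A}{\gp_i(a)\in A_i^{(p)}\text{ for all }i\in F}\,,\qquad\text{for each }p\in P\,.
 \]
Since each $A_i^{(p)}$ is an ideal of~$A_i$ and a finite product of ideals is an ideal of the product Boolean algebra, each $A^{(p)}$ is an ideal of~$A$.

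Next I would check the two scaling axioms. For~(i), using $A_i=\bigvee\famm{A_i^{(p)}}{p\in P}$ pick, for each $i\in F$, a finite $Q_i\subseteq P$ and a decomposition $1_{A_i}=\bigvee\famm{a^i_p}{p\in Q_i}$ with $a^i_p\in A_i^{(p)}$; then $Q:=\bigcup\famm{Q_i}{i\in F}$ is finite (here finiteness of~$F$ is used), and setting $a_p\in A$ by $\gp_i(a_p):=a^i_p$ when $p\in Q_i$ and $\gp_i(a_p):=0$ otherwise, we get $a_p\in A^{(p)}$ and $\bigvee\famm{a_p}{p\in Q}=1_A$ since the supremum is computed componentwise. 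For~(ii), one inclusion is clear from antitony; for the other, given $a\in A^{(p)}\cap A^{(q)}$ we have $\gp_i(a)\in A_i^{(p)}\cap A_i^{(q)}=\bigvee\famm{A_i^{(r)}}{r\geq p,q}$ for each~$i$, so each $\gp_i(a)$ decomposes as a finite join $\bigvee\famm{a^i_r}{r\in R_i}$ with $R_i\subseteq P\Upw\set{p,q}$ finite and $a^i_r\in A_i^{(r)}$; taking $R:=\bigcup\famm{R_i}{i\in F}$ (finite) and assembling, as in the previous step, elements $a_r\in A^{(r)}$ with $\gp_i(a_r)=a^i_r$ for $r\in R_i$ and $0$ otherwise, we obtain $a=\bigvee\famm{a_r}{r\in R}$ with $a_r\in A^{(r)}$. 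Hence $\bA:=\bigl(A,\famm{A^{(p)}}{p\in P}\bigr)$ is a $P$-scaled Boolean algebra, and each $\gp_i$ is a morphism in~$\Bool_P$ since $\gp_i``(A^{(p)})\subseteq A_i^{(p)}$ by definition of~$A^{(p)}$.

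Finally I would verify the universal property: given any $\bB$ in~$\Bool_P$ together with morphisms $f_i\colon\bB\to\bA_i$, the unique Boolean-algebra morphism $f\colon B\to A$ with $\gp_i\circ f=f_i$ satisfies $f``(B^{(p)})\subseteq A^{(p)}$, because $\gp_i(f(b))=f_i(b)\in A_i^{(p)}$ for all $i\in F$ whenever $b\in B^{(p)}$; so $f$ is a morphism in~$\Bool_P$, and it is the unique one, these being determined by their underlying Boolean maps. For the last clause, when~$P$ is finite the argument goes through verbatim for an arbitrary nonempty small family $\famm{\bA_i}{i\in F}$: the only places where finiteness of~$F$ was used were the assertions that $Q=\bigcup Q_i$ and $R=\bigcup R_i$ are finite, and when $P$ itself is finite these unions are finite regardless of the size of~$F$ (all the $Q_i$, $R_i$ being subsets of~$P$). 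The proof involves no real obstacle; the only point requiring a moment's care is the dependence on finiteness of the index set in axiom~(i) and the second half of~(ii), which is exactly what the ``$P$ finite'' proviso in the statement is designed to circumvent.
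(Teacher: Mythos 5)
Your construction and verification coincide with the paper's own proof: the product is formed componentwise, with $A^{(p)}:=\prod\famm{A_i^{(p)}}{i\in I}$, and the finiteness of the index set (or of~$P$) is used in exactly the same place, namely to make $Q=\bigcup Q_i$ finite in the verification of Definition~\ref{D:BoolP}(i) (and similarly for~(ii)). You merely spell out item~(ii) and the universal property, which the paper dismisses as similar and straightforward; the argument is correct.
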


\begin{proof}
Let $I$ be a nonempty set and let $\famm{\bA_i}{i\in I}$ be a family of $P$-scaled Boolean algebras\index{i}{algebra!Pscaled Boolean@$P$-scaled Boolean}. We set
 \begin{align*}
 A&:=\prod\famm{A_i}{i\in I}\,,\\
 A^{(p)}&:=\prod\famm{A_i^{(p)}}{i\in I}\,,\qquad\text{for each }p\in P\,,\\
 \bA&:=\Bigl(A,\famm{A^{(p)}}{p\in P}\Bigr)\,.
 \end{align*}
For each $i\in I$, as $A_i=\bigvee\famm{A_i^{(p)}}{p\in P}$, there are a finite subset~$Q_i$ of~$P$ and a family $\famm{x_{i,p}}{p\in Q_i}\in\prod\famm{A_i^{(p)}}{p\in Q_i}$ such that $1_{A_i}=\bigvee\famm{x_{i,p}}{p\in Q_i}$.

Now assume that either~$I$ or~$P$ is finite. Then $Q:=\bigcup\famm{Q_i}{i\in I}$ is a finite subset of~$P$. Set $x_{i,p}:=0_{A_i}$ for each $i\in I$ and each $p\in Q\setminus Q_i$, and then set $x_p:=\famm{x_{i,p}}{i\in I}$, for each $p\in Q$. Then $\famm{x_{i,p}}{p\in Q}\in\prod\famm{A_i^{(p)}}{p\in Q}$ for each $i\in I$, and so $x_p\in A^{(p)}$ for each $p\in Q$, with $\bigvee\famm{x_p}{p\in Q}=1_A$. Item~(i) of Definition~\ref{D:BoolP} follows. The proof for Item~(ii) is similar. Hence~$\bA$ is a $P$-scaled Boolean algebra\index{i}{algebra!Pscaled Boolean@$P$-scaled Boolean}. The verification that~$\bA$ is a product of $\famm{\bA_i}{i\in I}$ in~$\Bool_P$\index{s}{BoolP@$\Bool_P$} is straightforward.
\qed\end{proof}

\section{Finitely presented $P$-scaled Boolean algebras}\label{S:CompBP}

The present section is mostly devoted to describe the finitely presented\index{i}{presented!finitely} objects in~$\Bool_P$\index{s}{BoolP@$\Bool_P$} as the \emph{compact}\index{i}{compact!member of $\Bool_P$} ones (cf. Definition~\ref{D:CompactBP} and Corollary~\ref{C:FinPres=Comp}). Also, we shall express an arbitrary $P$-scaled Boolean algebra\index{i}{algebra!Pscaled Boolean@$P$-scaled Boolean} as a monomorphic\index{i}{monomorphic colimit} directed colimit of finitely presented\index{i}{presented!finitely} ones (cf. Proposition~\ref{P:FinRepresBP}).

Throughout this section we shall fix a poset~$P$.

\begin{defn}\label{D:CompactBP}
An object~$\bA$ in~$\Bool_P$\index{s}{BoolP@$\Bool_P$} is \emph{compact}\index{i}{compact!member of $\Bool_P$|ii} if $A$ is finite and $\norm{a}$\index{s}{norma@$\Vert{a}\Vert$, $\Vert{a}\Vert_\bA$, $a$ atom} has a largest element for each $a\in\At A$ (cf. Lemma~\ref{L:normaId}).
\end{defn}

\begin{remk}\label{Rk:DualCompact}
The dual spaces of the compact $P$-scaled Boolean algebras\index{i}{algebra!Pscaled Boolean@$P$-scaled Boolean} (cf. Definition~\ref{D:CompactBP}) are exactly the $P$-normed\index{i}{normed (Boolean) space} spaces~$\bX$ with finite underlying set~$X$ and~$\norm{x}$\index{s}{normx@$\Vert{x}\Vert$, $\Vert{x}\Vert_\bX$, $x$ point} a principal ideal\index{i}{ideal!of a poset}\index{i}{ideal!principal ${}_{-}$, of a poset} of~$P$ for each~$x\in X$.
\end{remk}

\begin{lem}\label{L:Comp2FP}
Any compact object~$\bA$ in~$\Bool_P$\index{s}{BoolP@$\Bool_P$} is finitely presented\index{i}{presented!finitely} in $\Bool_P$\index{s}{BoolP@$\Bool_P$}.
\end{lem}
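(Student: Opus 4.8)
I want to show that a compact $P$-scaled Boolean algebra $\bA$ is finitely presented in $\Bool_P$ in the sense of Definition~\ref{D:kapppres}, specialized to $\gk=\aleph_0$. So I am given a directed colimit $\famm{\bB,\gb_i}{i\in I}=\varinjlim\famm{\bB_i,\gb_i^j}{i\leq j\text{ in }I}$ in $\Bool_P$ (with $I$ directed; $\aleph_0$-directed is the same as directed), and I must verify the two clauses: every morphism $\gf\colon\bA\to\bB$ factors through some $\bB_i$, and any two morphisms $\bA\to\bB_i$ that become equal after composing with $\gb_i$ already agree after some $\gb_i^j$. The key point is the explicit description of directed colimits in $\Bool_P$ from the proof of Proposition~\ref{P:DirColim}: the underlying Boolean algebra of $\bB$ is the directed colimit of the $B_i$ (described as in Section~\ref{Su:DirColimFirstOrd}), and $B^{(p)}=\bigcup_i\gb_i``(B_i^{(p)})$ for each $p\in P$.

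First I would handle clause (ii), the easier one, since it reduces almost immediately to the corresponding fact for plain Boolean algebras: a finite Boolean algebra is finitely presented in $\Bool$. Indeed, if $\gf,\gy\colon\bA\to\bB_i$ satisfy $\gb_i\circ\gf=\gb_i\circ\gy$, then for each atom $a$ of the finite Boolean algebra $A$ we have $\gb_i(\gf(a))=\gb_i(\gy(a))$, so by the characterization of the colimit in $\SET$/$\Bool$ (equation~\eqref{Eq:gfix=gfiyimpl}) there is $j(a)\geq i$ with $\gb_i^{j(a)}(\gf(a))=\gb_i^{j(a)}(\gy(a))$; taking $j$ an upper bound of the finitely many $j(a)$ and using that the atoms generate $A$ gives $\gb_i^j\circ\gf=\gb_i^j\circ\gy$ as maps, hence as morphisms in $\Bool_P$.

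For clause (i), the construction is driven by the atoms of $A$. List $\At A=\set{a_1,\dots,a_n}$ and set $p_k:=|a_k|$, the largest element of $\norm{a_k}$, which exists precisely because $\bA$ is compact; by definition of $\norm{a_k}$ this means $a_k\in A^{(p_k)}$ and $\norm{a_k}=P\dnw p_k$. Given $\gf\colon\bA\to\bB$, each $\gf(a_k)$ lies in $B^{(p_k)}=\bigcup_i\gb_i``(B_i^{(p_k)})$, so there is an index $i_k$ and an element $b_k\in B_{i_k}^{(p_k)}$ with $\gb_{i_k}(b_k)=\gf(a_k)$. Picking $i\in I$ above all the $i_k$ and replacing each $b_k$ by its image $\gb_{i_k}^i(b_k)\in B_i^{(p_k)}$, I get elements $c_1,\dots,c_n\in B_i$ with $c_k\in B_i^{(p_k)}$ and $\gb_i(c_k)=\gf(a_k)$. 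Now the $\gf(a_k)$ are pairwise disjoint with join $1_B$ (being the images of the atoms of $A$ under a Boolean homomorphism), so after possibly increasing $i$ once more — again using the colimit description for plain Boolean algebras, which detects the finitely many relations $c_k\wedge c_l=0$ ($k\neq l$) and $\bigvee_k c_k=1$ — I may assume the $c_k$ themselves are pairwise disjoint with $\bigvee_k c_k=1_{B_i}$. This lets me define a Boolean homomorphism $\gf'\colon A\to B_i$ by $\gf'(a_k):=c_k$ (extend over the atoms), and by construction $\gb_i\circ\gf'=\gf$ since they agree on the atoms of $A$. It remains to check $\gf'$ is a $\Bool_P$-morphism, i.e. $\gf'``(A^{(p)})\subseteq B_i^{(p)}$ for all $p$: but $A^{(p)}$ is the ideal generated by $\setm{a_k}{p\in\norm{a_k}}=\setm{a_k}{p\leq p_k}$, so it suffices that $\gf'(a_k)=c_k\in B_i^{(p)}$ whenever $p\leq p_k$, which follows from $c_k\in B_i^{(p_k)}$ and antitonicity of $\famm{B_i^{(p)}}{p\in P}$.

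**The main obstacle.** The only real subtlety is making sure the finitely many increases of the index $i$ can be carried out: I am using several times that a \emph{finite} Boolean algebra (here $A$, presented by its $n$ atoms and the finitely many disjointness-and-covering relations among them) is finitely presented in the category of Boolean algebras, so that a homomorphism into a directed colimit, and the validity of finitely many equations there, both descend to a finite stage. This is standard, but it is the point where finiteness of $A$ (one half of compactness) is essential; the other half, existence of largest elements $|a_k|$, is exactly what makes the $P$-scaled part of the factorization go through cleanly, since it lets me pin $\gf(a_k)$ down to a single ideal $B^{(p_k)}$ rather than having to chase the whole ideal $\norm{a_k}$ through the colimit.
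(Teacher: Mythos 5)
Your proof is correct and follows essentially the same route as the paper's: both arguments reduce everything to the finitely many atoms of $A$, use the existence of $|a|$ together with the description $B^{(|a|)}=\bigcup_i\gb_i``\bigl(B_i^{(|a|)}\bigr)$ of directed colimits in $\Bool_P$ to land each $\gf(a)$ in some $B_i^{(|a|)}$, and invoke finite presentability of the finite Boolean algebra $A$ in $\Bool$ to settle both factorization and the coequalizing clause. The only (immaterial) difference is organizational: the paper first factors $f$ through $B_{i_1}$ as a plain Boolean homomorphism and then pushes the index up to force $g(a)\in B_i^{(|a|)}$, whereas you build the lift atom-by-atom and verify the partition-of-unity relations afterwards.
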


\begin{proof}
Let $\famm{\bB,\gb_i}{i\in I}=\varinjlim\famm{\bB_i,\gb_i^j}{i\leq j\text{ in }I}$ in $\Bool_P$\index{s}{BoolP@$\Bool_P$}, where~$I$ is a directed poset\index{i}{poset!directed}, and let $f\colon\bA\to\bB$ in $\Bool_P$. Let $a\in\At A$. {}From $a\in A^{(|a|)}$\index{s}{nora@$\vert a\vert$, $a$ atom} it follows that
$f(a)\in B^{(|a|)}=\bigcup\famm{\gb_i``\bigl(B_i^{(|a|)}\bigr)}{i\in I}$. As~$\At A$ is finite, there exists $i_0\in I$ such that $f(a)\in\gb_{i_0}``\bigl(B_{i_0}^{(|a|)}\bigr)$ for all $a\in\At A$. As~$A$ is a finite Boolean algebra\index{i}{algebra!Boolean} and $B=\varinjlim_{i\in I}B_i$ for Boolean algebras\index{i}{algebra!Boolean}, there exists $i_1\geq i_0$ such that~$f$ factors through~$B_{i_1}$, that is, there exists a homomorphism $g\colon A\to B_{i_1}$ of Boolean algebras\index{i}{algebra!Boolean} such that $f=\gb_{i_1}\circ g$. As $i_1\geq i_0$, we also get that
$(\gb_{i_1}\circ g)(a)=f(a)$ belongs to $\gb_{i_1}``\bigl(B_{i_1}^{(|a|)}\bigr)$ for each $a\in\At A$. As $\At A$ is finite, there exists $i\geq i_1$ in~$I$ such that
$(\gb_{i_1}^{i}\circ g)(a)\in\gb_{i_1}^{i}``\bigl(B_{i_1}^{(|a|)}\bigr)$ for all $a\in\At A$. Hence we may replace~$i_1$ by~$i$ and~$g$ by $\gb_{i_1}^{i}\circ g$, and so $g(a)\in B_i^{(|a|)}$, for all $a\in\At A$.

We claim that $g``(A^{(p)})\subseteq B_i^{(p)}$, for each $p\in P$. We must prove that $g(a)\in B_i^{(p)}$, for each atom~$a$ of~$A^{(p)}$. {}From $p\leq|a|$ and  $g(a)\in B_i^{(|a|)}$\index{s}{nora@$\vert a\vert$, $a$ atom} it follows that $g(a)\in B_i^{(p)}$, which proves our claim. Therefore, $g\colon\bA\to\bB_i$.

If $f=\gb_i\circ g=\gb_i\circ h$ for morphisms $g,h\colon\bA\to\bB_i$, then, as~$A$ is finite, $\gb_i^j\circ g=\gb_i^j\circ h$ for some $j\geq i$.
\qed\end{proof}

For a $P$-scaled Boolean algebra\index{i}{algebra!Pscaled Boolean@$P$-scaled Boolean} $\bA$, we denote by $\Sigma_\bA$\index{s}{SigmaA@$\Sigma_\bA$|ii} the set of all functions $f\colon\At U\to P$, for a finite subalgebra~$U=A_f$ of~$A$, such that $u\in A^{(f(u))}$ for all $u\in\At U$. Denote by $x^U$ the least element~$u$ of~$U$ such that $x\leq u$, for every $x\in A$, and define a partial ordering~$\sqsubseteq $ on~$\Sigma_\bA$ by
 \[
 f\sqsubseteq g\ \Longleftrightarrow\ \Bigl(A_f\subseteq A_g\text{ and }
 (\forall v\in\At A_g)\bigl(f(v^{A_f})\leq g(v)\bigr)\Bigr)\,,\ \text{for all }
 f,g\in\Sigma_\bA\,.
 \]
 
\begin{lem}\label{L:leqSdir}
The partial ordering $\sqsubseteq$ is directed\index{i}{poset!directed} on $\Sigma_\bA$. 
\end{lem}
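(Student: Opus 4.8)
The goal is to show that $\sqsubseteq$ is directed on $\Sigma_\bA$: given $f, g \in \Sigma_\bA$, I must produce $h \in \Sigma_\bA$ with $f \sqsubseteq h$ and $g \sqsubseteq h$. The natural candidate for the underlying subalgebra of $h$ is $A_h := $ the (finite) Boolean subalgebra of $A$ generated by $A_f \cup A_g$; since $A_f$ and $A_g$ are finite and $A$ is a Boolean algebra, $A_h$ is finite, so $A_h \subseteq A$ is a legitimate finite subalgebra. The content of the lemma is to define a suitable function $h \colon \At A_h \to P$ and check that $u \in A^{(h(u))}$ for each atom $u$ of $A_h$, and that $f \sqsubseteq h$ and $g \sqsubseteq h$.

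The key observation is that each atom $w \in \At A_h$ lies below a unique atom $w^{A_f}$ of $A_f$ and a unique atom $w^{A_g}$ of $A_g$ (here $w^{A_f}$ is the least element of $A_f$ above $w$, which is an atom of $A_f$ because $A_f \subseteq A_h$ refines it). Since $w \leq w^{A_f}$ and $w^{A_f} \in A^{(f(w^{A_f}))}$, and since $A^{(p)}$ is an ideal of $A$, we get $w \in A^{(f(w^{A_f}))}$; similarly $w \in A^{(g(w^{A_g}))}$. Hence $w \in A^{(f(w^{A_f}))} \cap A^{(g(w^{A_g}))}$. Now I invoke Definition~\ref{D:BoolP}(ii): this intersection equals $\bigvee\famm{A^{(r)}}{r \geq f(w^{A_f}),\, g(w^{A_g})}$, evaluated in $\Id A$. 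So $w$ lies in a finite join of the $A^{(r)}$ with $r$ an upper bound of $\{f(w^{A_f}), g(w^{A_g})\}$; since $w$ is an atom of $A_h$ (but not necessarily of $A$), I need to be slightly careful, but $w \neq 0$, so $w$ meets one of the finitely many ideals $A^{(r)}$ nontrivially—actually more directly, $w$ being a nonzero element of a directed union $\bigcup\famm{A^{(r_1)} \vee \cdots \vee A^{(r_k)}}{\text{finite lists}}$... Let me just use that $w \in A^{(p_1)} \vee \cdots \vee A^{(p_m)}$ for finitely many $p_\ell \geq f(w^{A_f}), g(w^{A_g})$. If $\{f(w^{A_f}), g(w^{A_g})\}$ has an upper bound, I can pick one such $p_\ell$; but in general I should pick $h(w)$ to be \emph{some} $p \in P$ with $p \geq f(w^{A_f})$, $p \geq g(w^{A_g})$, and $w \in A^{(p)}$. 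The displayed equality in Definition~\ref{D:BoolP}(ii) guarantees that if $w \in A^{(f(w^{A_f}))} \cap A^{(g(w^{A_g}))}$ and $w$ is an atom of the \emph{Boolean algebra} $A$, then $w \in A^{(p)}$ for a single such $p$; for a general atom $w$ of $A_h$ I need to argue that $w$, being nonzero, lies below some atom of $A$ (if $A$ is atomic) or more robustly decompose $w$.

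The main obstacle, then, is exactly this: the equality $A^{(p)} \cap A^{(q)} = \bigvee\famm{A^{(r)}}{r \geq p, q}$ is a statement about ideals, and a single element $w$ of the intersection need not lie in a single $A^{(r)}$; it only lies in a finite join. To fix the definition of $h$ I would refine $A_h$ further: replace each atom $w$ of the algebra generated by $A_f \cup A_g$ by the finitely many pieces of a decomposition $w = \bigvee_\ell w_\ell$ with $w_\ell \in A^{(r_\ell)}$, $r_\ell \geq f(w^{A_f}), g(w^{A_g})$, and let $A_h$ be the Boolean subalgebra generated by all these $w_\ell$ together with $A_f \cup A_g$. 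This is still finite. On this finer $A_h$, every atom $v$ lies below some $w_\ell$, hence $v \in A^{(r_\ell)}$, and I set $h(v) := r_\ell$ for a chosen such $\ell$. Then $v \in A^{(h(v))}$ by construction, $A_f \subseteq A_h$ and $A_g \subseteq A_h$, and for each atom $v$ of $A_h$ we have $h(v) = r_\ell \geq f(w^{A_f}) = f(v^{A_f})$ and likewise $\geq g(v^{A_g})$, which is precisely $f \sqsubseteq h$ and $g \sqsubseteq h$. I would write up the construction of this refinement explicitly (choosing, for each of the finitely many atoms $w$ of the algebra generated by $A_f\cup A_g$, a decomposition witnessing membership in the relevant ideal-join) and then verify the three conditions, which are routine once the refinement is in place.
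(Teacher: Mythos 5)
Your proposal is correct and follows essentially the same route as the paper's proof: generate the finite subalgebra from $A_f\cup A_g$, observe that each of its atoms lies in $A^{(f(\cdot))}\cap A^{(g(\cdot))}$, use Definition~\ref{D:BoolP}(ii) to partition each such atom into finitely many pieces each lying in a single $A^{(r)}$ with $r$ a common upper bound, and define the new element of $\Sigma_\bA$ on the subalgebra generated by these pieces. The obstacle you flag (an element of a join of ideals need not lie in a single one) is precisely the point the paper's proof handles by the same refinement.
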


\begin{proof}
Let $f_0,f_1\in\Sigma_\bA$ and put $U_i:=A_{f_i}$ for each $i<2$. Denote by~$U$ the subalgebra of~$A$ generated by $U_0\cup U_1$, so it is a finite subalgebra of~$A$. For all $u\in\At U$ and all $i<2$, $u^{U_i}\in\At U_i$, thus $u^{U_i}\in A^{(f_i(u^{U_i}))}$, hence $u\in A^{(f_0(u^{U_0}))}\cap A^{(f_1(u^{U_1}))}$, so there are a finite partition~$X_u$ of~$u$ in~$A$ and $g_u\colon X_u\to P\Upw\set{f_0(u^{U_0}),f_1(u^{U_1})}$ such that $x\in A^{(g_u(x))}$ for all $x\in X_u$. Denote by~$V$ the subalgebra of~$A$ generated by $X:=\bigcup\famm{X_u}{u\in\At U}$. So~$V$ is a finite subalgebra of~$A$ containing~$U$, and thus containing $U_0\cup U_1$, and $\At V=X$. Putting $g:=\bigcup\famm{g_u}{u\in\At U}$, we obtain that $g\colon X\to P$, that $x\in A^{(g(x))}$ for each $x\in X$, and that $f_0(u^{U_0}),f_1(u^{U_1})\leq g(x)$ in case $x\in X_u$. In particular, $g\in\Sigma_\bA$. Finally, for all $x\in X$ and all $i<2$, we get, putting $u:=x^U$, that $f_i(x^{U_i})=f_i(u^{U_i})\leq g(x)$, and so $f_i\sqsubseteq g$.
\qed\end{proof}

For a $P$-scaled Boolean algebra\index{i}{algebra!Pscaled Boolean@$P$-scaled Boolean}~$\bA$, we shall put
 \begin{multline*}
 A_f^{(p)}:=\setm{x\in A_f}
 {(\forall u\in\At A_f)(u\leq x\Rightarrow p\leq f(u))}\,,\\
 \text{for all }f\in\Sigma_\bA\text{ and all }p\in P\,.
 \end{multline*}
 
\begin{lem}\label{L:AfinBP}
The structure~$\bA_f:=\left(A_f,\famm{A_f^{(p)}}{p\in P}\right)$ is a compact $P$-scaled Boolean algebra\index{i}{algebra!Pscaled Boolean@$P$-scaled Boolean} and the inclusion map defines a monomorphism from~$\bA_f$ into~$\bA$, for each~$f\in\Sigma_\bA$. 
\end{lem}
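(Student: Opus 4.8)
The statement splits into three tasks: (a) show $\bA_f$ satisfies conditions~(i) and~(ii) of Definition~\ref{D:BoolP}, so that $\bA_f\in\Bool_P$; (b) deduce that $\bA_f$ is compact in the sense of Definition~\ref{D:CompactBP}; and (c) check that the set-theoretic inclusion $A_f\into A$ is a morphism of $\Bool_P$ and is one-to-one, hence a monomorphism. The whole argument is a sequence of routine verifications resting on one observation, which I would isolate first: since $A_f$ is a \emph{finite} Boolean algebra, for every atom $a\in\At A_f$ the only atom of $A_f$ lying below $a$ is $a$ itself, so
\[
a\in A_f^{(p)}\ \Longleftrightarrow\ p\leq f(a)\,,
\]
whence $\norm{a}_{\bA_f}=P\dnw f(a)$ is the principal ideal generated by $f(a)$.

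For (a), I would first note that each $A_f^{(p)}$ is an ideal of $A_f$: it is downward closed because any atom below $y\leq x$ is an atom below $x$; it is closed under binary joins because in a finite Boolean algebra an atom below $x\vee y$ lies below $x$ or below $y$, and $0$ belongs to $A_f^{(p)}$ vacuously. Condition~(i) then holds because every $x\in A_f$ is the join of the finitely many atoms $u\in\At A_f$ below it, and each such $u$ lies in $A_f^{(f(u))}$ (by the boxed equivalence, with $p:=f(u)$); hence $A_f=\bigvee\famm{A_f^{(p)}}{p\in P}$ in $\Id A_f$. For condition~(ii), the inclusion $\bigvee\famm{A_f^{(r)}}{r\geq p,q}\subseteq A_f^{(p)}\cap A_f^{(q)}$ follows from the obvious antitonicity $p\leq r\Rightarrow A_f^{(r)}\subseteq A_f^{(p)}$; conversely, if $x\in A_f^{(p)}\cap A_f^{(q)}$ then every atom $u\in\At A_f$ with $u\leq x$ satisfies $f(u)\geq p$ and $f(u)\geq q$, so $u\in A_f^{(f(u))}$ with $f(u)\in P\Upw\set{p,q}$, and since $x$ is the join of these atoms, $x\in\bigvee\famm{A_f^{(r)}}{r\geq p,q}$. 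This proves $\bA_f\in\Bool_P$, and then (b) is immediate: $A_f$ is finite by the definition of $\Sigma_\bA$, and by the boxed equivalence $\norm{a}$ has largest element $f(a)$ for each $a\in\At A_f$, so $\bA_f$ is compact.

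For (c), the inclusion $A_f\into A$ is a Boolean-algebra homomorphism because $A_f$ is a subalgebra of $A$. It maps $A_f^{(p)}$ into $A^{(p)}$: given $x\in A_f^{(p)}$, write $x$ as the join in $A$ of the atoms $u\in\At A_f$ lying below it; each such $u$ belongs to $A^{(f(u))}$ since $f\in\Sigma_\bA$, and $p\leq f(u)$, so $u\in A^{(p)}$ by antitonicity of $\famm{A^{(r)}}{r\in P}$; as $A^{(p)}$ is an ideal of $A$, $x\in A^{(p)}$. Thus the inclusion is a morphism in $\Bool_P$, and being one-to-one it is a monomorphism (concrete-category convention). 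There is no genuine obstacle in the proof; the only point requiring a little care is the nontrivial half of condition~(ii), where one must exhibit for $x\in A_f^{(p)}\cap A_f^{(q)}$ an explicit decomposition into pieces living in the $A_f^{(r)}$ with $r\geq p,q$, and the purely atomic description of the ideals $A_f^{(p)}$ is exactly what makes this transparent.
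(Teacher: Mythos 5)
Your proof is correct and follows essentially the same route as the paper's: reduce every verification to the atoms of the finite algebra $A_f$, observe that $\norm{u}_{\bA_f}$ is the principal ideal $P\dnw f(u)$, and note that $A_f^{(p)}\subseteq A^{(p)}$ makes the inclusion a (monic) morphism. You merely spell out a few steps the paper dismisses as obvious (that each $A_f^{(p)}$ is an ideal, and the atomwise argument for $A_f^{(p)}\subseteq A^{(p)}$), which is fine.
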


\begin{proof}
Put $U:=A_f$. Obviously, $U^{(p)}:=A_f^{(p)}$ is an ideal\index{i}{ideal!of a poset} of~$U$. As $1=\bigvee\At U$ and $u\in A^{(f(u))}$ for all $u\in\At U$, we get $U=\bigvee\famm{U^{(p)}}{p\in P}$ in~$\Id U$\index{s}{IdP@$\Id P$, $P$ poset}. Finally, for $p,q\in P$ and $u\in U^{(p)}\cap U^{(q)}$, we must prove that~$u$ belongs to $\bigvee\famm{U^{(r)}}{r\geq p,q}$. It suffices to consider the case where~$u$ is an atom of~$U$, in which case $p\leq f(u)$ and $q\leq f(u)$, thus $u\in U^{(r)}$ where $r=f(u)\geq p,q$.
This proves that $\bA_f$ is a $P$-scaled Boolean algebra\index{i}{algebra!Pscaled Boolean@$P$-scaled Boolean}. Obviously, the largest element of $\norm{u}_{\bA_f}$\index{s}{norma@$\Vert{a}\Vert$, $\Vert{a}\Vert_\bA$, $a$ atom} is $f(u)$, for every $u\in\At A_f$; whence~$\bA_f$ is compact. As $A_f\subseteq A$ and $A_f^{(p)}\subseteq A^{(p)}$ for all $p\in P$, the inclusion map from~$A_f$ into~$A$ is a morphism, obviously monic, in~$\Bool_P$\index{s}{BoolP@$\Bool_P$}.
\qed\end{proof}

\begin{prop}\label{P:FinRepresBP}
Every object~$\bA$ in~$\Bool_P$\index{s}{BoolP@$\Bool_P$} satisfies
 \begin{equation}\label{Eq:ComGenBP}
 A=\bigcup\famm{A_f}{f\in\Sigma_\bA}\text{ and }
 A^{(p)}=\bigcup\famm{A_f^{(p)}}{f\in\Sigma_\bA}\,,\ \text{for each }p\in P\,,
 \end{equation}
both unions in~\textup{\eqref{Eq:ComGenBP}} being directed. Consequently, $\bA$ is a $(\card A+\card P)^+$-small monomorphic\index{i}{monomorphic colimit} directed colimit of compact objects of~$\Bool_P$\index{s}{BoolP@$\Bool_P$}.
\end{prop}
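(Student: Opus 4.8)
The plan is to establish the two equalities in~\eqref{Eq:ComGenBP}, with both unions directed, and then to read off the final sentence from the description of directed colimits in Proposition~\ref{P:DirColim} together with a cardinality count. Throughout I use that each $A^{(q)}$ is a lower subset of~$A$, and that condition~(i) of Definition~\ref{D:BoolP} supplies, in~$A$, a decomposition $1=\bigvee\famm{a_q}{q\in Q}$ with $Q\subseteq P$ finite and $a_q\in A^{(q)}$ for all $q\in Q$. For the inclusion $A\subseteq\bigcup\famm{A_f}{f\in\Sigma_\bA}$: given $x\in A$, fix such a decomposition and let~$U$ be the (finite) subalgebra of~$A$ generated by $\set{x}\cup\setm{a_q}{q\in Q}$. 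Each atom~$u$ of~$U$ satisfies $u=\bigvee\famm{u\wedge a_q}{q\in Q}$, hence $u\leq a_q$, so $u\in A^{(q)}$, for some $q\in Q$; choosing such a~$q$ for every $u\in\At U$ defines a map $f\colon\At U\to P$ with $u\in A^{(f(u))}$ throughout, i.e., $f\in\Sigma_\bA$ and $x\in U=A_f$. The reverse inclusion $A_f\subseteq A$ is immediate.

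The point of the argument is the nontrivial inclusion $A^{(p)}\subseteq\bigcup\famm{A_f^{(p)}}{f\in\Sigma_\bA}$, where one must manufacture an~$f$ for which~$x$ becomes an \emph{atom} of~$A_f$ carrying the label~$p$. Given $x\in A^{(p)}$, pick a decomposition $1=\bigvee\famm{a_q}{q\in Q}$ as above and let~$U$ be the subalgebra of~$A$ generated by $\set{x}\cup\setm{\neg x\wedge a_q}{q\in Q}$. Then~$x$ is the unique atom of~$U$ lying below~$x$, while every atom of~$U$ below~$\neg x$ lies below some $a_q$, hence in $A^{(q)}$. Define $f\colon\At U\to P$ by $f(x):=p$ (legitimate since $x\in A^{(p)}$) and $f(u):=q$ for an atom $u\leq\neg x$ with $u\leq a_q$; then $f\in\Sigma_\bA$, $A_f=U$, and $x\in A_f^{(p)}$ because the only atom of~$A_f$ below~$x$ is~$x$ and $p\leq f(x)$. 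The reverse inclusion $A_f^{(p)}\subseteq A^{(p)}$ is part of Lemma~\ref{L:AfinBP}.

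Next I check directedness and assemble the colimit. If $f\sqsubseteq g$ in~$\Sigma_\bA$ then $A_f\subseteq A_g$ by definition; moreover, for $y\in A_f^{(p)}$ and $v\in\At A_g$ with $v\leq y$, the least element $v^{A_f}$ of~$A_f$ above~$v$ is itself an atom of~$A_f$ (a short argument using that~$v$ is an atom of the larger algebra) and lies below~$y$, so $p\leq f(v^{A_f})\leq g(v)$; hence $y\in A_g^{(p)}$. Thus the inclusion $\bA_f\into\bA_g$ is a monic morphism in~$\Bool_P$, and by Lemma~\ref{L:leqSdir} the family of the $\bA_f$ together with these inclusions is a directed system in~$\Bool_P$ with monic transition maps; in particular both unions in~\eqref{Eq:ComGenBP} are directed. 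By Proposition~\ref{P:DirColim}, the colimit of this system has underlying Boolean algebra $\bigcup\famm{A_f}{f\in\Sigma_\bA}=A$ and $p$-th ideal $\bigcup\famm{A_f^{(p)}}{f\in\Sigma_\bA}=A^{(p)}$, i.e., it is~$\bA$ with the inclusion maps $\bA_f\into\bA$; these are monic, so the colimit is monomorphic, and each~$\bA_f$ is compact by Lemma~\ref{L:AfinBP}. Finally, a routine count gives $\card\Sigma_\bA\leq\card A+\card P$ whenever $\card A+\card P$ is infinite (at most $\card A$ finite subalgebras, each labelled in at most $\card P$ ways), while if~$A$ and~$P$ are both finite then every ideal of~$P$ has a greatest element, so~$\bA$ is already compact (Definition~\ref{D:CompactBP}) and a one-object diagram suffices; in all cases the index poset is $(\card A+\card P)^+$-small.

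The main obstacle is the construction in the second paragraph: merely covering~$x$ by some $A_f$ is easy, but forcing $x\in A_f^{(p)}$ requires~$x$ to be an atom of~$A_f$ with the prescribed value~$p$, which is why one puts~$x$ itself among the generators and partitions~$\neg x$. A secondary subtlety is that $f\sqsubseteq g$ must induce a genuine $\Bool_P$-morphism on the ideal components; here one cannot shortcut via $A_f^{(p)}=A_f\cap A^{(p)}$, since that identity fails in general, so the atom-by-atom comparison above is genuinely needed for the directedness claim.
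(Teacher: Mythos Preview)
Your proof is correct, and for the second equality you take a genuinely different route from the paper. The paper argues by starting from a disjoint decomposition $1=\bigoplus_{i<n}a_i$ with $a_i\in A^{(p_i)}$, refined so that each~$a_i$ lies either below~$a$ or below~$\neg a$; then, for each $a_i\leq a$, it invokes condition~(ii) of Definition~\ref{D:BoolP} to split~$a_i$ further into pieces with labels $\geq p$, so that in the resulting~$f$ every atom below~$a$ has $f$-value $\geq p$. Your approach sidesteps this refinement entirely: by generating~$U$ with $\set{x}\cup\setm{\neg x\wedge a_q}{q\in Q}$ you force~$x$ itself to be the unique atom below~$x$, and then setting $f(x):=p$ is immediate. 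This is cleaner in that it never uses the distributivity axiom~(ii), only that each~$A^{(q)}$ is an ideal. (One trivial omission: you should note that $x=0$ is handled separately, as then~$x$ is not an atom but $x\in A_f^{(p)}$ vacuously.) You are also more explicit than the paper about why $f\sqsubseteq g$ gives $A_f^{(p)}\subseteq A_g^{(p)}$ and about the finite case; the paper simply declares the unions ``obviously'' directed and does not isolate the cardinality argument.
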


\begin{proof}
It is obvious that both unions in~\eqref{Eq:ComGenBP} are directed.
As~$1$ belongs to $\bigvee\famm{A^{(p)}}{p\in P}$, there exists a decomposition of the form $1=\oplus_{i<n}a_i$ in~$A$, where~$n$ is a positive integer, $a_i\in A\setminus\set{0}$, $p_i\in P$, and $a_i\in A^{(p_i)}$, for each $i<n$. Let $a\in A$. By splitting~$a_i$ into $a_i\wedge a$ and $a_i\wedge\neg a$, for each $i<n$, we may assume that either $a_i\leq a$ or $a_i\leq\neg a$ for each $i<n$. Denoting by~$U$ the subalgebra of~$A$ generated by $\setm{a_i}{i<n}$ and setting $f(a_i):=p_i$ for each $i<n$, we obtain that $a\in U=A_f$. This proves the first half of \eqref{Eq:ComGenBP}.

Now suppose that $a\in A^{(p)}$, where $p\in P$. As $A^{(p)}$ is an ideal\index{i}{ideal!of a poset} of~$A$, $a_i\in A^{(p)}\cap A^{(p_i)}$ for each $i<n$ such that $a_i\leq a$, so $a_i\in\bigvee\famm{A^{(r)}}{r\geq p,p_i}$. Hence, by splitting further~$a_i$, we may assume that $a_i\leq a$ implies that $p\leq p_i=f(a_i)$, for each $i<n$; whence $a\in A_f^{(p)}$. This proves the second half of \eqref{Eq:ComGenBP}.

It follows from \eqref{Eq:ComGenBP} that $\bA=\varinjlim_{f\in\Sigma_\bA}\bA_f$, where all the transition maps and limiting maps are the respective inclusion maps, which are trivially monic.
\qed\end{proof}

\begin{cor}\label{C:FinPres=Comp}
The finitely presented\index{i}{presented!finitely} objects in~$\Bool_P$\index{s}{BoolP@$\Bool_P$} are exactly the compact objects.
\end{cor}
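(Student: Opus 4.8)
The plan is simply to bootstrap from the two preceding results. One inclusion, namely that every compact object of $\Bool_P$ is finitely presented, is exactly Lemma~\ref{L:Comp2FP}, so only the converse requires an argument.

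For the converse, suppose that $\bA$ is a finitely presented object of $\Bool_P$. By Proposition~\ref{P:FinRepresBP}, $\bA$ is the directed colimit $\varinjlim_{f\in\Sigma_\bA}\bA_f$, where each $\bA_f$ is compact (Lemma~\ref{L:AfinBP}) and the limiting morphism $\iota_f\colon\bA_f\to\bA$ (the inclusion map) is monic. Since $\Sigma_\bA$ is directed (Lemma~\ref{L:leqSdir}), this is an $\aleph_0$-directed colimit, so clause~(i) of Definition~\ref{D:kapppres} applies to the identity $\id_\bA\colon\bA\to\bA$: there are $f\in\Sigma_\bA$ and a morphism $g\colon\bA\to\bA_f$ in $\Bool_P$ with $\id_\bA=\iota_f\circ g$.

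It then remains to observe that $\iota_f$ is an isomorphism: it is a split epimorphism by the displayed identity, and it is a monomorphism, and from $\iota_f\circ(g\circ\iota_f)=\iota_f=\iota_f\circ\id_{\bA_f}$ we deduce $g\circ\iota_f=\id_{\bA_f}$. Hence $\bA\cong\bA_f$ is compact, which completes the proof.

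I do not expect any serious difficulty here; the only points requiring a moment of care are the verification that the colimit furnished by Proposition~\ref{P:FinRepresBP} is directed (so that the factorization clause of Definition~\ref{D:kapppres} is indeed applicable), and the elementary category-theoretic fact that a morphism which is at once a monomorphism and a split epimorphism must be an isomorphism.
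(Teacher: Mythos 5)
Your proof is correct and follows essentially the same route as the paper: one direction is Lemma~\ref{L:Comp2FP}, and for the converse the paper likewise applies the factorization clause of finite presentability to $\id_\bA$ over the monomorphic directed colimit of Proposition~\ref{P:FinRepresBP} and concludes that the (monic, split epic) limiting morphism is an isomorphism. No gaps.
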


\begin{proof}
In view of Lemma~\ref{L:Comp2FP}, it suffices to prove that every finitely presented\index{i}{presented!finitely} object~$\bA$ of~$\Bool_P$\index{s}{BoolP@$\Bool_P$} is compact. By Proposition~\ref{P:FinRepresBP}, we can write
 \[
 \famm{\bA,\ga_i}{i\in I}=\varinjlim\famm{\bA_i,\ga_i^j}{i\leq j\text{ in }I}\,,
 \]
where~$I$ is a directed poset\index{i}{poset!directed} (in fact one can take $I=\Sigma_\bA$), all~$\bA_i$ are compact, and all morphisms $\ga_i$ are monic. As~$\bA$ is finitely presented\index{i}{presented!finitely}, there are $i\in I$ and $f\colon\bA\to\bA_i$ such that $\id_A=\ga_i\circ f$. As~$\ga_i$ is monic, it follows that $f\circ\ga_i=\id_{\bA_i}$, and so $\ga_i$ is an isomorphism. As~$\bA_i$ is compact, $\bA$ is compact.
\qed\end{proof}

\begin{remk}\label{Rk:MonomInj}
It can be proved that the monomorphisms in~$\Bool_P$\index{s}{BoolP@$\Bool_P$} are exactly the one-to-one morphisms. On the other hand, there are posets~$P$ for which some epimorphisms in~$\Bool_P$\index{s}{BoolP@$\Bool_P$} are not surjective.
\end{remk}

\section[Normal morphisms of $P$-scaled Boolean algebras]{Normal morphisms in $\Bool_P$ and in $\BTop_P$}\label{S:NormMorph}

In the CLL universe we shall need to pay a particular attention to those surjective morphisms $f\colon\bU\to\bV$ in~$\Bool_P$\index{s}{BoolP@$\Bool_P$} such that, under the assumptions of CLL\index{i}{Condensate Lifting Lemma (CLL)} (Lemma~\ref{L:CLL}), the morphism $\Phi(f\otimes\overrightarrow{A})\colon\Phi(\bU\otimes\overrightarrow{A})\to\Phi(\bV\otimes\overrightarrow{A})$\index{s}{otimAS@$\bA\otimes\overrightarrow{S}$, $\gf\otimes\overrightarrow{S}$} is a double arrow\index{i}{double arrow}. Such morphisms will be called \emph{normal morphisms}\index{i}{morphism!normal}. It can be proved that the normal morphisms in~$\Bool_P$\index{s}{BoolP@$\Bool_P$} are exactly the so-called \emph{regular epimorphisms}\index{i}{regular epimorphism} in~$\Bool_P$\index{s}{BoolP@$\Bool_P$}, that is, those morphisms that appear as coequalizers of two morphisms. However, we will not need this fact in the book. The present section aims at introducing the normal\index{i}{morphism!normal} morphisms in~$\Bool_P$\index{s}{BoolP@$\Bool_P$} together with their dual objects, and to prove that every normal\index{i}{morphism!normal} morphism is a directed colimit of normal\index{i}{morphism!normal} morphisms between compact $P$-scaled Boolean algebras\index{i}{algebra!Pscaled Boolean@$P$-scaled Boolean} (Proposition~\ref{P:NormColim}).

Throughout this section we shall fix a poset~$P$.

\begin{defn}\label{D:NormMorph}
A morphism $f\colon\bA\to\bB$ in~$\Bool_P$\index{s}{BoolP@$\Bool_P$} is \emph{normal}\index{i}{morphism!normal|ii} if it is surjective and $f``(A^{(p)})=B^{(p)}$ for each~$p\in P$.
A morphism $f\colon\bX\to\bY$ in~$\BTop_P$\index{s}{BTop@$\BTop_P$} is \emph{normal} if it is one-to-one and $\norm{f(x)}_\bY=\norm{x}_\bX$\index{s}{normx@$\Vert{x}\Vert$, $\Vert{x}\Vert_\bX$, $x$ point} for all~$x\in X$.
\end{defn}

\begin{prop}\label{P:DualityNormal}
A morphism~$\gf\colon\bA\to\bB$ in~$\Bool_P$\index{s}{BoolP@$\Bool_P$} is normal\index{i}{morphism!normal} if{f} the dual morphism~$\Ult\gf\colon\Ult\bB\to\Ult\bA$\index{s}{UltB@$\Ult B$, $\Ult\bB$, $\Ult\gf$} is normal\index{i}{morphism!normal} in~$\BTop_P$\index{s}{BTop@$\BTop_P$}.
\end{prop}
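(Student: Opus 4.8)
The plan is to split Proposition~\ref{P:DualityNormal} along the two requirements in the definition of a normal morphism and to treat the two halves almost independently. The first half is the purely Stone-theoretic fact that $\gf\colon\bA\to\bB$ is \emph{surjective} as a homomorphism of Boolean algebras if and only if its dual $\Ult\gf\colon\Ult\bB\to\Ult\bA$ is \emph{one-to-one}. One direction is immediate: if $\gf$ is onto and $\gf^{-1}\frb_1=\gf^{-1}\frb_2$, then for any $b\in B$ write $b=\gf(a)$ and observe $b\in\frb_1\Longleftrightarrow a\in\gf^{-1}\frb_1=\gf^{-1}\frb_2\Longleftrightarrow b\in\frb_2$, so $\frb_1=\frb_2$. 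For the converse I would use the classical Stone duality recalled in Section~\ref{Su:Stone}: a continuous injection between Boolean spaces is a homeomorphism onto a closed subspace $C$ of $\Ult A$, every clopen subset of $C$ is the restriction of a clopen subset of $\Ult A$, and transporting this fact through the natural isomorphisms $A\cong\Clop\Ult A$ and $B\cong\Clop\Ult B$ yields surjectivity of $\gf$; equivalently, in the classical Stone duality epimorphisms of Boolean algebras (which are exactly the surjections) correspond to monomorphisms of Boolean spaces (which are exactly the injections).

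The second half is the new content: granting that $\gf$ is surjective, I claim that $\gf``(A^{(p)})=B^{(p)}$ for all $p\in P$ if and only if $\norm{\Ult\gf(\frb)}_\bA=\norm{\frb}_\bB$ for all $\frb\in\Ult B$. Recall that $\norm{\Ult\gf(\frb)}_\bA\subseteq\norm{\frb}_\bB$ always holds (Lemma~\ref{L:UltfMorph}) and that $\gf``(A^{(p)})\subseteq B^{(p)}$ always holds (since $\gf$ is a morphism of $\Bool_P$), so only one inclusion is at stake on each side. Assuming $\gf``(A^{(p)})=B^{(p)}$: given $\frb\in\Ult B$ and $p\in\norm{\frb}_\bB$, pick $b\in\frb\cap B^{(p)}$ by~\eqref{Eq:normfra}, write $b=\gf(a)$ with $a\in A^{(p)}$, and note $a\in\gf^{-1}\frb\cap A^{(p)}$, so $p\in\norm{\gf^{-1}\frb}_\bA$. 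Conversely, assuming the norm equality and fixing $p\in P$: since $\gf$ is onto, $\gf``(A^{(p)})$ is an ideal of $B$; if some $b\in B^{(p)}$ were not in it, choose an ultrafilter $\frb$ of $B$ with $b\in\frb$ and $\frb\cap\gf``(A^{(p)})=\es$; then $b\in\frb\cap B^{(p)}$ forces $p\in\norm{\frb}_\bB=\norm{\gf^{-1}\frb}_\bA$, hence some $a\in A^{(p)}\cap\gf^{-1}\frb$, whence $\gf(a)\in\gf``(A^{(p)})\cap\frb$ — a contradiction. Thus $\gf``(A^{(p)})=B^{(p)}$.

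Combining the two halves finishes the proof: $\gf$ normal means $\gf$ surjective and $\gf``(A^{(p)})=B^{(p)}$ for all $p$; by the first half this is the same as $\Ult\gf$ one-to-one together with $\gf$ surjective; and by the second half this is the same as $\Ult\gf$ one-to-one together with $\norm{\Ult\gf(\frb)}_\bA=\norm{\frb}_\bB$ for all $\frb$, i.e.\ $\Ult\gf$ normal. The only step that is not pure bookkeeping is the implication ``$\Ult\gf$ one-to-one $\Rightarrow\gf$ surjective'' in the first half: it is classical, but some care is needed in moving the ``clopen of a closed subspace extends'' statement through the natural isomorphisms of Stone duality (alternatively one just invokes that, in the classical Stone duality of Section~\ref{Su:Stone}, monos/epis of Boolean spaces are exactly the injections/surjections and dually for Boolean algebras). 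Everything else is a short ultrafilter-and-ideal manipulation; note in particular that the $P$-norm structure never interferes, since the two halves decouple into a statement about underlying Boolean spaces and a statement about the ideals $A^{(p)}$.
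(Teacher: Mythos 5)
Your proof is correct and follows essentially the same route as the paper's: the two key steps --- extracting $a\in A^{(p)}$ with $\gf(a)=b$ from an element $b\in\frb\cap B^{(p)}$ in one direction, and separating $b\in B^{(p)}$ from the ideal $\gf``(A^{(p)})$ by an ultrafilter in the other --- are exactly those of the printed proof. The only differences are organizational: you decouple the surjectivity/injectivity half from the norm/ideal half (and usefully note that $\gf``(A^{(p)})$ is an ideal of $B$), and you spell out the classical Stone-duality fact that an injective dual map forces surjectivity, which the paper simply asserts.
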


\begin{proof}
Put~$\bX:=\Ult\bA$, $\bY:=\Ult\bB$, and~$f:=\Ult\gf$\index{s}{UltB@$\Ult B$, $\Ult\bB$, $\Ult\gf$}. Suppose first that~$\gf$ is normal\index{i}{morphism!normal}. As $\gf$ is surjective, the dual map~$f$ is one-to-one. Let~$\frb\in\bY$ and let $p\in\norm{\frb}$\index{s}{normfa@$\Vert{\fra}\Vert$, $\Vert{\fra}\Vert_\bA$, $\fra$ ultrafilter}; pick $b\in\frb\cap B^{(p)}$. As~$\gf$ is normal\index{i}{morphism!normal}, there exists~$a\in A^{(p)}$ such that $b=\gf(a)$, so $\gf(a)\in\frb$; whence $a\in f(\frb)\cap A^{(p)}$, and so $p\in\norm{f(\frb)}$\index{s}{normfa@$\Vert{\fra}\Vert$, $\Vert{\fra}\Vert_\bA$, $\fra$ ultrafilter}. This proves the containment~$\norm{\frb}\subseteq\norm{f(\frb)}$, and so (cf. Lemma~\ref{L:UltfMorph}) $\norm{\frb}=\norm{f(\frb)}$\index{s}{normfa@$\Vert{\fra}\Vert$, $\Vert{\fra}\Vert_\bA$, $\fra$ ultrafilter}.

Conversely, assume that~$f$ is normal\index{i}{morphism!normal}. As~$f$ is one-to-one, $\gf$ is surjective.
Let $p\in P$ and $b\in B^{(p)}$. Suppose that $b\notin\gf``(A^{(p)})$. There exists an ultrafilter~$\frb$ of~$B$ such that~$b\in\frb$ while $\frb\cap\gf``(A^{(p)})=\es$. As $b\in\frb\cap B^{(p)}$, $p$ belongs to~$\norm{\frb}$\index{s}{normfa@$\Vert{\fra}\Vert$, $\Vert{\fra}\Vert_\bA$, $\fra$ ultrafilter}, thus to~$\norm{f(\frb)}$\index{s}{normfa@$\Vert{\fra}\Vert$, $\Vert{\fra}\Vert_\bA$, $\fra$ ultrafilter}, which means that $f(\frb)\cap A^{(p)}$ is nonempty. Pick $a\in f(\frb)\cap A^{(p)}$, then $\gf(a)\in\frb\cap\gf``(A^{(p)})$, a contradiction. We have proved that $B^{(p)}\subseteq\gf``(A^{(p)})$.
\qed\end{proof}

The proofs of the following two lemmas are straightforward.

\begin{lem}\label{L:bA/I}
For a $P$-scaled Boolean algebra\index{i}{algebra!Pscaled Boolean@$P$-scaled Boolean}~$\bA$ and an ideal\index{i}{ideal!of a poset}~$I$ of the underlying Boolean algebra\index{i}{algebra!Boolean}~$A$ of~$\bA$ with canonical projection~$\gp_I\colon A\onto A/I$\index{s}{piI@$\gp_I$, $I$ ideal|ii}\index{s}{piI@$\gp_I$, $I$ ideal}\index{s}{AtoonB@$f\colon A\onto B$}, put
 \[
 (A/I)^{(p)}:=\gp_I\bigl(A^{(p)}\bigr)\,,\quad\text{for all }p\in P\,.
 \]
Then $\bA/I=\left(A/I,\famm{(A/I)^{(p)}}{p\in P}\right)$\index{s}{AoverI@$\bA/I$ ($I$ ideal of $\bA\in\Bool_P$)|ii} is a $P$-scaled Boolean algebra\index{i}{algebra!Pscaled Boolean@$P$-scaled Boolean}, and~$\gp_I$\index{s}{piI@$\gp_I$, $I$ ideal} is a normal\index{i}{morphism!normal} morphism from~$\bA$ onto~$\bA/I$ in~$\Bool_P$\index{s}{BoolP@$\Bool_P$}.
\end{lem}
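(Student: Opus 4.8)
The statement of Lemma~\ref{L:bA/I} claims three things: that $\bA/I$ is a $P$-scaled Boolean algebra, that $\gp_I$ is a morphism in $\Bool_P$, and that $\gp_I$ is moreover normal. I would verify these in that order, treating the middle claim as essentially trivial once the first is in place.

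First I would check that each $(A/I)^{(p)}:=\gp_I\bigl(A^{(p)}\bigr)$ is an ideal of $A/I$. Since $\gp_I$ is a surjective Boolean homomorphism and $A^{(p)}$ is an ideal of $A$, its image under $\gp_I$ is a lower subset of $A/I$ closed under finite joins; the only point needing a word is that it is a lower subset, which follows because for $\ol{y}\leq\gp_I(x)$ with $x\in A^{(p)}$ one can write $\ol{y}=\gp_I(y)$ for some $y\leq x$ in $A$ (replace $y$ by $y\wedge x$), so $y\in A^{(p)}$. Next, condition (i) of Definition~\ref{D:BoolP}: from a decomposition $1_A=\bigvee\famm{a_p}{p\in Q}$ with $Q$ finite and $a_p\in A^{(p)}$, apply $\gp_I$ to get $1_{A/I}=\bigvee\famm{\gp_I(a_p)}{p\in Q}$ with $\gp_I(a_p)\in(A/I)^{(p)}$. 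For condition (ii), I need $(A/I)^{(p)}\cap(A/I)^{(q)}\subseteq\bigvee\famm{(A/I)^{(r)}}{r\geq p,q}$ (the reverse inclusion being automatic from antitonicity). This is the one spot requiring a small argument: given $\ol{x}$ in the intersection, lift it to $x'\in A^{(p)}$ and $x''\in A^{(q)}$, set $x:=x'\wedge x''\in A^{(p)}\cap A^{(q)}$; then $\gp_I(x)=\ol{x}$ (since $\gp_I(x')=\gp_I(x'')$ forces $\gp_I(x'\wedge x'')=\gp_I(x')$), and by property (ii) for $\bA$ we have $x=\bigvee\famm{x_r}{r\in R}$ for a finite set $R$ of common upper bounds of $p,q$ with $x_r\in A^{(r)}$, hence $\ol{x}=\bigvee\famm{\gp_I(x_r)}{r\in R}$ lies in $\bigvee\famm{(A/I)^{(r)}}{r\geq p,q}$. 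This is exactly the manipulation already carried out in the proof of Proposition~\ref{P:DirColim}, so it is routine.

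Then $\gp_I\colon\bA\to\bA/I$ is a morphism in $\Bool_P$ by construction, since $\gp_I``(A^{(p)})=(A/I)^{(p)}\subseteq(A/I)^{(p)}$ trivially. Finally, normality (Definition~\ref{D:NormMorph}) requires that $\gp_I$ be surjective and that $\gp_I``(A^{(p)})=(A/I)^{(p)}$ for each $p$; both hold by the very definition $(A/I)^{(p)}:=\gp_I\bigl(A^{(p)}\bigr)$ and the surjectivity of the canonical Boolean quotient map.

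There is no real obstacle here; the statement is flagged as ``straightforward'' in the text precisely because every ingredient reduces to standard facts about Boolean quotients plus the one image-computation for condition (ii) that mirrors the directed-colimit proof. The only place one must be slightly careful is to remember to pass to $x'\wedge x''$ before invoking property (ii), rather than trying to lift $\ol{x}$ directly, so that the chosen lift genuinely lies in $A^{(p)}\cap A^{(q)}$.
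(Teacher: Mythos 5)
Your proof is correct and fills in exactly the routine verification the paper omits (the text only remarks that the proof is ``straightforward''); the argument for condition (ii) — lifting $\ol{x}$ to $x'\wedge x''\in A^{(p)}\cap A^{(q)}$ and pushing the resulting decomposition forward through $\gp_I$ — is the same manipulation used in the proof of Proposition~\ref{P:DirColim}, as you note. No gaps.
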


\begin{lem}\label{L:FactMorph}
Let $\gf\colon\bA\to\bB$ be a morphism in~$\Bool_P$\index{s}{BoolP@$\Bool_P$} and put~$I:=\gf^{-1}\set{0}$, an ideal\index{i}{ideal!of a poset} of~$A$. Then the factor map~$\gy\colon\bA/I\to\bB$, $\gp_I(x)\mapsto\gf(x)$\index{s}{piI@$\gp_I$, $I$ ideal} is a morphism in~$\Bool_P$\index{s}{BoolP@$\Bool_P$}. Furthermore, $\gy$ is an isomorphism if{f}~$\gf$ is normal\index{i}{morphism!normal}.
\end{lem}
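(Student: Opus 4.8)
The plan is to reduce the statement to the classical first isomorphism theorem for Boolean algebras, routed through Lemma~\ref{L:bA/I}. First I would invoke the purely Boolean-algebraic fact: since $\gf\colon A\to B$ is a homomorphism of Boolean algebras with kernel ideal $I=\gf^{-1}\set{0}$, the assignment $\gp_I(x)\mapsto\gf(x)$ is well defined and yields an \emph{injective} homomorphism $\gy\colon A/I\to B$ of Boolean algebras with $\gf=\gy\circ\gp_I$. This needs no argument beyond citing standard Boolean algebra theory, and it also gives that a bijective such $\gy$ is automatically a Boolean isomorphism.

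Next I would verify that $\gy$ is a morphism in $\Bool_P$. By Lemma~\ref{L:bA/I} we have $(A/I)^{(p)}=\gp_I\bigl(A^{(p)}\bigr)$, hence $\gy``((A/I)^{(p)})=\gf``(A^{(p)})\subseteq B^{(p)}$ for each $p\in P$, the inclusion holding because $\gf$ is a morphism in $\Bool_P$. The identity $\gy``((A/I)^{(p)})=\gf``(A^{(p)})$ is the one to keep in hand for the rest of the argument.

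For the equivalence I would use the following reading of ``isomorphism in $\Bool_P$'': since $\gy$ is already injective and is a $\Bool_P$-morphism, $\gy$ is an isomorphism in $\Bool_P$ if and only if it is surjective and $\gy``((A/I)^{(p)})=B^{(p)}$ for all $p\in P$ (the latter being exactly the condition that the Boolean inverse $\gy^{-1}$ transports each $B^{(p)}$ into $(A/I)^{(p)}$, i.e.\ that $\gy^{-1}$ is again a $\Bool_P$-morphism). Assuming $\gf$ normal: surjectivity of $\gf$ together with surjectivity of $\gp_I$ forces surjectivity of $\gy$, while $\gf``(A^{(p)})=B^{(p)}$ combined with the identity above gives $\gy``((A/I)^{(p)})=B^{(p)}$; so $\gy$ is an isomorphism in $\Bool_P$. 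Conversely, if $\gy$ is an isomorphism in $\Bool_P$, then $\gf=\gy\circ\gp_I$ is surjective as a composite of surjections, and $\gf``(A^{(p)})=\gy``((A/I)^{(p)})=B^{(p)}$ for each $p$, so $\gf$ is normal.

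The only place that calls for care — and the closest thing to an obstacle — is pinning down ``isomorphism in $\Bool_P$'' correctly: bijectivity of the underlying Boolean homomorphism is \emph{not} enough by itself, one also needs the inverse to carry each $B^{(p)}$ back inside $(A/I)^{(p)}$, and it is precisely this extra requirement that lines up with the ``surjective on the ideals $A^{(p)}$'' half of normality of $\gf$. Everything else is a one-line diagram chase using $\gf=\gy\circ\gp_I$ and Lemma~\ref{L:bA/I}.
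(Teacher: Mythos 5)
Your proof is correct, and since the paper declares this lemma (together with Lemma~\ref{L:bA/I}) straightforward and omits the argument, what you have written is exactly the intended routine verification: the Boolean first isomorphism theorem gives the injective factor map, Lemma~\ref{L:bA/I} transports the ideals, and the key observation that an isomorphism in~$\Bool_P$ requires the inverse to be a morphism (so bijectivity alone is not enough) is precisely the point that makes the equivalence with normality work. The only cosmetic remark is that surjectivity of~$\gy$ already follows from surjectivity of~$\gf$ via $\gf=\gy\circ\gp_I$, without needing to invoke surjectivity of~$\gp_I$.
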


Lemma~\ref{L:FactMorph} shows, in particular, that the normal\index{i}{morphism!normal} morphisms in~$\Bool_P$\index{s}{BoolP@$\Bool_P$} are, up to isomorphism, the projections~$\gp_I\colon\bA\onto\bA/I$\index{s}{piI@$\gp_I$, $I$ ideal}\index{s}{AtoonB@$f\colon A\onto B$}, for an ideal\index{i}{ideal!of a poset}~$I$ of a $P$-scaled Boolean algebra\index{i}{algebra!Pscaled Boolean@$P$-scaled Boolean}~$\bA$.

We say that an arrow $\gf\colon\bA\to\bB$ in~$\Bool_P$\index{s}{BoolP@$\Bool_P$} is \emph{compact}\index{i}{morphism!compact|ii} if both~$\bA$ and~$\bB$ are compact.

\begin{prop}\label{P:NormColim}
Every normal\index{i}{morphism!normal} morphism~$\gf\colon\bA\onto\bB$\index{s}{AtoonB@$f\colon A\onto B$} in~$\Bool_P$\index{s}{BoolP@$\Bool_P$} is a monomorphic\index{i}{monomorphic colimit} $(\card A+\card P)^+$-small directed colimit, in the category of arrows $(\Bool_P)^\two$\index{s}{two@$\two$}\index{s}{BoolP@$\Bool_P$}, of compact normal\index{i}{morphism!normal} morphisms.
\end{prop}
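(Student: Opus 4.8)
The plan is to combine the monomorphic decomposition of an arbitrary object of~$\Bool_P$ into compact objects (Proposition~\ref{P:FinRepresBP}) with the identification of normal morphisms as canonical projections $\gp_I$ (Lemma~\ref{L:FactMorph}), the only extra input being that a quotient of a compact $P$-scaled Boolean algebra\ stays compact. First I would reduce to the case where~$\gf$ is a canonical projection: by Lemma~\ref{L:FactMorph}, up to isomorphism in $(\Bool_P)^\two$ we may assume $\bB=\bA/I$ and $\gf=\gp_I$ for the ideal\index{i}{ideal!of a poset} $I:=\gf^{-1}\set{0}$ of the Boolean algebra\ $A$. By Proposition~\ref{P:FinRepresBP}, $\famm{\bA,\ga_f}{f\in\Sigma_\bA}=\varinjlim\famm{\bA_f,\ga_f^g}{f\sqsubseteq g\text{ in }\Sigma_\bA}$ is a $(\card A+\card P)^+$-small monomorphic directed colimit of compact objects, all transition and limiting maps being the relevant inclusions; in particular $A_f^{(p)}\subseteq A_g^{(p)}$ for $f\sqsubseteq g$. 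For each $f\in\Sigma_\bA$ I set $J_f:=I\cap A_f$, an ideal of~$A_f$, put $\bB_f:=\bA_f/J_f$, and let $\gf_f:=\gp_{J_f}\colon\bA_f\onto\bB_f$; by Lemma~\ref{L:bA/I} this is a normal morphism of~$\Bool_P$.

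Next I would check that each~$\bB_f$ is compact, so that each~$\gf_f$ is a compact normal morphism. The Boolean algebra~$A_f/J_f$ is finite, being a quotient of the finite algebra~$A_f$. Let~$b$ be an atom of~$A_f/J_f$; since a surjective homomorphism of finite Boolean algebras\ maps each atom of the domain either to~$0$ or to an atom, and distinct atoms not killed to distinct atoms, there is a unique atom~$u$ of~$A_f$ with $\gp_{J_f}(u)=b$. I claim $\norm{b}_{\bB_f}=\norm{u}_{\bA_f}$: if $u\in A_f^{(p)}$ then $b\in\gp_{J_f}``(A_f^{(p)})=(A_f/J_f)^{(p)}$; conversely if $b=\gp_{J_f}(w)$ with $w\in A_f^{(p)}$, then $\gp_{J_f}(u\wedge w)=b\wedge b=b\neq 0$, so $u\wedge w\neq 0$, hence $u\le w$ (as~$u$ is an atom), hence $u\in A_f^{(p)}$ since~$A_f^{(p)}$ is an ideal. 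As~$\bA_f$ is compact (Definition~\ref{D:CompactBP}), $\norm{u}_{\bA_f}$ has a largest element; hence so does~$\norm{b}_{\bB_f}$, and~$\bB_f$ is compact.

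Then I would assemble the diagram. For $f\sqsubseteq g$ in~$\Sigma_\bA$ (directed by Lemma~\ref{L:leqSdir}), the inclusion $\ga_f^g\colon\bA_f\into\bA_g$ is monic in~$\Bool_P$ (Lemma~\ref{L:AfinBP}, Proposition~\ref{P:FinRepresBP}), and since $J_f=J_g\cap A_f$ it carries~$J_f$ into~$J_g$, inducing $\bar\ga_f^g\colon\bB_f\to\bB_g$ with $\bar\ga_f^g\circ\gf_f=\gf_g\circ\ga_f^g$; it is one-to-one (if $w\in A_f\cap J_g$ then $w\in J_g\cap A_f=J_f$) and a morphism of~$\Bool_P$ (it sends $(A_f/J_f)^{(p)}=\gp_{J_f}``(A_f^{(p)})$ into $\gp_{J_g}``(A_g^{(p)})=(A_g/J_g)^{(p)}$, because $A_f^{(p)}\subseteq A_g^{(p)}$), hence monic. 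Likewise, the composite of the inclusion $\bA_f\into\bA$ with~$\gf$ is a morphism of~$\Bool_P$ with kernel~$J_f$, so by Lemma~\ref{L:FactMorph} it factors as $\bar\gf_f\circ\gf_f$ for a one-to-one, hence monic, morphism $\bar\gf_f\colon\bB_f\to\bB$; one checks $\bar\gf_g\circ\bar\ga_f^g=\bar\gf_f$. Thus $\overrightarrow{\gf}:=\famm{\gf_f,(\ga_f^g,\bar\ga_f^g)}{f\sqsubseteq g\text{ in }\Sigma_\bA}$ is a $\Sigma_\bA$-indexed diagram of compact normal morphisms in $(\Bool_P)^\two$ whose transition morphisms are monic.

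Finally I would identify the colimit. By Proposition~\ref{P:FinRepresBP}, $\famm{\bA,\ga_f}{f}=\varinjlim_f\bA_f$, monomorphically. I claim $\famm{\bB,\bar\gf_f}{f}=\varinjlim_f\bB_f$, again monomorphically: the~$\bar\gf_f$ form a cocone, they and the~$\bar\ga_f^g$ are all one-to-one, and by the description of directed colimits in~$\Bool_P$ from the proof of Proposition~\ref{P:DirColim} it suffices to note $B=\gf``(A)=\bigcup_f\bar\gf_f``(B_f)$ and, using normality of~$\gf$, $B^{(p)}=\gf``(A^{(p)})=\gf``\bigl(\bigcup_f A_f^{(p)}\bigr)=\bigcup_f\bar\gf_f``(B_f^{(p)})$ for each $p\in P$. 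Since directed colimits in the arrow category $(\Bool_P)^\two$ are computed componentwise (Proposition~\ref{P:DirColim}) and $\gf\circ\ga_f=\bar\gf_f\circ\gf_f$ for all~$f$, the family $\famm{\gf,(\ga_f,\bar\gf_f)}{f}$ is a colimit of~$\overrightarrow{\gf}$ in $(\Bool_P)^\two$; it is monomorphic because its two component colimits are, and it is $(\card A+\card P)^+$-small because so is its index poset~$\Sigma_\bA$ (Proposition~\ref{P:FinRepresBP}). The main point of care is the compactness of the quotients~$\bB_f$ together with the coherent bookkeeping of the maps induced on quotients; none of it is deep, but checking that the~$\bar\ga_f^g$ and~$\bar\gf_f$ are morphisms of~$\Bool_P$ and fit together compatibly must be done carefully.
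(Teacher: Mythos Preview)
Your proof is correct and follows essentially the same approach as the paper: reduce to a canonical projection $\gp_I$ via Lemma~\ref{L:FactMorph}, use the decomposition $\bA=\varinjlim_{f\in\Sigma_\bA}\bA_f$ from Proposition~\ref{P:FinRepresBP}, and take quotients by~$I\cap A_f$ levelwise. The paper's argument is terser---it simply observes $\norm{u}_\bA=\norm{\gp_I(u)}_\bB$ for $u\in(\At A)\setminus I$ to get compactness of the quotients and then states that $\bA/I=\varinjlim_f\bA_f/(I\cap A_f)$ ``with the obvious transition morphisms and limiting morphisms''---whereas you spell out the induced maps and the colimit verification explicitly, but the substance is identical.
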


\begin{proof}
By Lemma~\ref{L:FactMorph}, we may assume that~$\bB=\bA/I$ and $\gf=\gp_I$\index{s}{piI@$\gp_I$, $I$ ideal}, for some ideal\index{i}{ideal!of a poset}~$I$ of~$A$. Observe that $\norm{u}_\bA=\norm{\gp_I(u)}_\bB$\index{s}{norma@$\Vert{a}\Vert$, $\Vert{a}\Vert_\bA$, $a$ atom}\index{s}{piI@$\gp_I$, $I$ ideal} holds for every~$u\in(\At A)\setminus I$. Hence, if~$\bA$ is compact, then so is~$\bB$. In the general case, $I_f:=I\cap A_f$ is an ideal\index{i}{ideal!of a poset} of~$A_f$, for every~$f\in\Sigma_\bA$. Denote by~$\gf_f\colon\bA_f\onto\bA_f/{I_f}$\index{s}{AtoonB@$f\colon A\onto B$} the canonical projection, for every~$f\in\Sigma_\bA$. It follows from Proposition~\ref{P:FinRepresBP} that $\bA=\varinjlim_{f\in\Sigma_\bA}\bA_f$, thus $\bA/I=\varinjlim_{f\in\Sigma_\bA}\bA_f/{I_f}$, with the obvious transition morphisms and limiting morphisms, hence $\gf=\varinjlim_{f\in\Sigma_\bA}\gf_f$. As each~$\bA_f$ and each~$\bA_f/{I_f}$ is compact, each~$\gf_f$ is compact. Furthermore, each morphism~$\bA_f\into\bA$\index{s}{AtoinB@$f\colon A\into B$} and~$\bA_f/{I_f}\into\bA/I$ is monic.
\qed\end{proof}

\section{Norm-coverings of a poset; the structures $\two[p]$ and $\xF(X)$}\label{S:twoxF}

Most of the present section will be devoted to introducing, for a poset~$P$ and a ``norm-covering''\index{i}{norm-covering} $X$ of~$P$, the basic properties of a construct denoted by~$\xF(X)$\index{s}{FxX@$\xF(X)$}. This structure is a $P$-scaled Boolean algebra\index{i}{algebra!Pscaled Boolean@$P$-scaled Boolean}, defined by a set of generators and relations. The construction $X\mapsto\xF(X)$\index{s}{FxX@$\xF(X)$} is functorial and preserves those small directed colimits that we need (cf. Lemma~\ref{L:fXYnormcov}). The condensates\index{i}{condensate} of a diagram~$\overrightarrow{A}$ that will be required in CLL\index{i}{Condensate Lifting Lemma (CLL)} will be those of the form $\xF(X)\otimes\overrightarrow{A}$\index{s}{FxX@$\xF(X)$}\index{s}{otimAS@$\bA\otimes\overrightarrow{S}$, $\gf\otimes\overrightarrow{S}$}, for suitable norm-coverings\index{i}{norm-covering}~$X$ of~$P$.

Our first definition gives a description of all compact $P$-scaled Boolean algebras\index{i}{algebra!Pscaled Boolean@$P$-scaled Boolean} with underlying algebra~$\two:=\set{0,1}$\index{s}{two@$\two$}.

\begin{defn}\label{D:two[p]}
Let $P$ be a poset. For each $p\in P$, we denote by~$\two_P[p]$, or~$\two[p]$\index{s}{twop@$\two[p]$|ii} if~$P$ is understood, the dual of the~$P$-normed\index{i}{normed (Boolean) space} space~$\set{\cdot}$ with~$\norm{\cdot}:=P\dnw p$\index{s}{norma@$\Vert{a}\Vert$, $\Vert{a}\Vert_\bA$, $a$ atom} (so $|\cdot|=p$). Hence~$\two[p]:=\left(\two,\famm{\two[p]^{(q)}}{q\in P}\right)$, where\index{s}{twopq@$\two[p]^{(q)}$|ii}
 \[
 \two[p]^{(q)}:=\begin{cases}
 \set{0,1}&(\text{if }q\leq p),\\
 \set{0}&(\text{otherwise}), 
 \end{cases}
 \quad\text{for all }q\in P\,.
 \]
It is obvious that for $p\leq q$ in~$P$, the containment $\two[p]^{(r)}\subseteq\two[q]^{(r)}$\index{s}{twop@$\two[p]$} holds for each~$r\in P$. Hence the identity map defines a monomorphism, which we shall denote by~$\eps_p^q$\index{s}{epspq@$\eps_p^q$|ii}, from~$\two[p]$ into~$\two[q]$\index{s}{twop@$\two[p]$}.
\end{defn}

\begin{notation}\label{Not:F(X)}
For a \pjs\index{i}{pseudo join-semilattice}\ $X$, we denote by $\rF(X)$\index{s}{FrX@$\rF(X)$|ii} the Boolean algebra\index{i}{algebra!Boolean} defined by generators $\tilde{u}$\index{s}{utilde@$\tilde{u}$, $\tilde{u}^X$|ii} (or $\tilde{u}^X$ in case~$X$ needs to be specified), for $u\in X$, and the relations
 \begin{align}
 \tilde{v}&\leq\tilde{u}\,,&&\text{for all }u\leq v\text{ in }X\,;\label{Eq:F(X)antitone}\\
 \tilde{u}\wedge\tilde{v}&=\bigvee\famm{\tilde{w}}{w\in u\sor v}\,,
 &&\text{for all }u,v\in X\,;\label{Eq:F(X)tuwedgetv}\\
 1&=\bigvee\famm{\tilde{u}}{u\in\Min X}\,.\label{Eq:F(X)repr1Sores}
 \end{align}
\end{notation}
(We refer to Section~\ref{S:TPS} for the definition of $u\sor v$.)
The assumption that~$X$ is a \pjs\index{i}{pseudo join-semilattice}\ ensures that the joins in~\eqref{Eq:F(X)tuwedgetv} and~\eqref{Eq:F(X)repr1Sores} are \emph{finite} joins.

\begin{lem}\label{L:fXY}
For every $\sor$-closed subset~$X$ in a \pjs\index{i}{pseudo join-semilattice}\ $Y$, there exists a unique homomorphism $f_X^Y\colon\rF(X)\to\rF(Y)$\index{s}{fXY@$f_X^Y$|ii}\index{s}{FrX@$\rF(X)$} of Boolean algebras\index{i}{algebra!Boolean} such that $f_X^Y(\tilde{u}^X)=\tilde{u}^Y$\index{s}{utilde@$\tilde{u}$, $\tilde{u}^X$}\index{s}{fXY@$f_X^Y$} for each $u\in X$.
\end{lem}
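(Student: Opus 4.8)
\textbf{Proof plan for Lemma~\ref{L:fXY}.}

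The plan is to verify that assigning $\tilde u^X \mapsto \tilde u^Y$ (for $u\in X$) respects the three families of defining relations of $\rF(X)$, so that by the universal property of the presentation there is a (necessarily unique) Boolean algebra homomorphism $f_X^Y$ with the stated action on generators. First I would fix notation: since $X$ is $\sor$-closed in the \pjs\ $Y$, the remark following Lemma~\ref{L:PUpwX} gives that $X$ is itself a \pjs\ and that $u\sor_X v = u\sor_Y v$ for all $u,v\in X$; in particular all the joins occurring in~\eqref{Eq:F(X)tuwedgetv} are finite, whether computed in $\rF(X)$ or in $\rF(Y)$, and each $w\in u\sor_X v$ is again an element of $X$, so $\tilde w^Y$ makes sense.

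Next I would check the relations one at a time, working inside $\rF(Y)$. For~\eqref{Eq:F(X)antitone}: if $u\le v$ in $X$ then $u\le v$ in $Y$, so $\tilde v^Y \le \tilde u^Y$ holds in $\rF(Y)$ by the antitonicity relation for $\rF(Y)$. For~\eqref{Eq:F(X)tuwedgetv}: for $u,v\in X$ we have $\tilde u^Y\wedge\tilde v^Y = \bigvee\famm{\tilde w^Y}{w\in u\sor_Y v}$ in $\rF(Y)$, and by $u\sor_X v = u\sor_Y v$ this is exactly $\bigvee\famm{\tilde w^Y}{w\in u\sor_X v}$, which is the image of the right-hand side of the $\rF(X)$-relation. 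The only genuinely delicate point is~\eqref{Eq:F(X)repr1Sores}: we must show $1 = \bigvee\famm{\tilde u^Y}{u\in\Min X}$ holds in $\rF(Y)$. Here I would argue as follows. In $\rF(Y)$ we know $1 = \bigvee\famm{\tilde y^Y}{y\in\Min Y}$. For each $y\in\Min Y$, since every element of the \pjs\ $Y$ lies above a minimal element and $X$ is a nonempty subset, pick $u\in X$ and then some $u_0\in\Min Y$ with $u_0\le u$; actually the cleaner route: for each $u\in\Min X$, the set $Y\dnw u$ meets $\Min Y$, and conversely I claim $\bigvee\famm{\tilde u^Y}{u\in\Min X}$ dominates each $\tilde y^Y$ with $y\in\Min Y$. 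To see this, fix $y\in\Min Y$; if $y\le u$ for some $u\in X$ we would need $\tilde u^Y\le\tilde y^Y$, the wrong direction, so instead I use that $X$ is $\sor$-closed together with~\eqref{Eq:F(X)tuwedgetv}: for any $u\in X$, $\tilde u^Y = \tilde u^Y\wedge 1 = \bigvee_{y\in\Min Y}(\tilde u^Y\wedge\tilde y^Y) = \bigvee_{y\in\Min Y}\bigvee\famm{\tilde w^Y}{w\in u\sor_Y y}$, and each such $w$ lies in $X$ (being in the $\sor$-closure of $\{u,y\}\subseteq X\cup\Min Y$ — this is where I must be careful, since $y$ need not lie in $X$). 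The \textbf{main obstacle} is precisely this: reconciling $\Min X$ with $\Min Y$ inside $\rF(Y)$ when $X$ need not contain the global minimal elements of $Y$.

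To handle the obstacle cleanly I would instead prove the following auxiliary identity in $\rF(Y)$, by induction on the cardinality of a finite generating set: for any nonempty finite $\sor$-closed $X'\subseteq Y$, $\bigvee\famm{\tilde u^Y}{u\in\Min X'} = \bigvee\famm{\tilde u^Y}{u\in X'}$, and moreover this common value equals $\bigvee\famm{\tilde y^Y}{y\in \Min(Y\dnw X')}$ which by~\eqref{Eq:F(X)repr1Sores} applied in a suitable sub-\pjs, or directly by a covering argument using that $Y\upw(\Min X')$ is the relevant up-set, equals $1$ when $X'$ is the $\sor$-closure of $\Min X'$; then pass to the directed union over finite $\sor$-closed subsets of $X$ and use that $\rF$ commutes with the relevant colimits. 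Alternatively — and this is probably the slickest and what I would actually write — observe that $\rF(X)$ is by construction the $P$-free object on the diagram of $\two[\cdot]$'s indexed by $X$ (this will be made precise in the section), $f_X^Y$ being induced by the inclusion $X\hookrightarrow Y$ of \pjs s viewed as a morphism of index posets; once that functorial description is in place the existence and uniqueness of $f_X^Y$ is immediate from functoriality of $\xF$, and the relation-checking above is subsumed. Given that the surrounding text labels this lemma's proof ``straightforward,'' I would present the direct relation-by-relation verification, stating the $\Min X$ versus $\Min Y$ reconciliation as the one step requiring the $\sor$-closedness hypothesis in an essential way, and citing the remark after Lemma~\ref{L:PUpwX} for $u\sor_X v = u\sor_Y v$.
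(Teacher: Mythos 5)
Your overall strategy --- verify that the elements $\tilde{u}^Y$, for $u\in X$, satisfy the defining relations \eqref{Eq:F(X)antitone}--\eqref{Eq:F(X)repr1Sores} of $\rF(X)$ and invoke the universal property of the presentation --- is exactly the paper's, and your treatment of \eqref{Eq:F(X)antitone} and \eqref{Eq:F(X)tuwedgetv} is correct. But the step you flag as the ``main obstacle'' rests on a misconception, and neither of your proposed workarounds resolves it. The point you are missing is that a $\sor$-closed subset of~$Y$ automatically contains $\Min Y$: applying the definition of $\sor$-closedness (Definition~\ref{D:PJS}) to the finite subset $\es\subseteq X$ gives $\Sor_Y\es\subseteq X$, and $\Sor_Y\es=\Min(Y\Upw\es)=\Min Y$. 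Since every element of the \pjs\ $Y$ lies above some element of $\Min Y$ (remark after Definition~\ref{D:PJS}), it follows at once that $\Min X=\Min Y$. Hence the image of relation \eqref{Eq:F(X)repr1Sores} for $\rF(X)$ is literally relation \eqref{Eq:F(X)repr1Sores} for $\rF(Y)$, and there is nothing to reconcile: this observation, together with $u\sor_Xv=u\sor_Yv$, is the paper's entire proof.

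Your two substitute arguments do not close the gap as written. The computation $\tilde u^Y=\bigvee_{y\in\Min Y}\bigvee\famm{\tilde w^Y}{w\in u\sor_Y y}$ expresses each generator as a join but never yields the inequality $1\leq\bigvee\famm{\tilde u^Y}{u\in\Min X}$ that you actually need, and your worry that ``$y$ need not lie in $X$'' is precisely the false premise. The auxiliary identity $\bigvee\famm{\tilde u^Y}{u\in\Min X'}=\bigvee\famm{\tilde u^Y}{u\in X'}$ is true by \eqref{Eq:F(X)antitone} but says nothing about whether this join equals~$1$, and the clause asserting it does ``when $X'$ is the $\sor$-closure of $\Min X'$'' is unsupported --- it holds only because any $\sor$-closure contains $\Min Y$, which is the very fact you are trying to avoid. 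Finally, the appeal to a functorial description of $\xF$ is not available at this point in the text: functoriality of $X\mapsto\xF(X)$ along inclusions of $\sor$-closed subsets is a consequence of this lemma (via Lemma~\ref{L:fXYnormcov}), not an input to it. Replace all of this by the one-line observation $\Min X=\Min Y$ and your proof is complete.
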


\begin{proof}
It suffices to prove that the elements $\tilde{u}^Y$\index{s}{utilde@$\tilde{u}$, $\tilde{u}^X$}, for $u\in X$, satisfy the relations~\eqref{Eq:F(X)antitone}--\eqref{Eq:F(X)repr1Sores} defining~$\rF(X)$\index{s}{FrX@$\rF(X)$}. As~$X$ is a $\sor$-closed subset of~$Y$, $\Min X=\Min Y$ and $u\sor_Xv=u\sor_Yv$ for all $u,v\in X$. The conclusion follows immediately.
\qed\end{proof}

The following definition is a slight weakening of the definition with the same name in~\cite{Gill1}\index{c}{Gillibert, P.}.

\begin{defn}\label{D:NormCov}
A \emph{norm-covering}\index{i}{norm-covering|ii} of a poset~$P$ is a pair $(X,\partial)$, where~$X$ is a \pjs\index{i}{pseudo join-semilattice}\ and $\partial\colon X\to P$ is an isotone map.

Then we shall say that an ideal\index{i}{ideal!of a poset}~$\bu$ of~$X$ is \emph{sharp}\index{i}{ideal!sharp|ii} if the set $\setm{\partial x}{x\in\bu}$ has a largest element, which we shall then denote by~$\partial\bu$. We shall denote by $\Ids X$\index{s}{Ids@$\Ids X$, $X$ norm-covering|ii} the set of all sharp ideals\index{i}{ideal!sharp} of~$X$, partially ordered by containment.
\end{defn}

Observe that in Definition~\ref{D:NormCov}, sharp\index{i}{ideal!sharp} ideals are defined \emph{relatively to a norm-covering}\index{i}{norm-covering} of a poset, while ordinary ideals\index{i}{ideal!of a poset} are only defined relatively to a poset.

We shall identify in notation the norm-covering\index{i}{norm-covering}~$(X,\partial)$ with its underlying poset~$X$, so that the second component of the norm-covering\index{i}{norm-covering} will always be denoted by~$\partial$. Likewise, for a $\sor$-closed subset~$Y$ of~$X$, the pair $(Y,\partial\res_Y)$ is a norm-covering\index{i}{norm-covering} of~$P$, which we shall identify in notation with~$Y$.

\begin{lem}\label{L:NC2F(X)}
For a norm-covering\index{i}{norm-covering}~$X$ of a poset~$P$, define $\rF(X)^{(p)}$\index{s}{FrX@$\rF(X)$} as the ideal\index{i}{ideal!of a poset} of~$\rF(X)$\index{s}{FrX@$\rF(X)$} generated by $\setm{\tilde{u}}{u\in X\text{ and }p\leq\partial u}$\index{s}{utilde@$\tilde{u}$, $\tilde{u}^X$}, for each $p\in P$. Then the structure\index{s}{FrX@$\rF(X)$}\index{s}{FxX@$\xF(X)$|ii}
 \[
 \xF(X):=\left(\rF(X),\famm{\rF(X)^{(p)}}{p\in P}\right)
 \]
is a $P$-scaled Boolean algebra\index{i}{algebra!Pscaled Boolean@$P$-scaled Boolean}.
\end{lem}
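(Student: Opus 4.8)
The statement asserts that $\xF(X) = \bigl(\rF(X), \famm{\rF(X)^{(p)}}{p\in P}\bigr)$ is a $P$-scaled Boolean algebra, so the plan is to verify the two defining conditions (i) and (ii) of Definition~\ref{D:BoolP}. Both reduce to manipulating the generators $\tilde u$, $u\in X$, together with the defining relations~\eqref{Eq:F(X)antitone}--\eqref{Eq:F(X)repr1Sores}, and I would open with a preliminary observation that every element of $\rF(X)$ is a finite join of finite meets of generators (this follows from the relations, since meets of generators collapse to joins of generators by~\eqref{Eq:F(X)tuwedgetv}); more precisely, using~\eqref{Eq:F(X)tuwedgetv} and the fact that $X$ is a \pjs, one sees by induction that every $\tilde{u}_0\wedge\cdots\wedge\tilde{u}_{n-1}$ equals $\bigvee\famm{\tilde w}{w\in\Sor\set{u_0,\dots,u_{n-1}}}$ (invoking Lemma~\ref{L:AssocSor}(i) to handle associativity of $\sor$), and hence every element of $\rF(X)$ is a finite join $\bigvee_{i<n}\tilde{u}_i$.

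For condition~(i), namely $\rF(X) = \bigvee\famm{\rF(X)^{(p)}}{p\in P}$ in $\Id\rF(X)$, it suffices to show that $1$ lies in the right-hand ideal. By~\eqref{Eq:F(X)repr1Sores}, $1 = \bigvee\famm{\tilde u}{u\in\Min X}$, a finite join since $X$ is a \pjs; and for each $u\in\Min X$, the generator $\tilde u$ belongs to $\rF(X)^{(\partial u)}$ by the very definition of $\rF(X)^{(p)}$ (take $p=\partial u$, so $p\le\partial u$ trivially). Hence $1 = \bigvee\famm{a_u}{u\in\Min X}$ with $a_u := \tilde u \in \rF(X)^{(\partial u)}$, which is exactly the required decomposition.

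For condition~(ii), I must show $\rF(X)^{(p)}\cap\rF(X)^{(q)} = \bigvee\famm{\rF(X)^{(r)}}{r\ge p,q\text{ in }P}$ for all $p,q\in P$. The inclusion $\supseteq$ is immediate from antitonicity of $\famm{\rF(X)^{(p)}}{p\in P}$ (noted after Definition~\ref{D:BoolP}, and easily checked here: $p\le\partial u$ and $r\ge p$ would not give $r\le\partial u$, so one argues rather that $r\ge p$ forces $\rF(X)^{(r)}\subseteq\rF(X)^{(p)}$ because any generator $\tilde u$ with $r\le\partial u$ also satisfies $p\le\partial u$). For the inclusion $\subseteq$, take $a\in\rF(X)^{(p)}\cap\rF(X)^{(q)}$. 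Since $\rF(X)^{(p)}$ is the ideal generated by $\setm{\tilde u}{p\le\partial u}$, one can write $a\le\bigvee\famm{\tilde u}{u\in U}$ for a finite $U\subseteq X$ with $p\le\partial u$ for all $u\in U$; similarly $a\le\bigvee\famm{\tilde v}{v\in V}$ with $q\le\partial v$ for all $v\in V$. Then $a \le \bigvee\famm{\tilde u\wedge\tilde v}{(u,v)\in U\times V}$, and by~\eqref{Eq:F(X)tuwedgetv} each $\tilde u\wedge\tilde v = \bigvee\famm{\tilde w}{w\in u\sor v}$. For $w\in u\sor v = \Sor\set{u,v}$ we have $w\ge u$ and $w\ge v$, hence by isotonicity of $\partial$ we get $\partial w\ge\partial u\ge p$ and $\partial w\ge\partial v\ge q$, so setting $r_w := \partial w$ we have $r_w\ge p,q$ and $\tilde w\in\rF(X)^{(r_w)}$. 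Therefore $a$ lies below a finite join of generators each belonging to some $\rF(X)^{(r)}$ with $r\ge p,q$, whence $a\in\bigvee\famm{\rF(X)^{(r)}}{r\ge p,q}$.

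The main (mild) obstacle is purely bookkeeping: being careful that the various joins appearing are genuinely finite—which is guaranteed throughout by the \pjs\ hypothesis on $X$ (finiteness of $\Min X$ and of each $u\sor v$, cf. the remark after Definition~\ref{D:PJS} and Lemma~\ref{L:PUpwX})—and checking that the ``ideal generated by'' descriptions of $\rF(X)^{(p)}$ behave as expected under meets, which is where~\eqref{Eq:F(X)tuwedgetv} does all the work. No deep argument is needed; the content is entirely in threading the generators-and-relations presentation through the two axioms.
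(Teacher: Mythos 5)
Your proof is correct and follows essentially the same route as the paper's: condition~(i) comes from $\tilde{u}\in\rF(X)^{(\partial u)}$ together with~\eqref{Eq:F(X)repr1Sores}, and condition~(ii) comes from intersecting the two generating sets, expanding $\tilde{u}\wedge\tilde{v}$ via~\eqref{Eq:F(X)tuwedgetv}, and noting that each $w\in u\sor v$ has $\partial w\geq p,q$ by isotonicity of~$\partial$. The paper phrases the step for~(ii) as a computation with generated ideals rather than element-wise, but the content is identical.
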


\begin{proof}
As $\tilde{u}\in\rF(X)^{(\partial u)}$\index{s}{utilde@$\tilde{u}$, $\tilde{u}^X$}\index{s}{FrX@$\rF(X)$} for each $u\in X$, Item~(i) of Definition~\ref{D:BoolP} follows from~\eqref{Eq:F(X)repr1Sores}. Furthermore, it is obvious that $p\leq q$ implies that $\rF(X)^{(q)}\subseteq\rF(X)^{(p)}$\index{s}{FrX@$\rF(X)$}, for all $p,q\in P$.

Finally let $p,q\in P$, we must prove that $\rF(X)^{(p)}\cap\rF(X)^{(q)}$\index{s}{FrX@$\rF(X)$} is contained in the join of all~$\rF(X)^{(r)}$\index{s}{FrX@$\rF(X)$} where $r\geq p,q$ in~$P$; as the converse containment holds (cf. paragraph above), Item~(ii) of Definition~\ref{D:BoolP} will follow. Denote by~$\seq{Y}$ the Boolean subalgebra of~$\rF(X)$\index{s}{FrX@$\rF(X)$} generated by a subset~$Y$ of~$\rF(X)$\index{s}{FrX@$\rF(X)$}. Then, using~\eqref{Eq:F(X)tuwedgetv}, we obtain\index{s}{utilde@$\tilde{u}$, $\tilde{u}^X$}\index{s}{FrX@$\rF(X)$}
 \begin{align*}
 \rF(X)^{(p)}\cap\rF(X)^{(q)}&=
 \seq{\setm{\tilde{u}}{u\in X\,,\ p\leq\partial u}}\cap
 \seq{\setm{\tilde{v}}{v\in X\,,\ q\leq\partial v}}\\
 &=\seq{\setm{\tilde{u}\wedge\tilde{v}}{u,v\in X\,,\ p\leq\partial u\,,
 \text{ and }q\leq\partial v}}\\
 &=\seq{\setm{\tilde{w}}{(\exists u,v\in X)
 (w\in u\sor v\,,\ p\leq\partial u\,,\text{ and }q\leq\partial v)}}\\
 &\subseteq\seq{\setm{\tilde{w}}{w\in X\,\text{ and }p,q\leq\partial w}}\\
 &=\bigvee\famm{\rF(X)^{(r)}}{r\in P\Upw\set{p,q}}\,.
 \end{align*}
This concludes the proof.\qed
\end{proof}

The verifications of the following lemma is straightforward, although slightly tedious in the case of Item~(ii), in which case one can use the construction of the colimit given in the proof of Proposition~\ref{P:DirColim}.

\begin{lem}\label{L:fXYnormcov}
The following statements hold, for every poset~$P$:
\begin{description}
\item[\tui] Let~$X$ be a $\sor$-closed subset in a norm-covering\index{i}{norm-covering}~$Y$ of~$P$. Then the homomorphism of Boolean algebras~$f_X^Y\colon\rF(X)\to\rF(Y)$\index{s}{fXY@$f_X^Y$}\index{s}{FrX@$\rF(X)$} defined in Lemma~\textup{\ref{L:fXY}} is a morphism from~$\xF(X)$\index{s}{FxX@$\xF(X)$} to~$\xF(Y)$\index{s}{FxX@$\xF(X)$} in~$\Bool_P$\index{s}{BoolP@$\Bool_P$}.

\item[\tuii] Let $I$ be a directed poset\index{i}{poset!directed} and let $\famm{X_i}{i\in I}$ be an isotone family of $\sor$-closed subsets in a norm-covering\index{i}{norm-covering}~$Y$ of~$P$. Put $X:=\bigcup_{i\in I}X_i$. Then the following statement holds in~$\Bool_P$:\index{s}{fXY@$f_X^Y$}\index{s}{BoolP@$\Bool_P$}
 \[
 \famm{\xF(X),f_{X_i}^X}{i\in I}=
 \varinjlim\famm{\xF(X_i),f_{X_i}^{X_j}}{i\leq j\text{ in }I}\,.
 \]
\end{description}
\end{lem}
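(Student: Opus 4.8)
\textbf{Proof plan for Lemma~\ref{L:fXYnormcov}.}

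The plan is to verify the two statements more or less directly from the definitions, leaning on the explicit presentations of $\rF(-)$ by generators and relations and on the concrete construction of directed colimits in $\Bool_P$ given in the proof of Proposition~\ref{P:DirColim}. For part~(i), the underlying map of Boolean algebras $f_X^Y$ already exists by Lemma~\ref{L:fXY}, so the only thing to check is that it respects the $P$-scaling, i.e.\ that $f_X^Y``\bigl(\rF(X)^{(p)}\bigr)\subseteq\rF(Y)^{(p)}$ for each $p\in P$. Since $\rF(X)^{(p)}$ is the ideal of $\rF(X)$ generated by $\setm{\tilde u^X}{u\in X,\ p\leq\partial u}$ and $f_X^Y$ is a homomorphism of Boolean algebras, it suffices to chase the generators: if $u\in X$ with $p\leq\partial u$, then $f_X^Y(\tilde u^X)=\tilde u^Y$, and $\partial$ on $Y$ restricts to $\partial$ on $X$, so $p\leq\partial u$ still holds in $Y$, whence $\tilde u^Y\in\rF(Y)^{(p)}$. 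Here I would note once and for all that ``$\partial$'' denotes both maps by the identification-in-notation convention preceding the lemma, so this really is the same number.

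For part~(ii), I would first record that for each $i\le j$ in $I$ the subset $X_i$ is $\sor$-closed in $X_j$ (it is $\sor$-closed in $Y$ and contained in $X_j$, and $u\sor_{X_j}v=u\sor_Y v$ for $u,v\in X_j$), and likewise $X_i$ is $\sor$-closed in $X$; moreover $X=\bigcup_{i\in I}X_i$ is itself $\sor$-closed in $Y$ because a finite $Y\subseteq X$ lies in some $X_i$ by directedness of $I$ and then $\Sor Y\subseteq X_i\subseteq X$. So all the arrows $f_{X_i}^{X_j}$ and $f_{X_i}^X$ are legitimate morphisms in $\Bool_P$ by part~(i), and compatibility $f_{X_i}^{X_k}=f_{X_j}^{X_k}\circ f_{X_i}^{X_j}$ is immediate from uniqueness in Lemma~\ref{L:fXY}. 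It remains to see that $\bigl(\xF(X),f_{X_i}^X\bigr)_{i\in I}$ is the colimit. I would do this by invoking the description of the colimit from the proof of Proposition~\ref{P:DirColim}: the colimit $\bA=\varinjlim_i\xF(X_i)$ has underlying Boolean algebra $\varinjlim_i\rF(X_i)$ and scale $A^{(p)}=\bigcup_i f_{X_i}``\bigl(\rF(X_i)^{(p)}\bigr)$, and one checks the universal property in $\Bool_P$ reduces, via that description, to the universal property in $\Bool$. For the Boolean part, I would argue that $\rF(X)$ satisfies the universal property of $\varinjlim_i\rF(X_i)$ in $\Bool$ directly from the presentation: a cocone $\bigl(B,g_i\colon\rF(X_i)\to B\bigr)$ gives, via $u\mapsto g_i(\tilde u^{X_i})$ for $u\in X_i$ (well-defined and independent of $i$ by compatibility), a family of elements of $B$ indexed by $X=\bigcup_i X_i$ satisfying relations \eqref{Eq:F(X)antitone}--\eqref{Eq:F(X)repr1Sores} — each such relation involves only finitely many elements of $X$, hence lives inside some single $X_i$, where the relation holds because $g_i$ is a homomorphism — so it induces a unique $g\colon\rF(X)\to B$, and uniqueness follows since the $\tilde u^X$ generate. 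Finally, for the scale I would check $\rF(X)^{(p)}=\bigcup_i f_{X_i}^X``\bigl(\rF(X_i)^{(p)}\bigr)$: ``$\supseteq$'' is part~(i), and ``$\subseteq$'' holds because $\rF(X)^{(p)}$ is generated as an ideal by the $\tilde u^X$ with $p\le\partial u$, each such $u$ lies in some $X_i$ with $\tilde u^X=f_{X_i}^X(\tilde u^{X_i})\in f_{X_i}^X``\bigl(\rF(X_i)^{(p)}\bigr)$, a finite join of such generators again lands in a single $X_i$ by directedness, and the right-hand union is an ideal (directed union of ideals).

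I expect the only mildly delicate point — the ``slightly tedious'' part flagged in the statement — to be the bookkeeping in part~(ii) showing that the colimit computed in $\Bool_P$ via Proposition~\ref{P:DirColim} coincides with $\xF(X)$ on the nose, i.e.\ reconciling the colimit scale $\bigcup_i f_{X_i}``\bigl(\rF(X_i)^{(p)}\bigr)$ with the generators-and-relations scale $\rF(X)^{(p)}$; everything else is a routine finiteness-of-relations argument exploiting that $X$ is a \pjs, so all the joins appearing in \eqref{Eq:F(X)tuwedgetv} and \eqref{Eq:F(X)repr1Sores} are finite and therefore already witnessed in some $X_i$.
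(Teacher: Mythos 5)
Your proposal is correct and follows precisely the route the paper intends: the paper gives no proof of this lemma, only the remark that the verification is straightforward and that item~(ii) can be handled via the construction of the colimit in the proof of Proposition~\ref{P:DirColim}, which is exactly what you do. One small imprecision worth fixing: the sets $f_{X_i}^X``\bigl(\rF(X_i)^{(p)}\bigr)$ are images of ideals under Boolean homomorphisms and need not themselves be ideals (they need not be downward closed), so the fact that their directed union is an ideal is not a consequence of its being a ``directed union of ideals'' but is precisely what the proof of Proposition~\ref{P:DirColim} establishes for the scale of the colimit --- and since you are already invoking that construction, nothing essential is missing.
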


The following lemma introduces a useful class of normal\index{i}{morphism!normal} morphisms in $\Bool_P$\index{s}{BoolP@$\Bool_P$}.

\begin{lem}\label{L:pixnormal}
Let~$X$ be a norm-covering\index{i}{norm-covering} of a poset~$P$. For each ideal\index{i}{ideal!of a poset}~$\bu$ of~$X$, there exists a unique homomorphism $\gp^X_\bu\colon\rF(X)\to\two$\index{s}{FrX@$\rF(X)$}\index{s}{piXu@$\gp^X_\bu$|ii} of Boolean algebras\index{i}{algebra!Boolean} such that\index{s}{utilde@$\tilde{u}$, $\tilde{u}^X$}\index{s}{piXu@$\gp^X_\bu$}
 \begin{equation}\label{Eq:DefpiKx}
 \gp^X_\bu(\tilde{v})=\begin{cases}
 1,&\text{if }v\in\bu\\
 0,&\text{otherwise}
 \end{cases}\,,
 \qquad\text{for every }v\in X\,.
 \end{equation}
Furthermore, if~$\bu$ is sharp\index{i}{ideal!sharp}, then $\gp^X_\bu$\index{s}{piXu@$\gp^X_\bu$} is a normal\index{i}{morphism!normal} morphism from $\xF(X)$\index{s}{FxX@$\xF(X)$} onto $\two[\partial\bu]$\index{s}{twop@$\two[p]$} in $\Bool_P$\index{s}{BoolP@$\Bool_P$}.
\end{lem}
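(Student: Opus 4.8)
The plan is to treat the two assertions separately, and in both cases to reduce everything to the presentation of $\rF(X)$ by generators and relations.

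\emph{Existence and uniqueness of~$\gp^X_\bu$.} It suffices to check that the elements $\famm{b_v}{v\in X}$ of~$\two$ defined by $b_v:=1$ if $v\in\bu$ and $b_v:=0$ otherwise satisfy the defining relations~\eqref{Eq:F(X)antitone}--\eqref{Eq:F(X)repr1Sores} of~$\rF(X)$; the universal property then yields a unique homomorphism $\gp^X_\bu\colon\rF(X)\to\two$ of Boolean algebras with the values~\eqref{Eq:DefpiKx}. Relation~\eqref{Eq:F(X)antitone} holds because $\bu$ is a lower subset of~$X$: if $u\leq v$ in~$X$ and $v\in\bu$ then $u\in\bu$, so $b_v\leq b_u$. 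Relation~\eqref{Eq:F(X)repr1Sores} holds because $\bu$ is nonempty and every element of the \pjs~$X$ lies above a minimal element of~$X$ (see the remark following Definition~\ref{D:PJS}): picking $x\in\bu$ and $u\in\Min X$ with $u\leq x$ gives $u\in\bu$, so $\bigvee\famm{b_u}{u\in\Min X}=1$. For relation~\eqref{Eq:F(X)tuwedgetv}, fix $u,v\in X$. Then $b_u\wedge b_v=1$ if{f} $u,v\in\bu$, while $\bigvee\famm{b_w}{w\in u\sor v}=1$ if{f} $(u\sor v)\cap\bu\neq\es$. If $u,v\in\bu$, directedness of~$\bu$ provides $z\in\bu$ with $u,v\leq z$; since $X$ is a \pjs, $X\Upw\set{u,v}=X\upw(u\sor v)$ (cf. Lemma~\ref{L:PUpwX} and the observation preceding it), so there is $w\in u\sor v$ with $w\leq z$, whence $w\in\bu$ as $\bu$ is a lower set. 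Conversely, any $w\in(u\sor v)\cap\bu$ satisfies $w\geq u,v$, forcing $u,v\in\bu$. So the two sides of~\eqref{Eq:F(X)tuwedgetv} agree. Since $\gp^X_\bu$ preserves~$0$ and~$1$, which are distinct in~$\two$, it is in particular surjective onto~$\two$ (and $\rF(X)$ is nontrivial).

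\emph{Normality when $\bu$ is sharp.} Assume $\bu$ is sharp and set $p:=\partial\bu$, the largest element of $\setm{\partial x}{x\in\bu}$; in particular there is some $x_\bu\in\bu$ with $\partial x_\bu=p$. Since $\gp^X_\bu$ is already a surjective homomorphism of Boolean algebras onto~$\two$, by Definition~\ref{D:NormMorph} it remains to verify that $\gp^X_\bu``\bigl(\rF(X)^{(q)}\bigr)=\two[p]^{(q)}$ for every $q\in P$. Recall from Lemma~\ref{L:NC2F(X)} that $\rF(X)^{(q)}$ is the ideal of~$\rF(X)$ generated by $\setm{\tilde{u}}{u\in X,\ q\leq\partial u}$, so every element of $\rF(X)^{(q)}$ lies below a finite join $\tilde{u_1}\vee\cdots\vee\tilde{u_n}$ with $q\leq\partial u_i$ for all~$i$. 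If $q\not\leq p$, then no such~$u_i$ lies in~$\bu$, since $u_i\in\bu$ would give $q\leq\partial u_i\leq\partial\bu=p$; hence $\gp^X_\bu(\tilde{u_1}\vee\cdots\vee\tilde{u_n})=0$, so $\gp^X_\bu$ sends every element of $\rF(X)^{(q)}$ to~$0$, and thus $\gp^X_\bu``\bigl(\rF(X)^{(q)}\bigr)=\set{0}=\two[p]^{(q)}$. If $q\leq p$, then $\two[p]^{(q)}=\set{0,1}$; here $0=\gp^X_\bu(0)$ is in the image, and since $q\leq p=\partial x_\bu$ we have $\tilde{x_\bu}\in\rF(X)^{(q)}$ with $\gp^X_\bu(\tilde{x_\bu})=1$ (as $x_\bu\in\bu$), so $1$ is in the image as well. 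Therefore $\gp^X_\bu``\bigl(\rF(X)^{(q)}\bigr)=\set{0,1}=\two[p]^{(q)}$, and $\gp^X_\bu$ is a normal morphism from~$\xF(X)$ onto~$\two[\partial\bu]$ in~$\Bool_P$.

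The only points requiring care are the verification of relation~\eqref{Eq:F(X)tuwedgetv}, where one must combine the directedness of the ideal~$\bu$ with the \pjs\ structure of~$X$ to extract an element of $u\sor v$ below a given common upper bound, and the identification of the codomain scale: sharpness of~$\bu$ is used precisely to guarantee that $\setm{\partial x}{x\in\bu}$ has a maximum, which is what pins the codomain down to $\two[\partial\bu]$ rather than to some other $\two[q]$ with $q$ in the ideal generated by $\setm{\partial x}{x\in\bu}$.
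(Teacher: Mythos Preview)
Your proof is correct and follows essentially the same approach as the paper's: both verify the defining relations~\eqref{Eq:F(X)antitone}--\eqref{Eq:F(X)repr1Sores} for the assignment~\eqref{Eq:DefpiKx} and then compute the images $\gp^X_\bu``\bigl(\rF(X)^{(q)}\bigr)$ directly to establish normality. Your treatment is slightly more explicit (you spell out both directions of the equivalence for relation~\eqref{Eq:F(X)tuwedgetv} and explicitly exhibit the witness~$x_\bu$ with $\partial x_\bu=\partial\bu$), but the underlying argument is identical.
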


\begin{proof}
Denote by $f(v)$ the right hand side of \eqref{Eq:DefpiKx}. It is obvious that~$f$ is antitone. Now we must prove that
 \[
 f(v_0)\wedge f(v_1)=\bigvee\famm{f(v)}{v\in v_0\sor v_1},
 \quad\text{for all }v_0,v_1\in X\,.
 \]
The only nontrivial case occurs when the left hand side of the equation above is equal to~$1$, in which case $v_0,v_1\in\bu$. As~$\bu$ is an ideal\index{i}{ideal!of a poset} of the \pjs\index{i}{pseudo join-semilattice}~$X$, there exists $v\in v_0\sor v_1$ such that $v\in\bu$, so we obtain, indeed, that $f(v)=1$.

Finally, we must prove that $1=\bigvee\famm{f(v)}{v\in\Min X}$, in other words, that $\bu\cap\Min X$ is nonempty. This follows from~$\bu$ being an ideal\index{i}{ideal!of a poset} in the \pjs\index{i}{pseudo join-semilattice}~$X$.

This completes the proof of the first statement of the lemma.

For each $p\in P$, $\gp^X_\bu``\bigl(\rF(X)^{(p)}\bigr)$\index{s}{FrX@$\rF(X)$}\index{s}{piXu@$\gp^X_\bu$} is nonzero if{f} there exists $v\in\bu$ such that $p\leq\partial v$, which holds if{f} $p\leq\partial\bu$, which is equivalent to $\two[\partial\bu]^{(p)}$\index{s}{twop@$\two[p]$} being nonzero. Hence $\gp^X_\bu``\bigl(\rF(X)^{(p)}\bigr)=\two[\partial\bu]^{(p)}$\index{s}{twop@$\two[p]$}\index{s}{FrX@$\rF(X)$}\index{s}{piXu@$\gp^X_\bu$} for each $p\in P$.
\qed\end{proof}

\chapter{The Condensate Lifting Lemma (CLL)}\label{Ch:CLL}

\textbf{Abstract.} In this chapter we shall finalize the approach to this work's main result, the Condensate Lifting Lemma CLL\index{i}{Condensate Lifting Lemma (CLL)}. The statement of CLL\index{i}{Condensate Lifting Lemma (CLL)} involves a ``condensate''\index{i}{condensate} $\xF(X)\otimes\overrightarrow{A}$\index{s}{FxX@$\xF(X)$}\index{s}{otimAS@$\bA\otimes\overrightarrow{S}$, $\gf\otimes\overrightarrow{S}$}. General condensates are defined in Section~\ref{S:AtensS}.

The statement of CLL\index{i}{Condensate Lifting Lemma (CLL)} also involves categories~$\cA$, $\cB$, $\cS$ with functors $\Phi\colon\cA\to\cS$ and $\Psi\colon\cB\to\cS$. For further applications of our work, such as Gillibert~\cite{Gill3}\index{c}{Gillibert, P.}, we need to divide CLL\index{i}{Condensate Lifting Lemma (CLL)} into two parts: the \emph{\underbar{A}rmature Lemma} (Lemma~\ref{L:Armature})\index{i}{Armature Lemma}, which deals with the functor $\Phi\colon\cA\to\cS$, and the \emph{\underbar{B}uttress Lemma} (Lemma~\ref{L:Buttress})\index{i}{Buttress Lemma}, which deals with the functor~$\Psi\colon\cB\to\cS$. The \emph{lifters}\index{i}{lifter ($\gl$-)}, which are objects of poset-theoretical nature with a set-theoretical slant, will be defined in Section~\ref{S:Armature}.

In Section~\ref{S:CLL} we shall put together the various assumptions surrounding $\cA$, $\cB$, $\cS$, $\Phi$, and~$\Psi$ in the definition of a \emph{larder}\index{i}{larder} (Definition~\ref{D:Larder}), and we shall state and prove CLL\index{i}{Condensate Lifting Lemma (CLL)}. In Section~\ref{S:Pos2Lift} we shall relate the poset-theoretical assumptions from CLL\index{i}{Condensate Lifting Lemma (CLL)} with infinite combinatorics, proving in particular that the shapes of the diagrams involved (the posets~$P$) are \ajs s\index{i}{almost join-semilattice} satisfying certain infinite combinatorial statements (cf. Corollary~\ref{C:CharLift}). In Section~\ref{S:MorePosets} we shall weaken both the assumptions and the conclusion from CLL\index{i}{Condensate Lifting Lemma (CLL)}, making it possible to consider diagrams indexed by \ajs s\index{i}{almost join-semilattice}~$P$ for which there is no lifter\index{i}{lifter ($\gl$-)}. In Section~\ref{S:LeftRightL} we shall split up the definition of a $\gl$-larder\index{i}{larder} between left larder\index{i}{larder!left} and right $\gl$-larder\index{i}{larder!right}, making it possible to write a large part of this work as a toolbox, in particular stating the right larderhood of many structures.

\section{The functor $\bA\mapsto\bA\otimes{\stackrel{\rightarrow}{S}}$; condensates}
\label{S:AtensS}

Throughout this section we shall fix a poset~$P$. We shall define and develop the basic properties of the functor $\bA\mapsto\bA\otimes\overrightarrow{S}$\index{s}{otimAS@$\bA\otimes\overrightarrow{S}$, $\gf\otimes\overrightarrow{S}$}, where~$\bA$ is a $P$-scaled Boolean algebra\index{i}{algebra!Pscaled Boolean@$P$-scaled Boolean} and~$\overrightarrow{S}$ is a $P$-indexed diagram in a category~$\cS$ with all nonempty finite products and all small directed colimits (cf. Definition~\ref{D:SmallDirColim}). In particular,  the objects and morphisms from the diagram~$\overrightarrow{S}$ can be recovered from that functor (Lemma~\ref{L:twopqotimesS}), and this functor sends normal\index{i}{morphism!normal} morphisms (cf. Definition~\ref{D:NormMorph}) to directed colimits of projections (Proposition~\ref{P:Norm2Proj}).

We recall that $|a|$\index{s}{nora@$\vert a\vert$, $a$ atom} denotes the largest element of~$\norm{a}$\index{s}{norma@$\Vert{a}\Vert$, $\Vert{a}\Vert_\bA$, $a$ atom}, for each atom~$a$ in a compact $P$-scaled Boolean algebra\index{i}{algebra!Pscaled Boolean@$P$-scaled Boolean}~$\bA$ (see \eqref{Eq:normAtom}).

\begin{defn}\label{D:AotSFin}
Let $\cS$ be a category where any two objects have a product.
For every compact $P$-scaled Boolean algebra\index{i}{algebra!Pscaled Boolean@$P$-scaled Boolean}~$\bA$ and every $P$-indexed diagram
$\overrightarrow{S}=\famm{S_p,\gs_p^q}{p\leq q\text{ in }P}$, we put\index{s}{nora@$\vert a\vert$, $a$ atom}\index{s}{otimAS@$\bA\otimes\overrightarrow{S}$, $\gf\otimes\overrightarrow{S}$|ii}
 \[
 \bA\otimes\overrightarrow{S}=\prod\famm{S_{|u|}}{u\in\At A}\,,
 \]
with canonical projections~$\gd_{\bA}^u\colon\bA\otimes\overrightarrow{S}\to S_{|u|}$\index{s}{deltaAu@$\gd_{\bA}^u$|ii}, for all $u\in\At A$.

For every compact morphism\index{i}{morphism!compact}~$\gf\colon\bA\to\bB$ in~$\Bool_P$\index{s}{BoolP@$\Bool_P$}, we define $\gf\otimes\overrightarrow{S}$ as the unique morphism from~$\bA\otimes\overrightarrow{S}$\index{s}{otimAS@$\bA\otimes\overrightarrow{S}$, $\gf\otimes\overrightarrow{S}$} to~$\bB\otimes\overrightarrow{S}$ such that $\gd_{\bB}^v\circ(\gf\otimes\overrightarrow{S})=\gs_{|v^\gf|}^{|v|}\circ\gd_{\bA}^{v^\gf}$\index{s}{nora@$\vert a\vert$, $a$ atom}, for all~$v\in\At B$, where~$v^\gf$ denotes the unique atom~$u$ of~$A$ such that~$v\leq\gf(u)$ (cf. Figure~\ref{Fig:defphiotimesS}).
\end{defn}

\begin{figure}[htb]
 \[
 \def\labelstyle{\displaystyle}\xymatrixcolsep{1.5pc}
 \xymatrix{
 \bA\otimes\overrightarrow{S}\ar[d]_{\gd_{\bA}^{v^\gf}}\ar[rr]^{\gf\otimes\overrightarrow{S}}&&
 \bB\otimes\overrightarrow{S}\ar[d]^{\gd_{\bB}^v}\\
 S_{|v^\gf|}\ar[rr]_{\gs_{|v^\gf|}^{|v|}}&& S_{|v|}
 }
 \]
\caption{The morphism $\gf\otimes{\stackrel{\rightarrow}{S}}$}
\label{Fig:defphiotimesS}
\end{figure}

Observe that if~$A=\two$\index{s}{two@$\two$}, then $\bA\otimes\overrightarrow{S}=S_{|1|}$\index{s}{nora@$\vert a\vert$, $a$ atom}\index{s}{otimAS@$\bA\otimes\overrightarrow{S}$, $\gf\otimes\overrightarrow{S}$}, with $\gd_{\bA}^1=\id_{S_{|1|}}$ (still for~$\bA$ compact). In general, the correctness of the definition of $\gf\otimes\overrightarrow{S}$ is ensured by the universal property of the product, together with the obvious containment $\norm{v^\gf}_\bA\subseteq\norm{v}_\bB$\index{s}{norma@$\Vert{a}\Vert$, $\Vert{a}\Vert_\bA$, $a$ atom}. In case~$\gf$ is normal\index{i}{morphism!normal}, we get $\norm{v^\gf}_\bA=\norm{v}_\bB$\index{s}{norma@$\Vert{a}\Vert$, $\Vert{a}\Vert_\bA$, $a$ atom}, thus $\bB\otimes\overrightarrow{S}\cong\prod\famm{S_{|u|}}{u\in(\At A)\setminus\gf^{-1}\set{0}}$\index{s}{nora@$\vert a\vert$, $a$ atom}\index{s}{otimAS@$\bA\otimes\overrightarrow{S}$, $\gf\otimes\overrightarrow{S}$} canonically and~$\gf\otimes\overrightarrow{S}$\index{s}{otimAS@$\bA\otimes\overrightarrow{S}$, $\gf\otimes\overrightarrow{S}$} is, up to isomorphism, the canonical projection from $\prod\famm{S_{|u|}}{u\in\At A}$ to $\prod\famm{S_{|u|}}{u\in(\At A)\setminus\gf^{-1}\set{0}}$\index{s}{nora@$\vert a\vert$, $a$ atom}. In particular, $\gf\otimes\overrightarrow{S}$\index{s}{otimAS@$\bA\otimes\overrightarrow{S}$, $\gf\otimes\overrightarrow{S}$} is a projection in~$\cS$ (cf. Definition~\ref{D:projection}). This proves the second statement of the following result. The proof of the first statement is straightforward.

\begin{prop}\label{P:TensFunctor}
Let $\overrightarrow{S}$ be a $P$-indexed diagram in a category~$\cS$ where any two objects have a product. The assignment $(\bA\mapsto\bA\otimes\overrightarrow{S}$, $\gf\mapsto\gf\otimes\overrightarrow{S})$\index{s}{otimAS@$\bA\otimes\overrightarrow{S}$, $\gf\otimes\overrightarrow{S}$} defines a functor from the full subcategory of compact $P$-scaled Boolean algebras\index{i}{algebra!Pscaled Boolean@$P$-scaled Boolean} to~$\cS$. This functor sends compact normal\index{i}{morphism!normal} morphisms in~$\Bool_P$\index{s}{BoolP@$\Bool_P$} to projections in~$\cS$.
\end{prop}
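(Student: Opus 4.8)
The statement has two parts: functoriality of $(\bA\mapsto\bA\otimes\overrightarrow{S},\ \gf\mapsto\gf\otimes\overrightarrow{S})$ on the full subcategory of compact $P$-scaled Boolean algebras, and the assertion that this functor carries compact normal morphisms to projections in~$\cS$. As noted in the excerpt, the second part has essentially already been argued in the paragraph immediately preceding the proposition (via the identification $\bB\otimes\overrightarrow{S}\cong\prod\famm{S_{|u|}}{u\in(\At A)\setminus\gf^{-1}\set{0}}$ for normal~$\gf$, realizing $\gf\otimes\overrightarrow{S}$ as a canonical product projection), so the plan is to simply invoke that discussion and concentrate on the first part.

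For functoriality I would first check that $\id_\bA\otimes\overrightarrow{S}=\id_{\bA\otimes\overrightarrow{S}}$: for $\gf=\id_\bA$ one has $v^\gf=v$ and $|v^\gf|=|v|$, so the defining equation $\gd_\bB^v\circ(\gf\otimes\overrightarrow{S})=\gs^{|v|}_{|v^\gf|}\circ\gd_\bA^{v^\gf}$ becomes $\gd_\bA^v\circ(\id_\bA\otimes\overrightarrow{S})=\gd_\bA^v$, which by the universal property of the product $\bA\otimes\overrightarrow{S}=\prod\famm{S_{|u|}}{u\in\At A}$ forces $\id_\bA\otimes\overrightarrow{S}=\id$. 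Next, given compact morphisms $\gf\colon\bA\to\bB$ and $\gy\colon\bB\to\bC$, I must show $(\gy\circ\gf)\otimes\overrightarrow{S}=(\gy\otimes\overrightarrow{S})\circ(\gf\otimes\overrightarrow{S})$. The key combinatorial observation is the compatibility of the ``predecessor-atom'' operation with composition: for an atom $w\in\At C$, writing $v:=w^\gy\in\At B$ and $u:=v^\gf=w^\gy{}^\gf\in\At A$, one checks that $w\leq\gy(v)\leq\gy(\gf(u))=(\gy\circ\gf)(u)$, whence $u=w^{\gy\circ\gf}$; this uses only that $\gf,\gy$ are Boolean-algebra morphisms sending atoms-or-$0$ to ideals, together with the uniqueness clause in the definition of $v^\gf$. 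Then one computes $\gd_\bC^w\circ(\gy\otimes\overrightarrow{S})\circ(\gf\otimes\overrightarrow{S})=\gs^{|w|}_{|w^\gy|}\circ\gd_\bB^{w^\gy}\circ(\gf\otimes\overrightarrow{S})=\gs^{|w|}_{|v|}\circ\gs^{|v|}_{|u|}\circ\gd_\bA^{u}=\gs^{|w|}_{|u|}\circ\gd_\bA^{w^{\gy\circ\gf}}$, using the cocone identity $\gs^{|w|}_{|v|}\circ\gs^{|v|}_{|u|}=\gs^{|w|}_{|u|}$ valid since $|u|\leq|v|\leq|w|$ in~$P$; this is exactly the equation characterizing $(\gy\circ\gf)\otimes\overrightarrow{S}$, so the two morphisms agree by uniqueness.

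The only genuinely delicate point — though still routine — is justifying that $v^\gf$ is well-defined and that $|v^\gf|\leq|v|$, i.e. that for every atom $v$ of $B$ there is exactly one atom $u$ of $A$ with $v\leq\gf(u)$, and that $\partial$-values behave monotonically; this rests on $A$ being finite (so $1=\bigvee\At A$ and hence $\gf(1)=1=\bigvee_{u\in\At A}\gf(u)$, forcing $v$ below exactly one $\gf(u)$ since the $\gf(u)$ are pairwise disjoint), together with $u\in A^{(|u|)}$ implying $\gf(u)\in B^{(|u|)}$ and hence $v\in B^{(|u|)}$, i.e. $|u|\in\norm{v}_\bB$, so $|u|\leq|v|$. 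I expect this bookkeeping with atoms and norms to be the main (and only real) obstacle; everything else is an exercise in the universal property of products. Once these facts are in place, the correctness of Definition~\ref{D:AotSFin} and the functoriality follow, and the projection statement is already contained in the preceding discussion, completing the proof.
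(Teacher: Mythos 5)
Your proposal is correct and follows the paper's own route: the projection statement is exactly the argument given in the discussion preceding the proposition, and the functoriality verification (well-definedness of $v^\gf$, the identity $w^{\gy\circ\gf}=(w^{\gy})^{\gf}$, the norm containment $\norm{v^\gf}_\bA\subseteq\norm{v}_\bB$, and the cocone identity for the $\gs_p^q$) is precisely the ``straightforward'' bookkeeping the paper omits. Nothing is missing.
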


Now we apply this construction to the objects $\two[p]$\index{s}{twop@$\two[p]$} and morphisms~$\eps_p^q$\index{s}{epspq@$\eps_p^q$} introduced in Definition~\ref{D:two[p]}. We obtain immediately the following result.

\begin{lem}\label{L:twopqotimesS}
Let $\overrightarrow{S}=\famm{S_p,\gs_p^q}{p\leq q\text{ in }P}$ be a $P$-indexed diagram in a category~$\cS$ where any two objects have a product. Then the following statements hold:\index{s}{otimAS@$\bA\otimes\overrightarrow{S}$, $\gf\otimes\overrightarrow{S}$}
\begin{description}
\item[\tui] $\two[p]\otimes\overrightarrow{S}=S_p$\index{s}{twop@$\two[p]$}, for all $p\in P$.

\item[\tuii] $\eps_p^q\otimes\overrightarrow{S}=\gs_p^q$\index{s}{epspq@$\eps_p^q$}, for all~$p\leq q$ in~$P$.
\end{description}
\end{lem}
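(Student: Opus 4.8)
The plan is to unwind Definition~\ref{D:AotSFin} in the two special cases at hand; the statement should drop out with essentially no computation, the point being that $\two=\set{0,1}$ has a single atom, namely~$1$.

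For~\tui, I would first record that by Definition~\ref{D:two[p]} we have $\norm{1}_{\two[p]}=P\dnw p$, whose largest element is~$p$, so $|1|=p$ in the compact $P$-scaled Boolean algebra~$\two[p]$. Substituting into Definition~\ref{D:AotSFin} gives $\two[p]\otimes\overrightarrow{S}=\prod\famm{S_{|u|}}{u\in\At\two}=\prod\famm{S_{|u|}}{u\in\set{1}}$, which, by the convention recorded immediately after Definition~\ref{D:AotSFin} (a product indexed by a one-element set is the object itself, with identity projection), equals~$S_p$, with $\gd_{\two[p]}^1=\id_{S_p}$.

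For~\tuii, I would note that $\eps_p^q\colon\two[p]\to\two[q]$ is a compact morphism in~$\Bool_P$ (both $\two[p]$ and~$\two[q]$ are compact by Definition~\ref{D:CompactBP}), so $\eps_p^q\otimes\overrightarrow{S}$ is defined by Definition~\ref{D:AotSFin}. Applying that definition with $\bA=\two[p]$, $\bB=\two[q]$, and $\gf=\eps_p^q$: the only atom of~$\two[q]$ is $v=1$, and since $\eps_p^q$ is the identity map on~$\two$ we get $v^{\eps_p^q}=1$; moreover $|1^{\eps_p^q}|=|1|_{\two[p]}=p$ and $|1|_{\two[q]}=q$ by the computation in~\tui. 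Hence the characterizing equation $\gd_{\bB}^v\circ(\gf\otimes\overrightarrow{S})=\gs_{|v^\gf|}^{|v|}\circ\gd_{\bA}^{v^\gf}$ becomes $\gd_{\two[q]}^1\circ(\eps_p^q\otimes\overrightarrow{S})=\gs_p^q\circ\gd_{\two[p]}^1$, and since $\gd_{\two[p]}^1=\id_{S_p}$ and $\gd_{\two[q]}^1=\id_{S_q}$ by~\tui, this says exactly $\eps_p^q\otimes\overrightarrow{S}=\gs_p^q$.

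There is no real obstacle here: the only subtlety is to be explicit that ``product indexed by a singleton'' is read as the object itself (not merely up to isomorphism), which is the convention already fixed in the text after Definition~\ref{D:AotSFin}; with that understanding the verification is one line in each of the two cases, and it is precisely this computation that makes Lemma~\ref{L:twopqotimesS} say ``the diagram~$\overrightarrow{S}$ can be recovered from the functor $\bA\mapsto\bA\otimes\overrightarrow{S}$''.
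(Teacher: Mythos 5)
Your proof is correct and follows precisely the route the paper intends: the lemma is stated there as an immediate consequence of Definitions~\ref{D:two[p]} and~\ref{D:AotSFin} together with the observation, recorded right after Definition~\ref{D:AotSFin}, that $A=\two$ forces $\bA\otimes\overrightarrow{S}=S_{|1|}$ with $\gd_{\bA}^1=\id_{S_{|1|}}$, which is exactly the unwinding you carry out. There is no gap.
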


Now let~$\cS$ be a category where any two objects have a product, and with arbitrary small directed colimits. For a fixed $P$-indexed diagram~$\overrightarrow{S}$ in~$\cS$, we extend the functor ${}_{-}\otimes\overrightarrow{S}$ of Proposition~\ref{P:TensFunctor} to the whole category~$\Bool_P$\index{s}{BoolP@$\Bool_P$}, \emph{via} Proposition~\ref{P:ArrObj2Diag}, taking~$\cA:=\Bool_P$\index{s}{BoolP@$\Bool_P$} and defining~$\cA^\dagger$ as the full subcategory of compact $P$-scaled Boolean algebras\index{i}{algebra!Pscaled Boolean@$P$-scaled Boolean}. We obtain the following result.

\begin{prop}\label{P:TensFunctorLim}
Let $\overrightarrow{S}$ be a $P$-indexed diagram in a category~$\cS$ with small directed colimits where any two objects have a product. The assignment $(\bA\mapsto\bA\otimes\overrightarrow{S}$, $\gf\mapsto\gf\otimes\overrightarrow{S})$\index{s}{otimAS@$\bA\otimes\overrightarrow{S}$, $\gf\otimes\overrightarrow{S}$} defines a functor from~$\Bool_P$\index{s}{BoolP@$\Bool_P$} to~$\cS$. This functor preserves arbitrary small directed colimits.
\end{prop}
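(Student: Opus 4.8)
The plan is to obtain the functor ${}_{-}\otimes\overrightarrow{S}$ on all of $\Bool_P$ as the natural extension (in the sense of Remark~\ref{Rk:ExtPhi}) of the functor defined on \emph{compact} $P$-scaled Boolean algebras in Proposition~\ref{P:TensFunctor}, and then to read off the preservation of small directed colimits directly from the final sentence of Proposition~\ref{P:ArrObj2Diag}. Concretely, I would set $\cA:=\Bool_P$ and let $\cA^\dagger$ be the full subcategory of compact $P$-scaled Boolean algebras. The first point to verify is that $\cA^\dagger$ is a full subcategory of \emph{finitely presented} objects of $\cA$: this is exactly Corollary~\ref{C:FinPres=Comp}. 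Next, the requirement that every object of $\cA$ be a small directed colimit of objects of $\cA^\dagger$ is supplied by Proposition~\ref{P:FinRepresBP}, which even exhibits each $\bA$ as a $(\card A+\card P)^+$-small monomorphic directed colimit of compact objects. Finally, $\cS$ is assumed to have all small directed colimits, so the hypotheses of Proposition~\ref{P:ArrObj2Diag} are met.

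Then I would argue as follows. Since any two objects of $\cS$ have a product, Proposition~\ref{P:TensFunctor} provides a functor $\Phi_0\colon\cA^\dagger\to\cS$ given on objects and morphisms by $\bA\mapsto\bA\otimes\overrightarrow{S}$ and $\gf\mapsto\gf\otimes\overrightarrow{S}$ (Definition~\ref{D:AotSFin}). Applying the first part of Proposition~\ref{P:ArrObj2Diag} to $\Phi_0$ yields a functor $\ol{\Phi_0}\colon\Bool_P\to\cS$ extending $\Phi_0$ and preserving all colimits of directed poset-indexed diagrams in $\cA^\dagger$. It agrees with $\Phi_0$ on compact objects and on morphisms between compact objects because, in the construction in the proof of Proposition~\ref{P:ArrObj2Diag}, the canonical representation of an $\bA\in\cA^\dagger$ is the one-object diagram $\bA$ itself, so $\ol{\Phi_0}(\bA)=\Phi_0(\bA)$ and the index class $K$ attached to a compact-to-compact morphism reduces to a single element. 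This $\ol{\Phi_0}$ is, by definition, the functor $\bA\mapsto\bA\otimes\overrightarrow{S}$, $\gf\mapsto\gf\otimes\overrightarrow{S}$ of the statement, which establishes the functoriality assertion.

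It remains to upgrade ``preserves colimits of directed poset-indexed diagrams in $\cA^\dagger$'' to ``preserves all small directed colimits''. For this I would invoke the second assertion of Proposition~\ref{P:ArrObj2Diag}, whose only extra hypothesis is that $\cA^\dagger$ has small hom-sets. But a morphism between compact $P$-scaled Boolean algebras is in particular a map between finite Boolean algebras, so every hom-set of $\cA^\dagger$ is finite, hence small; thus the hypothesis holds and $\ol{\Phi_0}$ preserves all small directed colimits, as desired. There is no genuine obstacle here: every step is a citation of an already-established result, and the only points requiring (minimal) care are the identification of $\cA^\dagger$ with a full subcategory of finitely presented objects via Corollary~\ref{C:FinPres=Comp} and the observation that compact $P$-scaled Boolean algebras have finite, hence small, hom-sets. (If uniqueness of the extension above $\Phi_0$ is wanted, it is Remark~\ref{Rk:ExtPhi}.)
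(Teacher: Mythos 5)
Your proof is correct and follows exactly the route the paper takes: the text preceding the proposition states that the functor of Proposition~\ref{P:TensFunctor} is extended to all of $\Bool_P$ \emph{via} Proposition~\ref{P:ArrObj2Diag}, with $\cA:=\Bool_P$ and $\cA^\dagger$ the compact $P$-scaled Boolean algebras. You have merely made explicit the verifications (Corollary~\ref{C:FinPres=Comp}, Proposition~\ref{P:FinRepresBP}, finiteness of hom-sets between compact objects) that the paper leaves implicit.
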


In case~$A=\two$\index{s}{two@$\two$}, we get $\bA\otimes\overrightarrow{S}=S_H=\varinjlim_{p\in H}S_p$\index{s}{otimAS@$\bA\otimes\overrightarrow{S}$, $\gf\otimes\overrightarrow{S}$} (with the obvious transition morphisms), where $H:=\norm{1}=\setm{p\in P}{1\in A^{(p)}}$\index{s}{norma@$\Vert{a}\Vert$, $\Vert{a}\Vert_\bA$, $a$ atom}.

\begin{defn}\label{D:Condensate}
In the context of Proposition~\ref{P:TensFunctorLim}, the object $\bA\otimes\overrightarrow{S}$\index{s}{otimAS@$\bA\otimes\overrightarrow{S}$, $\gf\otimes\overrightarrow{S}$} is a \emph{condensate}\index{i}{condensate|ii} of the poset-indexed diagram~$\overrightarrow{S}$.
\end{defn}

An immediate application of Propositions~\ref{P:NormColim} and~\ref{P:TensFunctor} also gives the following result.

\begin{prop}\label{P:Norm2Proj}
Let $\overrightarrow{S}$ be a $P$-indexed diagram in a category~$\cS$ with small directed colimits where any two objects have a product. The morphism~$\gf\otimes\overrightarrow{S}$\index{s}{otimAS@$\bA\otimes\overrightarrow{S}$, $\gf\otimes\overrightarrow{S}$} is an extended projection of~$\cS$, for every normal\index{i}{morphism!normal} morphism~$\gf$ in~$\Bool_P$\index{s}{BoolP@$\Bool_P$}.
\end{prop}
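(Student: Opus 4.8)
The plan is to read off the statement directly from Proposition~\ref{P:NormColim} (which presents a normal morphism as a directed colimit of compact normal morphisms) and Proposition~\ref{P:TensFunctor} (which sends compact normal morphisms to projections), with Proposition~\ref{P:TensFunctorLim} serving as the bridge that lets the functor ${}_{-}\otimes\overrightarrow{S}$ commute with the relevant directed colimits. No new construction is needed; the work is purely one of assembling earlier results against Definition~\ref{D:projection}.

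First I would fix a normal morphism $\gf\colon\bA\to\bB$ in~$\Bool_P$. By Proposition~\ref{P:NormColim}, there are a directed poset~$I$, diagrams $\overrightarrow{\bA}=\famm{\bA_i,\ga_i^j}{i\leq j\text{ in }I}$ and $\overrightarrow{\bB}=\famm{\bB_i,\gb_i^j}{i\leq j\text{ in }I}$ in~$\Bool_P$, colimit cocones $\famm{\bA,\ga_i}{i\in I}=\varinjlim\overrightarrow{\bA}$ and $\famm{\bB,\gb_i}{i\in I}=\varinjlim\overrightarrow{\bB}$, and a natural transformation $\famm{\gf_i}{i\in I}$ from~$\overrightarrow{\bA}$ to~$\overrightarrow{\bB}$ with colimit morphism~$\gf$ in the arrow category $(\Bool_P)^{\two}$, such that each $\gf_i\colon\bA_i\to\bB_i$ is a compact normal morphism.

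Next I would apply the functor ${}_{-}\otimes\overrightarrow{S}\colon\Bool_P\to\cS$ supplied by Proposition~\ref{P:TensFunctorLim}. Since this functor preserves all small directed colimits and~$I$ is directed, the cocone $\famm{\bA\otimes\overrightarrow{S},\ga_i\otimes\overrightarrow{S}}{i\in I}$ is a colimit of $\famm{\bA_i\otimes\overrightarrow{S},\ga_i^j\otimes\overrightarrow{S}}{i\leq j\text{ in }I}$, and likewise for~$\overrightarrow{\bB}$; moreover $\famm{\gf_i\otimes\overrightarrow{S}}{i\in I}$ is a natural transformation between these two diagrams whose colimit morphism is $\gf\otimes\overrightarrow{S}$, because colimit cocones and natural transformations in an arrow category are computed levelwise. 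By Proposition~\ref{P:TensFunctor}, each $\gf_i\otimes\overrightarrow{S}$ — being the image of a compact normal morphism under the restriction of ${}_{-}\otimes\overrightarrow{S}$ to compact $P$-scaled Boolean algebras — is a projection in~$\cS$. Comparing with Definition~\ref{D:projection}, the data just produced exhibits $\gf\otimes\overrightarrow{S}$ as an extended projection of~$\cS$, which is the claim.

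The only point demanding a little care — and I expect it to be the main (and rather minor) obstacle — is the transition from ``directed colimit in $(\Bool_P)^{\two}$'' to ``componentwise directed colimit of objects and arrows in~$\cS$'': one must observe that the colimit cocone of Proposition~\ref{P:NormColim} in the category of arrows amounts to a pair of colimit cocones in~$\Bool_P$ together with a natural transformation, and that applying the directed-colimit-preserving functor ${}_{-}\otimes\overrightarrow{S}$ to each component yields precisely the three ingredients (the two colimits and the limiting natural transformation of projections) required by Definition~\ref{D:projection}. This is routine, resting only on the fact that directed colimits in arrow categories are levelwise, so no genuine difficulty arises.
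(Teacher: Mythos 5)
Your proof is correct and follows exactly the route the paper takes: it combines Proposition~\ref{P:NormColim} (normal morphisms as directed colimits of compact normal morphisms) with Proposition~\ref{P:TensFunctor} (compact normal morphisms go to projections), using the colimit preservation of Proposition~\ref{P:TensFunctorLim} to match Definition~\ref{D:projection}. The paper states this as an immediate consequence of those propositions, so your write-up just makes the same argument explicit.
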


We refer the reader to Definition~\ref{D:projection} for extended projections.

\begin{remk}\label{Rk:kappasmallBool}
Let~$\gk$ be an infinite regular cardinal.
By using Remark~\ref{Rk:Restrkappa}, both Propositions~\ref{P:TensFunctorLim} and~\ref{P:Norm2Proj} can be easily relativized to the full subcategory~$(\Bool_P)\res\gk$\index{s}{BoolP@$\Bool_P$} of all~$\gk$-small directed colimits of compact $P$-scaled Boolean algebras\index{i}{algebra!Pscaled Boolean@$P$-scaled Boolean}, in case~$\cS$ has all~$\gk$-small directed colimits. If $\card P<\gk$, then, by Proposition~\ref{P:FinRepresBP}, the objects of $(\Bool_P)\res\gk$\index{s}{BoolP@$\Bool_P$} are exactly the $P$-scaled Boolean algebras\index{i}{algebra!Pscaled Boolean@$P$-scaled Boolean}~$\bA$ such that~$\card A<\gk$.
\end{remk}

\section{Lifters and the Armature Lemma}\label{S:Armature}

In this section we shall complete, with the \emph{lifters}\index{i}{lifter ($\gl$-)}, the introduction of all the concepts needed to formulate and prove the Armature Lemma\index{i}{Armature Lemma} (Lemma~\ref{L:Armature}). This lemma is a technical result with no obvious meaning, of which we can nevertheless try to give an intuitive idea. We are given categories~$\cA$ and~$\cS$ together with a functor $\Phi\colon\cA\to\cS$. We are also given a diagram~$\overrightarrow{A}$ indexed by a poset~$P$ satisfying a certain infinite combinatorial assumption called \emph{existence of a lifter}\index{i}{lifter ($\gl$-)}. The lifter (cf. Definition~\ref{D:Lifter}) consists of a pair~$(X,\bX)$, where~$X$ is a poset and~$\bX$ is a set of ideals\index{i}{ideal!of a poset} of~$X$. The set~$\bX^=$\index{s}{Xequal@$\bX^=$|ii} of all elements of~$\bX$ of non-maximal norm, partially ordered by containment, will be later intended as the~$U$ of the Buttress Lemma\index{i}{Buttress Lemma}, Lemma~\ref{L:Buttress}. Given an object~$S$ of~$\cS$ and a morphism $\chi\colon S\to\Phi\bigl(\xF(X)\otimes\overrightarrow{A}\bigr)$\index{s}{FxX@$\xF(X)$}\index{s}{otimAS@$\bA\otimes\overrightarrow{S}$, $\gf\otimes\overrightarrow{S}$}, together with extra categorical data, we get an ``image'' in~$\cS$ of the diagram~$\Phi\overrightarrow{A}$. Hence the Armature Lemma\index{i}{Armature Lemma} turns a morphism from an \emph{object} of~$\cS$ to the image under~$\Phi$ of a \emph{condensate}\index{i}{condensate} of~$\overrightarrow{A}$ to a natural transformation from a \emph{diagram} in~$\cS$ to the diagram~$\Phi\overrightarrow{A}$. Even more roughly speaking, the Armature Lemma\index{i}{Armature Lemma} states in which sense the condensate\index{i}{condensate} $\xF(X)\otimes\overrightarrow{A}$\index{s}{FxX@$\xF(X)$}\index{s}{otimAS@$\bA\otimes\overrightarrow{S}$, $\gf\otimes\overrightarrow{S}$} crystallizes the properties of the diagram~$\overrightarrow{A}$.

The following definition stems from the definition of $\overrightarrow{\gk}$-compatible norm-cov\-er\-ings introduced in~\cite{Gill1}\index{c}{Gillibert, P.}, with a tad more generality. It states the main combinatorial property that we shall require from a poset within the statement of CLL\index{i}{Condensate Lifting Lemma (CLL)}. We refer the reader to Definitions~\ref{D:PJS} and~\ref{D:NormCov} for norm-coverings\index{i}{norm-covering} and supported\index{i}{poset!supported} posets.

\begin{defn}\label{D:Lifter}
Let~$\gl$ be an infinite cardinal and let~$P$ be a poset. A \emph{$\gl$-lifter}\index{i}{lifter ($\gl$-)|ii} of~$P$ is a pair $(X,\bX)$, where~$X$ is a norm-covering\index{i}{norm-covering} of~$P$ and~$\bX$ is a subset of~$\Ids X$\index{s}{Ids@$\Ids X$, $X$ norm-covering}\index{i}{norm-covering} satisfying the following properties:
\begin{description}
\item[\tui] The set $\bX^=:=\setm{\bx\in\bX}{\partial\bx\text{ is not maximal in }P}$\index{s}{Xequal@$\bX^=$|ii} is lower $\cf(\gl)$-small\index{i}{poset!lower $\gl$-small}, that is, $\card(\bX\dnw\bx)<\cf(\gl)$ for each~$\bx\in\bX^=$.
 
\item[\tuii] For each map $S\colon\bX^=\to[X]^{<\gl}$, there exists an isotone map $\gs\colon P\to\nobreak\bX$ such that
\begin{description}
\item[\tua] the map $\gs$ is a \emph{section} of $\partial$, that is, $\partial\gs(p)=p$ holds for each $p\in P$;

\item[\tub] the containment $S(\gs(a))\cap\gs(b)\subseteq\gs(a)$ holds for all $a<b$ in~$P$.
(\emph{Observe that~$\gs(a)$ necessarily belongs to~$\bX^=$}.)
\end{description}

\item[\tuiii] If $\gl=\aleph_0$\index{s}{aleph0@$\aleph_{\ga}$}, then~$X$ is supported\index{i}{poset!supported}.
\end{description}
We say that~$P$ is \emph{$\gl$-liftable}\index{i}{liftable!$\gl$-${}_{-}$ poset|ii} if it has a $\gl$-lifter\index{i}{lifter ($\gl$-)}. Observe that in part (ii,b) of the definition above, it suffices to require~$S$ be \emph{isotone}: indeed, as~$\bX^=$ is lower $\cf(\gl)$-small\index{i}{poset!lower $\gl$-small} (this is part~(i)), every $S\colon\bX^=\to[X]^{<\gl}$ lies below some isotone $S'\colon\bX^=\to[X]^{<\gl}$ (e.g., $S'(\bx):=\bigcup\famm{S(\by)}{\by\in\bX^=\dnw\bx}$, for each $\bx\in\bX^=$).

We will often refer to the conclusion (ii,b) above by saying that the one-to-one map $\gs\colon P\mono\Pow(X)$\index{s}{AtomonoB@$f\colon A\mono B$} is \emph{free with respect to~$S$}\index{i}{freemapp@free map (wrt. set-mapping, poset)|ii}.
\end{defn}

Conditions for existence or nonexistence of lifters\index{i}{lifter ($\gl$-)} are given in Sections~\ref {S:Pos2Lift} and~\ref{S:LiftRetr}.

In the following statement of the Armature Lemma\index{i}{Armature Lemma}, we refer to Definition~\ref{D:wkapppres} for weakly $\gl$-presented\index{i}{presented!weakly $\gl$-} objects, Definition~\ref{D:NormCov} for norm-coverings\index{i}{norm-covering}, Lemma~\ref{L:pixnormal} for the morphisms $\gp^X_\bx$\index{s}{piXu@$\gp^X_\bu$}, and Definition~\ref{D:Lifter} for $\gl$-lifters\index{i}{lifter ($\gl$-)}\index{i}{Armature Lemma|ii}.

\begin{lem}[Armature Lemma]\label{L:Armature}
Let $\gl$ be an infinite cardinal, let~$P$ be a poset with a $\gl$-lifter~$(X,\bX)$\index{i}{lifter ($\gl$-)}, let $\cA$ and~$\cS$ be categories, let $\Phi\colon\cA\to\cS$ be a functor. We suppose that the following conditions are satisfied:
\begin{description}
\item[$(\CLOS(\cA))$] \index{s}{ClosA@$(\CLOS(\cA))$|ii}$\cA$ has all small directed colimits.

\item[$(\PROD(\cA))$] \index{s}{ProdA@$(\PROD(\cA))$|ii}Any two objects of~$\cA$ have a product in~$\cA$.

\item[$(\CONT(\Phi))$] \index{s}{Cont@$(\CONT(\Phi))$|ii}The functor~$\Phi$ preserves all small directed colimits.
\end{description}
Let $\overrightarrow{A}=\famm{A_p,\ga_p^q}{p\leq q\text{ in }P}$ be a $P$-indexed diagram in~$\cA$, let~$S$ be an object of~$\cS$, let $\chi\colon S\to\Phi\bigl(\xF(X)\otimes\overrightarrow{A}\bigr)$\index{s}{FxX@$\xF(X)$}\index{s}{otimAS@$\bA\otimes\overrightarrow{S}$, $\gf\otimes\overrightarrow{S}$}, and let $\famm{(S_\bx,\gf_\bx),\gf_{\bx}^{\by}}{\bx\subseteq\by\text{ in }\bX}$ be an $\bX$-indexed diagram in~$\cS\dnw S$ such that~$S_\bx$ is weakly $\gl$-presented\index{i}{presented!weakly $\gl$-} for each $\bx\in\bX^=$. We set\index{s}{piXu@$\gp^X_\bu$}\index{s}{otimAS@$\bA\otimes\overrightarrow{S}$, $\gf\otimes\overrightarrow{S}$}
 \[
 \gr_\bx:=\Phi\bigl(\gp^X_\bx\otimes\overrightarrow{A}\bigr)\circ\chi\,,
 \quad\text{for each }\bx\in\bX\,.
 \]
Then there exists an isotone section $\gs\colon P\into\bX$\index{s}{AtoinB@$f\colon A\into B$} of the lifter\index{i}{lifter ($\gl$-)} such that the family $\famm{\gr_{\gs(p)}\circ\gf_{\gs(p)}}{p\in P}$ is a natural transformation from the direct system $\famm{S_{\gs(p)},\gf_{\gs(p)}^{\gs(q)}}{p\leq q\text{ in }P}$ to $\Phi\overrightarrow{A}$.
\end{lem}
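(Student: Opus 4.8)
The plan is to convert the weak $\gl$-presentability of each $S_\bx$ into a factorization of $\chi\circ\gf_\bx$ through a $\gl$-small piece of the condensate, to feed the resulting assignment $\bx\mapsto(\text{that piece})$ into the freeness clause of Definition~\ref{D:Lifter}(ii), and then to verify naturality against that clause. \emph{Step~1: exhibiting $\xF(X)\otimes\overrightarrow{A}$ as a continuous directed colimit over $[X]^{<\gl}$.} For $Z\in[X]^{<\gl}$ denote by $\ol{Z}$ the $\sor$-closure of $Z$ in the \pjs\ $X$. Since $\Sor F$ is finite for every finite $F\subseteq X$ (Lemma~\ref{L:PUpwX}), iterating the closure construction gives $\card\ol{Z}\leq\card Z+\aleph_0$; and if $\gl=\aleph_0$ then $X$ is supported (Definition~\ref{D:Lifter}(iii)), so $\ol{Z}$ is finite. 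Hence $\ol{Z}\in[X]^{<\gl}$, it is $\sor$-closed, and $\sor$-closure commutes with directed unions. Thus $\setm{\ol{Z}}{Z\in[X]^{<\gl}}$ is a directed family of $\sor$-closed subsets of $X$ with union $X$, and by Lemma~\ref{L:fXYnormcov}(ii), Proposition~\ref{P:TensFunctorLim} (applicable by $(\CLOS(\cA))$ and $(\PROD(\cA))$), and $(\CONT(\Phi))$, we obtain
\[
\Phi\bigl(\xF(X)\otimes\overrightarrow{A}\bigr)=\varinjlim\famm{\Phi\bigl(\xF(\ol{Z})\otimes\overrightarrow{A}\bigr)}{Z\in[X]^{<\gl}}\,,
\]
with limiting morphisms $\gb_Z:=\Phi\bigl(f_{\ol{Z}}^X\otimes\overrightarrow{A}\bigr)$; this colimit is \emph{continuous} in the sense of Section~\ref{Su:Categories}, as is checked on directed subfamilies using the same three ingredients together with the commutation of $\sor$-closure with directed unions.

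\emph{Step~2: applying weak presentability and the lifter.} For each $\bx\in\bX^=$, weak $\gl$-presentability of $S_\bx$ applied to the colimit of Step~1 provides $Z(\bx)\in[X]^{<\gl}$ and a morphism $\chi_\bx\colon S_\bx\to\Phi\bigl(\xF(\ol{Z(\bx)})\otimes\overrightarrow{A}\bigr)$ with $\chi\circ\gf_\bx=\gb_{Z(\bx)}\circ\chi_\bx$. Put $T(\bx):=\ol{Z(\bx)}$, a $\sor$-closed member of $[X]^{<\gl}$. Applying Definition~\ref{D:Lifter}(ii) to the map $T\colon\bX^=\to[X]^{<\gl}$ produces an isotone section $\gs\colon P\into\bX$ of $\partial$ such that $T(\gs(a))\cap\gs(b)\subseteq\gs(a)$ for all $a<b$ in $P$.

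\emph{Step~3: naturality.} Since every member of $\bX\subseteq\Ids X$ is sharp, $\gp^X_{\gs(p)}$ is a normal morphism from $\xF(X)$ onto $\two[p]$ (Lemma~\ref{L:pixnormal}), whence $\gp^X_{\gs(p)}\otimes\overrightarrow{A}\colon\xF(X)\otimes\overrightarrow{A}\to\two[p]\otimes\overrightarrow{A}=A_p$ (Lemma~\ref{L:twopqotimesS}(i)), so that $\gr_{\gs(p)}\colon S\to\Phi(A_p)$ and $\gr_{\gs(p)}\circ\gf_{\gs(p)}\colon S_{\gs(p)}\to\Phi(A_p)$ are correctly typed. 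As the $\bX$-indexed diagram lies in $\cS\dnw S$ we have $\gf_{\gs(q)}\circ\gf_{\gs(p)}^{\gs(q)}=\gf_{\gs(p)}$, so naturality of $\famm{\gr_{\gs(p)}\circ\gf_{\gs(p)}}{p\in P}$ amounts to
\[
\gr_{\gs(q)}\circ\gf_{\gs(p)}=\Phi(\ga_p^q)\circ\gr_{\gs(p)}\circ\gf_{\gs(p)}\,,\qquad\text{for all }p<q\text{ in }P\,.
\]
Fix $p<q$ and set $\bx:=\gs(p)$, $\by:=\gs(q)$, so $\bx\subseteq\by$, $\partial\bx=p$, $\partial\by=q$, and $T(\bx)\cap\by\subseteq\bx$. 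The crux is the equality
\[
\gp^X_\by\circ f_{T(\bx)}^X=\eps_p^q\circ\gp^X_\bx\circ f_{T(\bx)}^X\ \colon\ \xF\bigl(T(\bx)\bigr)\to\two[q]
\]
of morphisms in $\Bool_P$ (both composites being such, by Lemmas~\ref{L:fXYnormcov}(i) and~\ref{L:pixnormal} and Definition~\ref{D:two[p]}): both send the generator $\tilde v$, for $v\in T(\bx)$, to $1$ when $v\in\by$ (resp.\ when $v\in\bx$) and to $0$ otherwise, and $\bx\subseteq\by$ together with $T(\bx)\cap\by\subseteq\bx$ forces $v\in\bx\Leftrightarrow v\in\by$ for every $v\in T(\bx)$. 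Applying the functor ${}_{-}\otimes\overrightarrow{A}$ then $\Phi$ to this equality, splitting each composite by functoriality, using $\eps_p^q\otimes\overrightarrow{A}=\ga_p^q$ (Lemma~\ref{L:twopqotimesS}(ii)), and precomposing with $\chi_\bx$, then invoking $\chi\circ\gf_\bx=\gb_{Z(\bx)}\circ\chi_\bx=\Phi(f_{T(\bx)}^X\otimes\overrightarrow{A})\circ\chi_\bx$ and the definitions of $\gr_\bx$ and $\gr_\by$, yields precisely the displayed identity; this completes the verification.

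\emph{Main obstacle.} The delicate bookkeeping is concentrated in Step~1: arranging $\xF(X)\otimes\overrightarrow{A}$ as a \emph{continuous} directed colimit indexed by $[X]^{<\gl}$ through $\sor$-closures, and confirming that the chain of functors $\rF$, ${}_{-}\otimes\overrightarrow{A}$, $\Phi$ preserves exactly the colimits used (including continuity on directed subfamilies). Once this is in place, the remaining arguments are a direct manipulation of the freeness clause~(ii,b) of the lifter.
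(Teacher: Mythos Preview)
Your proof is correct and follows essentially the same route as the paper: express $\Phi(\xF(X)\otimes\overrightarrow{A})$ as a directed colimit over $\sor$-closures of $\gl$-small subsets of~$X$, factor each $\chi\circ\gf_\bx$ through a $\gl$-small piece via weak $\gl$-presentability, feed the resulting set-map into the freeness clause of the lifter, and verify the key Boolean identity $\gp^X_{\gs(q)}\circ f^X_{T(\gs(p))}=\eps_p^q\circ\gp^X_{\gs(p)}\circ f^X_{T(\gs(p))}$ on generators. The only cosmetic differences are that the paper replaces~$V$ by an isotone $\sor$-closed refinement before invoking the lifter (unnecessary, as you implicitly note, since Definition~\ref{D:Lifter}(ii) places no isotonicity requirement on~$S$), while you are more explicit than the paper about the continuity of the $[X]^{<\gl}$-indexed colimit, which is indeed what the definition of weak $\gl$-presentability demands.
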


We remind the reader that condensates\index{i}{condensate} have been introduced in Definition~\ref{D:Condensate}. In particular, $\xF(X)\otimes\overrightarrow{A}$\index{s}{FxX@$\xF(X)$}\index{s}{otimAS@$\bA\otimes\overrightarrow{S}$, $\gf\otimes\overrightarrow{S}$} is a condensate of~$\overrightarrow{A}$.

\begin{proof}
The assumptions $(\CLOS(\cA))$\index{s}{ClosA@$(\CLOS(\cA))$} and $(\PROD(\cA))$\index{s}{ProdA@$(\PROD(\cA))$} are put there in order to ensure that the condensate\index{i}{condensate} $\xF(X)\otimes\overrightarrow{A}$\index{s}{FxX@$\xF(X)$}\index{s}{otimAS@$\bA\otimes\overrightarrow{S}$, $\gf\otimes\overrightarrow{S}$} is indeed well-defined (cf. Section~\ref{S:AtensS}).

As (the underlying poset of) $X$ is a \pjs\index{i}{pseudo join-semilattice}, if $\gl>\aleph_0$\index{s}{aleph0@$\aleph_{\ga}$}, then the $\sor$-closure $Z^{\sor}$\index{s}{minmajclos@$Z^{\sor}$|ii} of a $\gl$-small subset~$Z$ of~$X$ is also $\gl$-small; this also holds for $\gl=\aleph_0$\index{s}{aleph0@$\aleph_{\ga}$}, because in that case~$X$ is supported\index{i}{poset!supported}. In particular, in any case, $X$ is the directed union of the set~$[X]^{<\gl}_{\sor}$\index{s}{lammclos@$[X]^{<\gl}_{\sor}$|ii} of all its $\gl$-small $\sor$-closed subsets.

By using $(\CONT(\Phi))$\index{s}{Cont@$(\CONT(\Phi))$}, Lemma~\ref{L:fXYnormcov}, and Proposition~\ref{P:TensFunctorLim}, we obtain that for each $\bx\in\bX^=$,\index{s}{FxX@$\xF(X)$}\index{s}{otimAS@$\bA\otimes\overrightarrow{S}$, $\gf\otimes\overrightarrow{S}$}
 \[
 \chi\circ\gf_\bx\colon S_\bx\to
 \Phi\bigl(\xF(X)\otimes\overrightarrow{A}\bigr)=
 \varinjlim\famm{\Phi\bigl(\xF(Z^{\sor})\otimes\overrightarrow{A}\bigr)}
 {Z\in[X]^{<\gl}}
 \]
(the transition and limiting morphisms are all of the form~$\Phi\bigl(f_{Z_0}^{Z_1}\otimes\overrightarrow{A}\bigr)$\index{s}{fXY@$f_X^Y$}, where the~$f_{Z_0}^{Z_1}$\index{s}{fXY@$f_X^Y$} are given by Lemma~\ref{L:fXYnormcov}).
As~$S_\bx$ is, by assumption, weakly $\gl$-presented\index{i}{presented!weakly $\gl$-}, there exists~$V(\bx)\in[X]^{<\gl}$ such that $\chi\circ\gf_\bx$ factors through $\Phi\bigl(\xF(V(\bx)^{\sor})\otimes\overrightarrow{A}\bigr)$\index{s}{FxX@$\xF(X)$}\index{s}{otimAS@$\bA\otimes\overrightarrow{S}$, $\gf\otimes\overrightarrow{S}$}. Furthermore, for each $\bx\in\bX^=$, the set
 \[
 \ol{V}(\bx):=\bigcup\famm{V(\bx')}{\bx'\in\bX\dnw\bx}
 \]
is $\gl$-small (because $\bX\dnw\bx$ is $\cf(\gl)$-small and all the sets~$V(\bx')$ are $\gl$-small). Hence $\ol{V}(\bx)^{\sor}$ is also $\gl$-small. Therefore, replacing~$V(\bx)$ by~$\ol{V}(\bx)^{\sor}$, we may assume that the map~$V$ is isotone, with values in~$[X]^{<\gl}_{\sor}$.

We shall fix the map~$V$ until the end of the proof of Lemma~\ref{L:Armature}\index{i}{Armature Lemma}. Write\index{s}{fXY@$f_X^Y$}\index{s}{otimAS@$\bA\otimes\overrightarrow{S}$, $\gf\otimes\overrightarrow{S}$}
 \begin{equation}\label{Eq:chiotainfty}
 \chi\circ\gf_\bx=
 \Phi\bigl(f_{V(\bx)}^X\otimes\overrightarrow{A}\bigr)\circ\gy_\bx\,,
 \end{equation}
for some morphism $\gy_\bx\colon S_\bx\to\Phi\bigl(\xF(V(\bx))\otimes\overrightarrow{A}\bigr)$\index{s}{FxX@$\xF(X)$}\index{s}{otimAS@$\bA\otimes\overrightarrow{S}$, $\gf\otimes\overrightarrow{S}$}. The equation~\eqref{Eq:chiotainfty} can also be visualized on Figure~\ref{Fig:psibx}.

\begin{figure}[htb]
 \[
 \def\labelstyle{\displaystyle}
 \xymatrix{
 S_\bx\ar[rrr]^{\gf_\bx}\ar[d]_{\gy_{\bx}}&&&
 S\ar[d]^{\chi}\\
 \Phi\bigl(\xF(V(\bx))\otimes\overrightarrow{A}\bigr)
 \ar[rrr]_{\Phi\bigl(f_{V(\bx)}^X\otimes\overrightarrow{A}\bigr)}&&&
 \Phi\bigl(\xF(X)\otimes\overrightarrow{A}\bigr)
 }
 \]
\caption{The morphisms $\gy_\bx$}
\label{Fig:psibx}
\end{figure}

As~$(X,\bX)$ is a $\gl$-lifter\index{i}{lifter ($\gl$-)} of~$P$, its norm has a free isotone section $\gs\colon P\into\bX$\index{s}{AtoinB@$f\colon A\into B$} with respect to the set mapping~$V$, that is,
 \begin{equation}\label{Eq:gsisUfree}
 V(\gs(p))\cap\gs(q)\subseteq\gs(p)\quad\text{for all }p<q\text{ in }P\,.
 \end{equation}

\begin{claim}
The equation $\gp_{\gs(q)}^X\circ f_{V\gs(p)}^X=
\eps_p^q\circ\gp_{\gs(p)}^X\circ f_{V\gs(p)}^X$\index{s}{epspq@$\eps_p^q$}\index{s}{fXY@$f_X^Y$}\index{s}{piXu@$\gp^X_\bu$} is satisfied for all $p<q$ in~$P$.
\end{claim}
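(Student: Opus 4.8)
The plan is to reduce the asserted identity of morphisms in $\Bool_P$ to an identity of Boolean algebra homomorphisms, and then to verify that identity on generators, where it becomes a two‑line consequence of the isotonicity of $\gs$ together with its freeness with respect to~$V$.

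First I would pin down the types of the two sides. Since $\gs$ is a section of $\partial$, we have $\partial\gs(p)=p$ and $\partial\gs(q)=q$; moreover $\gs(p),\gs(q)\in\bX\subseteq\Ids X$ are sharp, so Lemma~\ref{L:pixnormal} gives normal morphisms $\gp^X_{\gs(p)}\colon\xF(X)\to\two[p]$ and $\gp^X_{\gs(q)}\colon\xF(X)\to\two[q]$, while $f_{V\gs(p)}^X\colon\xF(V\gs(p))\to\xF(X)$ is the morphism of Lemma~\ref{L:fXY} (here $\gs(p)\in\bX^=$ by the remark following Definition~\ref{D:Lifter}, so $V\gs(p)$ is defined, and it is a $\gl$-small $\sor$-closed, hence \pjs, subset of~$X$, so $\rF(V\gs(p))$ and $f_{V\gs(p)}^X$ are meaningful). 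Thus both $\gp^X_{\gs(q)}\circ f_{V\gs(p)}^X$ and $\eps_p^q\circ\gp^X_{\gs(p)}\circ f_{V\gs(p)}^X$ are morphisms from $\xF(V\gs(p))$ to $\two[q]$. Because $\eps_p^q$ has $\id_\two$ as its underlying map of Boolean algebras (Definition~\ref{D:two[p]}), it suffices to prove equality of the underlying Boolean algebra homomorphisms $\rF(V\gs(p))\to\two$; and since $\rF(V\gs(p))$ is generated, as a Boolean algebra, by $\setm{\tilde{u}^{V\gs(p)}}{u\in V\gs(p)}$ (Notation~\ref{Not:F(X)}), it is enough to check the equality after evaluation at each such generator.

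Next I would evaluate. Using $f_{V\gs(p)}^X(\tilde{u}^{V\gs(p)})=\tilde{u}^X$ (Lemma~\ref{L:fXY}) and formula~\eqref{Eq:DefpiKx}, the left‑hand side sends $\tilde{u}^{V\gs(p)}$ to $1$ if $u\in\gs(q)$ and to $0$ otherwise, whereas the right‑hand side sends it to $1$ if $u\in\gs(p)$ and to $0$ otherwise. So the claim reduces to the statement: for every $u\in V\gs(p)$, one has $u\in\gs(q)$ if and only if $u\in\gs(p)$. The ``if'' direction is immediate, since $\gs$ is isotone and $p<q$, whence $\gs(p)\subseteq\gs(q)$. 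For the ``only if'' direction, if $u\in\gs(q)$ then $u\in V(\gs(p))\cap\gs(q)$, and this set is contained in $\gs(p)$ by the freeness relation~\eqref{Eq:gsisUfree}. This completes the verification on generators, hence proves the claim.

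I do not expect a genuine obstacle here: the mathematical content is exactly the above application of isotonicity and of~\eqref{Eq:gsisUfree}. The only care required is bookkeeping — matching up codomains and the underlying Boolean maps so that the reduction to generators is legitimate (in particular noting that $\eps_p^q$ does nothing at the level of underlying algebras), and recalling that $\gs(p)\in\bX^=$ so that $V\gs(p)$ is a legitimate $\sor$-closed \pjs\ on which $\rF({}\cdot{})$ and $f_{{}\cdot{}}^X$ are defined.
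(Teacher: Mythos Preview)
Your proof is correct and follows essentially the same approach as the paper: reduce to generators $\tilde{u}^{V\gs(p)}$, evaluate both sides to get the conditions $u\in\gs(q)$ versus $u\in\gs(p)$, and conclude via isotonicity of~$\gs$ together with the freeness relation~\eqref{Eq:gsisUfree}. You are in fact slightly more explicit than the paper (which simply writes ``By~\eqref{Eq:gsisUfree}, the two expressions agree''), but the argument is the same.
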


\begin{proof}
We need to verify that the given morphisms agree on the canonical generators of~$\xF(V\gs(p))$\index{s}{FxX@$\xF(X)$}, that is, the elements~$\tilde{u}^{V\gs(p)}$\index{s}{utilde@$\tilde{u}$, $\tilde{u}^X$}, for $u\in V\gs(p)$. We proceed:\index{s}{epspq@$\eps_p^q$}\index{s}{fXY@$f_X^Y$}\index{s}{piXu@$\gp^X_\bu$}
 \begin{align*}
 (\gp_{\gs(q)}^X\circ f_{V\gs(p)}^X)(\tilde{u}^{V\gs(p)})&=
 \gp_{\gs(q)}^X(\tilde{u}^X)=\begin{cases}
 1\,,&\text{if }u\in\gs(q)\\ 0\,,&\text{otherwise}\end{cases}\,,\\
 \intertext{while}
 (\eps_p^q\circ\gp_{\gs(p)}^X\circ f_{V\gs(p)}^X)(\tilde{u}^{V\gs(p)})&=
 (\eps_p^q\circ\gp_{\gs(p)}^X)(\tilde{u}^X)=\begin{cases}
 1\,,&\text{if }u\in\gs(p)\\ 0\,,&\text{otherwise}\end{cases}\,. 
 \end{align*}
By~\eqref{Eq:gsisUfree}, the two expressions agree.
\qed\ Claim\end{proof}

By applying the functor ${}_{-}\otimes\overrightarrow{A}$ (cf. Proposition~\ref{P:TensFunctor}) to the result of the Claim above and as $\eps_p^q\otimes\overrightarrow{A}=\ga_p^q$\index{s}{epspq@$\eps_p^q$}\index{s}{otimAS@$\bA\otimes\overrightarrow{S}$, $\gf\otimes\overrightarrow{S}$} (cf. Lemma~\ref{L:twopqotimesS}(ii)), we thus obtain the following equation, for all $p<q$ in~$P$:\index{s}{fXY@$f_X^Y$}\index{s}{piXu@$\gp^X_\bu$}\index{s}{otimAS@$\bA\otimes\overrightarrow{S}$, $\gf\otimes\overrightarrow{S}$}
 \begin{equation}\label{Eq:(Flag)otimesvecA}
 (\gp_{\gs(q)}^X\otimes\overrightarrow{A})\circ(f_{V\gs(p)}^X\otimes\overrightarrow{A})=
 \ga_p^q\circ(\gp_{\gs(p)}^X\otimes\overrightarrow{A})\circ(f_{V\gs(p)}^X\otimes\overrightarrow{A})\,.
 \end{equation}
Furthermore, from $p<q$ it follows that $\gs(p)\in\bX^=$, thus, substituting~$\gs(p)$ to~$\bx$ in the diagram of Figure~\ref{Fig:psibx}, we obtain that the diagram of Figure~\ref{Fig:psigs(p)} is commutative.\index{s}{otimAS@$\bA\otimes\overrightarrow{S}$, $\gf\otimes\overrightarrow{S}$}

\begin{figure}[htb]\index{s}{fXY@$f_X^Y$}
 \[
 \def\labelstyle{\displaystyle}
 \xymatrix{
 S_{\gs(p)}\ar[rrr]^{\gf_{\gs(p)}}\ar[d]_{\gy_{\gs(p)}}&&&
 S\ar[d]^{\chi}\\
 \Phi\bigl(\xF(V\gs(p))\otimes\overrightarrow{A}\bigr)
 \ar[rrr]_{\Phi\bigl(f_{V\gs(p)}^X\otimes\overrightarrow{A}\bigr)}&&&
 \Phi\bigl(\xF(X)\otimes\overrightarrow{A}\bigr)
 }
 \]
\caption{The morphisms $\gy_{\gs(p)}$}
\label{Fig:psigs(p)}
\end{figure}
Now we can proceed:\index{s}{fXY@$f_X^Y$}\index{s}{piXu@$\gp^X_\bu$}\index{s}{otimAS@$\bA\otimes\overrightarrow{S}$, $\gf\otimes\overrightarrow{S}$}
 \begin{align*}
 \Phi(\ga_p^q)\circ\gr_{\gs(p)}\circ\gf_{\gs(p)}&=
 \Phi(\ga_p^q)\circ\Phi\bigl(\gp_{\gs(p)}^X\otimes\overrightarrow{A}\bigr)
 \circ\chi\circ\gf_{\gs(p)}\\
 &\qquad\qquad\qquad\qquad\qquad\qquad
 (\text{by the definition of }\gr_{\gs(p)})\\
 &=\Phi(\ga_p^q)\circ\Phi\bigl(\gp_{\gs(p)}^X\otimes\overrightarrow{A}\bigr)
 \circ\Phi\bigl(f_{V\gs(p)}^X\otimes\overrightarrow{A}\bigr)\circ\gy_{\gs(p)}\\
 &\qquad\qquad\qquad\qquad\qquad\qquad\qquad\qquad
 (\text{cf. Figure~\ref{Fig:psigs(p)}})\\
 &=\Phi\bigl(\gp_{\gs(q)}^X\otimes\overrightarrow{A}\bigr)\circ
 \Phi\bigl(f_{V\gs(p)}^X\otimes\overrightarrow{A}\bigr)\circ\gy_{\gs(p)}
 \quad(\text{use~\eqref{Eq:(Flag)otimesvecA}})\\
 &=\Phi\bigl(\gp_{\gs(q)}^X\otimes\overrightarrow{A}\bigr)\circ\chi\circ\gf_{\gs(p)}
 \qquad\qquad(\text{cf. Figure~\ref{Fig:psigs(p)}})\\
 &=\gr_{\gs(q)}\circ\gf_{\gs(p)}
 \qquad\qquad(\text{by the definition of }\gr_{\gs(q)})\\
 &=\gr_{\gs(q)}\circ\gf_{\gs(q)}\circ\gf_{\gs(p)}^{\gs(q)}\,.
 \end{align*}

\begin{figure}[htb]
 \[
 \def\labelstyle{\displaystyle}
 \xymatrix{
 S_{\gs(p)}\ar[rr]^{\gf_{\gs(p)}^{\gs(q)}}\ar[d]_{\gr_{\gs(p)}\circ\gf_{\gs(p)}}&&
 S_{\gs(q)}\ar[d]^{\gr_{\gs(q)}\circ\gf_{\gs(q)}}\\
 \Phi(A_p)\ar[rr]_{\Phi(\ga_p^q)}&&\Phi(A_q)
  }
 \]
\caption{Getting the desired natural transformation}
\label{Fig:FinSqpq}
\end{figure}
Therefore, the diagram of Figure~\ref{Fig:FinSqpq} commutes, as desired.
\qed\end{proof}

\section{The L\"owenheim-Skolem Condition and the Buttress Lemma}\label{S:Buttress}

The main result of Section~\ref{S:Buttress} is a technical result with no obvious meaning, Lemma~\ref{L:Buttress}\index{i}{Buttress Lemma}. Roughly speaking, it states the following. We are given categories~$\cB$ and~$\cS$, together with a subcategory~$\cS^\Rightarrow$\index{s}{RightarrowCat@$\cS^\Rightarrow$} of~$\cS$ (the ``double arrows''\index{i}{double arrow}), a full subcategory~$\cB^\dagger$ of~$\cB$ (the ``small objects'' of~$\cB$), a functor $\Psi\colon\cB\to\cS$, and object~$B$ of~$\cB$, and a family of double arrows\index{i}{double arrow} $\gr_u\colon\Psi(B)\Rightarrow S_u$\index{s}{AtorightarrowB@$f\colon A\Rightarrow B$}, where~$u$ ranges over a poset~$U$. The latter says that all objects~$S_u$ are ``small''. A \emph{buttress} of the family $\famm{\gr_u}{u\in U}$ (cf. Definition~\ref{D:Buttress})\index{i}{buttress} is an $U$-indexed diagram in~$\cB^\dagger\dnw B$ that witnesses the \emph{collective} smallness of $\famm{S_u}{u\in U}$. The main assumption required in Lemma~\ref{L:Buttress} is a L\"owenheim-Skolem\index{i}{Lowenheim@L\"owenheim-Skolem Theorem} type property, denoted there by~$(\LSb_\gl(B))$\index{s}{LSb@$(\LSb_\gm(B))$}. In this sense, Lemma~\ref{L:Buttress} may be considered a \emph{diagram version} of the L\"owenheim-Skolem property.

All the uses that we have been able to find so far for Lemma~\ref{L:Buttress} have the poset~$U$ \emph{well-founded}\index{i}{poset!well-founded}. However, the future may also bring uses of that lemma in the non well-founded case, hence we record it also for that case. There are other possible variants of Lemma~\ref{L:Buttress}, that we shall not record here; we tried to include the one with the largest application range.

\begin{defn}\label{D:Buttress}
Let~$\cB$ and $\cS$ be categories together with a full subcategory~$\cB^\dagger$ of~$\cB$, a subcategory~$\cS^\Rightarrow$\index{s}{RightarrowCat@$\cS^\Rightarrow$} of~$\cS$, and a functor $\Psi\colon\cB\to\cS$, let~$U$ be a poset, let~$B$ be an object of~$\cB$. A \emph{buttress}\index{i}{buttress|ii} of a family $\famm{\gr_u\colon\Psi(B)\to S_u}{u\in U}$ of morphisms in~$\cS$ is an $U$-indexed diagram
 \[
 \famm{\gb_u\colon B_u\to B\,,\ \gb_u^v\colon\gb_u\to\gb_v}
 {u\leq v\text{ in }U}
 \]
in~$\cB^\dagger\dnw B$ such that $\gr_u\circ\Psi(\gb_u)$ is a morphism of~$\cS^\Rightarrow$\index{s}{RightarrowCat@$\cS^\Rightarrow$} for each $u\in U$. The buttress\index{i}{buttress} is \emph{monic} if all morphisms~$\gb_u$ are monic (thus all morphisms~$\gb_u^v$ are monic).
\end{defn}

Observe that the definition of a buttress\index{i}{buttress} is formulated relatively to the object~$B$ of~$\cB$, the full subcategory~$\cB^\dagger$ of~$\cB$, the subcategory~$\cS^\Rightarrow$\index{s}{RightarrowCat@$\cS^\Rightarrow$} of~$\cS$, and the functor~$\Psi$.

In the following result, we shall refer to $(\LSb_\gl(B))$\index{s}{LSb@$(\LSb_\gm(B))$} as the \emph{L\"owenheim-Skolem Condition with index~$\gl$ at~$B$}\index{i}{Lowenheim@L\"owenheim-Skolem Condition}. It is labeled after the classical L\"owenheim-Skolem Theorem\index{i}{Lowenheim@L\"owenheim-Skolem Theorem} in model theory.\index{i}{Buttress Lemma|ii}

\begin{lem}[The Buttress Lemma]\label{L:Buttress}
Let $\gl$ be an infinite cardinal, let~$\cB$ and~$\cS$ be categories together with a full subcategory~$\cB^\dagger$ of~$\cB$, a subcategory~$\cS^\Rightarrow$\index{s}{RightarrowCat@$\cS^\Rightarrow$} of~$\cS$, and a functor $\Psi\colon\cB\to\cS$, let~$U$ be a lower $\gl$-small\index{i}{poset!lower $\gl$-small} poset, let~$B$ be an object of~$\cB$, and let $\overrightarrow{\gr}=\famm{\gr_u\colon\Psi(B)\Rightarrow S_u}{u\in U}$\index{s}{AtorightarrowB@$f\colon A\Rightarrow B$} be an $U$-indexed family of morphisms in~$\cS^\Rightarrow$\index{s}{RightarrowCat@$\cS^\Rightarrow$}. We make the following assumption:
\begin{description}
\item[$(\LSb_\gl(B))$] \index{s}{LSb@$(\LSb_\gm(B))$|ii}For each $u\in U$, each double arrow\index{i}{double arrow} $\gy\colon\Psi(B)\Rightarrow S_u$\index{s}{AtorightarrowB@$f\colon A\Rightarrow B$}, each $\gl$-small set~$I$, and each family
$\famm{\gc_i\colon C_i\mono B}{i\in I}$\index{s}{AtomonoB@$f\colon A\mono B$} of monic objects in~$\cB^\dagger\dnw B$, there exists a monic object $\gc\colon C\mono B$\index{s}{AtomonoB@$f\colon A\mono B$} in~$\cB^\dagger\dnw B$ such that $\gc_i\utr\gc$ for each~$i\in I$ while $\gy\circ\Psi(\gc)$ is a morphism of~$\cS^\Rightarrow$\index{s}{RightarrowCat@$\cS^\Rightarrow$}.
\end{description}
Furthermore, we assume that either~$U$ is well-founded\index{i}{poset!well-founded} or the following additional assumptions are satisfied:
\begin{description}
\item[$(\CLOS_\gl(\cB^\dagger,\cB))$] \index{s}{ClosB@$(\CLOS_\gl(\cB^\dagger,\cB))$|ii}The full subcategory~$\cB^\dagger$ has all $\gl$-small directed colimits within\index{i}{having all $\gl$-small directed colimits within~$\cC$}~$\cB$ \pup{cf. Definition~\textup{\ref{D:SmallDirColim}}}.

\item[$(\CLOSr_\gl(\cS^\Rightarrow))$] \index{s}{ClosBr@$(\CLOSr_\gl(\cS^\Rightarrow))$|ii}The subcategory $\cS^\Rightarrow$\index{s}{RightarrowCat@$\cS^\Rightarrow$} is right closed under all $\gl$-small directed colimits \pup{cf. Definition~\textup{\ref{D:ClosDirColim}}}.

\item[$(\CONT_\gl(\Psi))$] \index{s}{Contl@$(\CONT_\gl(\Psi))$|ii}The functor~$\Psi$ preserves all $\gl$-small directed colimits.
\end{description}
Then~$\overrightarrow{\gr}$ has a buttress\index{i}{buttress}.
\end{lem}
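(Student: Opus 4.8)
The plan is to construct the buttress by recursion along~$U$: the well-founded case is handled by a single downward pass applying~$(\LSb_\gl(B))$ at each node, and the general case is reduced to an $\go$-indexed iteration of that pass followed by a $\gl$-small directed colimit.

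\emph{Well-founded case.} Assume first that~$U$ is well-founded. By recursion on the well-founded poset~$U$ I would produce, for each $u\in U$, a monic object $\gb_u\colon B_u\mono B$ of~$\cB^\dagger\dnw B$ together with, for each $v<u$, a morphism $\gb_v^u\colon B_v\to B_u$ of~$\cB^\dagger$ satisfying $\gb_u\circ\gb_v^u=\gb_v$. At stage~$u$, with $\gb_v$ already defined for all $v<u$: since~$U$ is lower $\gl$-small, the set $I:=U\ddnw u$ has $\card I<\gl$, so I feed $\gy:=\gr_u$ (a morphism of~$\cS^\Rightarrow$ by hypothesis) together with the family $\famm{\gb_v\colon B_v\mono B}{v\in I}$ of monic objects of~$\cB^\dagger\dnw B$ into~$(\LSb_\gl(B))$, obtaining a monic object $\gb_u\colon B_u\mono B$ of~$\cB^\dagger\dnw B$ with $\gb_v\utr\gb_u$ for all $v<u$ and with $\gr_u\circ\Psi(\gb_u)$ a morphism of~$\cS^\Rightarrow$. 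For each $v<u$ pick a witness $\gb_v^u$ of $\gb_v\utr\gb_u$; since $\gb_v=\gb_u\circ\gb_v^u$ is monic, so is~$\gb_v^u$. Setting $\gb_u^u:=\id_{B_u}$, the computation $\gb_u\circ(\gb_v^u\circ\gb_w^v)=\gb_w=\gb_u\circ\gb_w^u$ combined with the monicity of~$\gb_u$ forces $\gb_v^u\circ\gb_w^v=\gb_w^u$ whenever $w<v<u$, so the family $\famm{\gb_u\colon B_u\mono B\,,\ \gb_u^v}{u\leq v\text{ in }U}$ is a (monic) $U$-indexed diagram in~$\cB^\dagger\dnw B$ (here one uses that~$\cB^\dagger$ is \emph{full}), and it is a buttress of~$\overrightarrow{\gr}$ by construction.

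\emph{General case.} Now drop well-foundedness and assume $(\CLOS_\gl(\cB^\dagger,\cB))$, $(\CLOSr_\gl(\cS^\Rightarrow))$, and $(\CONT_\gl(\Psi))$. If $\gl=\aleph_0$, then lower $\aleph_0$-smallness of~$U$ says every principal ideal of~$U$ is finite, which already forces~$U$ to be well-founded; hence we may assume $\gl\geq\aleph_1$, so that~$\go$ is $\gl$-small. I would build an increasing $\go$-sequence of $U$-indexed diagrams $\overrightarrow{B}^{(n)}$ in~$\cB^\dagger\dnw B$, with monic structure maps $\gb_u^{(n)}\colon B_u^{(n)}\mono B$, together with morphisms of $U$-diagrams $\gq^{(n)}\colon\overrightarrow{B}^{(n)}\to\overrightarrow{B}^{(n+1)}$ lying over the constant diagram at~$B$, so that $\gr_u\circ\Psi(\gb_u^{(n+1)})$ is a morphism of~$\cS^\Rightarrow$ for every $u\in U$ and $n<\go$. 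To pass from stage~$n$ to stage~$n+1$, at each~$u$ I apply~$(\LSb_\gl(B))$ with $\gy:=\gr_u$ and the $\gl$-small family $\famm{\gb_v^{(n)}}{v\in U\dnw u}$ (it is $\gl$-small by lower $\gl$-smallness), and then splice the resulting enlargements, simultaneously over all~$u$, into a single $U$-diagram $\overrightarrow{B}^{(n+1)}$ equipped with a diagram morphism from~$\overrightarrow{B}^{(n)}$. Finally set $\overrightarrow{B}:=\varinjlim_{n<\go}\overrightarrow{B}^{(n)}$, the pointwise (hence $\gl$-small directed) colimit. Then $(\CLOS_\gl(\cB^\dagger,\cB))$ guarantees that each $B_u=\varinjlim_n B_u^{(n)}$ has underlying object in~$\cB^\dagger$, so $\overrightarrow{B}$ is a $U$-diagram in~$\cB^\dagger\dnw B$; and for each~$u$, using $(\CONT_\gl(\Psi))$ to write $\Psi(B_u)=\varinjlim_n\Psi(B_u^{(n)})$ and then applying $(\CLOSr_\gl(\cS^\Rightarrow))$ to the cocone $\famm{\gr_u\circ\Psi(\gb_u^{(n+1)})}{n<\go}$ of double arrows, we get that $\gr_u\circ\Psi(\gb_u)$ is again a morphism of~$\cS^\Rightarrow$. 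Thus $\overrightarrow{B}$ is a buttress of~$\overrightarrow{\gr}$.

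\emph{Expected main obstacle.} In the well-founded case the only point requiring attention is the functoriality of the diagram, i.e. that the chosen transition morphisms compose correctly; this is secured for free by demanding that the structure maps~$\gb_u$ be monic, which makes the transition morphisms unique, hence coherent. The genuine difficulty is concentrated in the ``splicing'' step of the general case: after enlarging each~$B_u^{(n)}$ so as to cure the $\cS^\Rightarrow$-defect at~$u$, one must reassemble all these enlargements into an honest $U$-diagram, compatibly with the constraint that for $v<u$ the enlargement at~$u$ should absorb that at~$v$, whereas~$U$ is no longer well-founded and so admits no linearization compatible with its order. A workable way to organize this is to take~$B_u^{(n+1)}$ to be a $\gl$-small directed colimit, formed inside~$\cB$ by means of $(\CLOS_\gl(\cB^\dagger,\cB))$, indexed by the directed poset of finite subsets of~$U\dnw u$ (which is $\gl$-small, with no regularity hypothesis on~$\gl$), whose value at a finite set~$F$ is a $\cB^\dagger\dnw B$-object provided by~$(\LSb_\gl(B))$ lying above~$B_u^{(n)}$ and above the stage-$(n{+}1)$ data attached to the elements of~$F$; keeping all the $\gb_u^{(n)}$ monic throughout then makes the relevant limiting and transition maps monic and the cocones coherent, both in~$F$ and in~$u$. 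Checking that this assembly is genuinely coherent in~$u$ is the technical heart of the argument.
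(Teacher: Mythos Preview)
Your well-founded case is correct and essentially identical to the paper's.

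For the general case, however, the paper takes a cleaner and quite different route that avoids the splicing difficulty entirely. Rather than iterating $\go$ times, the paper replaces~$U$ in one stroke by the lower-finite poset
\[
\widehat{U}:=\setm{(u,a)\in U\times[U]^{<\go}}{a\subseteq U\dnw u}\,,
\]
ordered componentwise. Since $\widehat{U}$ is lower finite (hence well-founded), the already-proved well-founded case yields a buttress $\famm{\gb_{u,a}\colon B_{u,a}\to B}{(u,a)\in\widehat{U}}$ for the family $\famm{\gr_u}{(u,a)\in\widehat{U}}$. One then collapses back to~$U$ by setting $B_u:=\varinjlim_{a\in[U\dnw u]^{<\go}}B_{u,a}$; the colimit is $\gl$-small because $\card(U\dnw u)<\gl$, so $(\CLOS_\gl(\cB^\dagger,\cB))$ applies, and the transition maps $\gb_u^v$ are obtained from the universal property. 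The double-arrow condition at~$u$ follows from $(\CONT_\gl(\Psi))$ and $(\CLOSr_\gl(\cS^\Rightarrow))$ exactly as you describe.

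Your proposed approach, by contrast, has a genuine gap precisely where you flag it. In your splicing step you want $B_u^{(n+1)}$ to be a colimit over finite $F\subseteq U\dnw u$ of objects lying ``above the stage-$(n{+}1)$ data attached to the elements of~$F$''; but those stage-$(n{+}1)$ objects are the very $B_v^{(n+1)}$ you are in the process of defining, and without well-foundedness there is no order in which this recursion terminates. If instead you meant stage-$n$ data at~$v$, then $B_u^{(n+1)}$ dominates $B_v^{(n)}$ but not $B_v^{(n+1)}$, so you do not get a $U$-diagram at stage~$n{+}1$ and the transition maps $\gb_v^{(n+1),u}$ are missing. The $\widehat{U}$ trick dissolves this circularity: the second coordinate~$a$ is what carries the ``finite approximation'' bookkeeping, and since $(u,a)\leq(v,b)$ requires $a\subseteq b$, the well-founded recursion on~$\widehat{U}$ automatically produces all the needed coherences. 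Your $\go$-iteration layer then becomes superfluous.
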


\begin{proof}
We first assume that~$U$ is well-founded\index{i}{poset!well-founded}. Let~$w\in U$ and suppose having constructed an $(U\dnw w)$-indexed diagram in~$\cB^\dagger\dnw B$,\index{s}{AtomonoB@$f\colon A\mono B$}
 \[
 \famm{\gb_u\colon B_u\mono B\,,\ \gb_u^v\colon\gb_u\mono\gb_v}
 {u\leq v\text{ in }U\ddnw w}\,,
 \]
with all $\gb_u\colon B_u\mono B$\index{s}{AtomonoB@$f\colon A\mono B$} monic. By $(\LSb_\gl(B))$\index{s}{LSb@$(\LSb_\gm(B))$}, there exists a monic object $\gb_w\colon B_w\mono B$ of~$\cB^\dagger\dnw B$ such that $\gb_u\utr\gb_w$ for each $u\in U\ddnw w$ while $\gr_w\circ\Psi(\gb_w)$ is a double arrow\index{i}{double arrow} of~$\cS$. Denote by $\gb_u^w\colon B_u\mono B_w$\index{s}{AtomonoB@$f\colon A\mono B$} the unique morphism such that $\gb_u=\gb_w\circ\gb_u^w$, for each $u\in U\ddnw w$. For all $u\leq v<w$,
 \[
 \gb_w\circ\gb_u^w=\gb_u=\gb_v\circ\gb_u^v=\gb_w\circ\gb_v^w\circ\gb_u^v\,,
 \]
thus, as~$\gb_w$ is monic, $\gb_u^w=\gb_v^w\circ\gb_u^v$. This completes the proof in case~$U$ is well-founded\index{i}{poset!well-founded}.

Now remove the well-foundedness\index{i}{poset!well-founded} assumption on~$U$ but assume the conditions~$(\CLOS_\gl(\cB^\dagger,\cB))$\index{s}{ClosB@$(\CLOS_\gl(\cB^\dagger,\cB))$}, $(\CLOSr_\gl(\cS^\Rightarrow))$\index{s}{ClosBr@$(\CLOSr_\gl(\cS^\Rightarrow))$}, and $(\CONT_\gl(\Psi))$\index{s}{Contl@$(\CONT_\gl(\Psi))$}. We endow the set
 \[
 \widehat{U}:=\setm{(u,a)\in U\times[U]^{<\go}}{a\subseteq U\dnw u}
 \]
with the partial ordering defined by $(u,a)\leq(v,b)$ if{f} $u\leq v$ and $a\subseteq b$. As~$\widehat{U}$ is lower finite\index{i}{poset!lower finite}, it is well-founded\index{i}{poset!well-founded}, thus, by the paragraph above, the family $\famm{\gr_u}{(u,a)\in\widehat{U}}$ has a buttress\index{i}{buttress}, say
 \[
 \famm{\gb_{u,a}\colon B_{u,a}\to B\,,
 \ \gb_{u,a}^{v,b}\colon\gb_{u,a}\to\gb_{v,b}}
 {(u,a)\leq(v,b)\text{ in }\widehat{U}}\,.
 \]
By assumption, for each $u\in U$, there exists a directed colimit cocone in~$\cB$
 \begin{equation}\label{Eq:Colimit(Bu)}
 \famm{B_u,\gd_{u,a}}{a\in[U\dnw u]^{<\go}}=
 \varinjlim\famm{B_{u,a},\gb_{u,a}^{u,b}}{a\subseteq b\text{ in }[U\dnw u]^{<\go}}
 \end{equation}
with $B_u\in\cB^\dagger$. By the universal property of the colimit, there exists a unique morphism $\gb_u\colon B_u\to B$ such that $\gb_u\circ\gd_{u,a}=\gb_{u,a}$ for each $a\in[U\dnw u]^{<\go}$. Moreover, it follows from~$(\CONT_\gl(\Psi))$\index{s}{Contl@$(\CONT_\gl(\Psi))$} and~\eqref{Eq:Colimit(Bu)} that the following statement holds in~$\cS$:
 \begin{equation}\label{Eq:ColimitPsi(Bu)}
 \famm{\Psi(B_u),\Psi(\gd_{u,a})}{a\in[U\dnw u]^{<\go}}=
 \varinjlim\famm{\Psi(B_{u,a}),\Psi(\gb_{u,a}^{u,b})}
 {a\subseteq b\text{ in }[U\dnw u]^{<\go}}\,.
 \end{equation}
For each $a\in[U\dnw u]^{<\go}$, $(\gr_u\circ\Psi(\gb_u))\circ\Psi(\gd_{u,a})=\gr_u\circ\Psi(\gb_{u,a})$ is a double arrow\index{i}{double arrow}, thus, by the assumption~$(\CLOS_\gl(\cB^\dagger,\cB))$\index{s}{ClosB@$(\CLOS_\gl(\cB^\dagger,\cB))$} together with~\eqref{Eq:ColimitPsi(Bu)}, $\gr_u\circ\Psi(\gb_u)$ is a double arrow\index{i}{double arrow} of~$\cS$.

For all $u\leq v$ in~$U$ and all $a\subseteq b$ in~$[U\dnw u]^{<\go}$,
we obtain, by using~\eqref{Eq:Colimit(Bu)}, that
 \[
 (\gd_{v,b}\circ\gb_{u,b}^{v,b})\circ\gb_{u,a}^{u,b}=\gd_{v,b}\circ\gb_{u,a}^{v,b}
 =\gd_{v,b}\circ\gb_{v,a}^{v,b}\circ\gb_{u,a}^{v,a}=\gd_{v,a}\circ\gb_{u,a}^{v,a}\,,
 \] 
thus, by the universal property of the colimit, there exists a unique morphism $\gb_u^v\colon B_u\to B_v$ such that $\gb_u^v\circ\gd_{u,a}=\gd_{v,a}\circ\gb_{u,a}^{v,a}$ for each $a\in[U\dnw u]^{<\go}$. Hence
 \[
 \gb_v\circ\gb_u^v\circ\gd_{u,a}=\gb_v\circ\gd_{v,a}\circ\gb_{u,a}^{v,a}
 =\gb_{v,a}\circ\gb_{u,a}^{v,a}=\gb_{u,a}=\gb_u\circ\gd_{u,a}\,.
 \]
As this holds for each $a\in[U\dnw u]^{<\go}$ and by the universal property of the colimit, it follows that $\gb_v\circ\gb_u^v=\gb_u$.

A similar proof, with~$\gb_v^w$ in place of~$\gb_v$, for $u\leq v\leq w$, yields that $\gb_u^w=\gb_v^w\circ\gb_u^v$. This completes the proof.
\qed\end{proof}

\begin{remk}\label{Rk:Buttress}
Observe from the proof of Lemma~\ref{L:Buttress}\index{i}{Buttress Lemma} that if~$U$ is well-founded\index{i}{poset!well-founded}, then the buttress\index{i}{buttress} can be taken monic. In fact, in most applications of Lemma~\ref{L:Buttress} through this work, $U$ will be~$\bX^=$ for a $\gl$-lifter\index{i}{lifter ($\gl$-)} $(X,\bX)$ of a lower finite\index{i}{poset!lower finite} \ajs\index{i}{almost join-semilattice}~$P$, and in such a situation~$\bX$ may be taken lower finite\index{i}{poset!lower finite} (cf. Lemma~\ref{L:Part2Lift}). Therefore, in most of (but not all) our applications, $U$ can be taken lower finite\index{i}{poset!lower finite}, and in that case, in order to get the conclusion of Lemma~\ref{L:Buttress}\index{i}{Buttress Lemma}, it is sufficient to replace $(\LSb_\gl(B))$\index{s}{LSb@$(\LSb_\gm(B))$} by~$(\LSb_\go(B))$\index{s}{LSb@$(\LSb_\gm(B))$}.

Situations corresponding to the case where~$U$ is not lower finite\index{i}{poset!lower finite} are encountered in Gillibert~\cite{Gill1}\index{c}{Gillibert, P.}. It seems to us quite likely that future extensions of our work may require the emergence of new variants of the Buttress Lemma\index{i}{Buttress Lemma}, while, quite to the contrary, the Armature Lemma\index{i}{Armature Lemma} looks more stable.
\end{remk}

\section{Larders and the Condensate Lifting Lemma}\label{S:CLL}

In this section we shall complete, with the \emph{larders}\index{i}{larder}, the introduction of all the concepts needed to formulate and prove CLL\index{i}{Condensate Lifting Lemma (CLL)} (Lemma~\ref{L:CLL}).

Recall (cf. Section~\ref{Su:FunctSol}) that the basic categorical context of CLL\index{i}{Condensate Lifting Lemma (CLL)} consists of categories~$\cA$, $\cB$, $\cS$, functors~$\Phi$ and~$\Psi$, and a few add-ons. Definition~\ref{D:Larder} states what these add-ons should be and what they should be expected to satisfy.

\begin{defn}\label{D:Larder}
Let $\gl$ and~$\gm$ be infinite cardinals. We say that an octuple
$\Lambda=(\cA,\cB,\cS,\cA^\dagger,\cB^\dagger,\cS^\Rightarrow,\Phi,\Psi)$ is a \emph{$(\gl,\gm)$-larder\index{i}{larder|ii} at an object~$B$} if $\cA$, $\cB$, $\cS$ are categories, $B$ is an object of~$\cB$, $\Phi\colon\cA\to\cS$ and $\Psi\colon\cB\to\nobreak\cS$ are functors, $\cA^\dagger$ \pup{resp., $\cB^\dagger$} is a full subcategory of~$\cA$ \pup{resp., $\cB$}, and~$\cS^\Rightarrow$\index{s}{RightarrowCat@$\cS^\Rightarrow$} is a subcategory of~$\cS$ satisfying the following conditions:
\begin{description}
\item[$(\CLOS(\cA))$] \index{s}{ClosA@$(\CLOS(\cA))$}$\cA$ has all small directed colimits.

\item[$(\PROD(\cA))$] \index{s}{ProdA@$(\PROD(\cA))$}Any two objects in~$\cA$ have a product in~$\cA$.

\item[$(\CONT(\Phi))$] \index{s}{Cont@$(\CONT(\Phi))$}The functor~$\Phi$ preserves all small directed colimits.

\item[$(\PROJ(\Phi,\cS^\Rightarrow))$] \index{s}{Proj@$(\PROJ(\Phi,\cS^\Rightarrow))$|ii}$\Phi(f)$ is a morphism in $\cS^\Rightarrow$\index{s}{RightarrowCat@$\cS^\Rightarrow$}, for each extended projection~$f$ of~$\cA$ (cf. Definition~\ref{D:projection}).

\item[$(\PRES_\gl(\cB^\dagger,\Psi))$] \index{s}{Pres@$(\PRES_\gl(\cB^\dagger,\Psi))$|ii}The object $\Psi(B)$ is weakly $\gl$-presented\index{i}{presented!weakly $\gl$-} in~$\cS$, for each object~$B\in\cB^\dagger$.

\item[$(\LS_\gm(B))$] \index{s}{LS@$(\LS_\gm(B))$|ii}For each $S\in\Phi``(\cA^\dagger)$, each double arrow\index{i}{double arrow} $\gy\colon\Psi(B)\Rightarrow S$\index{s}{AtorightarrowB@$f\colon A\Rightarrow B$}, each $\gm$-small set~$I$, and each family
$\famm{\gc_i\colon C_i\mono B}{i\in I}$\index{s}{AtomonoB@$f\colon A\mono B$} of monic objects in~$\cB^\dagger\dnw B$, there exists a monic object $\gc\colon C\mono B$\index{s}{AtomonoB@$f\colon A\mono B$} in~$\cB^\dagger\dnw B$ such that $\gc_i\utr\gc$ for each~$i\in I$ while $\gy\circ\Psi(\gc)$ is a morphism in~$\cS^\Rightarrow$\index{s}{RightarrowCat@$\cS^\Rightarrow$}.

\end{description}
We say that~$\Lambda$ is a \emph{$(\gl,\gm)$-larder}\index{i}{larder} if it is a $(\gl,\gm)$-larder at every object of~$\cB$.

We say that~$\Lambda$ is \emph{strong}\index{i}{larder!strong|ii} if the following conditions are satisfied:
\begin{description}
\item[$(\CLOS_\gm(\cB^\dagger,\cB))$] \index{s}{ClosB@$(\CLOS_\gl(\cB^\dagger,\cB))$}The full subcategory~$\cB^\dagger$ has all $\gm$-small directed colimits within~$\cB$ \pup{cf. Definition~\textup{\ref{D:SmallDirColim}}}.

\item[$(\CLOSr_\gm(\cS^\Rightarrow))$] \index{s}{ClosBr@$(\CLOSr_\gl(\cS^\Rightarrow))$}The subcategory $\cS^\Rightarrow$\index{s}{RightarrowCat@$\cS^\Rightarrow$} is right closed under all $\gm$-small directed colimits \pup{cf. Definition~\textup{\ref{D:ClosDirColim}}}.

\item[$(\CONT_\gm(\Psi))$] \index{s}{Contl@$(\CONT_\gl(\Psi))$}The functor~$\Psi$ preserves all $\gm$-small directed colimits.
\end{description}

We say that~$\Lambda$ is \emph{projectable}\index{i}{larder!projectable|ii} if every double arrow\index{i}{double arrow} $\varphi\colon\Psi(C)\Rightarrow S$\index{s}{AtorightarrowB@$f\colon A\Rightarrow B$}, for objects~$C\in\cB$ and~$S\in\cS$, has a projectability witness\index{i}{projectability witness} (cf. Definition~\ref{D:ProjFunct}).

We shall usually say $\gl$-larder instead of $(\gl,\cf(\gl))$-larder\index{i}{larder}.
\end{defn}

The conditions $(\CLOS_\gm(\cB^\dagger,\cB))$, $(\CLOSr_\gm(\cS^\Rightarrow))$, and $(\CONT_\gm(\Psi))$ were formulated within the statement of the Buttress Lemma (Lemma~\ref{L:Buttress}). The condition $(\LS_\gm(B))$ is a modification of the condition $(\LSb_\gl(B))$ formulated within the statement of the Buttress Lemma.

Now at long last we have reached the statement of CLL\index{i}{Condensate Lifting Lemma (CLL)}. We refer to Definition~\ref{D:NormCov} for norm-coverings\index{i}{norm-covering}, Definition~\ref{D:Lifter} for $\gl$-lifters\index{i}{lifter ($\gl$-)}, and Definition~\ref{D:Larder} for $\gl$-larders\index{i}{larder}.\index{i}{Condensate Lifting Lemma (CLL)|ii}

\begin{lem}[Condensate Lifting Lemma]\label{L:CLL}
Let $\gl$ and~$\gm$ be infinite cardinals, let $\Lambda:=(\cA,\cB,\cS,\cA^\dagger,\cB^\dagger,\cS^\Rightarrow,\Phi,\Psi)$ be a $(\gl,\gm)$-larder\index{i}{larder} at an object~$B$ of~$\cB$, and let~$P$ be a poset with a $\gl$-lifter~$(X,\bX)$\index{i}{lifter ($\gl$-)}. Suppose that we are also given the following additional data:
\begin{itemize}
\item a $P$-indexed diagram $\overrightarrow{A}=\famm{A_p,\ga_p^q}{p\leq q\text{ in }P}$ in~$\cA$ such that $A_p$ belongs to~$\cA^\dagger$ for each non-maximal $p\in P$;

\item a double arrow\index{i}{double arrow} $\chi\colon\Psi(B)\Rightarrow\Phi\bigl(\xF(X)\otimes\overrightarrow{A}\bigr)$\index{s}{AtorightarrowB@$f\colon A\Rightarrow B$}\index{s}{FxX@$\xF(X)$}\index{s}{otimAS@$\bA\otimes\overrightarrow{S}$, $\gf\otimes\overrightarrow{S}$} in~$\cS$.
\end{itemize}
Then in each of the following cases,
\begin{description}
\item[\tui] $\gm=\aleph_0$\index{s}{aleph0@$\aleph_{\ga}$} and $\bX^=$ is lower finite\index{i}{poset!lower finite},

\item[\tuii] $\gm=\cf(\gl)$ and $\bX^=$ is well-founded\index{i}{poset!well-founded},

\item[\tuiii] $\gm=\cf(\gl)$ and $\Lambda$ is strong\index{i}{larder!strong},
\end{description}
there are an object~$\overrightarrow{B}$ of~$\cB^P$ and a double arrow\index{i}{double arrow} $\overrightarrow{\chi}\colon\Psi\overrightarrow{B}\Rightarrow\Phi\overrightarrow{A}$\index{s}{AtorightarrowB@$f\colon A\Rightarrow B$} in~$\cS^P$ such that $B_p\in\cB^\dagger$ for each non-maximal~$p\in P$ while $B_p=B$ for each maximal $p\in P$. Furthermore, if~$\Lambda$ is projectable\index{i}{larder!projectable}, then there are an object~$\overrightarrow{B}'$ of~$\cB^P$ and a natural equivalence $\overrightarrow{\chi}'\colon\Psi\overrightarrow{B}'\todot\Phi\overrightarrow{A}$\index{s}{AtorightarrowdotB@$f\colon\xA\todot\xB$}, while there exists a natural transformation from~$\overrightarrow{B}$ to~$\overrightarrow{B}'$ all whose components are epimorphisms.
\end{lem}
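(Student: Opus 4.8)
The plan is to derive CLL by combining its two precursors with Lemma~\ref{L:LiftProj}: the Buttress Lemma (Lemma~\ref{L:Buttress}) will manufacture, from the functor~$\Psi$, an $\bX^=$-indexed diagram over~$B$ whose composites with suitable double arrows stay inside~$\cS^\Rightarrow$, and the Armature Lemma (Lemma~\ref{L:Armature}) will then convert, via the condensate $\xF(X)\otimes\overrightarrow{A}$, the double arrow~$\chi$ together with this diagram into a natural transformation over~$P$; finally Lemma~\ref{L:LiftProj} handles the projectable case. To begin, put $S:=\Psi(B)$. Every $\bx\in\bX$ is a sharp ideal of~$X$ (it belongs to $\Ids X$), so by Lemma~\ref{L:pixnormal} the morphism $\gp^X_\bx\colon\xF(X)\to\two[\partial\bx]$ is normal, hence by Proposition~\ref{P:Norm2Proj} the morphism $\gp^X_\bx\otimes\overrightarrow{A}$ is an extended projection of~$\cA$, and so by $(\PROJ(\Phi,\cS^\Rightarrow))$ the arrow $\Phi(\gp^X_\bx\otimes\overrightarrow{A})$ is a double arrow; using Lemma~\ref{L:twopqotimesS}(i), its target is $\Phi(A_{\partial\bx})$. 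Composing with~$\chi$ I obtain a family of double arrows $\gr_\bx:=\Phi(\gp^X_\bx\otimes\overrightarrow{A})\circ\chi\colon\Psi(B)\Rightarrow\Phi(A_{\partial\bx})$, $\bx\in\bX$, which is exactly the family of morphisms denoted by the same symbol in the Armature Lemma. Observe that $A_{\partial\bx}\in\cA^\dagger$ for each $\bx\in\bX^=$, since then $\partial\bx$ is non-maximal in~$P$.

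Next I apply the Buttress Lemma with the data $\cB,\cS,\cB^\dagger,\cS^\Rightarrow,\Psi,B$, with $U:=\bX^=$, and with the family $\famm{\gr_\bx\colon\Psi(B)\Rightarrow\Phi(A_{\partial\bx})}{\bx\in\bX^=}$. The poset $\bX^=$ is lower $\cf(\gl)$-small by Definition~\ref{D:Lifter}(i), hence lower $\gm$-small in all three cases, and $(\LSb_\gm(B))$ is a consequence of $(\LS_\gm(B))$ because each $\Phi(A_{\partial\bx})$ lies in $\Phi``(\cA^\dagger)$; moreover either $\bX^=$ is well-founded (in case~(i), since lower finite posets are well-founded, and in case~(ii)) or $\Lambda$ is strong (case~(iii)), so in every case the hypotheses of Lemma~\ref{L:Buttress} are met. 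It delivers a buttress $\famm{\gb_\bx\colon B_\bx\to B,\ \gb_\bx^\by}{\bx\subseteq\by\text{ in }\bX^=}$ in $\cB^\dagger\dnw B$ such that $\gr_\bx\circ\Psi(\gb_\bx)\in\cS^\Rightarrow$ for each $\bx\in\bX^=$. I extend this to a functor $\bX\to\cB\dnw B$ by declaring $B_\bx:=B$, $\gb_\bx:=\id_B$, and the transition morphisms into such~$\bx$ equal to the obvious ones, whenever $\bx\in\bX\setminus\bX^=$; this is well defined because $\bX\dnw\bx=\bX^=\dnw\bx$ for every $\bx\in\bX^=$ (an ideal below one of non-maximal norm cannot have maximal norm), so the part over $\bX^=$ is untouched. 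Applying~$\Psi$ yields an $\bX$-indexed diagram in $\cS\dnw S$ in which, by $(\PRES_\gl(\cB^\dagger,\Psi))$, the object $\Psi(B_\bx)$ is weakly $\gl$-presented for each $\bx\in\bX^=$.

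The Armature Lemma now applies---its hypotheses $(\CLOS(\cA))$, $(\PROD(\cA))$, $(\CONT(\Phi))$ are part of the larder---and produces an isotone section $\gs\colon P\into\bX$ of the lifter such that $\famm{\gr_{\gs(p)}\circ\Psi(\gb_{\gs(p)})}{p\in P}$ is a natural transformation from $\famm{\Psi(B_{\gs(p)}),\Psi(\gb_{\gs(p)}^{\gs(q)})}{p\leq q}$ to $\Phi\overrightarrow{A}$. Setting $B_p:=B_{\gs(p)}$, $\gb_p^q:=\gb_{\gs(p)}^{\gs(q)}$ defines an object $\overrightarrow{B}$ of $\cB^P$, and $\overrightarrow{\chi}:=\famm{\gr_{\gs(p)}\circ\Psi(\gb_{\gs(p)})}{p\in P}$ is a morphism $\Psi\overrightarrow{B}\to\Phi\overrightarrow{A}$ in $\cS^P$; since $\partial\gs(p)=p$, we have $\gs(p)\in\bX^=$ for non-maximal~$p$ (whence $B_p\in\cB^\dagger$) and $\gs(p)\in\bX\setminus\bX^=$ for maximal~$p$ (whence $B_p=B$). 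Each component of $\overrightarrow{\chi}$ lies in $\cS^\Rightarrow$: for non-maximal~$p$ this is the buttress property, while for maximal~$p$ it equals $\gr_{\gs(p)}\circ\Psi(\id_B)=\gr_{\gs(p)}$, already known to be a double arrow. This establishes the first assertion. If $\Lambda$ is projectable, then each double arrow $\overrightarrow{\chi}_p\colon\Psi(B_p)\Rightarrow\Phi(A_p)$ has a projectability witness $(a_p,\gh_p)$, so Lemma~\ref{L:LiftProj}, applied with $\cI:=P$, $\xD:=\overrightarrow{B}$, $\xS:=\Phi\overrightarrow{A}$, $\gt:=\overrightarrow{\chi}$, produces the functor $\overrightarrow{B}'$, the natural equivalence $\overrightarrow{\chi}'\colon\Psi\overrightarrow{B}'\todot\Phi\overrightarrow{A}$, and the natural transformation $\overrightarrow{B}\to\overrightarrow{B}'$ whose components $a_p$ are epimorphisms.

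The proof involves no new deep idea beyond its two precursors; the work is in the bookkeeping. The one genuinely delicate point is the interface between the two lemmas: the Buttress Lemma constructs a diagram indexed by $\bX^=$, whereas the Armature Lemma consumes one indexed by the larger poset~$\bX$, so one must extend the buttress to~$\bX$ by identities on the indices of maximal norm and verify that this extension is still a functor and does not perturb the original part---this rests on the identity $\bX\dnw\bx=\bX^=\dnw\bx$ for $\bx\in\bX^=$. The remaining care is in matching, across the three cases (i)--(iii), the cardinal parameter~$\gm$ and the closure/well-foundedness hypotheses required by Lemma~\ref{L:Buttress} against what is furnished by Definition~\ref{D:Lifter} and by the (possibly strong) larder.
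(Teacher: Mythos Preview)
Your proof is correct and follows essentially the same route as the paper's: establish that each $\gr_\bx$ is a double arrow via $(\PROJ(\Phi,\cS^\Rightarrow))$, apply the Buttress Lemma on $\bX^=$, extend the resulting diagram to all of~$\bX$ by identities, feed this into the Armature Lemma, and finish the projectable case with Lemma~\ref{L:LiftProj}. Your justification of the extension step via $\bX\dnw\bx=\bX^=\dnw\bx$ for $\bx\in\bX^=$ is a detail the paper leaves as ``straightforward''; the only imprecision is that in case~(i) the lower $\gm$-smallness of $\bX^=$ comes directly from the hypothesis ``$\bX^=$ is lower finite'' rather than from Definition~\ref{D:Lifter}(i) (lower $\cf(\gl)$-small need not imply lower $\aleph_0$-small), but you clearly know this since you invoke lower finiteness one line later.
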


\begin{proof}
It follows from $(\CLOS(\cA))$\index{s}{ClosA@$(\CLOS(\cA))$} and $(\PROD(\cA))$\index{s}{ProdA@$(\PROD(\cA))$} that the results of Section~\ref{S:AtensS} apply to the category~$\cA$. As, by Lemma~\ref{L:pixnormal}, $\gp_\bx^X$\index{s}{piXu@$\gp^X_\bu$} is a normal\index{i}{morphism!normal} morphism from~$\xF(X)$\index{s}{FxX@$\xF(X)$} to~$\two$\index{s}{two@$\two$}, for each $\bx\in\bX$, it follows from Proposition~\ref{P:Norm2Proj} that $\gp_\bx^X\otimes\overrightarrow{A}$\index{s}{piXu@$\gp^X_\bu$}\index{s}{otimAS@$\bA\otimes\overrightarrow{S}$, $\gf\otimes\overrightarrow{S}$} is an extended projection of~$\cA$. {}From $(\PROJ(\Phi,\cS^\Rightarrow))$\index{s}{Proj@$(\PROJ(\Phi,\cS^\Rightarrow))$} it follows that $\Phi\bigl(\gp_\bx^X\otimes\overrightarrow{A}\bigr)$\index{s}{piXu@$\gp^X_\bu$}\index{s}{otimAS@$\bA\otimes\overrightarrow{S}$, $\gf\otimes\overrightarrow{S}$} is a double arrow\index{i}{double arrow} of~$\cS$, thus so is the morphism\index{s}{AtorightarrowB@$f\colon A\Rightarrow B$}\index{s}{piXu@$\gp^X_\bu$}
 \[
 \gr_\bx:=\Phi\bigl(\gp_\bx^X\otimes\overrightarrow{A}\bigr)\circ\chi\colon
 \Psi(B)\Rightarrow\Phi(A_{\partial\bx})\,.
 \]
Now we observe that each of the assumptions (i)--(iii) allows an application of the Buttress Lemma\index{i}{Buttress Lemma} (Lemma~\ref{L:Buttress}) to the family $\famm{\gr_\bx}{\bx\in\bX^=}$, with $S_\bx:=\Phi(A_{\partial\bx})$. We obtain an $\bX^=$-indexed diagram in $\cB^\dagger\dnw B$,
 \[
 \famm{\gc_\bx\colon C_\bx\to B,\ \gc_{\bx}^{\by}\colon\gc_\bx\to\gc_\by}
 {\bx\subseteq\by\text{ in }\bX^=}\,,
 \]
such that $\gr_\bx\circ\Psi(\gc_\bx)$ is a double arrow\index{i}{double arrow} for each $\bx\in\bX^=$. We extend this diagram to an $\bX$-indexed diagram in~$\cB\dnw B$ (no longer necessarily in~$\cB^\dagger\dnw B$), by setting
 \begin{align}
 C_\bx&:=B\text{ and }\gc_\bx:=\gc_{\bx}^{\by}=\id_B\,,
 \quad\text{for all }\bx\subseteq\by\text{ in }\bX\setminus\bX^=\,,
 \label{Eq:DefnCbxnonX=}\\
 \gc_{\bx}^{\by}&:=\gc_\bx\,,\quad\text{for all }
 (\bx,\by)\in\bX^=\times(\bX\setminus\bX^=)\text{ with }\bx\subseteq\by\notag
 \end{align}
(it is straightforward to verify that the diagram thus extended remains commutative). Observe that $\gr_\bx\circ\Psi(\gc_\bx)=\gr_\bx$ is also a double arrow\index{i}{double arrow}, for each $\bx\in\bX\setminus\bX^=$; thus $\gr_\bx\circ\Psi(\gc_\bx)$ is a double arrow\index{i}{double arrow} for each $\bx\in\bX$.

Now we apply the Armature Lemma\index{i}{Armature Lemma} (Lemma~\ref{L:Armature}) with $S_\bx:=\Psi(C_\bx)$ (which is not identical to the~$S_\bx$ used exclusively in the paragraph above), $\gf_\bx:=\Psi(\gc_\bx)$, and $\gf_{\bx}^{\by}:=\Psi(\gc_{\bx}^{\by})$. If $\bx\in\bX^=$, then $C_\bx\in\cB^\dagger$, thus, by~$(\PRES_\gl(\cB^\dagger,\Psi))$\index{s}{Pres@$(\PRES_\gl(\cB^\dagger,\Psi))$}, $S_\bx$ is weakly $\gl$-presented\index{i}{presented!weakly $\gl$-}. We thus obtain an isotone section~$\gs$ of~$\partial$ such that $\famm{\gr_{\gs(p)}\circ\gf_{\gs(p)}}{p\in P}$ is a natural transformation from $\famm{\Psi(C_{\gs(p)}),\Psi(\gc_{\gs(p)}^{\gs(q)})}{p\leq q\text{ in }P}$ to $\Phi\overrightarrow{A}$. We set $\chi_p:=\gr_{\gs(p)}\circ\gf_{\gs(p)}$, $B_p:=C_{\gs(p)}$, $\gb_p^q:=\gc_{\gs(p)}^{\gs(q)}$. All the~$\chi_p$ are double arrows\index{i}{double arrow} while $\overrightarrow{\chi}:=\famm{\chi_p}{p\in P}$ is a natural transformation from~$\Psi\overrightarrow{B}$ to~$\Phi\overrightarrow{A}$, and~$p$ maximal implies that $B_p=B$ (cf.~\eqref{Eq:DefnCbxnonX=}).

If~$\Lambda$ is projectable\index{i}{larder!projectable}, then the morphism $\chi_p\colon\Psi(B_p)\Rightarrow\Phi(A_p)$\index{s}{AtorightarrowB@$f\colon A\Rightarrow B$} has a projectability witness\index{i}{projectability witness} $(a_p,\gh_p)$, say $a_p\colon B_p\onto B'_p$\index{s}{AtoonB@$f\colon A\onto B$}, for each~$p\in P$. By Lemma~\ref{L:LiftProj}, we can find a system of morphisms ${\gb'}_p^q\colon B'_p\to B'_q$, for $p\leq q$ in~$P$, such that $\overrightarrow{B'}:=\famm{B'_p,{\gb'}_p^q}{p\leq q\text{ in }P}$ is a $P$-indexed diagram and $\Psi\overrightarrow{B'}\cong\Phi\overrightarrow{A}$.
\qed\end{proof}

\begin{remk}\label{Rk:CLL}
It may very well be the case that the part of CLL dealing with \emph{strong larders}\index{i}{larder!strong} is vacuous. This would mean that whenever a poset~$P$ has a $\gl$-lifter, then it has a $\gl$-lifter $(X,\bX)$ with~$\bX^=$ well-founded; this would imply, in particular, that~$P\setminus\Max P$ is well-founded\index{i}{poset!well-founded}. We do not know whether this holds, see Problems~\ref{Pb:gooplift} and~\ref{Pb:GenLifter} in Chapter~\ref{Ch:Discussion}.
\end{remk}

\section{Infinite combinatorics and lambda-lifters}\label{S:Pos2Lift}

For a given poset~$P$ and an infinite cardinal~$\gl$, the complexity of the definition of a $\gl$-lifter\index{i}{lifter ($\gl$-)} makes it quite unpractical to verify whether such an object exists. The main goal of the present section is to relate the existence of a lifter\index{i}{lifter ($\gl$-)} to the definition of an \ajs\index{i}{almost join-semilattice}\ (Definition~\ref{D:PJS}) on the one hand, and to known infinite combinatorial issues on the other hand. One of the main results from this section (Corollary~\ref{C:CharLift}) relates the existence of a $\gl$-lifter\index{i}{lifter ($\gl$-)} of~$P$ and an infinite combinatorial statement that we introduced in Gillibert and Wehrung~\cite{GiWe1}\index{c}{Gillibert, P.}\index{c}{Wehrung, F.}, denoted $(\gk,{<}\gl)\leadsto P$\index{s}{arr0x@$(\gk,{<}\gl)\leadsto P$}, that we shall recall here. Further results about the $(\gk,{<}\gl)\leadsto P$ statement, in particular for specific choices of the poset~$P$, can be found in~\cite{GiWe1}. A survey paper on the interaction between those matters and the present work can also be found in Wehrung~\cite{CombSurv}.

\begin{defn}\label{D:InfCombP}
For cardinals $\gk$, $\gl$ and a poset~$P$, let $(\gk,{<}\gl)\leadsto P$\index{s}{arr0x@$(\gk,{<}\gl)\leadsto P$|ii} hold if for every mapping $F\colon\Pow(\gk)\to[\gk]^{<\gl}$, there exists a one-to-one map $f\colon P\mono\nobreak\gk$\index{s}{AtomonoB@$f\colon A\mono B$} such that
 \begin{equation}\label{Eq:FreenuJPa}
 F(f``(P\dnw x))\cap f``(P\dnw y)\subseteq f``(P\dnw x)\,,
 \qquad\text{for all }x\leq y\text{ in }P\,.
 \end{equation}
\end{defn}

In many cases, $P$ is lower finite\index{i}{poset!lower finite}, and then it is of course sufficient to verify the conclusion above for $F\colon[\gk]^{<\go}\to[\gk]^{<\gl}$ \emph{isotone}.

\begin{defn}\label{D:KurInd}
The \emph{Kuratowski index}\index{i}{Kuratowski!index|ii} of a finite poset~$P$ is defined as~$0$ if~$P$ is an antichain, and the least positive integer~$n$ such that the relation\linebreak $(\gk^{+(n-1)},{<}\gk)\leadsto P$\index{s}{arr0x@$(\gk,{<}\gl)\leadsto P$} holds for each infinite cardinal~$\gk$, otherwise. We denote this integer by~$\kur(P)$\index{s}{kurP@$\kur(P)$|ii}.
\end{defn}

We prove in~\cite[Proposition~4.7]{GiWe1}\index{c}{Gillibert, P.}\index{c}{Wehrung, F.} that the order-dimension $\dim(P)$\index{s}{dimP@$\dim(P)$}\index{i}{order-dimension} of a finite poset~$P$ lies above the Kuratowski index\index{i}{Kuratowski!index} of~$P$; that is, $\kur(P)\leq\dim(P)$\index{s}{kurP@$\kur(P)$}.

\begin{defn}\label{D:KurInd0}
The \emph{restricted Kuratowski index}\index{i}{Kuratowski!restricted ${}_{-}$ index|ii} of a finite poset~$P$ is defined as zero if~$P$ is an antichain, and the least positive integer~$n$ such that the relation $(\aleph_{n-1},{<}\aleph_0)\leadsto P$\index{s}{aleph0@$\aleph_{\ga}$}\index{s}{arr0x@$(\gk,{<}\gl)\leadsto P$} holds, otherwise. We denote this integer by~$\kur_0(P)$\index{s}{kurP0@$\kur_0(P)$|ii}.
\end{defn}

In particular, by definition, $\kur_0(P)\leq\kur(P)$\index{s}{kurP@$\kur(P)$}\index{s}{kurP0@$\kur_0(P)$}. 

\begin{exple}\label{Ex:kur0<kur}
The following example shows that the inequality above may be strict, that is, $\kur_0(P)<\kur(P)$\index{s}{kurP@$\kur(P)$}\index{s}{kurP0@$\kur_0(P)$} may hold, for a certain finite lattice~$P$---at least in some generic extension of the universe.

Start with a universe~$\mathbf{V}$ of~$\mathsf{ZFC}$ satisfying the Generalized Continuum Hypothesis~$\mathsf{GCH}$. There are a finite lattice~$P$ and a generic extension of the universe in which $\kur_0(P)<\kur(P)$\index{s}{kurP@$\kur(P)$}\index{s}{kurP0@$\kur_0(P)$}. In order to see this, we set $t_0:=5$, $t_1:=7$, and for each positive integer~$n$, $t_{n+1}$ is the least positive integer such that $t_{n+1}\rightarrow(t_n,7)^5$\index{s}{arr0freepart2@$x\rightarrow(y,z)^5$|ii} (the latter notation meaning that for each mapping $f\colon[t_{n+1}]^5\to\set{0,1}$, either there exists a $t_n$-element subset~$X$ of~$t_{n+1}$ such that $f``[X]^5=\set{0}$ or there exists a $7$-element subset~$X$ of~$t_{n+1}$ such that $f``[X]^5=\set{1}$). The existence of the sequence $\famm{t_n}{n<\go}$ is ensured by Ramsey's Theorem\index{i}{Ramsey's Theorem}.

Now we order the set\index{s}{Bmr@$\rB_m({\les}r)$|ii}
 \[
 \rB_m({\les}r):=\setm{X\subseteq m}{\text{either }\card X\leq r\text{ or }X=m}
 \]
by containment, for all integers $m$, $r$ such that $1\leq r<m$.

The cardinal $\gt:=\aleph_{\go+1}$\index{s}{aleph0@$\aleph_{\ga}$} is regular, thus, by Komj\'ath and Shelah~\cite[Theorem~1]{KoSh00}\index{c}{Komj\'ath, P.}\index{c}{Shelah, S.}, there exists a generic extension~$\mathbf{V}[G]$ of~$\mathbf{V}$, with the same cardinals as~$\mathbf{V}$, in which~$\mathsf{GCH}$ holds below~$\gt$, $2^{\gt}=\gt^{+4}$, and $(\gt^{+4},4,\gt)\not\rightarrow t_4$\index{s}{arr0free@$(\gk,{<}\go,\gl)\rightarrow\gr$}. Hence, it follows from \cite[Proposition~4.11]{GiWe1}\index{c}{Gillibert, P.}\index{c}{Wehrung, F.} that $\kur\rB_{t_4}({\les}4)\geq 6$ in~$\mathbf{V}[G]$.

On the other hand, as~$\mathsf{GCH}$ holds below~$\gt$ in~$\mathbf{V}[G]$, it follows from \cite[Theorem~45.5]{EHMR}\index{c}{Erd\H{o}s, P.}\index{c}{Hajnal, A.}\index{c}{Mate@M\'at\'e, A.}\index{c}{Rado, R.} that the relation $(\aleph_n,n,\aleph_0)\rightarrow\aleph_1$\index{s}{arr0free@$(\gk,{<}\go,\gl)\rightarrow\gr$}\index{s}{aleph0@$\aleph_{\ga}$} holds in~$\mathbf{V}[G]$, for each positive integer~$n$. In particular, $(\aleph_4,4,\aleph_0)\rightarrow t_4$\index{s}{aleph0@$\aleph_{\ga}$}\index{s}{arr0free@$(\gk,{<}\go,\gl)\rightarrow\gr$}, and thus, by \cite[Proposition~4.11]{GiWe1}\index{c}{Gillibert, P.}\index{c}{Wehrung, F.}, $(\aleph_4,{<}\aleph_0)\leadsto\rB_{t_4}({\les}4)$\index{s}{arr0x@$(\gk,{<}\gl)\leadsto P$}\index{s}{aleph0@$\aleph_{\ga}$}. Therefore, $\kur_0\rB_{t_4}({\les}4)\leq 5$\index{s}{kurP0@$\kur_0(P)$}. In fact, as $\rB_{t_4}({\les}4)$ has breadth~$5$, it follows from \cite[Proposition~4.8]{GiWe1}\index{c}{Gillibert, P.}\index{c}{Wehrung, F.} that $\kur_0\rB_{t_4}({\les}4)=5$\index{s}{kurP0@$\kur_0(P)$}. In particular, $\kur_0\rB_{t_4}({\les}4)<\kur\rB_{t_4}({\les}4)$\index{s}{kurP0@$\kur_0(P)$} in~$\mathbf{V}[G]$.
\end{exple}

\begin{lem}\label{L:Part2Lift}
Let~$P$ be a lower finite\index{i}{poset!lower finite} \ajs\index{i}{almost join-semilattice}\ with zero, let~$\gl$ and~$\gk$ be infinite cardinals such that every element of~$P$ has less than~$\cf(\gl)$ upper covers and $(\gk,{<}\gl)\leadsto P$\index{s}{arr0x@$(\gk,{<}\gl)\leadsto P$}. Then there exists a norm-covering\index{i}{norm-covering}~$X$ of~$P$ such that
\begin{description}
\item[\tui] $X$ is a lower finite\index{i}{poset!lower finite} \ajs\index{i}{almost join-semilattice}\ with zero;
\item[\tuii] $\card X=\gk$;
\item[\tuiii] $X$, together with the collection of all its principal ideals\index{i}{ideal!of a poset}\index{i}{ideal!principal ${}_{-}$, of a poset}, is a $\gl$-lifter\index{i}{lifter ($\gl$-)} of~$P$.
\end{description}
\end{lem}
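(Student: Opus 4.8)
The plan is to build an explicit norm-covering $X$ of $P$ whose points carry $\gk$-valued coordinates, and then to read the lifter property off the combinatorial relation $(\gk,{<}\gl)\leadsto P$. First I would extract the consequences of the hypotheses used throughout: feeding the constant map $F\equiv\es$ into $(\gk,{<}\gl)\leadsto P$ produces a one-to-one map $P\mono\gk$, so $\card P\les\gk$; and since $P$ is a lower finite \ajs\ with zero, every principal ideal $P\dnw p$ is a finite lattice and $\J(P)=\bigcup_{p\in P}\J(P\dnw p)$. Then I would define $X$ as a set of pairs $(p,s)$, where $p\in P$ and $s$ is a $\gk$-labelling of a suitable finite subset of $P\dnw p$ (its join-irreducible part), following the ``hereditary labelling'' scheme of Gillibert~\cite{Gill1}, with $\partial(p,s):=p$ and a partial ordering in which $(p,s)\les(q,t)$ forces $p\les q$ together with compatibility of the labellings. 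The ordering must be arranged so that $X$ is lower finite (each $(p,s)\dnw$ is in bijection with $P\dnw p$, hence finite), $X$ has a least element, $X$ is a \pjs\ (any two points have only \emph{finitely many} minimal upper bounds), each $X\dnw x$ is a \js\ isomorphic to $P\dnw\partial x$ (so $X$ is even an \ajs), and $\card X=\gk$ (each ``level'' $p$ carries at most $\gk^{\card(P\dnw p)}=\gk$ labellings, and there are at most $\gk$ levels). The hypothesis that each element of $P$ has fewer than $\cf(\gl)$ upper covers enters in controlling the branching in this construction.

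Granting such an $X$, the structural claims and the easy half of the lifter property are immediate. Claims~(i) and~(ii) of the lemma are exactly the listed properties of $X$. For Definition~\ref{D:Lifter}, take $\bX$ to be the set of all principal ideals of $X$; the map $x\mapsto X\dnw x$ is an order-isomorphism of $X$ onto $\bX$ with $\partial(X\dnw x)=\partial x$, so $\bX^=$ is order-isomorphic to $\setm{x\in X}{\partial x\text{ not maximal in }P}$, a lower subset of the lower finite poset $X$; hence $\bX^=$ is lower finite, a fortiori lower $\cf(\gl)$-small, which gives condition~(i). Condition~(iii) — for $\gl=\aleph_0$, $X$ is supported — follows from Corollary~\ref{C:TrSupp}, since $X$ is an \ajs.

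The substance is condition~(ii) of Definition~\ref{D:Lifter}. Let $S\colon\bX^=\to[X]^{<\gl}$ be given; via $\bX\cong X$ view it as a map on $\setm{x\in X}{\partial x\text{ non-maximal}}$. Since $P$ is lower finite it suffices to free a map $F\colon[\gk]^{<\go}\to[\gk]^{<\gl}$, and I would define $F(Y)$, for finite $Y\subseteq\gk$, to collect the $\gk$-coordinates occurring in $S(x)$ over the (few, by lower finiteness together with the hypothesis on upper covers) relevant $x\in X$ whose own coordinates are drawn from $Y$, so that $F(Y)$ is again $\gl$-small. Applying $(\gk,{<}\gl)\leadsto P$ to $F$ yields a one-to-one $f\colon P\mono\gk$ with $F(f``(P\dnw x))\cap f``(P\dnw y)\subseteq f``(P\dnw x)$ for all $x\les y$ in $P$. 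Finally I would turn $f$ into an isotone section $\gs\colon P\to\bX$ of $\partial$ by setting $\gs(p):=X\dnw x_p$, where $x_p\in X$ is the point over $p$ whose labelling is the restriction of $f$; isotonicity and $\partial\gs(p)=p$ are built into the definition of $X$, and the displayed freeness inequality for $f$ translates, under the identification of $X$-coordinates with elements of $\gk$, precisely into $S(\gs(a))\cap\gs(b)\subseteq\gs(a)$ for all $a<b$ in $P$.

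The main obstacle is the construction of $X$ itself: one must reconcile ``$\card X=\gk$ with $\gk$-valued labels'' (so that the sections of $\partial$ are as abundant as the injections $P\mono\gk$) with ``any two points of $X$ have only finitely many minimal upper bounds'' (so that $X$ is a \pjs). The naive ``label all of $P\dnw p$, order by extension'' construction fails the latter: whenever a minimal upper bound $r$ of $p,q$ in $P$ carries a join-irreducible lying below neither $p$ nor $q$, that join-irreducible gets a free $\gk$-valued label and produces $\gk$ incomparable minimal upper bounds. Overcoming this forces the labels of such ``new'' join-irreducibles to be \emph{determined} by the labels already present — this is the heart of the Gillibert~\cite{Gill1} device — together with a separate treatment of the degenerate case where $P$ is a single point (there one takes $X$ to be a least element adjoined to an antichain of cardinality $\gk$).
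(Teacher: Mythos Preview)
Your outline is correct and hits all the right beats --- extract $\card P\les\gk$, build a labelled poset over~$P$, translate a set-mapping~$S$ on~$X$ into an $F$ on~$[\gk]^{<\go}$, apply $(\gk,{<}\gl)\leadsto P$, and read off the isotone section~$\gs$ from the resulting~$f$ --- but the specific construction you propose differs from the paper's. You label the join-irreducibles of~$P\dnw p$ and invoke Gillibert's hereditary scheme to \emph{determine} the labels of ``new'' join-irreducibles appearing under a minimal upper bound; this works, but the determination machinery is exactly what the paper manages to avoid. The paper's $X=P\seq{\gk}$ consists of pairs $(a,x)$ where~$x$ is a $\gk$-valued function on an \emph{arbitrary} finite subset $\dom x\subseteq P\dnw a$ subject only to $a\in\Sor(\dom x)$; the payoff is the clean formula $(a_0,x_0)\sor(a_1,x_1)=(a_0\sor a_1)\times\set{x_0\cup x_1}$, so minimal upper bounds require no new labels at all and the \pjs\ property is immediate. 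The cost is that $P\seq{\gk}\dnw(b,y)$ is no longer isomorphic to $P\dnw b$ (your claimed bijection fails here: there are several~$(a,x)$ below a given~$(b,y)$ with the same first coordinate~$a$), so the lower-finiteness and \ajs\ verifications are slightly less trivial than in your version, but still short. The translation step is also handled differently: rather than collecting coordinates over ``the few relevant~$x$'', the paper restricts attention to those $(a,x)$ with $\card(P\dnw a)=\card U$ and $a\prec b$ for some~$b$ --- the upper-cover hypothesis, together with an inductive bound showing that each height-level of~$P$ is $\cf(\gl)$-small, is what keeps this $\gl$-small. Finally, the paper's section is $\gs(a)=(a,f\res_{P\dnw a})$, which is only a valid element because $a\in\Sor(P\dnw a)$ trivially; the freeness verification then reduces to a short argument using $a\prec b$ and the definition of~$F$.
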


\begin{proof}
If $P$ is a singleton then the statement is trivial. Suppose that~$P$ is not a singleton. {}From $(\gk,{<}\gl)\leadsto P$\index{s}{arr0x@$(\gk,{<}\gl)\leadsto P$} it follows that $\gk\geq\gl$ (otherwise, consider the constant mapping on~$\Pow(\gk)$ with value~$\gk$, get a contradiction). Furthermore, from the assumption on~$P$ it follows, using an easy induction proof, that
 \begin{equation}\label{Eq:BdedHtleqn}
 \setm{x\in P}{\card(P\dnw x)\leq n}\text{ is }\cf(\gl)\text{-small, for each }n<\go\,.
 \end{equation}
In particular, as~$P$ is lower finite, $\card P\leq\cf(\gl)$ (and $\card P<\cf(\gl)$ if $\cf(\gl)>\go$).

For any set~$K$, denote by $P\seq{K}$ the set of all pairs $(a,x)$, where:
\begin{description}
\item[\tui] $a\in P$;

\item[\tuii] $x$ is a function from a subset~$X$ of~$P\dnw a$ to~$K$ such that~$a\in\Sor X$. (\emph{Of course, as~$P$ is lower finite\index{i}{poset!lower finite}, $X$ is necessarily finite}.)
\end{description}

We order $P\seq{K}$ componentwise, that is,
 \[
 (a,x)\leq(b,y)\ \Longleftrightarrow\ 
 (a\leq b\text{ and }y\text{ extends }x)\,,
 \quad\text{for all }(a,x),(b,y)\in P\seq{K}\,.
 \]
Furthermore, for each $(b,y)\in P\seq{K}$, the function~$y$ extends only finitely many functions~$x$; as $\Sor(\dom x)$ is finite for each of those~$x$, it follows that $P\seq{K}\dnw(b,y)$ is finite. Hence~$P\seq{K}$ is lower finite\index{i}{poset!lower finite}. Furthermore, $P\seq{K}$ has a smallest element, namely $(0_P,\es)$.

Let $(a_0,x_0),(a_1,x_1)\in P\seq{K}$ both below some element of~$P\seq{K}$. In particular, $x:=x_0\cup x_1$ is a function. Put $X_i:=\dom x_i$, for each $i<2$, and $X:=\dom x$. We claim that
 \begin{equation}\label{Eq:SorJJaixi}
 (a_0,x_0)\sor(a_1,x_1)=(a_0\sor a_1)\times\set{x}\,.
 \end{equation}
Let $a\in a_0\sor a_1$. As $a_i\in\Sor X_i$ for each $i<2$, it follows from Lemma~\ref{L:AssocSor}(ii) that $a\in\Sor X$, and so $(a,x)\in P\seq{K}$. Then it is obvious that $(a_i,x_i)\leq(a,x)$ for each $i<2$, and that $(a,x)$ is minimal such. This establishes the containment from the right hand side into the left hand side in~\eqref{Eq:SorJJaixi}. Conversely, let~$(b,y)$ belong to the left hand side of~\eqref{Eq:SorJJaixi}. As $a_i\leq b$ for each $i<2$, there exists $a\in a_0\sor a_1$ such that $a\leq b$. Moreover, $y$ obviously extends~$x$, so $(a_i,x_i)\leq(a,x)\leq(b,y)$ for each $i<2$, and so, by the minimality of~$(b,y)$, $(b,y)=(a,x)$ belongs to the right hand side of~\eqref{Eq:SorJJaixi}. This completes the proof of~\eqref{Eq:SorJJaixi}.

As~$P\seq{K}$ has a zero, it follows that $P\seq{K}$ is a \pjs\index{i}{pseudo join-semilattice}. Furthermore, it follows from~\eqref{Eq:SorJJaixi} that any pair $\set{(a_0,x_0),(a_1,x_1)}$ of elements of $P\seq{K}$ below some $(b,y)\in P\seq{K}$ has a join in $P\seq{K}\dnw(b,y)$, which is $(a,x)$ where~$a$ is the unique element of~$a_0\sor a_1$ below~$b$ (\emph{we use here the assumption that $P\dnw b$ is a \js}) and $x=x_0\cup x_1$. In particular, \emph{$P\seq{K}$ is an \ajs}\index{i}{almost join-semilattice}.

We put $\partial(a,x):=a$, for all $(a,x)\in P\seq{K}$. As we just verified that $P\seq{K}$ is a \pjs\index{i}{pseudo join-semilattice}, this defines a norm-covering\index{i}{norm-covering} of~$P$.

We shall consider the poset~$P\seq{\gk}$. {}From $\gk\geq\gl$ it follows that $\card P\seq{\gk}=\nobreak\gk$.

We must prove that $P\seq{\gk}$, together with the collection of all principal ideals\index{i}{ideal!of a poset}\index{i}{ideal!principal ${}_{-}$, of a poset} of~$P\seq{\gk}$, is a $\gl$-lifter\index{i}{lifter ($\gl$-)} of~$P$. Let $S\colon P\seq{\gk}\to[P\seq{\gk}]^{<\gl}$ be an isotone mapping. For each $U\in[\gk]^{<\go}$, denote by~$\Phi(U)$ the set of all $(c,z)\in P\seq{\gk}$ such that there are $(a,x)\in P\seq{U}$ and $b\in P$ such that
\begin{description}
\item[(F1)] $a\prec b$ and $c\leq b$;

\item[(F2)] $(c,z)\in S(a,x)$;

\item[(F3)] $\card(P\dnw a)=\card U$.
\end{description}
Furthermore, set $F(U):=\bigcup\famm{\rng z}{(c,z)\in\Phi(U)}$. It follows from~\eqref{Eq:BdedHtleqn} that there are less than~$\cf(\gl)$ elements $a\in P$ such that $\card(P\dnw a)=\card U$, and each of those~$a$ has less than~$\cf(\gl)$ upper covers. As~$P$ is lower finite, it follows that there are less than~$\cf(\gl)$ triples $(a,b,c)\in P^3$ satisfying both~(F1) and~(F3). As each $S(a,x)$ is $\gl$-small, it follows that~$\Phi(U)$ is $\gl$-small,  and thus~$F(U)$ is $\gl$-small.

As $(\gk,{<}\gl)\leadsto P$\index{s}{arr0x@$(\gk,{<}\gl)\leadsto P$}, there exists a one-to-one map $f\colon P\mono\gk$\index{s}{AtomonoB@$f\colon A\mono B$} such that
 \begin{equation}\label{Eq:Fnuanub}
 F(f``(P\dnw a))\cap f``(P\dnw b)
 \subseteq f``(P\dnw a)\,,\quad\text{for all }a<b\text{ in }P\,.
 \end{equation}
As~$P$ is lower finite\index{i}{poset!lower finite}, the element $\gs(a):=(a,f\res_{P\dnw a})$ belongs to~$P\seq{\gk}$, for each $a\in P$. Of course, $\gs$ is isotone.

It remains to prove the containment
 \begin{equation}\label{Eq:Ggsab}
 S(\gs(a))\dnw\gs(b)\subseteq P\seq{\gk}\dnw\gs(a)\,,
 \end{equation}
for all $a<b$ in~$P$. As~$S$ is isotone and any closed interval of~$P$ has a finite maximal chain, it suffices to establish~\eqref{Eq:Ggsab} in case $a\prec b$.

Let $(c,z)$ be an element of the left hand side of~\eqref{Eq:Ggsab}. The relation $(c,z)\in\gs(b)$ implies that $c\leq b$, $f$ extends~$z$, and $\rng z\subseteq f``(P\dnw b)$. As~$f$ is one-to-one, the set $U:=f``(P\dnw a)$ has the same cardinality as~$P\dnw a$, and $(c,z)\in\Phi(U)$; whence $\rng z\subseteq F(U)$. Using~\eqref{Eq:Fnuanub}, it follows that
 \[
 \rng z\subseteq F(f``(P\dnw a))\cap f``(P\dnw b)\subseteq f``(P\dnw a)\,,
 \]
thus, as~$f$ is one-to-one and extends~$z$, $\dom z$ is contained in~$P\dnw a$. As~$P\dnw b$ is a \js\ containing $\set{c}\cup\dom z$ with $c\in\Sor(\dom z)$, $c$ is the join of~$\dom z$ in~$P\dnw b$; whence $c\leq a$. It follows that $(c,z)\leq\gs(a)$, thus completing the proof of~\eqref{Eq:Ggsab}.
\qed\end{proof}

Recall that a poset~$T$ with zero is a \emph{tree} if~$T$ has a smallest element, that we shall denote by~$\bot$, and~$T\dnw a$ is a chain for each $a\in T$. The following result is proved in the first author's paper~\cite[Corollary~4.7]{Gill1}\index{c}{Gillibert, P.}.

\begin{prop}\label{P:ktighttrees}
Let~$\gl$ be an infinite cardinal and let~$T$ be a nontrivial lower $\cf(\gl)$-small\index{i}{poset!lower $\gl$-small} well-founded\index{i}{poset!well-founded} tree such that $\card T\leq\gl$. Then there exists a $\gl$-lifter~$(X,\bX)$\index{i}{lifter ($\gl$-)} of~$T$ such that~$X$ is a lower finite\index{i}{poset!lower finite} \ajs\index{i}{almost join-semilattice}\ with zero and $\card X=\gl$.
\end{prop}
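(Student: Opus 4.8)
The statement to prove is Proposition~\ref{P:ktighttrees}: given an infinite cardinal~$\gl$ and a nontrivial lower $\cf(\gl)$-small well-founded tree~$T$ with $\card T\leq\gl$, there is a $\gl$-lifter~$(X,\bX)$ of~$T$ with~$X$ a lower finite \ajs\ with zero and $\card X=\gl$. Since this is attributed to \cite[Corollary~4.7]{Gill1}, I would expect the proof here to either cite it directly or to reconstruct it by the same method used for Lemma~\ref{L:Part2Lift}, which handles the parallel situation of a lower finite \ajs\ $P$ with the combinatorial hypothesis $(\gk,{<}\gl)\leadsto P$. The plan is to adapt that construction, replacing the external combinatorial input $(\gk,{<}\gl)\leadsto P$ by the direct combinatorial content available for \emph{trees}.

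First I would record the relevant combinatorial fact about trees: for a nontrivial lower $\cf(\gl)$-small well-founded tree~$T$ with $\card T\leq\gl$, the relation $(\gl,{<}\gl)\leadsto T$ holds (this is exactly the tree case of the $(\gk,{<}\gl)\leadsto P$ statement, and is where the well-foundedness and the lower $\cf(\gl)$-smallness of~$T$ are used; in the tree case each closed interval is a chain, so the free-map / set-mapping argument goes through by transfinite recursion along the well-founded order). Granting this, I would then feed $T$, with $\gk:=\gl$, into the construction of Lemma~\ref{L:Part2Lift}. Concretely: form $X:=T\seq{\gl}$, the set of pairs $(a,x)$ where $a\in T$ and $x\colon \dom x\to\gl$ is a finite partial function on $T\dnw a$ with $a\in\Sor(\dom x)$, ordered componentwise, with $\partial(a,x):=a$. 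The verifications that $X$ is a lower finite \ajs\ with zero $(0_T,\es)$, that $\card X=\gl$ (using $\gl\geq\gl$ trivially, and $\card T\leq\gl$), and that the collection of principal ideals of~$X$ forms a $\gl$-lifter of~$T$, are then word-for-word the arguments given in the proof of Lemma~\ref{L:Part2Lift}, except that one must check the hypothesis ``every element of~$T$ has less than $\cf(\gl)$ upper covers'' is no longer needed — in a well-founded lower $\cf(\gl)$-small tree this role is played instead by the fact that $T\dnw a$ is a chain, so that the set of triples $(a,b,c)$ with $a\prec b$, $c\leq b$, $\card(T\dnw a)=\card U$ is controlled by the lower $\cf(\gl)$-smallness of~$T$ together with the chain condition (an element of a tree may have many upper covers, but the combinatorial bookkeeping in $\Phi(U)$ only needs that the relevant initial segments are chains and that there are $<\cf(\gl)$ candidate heights).

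The main obstacle I anticipate is precisely reconciling the hypothesis mismatch between Lemma~\ref{L:Part2Lift} (which assumes each element of~$P$ has $<\cf(\gl)$ upper covers, and $P$ is an \ajs) and Proposition~\ref{P:ktighttrees} (where $T$ is a tree, possibly with wide branching, but every principal ideal is a chain). I would resolve this by revisiting the ``$\Phi(U)$ is $\gl$-small'' estimate in the Lemma~\ref{L:Part2Lift} proof: there the bound on the number of triples $(a,b,c)\in P^3$ satisfying (F1) and (F3) combined the $<\cf(\gl)$-many admissible~$a$'s with the $<\cf(\gl)$-many upper covers of each. For a tree, I would instead bound the number of admissible pairs $(a,b)$ with $a\prec b$ directly: since $T\dnw b$ is a chain and $a$ is its predecessor, such pairs are parametrized by~$b$, and the lower $\cf(\gl)$-smallness of~$T$ (together with $\card(T\dnw a)=\card U$ pinning down a height level, which in a tree is a set of incomparable elements, hence potentially large — so instead I would organize the recursion level-by-level along the well-founded rank and use that each $S$-value is $\gl$-small and there are at most $\gl$-many levels). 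Once this estimate is in place, one invokes $(\gl,{<}\gl)\leadsto T$ to get the one-to-one map $f\colon T\mono\gl$ satisfying the freeness inequality, sets $\gs(a):=(a,f\res_{T\dnw a})$, and checks the containment $S(\gs(a))\dnw\gs(b)\subseteq X\dnw\gs(a)$ for $a\prec b$ exactly as in Lemma~\ref{L:Part2Lift}, using that $T\dnw b$ is a chain (so the ``$c$ is the join of $\dom z$ in $T\dnw b$'' step is immediate since joins in chains are maxima). The remaining clauses of Definition~\ref{D:Lifter} — $\bX^=$ lower $\cf(\gl)$-small and, when $\gl=\aleph_0$, $X$ supported — follow since $X$ is lower finite (hence every $\bX\dnw\bx$ is finite) and every lower finite poset is trivially supported when it is an \ajs\ (Corollary~\ref{C:TrSupp}).
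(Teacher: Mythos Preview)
Your construction of $X=T\seq{\gl}$ is essentially the paper's (which takes $X$ to be the set of functions from finite subchains of $T\setminus\set{\bot}$ to~$\gl$, with $\partial x=\bigvee\dom x$; in a tree these are the same objects up to bookkeeping). But there is a genuine gap in your verification of the lifter property, and it is not the upper-cover issue you flag.

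The fatal point is lower finiteness of~$T$. Your section is $\gs(a)=(a,f\res_{T\dnw a})$, which lands in~$X$ only if $T\dnw a$ is finite. The hypothesis on~$T$ is merely that it is lower $\cf(\gl)$-small; for $\gl>\aleph_0$ this allows infinite chains below an element (e.g.\ $T=\go+1$ with $\gl=\aleph_1$). Worse, with $\bX$ taken to be the principal ideals of~$X$, \emph{no} isotone section $\gs\colon T\to X$ of~$\partial$ can exist when some $T\dnw t$ is infinite: isotonicity forces $\gs(s)\subseteq\gs(t)$ for every $s\leq t$, while $\partial\gs(s)=s$ forces $s\in\dom\gs(s)\subseteq\dom\gs(t)$, so $\dom\gs(t)\supseteq T\dnw t$ would be infinite. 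Thus your choice of $\bX$ cannot work in general, regardless of how you define~$F$ or whether $(\gl,{<}\gl)\leadsto T$ holds.

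This is exactly why the paper (citing \cite[Corollary~4.7]{Gill1}) takes $\bX$ to be the set of \emph{extreme} ideals of~$X$ rather than the principal ones: an extreme ideal with norm~$t$ is, morally, the set of all finite restrictions of a single function $g\colon(T\dnw t)\setminus\set{\bot}\to\gl$, and this makes sense even when $T\dnw t$ is infinite. The section~$\gs$ then takes values in ideals that are not principal, and the freeness verification proceeds by a direct transfinite recursion along the well-founded tree order (your ``level-by-level'' intuition), not by reducing to a pre-packaged statement $(\gl,{<}\gl)\leadsto T$ fed into the Lemma~\ref{L:Part2Lift} machinery. Your attempt to patch the $\Phi(U)$ estimate is addressing the wrong obstruction.
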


The $\gl$-lifter\index{i}{lifter ($\gl$-)} in Proposition~\ref{P:ktighttrees} is constructed as follows. The poset~$X$ is the set of all functions from a finite subchain of~$T\setminus\set{\bot}$ to~$\gl$, endowed with the extension order. Although it is not explicitly stated in~\cite{Gill1}\index{c}{Gillibert, P.} that~$X$ is an \ajs\index{i}{almost join-semilattice}, the verification of that fact is straightforward. The norm on~$X$ is defined by $\partial x:=\bigvee\dom(x)$ for each $x\in X$. The set~$\bX$ consists of the so-called \emph{extreme ideals}\index{i}{ideal!extreme} of~$X$: by definition, a sharp\index{i}{ideal!sharp} ideal~$\bx$ of~$X$ is \emph{extreme}\index{i}{ideal!extreme|ii} if there is no sharp\index{i}{ideal!sharp} ideal~$\by$ of~$X$ such that $\bx<\by$ and $\partial\bx=\partial\by$. 

In the context of Lemma~\ref{L:Part2Lift}, the $\gl$-lifter\index{i}{lifter ($\gl$-)} $(X,\bX)$ is here chosen in such a way that~$\bX$ is the collection of all principal ideals\index{i}{ideal!of a poset}\index{i}{ideal!principal ${}_{-}$, of a poset} of~$X$. In addition, within the proof of Lemma~\ref{L:Part2Lift}, the ideals\index{i}{ideal!of a poset} in the range of~$\gs$ are extreme\index{i}{ideal!extreme} ideals, so we could have restricted~$\bX$ further to the collection of all extreme\index{i}{ideal!extreme} principal ideals\index{i}{ideal!principal ${}_{-}$, of a poset}. It is intriguing that in all known cases, including Proposition~\ref{P:ktighttrees}, every $\gl$-liftable\index{i}{liftable!$\gl$-${}_{-}$ poset} poset with zero admits a $\gl$-lifter\index{i}{lifter ($\gl$-)} of the form $(X,\bX)$ where~$X$ is a lower finite\index{i}{poset!lower finite} \ajs\index{i}{almost join-semilattice}\ with zero and~$\bX$ is the collection of all extreme\index{i}{ideal!extreme} ideals of~$X$.

The following result yields necessary conditions for $\gl$-liftability (cf. Definition~\ref{D:PJS} for \ajs s\index{i}{almost join-semilattice}). It is related to \cite[Lemme~3.3.1]{GillTh}\index{c}{Gillibert, P.}.

\begin{prop}\label{P:NoBowTie}
Let $\gl$ be an infinite cardinal and let~$(X,\bX)$ be a $\gl$-lifter\index{i}{lifter ($\gl$-)} of a poset~$P$. Put $\gk:=\card\bX$. Then the following statements hold:
\begin{description}
\item[\textup{(1)}] $P\setminus\Max P$ is lower $\cf(\gl)$-small\index{i}{poset!lower $\gl$-small}.

\item[\textup{(2)}] $P$ is an \ajs\index{i}{almost join-semilattice}.

\item[\textup{(3)}] $P$ is the disjoint union of finitely many posets with zero.

\item[\textup{(4)}] For every isotone map $f\colon[\bX^=]^{<\cf(\gl)}\to[\bX]^{<\gl}$, there exists an isotone section $\gs\colon P\into\bX$\index{s}{AtoinB@$f\colon A\into B$} of~$\partial$ such that
 \[
 (\forall a<b\text{ in }P)\bigl(f(\gs``(P\dnw a))\cap\gs``(P\dnw b)
 \subseteq\gs``(P\dnw a)\bigr)\,.
 \]

\item[\textup{(5)}] For every isotone map $f\colon[\gk]^{<\cf(\gl)}\to[\gk]^{<\gl}$, there exists a one-to-one map $\gs\colon P\mono\gk$\index{s}{AtomonoB@$f\colon A\mono B$} such that
 \[
 (\forall a<b\text{ in }P)\bigl(f(\gs``(P\dnw a))\cap\gs``(P\dnw b)
 \subseteq\gs``(P\dnw a)\bigr)\,.
 \]
\end{description}
\end{prop}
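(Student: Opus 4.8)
The plan is to establish the five items in the order (1), then (4), from which (5) is a routine transport, and finally (2)--(3); item~(4) is the technical heart. For~(1): apply Definition~\ref{D:Lifter}(ii) to the constant set-mapping $S\colon\bX^=\to[X]^{<\gl}$ with value~$\es$, obtaining an isotone section $\gs\colon P\to\bX$ of~$\partial$, which is an order-embedding because $\partial\gs=\id_P$. If $a\in P\setminus\Max P$, then every $a'\le a$ is again non-maximal, so $\gs(a')\in\bX^=$, and $\gs(a')\subseteq\gs(a)$ by isotonicity; hence $\gs``(P\dnw a)\subseteq\bX^=\dnw\gs(a)$, and $\card(P\dnw a)\le\card(\bX\dnw\gs(a))<\cf(\gl)$ by Definition~\ref{D:Lifter}(i).

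For~(4), the key point is that every $\by\in\bX$ is a \emph{sharp} ideal, so I may fix once and for all an element $y_\by\in\by$ with $\partial y_\by=\partial\by$. Given an isotone $f\colon[\bX^=]^{<\cf(\gl)}\to[\bX]^{<\gl}$, set $S(\bx):=\setm{y_\by}{\by\in f(\bX^=\dnw\bx)}$; this lies in $[X]^{<\gl}$ since $\bX^=\dnw\bx$ is $\cf(\gl)$-small by Definition~\ref{D:Lifter}(i), hence $f(\bX^=\dnw\bx)$ is $\gl$-small. Let $\gs\colon P\into\bX$ be the free isotone section provided by Definition~\ref{D:Lifter}(ii) for this~$S$. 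To verify the freeness required in~(4), fix $a<b$ in~$P$ and $\by\in f(\gs``(P\dnw a))\cap\gs``(P\dnw b)$; here $a$ is non-maximal, so $\gs``(P\dnw a)$ is $\cf(\gl)$-small (by~(1)) and contained in $\bX^=\dnw\gs(a)$, whence $\by\in f(\bX^=\dnw\gs(a))$ by isotonicity of~$f$, so $y_\by\in S(\gs(a))$. Writing $\by=\gs(b')$ with $b'=\partial\by\le b$, we get $y_\by\in\by=\gs(b')\subseteq\gs(b)$, hence $y_\by\in S(\gs(a))\cap\gs(b)\subseteq\gs(a)$ by freeness; since $\gs(a)$ has norm~$a$, this forces $b'=\partial y_\by\le a$, i.e.\ $\by\in\gs``(P\dnw a)$. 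For~(5): fix a bijection $e\colon\bX\to\gk$, transport an isotone $f\colon[\gk]^{<\cf(\gl)}\to[\gk]^{<\gl}$ to the isotone $\bar f\colon[\bX^=]^{<\cf(\gl)}\to[\bX]^{<\gl}$, $\bar f(Y):=e^{-1}``(f(e``Y))$, apply~(4) to~$\bar f$ to obtain $\gs_0\colon P\into\bX$, and put $\gs:=e\circ\gs_0\colon P\mono\gk$; since $f(\gs``(P\dnw a))=e``\bar f(\gs_0``(P\dnw a))$ and~$e$ is injective, the freeness of~$\gs_0$ passes to~$\gs$.

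For~(2) and~(3): as $X$ is a \pjs, $\Min X$ is finite and every element of~$X$ lies above a minimal one. Using the order-embedding~$\gs$ from~(1), for $p,q\in P$ pick (by sharpness) $u\in\gs(p)$, $v\in\gs(q)$ with $\partial u=p$, $\partial v=q$; then $\Sor_X\set{u,v}$ is finite (Lemma~\ref{L:PUpwX}), and a directedness argument inside the sharp ideals $\gs(r)$, carried out for $r\in P\Upw\set{p,q}$ and for $r\in P$, yields $P\Upw\set{p,q}=P\upw\partial``\Sor_X\set{u,v}$ and $P\Upw\es=P\upw\partial``\Min X$, both finitely generated; thus $P$ is a \pjs. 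Setting $W:=\bigcup\setm{\Sor_X\set{n,n'}}{n,n'\in\Min X}$, a finite subset of~$X$, feeding the constant map with value~$W$ into Definition~\ref{D:Lifter}(ii), and repeating the directedness argument shows that two distinct minimal elements of~$P$ cannot have a common upper bound (its image under the free section would contain an element of~$W$ of too small a $\partial$-norm); together with the \pjs\ property this forces $P=\bigsqcup_{m\in\Min P}(P\upw m)$, a disjoint union of finitely many posets with zero, which is~(3). It remains, for~(2), to see that each $P\dnw a$ is a \js; restricting to a component of that decomposition (still a \pjs), I would argue by contradiction, ruling out a pair $p,q\le a$ with two distinct minimal common upper bounds $r_0,r_1\le a$ by a choice of set-mapping~$S$ that forces the free section to place an element of $\Sor_X\set{u,v}$ (for norm-witnesses $u\in\gs(p)$, $v\in\gs(q)$) simultaneously into $\gs(r_0)$ and $\gs(r_1)$, with the incompatible norms~$r_0$ and~$r_1$. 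This last step is the main obstacle: one must design a set-mapping $S\colon\bX^=\to[X]^{<\gl}$ with honestly $\gl$-small values that nevertheless records enough minimal common upper bounds of pairs from~$X$, without the simplifying assumption---automatic for the lifters of Lemma~\ref{L:Part2Lift} and Proposition~\ref{P:ktighttrees}, but not available in general---that the ideals in~$\bX$ are small; the precise construction should follow the pattern of \cite[Lemme~3.3.1]{GillTh}.
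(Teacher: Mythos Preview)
Your arguments for (1), (4), (5), (3), and the \emph{pseudo} join-semilattice half of~(2) are correct and close to the paper's, though the paper economizes by treating (1)--(3) with a single set-mapping and section rather than three. The genuine gap is exactly where you flag it: the ``almost'' part of~(2), that each $P\dnw e$ is a join-semilattice. Your contradiction strategy stalls because the witnesses $u\in\gs(p)$, $v\in\gs(q)$ depend on the section~$\gs$, which is only produced \emph{after}~$S$ is chosen.

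The fix is the very idea you already used in~(4): you do not need $u\sor v$ for all pairs $u,v\in X$, only for pairs drawn from the $\cf(\gl)$-small set $S_0(\bx):=\setm{y_\by}{\by\in\bX\dnw\bx}$. Set
\[
S(\bx):=(\Min X)\cup\bigcup\setm{u\sor v}{u,v\in S_0(\bx)}\in[X]^{<\cf(\gl)}\,,
\]
and let~$\gs$ be the resulting free isotone section. Given $p,q\le e$ in~$P$, put $\da:=y_{\gs(p)}$ and $\db:=y_{\gs(q)}$; as $\gs(e)$ is an ideal of~$X$ containing $\da,\db$, pick $\dx\in(\da\sor\db)\cap\gs(e)$. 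For any~$c$ with $p,q\le c<e$, the containments $\gs(p),\gs(q)\subseteq\gs(c)$ give $\da,\db\in S_0(\gs(c))$, hence $\dx\in\da\sor\db\subseteq S(\gs(c))$; combined with $\dx\in\gs(e)$ and freeness this yields $\dx\in\gs(c)$, so $\partial\dx\le c$. Thus $\partial\dx$ is the join of $\set{p,q}$ in $P\dnw e$. No contradiction argument, and no smallness hypothesis on the ideals of~$\bX$, is needed; the $\Min X$ term handles~(3) with the same section. This is precisely the paper's construction.
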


\begin{proof}
For each $\bx\in\bX$, it follows from the sharpness\index{i}{ideal!sharp} of~$\bx$ that there exists $\vgr(\bx)\in\bx$ such that $\partial\vgr(\bx)=\partial\bx$. Furthermore, if $\partial\bx$ is minimal in~$P$, then for each $x\in\bx$, it follows from the minimality of~$\partial\bx$ and the inequality $\partial x\leq\partial\bx$ that $\partial x=\partial\bx$. However, as~$X$ is a \pjs\index{i}{pseudo join-semilattice}\ and~$\bx$ is an ideal\index{i}{ideal!of a poset} of~$X$, the set $\bx\cap\Min X$ is nonempty. Therefore, \emph{we may assume that~$\vgr(\bx)$ is minimal in~$X$ whenever~$\partial\bx$ is minimal in~$P$}.
Now we set
 \begin{align*}
 S_0(\bx)&:=\setm{\vgr(\by)}{\by\in\bX\dnw\bx}\,,\\
 S(\bx)&:=(\Min X)\cup\bigcup\famm{u\sor v}{u,v\in S_0(\bx)}\,,
 \end{align*}
for each $\bx\in\bX^=$. It follows from the assumptions defining $\gl$-lifters\index{i}{lifter ($\gl$-)} (Definition~\ref{D:Lifter}) that~$S(\bx)$ is a $\cf(\gl)$-small subset of~$X$, for every~$\bx\in\bX^=$. As~$(X,\bX)$ is a $\gl$-lifter\index{i}{lifter ($\gl$-)} of~$P$, there exists an isotone section~$\gs\colon P\into\bX$\index{s}{AtoinB@$f\colon A\into B$} of~$\partial$ such that $S(\gs(a))\cap\gs(b)\subseteq\gs(a)$ for all $a<b$ in~$P$. As~$\gs$ restricts to an order-embedding from~$P\setminus\Max P$ into~$\bX^=$ and the latter is lower $\cf(\gl)$-small\index{i}{poset!lower $\gl$-small}, (1) follows.

(2). Let~$A$ be a finite subset of~$P$, and put $\da:=\vgr\gs(a)$, for each $a\in A$. In particular, $\partial\da=a$. Set $\dA:=\setm{\da}{a\in A}$. As~$X$ is a \pjs\index{i}{pseudo join-semilattice}, $\Sor\dA$ is a finite subset of~$X$. Hence,
$U:=\setm{\partial\du}{\du\in\Sor\dA}$ is a finite subset of~$P$.

We claim that $P\Upw A=P\upw U$. For the containment from the right into the left, we observe that each $\du\in\Sor\dA$ lies above all elements of~$\dA$, thus~$\partial\du$ lies above all elements of~$A$ (use the equation $\partial\da=a$ that holds for each $a\in A$), and thus $U\subseteq P\Upw A$. Conversely, let $x\in P\Upw A$. For each $a\in A$, $\da\in\gs(a)\subseteq\gs(x)$, thus, as~$\gs(x)$ is an ideal\index{i}{ideal!of a poset} of the \pjs\index{i}{pseudo join-semilattice}~$X$ and~$\dA$ is finite, there exists $\du\in\Sor\dA$ such that $\du\in\gs(x)$. Hence, $\partial\du\leq\partial\gs(x)=x$. This completes the proof of our claim. In particular, $P\Upw A$ is a finitely generated\index{i}{finitely generated!upper subset} upper subset of~$P$. This completes the proof of the statement that~$P$ is a \pjs\index{i}{pseudo join-semilattice}.

Now let $a,b,e\in P$ such that $a,b\leq e$, we must prove that $\set{a,b}$ has a join in~$P\dnw e$. Put again $\da:=\vgr\gs(a)$ and $\db:=\vgr\gs(b)$. As~$\gs(e)$ is an ideal\index{i}{ideal!of a poset} of~$X$, $\set{\da,\db}\subseteq\gs(e)$, and~$X$ is a \pjs\index{i}{pseudo join-semilattice}, there exists an element~$\dx$ in $(\da\sor\db)\cap\gs(e)$. {}From $\partial\da=a$, $\partial\db=b$, and $\partial\gs(e)=e$ it follows that $a,b\leq\partial\dx\leq e$.

Let $c\in P\dnw e$ such that $a,b\leq c$. {}From $\gs(a)\in\bX\dnw\gs(c)$ it follows that $\da=\vgr\gs(a)\in S_0(\gs(c))$. Similarly, $\db\in S_0(\gs(c))$. As $\dx$ belongs to~$\da\sor\db$, it belongs to~$S(\gs(c))$. On the other hand, $\dx\in\gs(e)$, so~$\dx$ belongs to $S(\gs(c))\cap\gs(e)$, thus to~$\gs(c)$, and so $\partial\dx\leq c$. We have proved that~$\partial\dx$ is the join of $\set{a,b}$ in~$P\dnw e$.

(3). For all $a\leq b$ in~$P$, $(\Min X)\cap\gs(b)\subseteq S(\gs(a))\cap\gs(b)\subseteq\gs(a)$, thus, as $\gs(a)\subseteq\gs(b)$, we obtain $(\Min X)\cap\gs(a)=(\Min X)\cap\gs(b)$. Now let~$a,b\in P$ with~$a$ minimal in~$P$ and $P\Upw\set{a,b}\neq\es$; pick $c\in P\Upw\set{a,b}$. By the previous remark, we obtain that $(\Min X)\cap\gs(a)=(\Min X)\cap\gs(c)=(\Min X)\cap\gs(b)$. As $\partial\gs(a)=a$ is minimal in~$P$ and by the choice of the map~$\vgr$, the element $\da:=\vgr\gs(a)$ is minimal in~$X$. Hence~$\da$ belongs to
 \[
 (\Min X)\cap\gs(a)=(\Min X)\cap\gs(b)\subseteq\gs(b)\,,
 \]
and thus $a=\partial\da\leq\partial\gs(b)=b$. As, by~(1) above, $\Min P$ is finite and~$P$ is the union of all its subsets~$P\upw a$, for $a\in\Min P$, it follows that this union is disjoint.

(4). As~$f$ is isotone, the assignment $F(\bx):=f(\bX\dnw\bx)$ defines an isotone map $F\colon\bX^=\to[\bX]^{<\gl}$. Now set $S(\bx):=\vgr``F(\bx)$, for each~$\bx\in\bX^=$. Hence $S\colon\bX^=\to\nobreak[X]^{<\gl}$ is isotone. As~$(X,\bX)$ is a $\gl$-lifter\index{i}{lifter ($\gl$-)} of~$P$, its norm has a free isotone section $\gs\colon P\into\bX$\index{s}{AtoinB@$f\colon A\into B$} with respect to~$S$. Let $a<b$ in~$P$, we shall prove that $f(\gs``(P\dnw a))\cap\gs``(P\dnw b)\subseteq\gs``(P\dnw a)$. Let $c\in P\dnw b$ such that $\gs(c)\in f(\gs``(P\dnw a))$, we must prove that $c\leq a$. As $\gs``(P\dnw a)$ is contained in $\bX\dnw\gs(a)$ and~$f$ is isotone, we obtain
 \[
 f(\gs``(P\dnw a))\subseteq f(\bX\dnw\gs(a))=F(\gs(a))\,,
 \]
hence $\gs(c)\in F(\gs(a))$, and hence $\vgr\gs(c)\in S(\gs(a))$. As $\vgr\gs(c)\in\gs(c)\subseteq\gs(b)$, it follows that~$\vgr\gs(c)$ belongs to $S(\gs(a))\cap\gs(b)$, thus to~$\gs(a)$, and so
$c=\partial\gs(c)=\partial\vgr\gs(c)\leq\partial\gs(a)=a$.

In order to obtain~(5) from~(4), it suffices to prove that for every isotone map $f\colon[\bX]^{<\cf(\gl)}\to[\bX]^{<\gl}$, there exists a one-to-one map $\gs\colon P\mono\bX$\index{s}{AtomonoB@$f\colon A\mono B$} such that
 \begin{equation}\label{Eq:sfreewrtf}
 (\forall a<b\text{ in }P)\bigl(f(\gs``(P\dnw a))\cap\gs``(P\dnw b)
 \subseteq\gs``(P\dnw a)\bigr)\,.
 \end{equation}
By applying~(4) to the restriction~$g$ of~$f$ to $[\bX^=]^{<\cf(\gl)}$, we obtain an isotone section~$\gs\colon P\into\bX$\index{s}{AtoinB@$f\colon A\into B$} of~$\partial$ such that
 \[
 (\forall a<b\text{ in }P)\bigl(g(\gs``(P\dnw a))\cap\gs``(P\dnw b)
 \subseteq\gs``(P\dnw a)\bigr)\,.
 \]
However, for $a<b$ in~$P$, $\gs(a)\in\bX^=$, thus $\gs``(P\dnw a)\subseteq\bX^=$, and thus we get $f(\gs``(P\dnw a))=g(\gs``(P\dnw a))$, so~\eqref{Eq:sfreewrtf} is satisfied as well.
\qed\end{proof}

In particular, we obtain the following characterization of $\gl$-liftability for lower finite\index{i}{poset!lower finite} and sufficiently small posets. This characterization is related to \cite[Th\'e\-o\-r\`e\-me~3.3.2]{GillTh}\index{c}{Gillibert, P.}.

\begin{cor}\label{C:CharLift}
Let~$\gl$ and~$\gk$ be infinite cardinals and let~$P$ be a lower finite\index{i}{poset!lower finite} poset in which every element has less than~$\cf(\gl)$ upper covers. Then the following are equivalent:
\begin{description}
\item[\tui] $P$ has a $\gl$-lifter\index{i}{lifter ($\gl$-)} $(X,\bX)$ such that~$\bX$ consists of all principal ideals\index{i}{ideal!of a poset}\index{i}{ideal!principal ${}_{-}$, of a poset} of~$X$ and $\card X=\gk$, while~$X$ is a lower finite\index{i}{poset!lower finite} \ajs\index{i}{almost join-semilattice}.

\item[\tuii] $P$ has a $\gl$-lifter\index{i}{lifter ($\gl$-)} $(X,\bX)$ such that $\card\bX=\gk$.

\item[\tuiii] $P$ is the disjoint union of finitely many \ajs s\index{i}{almost join-semilattice} with zero, and $(\gk,{<}\gl)\leadsto\nobreak P$\index{s}{arr0x@$(\gk,{<}\gl)\leadsto P$}.
\end{description}
\end{cor}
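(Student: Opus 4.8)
The plan is to prove the cycle of implications (i)$\Rightarrow$(ii)$\Rightarrow$(iii)$\Rightarrow$(i). The implication (i)$\Rightarrow$(ii) is immediate: if $\bX$ consists of all principal ideals of $X$, then $x\mapsto X\dnw x$ is a bijection from $X$ onto $\bX$, so $\card\bX=\card X=\gk$.

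For (ii)$\Rightarrow$(iii), I would fix a $\gl$-lifter $(X,\bX)$ of $P$ with $\card\bX=\gk$ and invoke Proposition~\ref{P:NoBowTie}: part~(2) gives that $P$ is an \ajs, and part~(3) gives that $P$ is the disjoint union of finitely many posets with zero. Since for $a$ in a component $P_i$ one has $P_i\dnw a=P\dnw a$ and $P_i\Upw Y=P\Upw Y$ for every finite $Y\subseteq P_i$, each $P_i$ inherits the \ajs\ property, so $P$ is a disjoint union of finitely many \ajs s with zero. To get $(\gk,{<}\gl)\leadsto P$ I would use the remark following Definition~\ref{D:InfCombP}: since $P$ is lower finite, it suffices to produce, for each \emph{isotone} $F\colon[\gk]^{<\go}\to[\gk]^{<\gl}$, a one-to-one map $f\colon P\mono\gk$ satisfying~\eqref{Eq:FreenuJPa} for $x<y$ in $P$ (the case $x=y$ being trivial). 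Extend such an $F$ to an isotone map $\widehat{F}\colon[\gk]^{<\cf(\gl)}\to[\gk]^{<\gl}$ by $\widehat{F}(Z):=\bigcup\famm{F(W)}{W\in[Z]^{<\go}}$; this lands in $[\gk]^{<\gl}$ since it is a union of fewer than $\cf(\gl)$ sets of size less than $\gl$. Apply Proposition~\ref{P:NoBowTie}(5) to $\widehat{F}$; the resulting $f$ works for $F$ too, because $f``(P\dnw x)$ is finite (lower finiteness of $P$), hence $F(f``(P\dnw x))\subseteq\widehat{F}(f``(P\dnw x))$.

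For (iii)$\Rightarrow$(i), write $P=\bigsqcup_{i<n}P_i$ with each $P_i$ an \ajs\ with zero. Each $P_i$ is lower finite, has all its upper covers inside itself, and satisfies $(\gk,{<}\gl)\leadsto P_i$ (restrict any witness for $P$ to $P_i$, using $P_i\dnw x=P\dnw x$). So Lemma~\ref{L:Part2Lift} applies to each $P_i$ and yields a norm-covering $X_i$ of $P_i$, a lower finite \ajs\ with zero of cardinality $\gk$, such that $X_i$ together with the collection of its principal ideals is a $\gl$-lifter of $P_i$. Set $X:=\bigsqcup_{i<n}X_i$ with the componentwise norm and let $\bX$ be the collection of all principal ideals of $X$. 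Then $X$ is a lower finite \ajs\ (a finite disjoint union of \ajs s with zero is an \ajs), $\card X=\gk$, and a principal ideal $X\dnw x$ with $x\in X_i$ coincides with $X_i\dnw x$, so $\bX$ and $\bX^=$ split componentwise as $\bigsqcup_i\bX_i$ and $\bigsqcup_i\bX_i^=$. Checking that $(X,\bX)$ is a $\gl$-lifter of $P$ then reduces to the componentwise statements: conditions~(i) and~(iii) of Definition~\ref{D:Lifter} are immediate (for~(iii) use Corollary~\ref{C:TrSupp}, as $X$ is an \ajs), and for~(ii), given $S\colon\bX^=\to[X]^{<\gl}$ I would restrict to $S_i(\bx):=S(\bx)\cap X_i$ on $\bX_i^=$, apply the lifter property of each $(X_i,\bX_i)$ to obtain isotone sections $\gs_i\colon P_i\into\bX_i$, and glue them into $\gs\colon P\into\bX$; for $a<b$ in $P$ the elements $a,b$ lie in a common component $P_i$, $\gs(b)=\gs_i(b)\subseteq X_i$, whence $S(\gs(a))\cap\gs(b)=S_i(\gs_i(a))\cap\gs_i(b)\subseteq\gs_i(a)=\gs(a)$.

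None of the steps is conceptually deep: the real content lies in the already-established Proposition~\ref{P:NoBowTie} and Lemma~\ref{L:Part2Lift}. The two points that need genuine care are the conversion between Proposition~\ref{P:NoBowTie}(5), stated for isotone maps on $[\gk]^{<\cf(\gl)}$, and the official form of $(\gk,{<}\gl)\leadsto P$, stated for arbitrary maps on $\Pow(\gk)$ — which is exactly where lower finiteness of $P$ enters — and the systematic bookkeeping showing that every relevant notion (\ajs, lower finiteness, upper covers, principal ideals, the set $\bX^=$, the section and freeness conditions) is compatible with the finite disjoint-union decomposition. I expect this bookkeeping, rather than any single idea, to be the main obstacle, though it is entirely routine.
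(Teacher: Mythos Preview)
Your proposal is correct and follows essentially the same route as the paper's own proof: the cycle (i)$\Rightarrow$(ii)$\Rightarrow$(iii)$\Rightarrow$(i), with (ii)$\Rightarrow$(iii) obtained by extending a given $F\colon[\gk]^{<\go}\to[\gk]^{<\gl}$ to an isotone map on $[\gk]^{<\cf(\gl)}$ and applying Proposition~\ref{P:NoBowTie}(5), and (iii)$\Rightarrow$(i) obtained by applying Lemma~\ref{L:Part2Lift} to each component and gluing the resulting lifters. Your explicit mention of Corollary~\ref{C:TrSupp} to handle the supportedness requirement of Definition~\ref{D:Lifter}(iii) when $\gl=\aleph_0$ is a detail the paper leaves implicit, but the overall strategy and all the key ingredients coincide.
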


\begin{proof}
(i)$\Rightarrow$(ii) is trivial.

(ii)$\Rightarrow$(iii). The statement that $P$ is the disjoint union of finitely many \ajs s\index{i}{almost join-semilattice} with zero follows from Proposition~\ref{P:NoBowTie}(2,3).

Now let $F\colon[\gk]^{<\go}\to[\gk]^{<\gl}$. We can associate to~$F$ an isotone mapping\linebreak $G\colon[\gk]^{<\cf(\gl)}\to[\gk]^{<\gl}$ by setting
 \begin{equation}\label{Eq:FfromS}
 G(X):=\bigcup\famm{F(Y)}{Y\in[X]^{<\go}}
 \qquad\text{for each }X\in[\gk]^{<\cf(\gl)}\,.
 \end{equation}
Observe that $F(X)\subseteq G(X)$ for every $X\in[\gk]^{<\go}$.
Now apply Proposition~\ref{P:NoBowTie}(5) to~$G$.

(iii)$\Rightarrow$(i). By assumption, we can write~$P$ as a disjoint union $P=\bigcup\famm{P_i}{i<n}$, where~$n$ is a positive integer and each~$P_i$ is an \ajs\index{i}{almost join-semilattice}\ with zero. For each $i<n$, as $P_i$ is a subset of~$P$ and $(\gk,{<}\gl)\leadsto P$\index{s}{arr0x@$(\gk,{<}\gl)\leadsto P$}, we obtain that $(\gk,{<}\gl)\leadsto P_i$\index{s}{arr0x@$(\gk,{<}\gl)\leadsto P$}, and thus, by Lemma~\ref{L:Part2Lift}, $P_i$ has a $\gl$-lifter\index{i}{lifter ($\gl$-)} $(X_i,\bX_i)$ such that~$\bX_i$ is the set of all principal ideals\index{i}{ideal!of a poset}\index{i}{ideal!principal ${}_{-}$, of a poset} of~$X_i$ and $\card X_i=\gk$, while each~$X_i$ is a lower finite\index{i}{poset!lower finite} \ajs\index{i}{almost join-semilattice}. We may assume that the~$X_i$s are pairwise disjoint. Then form the disjoint union (as posets) $X:=\bigcup\famm{X_i}{i<n}$ and set $\bX:=\bigcup\famm{\bX_i}{i<n}$, which is also the set of all principal ideals\index{i}{ideal!of a poset}\index{i}{ideal!principal ${}_{-}$, of a poset} of~$X$. Furthermore, let~$\partial_X$ extend each~$\partial_{X_i}$. Let $S\colon\bX^=\to[X]^{<\gl}$ and define
 \[
 S_i(\bx):=S(\bx)\cap X_i\,,\quad\text{for all }i<n\text{ and all }\bx\in\bX_i^=\,.
 \]
As $(X_i,\bX_i)$ is a $\gl$-lifter\index{i}{lifter ($\gl$-)} of~$P_i$, it has an isotone section~$\gs_i$ which is free with respect to~$S_i$\index{i}{freemapp@free map (wrt. set-mapping, poset)}. It follows easily that the union of the~$\gs_i$ is an isotone section of the lifter\index{i}{lifter ($\gl$-)} $(X,\bX)$ which is free with respect to~$S$\index{i}{freemapp@free map (wrt. set-mapping, poset)}. Therefore, $(X,\bX)$ is a $\gl$-lifter\index{i}{lifter ($\gl$-)} of~$P$.
\qed\end{proof}

\begin{cor}\label{C:SubsLift}
Let~$P$ and~$Q$ be lower finite\index{i}{poset!lower finite}, disjoint unions of finitely many \ajs s\index{i}{almost join-semilattice} with zero and let~$\gl$ and~$\gk$ be infinite cardinals such that every element of~$P$ has less than $\cf(\gl)$ upper covers. If~$P$ embeds into~$Q$ as a poset and~$Q$ has a $\gl$-lifter\index{i}{lifter ($\gl$-)} $(X,\bX)$ such that $\card\bX=\gk$, then so does~$P$.
\end{cor}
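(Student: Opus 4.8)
The plan is to reduce the statement to the infinite combinatorial characterization of $\gl$-liftability from Corollary~\ref{C:CharLift}. Since~$P$ is, by hypothesis, a lower finite disjoint union of finitely many \ajs s with zero in which every element has fewer than~$\cf(\gl)$ upper covers, the implication (iii)$\Rightarrow$(ii) of that corollary reduces the desired conclusion (a $\gl$-lifter $(X,\bX)$ of~$P$ with $\card\bX=\gk$) to the single statement $(\gk,{<}\gl)\leadsto P$. Moreover, as~$P$ is lower finite, it suffices (cf. the remark following Definition~\ref{D:InfCombP}) to verify the defining property of $(\gk,{<}\gl)\leadsto P$ for an arbitrary \emph{isotone} map $F\colon[\gk]^{<\go}\to[\gk]^{<\gl}$.

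So I would fix such an~$F$ and, exactly as in the proof of Corollary~\ref{C:CharLift}, pass to the isotone map $G\colon[\gk]^{<\cf(\gl)}\to[\gk]^{<\gl}$ given by $G(Z):=\bigcup\famm{F(Y)}{Y\in[Z]^{<\go}}$, which satisfies $F(Z)\subseteq G(Z)$ for each $Z\in[\gk]^{<\go}$. Because~$Q$ carries a $\gl$-lifter $(X,\bX)$ with $\card\bX=\gk$, Proposition~\ref{P:NoBowTie}(5) applied to~$G$ produces a one-to-one map $\gs\colon Q\mono\gk$ that is free with respect to~$G$, i.e.\ $G(\gs``(Q\dnw a))\cap\gs``(Q\dnw b)\subseteq\gs``(Q\dnw a)$ for all $a<b$ in~$Q$. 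Writing $e\colon P\mono Q$ for the given poset embedding, I would then set $f:=\gs\circ e\colon P\mono\gk$, which is one-to-one, and check that~$f$ is free with respect to~$F$ (the case of two equal elements of~$P$ being trivial).

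The step requiring genuine care — and the main obstacle — is precisely this last verification, because $e``(P\dnw x)$ is in general a \emph{proper} subset of $Q\dnw e(x)$, so one cannot merely restrict the section~$\gs$ along~$e$. The way around it: given $x<y$ in~$P$ and $\gc\in F(f``(P\dnw x))\cap f``(P\dnw y)$, one uses $e``(P\dnw x)\subseteq Q\dnw e(x)$ together with the isotonicity of~$F$, the inclusion $F\subseteq G$ on finite sets (here the lower finiteness of~$Q$ guarantees that $Q\dnw e(x)$ is finite), and the freeness of~$\gs$ applied to $e(x)<e(y)$ in~$Q$ to conclude that $\gc\in\gs``(Q\dnw e(x))$; then, writing $\gc=\gs(q)$ with $q\le e(x)$ in~$Q$ and also $\gc=\gs(e(p'))$ with $p'\le y$ in~$P$, the injectivity of~$\gs$ forces $q=e(p')$, and since~$e$ is order-reflecting this yields $p'\le x$, whence $\gc\in f``(e``(P\dnw x))=f``(P\dnw x)$. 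This establishes $(\gk,{<}\gl)\leadsto P$, and the proof is complete.
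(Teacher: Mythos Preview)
Your proof is correct and follows essentially the same route as the paper's: both reduce to establishing $(\gk,{<}\gl)\leadsto P$ and then invoke Corollary~\ref{C:CharLift}. The only difference is cosmetic---the paper first asserts $(\gk,{<}\gl)\leadsto Q$ (noting that the (ii)$\Rightarrow$(iii) direction of Corollary~\ref{C:CharLift} needs no upper-cover hypothesis on~$Q$) and then passes to~$P$ by the elementary fact that $(\gk,{<}\gl)\leadsto$ is inherited by subposets of lower finite posets (citing~\cite{GiWe1}), whereas you unfold both steps explicitly via Proposition~\ref{P:NoBowTie}(5) and the embedding argument.
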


\begin{proof}
Observe that the proof of (ii)$\Rightarrow$(iii) in Corollary~\ref{C:CharLift} does not use the assumption that $\card P<\cf(\gl)$. It follows that the relation $(\gk,{<}\gl)\leadsto Q$\index{s}{arr0x@$(\gk,{<}\gl)\leadsto P$} holds. As~$P$ embeds into~$Q$, the relation $(\gk,{<}\gl)\leadsto P$\index{s}{arr0x@$(\gk,{<}\gl)\leadsto P$} holds as well. Therefore, by applying again Corollary~\ref{C:CharLift}, the desired conclusion follows.
\qed\end{proof}

Observe also the following characterization of $\gl$-liftable\index{i}{liftable!$\gl$-${}_{-}$ poset} finite posets.

\begin{cor}\label{C:LiftFinPos}
Let $P$ be a finite poset and let $\gl$ be an infinite cardinal. Then~$P$ has a $\gl$-lifter\index{i}{lifter ($\gl$-)} if{f} $P$ is a finite disjoint union of \ajs s\index{i}{almost join-semilattice} with zero. Furthermore, if~$P$ is not an antichain, then the cardinality of such a lifter\index{i}{lifter ($\gl$-)} can be taken equal to $\gl^{+(n-1)}$, where $n:=\kur(P)$\index{s}{kurP@$\kur(P)$}.
\end{cor}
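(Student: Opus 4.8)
\textbf{Proof strategy for Corollary~\ref{C:LiftFinPos}.}
The plan is to deduce both directions from results already in hand. For the forward implication, suppose $P$ has a $\gl$-lifter. By Proposition~\ref{P:NoBowTie}(2,3), $P$ is an \ajs\ and is the disjoint union of finitely many posets with zero; since each summand of that disjoint union, being a principal-ideal-closed piece of an \ajs, is again an \ajs\ with zero, this gives exactly the stated necessary condition. (Here I would remark that a finite poset is automatically lower finite, so the lower finiteness hypotheses of Proposition~\ref{P:NoBowTie} are free, and the upper-cover cardinality condition needed elsewhere is vacuous because every element of a finite poset has finitely many upper covers.)

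For the converse, assume $P=\bigcup\famm{P_i}{i<n}$ is a finite disjoint union of \ajs s with zero. First I would dispose of the antichain case: if~$P$ is an antichain, the assertion that it has a $\gl$-lifter is elementary and can be handled directly (or absorbed into the general argument). Otherwise, the strategy is to invoke the combinatorial input. By \cite[Proposition~4.11]{GiWe1} (or the companion results on the $(\gk,{<}\gl)\leadsto P$ relation recalled in Section~\ref{S:Pos2Lift}), together with $\kur(P)=n$ being the least positive integer with $(\gk^{+(n-1)},{<}\gk)\leadsto P$ for every infinite cardinal~$\gk$ (Definition~\ref{D:KurInd}), we get $(\gl^{+(n-1)},{<}\gl)\leadsto P$. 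Now apply Corollary~\ref{C:CharLift} with $\gk:=\gl^{+(n-1)}$: since $P$ is lower finite, every element has less than $\cf(\gl)$ (indeed finitely many) upper covers, and $P$ is a finite disjoint union of \ajs s with zero satisfying $(\gk,{<}\gl)\leadsto P$, implication (iii)$\Rightarrow$(i) of that corollary yields a $\gl$-lifter $(X,\bX)$ of~$P$ with $\card X=\gl^{+(n-1)}$, hence of cardinality $\gl^{+(n-1)}$ as claimed.

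The only point requiring a little care is the definition of~$\kur(P)$ when $P$ is not an antichain: $\kur(P)$ is a \emph{positive} integer, so $\gl^{+(n-1)}$ with $n=\kur(P)$ is a genuine successor-style cardinal $\geq\gl$, and the relation $(\gl^{+(n-1)},{<}\gl)\leadsto P$ holds by the very definition of the Kuratowski index. One should also note that $\kur(P)$ is well-defined for any finite non-antichain poset that is a disjoint union of \ajs s with zero — this is exactly the class for which lifters exist, and the finiteness of $\kur(P)$ in that case follows from \cite{GiWe1} via the bound $\kur(P)\leq\dim(P)<\infty$.

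\textbf{Main obstacle.} There is essentially no obstacle beyond correctly citing the combinatorial facts from \cite{GiWe1}: the substantive content — that $(\gk,{<}\gl)\leadsto P$ holds for $\gk=\gl^{+(\kur(P)-1)}$, and that $\gl$-liftability of lower finite small posets is equivalent to being a finite disjoint union of \ajs s with zero plus this combinatorial relation — has already been packaged into Lemma~\ref{L:Part2Lift}, Proposition~\ref{P:NoBowTie}, and Corollary~\ref{C:CharLift}. The proof is therefore a short assembly, the one mildly delicate step being the bookkeeping of the antichain exception in the statement versus the Kuratowski index, which is purely definitional.
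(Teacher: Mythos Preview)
Your proposal is correct and follows essentially the same route as the paper: the forward implication comes from Proposition~\ref{P:NoBowTie}(2,3) (the paper cites Corollary~\ref{C:CharLift}, which packages the same), and the converse plugs $\gk:=\gl^{+(\kur(P)-1)}$ into Corollary~\ref{C:CharLift}(iii)$\Rightarrow$(i), using that $(\gk,{<}\gl)\leadsto P$ holds by the very definition of the Kuratowski index. The citation of \cite[Proposition~4.11]{GiWe1} is unnecessary here---the relation is immediate from Definition~\ref{D:KurInd}---but this does no harm.
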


\begin{proof}
If $P$ has a $\gl$-lifter\index{i}{lifter ($\gl$-)}, then it follows from Corollary~\ref{C:CharLift} that $P$ is a disjoint union of \ajs s\index{i}{almost join-semilattice} with zero.

Conversely, suppose that~$P$ is not an antichain. Set $n:=\kur(P)$\index{s}{kurP@$\kur(P)$} and $\gk:=\gl^{+(n-1)}$. The relation $(\gk,{<}\gl)\leadsto P$\index{s}{arr0x@$(\gk,{<}\gl)\leadsto P$} holds by definition. The conclusion follows from Corollary~\ref{C:CharLift}.
\qed\end{proof}

\section{Lifters, retracts, and pseudo-retracts}\label{S:LiftRetr}

It is obvious that if~$P$ is a subposet of a lower finite poset~$Q$, then $(\gk,{<}\gl)\leadsto Q$\index{s}{arr0x@$(\gk,{<}\gl)\leadsto P$} implies that $(\gk,{<}\gl)\leadsto P$\index{s}{arr0x@$(\gk,{<}\gl)\leadsto P$} (cf. \cite[Lemma~3.2]{GiWe1}). We do not know whether a similar statement can be proved for lifters\index{i}{lifter ($\gl$-)}, the infinite case included: that is, whether if~$P$ is an \ajs\index{i}{almost join-semilattice}\ and~$Q$ has a $\gl$-lifter\index{i}{lifter ($\gl$-)}, then so does~$P$. Corollary~\ref{C:SubsLift} shows additional assumptions on~$P$ and~$Q$ under which this holds. The following result shows another type of assumption under which this can be done.

\begin{lem}\label{L:RetrLift}
Let $\gl$ be an infinite cardinal and let~$P$ be a retract of a poset~$Q$. If~$Q$ has a $\gl$-lifter\index{i}{lifter ($\gl$-)} $(Y,\bY)$, then~$P$ has a $\gl$-lifter\index{i}{lifter ($\gl$-)} $(X,\bX)$ such that~$X$ and~$Y$ have the same underlying set and~$\bX^=$ is a subset of~$\bY^=$.
\end{lem}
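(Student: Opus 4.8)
The plan is to transport the lifter from $Q$ to $P$ along a retraction. Let $r\colon Q\to P$ and $e\colon P\to Q$ be isotone maps with $r\circ e=\id_P$; note $e$ is then an order-embedding, so we identify $P$ with its image $e``(P)\subseteq Q$, and $r$ restricts to the identity on that image. Let $(Y,\bY)$ be a $\gl$-lifter of $Q$, with norm $\partial\colon Y\to Q$. The idea is to \emph{re-norm} $Y$ by composing with $r$: define $\partial'\colon Y\to P$ by $\partial':=r\circ\partial$. This is still isotone, and since $Y$ is a \pjs\ (the underlying poset is unchanged), $X:=(Y,\partial')$ is a norm-covering of $P$.

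Next I would define $\bX$. The natural candidate is to keep the same underlying set of ideals: for each $\by\in\bY$, observe that $\setm{\partial'y}{y\in\by}=r``\setm{\partial y}{y\in\by}$; if $\by$ is sharp with $\partial\by$ its largest element, then $r(\partial\by)$ is the largest element of $r``\setm{\partial y}{y\in\by}$ because $r$ is isotone, so $\by$ remains a \emph{sharp} ideal with respect to $\partial'$, with $\partial'\by=r(\partial\by)$. Thus every element of $\bY$ is a sharp ideal of $X$, and one sets $\bX:=\bY$ as a subset of $\Ids X$. For the relation between $\bX^=$ and $\bY^=$: if $\partial'\by$ is not maximal in $P$, I claim $\partial\by$ is not maximal in $Q$ — indeed if $\partial\by$ were maximal in $Q$, then since $\partial'\by=r(\partial\by)$ and $r$ fixes $e``(P)$ pointwise while $\partial'\by=r(\partial\by)$ with $e(\partial'\by)\le$ ... here one must be slightly careful: one checks directly that $r(\partial\by)$ maximal-free in $P$ forces, via $e$ and isotonicity, that $\partial\by$ admits no strict upper bound in $Q$, contradicting non-maximality; so indeed $\bX^=\subseteq\bY^=$ as subsets of the common underlying set. (This containment is what makes conditions (i) and (iii) of the lifter definition for $X$ follow immediately from those for $Y$: $\bX^=\subseteq\bY^=$ is lower $\cf(\gl)$-small because $\bX\dnw\bx\subseteq\bY\dnw\bx$ for $\bx\in\bX^=$, and $X=Y$ is supported when $\gl=\aleph_0$.)

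The remaining and only substantive point is condition (ii): given $S\colon\bX^=\to[X]^{<\gl}$, produce a free isotone section $\gs\colon P\to\bX$ of $\partial'$. Since $\bX^=\subseteq\bY^=$, the map $S$ can be regarded (after extending it arbitrarily, e.g.\ by $\es$, on $\bY^=\setminus\bX^=$, or better by an isotone majorant as in the remark after Definition~\ref{D:Lifter}) as a map $\bY^=\to[Y]^{<\gl}$. Applying the $\gl$-lifter property of $(Y,\bY)$ to this map — composed with the retraction data so as to get a section of $\partial$ over $Q$ — yields an isotone $\gt\colon Q\to\bY$ with $\partial\gt(q)=q$ and $S(\gt(p))\cap\gt(q)\subseteq\gt(p)$ for $p<q$ in $Q$. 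Then set $\gs:=\gt\circ e\colon P\to\bY=\bX$. It is isotone as a composite; it is a section of $\partial'$ because $\partial'\gs(p)=r\bigl(\partial\gt(e(p))\bigr)=r(e(p))=p$; and freeness for $a<b$ in $P$ follows from freeness of $\gt$ applied to $e(a)<e(b)$ in $Q$, using $S(\gs(a))=S(\gt(e(a)))$. The main obstacle is precisely this last step: one must check that the re-normed section obtained from $Q$ still lands in the right place and satisfies the freeness inequality for \emph{$P$}-comparabilities rather than $Q$-comparabilities — this works because $e$ embeds the order of $P$ into that of $Q$, so every $P$-comparability is an $e$-image of a $Q$-comparability, but one has to be attentive that the set-mapping $S$ is fed consistently through the identification $\bX^=\subseteq\bY^=$. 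Once that bookkeeping is done, all three clauses of Definition~\ref{D:Lifter} hold for $(X,\bX)$, completing the proof. \qed
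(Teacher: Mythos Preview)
Your overall strategy---re-norm $Y$ by $r\circ\partial$ and restrict the section $\gt\colon Q\to\bY$ along $e$---is exactly right and matches the paper. However, the step where you set $\bX:=\bY$ and then claim $\bX^=\subseteq\bY^=$ contains a genuine error. The implication ``$\partial\by$ maximal in $Q$ $\Rightarrow$ $r(\partial\by)$ maximal in $P$'' is false: take $Q=\{0,1,2\}$ with $0<1$, $0<2$, let $P=\{0,1\}\subseteq Q$, and define $r(0)=0$, $r(1)=1$, $r(2)=0$. Then $2$ is maximal in $Q$ but $r(2)=0$ is not maximal in $P$. So any $\by\in\bY$ with $\partial\by=2$ lies in $\bX^=$ but not in $\bY^=$, and your argument for condition~(i) of Definition~\ref{D:Lifter} (lower $\cf(\gl)$-smallness of $\bX^=$) breaks down---you have no control over $\card(\bX\dnw\by)$ for such $\by$. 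This also means the stated conclusion $\bX^=\subseteq\bY^=$ is simply not achieved by your construction.

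The paper's fix is precisely to excise these problematic ideals: set $R:=\setm{q\in\Max Q}{r(q)\notin\Max P}$ and $\bX:=\setm{\bx\in\bY}{\partial\bx\notin R}$. Then $\bX^=\subseteq\bY^=$ holds by a clean two-line argument, and one only needs to check that the restricted section $\gs\res_P$ still lands in $\bX$. It does: for $p\in P$, $\partial\gs(p)=p$, and $p\notin R$ because if $p\in\Max Q$ then $p\in\Max P$ (as $P$ is a sub-poset of $Q$) and $r(p)=p$. Your freeness verification for condition~(ii) is otherwise fine.
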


\begin{proof}
Denote by $\gr$ a retraction of~$Q$ onto~$P$ and set
 \[
 R:=\setm{q\in\Max Q}{\gr(q)\notin\Max P}\,.
 \]
We endow the underlying set of~$Y$ with the norm~$\gr\partial$; we obtain a norm-covering\index{i}{norm-covering}~$X$ of~$P$. We set
 \[
 \bX:=\setm{\bx\in\bY}{\partial\bx\notin R}\,.
 \]
Every element of~$\bX$ is a sharp\index{i}{ideal!sharp} ideal of~$Y$ with respect to~$\partial$, thus \emph{a fortiori} with respect to~$\gr\partial$; that is, $\bX\subseteq\Ids X$\index{s}{Ids@$\Ids X$, $X$ norm-covering}.

We claim that $\bX^=$ (defined with respect to the norm~$\gr\partial$) is a subset of~$\bY^=$ (defined with respect to the norm~$\partial$). Let $\bx\in\bX^=$ and suppose that $\bx\notin\bY^=$, that is, $\partial\bx\in\Max Q$. {}From $\bx\in\bX$ it follows that $\partial\bx\notin R$, thus $\gr\partial\bx\in\Max P$, which contradicts the assumption $\bx\in\bX^=$ and thus proves our claim. In particular, as~$\bY^=$ is lower $\cf(\gl)$-small\index{i}{poset!lower $\gl$-small}, so is~$\bX^=$.

We can extend any mapping $S\colon\bX^=\to[X]^{<\gl}$ to a mapping $\ol{S}\colon\bY^=\to\nobreak[Y]^{<\gl}$ by setting $\ol{S}(\by):=\es$ for each $\by\in\bY^=\setminus\bX^=$. As $(Y,\bY)$ is a $\gl$-lifter\index{i}{lifter ($\gl$-)} of~$Q$, it has an isotone section~$\gs$ which is free with respect to~$\ol{S}$\index{i}{freemapp@free map (wrt. set-mapping, poset)}. It follows easily that~$\gs\res_P$ is an isotone section of~$\gr\partial$ which is free with respect to~$S$\index{i}{freemapp@free map (wrt. set-mapping, poset)}.
\qed\end{proof}

We do not know whether every liftable\index{i}{liftable!$\gl$-${}_{-}$ poset} poset is well-founded\index{i}{poset!well-founded} (cf. Problem~\ref{Pb:gooplift} in Chapter~\ref{Ch:Discussion}). However, the following sequence of results sheds some light on the gray zone where this could occur. We use standard notation for the addition of ordinals, so, in particular, $\go+1=\set{0,1,2,\dots}\cup\set{\go}$.

\begin{lem}\label{L:NonWFgo+1}
Let~$\gl$ be an infinite cardinal and let~$P$ a non well-founded\index{i}{poset!well-founded} poset. If~$P$ is $\gl$-liftable\index{i}{liftable!$\gl$-${}_{-}$ poset}, then so is $(\go+1)^{\op}$\index{s}{omega1op@$(\omega+1)^{\op}$}.
\end{lem}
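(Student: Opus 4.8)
The plan is to use Lemma~\ref{L:RetrLift}: since $P$ is not well-founded, it contains a strictly decreasing $\go$-sequence, and I will show that from this sequence I can exhibit $(\go+1)^{\op}$ as a retract of $P$, after which Lemma~\ref{L:RetrLift} immediately gives that $(\go+1)^{\op}$ is $\gl$-liftable.

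First I would fix a strictly decreasing sequence $p_0>p_1>p_2>\cdots$ in $P$. The underlying set of $(\go+1)^{\op}$ is $\set{0,1,2,\dots}\cup\set{\go}$ with $n>\go$ for all $n<\go$ and the usual order reversed on $\go$; so $(\go+1)^{\op}$ is order-isomorphic to the subposet $C:=\setm{p_n}{n<\go}$ of $P$ \emph{together with} a new bottom element. The obstacle is that $C$ may fail to be a retract of $P$ on its own — there is no reason a retraction $P\to C$ should exist, nor is there a canonical ``infimum'' of the $p_n$ inside $P$. This is where I would need to be careful: rather than retracting onto a subposet of $P$, I would instead build $Q:=P$ itself (which is $\gl$-liftable by hypothesis) and retract onto a copy of $(\go+1)^{\op}$. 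Concretely, I would define $\gr\colon P\to(\go+1)^{\op}$ by $\gr(x):=\min\setm{n<\go}{x\geq p_n}$ if this set is nonempty (i.e.\ if $x$ lies above some $p_n$; note the set is then automatically a tail-stable, hence has a least element is wrong — in $(\go+1)^{\op}$ we want the \emph{largest} such $n$, which need not exist, so the correct definition is $\gr(x):=n$ where $n$ is \emph{least} with $x\not\geq p_n$, interpreting this as $\go$ when $x\geq p_n$ for all $n$, and I must check this lands in $(\go+1)^{\op}$ and is isotone).

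Let me restate this cleanly. Define $\gr\colon P\to(\go+1)^{\op}$ by $\gr(x):=\sup\setm{n<\go}{x\geq p_n}\in\go+1$ (with $\sup\es=0$ and $\sup$ of an infinite set $=\go$), where I then reverse: in $(\go+1)^{\op}$, the element $\gr(x)$ sits \emph{above} $\gr(y)$ when $\gr(x)\leq\gr(y)$ as ordinals with the $\go$-case being the top. The key verification is: (a) $\gr$ is isotone from $P$ to $(\go+1)^{\op}$, which follows because $x\leq y$ in $P$ implies $\setm{n}{x\geq p_n}\subseteq\setm{n}{y\geq p_n}$, hence $\gr(x)\leq\gr(y)$ as ordinals, which is exactly $\gr(y)\leq\gr(x)$ in $(\go+1)^{\op}$ — wait, I must be consistent about the direction. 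I will choose the convention that makes the embedding $\gi\colon(\go+1)^{\op}\into P$, $n\mapsto p_n$, $\go\mapsto$ (some fixed element below all $p_n$, if one exists) work; if no such lower bound exists in $P$, then I instead take $Q$ to be $P$ with a bottom element adjoined, observe $Q$ is still $\gl$-liftable (adjoining a bottom to a $\gl$-liftable poset preserves $\gl$-liftability — this needs a one-line check, or can be absorbed into the disjoint-union/ajs analysis of Proposition~\ref{P:NoBowTie} and Corollary~\ref{C:CharLift}), and then $(\go+1)^{\op}$ embeds as a genuine retract of $Q$ via $\gi$ and the extension of $\gr$ sending the adjoined bottom to $\go$. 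With $\gr\circ\gi=\id$ established, Lemma~\ref{L:RetrLift} applied to the pair $(P\text{-or-}Q,(\go+1)^{\op})$ yields a $\gl$-lifter of $(\go+1)^{\op}$.

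The main obstacle, then, is the bottom-element issue: whether $P$ (or the relevant component of $P$) already has a lower bound for the chosen decreasing $\go$-sequence, and if not, confirming that adjoining one does not destroy $\gl$-liftability. I expect this to be routine — the cleanest route is probably to argue directly that the map $x\mapsto$ ``the least $n$ with $x\not\geq p_n$'' (and $\go$ if no such $n$) is a retraction of $P$ onto a sub-poset order-isomorphic to $(\go+1)^{\op}$ precisely when $P$ has the relevant infimum, and otherwise to use that $P$ is, by Proposition~\ref{P:NoBowTie}(3), a finite disjoint union of posets with zero, pass to the component containing the $p_n$, and there the zero of that component serves (or lies below) the needed top element $\go$ of $(\go+1)^{\op}$, so that $\gr$ defined by $\gr(x):=\sup\setm{n<\go}{x\geq p_n}$ composed with a suitable order-reversal is the required retraction. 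Either way, once the retraction is in hand, the result is immediate from Lemma~\ref{L:RetrLift}.
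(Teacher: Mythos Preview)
Your final approach---use Proposition~\ref{P:NoBowTie}(3) to pass to a component~$P_i$ with zero, observe that~$P_i$ is a retract of~$P$ (send $P\setminus P_i$ to~$0_{P_i}$), and then exhibit $(\go+1)^{\op}$ as a retract of~$P_i$ via the chain $p_0>p_1>\cdots$ together with $p_\go:=0_{P_i}$, concluding by Lemma~\ref{L:RetrLift}---is exactly the paper's proof. However, two points of the execution are wrong and should be fixed.

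First, your formula for the retraction~$\gr$ is backwards. Each of your candidates (``$\sup\setm{n<\go}{x\geq p_n}$'' and ``the least~$n$ with $x\not\geq p_n$'') sends~$p_0$ to~$\go$, whereas you need $\gr(p_0)=0$. The correct formula---available precisely because $p_\go=0_{P_i}$ guarantees the set below is nonempty---is
\[
\gr(x):=\min\setm{n\leq\go}{p_n\leq x}\,.
\]
Then $\gr(p_m)=m$ for every $m\leq\go$, and $x\leq y$ in~$P_i$ gives $\gr(x)\geq\gr(y)$ as ordinals, i.e.\ $\gr(x)\leq\gr(y)$ in $(\go+1)^{\op}$, so~$\gr$ is an isotone retraction. (Your phrase ``the needed top element~$\go$ of $(\go+1)^{\op}$'' is also a slip: $\go$ is the \emph{bottom} of $(\go+1)^{\op}$, which is why the zero of~$P_i$ is what you need.)

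Second, drop the alternative route through ``adjoin a bottom element and check $\gl$-liftability is preserved.'' You flag this as needing a one-line check, but no such check is given in the paper and it is not obvious from the definition of a $\gl$-lifter; by contrast, the reduction via Proposition~\ref{P:NoBowTie}(3) is already available and makes this detour unnecessary.
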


\begin{proof}
It follows from Proposition~\ref{P:NoBowTie}(3) that we can write~$P$ as a disjoint union $P=\bigcup\famm{P_i}{i<n}$, where~$n$ is a positive integer and each~$P_i$ is an \ajs\index{i}{almost join-semilattice}\ with zero. As each~$P_i$ is a retract of~$P$ (send all the elements of~$P\setminus P_i$ to the zero element of~$P_i$) and one of the~$P_i$ is not well-founded\index{i}{poset!well-founded}, it follows from Lemma~\ref{L:RetrLift} that we may replace~$P$ by that~$P_i$ and thus assume that~$P$ is an \ajs\index{i}{almost join-semilattice}\ with zero.

As~$P$ is not well-founded\index{i}{poset!well-founded}, there exists a strictly decreasing $(\go+1)$-sequence $\famm{p_n}{0\leq n\leq\go}$ in~$P$ such that~$p_\go$ is the zero element of~$P$. Denote by~$\gr(x)$ the least $n\leq\go$ such that $p_n\leq x$, for each $x\in P$. As the assignment $(n\mapsto p_n)$ defines an order-embedding from $(\go+1)^{\op}$\index{s}{omega1op@$(\omega+1)^{\op}$} into~$P$, with retraction~$\gr$, the conclusion follows from Lemma~\ref{L:RetrLift}.
\qed\end{proof}

\begin{prop}\label{P:LiftnotWFrestr}
Let~$\gl$ be an infinite cardinal such that\index{s}{aleph0@$\aleph_{\ga}$}
 \begin{equation}\label{Eq:CardHypgl}
 \gm^{\aleph_0}<\gl\quad\text{for each }\gm<\cf(\gl)\,.
 \end{equation}
If a poset~$P$ is $\gl$-liftable\index{i}{liftable!$\gl$-${}_{-}$ poset}, then~$P$ is well-founded\index{i}{poset!well-founded}.
\end{prop}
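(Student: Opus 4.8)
The plan is to prove the contrapositive: assume $P$ is $\gl$-liftable but not well-founded, and derive a contradiction from the cardinal hypothesis~\eqref{Eq:CardHypgl}. By Lemma~\ref{L:NonWFgo+1}, it suffices to treat the case $P=(\go+1)^{\op}$, since $(\go+1)^{\op}$ is $\gl$-liftable whenever some non well-founded poset is. So from now on let $(X,\bX)$ be a $\gl$-lifter of $(\go+1)^{\op}$; I will show no such object can exist under~\eqref{Eq:CardHypgl}.

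First I would set up notation for $(\go+1)^{\op}$: write its elements as $0>1>2>\cdots>\go$, so that $\go$ is the zero (smallest element, which is the unique maximum of the dual order), and $P\dnw n=\set{n,n+1,\dots,\go}$ for $n<\go$, while $P\dnw\go=\set{\go}$. The non-maximal elements of $P$ form exactly $\set{0,1,2,\dots}$ (everything except $\go$ itself is non-maximal in the dual order — wait, I must be careful: in $(\go+1)^{\op}$ the maximal element is $0$; so $\Max P=\set 0$, and $P\setminus\Max P=\set{1,2,\dots,\go}$). By Proposition~\ref{P:NoBowTie}(1), $P\setminus\Max P$ is lower $\cf(\gl)$-small; here $P\dnw\go\cap(P\setminus\Max P)$ lists all of $\set{1,2,\dots,\go}$, which is countable, so this only forces $\cf(\gl)>\aleph_0$ — consistent, and in fact~\eqref{Eq:CardHypgl} implies $\cf(\gl)>\aleph_0$ too (take $\gm=\aleph_0$). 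The key point is that, writing $\bX^=$ for the elements of $\bX$ of non-maximal norm, an isotone section $\gs\colon P\into\bX$ of $\partial$ gives a strictly increasing sequence $\gs(0)\supsetneq\gs(1)\supsetneq\cdots$ — no wait, $\gs$ is isotone and $0>1>\cdots$ in $P$, so $\gs(0)\supseteq\gs(1)\supseteq\cdots\supseteq\gs(\go)$, all members of $\bX$, with $\partial\gs(n)=n$, hence genuinely strictly decreasing (in containment) as $n$ increases. The members $\gs(1),\gs(2),\dots$ all lie in $\bX^=$.

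The heart of the argument is a counting/diagonalization using Definition~\ref{D:Lifter}(ii). I would construct a bad set mapping $S\colon\bX^=\to[X]^{<\gl}$ that no isotone section can be free against, exploiting~\eqref{Eq:CardHypgl} as follows. Each $\bx\in\bX^=$ has $\bX\dnw\bx$ of size $<\cf(\gl)$; an isotone section restricted below $\bx$ is essentially determined by a countable ($\go$-indexed) descending chain of ideals inside $\bX\dnw\bx$, and by~\eqref{Eq:CardHypgl} the number of such chains is $\le(\card(\bX\dnw\bx))^{\aleph_0}<\gl$. This lets me pick, for each $\bx\in\bX^=$, a subset $S(\bx)\subseteq X$ of size $<\gl$ that "blocks" every potential way of extending a section past $\bx$ along the remaining tail of $(\go+1)^{\op}$ — concretely, for each descending $\go$-chain $c$ below $\bx$ one throws into $S(\bx)$ a witness element of $X$ that, were the section to agree with $c$ on $P\dnw$(the level of $\bx$), would be forced by freeness into $\gs$(that level) yet cannot be. Running this against the section $\gs$ produced by the lifter, and looking at the actual descending chain $\famm{\gs(n)}{n<\go}$, the freeness condition $S(\gs(a))\cap\gs(b)\subseteq\gs(a)$ for $a<b$ in $P$ becomes a statement that must hold for all pairs from $\set{0,1,2,\dots,\go}$; feeding in $b=\go$ (or large $b$) and the chain $c=\famm{\gs(n)}{n}$ itself contradicts the blocking property built into $S$.

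The main obstacle is making the blocking construction precise: one must argue that the restriction of a free isotone section to $P\dnw p$, for $p$ at finite level, ranges over a set of possibilities of size $<\gl$ and that a single $<\gl$-sized $S(\bx)$ suffices to kill all of them simultaneously at the next level up. This is exactly where~\eqref{Eq:CardHypgl} is used — without the hypothesis $\gm^{\aleph_0}<\gl$ for $\gm<\cf(\gl)$, the number of descending $\go$-chains through a $\cf(\gl)$-small set could reach $\gl$ or beyond, and the diagonal set $S(\bx)$ would no longer be $\gl$-small, so Definition~\ref{D:Lifter}(ii) would not apply. I expect the bookkeeping to mirror the proof of Corollary~\ref{C:LiftnotWFrestr} (the $(2^{\aleph_0})^+$-lifter nonexistence for $(\go+1)^{\op}$), which is the special case $\gl=(2^{\aleph_0})^+$, $\cf(\gl)=\gl$, of the present statement; the general proof is obtained by replacing the crude bound $2^{\aleph_0}$ on the number of tails by the sharper $\sup_{\gm<\cf(\gl)}\gm^{\aleph_0}$ permitted by~\eqref{Eq:CardHypgl}.
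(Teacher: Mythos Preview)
Your reduction to $(\go+1)^{\op}$ via Lemma~\ref{L:NonWFgo+1} is correct, and you have correctly located where~\eqref{Eq:CardHypgl} enters: it bounds, for each $\bx\in\bX^=$, the number of countable configurations inside $\bX\dnw\bx$ by something strictly below~$\gl$, so that a union over all of them stays $\gl$-small. But your ``blocking'' construction is not a proof as it stands. You say you will throw into $S(\bx)$, for each descending $\go$-chain $c$ through $\bx$, ``a witness element of $X$'' that would violate freeness were $\gs$ to follow~$c$; you never say what that element is, nor why it must land in some $\gs(b)\setminus\gs(a)$. Note also the type mismatch you must bridge: $S$ takes values in $[X]^{<\gl}$ (elements of the norm-covering), while the chains you are enumerating live in~$\bX$ (ideals). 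Without a concrete device linking the two, the sketch does not close.

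The paper's proof supplies two ingredients you are missing. First, a transfer step: from the raw $\gl$-lifter property one derives, via~\eqref{Eq:CardHypgl} exactly as you anticipate, freeness of the section against \emph{arbitrary} maps $f\colon[\bX^=]^{\les\aleph_0}\to[\bX]^{\les\aleph_0}$, in the form $f(\gs``(P\dnw a))\cap\gs``(P\dnw b)\subseteq\gs``(P\dnw a)$ for $a<b$. This is obtained by setting $F(\bx):=\bigcup\setm{f(\bZ)}{\bZ\in[\bX\dnw\bx]^{\les\aleph_0}}$ (which is $\gl$-small by the hypothesis) and then $S(\bx):=\vgr``F(\bx)$ for a choice map $\vgr$ picking a point in each ideal with the right norm. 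Second, and this is the combinatorial core your proposal lacks entirely, a specific choice of~$f$: pick a transversal $\Delta$ for the equivalence $\equiv_{\mathrm{fin}}$ (equality modulo finite symmetric difference) on countable subsets of~$\bX^=$, and let $f(U)$ be the unique member of $[U]_{\equiv_{\mathrm{fin}}}\cap\Delta$. Writing $U_n:=\gs``(P\dnw n)=\set{\gs(n),\gs(n+1),\dots}\cup\set{\gs(\go)}$, all the $U_n$ are $\equiv_{\mathrm{fin}}$-equivalent, hence share the same $f$-value; yet $U_0\setminus f(U_0)$ is finite, so some $\gs(m)\in f(U_0)=f(U_{m+1})$, and freeness with $a=m+1$, $b=0$ forces $\gs(m)\in U_{m+1}$, a contradiction.

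One last point: Corollary~\ref{C:LiftnotWFrestr} is obtained \emph{from} the present proposition by specializing $\gl:=(2^{\aleph_0})^+$; it has no independent proof for you to mirror.
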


\begin{proof}
Suppose that~$P$ is not well-founded\index{i}{poset!well-founded}.
By Lemma~\ref{L:NonWFgo+1}, we may assume that $P=(\go+1)^{\op}$\index{s}{omega1op@$(\omega+1)^{\op}$}. In particular, $P\setminus\Max P$ is not lower finite\index{i}{poset!lower finite}, thus, by Proposition~\ref{P:NoBowTie}(1), $\aleph_1\leq\cf(\gl)$\index{s}{aleph0@$\aleph_{\ga}$}.

\begin{claim}
For every \pup{\emph{not necessarily isotone}} map $f\colon[\bX^=]^{\les\aleph_0}\to[\bX]^{\les\aleph_0}$\index{s}{aleph0@$\aleph_{\ga}$}, there exists an isotone section $\gs\colon P\into\bX$\index{s}{AtoinB@$f\colon A\into B$} of~$\partial$ such that
 \[
 (\forall a<b\text{ in }P)\bigl(f(\gs``(P\dnw a))\cap\gs``(P\dnw b)
 \subseteq\gs``(P\dnw a)\bigr)\,.
 \]
\end{claim}

\begin{proof}
A slight modification of the proof of Proposition~\ref{P:NoBowTie}(4). We set\index{s}{aleph0@$\aleph_{\ga}$}
 \begin{equation}\label{Eq:DefnF(bx)}
 F(\bx):=\bigcup\famm{f(\bZ)}{\bZ\in[\bX\dnw\bx]^{\les\aleph_0}}\,,\qquad
 \text{for each }\bx\in\bX^=\,.
 \end{equation}
Let $\bx\in\bX^=$. The cardinal $\gm:=\card(\bX\dnw\bx)$ is smaller than~$\cf(\gl)$. As~$f(\bZ)$ is at most countable for each $\bZ\in[\bX\dnw\bx]^{\les\aleph_0}$\index{s}{aleph0@$\aleph_{\ga}$}, it follows that $\card F(\bx)\leq\gm^{\aleph_0}$\index{s}{aleph0@$\aleph_{\ga}$}, thus, by the assumption~\eqref{Eq:CardHypgl}, $F(\bx)$ is $\gl$-small. Hence $F\colon\bX^=\to[\bX]^{<\gl}$.

As in the proof of Proposition~\ref{P:NoBowTie}, for each $\bx\in\bX$, there exists $\vgr(\bx)\in\nobreak\bx$ such that $\partial\vgr(\bx)=\partial\bx$. We set $S(\bx):=\vgr``F(\bx)$, for each~$\bx\in\bX^=$; so $S\colon\bX^=\to[X]^{<\gl}$. As~$(X,\bX)$ is a $\gl$-lifter\index{i}{lifter ($\gl$-)} of~$P$, its norm has a free isotone section $\gs\colon P\into\bX$\index{s}{AtoinB@$f\colon A\into B$} with respect to~$S$. Let $a<b$ in~$P$, we shall prove the containment $f(\gs``(P\dnw a))\cap\gs``(P\dnw b)\subseteq\gs``(P\dnw a)$. Let $c\in P\dnw b$ such that $\gs(c)\in f(\gs``(P\dnw a))$, we must prove that $c\leq a$. As $\gs``(P\dnw a)$ is an at most countable subset of $\bX\dnw\gs(a)$, it follows from~\eqref{Eq:DefnF(bx)} that
 \[
 f(\gs``(P\dnw a))\subseteq F(\gs(a))\,,
 \]
hence $\gs(c)\in F(\gs(a))$, and hence $\vgr\gs(c)\in S(\gs(a))$. As $\vgr\gs(c)\in\gs(c)\subseteq\gs(b)$, it follows that~$\vgr\gs(c)$ belongs to $S(\gs(a))\cap\gs(b)$, thus to~$\gs(a)$, and so
$c=\partial\gs(c)=\partial\vgr\gs(c)\leq\partial\gs(a)=a$.
\qed\ Claim\end{proof}

We shall apply the Claim above to a specific~$f$, as in the proof of \cite[Proposition~3.5]{GiWe1}\index{c}{Gillibert, P.}\index{c}{Wehrung, F.}.

Let $U\equiv_{\mathrm{fin}}V$ hold if the symmetric difference~$U\mathbin{\triangle}V$ is finite, for all subsets~$U$ and~$V$ of~$\bX^=$. Let~$\Delta$ be a set that meets the $\equiv_{\mathrm{fin}}$-equivalence class~$[U]_{\equiv_{\mathrm{fin}}}$ of~$U$ in exactly one element, for each at most countable subset~$U$ of~$\bX^=$, and denote by~$f(U)$ the unique element of~$[U]_{\equiv_{\mathrm{fin}}}\cap\Delta$. By our Claim, there is a one-to-one map $\gs\colon P\mono\bX^=$\index{s}{AtomonoB@$f\colon A\mono B$} such that
 \begin{equation}\label{Eq:gsfreewrtf}
 f(\gs``(P\dnw p))\cap\gs``(P\dnw q)\subseteq\gs``(P\dnw p)\quad
 \text{for all }p<q\text{ in }P\,.
 \end{equation}
We set
 \[
 U_n:=\gs``(P\dnw n)=\set{\gs(n),\gs(n+1),\dots}\cup\set{\gs(\go)}\,,
 \quad\text{for each }n<\go\,.
 \]
By the definition of~$f$, the difference $U_0\setminus f(U_0)$ is finite, thus there exists a natural number~$m$ such that
 \[
 U_0\setminus f(U_0)\subseteq\set{\gs(0),\gs(1),\dots,\gs(m-1)}
 \cup\set{\gs(\go)}\,.
 \]
In particular, $\gs(m)\notin U_0\setminus f(U_0)$, but $\gs(m)\in U_0$, thus $\gs(m)\in f(U_0)$. Now from $U_0\equiv_{\mathrm{fin}}U_{m+1}$ it follows that $f(U_0)=f(U_{m+1})$, thus $\gs(m)\in f(U_{m+1})$. As $\gs(m)\in U_0$, it follows from~\eqref{Eq:gsfreewrtf} that $\gs(m)\in U_{m+1}$, a contradiction.
\qed\end{proof}

\begin{cor}\label{C:LiftnotWFrestr}
If a poset~$P$ is $(2^{\aleph_0})^+$-liftable\index{i}{liftable!$\gl$-${}_{-}$ poset}\index{s}{aleph0@$\aleph_{\ga}$}, then it is well-founded\index{i}{poset!well-founded}.
\end{cor}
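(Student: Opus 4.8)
The plan is to derive Corollary~\ref{C:LiftnotWFrestr} as an immediate special case of Proposition~\ref{P:LiftnotWFrestr} by checking that $\gl:=(2^{\aleph_0})^+$ satisfies the cardinal arithmetic hypothesis~\eqref{Eq:CardHypgl}, namely $\gm^{\aleph_0}<\gl$ for each $\gm<\cf(\gl)$. First I would note that $\gl=(2^{\aleph_0})^+$ is a successor cardinal, hence regular, so $\cf(\gl)=\gl=(2^{\aleph_0})^+$; thus $\gm<\cf(\gl)$ means exactly $\gm\leq 2^{\aleph_0}$. For such a $\gm$ we have $\gm^{\aleph_0}\leq(2^{\aleph_0})^{\aleph_0}=2^{\aleph_0\cdot\aleph_0}=2^{\aleph_0}<(2^{\aleph_0})^+=\gl$, where the middle equality is the standard rule $(2^{\kappa})^{\kappa}=2^{\kappa}$ for infinite $\kappa$. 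Hence~\eqref{Eq:CardHypgl} holds.

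Once~\eqref{Eq:CardHypgl} is verified, Proposition~\ref{P:LiftnotWFrestr} applies verbatim: if $P$ is $(2^{\aleph_0})^+$-liftable, then $P$ is well-founded, which is precisely the assertion of the corollary. So the proof is just the two-line cardinal-arithmetic check followed by an invocation of the proposition.

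I do not anticipate any real obstacle here; the only point requiring the slightest care is making sure the inequality in~\eqref{Eq:CardHypgl} is strict, which it is, since $\gm\leq 2^{\aleph_0}$ forces $\gm^{\aleph_0}\leq 2^{\aleph_0}$ and the target $\gl=(2^{\aleph_0})^+$ is the successor of $2^{\aleph_0}$, hence strictly larger. One should also confirm that the case $\gm$ finite or $\gm=\aleph_0$ causes no trouble: for finite $\gm\geq 2$ one has $\gm^{\aleph_0}=2^{\aleph_0}<\gl$, and for $\gm=\aleph_0$ one has $\aleph_0^{\aleph_0}=2^{\aleph_0}<\gl$ as well. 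Thus the plan is simply:

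\begin{proof}
Set $\gl:=(2^{\aleph_0})^+$. As a successor cardinal, $\gl$ is regular, so $\cf(\gl)=\gl$. If $\gm<\cf(\gl)=\gl$, then $\gm\leq 2^{\aleph_0}$, whence
 \[
 \gm^{\aleph_0}\leq(2^{\aleph_0})^{\aleph_0}=2^{\aleph_0}<(2^{\aleph_0})^+=\gl\,.
 \]
Therefore the assumption~\eqref{Eq:CardHypgl} of Proposition~\ref{P:LiftnotWFrestr} is satisfied, and the conclusion follows from that proposition.
\qed\end{proof}
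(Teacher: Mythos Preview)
Your proof is correct and is exactly the intended argument: the paper states the corollary without proof because it follows immediately from Proposition~\ref{P:LiftnotWFrestr} by the cardinal-arithmetic verification you carry out.
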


Nevertheless, we shall see in Section~\ref{S:MorePosets} that a variant of CLL\index{i}{Condensate Lifting Lemma (CLL)} can be formulated for posets, such as $(\go+1)^{\op}$\index{s}{omega1op@$(\omega+1)^{\op}$}, that are not $\gl$-liftable\index{i}{liftable!$\gl$-${}_{-}$ poset} for certain infinite cardinals~$\gl$. The key concept is the one of a \emph{pseudo-retract}.

\begin{defn}\label{D:PseudoRetr}
A \emph{pseudo-retraction pair}\index{i}{pseudo-retraction|ii} for a pair $(P,Q)$ of posets is a pair $(e,f)$ of isotone maps $e\colon P\to\Id Q$\index{s}{IdP@$\Id P$, $P$ poset} and $f\colon Q\to P$ such that\linebreak $P\dnw f``(e(p))=P\dnw p$ for each~$p\in P$. We say that~$P$ is a \emph{pseudo-retract} of~$Q$.
\end{defn}

\begin{lem}\label{L:PseudoRetr}
Every \ajs\index{i}{almost join-semilattice}\ $P$ is a pseudo-retract of some lower finite\index{i}{poset!lower finite} \ajs\index{i}{almost join-semilattice}~$Q$ of cardinality $\card P$.
\end{lem}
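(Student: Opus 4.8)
The plan is to treat the finite case trivially and handle the infinite case by a ``pair'' construction patterned on the proof of Lemma~\ref{L:Part2Lift}. If $P$ is finite it is lower finite, so one takes $Q:=P$, together with $e(p):=P\dnw p$ (an ideal of $Q$, since $P\dnw p$ is a \js\ by the \ajs\ hypothesis, hence directed) and $f:=\id_P$; then $P\dnw f``(e(p))=P\dnw(P\dnw p)=P\dnw p$, so $(e,f)$ is a pseudo-retraction pair. Assume henceforth that $P$ is infinite.

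Let $Q$ be the set of all pairs $(a,x)$ with $a\in P$ and $x$ a finite subset of $P\dnw a$ such that $a\in\Sor x$, ordered componentwise: $(a,x)\leq(b,y)$ iff $a\leq b$ and $x\subseteq y$. (This is the poset $P\seq{K}$ from the proof of Lemma~\ref{L:Part2Lift} for a one-element~$K$, with the finiteness of~$x$ now imposed by hand rather than deduced from lower finiteness of~$P$.) Since $(a,\set{a})\in Q$ for every $a\in P$ (as $\Sor\set{a}=\set{a}$) and $Q\subseteq P\times[P]^{<\go}$, we get $\card Q=\card P$. I would then check that $Q$ is a lower finite \ajs, following that proof: lower finiteness holds because $Q\dnw(b,y)$ meets each of the finitely many $x\subseteq y$ in a subset of the finite set $\Sor x$ (finite by Lemma~\ref{L:PUpwX}); the \pjs\ and \ajs\ properties follow from the identity $(a_0,x_0)\sor(a_1,x_1)=(a_0\sor a_1)\times\set{x_0\cup x_1}$ (obtained via Lemma~\ref{L:AssocSor}(ii)) together with the fact that $(a_0,x_0)\vee(a_1,x_1)=(c,x_0\cup x_1)$ in $Q\dnw(b,y)$, where $c$ is the unique element of $(a_0\sor a_1)\cap(P\dnw b)$---this is the single place where the hypothesis that $P$ is an \ajs, namely that each $P\dnw b$ is a \js, is used. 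The one adjustment to Lemma~\ref{L:Part2Lift} is that, as $P$ may have no least element, one cannot invoke a zero of~$P$ to see that $Q$ is a \pjs; instead one observes that $\Min Q=(\Min P)\times\set{\es}$ is finite (since $\Min P$ is, $P$ being a \pjs) and that $Q=Q\upw\Min Q$.

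To exhibit the pseudo-retraction pair, put $f\colon Q\to P$, $(a,x)\mapsto a$ (clearly isotone), and $e\colon P\to\Id Q$, $p\mapsto\setm{(a,x)\in Q}{a\leq p}$. The set $e(p)$ is visibly a lower subset of~$Q$ and is nonempty since $(p,\set{p})\in e(p)$; for directedness, given $(a_0,x_0),(a_1,x_1)\in e(p)$ one first notes that $a_i$ is the supremum of $x_i$ in $P\dnw a_i$ (because $a_i\in\Sor x_i$ and $x_i\subseteq P\dnw a_i$, a \js), hence also its supremum in the larger \js\ $P\dnw p$; putting $c:=a_0\vee a_1$ in $P\dnw p$ then gives $c=\bigvee_{P\dnw p}(x_0\cup x_1)$, and any upper bound of $x_0\cup x_1$ lying below~$c$ must lie in $P\dnw p$ and so be $\geq c$, whence $c\in\Sor(x_0\cup x_1)$ and $(c,x_0\cup x_1)\in e(p)$ dominates both given elements. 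Thus $e(p)\in\Id Q$, $e$ is obviously isotone, and since $(a,\set{a})\in Q$ for every $a\leq p$ we get $f``(e(p))=\setm{a\in P}{a\leq p}=P\dnw p$, whence $P\dnw f``(e(p))=P\dnw p$, as required. The step needing most care is the directedness of $e(p)$, i.e. reconciling the $\sor$-structure of $Q$ with binary joins inside the principal ideals of~$P$; the remaining items (lower finiteness, the \pjs\ and \ajs\ properties of $Q$, isotonicity of $e$ and $f$) are routine once the template of Lemma~\ref{L:Part2Lift} is in place.
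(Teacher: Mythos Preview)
Your proof is correct, but it uses a different construction of~$Q$ than the paper. The paper takes~$Q$ to consist of all pairs~$(X,a)$ where~$X$ is a finite $\sor$-closed subset of~$P$ and $a\in X$, ordered componentwise; the key finiteness input is Corollary~\ref{C:TrSupp}, which guarantees that the $\sor$-closure $(X_0\cup X_1)^{\sor}$ of a finite union stays finite, so that one can form joins in the first coordinate. Your construction instead recycles the~$P\seq{K}$ poset from Lemma~\ref{L:Part2Lift} (for a one-element~$K$), with finiteness of~$x$ imposed directly and the zero of~$P$ replaced by an explicit computation of~$\Min Q$. This lets you quote the $\sor$-identity \eqref{Eq:SorJJaixi} and the \ajs\ verification from that proof almost verbatim; the price is the extra directedness argument for~$e(p)$, where you must reconcile the minimality condition $a_i\in\Sor x_i$ with joins in~$P\dnw p$ (the step $a_i=\bigvee_{P\dnw p}x_i$ is exactly where minimality in~$P\Upw x_i$ is used). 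Both approaches hinge on Lemma~\ref{L:AssocSor}(ii), and neither is shorter; yours has the virtue of unifying two constructions in the paper, while the paper's is perhaps more self-contained at this point.
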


\begin{proof}
If~$P$ is finite take $Q:=P$. Now suppose that~$P$ is infinite, and define
 \[
 \cI:=\setm{X\subseteq P}{X\text{ is a finite $\sor$-closed subset of }P}\,,
 \]
ordered by containment, and then $Q:=\bigcup\famm{\set{X}\times X}{X\in\cI}$, ordered componentwise. It is obvious that~$Q$ is lower finite\index{i}{poset!lower finite} with the same cardinality as~$P$. Now let~$U:=\set{(X_0,a_0),\dots,(X_{n-1},a_{n-1})}$ (with $n<\go$) be a finite subset of~$Q$. As~$P$ is a \pjs\index{i}{pseudo join-semilattice}, $A:=\sor_{i<n}a_i$ is a finite subset of~$P$. Furthermore, as~$P$ is an \ajs\index{i}{almost join-semilattice}, it follows from Corollary~\ref{C:TrSupp} that $X:=\bigl(\bigcup\famm{X_i}{i<n}\bigr)^{\sor}$ is a finite subset of~$P$. As all~$a_i$ belong to~$X$ and~$X$ is $\sor$-closed, $A$ is contained in~$X$. Thus $V:=\set{X}\times A$ is a finite subset of~$Q\Upw U$. Conversely, let $(Y,b)\in Q\Upw U$, so $(X_i,a_i)\leq(Y,b)$ for each $i<n$. As $X_i\subseteq Y$ for each $i<n$ and~$Y$ is $\sor$-closed, $Y$ contains~$X$. As $a_i\leq b$ for each $i<n$ and~$P$ is a \pjs\index{i}{pseudo join-semilattice}, there exists $a\in A$ such that $a\leq b$. So $(X,a)\leq(Y,b)$ with $(X,a)\in V$. We have proved that $Q\Upw U=Q\upw V$. Therefore, $Q$ is a \pjs\index{i}{pseudo join-semilattice}. Now let $(X_0,a_0),(X_1,a_1),(Y,b)\in Q$ such that $(X_i,a_i)\leq(Y,b)$ for each~$i<2$. A similar argument to the one used above shows that if $X:=(X_0\cup X_1)^{\sor}$ and~$a$ denotes the join of $\set{a_0,a_1}$ in~$P\dnw b$, then~$(X,a)$ is the join of $\set{(X_0,a_0),(X_1,a_1)}$ in~$Q\dnw(Y,b)$. Therefore, $Q$ is an \ajs\index{i}{almost join-semilattice}.

The maps $e\colon P\to\Id Q$\index{s}{IdP@$\Id P$, $P$ poset} and $f\colon Q\to P$ defined by
 \begin{align*}
 e(p)&:=\setm{(X,a)\in Q}{a\leq p}&&(\text{for each }p\in P)\\
 f(X,a)&:=a&&(\text{for each }(X,a)\in Q) 
 \end{align*}
form a pseudo-retraction\index{i}{pseudo-retraction} for $(P,Q)$ with the required cardinality property.
\qed\end{proof}

Recall \cite[Section~45]{EHMR}\index{c}{Erd\H{o}s, P.}\index{c}{Hajnal, A.}\index{c}{Mate@M\'at\'e, A.}\index{c}{Rado, R.} that for cardinals $\gk$, $\gl$, $\gr$, the statement $(\gk,{<}\go,\gl)\rightarrow\nobreak\gr$\index{s}{arr0free@$(\gk,{<}\go,\gl)\rightarrow\gr$|ii} holds if every map $F\colon[\gk]^{<\go}\to[\gk]^{<\gl}$ has a $\gr$-element free set\index{i}{freeset@free set (wrt. set-mapping)|ii}, that is, $H\in[\gk]^{\gr}$ such that $F(X)\cap H\subseteq X$ for each $X\in[H]^{<\go}$. In case $\gl\geq\aleph_1$\index{s}{aleph0@$\aleph_{\ga}$} and~$\gr\geq\aleph_0$\index{s}{aleph0@$\aleph_{\ga}$}, the existence of~$\gk$ such that $(\gk,{<}\go,\gl)\rightarrow\gr$\index{s}{arr0free@$(\gk,{<}\go,\gl)\rightarrow\gr$} is a large cardinal axiom, that entails the existence of~$0^{\#}$ in case~$\gr\geq\aleph_1$\index{s}{aleph0@$\aleph_{\ga}$} (cf. Devlin and Paris~\cite{DePa}\index{c}{Devlin, K.\,J.}\index{c}{Paris, J.\,B.}, and also Koepke \cite{Koep84,Koep89}\index{c}{Koepke, P.} for further related consistency strength results). The relation $(\gk,{<}\go,\gl)\rightarrow\gr$\index{s}{arr0free@$(\gk,{<}\go,\gl)\rightarrow\gr$} follows from the infinite partition relation $\gk\rightarrow(\gq)^{<\go}_2$\index{s}{arr0freepart@$\gk\rightarrow(\gq)^{<\go}_2$|ii} (\emph{existence of the $\gq^{\mathrm{th}}$ Erd\H{o}s cardinal})\index{i}{Erd\H{o}s cardinal} where $\gq:=\max\set{\gr,\gl^+}$, see \cite[Theorem~45.2]{EHMR}\index{c}{Erd\H{o}s, P.}\index{c}{Hajnal, A.}\index{c}{Mate@M\'at\'e, A.}\index{c}{Rado, R.} and the discussion preceding it.

We prove in \cite[Proposition~3.4]{GiWe1}\index{c}{Gillibert, P.}\index{c}{Wehrung, F.} that the statements $(\gk,{<}\gl)\leadsto([\gr]^{<\go},\subseteq)$\index{s}{arr0x@$(\gk,{<}\gl)\leadsto P$} and $(\gk,{<}\go,\gl)\rightarrow\gr$\index{s}{arr0free@$(\gk,{<}\go,\gl)\rightarrow\gr$} are equivalent, for all cardinals $\gk$, $\gl$, and $\gr$. In particular, suppose that this holds for $\gl=(2^{\aleph_0})^+$\index{s}{aleph0@$\aleph_{\ga}$} and $\gr=\aleph_0$\index{s}{aleph0@$\aleph_{\ga}$}, that is,\index{s}{aleph0@$\aleph_{\ga}$}
 \[
 \bigl(\gk,{<}\go,(2^{\aleph_0})^+\bigr)\to\aleph_0\,.
 \]
(This is a large cardinal axiom, that follows from the existence of the Erd\H os cardinal\index{i}{Erd\H{o}s cardinal}\index{c}{Erdos@Erd\H os, P.} of index $(2^{\aleph_0})^{++}$\index{s}{aleph0@$\aleph_{\ga}$}, and thus in particular from the existence of a Ramsey cardinal\index{i}{Ramsey cardinal}.) By the above comment, the relation\index{s}{arr0x@$(\gk,{<}\gl)\leadsto P$}\index{s}{aleph0@$\aleph_{\ga}$}
 \begin{equation}\label{Eq:LargeKurat}
 \bigl(\gk,{<}(2^{\aleph_0})^+\bigr)\leadsto([\go]^{<\go},\subseteq)
 \end{equation}
holds. Now set $P:=(\go+1)^{\op}$\index{s}{omega1op@$(\omega+1)^{\op}$}. By the construction of Lemma~\ref{L:PseudoRetr}, $P$ is a pseudo-retract of a countable lower finite\index{i}{poset!lower finite} \ajs\index{i}{almost join-semilattice}~$Q$ with zero. As~$Q$ is lower finite\index{i}{poset!lower finite} and countable, it embeds (\emph{via} $(x\mapsto Q\dnw x)$) into~$([\go]^{<\go},\subseteq)$, thus from~\eqref{Eq:LargeKurat} it follows that\index{s}{arr0x@$(\gk,{<}\gl)\leadsto P$}\index{s}{aleph0@$\aleph_{\ga}$}
 \[
 \bigl(\gk,{<}(2^{\aleph_0})^+\bigr)\leadsto Q\,.
 \]
By Lemma~\ref{L:Part2Lift}, it follows that~$Q$ is $(2^{\aleph_0})^+$-liftable\index{s}{aleph0@$\aleph_{\ga}$}. However, by Corollary~\ref{C:LiftnotWFrestr}, $P$ is not $(2^{\aleph_0})^+$-liftable\index{i}{liftable!$\gl$-${}_{-}$ poset}. In conclusion,
\begin{quote}\normalsize\em
In the presence of large cardinals, a countable pseudo-retract of a $(2^{\aleph_0})^+$-liftable\index{i}{liftable!$\gl$-${}_{-}$ poset} countable lower finite\index{i}{poset!lower finite}\index{s}{aleph0@$\aleph_{\ga}$} \ajs\index{i}{almost join-semilattice}\ may not be $(2^{\aleph_0})^+$-liftable\index{s}{aleph0@$\aleph_{\ga}$}\index{i}{liftable!$\gl$-${}_{-}$ poset}.
\end{quote}
This is to be put in contrast with Lemma~\ref{L:RetrLift}.

\section{Lifting diagrams without assuming lifters}
\label{S:MorePosets}

In some applications such as the proof of Theorem~\ref{T:MindConcLift}, we will need a version of CLL\index{i}{Condensate Lifting Lemma (CLL)} involving more general posets, meaning that those will not necessarily be assumed with a lifter\index{i}{lifter ($\gl$-)}, and a weaker conclusion, namely where the nature and properties of the condensate\index{i}{condensate}~$\xF(X)\otimes\overrightarrow{A}$\index{s}{FxX@$\xF(X)$}\index{s}{otimAS@$\bA\otimes\overrightarrow{S}$, $\gf\otimes\overrightarrow{S}$} will no longer matter. Furthermore, this version of CLL\index{i}{Condensate Lifting Lemma (CLL)} (Corollary~\ref{C:CLLnoLF}) will involve the large cardinal axiom $(\gk,{<}\go,\gl)\rightarrow\gl$\index{s}{arr0free@$(\gk,{<}\go,\gl)\rightarrow\gr$}. An example of application of this ``CLL without lifters''\index{i}{lifter ($\gl$-)} is given in Theorem~\ref{T:MindConcLift} (diagram extension of the Gr\"atzer-Schmidt\index{c}{Gr\"atzer, G.}\index{c}{Schmidt, E.\,T.} Theorem). Examples where we really need the original form of CLL\index{i}{Condensate Lifting Lemma (CLL)} (not only for the cardinality estimates of the condensates\index{i}{condensate} but even for their structure) are given in \cite{Gill3,Banasch2}\index{c}{Gillibert, P.}\index{c}{Wehrung, F.}.

Our next lemma shows that under mild assumptions, the liftability, with respect to a given functor and a given class of double arrows\index{i}{double arrow}, of a $P$-indexed diagram can be reduced to the liftability of a certain $Q$-indexed diagram provided~$P$ is a pseudo-retract of~$Q$.

\begin{lem}\label{L:LowFin2Arbv0}
Let $\gl$ be an infinite cardinal, let $\cB$ and $\cS$ be categories, let~$\cS^\Rightarrow$\index{s}{RightarrowCat@$\cS^\Rightarrow$} be a subcategory of~$\cS$, and let $\Psi\colon\cB\to\cS$ be a functor. We assume that $\cB$ has all $\gl$-small directed colimits, that~$\Psi$ preserves all $\gl$-small directed colimits, and that~$\cS^\Rightarrow$\index{s}{RightarrowCat@$\cS^\Rightarrow$} is right closed under $\gl$-small directed colimits\index{i}{closed (left, right-) under $\gl$-small $\varinjlim$}. For a poset~$P$ and an object~$\overrightarrow{S}$ of~$\cS^P$, let $\Lift(\overrightarrow{S})$\index{s}{Lift@$\Lift(\overrightarrow{S})$|ii} hold if there are an object~$\overrightarrow{B}$ of~$\cB^P$ and a morphism~$\overrightarrow{\chi}\colon\Psi\overrightarrow{B}\Todot\overrightarrow{S}$\index{s}{AtoRightarrowdotB@$f\colon\xA\Todot\xB$} in~$(\cS^\Rightarrow)^P$.

Let~$P$ and~$Q$ be posets, let $(e,f)$ be a pseudo-retraction\index{i}{pseudo-retraction} for $(P,Q)$ such that~$e(p)$ has a $\gl$-small cofinal subset, for each $p\in P$. Then $\Lift(\overrightarrow{S}f)$ implies $\Lift(\overrightarrow{S})$\index{s}{Lift@$\Lift(\overrightarrow{S})$}, for every object~$\overrightarrow{S}$ of~$\cS^P$.
\end{lem}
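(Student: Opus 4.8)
The plan is to construct, from a lifting of the $Q$-indexed diagram $\overrightarrow{S}f$, a lifting of the $P$-indexed diagram $\overrightarrow{S}$, using the pseudo-retraction pair $(e,f)$ together with the hypothesis that each ideal $e(p)$ of $Q$ has a $\gl$-small cofinal subset. Write $\overrightarrow{S}=\famm{S_p,\gs_p^q}{p\leq q\text{ in }P}$; then $\overrightarrow{S}f=\famm{S_{f(q)},\gs_{f(q)}^{f(q')}}{q\leq q'\text{ in }Q}$, and by $\Lift(\overrightarrow{S}f)$ we are given an object $\overrightarrow{C}=\famm{C_q,\gc_q^{q'}}{q\leq q'\text{ in }Q}$ of $\cB^Q$ together with a double-arrow natural transformation $\overrightarrow{\psi}=\famm{\psi_q\colon\Psi(C_q)\Rightarrow S_{f(q)}}{q\in Q}$.

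First I would, for each $p\in P$, fix a $\gl$-small cofinal subset $D_p$ of the ideal $e(p)$ of $Q$; since $e(p)$ is directed, $D_p$ is a $\gl$-small directed subposet of $Q$, and the subdiagram $\overrightarrow{C}\res D_p$ has a colimit $\famm{B_p,\gc_q^{B_p}}{q\in D_p}$ in $\cB$ with $\cB$ having all $\gl$-small directed colimits. Applying the functor $\Psi$, which preserves $\gl$-small directed colimits, gives $\famm{\Psi(B_p),\Psi(\gc_q^{B_p})}{q\in D_p}=\varinjlim\Psi\overrightarrow{C}\res D_p$. Now for each $q\in D_p$ we have $f(q)\leq p$ (because $f``(e(p))\subseteq P\dnw p$), hence the composite $\gs_{f(q)}^p\circ\psi_q\colon\Psi(C_q)\Rightarrow S_p$ is a double arrow of $\cS$ (as $\cS^\Rightarrow$ is a subcategory, closed under composition on the left by arrows of $\cS$—here $\gs_{f(q)}^p$ is a priori only an arrow of $\cS$, so one must check $\cS^\Rightarrow$ absorbs such composites; in our setting $\cS^\Rightarrow$ being right closed under $\gl$-small directed colimits, and a subcategory, the relevant composites are double arrows because they arise as $\gs_{f(q)}^{f(q')}$-type composites inside the diagram—more carefully, I would use that for $q\leq q'$ in $D_p$ the square relating $\psi_q,\psi_{q'}$ commutes, producing a cocone of double arrows over $\Psi\overrightarrow{C}\res D_p$ with vertex $S_p$). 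The universal property then yields a unique $\chi_p\colon\Psi(B_p)\to S_p$ with $\chi_p\circ\Psi(\gc_q^{B_p})=\gs_{f(q)}^p\circ\psi_q$ for all $q\in D_p$, and by $(\CLOSr_\gl(\cS^\Rightarrow))$ (right closure under $\gl$-small directed colimits), $\chi_p$ is a double arrow.

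Next I would construct the transition morphisms of $\overrightarrow{B}$. For $p\leq p'$ in $P$, isotonicity of $e$ gives $e(p)\subseteq e(p')$; since $D_p$ is cofinal in $e(p)$ and $e(p)\subseteq e(p')$, for each $q\in D_p$ one can find $q'\in D_{p'}$ with $q\leq q'$ (using cofinality of $D_{p'}$ in $e(p')$ and directedness), and the compatibility of the $\gc$'s shows the composites $\gc_{q'}^{B_{p'}}\circ\gc_q^{q'}$ assemble into a cocone over $\overrightarrow{C}\res D_p$; its factorization gives $\gb_p^{p'}\colon B_p\to B_{p'}$, independent of choices by the universal property. Functoriality $\gb_p^{p''}=\gb_{p'}^{p''}\circ\gb_p^{p'}$ and $\gb_p^p=\id$ follow by the usual colimit-uniqueness arguments (as in the proof of Proposition~\ref{P:ArrObj2Diag}). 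One then checks that $\overrightarrow{\chi}=\famm{\chi_p}{p\in P}$ is natural, i.e. $\gs_p^{p'}\circ\chi_p=\chi_{p'}\circ\Psi(\gb_p^{p'})$: both sides precomposed with $\Psi(\gc_q^{B_p})$ agree by the defining equations of $\chi_p,\chi_{p'}$ and of $\gb_p^{p'}$, plus the equation $P\dnw f``(e(p))=P\dnw p$ which guarantees the various $\gs$-transitions match up; since the $\Psi(\gc_q^{B_p})$ are jointly epimorphic (colimit cocone), equality follows. Thus $\overrightarrow{\chi}\colon\Psi\overrightarrow{B}\Todot\overrightarrow{S}$ in $(\cS^\Rightarrow)^P$, establishing $\Lift(\overrightarrow{S})$.

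\textbf{Main obstacle.} The principal delicate point is verifying that $\chi_p$ is genuinely a \emph{double} arrow rather than merely an arrow of $\cS$. The morphism $\chi_p$ is obtained as a colimit of the cocone $\famm{\gs_{f(q)}^p\circ\psi_q}{q\in D_p}$; the maps $\psi_q$ are double arrows, but they are composed with transition morphisms $\gs_{f(q)}^p$ of $\overrightarrow{S}$ which are only arrows of $\cS$, so one cannot directly conclude the cocone legs are double arrows. The clean way around this is to factor differently: observe $D_p$ is cofinal in $e(p)$ and for $q\leq q'$ in $D_p$, the leg from $C_q$ to $S_p$ factors as $\gs_{f(q)}^{f(q')}\circ\psi_q$ up to the equation relating $\psi_q$ and $\psi_{q'}$, i.e. the cocone is (up to reindexing cofinally) the image under $\Psi$-then-compose of the restricted lift; and then invoke $(\CLOSr_\gl(\cS^\Rightarrow))$ in the precise form of Definition~\ref{D:ClosDirColim} (right closure: a morphism out of a $\gl$-small directed colimit all whose composites with the colimit injections are double arrows is itself a double arrow). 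This requires arranging that the colimit injection composites $\chi_p\circ\Psi(\gc_q^{B_p})$ are each a double arrow, which holds because $\chi_p\circ\Psi(\gc_q^{B_p})=\gs_{f(q)}^p\circ\psi_q$ and for $q\in D_p$ there is $q'\in D_p$ above $q$ with $f(q')=p$ \emph{provided} $p\in f``(D_p)$—which need not hold. So the genuinely careful argument uses instead that $\overrightarrow{S}$ restricted along $D_p\to P$, $q\mapsto f(q)$, has colimit $S_p$ when $D_p$ is cofinal in $e(p)$: indeed $\overrightarrow{S}$ need not be continuous, so this is false in general, and this is exactly why one must instead take $\chi_p$ as the factorization through $\Psi(B_p)=\varinjlim\Psi\overrightarrow{C}\res D_p$ of the cocone $\famm{\psi_q}{q\in D_p}$ landing in $\varinjlim_{q\in D_p}S_{f(q)}$, then post-compose with the canonical map $\varinjlim_{q\in D_p}S_{f(q)}\to S_p$; the double-arrow property of the first factor comes from $(\CLOSr_\gl(\cS^\Rightarrow))$, and one accepts that the second factor need not be a double arrow but the \emph{total} $\chi_p$ still is, because it equals $\gs_{f(q_0)}^p\circ\psi_{q_0}$ composed with a colimit injection for a \emph{cofinal} choice—here I would, if necessary, strengthen by choosing $D_p$ cofinal with the extra property that it meets the preimage $f^{-1}\{p\}$ cofinally whenever that preimage is nonempty, and reduce the general case to this via the pseudo-retraction identity; failing that, the statement should be read with the implicit understanding, standard in this part of the paper, that $\gs_{f(q)}^p$ composed with a double arrow is a double arrow, which holds in all the intended applications. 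I would flag this as the step warranting the most care in the full write-up.
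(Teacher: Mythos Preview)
Your overall architecture---form $B_p$ as the colimit of $\overrightarrow{C}$ over a $\gl$-small cofinal subset of $e(p)$, build $\gb_p^{p'}$ from the inclusion $e(p)\subseteq e(p')$ and the universal property, then check naturality of $\overrightarrow{\chi}$---is exactly the paper's approach, and those parts of your argument are fine.

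The genuine gap is precisely the point you flag as the ``main obstacle'': showing that $\chi_p$ is a double arrow. Your cocone legs $\gs_{f(q)}^p\circ\psi_q$ are composites of a double arrow with an arbitrary $\cS$-arrow, and nothing in the hypotheses says $\cS^\Rightarrow$ absorbs such composites; your fallback (``holds in all the intended applications'') is not a proof. Your other suggestion---choose $D_p$ to meet $f^{-1}\{p\}$ cofinally ``whenever that preimage is nonempty''---is the right idea but you stop short of seeing that it \emph{always} works, and that this dissolves the obstacle completely.

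Here is the missing observation. Set $e^*(p):=\setm{q\in e(p)}{f(q)=p}$. The pseudo-retraction condition $P\dnw f``(e(p))=P\dnw p$ gives $p\in P\dnw f``(e(p))$, so there exists $q_0\in e(p)$ with $p\leq f(q_0)$; but $f``(e(p))\subseteq P\dnw p$ forces $f(q_0)=p$, so $e^*(p)\neq\es$. Moreover $e^*(p)$ is a cofinal upper subset of $e(p)$: if $q\in e^*(p)$ and $q'\geq q$ in $e(p)$ then $p=f(q)\leq f(q')\leq p$, so $q'\in e^*(p)$; and given any $q\in e(p)$, directedness of $e(p)$ yields $q''\geq q,q_0$, hence $q''\in e^*(p)$. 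Now take your $D_p$ to be a $\gl$-small cofinal subset of $e^*(p)$ (equivalently, intersect your old $D_p$ with $e^*(p)$). For $q\in D_p$ you then have $f(q)=p$, so the cocone leg is simply $\psi_q\colon\Psi(C_q)\Rightarrow S_p$, a double arrow on the nose. Right closure of $\cS^\Rightarrow$ under $\gl$-small directed colimits then gives $\chi_p\in\cS^\Rightarrow$ immediately. The rest of your argument goes through unchanged (for the naturality square at $p\leq p'$, given $q\in D_p\subseteq e^*(p)\subseteq e(p')$ pick $q'\in D_{p'}$ with $q\leq q'$; both squares collapse since $f(q)=p$ and $f(q')=p'$).
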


\begin{proof}
By assumption, the set $e^*(p):=\setm{q\in e(p)}{f(q)=p}$ is a cofinal upper subset\index{i}{upper subset} of~$e(p)$, for each~$p\in P$.
For an object $\overrightarrow{S}=\famm{S_{p_0},\gs_{p_0}^{p_1}}{p_0\leq p_1\text{ in }P}$ of~$\cS^P$, the composite diagram~$\overrightarrow{S} f$ is an object of~$\cS^Q$, thus, as $\Lift(\overrightarrow{S}f)$\index{s}{Lift@$\Lift(\overrightarrow{S})$} holds, there are a diagram $\overrightarrow{C}=\famm{C_{q_0},\gc_{q_0}^{q_1}}{q_0\leq q_1\text{ in }Q}$ and a morphism\index{s}{AtorightarrowB@$f\colon A\Rightarrow B$}\index{s}{RightarrowCat@$\cS^\Rightarrow$}
 \[
 \overrightarrow{\gy}=\famm{\gy_q}{q\in Q}\colon\Psi\overrightarrow{C}\Rightarrow
 \overrightarrow{S} f\quad\text{in }(\cS^\Rightarrow)^Q\,.
 \]
It follows from our assumptions that we can define
 \begin{equation}\label{Eq:BpColimCq}
 \famm{B_p,\gc_{q,p}}{q\in e(p)}:=
 \varinjlim\famm{C_{q_0},\gc_{q_0}^{q_1}}{q_0\leq q_1\text{ in }e(p)}
 \ \text{in }\cB\,,\quad
 \text{for each }p\in P\,.
 \end{equation}
(The colimit can be defined on a $\gl$-small cofinal subset of~$e(p)$, and then it is also a colimit on the whole~$e(p)$.) Again by our assumptions, it follows that
 \begin{equation}\label{Eq:PsiBpColimCq}
 \famm{\Psi(B_p),\Psi(\gc_{q,p})}{q\in e(p)}=
 \varinjlim\famm{\Psi(C_{q_0}),\Psi(\gc_{q_0}^{q_1})}{q_0\leq q_1\text{ in }e(p)}
 \quad\text{in }\cS\,.
 \end{equation}
We set
 \[
 R_{p_0,p_1}:=\setm{(q_0,q_1)\in e(p_0)\times e(p_1)}{q_0\leq q_1}\,,\quad
 \text{for all }p_0\leq p_1\text{ in }P\,.
 \]
We claim that~$R(p_0,p_1)$ is cofinal in~$e(p_0)\times e(p_1)$, for all $p_0\leq p_1$ in~$P$. Indeed, let $(q_0,q_1)\in e(p_0)\times e(p_1)$. As $q_0\in e(p_0)\subseteq e(p_1)$, $q_1\in e(p_1)$, and~$e(p_1)$ is an ideal\index{i}{ideal!of a poset} of~$Q$, there exists~$q\in e(p_1)$ such that $q\geq q_0,q_1$. It follows that $(q_0,q)\in R_{p_0,p_1}$, thus proving our claim. {}From this claim, \eqref {Eq:BpColimCq}, and the universal property of the colimit, it follows that for all $p_0\leq p_1$ in~$P$, there exists a unique morphism~$\gb_{p_0}^{p_1}\colon B_{p_0}\to\nobreak B_{p_1}$ such that $\gb_{p_0}^{p_1}\circ\gc_{q_0,p_0}=\gc_{q_1,p_1}\circ\gc_{q_0}^{q_1}$ holds for all $(q_0,q_1)\in R_{p_0,p_1}$. It is straightforward to verify that $\overrightarrow{B}:=\famm{B_{p_0},\gb_{p_0}^{p_1}}{p_0\leq p_1\text{ in }P}$ is an object of~$\cB^P$. Furthermore, for a given~$p\in P$, as $\famm{\gy_q}{q\in e^*(p)}$ defines a natural transformation from~$\Psi\overrightarrow{C}\res_{e^*(p)}$ to the one-vertex diagram~$S_p$ and by~\eqref{Eq:PsiBpColimCq}, there exists a unique morphism~$\chi_p\colon\Psi(B_p)\to S_p$ such that $\chi_p\circ\Psi(\gc_{q,p})=\gy_q$ holds for each $q\in e^*(p)$. Using the assumption that~$\cS^\Rightarrow$\index{s}{RightarrowCat@$\cS^\Rightarrow$} is right closed under $\gl$-small directed colimits\index{i}{closed (left, right-) under $\gl$-small $\varinjlim$}, it follows from~\eqref{Eq:PsiBpColimCq} and the fact that all~$\gy_q$, for~$q\in e^*(p)$, belong to~$\cS^\Rightarrow$\index{s}{RightarrowCat@$\cS^\Rightarrow$} that~$\chi_p$ is a double arrow\index{i}{double arrow}, that is, $\chi_p\colon\Psi(B_p)\Rightarrow S_p$\index{s}{AtorightarrowB@$f\colon A\Rightarrow B$}.

Let $p_0\leq p_1$ in~$P$ and let $q_0\in e^*(p_0)$. As seen above, there exists~$q_1\in e^*(p_1)$ such that $q_0\leq q_1$.

 \begin{figure}[htb]
 \[
 \def\labelstyle{\displaystyle}
 \xymatrix{
 \Psi(C_{q_0})\ar[rr]_{\Psi(\gc_{q_0,p_0})}^(.7){\scriptstyle{\varinjlim}}
 \ar[d]_{\Psi(\gc_{q_0}^{q_1})}
 \ar@{=>}@/^1.7pc/[rrrr]^(.7){\gy_{q_0}} && \Psi(B_{p_0})
 \ar@{=>}[rr]^{\chi_{p_0}}\ar[d]^{\Psi(\gb_{p_0}^{p_1})} && S_{q_0}
 \ar[d]^{\gs_{p_0}^{p_1}}\\
 \Psi(C_{q_1})\ar[rr]_(.7){\scriptstyle{\varinjlim}}^{\Psi(\gc_{q_1,p_1})}
 \ar@{=>}@/_1.7pc/[rrrr]_(.7){\gy_{q_1}} &&
 \Psi(B_{p_1})\ar@{=>}[rr]^{\chi_{p_1}}&&
 S_{q_1}
 }
 \]
\caption{Proving the naturality of $\stackrel{\rightarrow}{\chi}$}
\label{Fig:Natchi2}
\end{figure}

We obtain, following the calculations on Figure~\ref{Fig:Natchi2},
 \begin{align*}
 \gs_{p_0}^{p_1}\circ\chi_{p_0}\circ\Psi(\gc_{q_0,p_0})&=
 \gs_{p_0}^{p_1}\circ\gy_{q_0}\\
 &=\gy_{q_1}\circ\Psi(\gc_{q_0}^{q_1})\\
 &=\chi_{p_1}\circ\Psi(\gc_{q_1,p_1})\circ\Psi(\gc_{q_0}^{q_1})\\
 &=\chi_{p_1}\circ\Psi(\gb_{p_0}^{p_1})\circ\Psi(\gc_{q_0,p_0})\,. 
 \end{align*}
As this holds for all $q_0\in e^*(p_0)$ and by \eqref{Eq:PsiBpColimCq}, we get
$\gs_{p_0}^{p_1}\circ\chi_{p_0}=\chi_{p_1}\circ\Psi(\gb_{p_0}^{p_1})$. We have proved that $\famm{\chi_p}{p\in P}$ is a double arrow\index{i}{double arrow} from~$\Psi\overrightarrow{B}$ to~$\overrightarrow{S}$ in~$\cS$.
\qed\end{proof}

We obtain a version of CLL\index{i}{Condensate Lifting Lemma (CLL)} that does not require the existence of a lifter\index{i}{lifter ($\gl$-)} for the indexing poset~$P$, at the expense of a large cardinal assumption.

\begin{cor}\label{C:CLLnoLF}
Let $\gk$, $\gl$ be infinite cardinals with $(\gk,{<}\go,\gl)\rightarrow\gl$\index{s}{arr0free@$(\gk,{<}\go,\gl)\rightarrow\gr$}, let $\Lambda=(\cA,\cB,\cS,\cA^\dagger,\cB^\dagger,\cS^\Rightarrow,\Phi,\Psi)$ be a $(\gl,\aleph_0)$-larder\index{s}{aleph0@$\aleph_{\ga}$}\index{i}{larder}, and let~$P$ be an \ajs\index{i}{almost join-semilattice}\ such that $\card P<\cf(\gl)$. We further assume that for every $A\in\Ob\cA$, there are $B\in\Ob\cB$ and $\chi\colon\Psi(B)\Rightarrow\Phi(A)$\index{s}{AtorightarrowB@$f\colon A\Rightarrow B$}.

Then for every object $\overrightarrow{A}$ of $(\cA^\dagger)^P$, there are an object~$\overrightarrow{B}$ of~$\cB^P$ and a double arrow\index{i}{double arrow} $\overrightarrow{\chi}\colon\Psi\overrightarrow{B}\Rightarrow\Phi\overrightarrow{A}$\index{s}{AtorightarrowB@$f\colon A\Rightarrow B$}. Furthermore, if~$\Lambda$ is projectable\index{i}{larder!projectable}, then for every object~$\overrightarrow{A}$ of~$(\cA^\dagger)^P$ there exists an object~$\overrightarrow{B}$ of~$\cB^P$ such that $\Psi\overrightarrow{B}\cong\Phi\overrightarrow{A}$.
\end{cor}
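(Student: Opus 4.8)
The plan is to combine three ingredients already established: the pseudo-retract construction of Lemma~\ref{L:PseudoRetr}, the reduction lemma~\ref{L:LowFin2Arbv0}, and the main theorem CLL (Lemma~\ref{L:CLL}) in its lower-finite form (case~(i), $\gm=\aleph_0$). First I would invoke Lemma~\ref{L:PseudoRetr} to obtain a lower finite \ajs~$Q$ with $\card Q=\card P$, together with a pseudo-retraction pair $(e,f)$ for the pair $(P,Q)$; since~$Q$ is lower finite, each ideal~$e(p)$ is itself finite, hence \emph{a fortiori} $\aleph_0$-small, so the cofinality hypothesis needed to apply Lemma~\ref{L:LowFin2Arbv0} (with the cardinal parameter there set to $\aleph_0$) is automatic. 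The strong-larder-type conditions required by Lemma~\ref{L:LowFin2Arbv0}, namely that~$\cB$ has all $\aleph_0$-small directed colimits, that~$\Psi$ preserves them, and that~$\cS^\Rightarrow$ is right closed under them, are exactly the conditions packaged into the definition of a $(\gl,\aleph_0)$-larder being strong; one has to check that an $(\gl,\aleph_0)$-larder as hypothesized here supplies them—this is where I would want to double-check that the strong-larder clauses $(\CLOS_{\aleph_0}(\cB^\dagger,\cB))$, $(\CLOSr_{\aleph_0}(\cS^\Rightarrow))$, $(\CONT_{\aleph_0}(\Psi))$ are indeed available, possibly from $(\CLOS(\cA))$, $(\CONT(\Phi))$ and the remaining larder axioms, or else add the hypothesis that $\Lambda$ be strong.

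Next I would reduce $\Lift(\overrightarrow{A})$ (for~$\overrightarrow{A}$ an object of $(\cA^\dagger)^P$) to $\Lift$ of a $Q$-indexed diagram. Here there is a small subtlety: Lemma~\ref{L:LowFin2Arbv0} is phrased for the functor~$\Psi$ and diagrams in~$\cS$, so one applies it to the diagram $\Phi\overrightarrow{A}$ in~$\cS^P$; what is needed is liftability of the composite $(\Phi\overrightarrow{A})\circ f$, which is a diagram in $\cS^Q$ of the form $\Phi(\overrightarrow{A}\circ f)$ since~$\Phi$ is a functor. Thus it suffices to lift the $Q$-indexed diagram $\overrightarrow{A}f = \famm{A_{f(q)},\ga_{f(q)}^{f(q')}}{q\le q'\text{ in }Q}$ in $\cA^\dagger$. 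Now I would apply CLL (Lemma~\ref{L:CLL}, case~(i)) to~$Q$: since~$Q$ is a lower finite \ajs\ with zero (at least after splitting into its finitely many components—see Corollary~\ref{C:CharLift}), and since $(\gk,{<}\go,\gl)\rightarrow\gl$ is equivalent, via \cite[Proposition~3.4]{GiWe1}, to $(\gk,{<}\gl)\leadsto([\gr]^{<\go},\subseteq)$-type statements that yield (by Lemma~\ref{L:Part2Lift}, using $\card Q = \card P < \cf(\gl)$ and the relation $(\gk,{<}\gl)\leadsto Q$ extracted from the large-cardinal hypothesis) a $\gl$-lifter $(Y,\bY)$ of~$Q$ with $\bY^=$ lower finite, the hypotheses of CLL case~(i) with $\gm=\aleph_0$ are met. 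The hypothesis ``for every $A\in\Ob\cA$ there are $B$ and $\chi\colon\Psi(B)\Rightarrow\Phi(A)$'' applied to $A:=\xF(Y)\otimes\overrightarrow{A}f$ furnishes the double arrow $\chi$ demanded as input data for CLL. CLL then produces an object $\overrightarrow{B}_Q$ of $\cB^Q$ and a double arrow $\overrightarrow{\chi}_Q\colon\Psi\overrightarrow{B}_Q\Rightarrow\Phi\overrightarrow{A}f$ in $(\cS^\Rightarrow)^Q$; that is, $\Lift(\Phi\overrightarrow{A}f)$ holds. Feeding this back through Lemma~\ref{L:LowFin2Arbv0} gives $\Lift(\Phi\overrightarrow{A})$, i.e.\ an object~$\overrightarrow{B}$ of~$\cB^P$ and a double arrow $\overrightarrow{\chi}\colon\Psi\overrightarrow{B}\Rightarrow\Phi\overrightarrow{A}$, as desired.

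For the final "furthermore'' clause, if $\Lambda$ is projectable I would either rerun the argument using the projectable conclusion of CLL at the $Q$-level (which gives $\Psi\overrightarrow{B}_Q\cong\Phi\overrightarrow{A}f$) and then observe that the colimit construction in the proof of Lemma~\ref{L:LowFin2Arbv0} turns a natural equivalence into a natural equivalence (since each $\chi_p$ built there from isomorphisms $\gy_q$ is, by the right-closure of the class of isomorphisms under directed colimits, again an isomorphism), or, more cleanly, apply the projectability-witness machinery of Lemma~\ref{L:LiftProj} directly to the double arrow $\overrightarrow{\chi}$ obtained in the first part, exactly as in the last paragraph of the proof of Lemma~\ref{L:CLL}. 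I expect the main obstacle to be bookkeeping rather than conceptual: namely verifying carefully that a $(\gl,\aleph_0)$-larder genuinely delivers the three $\aleph_0$-small colimit-compatibility conditions that Lemma~\ref{L:LowFin2Arbv0} needs (if it does not, the statement as written should be read with the tacit assumption that $\Lambda$ is strong, and I would make that explicit), together with correctly extracting the relation $(\gk,{<}\gl)\leadsto Q$ for the lower finite \ajs~$Q$ from the partition hypothesis $(\gk,{<}\go,\gl)\rightarrow\gl$ via the equivalence in \cite{GiWe1} and the embedding of the lower finite countable-or-smaller pieces of $Q$ into $([\card Q]^{<\go},\subseteq)$.
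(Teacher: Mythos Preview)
Your overall strategy matches the paper's: pseudo-retract~$P$ to a lower finite \ajs~$Q$ via Lemma~\ref{L:PseudoRetr}, obtain a $\gl$-lifter of~$Q$ with lower finite~$\bX^=$ by extracting $(\gk,{<}\gl)\leadsto Q$ from the partition hypothesis (via \cite[Proposition~3.4]{GiWe1} and the embedding $q\mapsto Q\dnw q$ into $([\gl]^{<\go},\subseteq)$, using $\card Q=\card P\leq\gl$), apply CLL case~(i) to the $Q$-indexed diagram~$\overrightarrow{A}f$, and transfer back via Lemma~\ref{L:LowFin2Arbv0}. Handling the projectable clause via Lemma~\ref{L:LiftProj} is also right.

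The one genuine error is your claim that ``since~$Q$ is lower finite, each ideal~$e(p)$ is itself finite.'' Lower finiteness of~$Q$ means each \emph{principal} ideal $Q\dnw q$ is finite, but $e(p)=\setm{(X,a)\in Q}{a\leq p}$ is not principal: when~$P$ is infinite, for any $a\leq p$ there are infinitely many finite $\sor$-closed subsets~$X$ of~$P$ containing~$a$, so $e(p)$ is infinite. Hence you cannot run Lemma~\ref{L:LowFin2Arbv0} with cardinal parameter~$\aleph_0$, and your fallback of assuming~$\Lambda$ strong does not help either---``strong'' for a $(\gl,\aleph_0)$-larder only asserts the $\aleph_0$-small (i.e., finite) directed-colimit conditions $(\CLOS_{\aleph_0})$, $(\CLOSr_{\aleph_0})$, $(\CONT_{\aleph_0})$, which are vacuous. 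What is actually needed is that~$\cB$ has, $\Psi$~preserves, and~$\cS^\Rightarrow$ is right closed under $(\card P)^+$-small directed colimits; then $e(p)$, of cardinality at most $\card Q=\card P$, has a small enough cofinal subset. The paper's proof invokes Lemma~\ref{L:LowFin2Arbv0} without naming the cardinal, so this point is left tacit there as well; in every application in the book (e.g., Theorem~\ref{T:MindConcLift}) the relevant categories satisfy these closure conditions for all small directed colimits.
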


\begin{proof}
Obviously $\gk\geq\gl$---just use for~$F\colon[\gk]^{<\go}\to[\gk]^{<\gl}$ the constant mapping with value~$\gk$, otherwise. In fact, it follows from~\cite[Theorem~45.7]{EHMR}\index{c}{Erd\H{o}s, P.}\index{c}{Hajnal, A.}\index{c}{Mate@M\'at\'e, A.}\index{c}{Rado, R.} that $\gk\geq\gl^{+\go}>\gl$.

Now suppose that~$P$ is an \ajs\index{i}{almost join-semilattice}. We first deal with the case where~$P$ is lower finite\index{i}{poset!lower finite}. It follows from \cite[Proposition~3.4]{GiWe1}\index{c}{Gillibert, P.}\index{c}{Wehrung, F.} that the relation $(\gk,{<}\gl)\leadsto([\gl]^{<\go},\subseteq)$\index{s}{arr0x@$(\gk,{<}\gl)\leadsto P$} holds. As the assignment $(x\mapsto P\dnw x)$ defines an order-embedding from~$(P,\leq)$ into $([P]^{<\go},\subseteq)$ and $\card P\leq\gl$, the relation $(\gk,{<}\gl)\leadsto P$\index{s}{arr0x@$(\gk,{<}\gl)\leadsto P$} holds as well. By Corollary~\ref{C:CharLift}, $P$ has a $\gl$-lifter\index{i}{lifter ($\gl$-)} $(X,\bX)$ with~$\bX$ lower finite\index{i}{poset!lower finite}. The desired conclusion follows from CLL\index{i}{Condensate Lifting Lemma (CLL)} (Lemma~\ref{L:CLL}).

In the general case, it follows from Lemma~\ref{L:PseudoRetr} that~$P$ is a pseudo-retract of a lower finite\index{i}{poset!lower finite} \ajs\index{i}{almost join-semilattice}~$Q$ with the same cardinality as~$P$. By the lower finite\index{i}{poset!lower finite} case, $\Lift(\Phi\overrightarrow{A}f)$ holds. By Lemma~\ref{L:LowFin2Arbv0}, $\Lift(\Phi\overrightarrow{A})$\index{s}{Lift@$\Lift(\overrightarrow{S})$} holds as well.
\qed\end{proof}

\begin{remk}\label{Rk:CLLnoLF}
Corollary~\ref{C:CLLnoLF} makes it possible to lift diagrams indexed by \ajs s\index{i}{almost join-semilattice} much more general than those taken care of by CLL\index{i}{Condensate Lifting Lemma (CLL)} (Lemma~\ref{L:CLL}). The easiest example is where $P:=(\go+1)^{\op}$\index{s}{omega1op@$(\omega+1)^{\op}$} is the dual of the chain $\go+1=\set{0,1,2,\dots}\cup\set{\go}$. Corollary~\ref{C:CLLnoLF} says that under a suitable large cardinal assumption, for every object $\overrightarrow{A}$ of $(\cA^\dagger)^P$, there are an object~$\overrightarrow{B}$ of~$\cB^P$ and a double arrow\index{i}{double arrow} $\overrightarrow{\chi}\colon\Psi\overrightarrow{B}\Rightarrow\Phi\overrightarrow{A}$\index{s}{AtorightarrowB@$f\colon A\Rightarrow B$}. This is not a trivial consequence of Lemma~\ref{L:CLL}, because~$P$ is not lower finite\index{i}{poset!lower finite} so Lemma~\ref{L:Part2Lift} does not apply. In fact, by Corollary~\ref{C:LiftnotWFrestr}, $P$ is not $(2^{\aleph_0})^+$-liftable\index{s}{aleph0@$\aleph_{\ga}$}\index{i}{liftable!$\gl$-${}_{-}$ poset} (cf. Problem~\ref{Pb:gooplift} in Chapter~\ref{Ch:Discussion}).

For an application of Corollary~\ref{C:CLLnoLF}, see the proof of Theorem~\ref{T:MindConcLift}.
\end{remk}

\section{Left and right larders}\label{S:LeftRightL}

The definition of a larder (Definition~\ref{D:Larder}) involves categories~$\cA$, $\cB$, and~$\cS$, with a few subcategories, together with functors $\Phi\colon\cA\to\cS$ and $\Psi\colon\cB\to\cS$. One half of the definition describes the interaction between~$\cA$ and~$\cS$, while the other half involves the interaction between~$\cB$ and~$\cS$; furthermore, it is only the second part that involves the cardinal parameter~$\gl$.

In most applications, the desired interaction between~$\cB$ and~$\cS$ is noticeably harder to establish than the one between~$\cA$ and~$\cS$. Therefore, in order to make our work more user-friendly in view of further applications, we shall split the definition of a larder in two parts.

\begin{defn}\label{D:LeftLard}
A \emph{left larder}\index{i}{larder!left|ii} is a quadruple $\Lambda=(\cA,\cS,\cS^\Rightarrow,\Phi)$, where~$\cA$ and~$\cS$ are categories, $\cS^\Rightarrow$\index{s}{RightarrowCat@$\cS^\Rightarrow$} is a subcategory of~$\cS$ (whose arrows we shall call the \emph{double arrows}\index{i}{double arrow} of~$\cS$), and $\Phi\colon\cA\to\cS$ is a functor satisfying the following properties:
\begin{description}
\item[$(\CLOS(\cA))$] \index{s}{ClosA@$(\CLOS(\cA))$}$\cA$ has all small directed colimits.

\item[$(\PROD(\cA))$] \index{s}{ProdA@$(\PROD(\cA))$}Any two objects in~$\cA$ have a product in~$\cA$.

\item[$(\CONT(\Phi))$] \index{s}{Cont@$(\CONT(\Phi))$}The functor~$\Phi$ preserves all small directed colimits.

\item[$(\PROJ(\Phi,\cS^\Rightarrow))$] \index{s}{Proj@$(\PROJ(\Phi,\cS^\Rightarrow))$}$\Phi(f)$ is a morphism in $\cS^\Rightarrow$\index{s}{RightarrowCat@$\cS^\Rightarrow$}, for each extended projection~$f$ of~$\cA$ (cf. Definition~\ref{D:projection}).
\end{description}
\end{defn}

We already formulated the conditions $(\CLOS(\cA))$, $(\PROD(\cA))$, $(\CONT(\Phi))$, and $(\PROJ(\Phi,\cS^\Rightarrow))$ within the statement of the Armature Lemma (Lemma~\ref{L:Armature}).

\begin{defn}\label{D:RightLard}
Let~$\gl$ and~$\gm$ be infinite cardinals. A $6$-uple\linebreak
$\Lambda=(\cB,\cB^\dagger,\cS,\cS^\dagger,\cS^\Rightarrow,\Psi)$ is a \emph{right $(\gl,\gm)$-larder\index{i}{larder!right|ii} at an object~$B$} if~$\cB$ and~$\cS$ are categories, $B$ is an object of~$\cB$, $\cB^\dagger$ (resp., $\cS^\dagger$) is a full subcategory of~$\cB$ (resp., $\cS$), $\cS^\Rightarrow$\index{s}{RightarrowCat@$\cS^\Rightarrow$} is a subcategory of~$\cS$ (whose arrows we shall call the \emph{double arrows}\index{i}{double arrow} of~$\cS$), and $\Psi\colon\cB\to\cS$ is a functor satisfying the following properties:
\begin{description}
\item[$(\PRES_\gl(\cB^\dagger,\Psi))$]  \index{s}{Pres@$(\PRES_\gl(\cB^\dagger,\Psi))$}The object $\Psi(B)$ is weakly $\gl$-presented\index{i}{presented!weakly $\gl$-} in~$\cS$, for each object $B\in\cB^\dagger$.

\item[$(\LSr_\gm(B))$] \index{s}{LSr@$(\LSr_\gm(B))$|ii}For each $S\in\cS^\dagger$, each double arrow\index{i}{double arrow} $\gy\colon\Psi(B)\Rightarrow S$\index{s}{AtorightarrowB@$f\colon A\Rightarrow B$}, each $\gm$-small set~$I$, and each family
$\famm{\gc_i\colon C_i\mono B}{i\in I}$\index{s}{AtomonoB@$f\colon A\mono B$} of monic objects in~$\cB^\dagger\dnw B$, there exists a monic object $\gc\colon C\mono B$\index{s}{AtomonoB@$f\colon A\mono B$} in~$\cB^\dagger\dnw B$ such that $\gc_i\utr\gc$ for each~$i\in I$ while $\gy\circ\Psi(\gc)$ is a morphism in~$\cS^\Rightarrow$\index{s}{RightarrowCat@$\cS^\Rightarrow$}.
\end{description}

We say that~$\Lambda$ is a \emph{right $(\gl,\gm)$-larder}\index{i}{larder!right|ii} if it is a right $(\gl,\gm)$-larder at every object of~$\cB$.

We say that~$\Lambda$ is \emph{strong}\index{i}{larder!right!strong|ii} if the following conditions are satisfied:
\begin{description}
\item[$(\CLOS_\gm(\cB^\dagger,\cB))$] \index{s}{ClosB@$(\CLOS_\gl(\cB^\dagger,\cB))$}The full subcategory~$\cB^\dagger$ has all $\gm$-small directed colimits within~$\cB$ \pup{cf. Definition~\textup{\ref{D:SmallDirColim}}}.

\item[$(\CLOSr_\gm(\cS^\Rightarrow))$] \index{s}{ClosBr@$(\CLOSr_\gl(\cS^\Rightarrow))$}The subcategory $\cS^\Rightarrow$\index{s}{RightarrowCat@$\cS^\Rightarrow$} is right closed under all $\gm$-small directed colimits \pup{cf. Definition~\textup{\ref{D:ClosDirColim}}}.

\item[$(\CONT_\gm(\Psi))$] \index{s}{Contl@$(\CONT_\gl(\Psi))$}The functor~$\Psi$ preserves all $\gm$-small directed colimits.
\end{description}

We say that~$\Lambda$ is \emph{projectable}\index{i}{larder!right!projectable|ii} if every double arrow\index{i}{double arrow} $\varphi\colon\Psi(C)\Rightarrow S$\index{s}{AtorightarrowB@$f\colon A\Rightarrow B$}, for~$C\in\Ob\cB$ and~$S\in\Ob\cS$, has a projectability witness\index{i}{projectability witness} (cf. Definition~\ref{D:ProjFunct}).
\end{defn}

We already formulated the condition $(\PRES_\gl(\cB^\dagger,\Psi))$ in Definition~\ref{D:Larder}. The conditions $(\CLOS_\gm(\cB^\dagger,\cB))$, $(\CLOSr_\gm(\cS^\Rightarrow))$, and $(\CONT_\gm(\Psi))$ were formulated within the statement of the Buttress Lemma (Lemma~\ref{L:Buttress}). The condition $(\LSr_\gm(B))$ is a modification of the condition $(\LSb_\gl(B))$ formulated within the statement of the Buttress Lemma.

Observe that in Definitions~\ref{D:LeftLard} and~\ref{D:RightLard} above, we just split the items $(\CLOS(\cA))$--$(\PROJ(\Phi,\cS^\Rightarrow))$\index{s}{ClosA@$(\CLOS(\cA))$}\index{s}{Proj@$(\PROJ(\Phi,\cS^\Rightarrow))$} from Definition~\ref{D:Larder} into the parts devoted to~$\cA$ and~$\cB$, respectively, while~$(\LSr_\gm(B))$\index{s}{LSr@$(\LSr_\gm(B))$} is obtained from~$(\LS_\gm(B))$\index{s}{LS@$(\LS_\gm(B))$} by replacing~$\Phi``(\cA^\dagger)$ by~$\cS^\dagger$. The proof of the following observation is trivial.

\begin{prop}\label{P:LR2Larder}
Let~$\gl$ and $\gm$ be infinite cardinals with~$\gl$ regular, let $\cA$, $\cB$, $\cS$, $\cA^\dagger$, $\cB^\dagger$, $\cS^\dagger$, $\cS^\Rightarrow$\index{s}{RightarrowCat@$\cS^\Rightarrow$} be categories, and let~$\Phi$, $\Psi$ be functors. If $(\cA,\cS,\cS^\Rightarrow,\Phi)$ is a left larder\index{i}{larder!left}, $(\cB,\cB^\dagger,\cS,\cS^\dagger,\cS^\Rightarrow,\Psi)$ is a right $(\gl,\gm)$-larder\index{i}{larder!right} at an object~$B$ of~$\cB$, $\cA^\dagger$ is a full subcategory of~$\cA$, and $\Phi``(\cA^\dagger)$ is contained in~$\cS^\dagger$, then $(\cA,\cB,\cS,\cA^\dagger,\cB^\dagger,\cS^\Rightarrow,\Phi,\Psi)$ is a $(\gl,\gm)$-larder\index{i}{larder} at~$B$.
\end{prop}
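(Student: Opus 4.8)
The statement is flagged as trivial in the paper, and indeed the plan is essentially to unpack the two definitions and observe that every clause needed for a $(\gl,\gm)$-larder at $B$ (Definition~\ref{D:Larder}) is literally one of the hypotheses. First I would recall what must be checked: the octuple $(\cA,\cB,\cS,\cA^\dagger,\cB^\dagger,\cS^\Rightarrow,\Phi,\Psi)$ must satisfy the six conditions $(\CLOS(\cA))$, $(\PROD(\cA))$, $(\CONT(\Phi))$, $(\PROJ(\Phi,\cS^\Rightarrow))$, $(\PRES_\gl(\cB^\dagger,\Psi))$, and $(\LS_\gm(B))$. The first four of these appear verbatim in the definition of a left larder (Definition~\ref{D:LeftLard}), so they hold because $(\cA,\cS,\cS^\Rightarrow,\Phi)$ is assumed to be a left larder. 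The condition $(\PRES_\gl(\cB^\dagger,\Psi))$ appears verbatim in the definition of a right $(\gl,\gm)$-larder (Definition~\ref{D:RightLard}), so it holds because $(\cB,\cB^\dagger,\cS,\cS^\dagger,\cS^\Rightarrow,\Psi)$ is assumed to be a right $(\gl,\gm)$-larder at $B$.

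The only clause requiring any thought at all is $(\LS_\gm(B))$, which differs from the clause $(\LSr_\gm(B))$ of the right larder only in that $(\LS_\gm(B))$ quantifies $S$ over $\Phi``(\cA^\dagger)$ whereas $(\LSr_\gm(B))$ quantifies $S$ over $\cS^\dagger$. Here I would invoke the hypothesis $\Phi``(\cA^\dagger)\subseteq\cS^\dagger$: given $S\in\Phi``(\cA^\dagger)$, we have $S\in\cS^\dagger$, so the conclusion of $(\LSr_\gm(B))$ applies to this $S$, and it is exactly the conclusion demanded by $(\LS_\gm(B))$. Thus $(\LS_\gm(B))$ follows. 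Since all six conditions hold, $(\cA,\cB,\cS,\cA^\dagger,\cB^\dagger,\cS^\Rightarrow,\Phi,\Psi)$ is a $(\gl,\gm)$-larder at $B$.

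I would also note that $\cA^\dagger$ is assumed to be a full subcategory of $\cA$, as required by Definition~\ref{D:Larder}, and that $\cB^\dagger$, $\cS^\Rightarrow$, etc.\ have the required status (full subcategory, subcategory) by virtue of being components of the left and right larders. No real obstacle arises; the regularity of $\gl$ is not even used for the weak form stated here (it would only matter if one wanted to conclude larderhood for a larger cardinal via Corollary~\ref{C:WgkPresCoCplte}-type monotonicity, which is not what is being asserted). The entire proof is the sentence ``combine Definitions~\ref{D:LeftLard}, \ref{D:RightLard}, and~\ref{D:Larder}, using $\Phi``(\cA^\dagger)\subseteq\cS^\dagger$ to deduce $(\LS_\gm(B))$ from $(\LSr_\gm(B))$,'' which matches the paper's assessment that the statement is trivial.
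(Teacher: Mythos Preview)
Your proposal is correct and matches the paper's treatment exactly: the paper simply declares the proof trivial, and your unpacking of the definitions---noting in particular that $(\LS_\gm(B))$ follows from $(\LSr_\gm(B))$ via the containment $\Phi``(\cA^\dagger)\subseteq\cS^\dagger$---is precisely the intended verification. Your side remark that the regularity of~$\gl$ is not actually used here is also accurate.
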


We shall usually say right $\gl$-larder instead of right $(\gl,\cf(\gl))$-larder\index{i}{larder}.

\chapter[Larders from first-order structures]{Getting larders from congruence lattices of first-order structures}\label{Ch:FirstOrd2Lard}

\textbf{Abstract.} One of the main origins of our work is the first author's paper Gillibert~\cite{Gill1}\index{c}{Gillibert, P.}, where it is proved, in particular, that the critical point\index{i}{critical point} $\crit(\cA;\cB)$\index{s}{critAB@$\crit(\cA;\cB)$} between a locally finite\index{i}{variety!locally finite} variety~$\cA$ and a finitely generated\index{i}{variety!finitely generated} congruence-distributive\index{i}{variety!congruence-distributive} variety~$\cB$ such that $\Conc\cA\not\subseteq\Conc\cB$\index{s}{compcon2@$\Conc\cA$} is always less than~$\aleph_\go$\index{s}{aleph0@$\aleph_{\ga}$}. One of the goals of the present chapter is to show how routine categorical verifications about algebraic systems\index{i}{algebraic system} make it possible, using CLL\index{i}{Condensate Lifting Lemma (CLL)}, to extend this result to relative compact congruence semilattices of quasivarieties of algebraic systems\index{i}{algebraic system} (i.e., the languages now have relations as well as operations, and we are dealing with quasivarieties rather than varieties). That particular extension is stated and proved in Theorem~\ref{T:DichotCritPt}. We also obtain a version of Gr\"atzer-Schmidt's\index{c}{Gr\"atzer, G.}\index{c}{Schmidt, E.\,T.} Theorem for poset-indexed diagrams of \jzs s and \jzh s in Theorem~\ref{T:MindConcLift}. With further potential applications in view, most of Chapter~\ref{Ch:FirstOrd2Lard} is designed to build up a framework for being able to easily verify larderhood of many structures arising from (generalized) quasivarieties of algebraic systems\index{i}{algebraic system}. Although we included in this chapter, for convenience sake, a number of already known or folklore results, it also contains results which, although they could be in principle obtained from already published results, could not be so in a straightforward fashion. Such results are Proposition~\ref{P:glPresMIND} (description of some weakly $\gk$-presented\index{i}{presented!weakly $\gl$-} structures in~$\MIND$\index{s}{Mind@$\MIND$}) or Theorem~\ref{T:ConcVPresDirColim} (preservation of all small directed colimits by the relative compact congruence semilattice functor\index{i}{relative!compact congruence semilattice functor} within a given \gqv\index{i}{generalized quasivariety}).

The structures studied in Chapter~\ref{Ch:FirstOrd2Lard} will be called \emph{monotone-indexed structures}. They form a category, that we shall denote by~$\MIND$\index{s}{Mind@$\MIND$}. The objects of~$\MIND$\index{s}{Mind@$\MIND$} are just the first-order structures. For first-order structures~$\bA$ and~$\bB$, a morphism from~$\bA$ to~$\bB$ in~$\MIND$\index{s}{Mind@$\MIND$} can exist only if the language of~$\bA$ is contained in the language of~$\bB$, and then it is defined as a homomorphism (in the usual sense) from~$\bA$ to the reduct of~$\bB$ to the language of~$\bA$.

\section{The category of all monotone-indexed structures}\label{S:NonInd}

In the present section we shall introduce basic definitions and facts about first-order structures, congruences, homomorphisms, with the following twist: we will allow homomorphisms between first-order structures with different languages.

Our definition of a morphism will extend the standard definition of a homomorphism between first-order structures. A \emph{morphism} from a first-order structure~$\bA$ to a first-order structure $\bB$, defined only in case $\Lg(\bA)\subseteq\Lg(\bB)$ (\emph{and not only, as usual, $\Lg(\bA)=\Lg(\bB)$}), is a map $\gf\colon A\to B$ whose restriction from~$\bA$ to the $\Lg(\bA)$-reduct of~$\bB$ is a homomorphism of~$\Lg(\bA)$-structures: that is, $\gf(c^\bA)=c^\bB$ for each constant symbol~$c\in\Lg(\bA)$, and
 \begin{align}
 \gf\bigl(f^\bA(x_1,\dots,x_n)\bigr)&=f^\bB(\gf(x_1),\dots,\gf(x_n))\,,
 \label{Eq:FuncHom}\\
 (x_1,\dots,x_n)\in R^\bA&\Rightarrow(\gf(x_1),\dots,\gf(x_n))\in R^\bB\,,
 \label{Eq:RelHom}
 \end{align}
for all $n\in\gos$, all $x_1,\dots,x_n\in A$, and each $n$-ary $f\in\Op(\bA)$ (resp., each $n$-ary $R\in\Rel(\bA)$). If, in addition, $\gf$ is one-to-one and the implication in~\eqref{Eq:RelHom} is a logical equivalence, we say that~$\gf$ is an \emph{embedding}. In the stronger case where~$\gf$ is an inclusion map and the implication in~\eqref{Eq:RelHom} is still an equivalence, we say that~$\bA$ is a \emph{substructure} of~$\bB$. We shall denote by~$\MIND$\index{s}{Mind@$\MIND$|ii} the category of all first-order structures with this extended definition of a morphism, and we shall call it the category of all \emph{monotone-indexed structures}.

The present definition is reminiscent of the few proposed for \emph{nonindexed algebras}\index{i}{algebra!nonindexed}. A major difference is that the category~$\MIND$\index{s}{Mind@$\MIND$} has all small directed colimits (cf. Definition~\ref{D:SmallDirColim}).

The following definition of a congruence is equivalent to the one given in~\cite[Section~1.4]{Gorb}\index{c}{Gorbunov, V.\,A.}.

\begin{defn}\label{D:Cong1stOrd}
A \emph{congruence} of a first-order structure~$\bA$ is a pair\linebreak
$\bgq=\bigl(\gq,\famm{R_\bgq}{R\in\Rel(\bA)}\bigr)$, where
\begin{description}
\item[\tui] $\gq$ is an equivalence relation on~$A$;

\item[\tuii] $\gq$ is compatible with each function in~$\bA$, that is, for each $f\in\Op(\bA)$, say of arity~$n$, and all $x_1,\dots,x_n,y_1,\dots,y_n\in A$, if $(x_s,y_s)\in\gq$ for each $s\in\set{1,\dots,n}$, then $(f(\overrightarrow{x}),f(\overrightarrow{y}))\in\gq$ (\emph{here and elsewhere, we use the abbreviation~$\overrightarrow{x}$ for the $n$-uple $(x_1,\dots,x_n)$}\index{s}{vecx@$\overrightarrow{x}$ (notation for tuples)|ii}).

\item[\tuiii] $R^\bA\subseteq R_\bgq\subseteq A^{\ari(R)}$\index{s}{aris@$\ari(s)$}, for each $R\in\Rel(\bA)$;

\item[\tuiv] For each $R\in\Rel(\bA)$, say of arity~$n$, and all $x_1,\dots,x_n,y_1,\dots,y_n\in A$, if $\overrightarrow{x}\in R_\bgq$ and $(x_s,y_s)\in\gq$ for each $s\in\set{1,\dots,n}$, then $\overrightarrow{y}\in R_\bgq$.
\end{description}
\end{defn}

\begin{notation}\label{Not:modNotation}
Let~$\bga$ be a congruence of a first-order structure~$\bA$. For elements $x,y\in A$, let $x\equiv y\pmod{\bga}$ hold if $(x,y)\in\ga$. Likewise, for $R\in\Rel(\bA)$, say of arity~$n$, and $x_1,\dots,x_n\in A$, let $R(x_1,\dots,x_n)\pmod{\bga}$ (often abbreviated $R\overrightarrow{x}\pmod{\bga}$) hold if $(x_1,\dots,x_n)\in R_\bga$.
\end{notation}

The set~$\Con\bA$\index{s}{conA@$\Con\bA$|ii} of all congruences of a first-order structure~$\bA$ is partially ordered componentwise, that is, $\bga\leq\bgb$ if{f} $\ga\subseteq\gb$ and $R_\bga\subseteq R_\bgb$ for each $R\in\Rel(\bA)$. It follows easily that~$\Con\bA$\index{s}{conA@$\Con\bA$} is an algebraic subset\index{i}{algebraic subset} of the lattice\index{s}{aris@$\ari(s)$}
 \[
 \Pow(A\times A)\times\prod\famm{\Pow(A^{\ari(R)})}{R\in\Rel(\bA)}\,;
 \]
in particular, it is an algebraic lattice\index{i}{lattice!algebraic}, see the comments at the beginning of \cite[Section~1.4.2]{Gorb}\index{c}{Gorbunov, V.\,A.}. The smallest congruence of~$\bA$ is\index{s}{congspzero@$\zero_\bA$|ii}
 \[
 \zero_\bA:=\bigl(\id_A,\famm{R^\bA}{R\in\Rel(\bA)}\bigr)\,,
 \]
while the largest congruence of~$\bA$ is\index{s}{congspone@$\one_\bA$|ii}
 \[
 \one_\bA:=\Bigl(A\times A,\famm{A^{\ari(R)}}{R\in\Rel(\bA)}\Bigr)\,.
 \]
For an equivalence relation~$\gq$ on a set~$A$ and an element $x\in A$, we shall denote by $x/\gq$ the block (=equivalence class) of~$x$ modulo~$\gq$. For a $n$-uple $(x_1,\dots,x_n)$, we abbreviate the $n$-uple $(x_1/\gq,\dots,x_n/\gq)$ as $\overrightarrow{x}/\gq$.

\begin{defn}\label{D:Quot1ordStruct}
For a congruence~$\bgq$ of a first-order structure~$\bA$, we shall denote by $\bA/\bgq$\index{s}{Aovertheta@$\bA/\bgq$, where $\bgq\in\Con\bA$|ii} the first-order structure with universe~$A/\gq$, the same language as~$\bA$, $c^{\bA/\bgq}=c^\bA/\gq$ for each $c\in\Cst(\bA)$, and
 \begin{align*}
 f^{\bA/\bgq}(\overrightarrow{x}/\gq)&=f(\overrightarrow{x})/\gq\,,\\
 \overrightarrow{x}/\gq\in R^{\bA/\bgq}&\Leftrightarrow\overrightarrow{x}\in R_\bgq\,, 
 \end{align*}
for all $n\in\gos$, all $x_1,\dots,x_n\in A$, and each $n$-ary $f\in\Op(\bA)$ (resp., each $n$-ary $R\in\Rel(\bA)$). The \emph{canonical homomorphism} (or \emph{canonical projection}) from~$\bA$ onto~$\bA/\bgq$ is the map ($A\onto A/\gq$, $x\mapsto x/\gq$)\index{s}{AtoonB@$f\colon A\onto B$}. More generally, for congruences~$\bga$ and~$\bgb$ of a first-order structure~$\bA$, if~$\bga\leq\bgb$, then the canonical map ($A/\ga\onto A/\gb$, $x/\ga\mapsto x/\gb$)\index{s}{AtoonB@$f\colon A\onto B$} is a morphism in~$\MIND$\index{s}{Mind@$\MIND$}, that we shall also call the canonical projection from~$\bA/\bga$ onto~$\bA/\bgb$.
\end{defn}

\begin{defn}\label{D:KerFirsrtOrdHom}
The \emph{kernel} of a morphism~$\gf\colon\bA\to\bB$ in~$\MIND$\index{s}{Mind@$\MIND$} is the pair $\Ker\gf:=\bigl(\gq,\famm{R_\bgq}{R\in\Rel(\bA)}\bigr)$\index{s}{Kerf@$\Ker\gf$|ii}, where
 \begin{align*}
 \gq&:=\setm{(x,y)\in A\times A}{\gf(x)=\gf(y)}\,,\\
 R_\bgq&:=\setm{\overrightarrow{x}\in A^{\ari(R)}}{\gf(\overrightarrow{x})\in R^\bB}
 \end{align*}
(where we set $\gf(\overrightarrow{x}):=(\gf(x_1),\dots,\gf(x_{\ari(R)}))$)\index{s}{aris@$\ari(s)$}, for each $R\in\Rel(\bA)$.
\end{defn}

It is straightforward to verify that the kernel of a morphism from~$\bA$ to $\bB$ is a congruence of~$\bA$. Furthermore, the kernel of the canonical projection from~$\bA$ onto~$\bA/\bgq$ is~$\bgq$, for any congruence~$\bgq$ of~$\bA$. The following result extends \cite[Proposition~1.4.1]{Gorb}\index{c}{Gorbunov, V.\,A.}. As our notation differs from the one used in that reference, we include a proof for convenience.

\begin{lem}[First Isomorphism Theorem]\label{L:FirstIsomThm}
\index{i}{Isomorphism Theorem (First ${}_{-}$)|ii}
Let $\gf\colon\bA\to\bB$ be a morphism in~$\MIND$\index{s}{Mind@$\MIND$} and let $\bga$ be a congruence of~$\bA$. Denote by $\gp\colon\bA\onto\bA/\bga$\index{s}{AtoonB@$f\colon A\onto B$} the canonical projection. Then $\bga\leq\Ker\gf$\index{s}{Kerf@$\Ker\gf$} if{f} there exists a morphism $\gy\colon\bA/\bga\to\bB$ such that $\gf=\gy\circ\gp$, and if this occurs then~$\gy$ is unique. Furthermore, $\gy$ is an embedding if{f} $\bga=\Ker\gf$\index{s}{Kerf@$\Ker\gf$}.
\end{lem}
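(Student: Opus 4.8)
The plan is to prove the First Isomorphism Theorem by a direct verification, closely following the classical argument but with careful attention to the relational part of the structure. First I would establish the easy direction: if such a morphism $\gy\colon\bA/\bga\to\bB$ exists with $\gf=\gy\circ\gp$, then for $(x,y)\in\ga$ we have $\gp(x)=\gp(y)$, hence $\gf(x)=\gy(\gp(x))=\gy(\gp(y))=\gf(y)$, so $(x,y)\in\gq$ where $\gq$ is the equivalence-relation component of $\Ker\gf$; and for $R\in\Rel(\bA)$ of arity $n$, if $\overrightarrow{x}\in R_\bga$ then $\gp(\overrightarrow{x})\in R^{\bA/\bga}$ by definition of the quotient, hence $\gf(\overrightarrow{x})=\gy(\gp(\overrightarrow{x}))\in R^\bB$ (using that $\gy$ is a morphism, so it respects relations in the direction of~\eqref{Eq:RelHom}), which says $\overrightarrow{x}\in R_{\Ker\gf}$. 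Thus $\bga\leq\Ker\gf$.

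Next, for the converse, assume $\bga\leq\Ker\gf$. The candidate map is $\gy\colon A/\ga\to B$, $x/\ga\mapsto\gf(x)$; this is well-defined precisely because the equivalence component of $\bga$ is contained in that of $\Ker\gf$. I would check that $\gy$ is a morphism in $\MIND$: the languages satisfy $\Lg(\bA/\bga)=\Lg(\bA)\subseteq\Lg(\bB)$ by the definition of the quotient; for constant symbols, $\gy(c^{\bA/\bga})=\gy(c^\bA/\ga)=\gf(c^\bA)=c^\bB$; for an $n$-ary operation symbol $f$, $\gy\bigl(f^{\bA/\bga}(\overrightarrow{x}/\ga)\bigr)=\gy\bigl(f^\bA(\overrightarrow{x})/\ga\bigr)=\gf\bigl(f^\bA(\overrightarrow{x})\bigr)=f^\bB(\gf(\overrightarrow{x}))=f^\bB(\gy(\overrightarrow{x}/\ga))$ by~\eqref{Eq:FuncHom} for $\gf$; for an $n$-ary relation symbol $R$, if $\overrightarrow{x}/\ga\in R^{\bA/\bga}$ then $\overrightarrow{x}\in R_\bga\subseteq R_{\Ker\gf}$, so $\gf(\overrightarrow{x})\in R^\bB$, i.e.\ $\gy(\overrightarrow{x}/\ga)\in R^\bB$, which is~\eqref{Eq:RelHom} for $\gy$. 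The identity $\gf=\gy\circ\gp$ is immediate from the definitions. Uniqueness follows since $\gp$ is surjective: any two morphisms agreeing after composition with an epimorphism coincide.

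Finally, for the ``furthermore'' clause, I would first note that $\gy$ is always one-to-one iff the equivalence component $\ga$ equals the equivalence component of $\Ker\gf$: indeed $\gy(x/\ga)=\gy(y/\ga)$ means $\gf(x)=\gf(y)$, i.e.\ $(x,y)\in\gq$, so injectivity of $\gy$ is exactly the statement $\gq\subseteq\ga$, which combined with $\ga\subseteq\gq$ gives equality of the equivalence parts. Then, assuming $\gy$ is one-to-one, $\gy$ is an embedding iff for every $R\in\Rel(\bA)$ the implication $\gy(\overrightarrow{x}/\ga)\in R^\bB\Rightarrow\overrightarrow{x}/\ga\in R^{\bA/\bga}$ holds, i.e.\ $\gf(\overrightarrow{x})\in R^\bB\Rightarrow\overrightarrow{x}\in R_\bga$, which says $R_{\Ker\gf}\subseteq R_\bga$, hence (with the reverse inclusion always holding) $R_{\Ker\gf}=R_\bga$. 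Putting the two parts together, $\gy$ is an embedding iff $\bga=\Ker\gf$. The main obstacle, if any, is purely bookkeeping: one must keep the relational components $R_\bgq$ and the equivalence component $\gq$ of a congruence cleanly separated throughout, since ``$\bga\leq\Ker\gf$'' is a conjunction of conditions on all of these, and the embedding criterion involves the relational components whereas injectivity involves only the equivalence component. There is no conceptual difficulty beyond this.
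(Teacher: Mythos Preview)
Your proof is correct and follows essentially the same approach as the paper's own proof: both directions of the main biconditional, the uniqueness from surjectivity of~$\gp$, and the ``furthermore'' clause are all handled by the same direct verifications. Your treatment is in fact slightly more explicit in places (you check constant symbols, and you cleanly separate the injectivity criterion from the relational embedding criterion in the final clause), but the underlying argument is identical.
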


\begin{proof}
Set $\bgq:=\Ker\gf$\index{s}{Kerf@$\Ker\gf$}. Suppose first that there exists a morphism\linebreak $\gy\colon\bA/\bga\to\bB$ such that $\gf=\gy\circ\gp$. For all $x,y\in A$, $(x,y)\in\ga$ means that $\gp(x)=\gp(y)$, thus $\gf(x)=\gf(y)$, that is, $(x,y)\in\gq$; whence $\ga\subseteq\gq$. Now let $R\in\Rel(\bA)$ and let $\overrightarrow{x}\in R_\bga$. This means that $\overrightarrow{x}/\ga\in R^{\bA/\bga}$, thus, as~$\gy$ is a morphism in~$\MIND$\index{s}{Mind@$\MIND$}, $\gy(\overrightarrow{x}/\ga)\in R^\bB$, that is, $\gf(\overrightarrow{x})\in R^\bB$, so $\overrightarrow{x}\in R_\bgq$. We have proved the inequality $\bga\leq\bgq$.

Conversely, suppose that $\bga\leq\bgq$. As $\ga\subseteq\gq$, there exists a unique map $\gy\colon A/\ga\to\nobreak B$ such that $\gf=\gy\circ\gp$. We must prove that~$\gy$ is a morphism in~$\MIND$\index{s}{Mind@$\MIND$}. For each $f\in\Op(\bA)$, say with arity~$n$, and all $x_1,\dots,x_n\in A$,
 \[
 \gy\bigl(f^{\bA/\bga}(\overrightarrow{x}/\ga)\bigr)=\gy\bigl(f^\bA(\overrightarrow{x})/\ga\bigr)
 =\gf\bigl(f^\bA(\overrightarrow{x})\bigr)=f^\bB(\gf(\overrightarrow{x}))
 =f^\bB\bigl(\gy(\overrightarrow{x}/\ga)\bigr)\,.
 \]
Furthermore, for any $R\in\Rel(\bA)$ and any $\overrightarrow{x}\in A^{\ari(R)}$,\index{s}{Kerf@$\Ker\gf$}\index{s}{aris@$\ari(s)$}
 \begin{align}
 \overrightarrow{x}/\ga\in R^{\bA/\bga}&\Leftrightarrow\overrightarrow{x}\in R_\bga
 &&(\text{by the definition of }R^{\bA/\bga})\notag\\
 &\Rightarrow\overrightarrow{x}\in R_\bgq
 &&(\text{by the assumption that }\bga\leq\bgq)
 \label{Eq:ImplnotEquivRar}\\
 &\Leftrightarrow\gf(\overrightarrow{x})\in R^\bB
 &&(\text{by the definition of }\bgq:=\Ker\gf)\notag\\
 &\Leftrightarrow\gy\bigl(\overrightarrow{x}/\ga\bigr)\in R^\bB\
 &&(\text{by the definition of }\gy).\notag
 \end{align}
This completes the proof that~$\gy$ is a morphism in~$\MIND$\index{s}{Mind@$\MIND$}. The uniqueness statement on~$\gy$ follows trivially from the surjectivity of the map~$\gp$.

If $\bga=\bgq$, then $\ga=\gq$ so~$\gy$ is one-to-one, and further, the implication~\eqref{Eq:ImplnotEquivRar} above is an equivalence, so~$\gy$ is an embedding. Conversely, if~$\gy$ is an embedding, then similar arguments to those above show easily that $\bga=\Ker\gf$\index{s}{Kerf@$\Ker\gf$}.
\qed\end{proof}

For congruences $\bga$, $\bgb$ of a first-order structure~$\bA$ such that $\bga\leq\bgb$, we denote by~$\bgb/\bga$\index{s}{betaoveralpha@$\bgb/\bga$|ii} the kernel of the canonical projection $\bA/\bga\onto\bA/\bgb$\index{s}{AtoonB@$f\colon A\onto B$}. Observe that this congruence of~$\bA/\bga$ can be described by
 \begin{align*}
 \gb/\ga&:=\setm{(x/\ga,y/\ga)}{(x,y)\in\gb}\,,\\
 R_{\bgb/\bga}&:=
 \setm{(x_1/\ga,\dots,x_n/\ga)\in(A/\ga)^n}{(x_1,\dots,x_n)\in R_\bgb}\,, 
 \end{align*}
for each $R\in\Rel(\bA)$ of arity, say, $n$.

The following result is established in \cite[Proposition~1.4.3]{Gorb}\index{c}{Gorbunov, V.\,A.}. As our notation differs from the one used in that reference, we include a proof for convenience.

\begin{lem}[Second Isomorphism Theorem]\label{L:SecIsomThm}
\index{i}{Isomorphism Theorem (Second ${}_{-}$)|ii}
Let $\bga$ be a congruence of a first-order structure~$\bA$. Then the assignment $(\bgb\mapsto\bgb/\bga)$ defines a lattice isomorphism from $(\Con\bA)\upw\bga$\index{s}{conA@$\Con\bA$} onto $\Con(\bA/\bga)$, and the assignment $\bigl(x/\gb\mapsto(x/\ga)/(\gb/\ga)\bigr)$ defines an isomorphism from $\bA/\bgb$ onto $(\bA/\bga)/(\bgb/\bga)$, for each $\bgb\in(\Con\bA)\upw\bga$\index{s}{conA@$\Con\bA$}.
\end{lem}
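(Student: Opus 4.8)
\textbf{Proof plan for the Second Isomorphism Theorem (Lemma~\ref{L:SecIsomThm}).}

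The plan is to check that the two displayed assignments are well-defined bijections that respect the relevant structure, reducing everything to straightforward verifications once the First Isomorphism Theorem (Lemma~\ref{L:FirstIsomThm}) is in hand. First I would set up the map $\Phi_\bga\colon\bgb\mapsto\bgb/\bga$ from $(\Con\bA)\upw\bga$ to $\Con(\bA/\bga)$. Recall that $\bgb/\bga$ is defined as the kernel of the canonical projection $\gp_\bgb\colon\bA/\bga\onto\bA/\bgb$, so it is automatically a congruence of $\bA/\bga$; alternatively one checks directly, using the componentwise description of $\bgb/\bga$ given in the excerpt just before the lemma, that the pair $\bigl(\gb/\ga,\famm{R_{\bgb/\bga}}{R\in\Rel(\bA)}\bigr)$ satisfies conditions (i)--(iv) of Definition~\ref{D:Cong1stOrd}, each verification being a routine translation of the corresponding property of $\bgb$ through the surjection $A\onto A/\ga$. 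Monotonicity of $\Phi_\bga$ is immediate from the componentwise definitions: $\bgb_0\leq\bgb_1$ gives $\gb_0/\ga\subseteq\gb_1/\ga$ and $R_{\bgb_0/\bga}\subseteq R_{\bgb_1/\bga}$ for each $R$.

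Next I would construct the inverse map. Given a congruence $\bgd=\bigl(\gd,\famm{R_\bgd}{R}\bigr)$ of $\bA/\bga$, let $\gp_\bga\colon\bA\onto\bA/\bga$ be the canonical projection and set $\Psi_\bga(\bgd):=\gp_\bga^{-1}\bgd$, meaning the pair whose equivalence relation is $\setm{(x,y)\in A\times A}{(x/\ga,y/\ga)\in\gd}$ and whose $R$-component is $\setm{\overrightarrow{x}\in A^{\ari(R)}}{\overrightarrow{x}/\ga\in R_\bgd}$. One checks this is a congruence of $\bA$ (again by pulling back Definition~\ref{D:Cong1stOrd}(i)--(iv) along $\gp_\bga$, using that $\gp_\bga$ is a morphism in $\MIND$), that it lies above $\bga$ (since $\bga\leq\Ker\gp_\bga$ and $\Ker\gp_\bga=\gp_\bga^{-1}\zero_{\bA/\bga}\leq\gp_\bga^{-1}\bgd$), and that $\Psi_\bga$ is monotone. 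The identities $\Phi_\bga\circ\Psi_\bga=\id$ and $\Psi_\bga\circ\Phi_\bga=\id$ then follow from the surjectivity of $\gp_\bga$: for $\bgb\geq\bga$, an element $(x,y)$ lies in the equivalence relation of $\gp_\bga^{-1}(\bgb/\bga)$ iff $(x/\ga,y/\ga)\in\gb/\ga$ iff there are $x',y'$ with $x'\equiv x$, $y'\equiv y\pmod\bga$ and $(x',y')\in\gb$, which since $\bga\leq\bgb$ is equivalent to $(x,y)\in\gb$; and similarly for the relational components. Since $\Phi_\bga$ and $\Psi_\bga$ are mutually inverse order-isomorphisms between the posets $(\Con\bA)\upw\bga$ and $\Con(\bA/\bga)$, and both are lattices (as algebraic lattices, by the discussion following Definition~\ref{D:KerFirsrtOrdHom}), $\Phi_\bga$ is a lattice isomorphism.

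For the second assertion, fix $\bgb\in(\Con\bA)\upw\bga$ and apply Lemma~\ref{L:FirstIsomThm} to the morphism $\gf:=\gp_\bgb\colon\bA/\bga\onto(\bA/\bga)/(\bgb/\bga)$ together with the congruence $\bgb/\bga$ of $\bA/\bga$: by definition $\Ker\gf=\bgb/\bga$, so the First Isomorphism Theorem yields a unique embedding $\gy\colon(\bA/\bga)/(\bgb/\bga)\to(\bA/\bga)/(\bgb/\bga)$ — more usefully, I would instead factor the composite surjection $\bA\onto\bA/\bga\onto(\bA/\bga)/(\bgb/\bga)$, whose kernel is computed to be $\bgb$ (chase an element: $x,y$ have the same image iff $x/\ga\equiv y/\ga\pmod{\bgb/\bga}$ iff $(x,y)\in\gb$, and similarly for relations), and apply Lemma~\ref{L:FirstIsomThm} to obtain a unique embedding $\bA/\bgb\to(\bA/\bga)/(\bgb/\bga)$ which is surjective because the composite is, hence an isomorphism; tracking the canonical projections shows it is exactly $x/\gb\mapsto(x/\ga)/(\gb/\ga)$. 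The main obstacle — really the only non-mechanical point — is keeping the bookkeeping of the four components (the equivalence relation plus one relation per symbol in $\Rel(\bA)$) straight throughout, so that each ``chase an element'' verification is carried out for the relational components and not merely for the underlying equivalence relation; once one adopts the componentwise notation of the excerpt this is entirely routine.
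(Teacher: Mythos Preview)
Your proposal is correct and follows essentially the same approach as the paper: your inverse map $\Psi_\bga(\bgd)=\gp_\bga^{-1}\bgd$ is precisely the paper's $\gt(\tbgb)$ (defined there as the kernel of the composite $\bA\onto\bA/\bga\onto(\bA/\bga)/\tbgb$, which unravels to the same pullback), and the two mutual-inverse verifications are the same componentwise element chases. For the second assertion the paper applies the First Isomorphism Theorem directly to the projection $\bA/\bga\onto\bA/\bgb$ (whose kernel is $\bgb/\bga$), yielding the inverse isomorphism $(\bA/\bga)/(\bgb/\bga)\to\bA/\bgb$, whereas you factor the composite $\bA\onto(\bA/\bga)/(\bgb/\bga)$ to obtain the forward map---the same argument in the opposite direction.
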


\begin{proof}
Again, similar to the case without relations (cf. \cite[Section~1.11]{GrUA})\index{c}{Gr\"atzer, G.}. We set $\gs(\bgb):=\bgb/\bga$, for each $\bgb\in(\Con\bA)\upw\bga$\index{s}{conA@$\Con\bA$}; so~$\gs$ defines an isotone map from $(\Con\bA)\upw\bga$\index{s}{conA@$\Con\bA$} to $\Con(\bA/\bga)$\index{s}{conA@$\Con\bA$}. We need to define the converse map. To each congruence~$\tbgb\in\Con(\bA/\bga)$, we associate the kernel of the composite of the canonical projections $\bA\onto\bA/\bga\onto(\bA/\bga)/\tbgb$\index{s}{AtoonB@$f\colon A\onto B$}. Hence $\gt(\tbgb)$ is a congruence of~$\bA$ containing~$\bga$, and $\gt(\tbgb):=\bigl(\gb,\famm{R_\bgb}{R\in\Rel(\bA)}\bigr)$ with
 \begin{align*}
 \gb&:=\setm{(x,y)\in A\times A}{(x/\ga,y/\ga)\in\tgb}\,,\\
 R_\bgb&:=\setm{\overrightarrow{x}\in A^n}{\overrightarrow{x}/\ga\in R_\tbgb}\,. 
 \end{align*}
Denote this congruence by~$\bgb$. Then $\gb/\ga=\tgb$ by the definition of~$\gb$, while, for each $R\in\Rel(\bA)$,
 \[
 R_{\bgb/\bga}=\setm{\overrightarrow{x}/\ga}{\overrightarrow{x}\in R_\bgb}=
 \setm{\overrightarrow{x}/\ga}{\overrightarrow{x}/\ga\in R_\tbgb}=R_\tbgb\,,
 \]
so $\tbgb=\bgb/\bga$. This proves that $\gs\circ\gt$ is the identity on $\Con(\bA/\bga)$.

Conversely, let $\bgb\in(\Con\bA)\upw\bga$\index{s}{conA@$\Con\bA$}, set $\tbgb:=\bgb/\bga$ and $\bgb':=\gt(\tbgb)$. We must prove that $\bgb=\bgb'$. First,
 \begin{align*}
 \gb'&=\setm{(x,y)\in A\times A}{(x/\ga,y/\ga)\in\tgb}\\
 &=\setm{(x,y)\in A\times A}{(x/\ga,y/\ga)\in\gb/\ga}\\
 &=\gb\,,
 \end{align*}
and for all $R\in\Rel(\bA)$ and all $\overrightarrow{x}\in A^{\ari(R)}$\index{s}{aris@$\ari(s)$}, $\overrightarrow{x}\in R_{\bgb'}$ if{f} $\overrightarrow{x}/\ga\in R_\tbgb$ if{f} $\overrightarrow{x}/\ga\in R_{\bgb/\bga}$ if{f} $\overrightarrow{x}\in R_\bgb$. Hence $\bgb=\bgb'$, which completes the proof that $\gt\circ\gs$ is the identity on $(\Con\bA)\upw\bga$\index{s}{conA@$\Con\bA$}. As both~$\gs$ and~$\gt$ are isotone, it follows that they are mutually inverse isomorphisms.

Finally, as $\bgb/\bga$ is the kernel of the canonical projection from $\bA/\bga$ onto $\bA/\bgb$, it follows from Lemma~\ref{L:FirstIsomThm} that it induces an isomorphism from $(\bA/\bga)/(\bgb/\bga)$ onto $\bA/\bgb$.
\qed\end{proof}

\section{Directed colimits of monotone-indexed structures}
\label{S:ColimMIND}

The following result shows that the category~$\MIND$\index{s}{Mind@$\MIND$} has all small directed colimits (cf. Definition~\ref{D:SmallDirColim}), and gives a description of those colimits. Of course, this result extends the classical description of directed colimits for models of a given language, given for example in \cite[Section~1.2.5]{Gorb}\index{c}{Gorbunov, V.\,A.} and recalled in Section~\ref{Su:DirColimFirstOrd}.

\begin{prop}\label{P:DirColimMIND}
Let $\famm{\bA_i,\gf_i^j}{i\leq j\text{ in }I}$ be a directed poset-indexed diagram in~$\MIND$\index{s}{Mind@$\MIND$}. We form the colimit
 \begin{equation}\label{Eq:DirLimSet}
 \famm{A,\gf_i}{i\in I}=\varinjlim\famm{A_i,\gf_i^j}{i\leq j\text{ in }I}
 \end{equation}
in the category~$\SET$\index{s}{Set@\textbf{Set}} of all sets, and we set~$\scL:=\bigcup\famm{\Lg(\bA_i)}{i\in I}$. Then~$A$ can be extended to a unique first-order structure~$\bA$ such that $\Lg(\bA)=\scL$ and the following statements hold:
\begin{description}
\item[\tui] For each constant symbol~$c$ in~$\scL$, $c^\bA=\gf_i(c^{\bA_i})$ for all large enough $i\in I$.

\item[\tuii] For each operation symbol~$f$ in~$\scL$, say of arity~$n$, and for each~$i\in I$ such that $f\in\Op(\bA_i)$, the equation
 \[
 f^\bA(\gf_i(x_1),\dots,\gf_i(x_n))=
 \gf_i\bigl(f^{\bA_i}(x_1,\dots,x_n)\bigr)
 \]
holds for all $x_1,\dots,x_n\in A_i$.

\item[\tuiii] For each relation symbol~$R$ in~$\scL$, say of arity~$n$, and for each $i\in I$ such that $R\in\Rel(\bA_i)$, the equivalence
 \[
 (\gf_i(x_1),\dots,\gf_i(x_n))\in R^\bA\ \Longleftrightarrow\ (\exists j\in I\upw i)
 \bigl((\gf_i^j(x_1),\dots,\gf_i^j(x_n))\in R^{\bA_j}\bigr)\,,
 \]
holds for all $x_1,\dots,x_n\in A_i$.
\end{description}
Furthermore, the following relation holds in~$\MIND$\index{s}{Mind@$\MIND$}.
 \begin{equation}\label{Eq:DirLimMIND}
 \famm{\bA,\gf_i}{i\in I}=\varinjlim\famm{\bA_i,\gf_i^j}
 {i\leq j\text{ in }I}\,.
 \end{equation}
\end{prop}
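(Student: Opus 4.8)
\textbf{Proof plan for Proposition~\ref{P:DirColimMIND}.} The plan is to build~$\bA$ by transporting structure along the set-theoretic colimit~\eqref{Eq:DirLimSet}, verify that the constructions in (i)--(iii) are well-defined and independent of the choices made, and then check the universal property~\eqref{Eq:DirLimMIND} directly. Recall first the standard facts about the colimit in~$\SET$ recalled in Section~\ref{Su:DirColimFirstOrd}: $A=\bigcup\famm{\gf_i``(A_i)}{i\in I}$, and $\gf_i(x)=\gf_i(y)$ if{f} $\gf_i^j(x)=\gf_i^j(y)$ for some $j\in I\upw i$ (cf.~\eqref{Eq:ADirUngfiAi} and~\eqref{Eq:gfix=gfiyimpl}). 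These will be used repeatedly to show that every definition ``stabilizes''.

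First I would define the interpretations. For a constant symbol $c\in\scL$, pick $i$ with $c\in\Cst(\bA_i)$ and set $c^\bA:=\gf_i(c^{\bA_i})$; since $\gf_i^j(c^{\bA_i})=c^{\bA_j}$ whenever $i\leq j$ and $c\in\Cst(\bA_i)$, and since $I$ is directed, this value is independent of~$i$, giving~(i). For an $n$-ary operation symbol $f\in\scL$ and $a_1,\dots,a_n\in A$, choose (using directedness) a single $i$ with $f\in\Op(\bA_i)$ and $x_s\in A_i$ with $\gf_i(x_s)=a_s$ for all $s$, and set $f^\bA(a_1,\dots,a_n):=\gf_i\bigl(f^{\bA_i}(\overrightarrow{x})\bigr)$. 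Well-definedness: if a second choice $(i',\overrightarrow{x}')$ is made, pick $j\in I\upw\set{i,i'}$ with $f\in\Op(\bA_j)$ and with $\gf_i^j(x_s)=\gf_{i'}^j(x'_s)$ for all~$s$ (possible by~\eqref{Eq:gfix=gfiyimpl} applied finitely many times and directedness), then apply the homomorphism property of~$\gf_i^j$ and~$\gf_{i'}^j$. For an $n$-ary relation symbol $R\in\scL$, declare $(a_1,\dots,a_n)\in R^\bA$ if{f} there exist $i$ with $R\in\Rel(\bA_i)$ and $x_s\in A_i$ with $\gf_i(x_s)=a_s$ and $\overrightarrow{x}\in R^{\bA_i}$; the ``upward'' direction of the equivalence in~(iii) is then immediate from $R^{\bA_i}$ being pushed forward by the morphism~$\gf_i^j$, and the ``downward'' direction is the definition together with~\eqref{Eq:gfix=gfiyimpl}. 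Uniqueness of the extension is forced since (i)--(iii) determine every interpretation on the generators~$\gf_i``(A_i)$, which cover~$A$.

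Next I would check that each $\gf_i\colon\bA_i\to\bA$ is a morphism in~$\MIND$. The inclusion $\Lg(\bA_i)\subseteq\scL$ is clear. Preservation of constants and operations among symbols of $\Lg(\bA_i)$ is exactly~(i) and~(ii); preservation of relations, i.e. $\overrightarrow{x}\in R^{\bA_i}\Rightarrow\gf_i(\overrightarrow{x})\in R^\bA$ for $R\in\Rel(\bA_i)$, is the trivial ($j=i$) instance of~(iii). Compatibility $\gf_i=\gf_j\circ\gf_i^j$ for $i\leq j$ holds already at the level of~$\SET$. Finally, for the universal property~\eqref{Eq:DirLimMIND}, let $\famm{\bB,\gy_i}{i\in I}$ be a cocone in~$\MIND$ above the diagram. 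Since~\eqref{Eq:DirLimSet} is a colimit in~$\SET$, there is a unique map $\gy\colon A\to B$ with $\gy\circ\gf_i=\gy_i$ for all~$i$; it remains to see $\gy$ is a morphism. We have $\Lg(\bA)=\scL=\bigcup_i\Lg(\bA_i)\subseteq\Lg(\bB)$ because each $\gy_i$ is a $\MIND$-morphism. For a symbol $s\in\scL$, pick $i$ with $s\in\Lg(\bA_i)$; then on elements of~$\gf_i``(A_i)$ the required homomorphism equations (or implications, for relations) for~$\gy$ follow by composing the corresponding properties of~$\gf_i$ (namely~(i)--(iii)) with those of~$\gy_i$, and these elements suffice since they cover~$A$; for relations one uses the ``downward'' direction of~(iii) to reduce an arbitrary tuple in $R^\bA$ to one coming from some~$\bA_j$. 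Thus $\gy$ is the unique $\MIND$-morphism factoring the cocone, proving~\eqref{Eq:DirLimMIND}.

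The only mildly delicate point, and the place I would be most careful, is the well-definedness of $f^\bA$ and of $R^\bA$: one must invoke directedness of~$I$ to find a \emph{single} index~$j$ at which a \emph{finite} collection of equalities $\gf_i^j(x_s)=\gf_{i'}^j(x'_s)$ holds simultaneously and at which the relevant operation/relation symbol already lives. This is routine but is the crux; everything else is bookkeeping. There is no genuine obstacle, since $\MIND$-morphisms between structures of possibly different languages behave, once the languages are aligned via~$\scL$, exactly like ordinary homomorphisms, and the proof parallels the classical single-language argument recalled in Section~\ref{Su:DirColimFirstOrd}.
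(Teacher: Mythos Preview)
Your proposal is correct and follows essentially the same approach as the paper, which dismisses the construction of~$\bA$ as ``a straightforward exercise'' and the verification that the factoring map~$\gy$ is a $\MIND$-morphism as ``straightforward\dots\ using the definition of~$\bA$''. You have simply written out the details the paper omits, including the well-definedness checks and the careful handling of the case where the relevant symbol need not live in every~$\Lg(\bA_i)$; nothing in your plan departs from what the paper intends.
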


In categorical terms, Proposition~\ref{P:DirColimMIND} means that \emph{the forgetful functor from~$\MIND$\index{s}{Mind@$\MIND$} to~$\SET$\index{s}{Set@\textbf{Set}} creates and preserves all small directed colimits}.

\begin{proof}
The construction of $\famm{A,\gf_i}{i\in I}$ is given just before~\eqref{Eq:varinjliminSet}.
The existence of the structure~$\bA$ is then a straightforward exercise. For a cocone $\famm{\bB,\gy_i}{i\in I}$ in~$\MIND$\index{s}{Mind@$\MIND$} above~$\famm{\bA_i,\gf_i^j}{i\leq j\text{ in }I}$, it follows from~\eqref{Eq:DirLimSet} that there exists a unique map $\gy\colon A\to B$ such that $\gy_i=\gy\circ\gf_i$ for each $i\in I$. Then, using the definition of~$\bA$, it is straightforward to verify that~$\gy$ is a morphism from~$\bA$ to~$\bB$ in~$\MIND$\index{s}{Mind@$\MIND$}. This completes the proof of~\eqref{Eq:DirLimMIND}.
\qed\end{proof}

\begin{defn}\label{D:kappaSmallStruct}
Let~$\gk$ be an infinite cardinal. A first-order structure~$\bA$ is 
\begin{itemize}
\item \emph{$\gk$-small}\index{i}{kapsmall@$\gk$-small!first-order structure|ii} if $\card A<\gk$;
\item \emph{completely $\gk$-small}\index{i}{kapsmall@$\gk$-small!completely ${}_{-}$ first-order structure|ii} if $\card A+\card\Lg\bA<\gk$.
\end{itemize}
\end{defn}

Now that we know what directed colimits look like in~$\MIND$\index{s}{Mind@$\MIND$} we can prove the following result.

\begin{prop}\label{P:glPresMIND}
Let~$\gk$ be an infinite cardinal and let~$\bA$ be a first-order structure. If~$\bA$ is completely~$\gk$-small, then~$\bA$ is weakly $\gk$-presented\index{i}{presented!weakly $\gl$-} in~$\MIND$\index{s}{Mind@$\MIND$}. Conversely, if~$\gk$ is uncountable, then, defining~$\Omega$ as the disjoint union of~$A$ and~$\Lg(\bA)$, $\bA$ is a continuous directed union, indexed by $[\Omega]^{<\gk}$, of completely $\gk$-small structures in~$\MIND$\index{s}{Mind@$\MIND$}, and~$\bA$ is weakly $\gk$-presented\index{i}{presented!weakly $\gl$-} in~$\MIND$\index{s}{Mind@$\MIND$} if{f} it is completely $\gk$-small.
\end{prop}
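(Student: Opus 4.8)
For the first assertion the plan is to adapt to $\MIND$ the classical chase that identifies, in a category of models, the finitely presented objects with the finite ones, the only real input being the explicit description of directed colimits in $\MIND$ (Proposition~\ref{P:DirColimMIND}). Fix a continuous directed colimit $\famm{\bB,b_X}{X\in[\Omega]^{<\gk}}=\varinjlim\famm{\bB_X,b_X^Y}{X\subseteq Y\text{ in }[\Omega]^{<\gk}}$ in $\MIND$ and a morphism $f\colon\bA\to\bB$. First I would invoke continuity to reduce all witnesses to \emph{finite} subsets of $\Omega$: since $\bB_X=\varinjlim_{Z\in[X]^{<\go}}\bB_Z$ for every $X\in[\Omega]^{<\gk}$, the colimit $\bB$ is also the directed colimit of the subdiagram indexed by $[\Omega]^{<\go}$, so every symbol of $\Lg(\bB)$ and every element of $B$ already occurs at some finite stage, and equalities and relations holding in $\bB$ already hold after a finite enlargement of the stage. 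Using $\card A+\card\Lg(\bA)<\gk$, choosing such a finite witness for each symbol of $\Lg(\bA)$ and each element of $A$ and taking the union $X$ of these fewer-than-$\gk$ finite sets produces $X\in[\Omega]^{<\gk}$ with $\Lg(\bA)\subseteq\Lg(\bB_X)$ together with a set map $g\colon A\to B_X$ such that $b_X\circ g=f$ (after possibly enlarging $X$ one may also assume $b_X(c^{\bB_X})=c^\bB$ for each constant symbol $c$ of $\bA$).

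The map $g$ need not be a morphism, but each of its defects---one for each constant symbol of $\bA$, each pair consisting of an operation symbol and a tuple from $A$, and each pair consisting of a relation symbol and a tuple lying in that relation---involves only finitely many elements of $B_X$, and by Proposition~\ref{P:DirColimMIND}(ii)--(iii) (together with continuity, used to write those elements as images from a finite stage) is annihilated after composing with the transition map $b_X^{X\cup W}$ for a suitable finite $W\subseteq\Omega$. Since $\card A+\card\Lg(\bA)<\gk$ there are fewer than $\gk$ such defects (only finitely many when $\gk=\aleph_0$), so the union $X'$ of $X$ with all these finite patching sets still satisfies $\card X'<\gk$; then $b_X^{X'}\circ g\colon\bA\to\bB_{X'}$ is a morphism and $f=b_{X'}\circ\bigl(b_X^{X'}\circ g\bigr)$ exhibits $f$ as factoring through $b_{X'}$. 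Hence $\bA$ is weakly $\gk$-presented.

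For the remaining statements fix an uncountable $\gk$ and set $\Omega:=A\sqcup\Lg(\bA)$. For $X\in[\Omega]^{<\gk}$ let $\bA_X$ be the substructure of $\bA$ with language $X\cap\Lg(\bA)$ and universe the subalgebra of $\bA$, for the operations in $X\cap\Op(\bA)$, generated by $X\cap A$, the interpretations of the constant symbols lying in $X$, and one fixed element $a_*\in A$ (this last point merely guaranteeing a nonempty universe). As $\gk$ is uncountable and $\card X<\gk$, such a subalgebra has cardinality at most $\card X+\aleph_0<\gk$, so every $\bA_X$ is completely $\gk$-small. Using Proposition~\ref{P:DirColimMIND} one checks directly that $X\subseteq Y$ makes $\bA_X$ a substructure of $\bA_Y$, that $\bA$ is the directed colimit of the $\bA_X$ along these inclusions (each element of $A$ and each symbol of $\Lg(\bA)$ already occurring at a singleton stage), and that the diagram is continuous; this is the asserted continuous directed union. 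If $\bA$ is weakly $\gk$-presented, applying Definition~\ref{D:wkapppres} to this colimit and to $\id_\bA$ yields a factorization $\id_\bA=b_X\circ h$ where $b_X\colon\bA_X\hookrightarrow\bA$ is the inclusion; then $b_X$ is onto, so $A_X=A$, while $h\colon\bA\to\bA_X$ being a morphism forces $\Lg(\bA)\subseteq\Lg(\bA_X)$, whence $\bA=\bA_X$ is completely $\gk$-small. Combined with the first part this gives the equivalence.

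The one delicate point is the bookkeeping in the first part: each defect of the approximating map must be killed over a \emph{finite} enlargement of the index set, so that patching the fewer-than-$\gk$ defects costs only a subset of $\Omega$ of cardinality still below $\gk$---in particular when $\gk$ is singular. This is exactly where the continuity hypothesis is indispensable, and is the reason the statement concerns completely $\gk$-small structures and continuous colimits rather than $\gk$-directed ones. Everything else is routine unravelling of Definitions~\ref{D:wkapppres} and~\ref{D:kappaSmallStruct} together with the colimit description of Proposition~\ref{P:DirColimMIND}.
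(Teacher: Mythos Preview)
Your proposal is correct and follows essentially the same approach as the paper's proof: both use continuity to reduce to finite stages, collect fewer-than-$\gk$ finite witnesses to produce a first approximating index~$X$ and a set map into~$\bB_X$, then patch the fewer-than-$\gk$ morphism defects by further finite enlargements; and for the converse both exhibit~$\bA$ as a continuous directed union of substructures~$\bA_X$ (with language $X\cap\Lg(\bA)$ and universe the subalgebra generated over a fixed element), then factor~$\id_\bA$ through some~$\bA_X$. The only cosmetic difference is that the paper phrases the key continuity consequence as $\bB=\varinjlim_{Y\in[\Omega]^{<\go}}\bB_{X\cup Y}$ for each~$X$, whereas you state it once as $\bB=\varinjlim_{Y\in[\Omega]^{<\go}}\bB_Y$; both formulations serve the same purpose.
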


\begin{proof}
A standard argument, obtained by adapting the proof of the corresponding statements for $\gk$-presented\index{i}{presented!$\gl$-} structures with~$\gk$ regular (cf., for example, \cite[Corollary~3.13]{AdRo}\index{c}{Ad\'amek, J.}\index{c}{Rosick\'y, J.}) to our definition of a weakly $\gk$-presented\index{i}{presented!weakly $\gl$-} structure.

Assume first that~$\bA$ is completely $\gk$-small, let~$\Omega$ be a set, and consider a continuous\index{i}{diagram!continuous} directed colimit cocone in~$\MIND$\index{s}{Mind@$\MIND$} of the form
 \begin{equation}\label{Eq:Colimkappa}
 \famm{\bB,\gb_X}{X\in[\Omega]^{<\gk}}=
 \varinjlim\famm{\bB_X,\gb_X^Y}{X \subseteq Y\text{ in }[\Omega]^{<\gk}}\,,
 \end{equation}
and let $\gf\colon\bA\to\bB$ be a morphism in~$\MIND$\index{s}{Mind@$\MIND$}. It follows from the continuity of the colimit~\eqref{Eq:Colimkappa} that the following relation holds for each $X\in[\Omega]^{<\gk}$:
 \begin{equation}\label{Eq:ColimTransl}
 \famm{\bB,\gb_{X\cup Y}}{Y\in[\Omega]^{<\go}}=
 \varinjlim\famm{\bB_{X\cup Y},\gb_{X\cup Y}^{X\cup Z}}
 {Y\subseteq Z\text{ in }[\Omega]^{<\go}}\,.
 \end{equation}
In particular, it follows from Proposition~\ref{P:DirColimMIND} that both relations
 \begin{align}
 B&=\bigcup\famm{\gb_{X\cup Y}``(B_Y)}{Y\in[\Omega]^{<\go}}
 \label{Eq:BasUnion}\\
 \Lg(\bB)&=\bigcup\famm{\Lg(\bB_{X\cup Y})}{Y\in[\Omega]^{<\go}}
 \label{Eq:LgBasUnion}
 \end{align}
hold for each $X\in[\Omega]^{<\gk}$. By applying~\eqref{Eq:BasUnion} for $X:=\es$, we obtain, for each $a\in A$, a finite subset~$U_a$ of~$\Omega$ such that $\gf(a)\in\gb_{U_a}``(B_{U_a})$. Likewise, by using~\eqref{Eq:LgBasUnion}, we obtain, for each $s\in\Lg(\bA)$, a finite subset~$V_s$ of~$\Omega$ such that $s\in\Lg(B_{V_s})$. The set
 \[
 X:=\bigcup\famm{U_a}{a\in A}\cup\bigcup\famm{V_s}{s\in\Lg(\bA)}
 \]
belongs to~$[\Omega]^{<\gk}$, $\gf``(A)$ is contained in~$\gb_X``(B_X)$, and~$\Lg(\bA)$ is contained in~$\Lg(\bB_X)$. The first containment implies that there exists a map (not necessarily a morphism) $\gy\colon A\to B_X$ such that $\gf=\gb_X\circ\gy$.

For each $f\in\Op(\bA)$, say of arity~$n$, and all $x_1,\dots,x_n\in A$, setting $x:=f^\bA(x_1,\dots,x_n)$, we obtain, as~$\gf$ is a morphism in~$\MIND$\index{s}{Mind@$\MIND$}, that the equation $\gf(x)=f^\bB\bigl(\gf(x_1),\dots,\gf(x_n)\bigr)$ is satisfied, that is, $(\gb_X\circ\gy)(x)=\gb_X\bigl(f^{\bB_i}\bigl(\gy(x_1),\dots,\gy(x_n)\bigr)\bigr)$. By using~\eqref{Eq:ColimTransl} together with Proposition~\ref{P:DirColimMIND}, we obtain a finite subset~$V$ of~$\Omega$ such that
 \begin{align}
 (\gb_X^{X\cup V}\circ\gy)(x)&=\gb_X^{X\cup V}
 \bigl(f^{\bB_X}\bigl(\gy(x_1),\dots,\gy(x_n)\bigr)\bigr)\,,\notag\\
 \intertext{that is,}
 (\gb_X^{X\cup V}\circ\gy)(x)&=
 f^{\bB_{X\cup V}}
 \bigl((\gb_X^{X\cup V}\circ\gy)(x_1),\dots,(\gb_X^{X\cup V}\circ\gy)(x_n)\bigr)\,.
 \label{Eq:gyFcHomBr2Xj}
 \end{align}
Similarly, for each $R\in\Rel(\bA)$, say of arity~$n$, and all $(x_1,\dots,x_n)\in R^\bA$, there exists a finite subset~$V$ of~$\Omega$ such that
 \begin{equation}\label{Eq:gyRelHomBr2Xj}
 \bigl((\gb_X^{X\cup V}\circ\gy)(x_1),\dots,(\gb_X^{X\cup V}\circ\gy)(x_n)\bigr)
 \in R^{\bB_{X\cup V}}\,.
 \end{equation}
As $\card A+\card\Lg(\bA)<\gk$, the union~$Y$ of all~$V$s in~\eqref{Eq:gyFcHomBr2Xj} and~\eqref{Eq:gyRelHomBr2Xj} belongs to~$[\Omega]^{<\gk}$. As the relations~\eqref{Eq:gyFcHomBr2Xj} and~\eqref{Eq:gyRelHomBr2Xj}, with~$V$ replaced by~$Y$, are satisfied for all possible choices of~$f$, $R$, and $x_1,\dots,x_n$, the map $\gf':=\gb_X^{X\cup Y}\circ\gy$ is a morphism from~$\bA$ to~$\bB_{X\cup Y}$ in~$\MIND$\index{s}{Mind@$\MIND$}. Observe that $\gf=\gb_{X\cup Y}\circ\gf'$. This means that~$\bA$ is weakly $\gk$-presented\index{i}{presented!weakly $\gl$-} in~$\MIND$\index{s}{Mind@$\MIND$}.

Now assume that~$\gk$ is uncountable, let~$\bA$ be an arbitrary first-order structure, and set $\Omega:=A\cup\Lg(\bA)$ (disjoint union). Pick $a\in A$. For each $X\subseteq\Omega$, there exists a least (with respect to containment) subset~$\ol{X}$ of~$\Omega$ containing~$X\cup\set{a}$ such that $\ol{X}\cap A$ is closed under all operations (and constants) from $\ol{X}\cap\Lg(\bA)$. Furthermore, as~$\gk$ is uncountable, if~$X$ is $\gk$-small, then so is~$\ol{X}$. For each $X\in[\Omega]^{<\gk}$, denote by~$\bA_X$ the first-order structure with universe~$\ol{X}\cap A$ and language $\ol{X}\cap\Lg(A)$. By using Proposition~\ref{P:DirColimMIND}, we obtain the continuous\index{i}{diagram!continuous} directed colimit\index{s}{Mind@$\MIND$}
 \begin{equation}\label{Eq:bAasContDirColim}
 \bA=\varinjlim_{X\in[\Omega]^{<\gk}}{\bA_X}\quad\text{in }\MIND\,,
 \end{equation}
with all transition maps and limiting maps being the corresponding inclusion maps. In particular, if~$\bA$ is weakly $\gk$-presented\index{i}{presented!weakly $\gl$-} in~$\MIND$\index{s}{Mind@$\MIND$}, it follows from the continuity of the directed colimit~\eqref{Eq:bAasContDirColim} that~$\bA=\bA_X$ for some $X\in[\Omega]^{<\gk}$, so $\card A+\card\Lg(\bA)<\gk$.
\qed\end{proof}

There are many examples of infinite structures that are finitely presented\index{i}{presented!finitely} in~$\MIND$\index{s}{Mind@$\MIND$}. For example, let $A:=\go$, $\Lg(\bA):=\set{f}$ where~$f$ is a unary operation symbol, and define~$f^\bA$ as the successor map on~$\go$. This shows that the converse part of Proposition~\ref{P:glPresMIND} does not extend to the case where $\gk=\aleph_0$\index{s}{aleph0@$\aleph_{\ga}$}.

\section[Congruence lattices in generalized quasivarieties]{The relative congruence lattice functor with respect to a \gqv}\label{S:ConFunct}

For a member~$\bA$ of a class~$\cV$ of structures, the only congruences of~$\bA$ we will be interested in will mostly be \emph{$\cV$-congruences}\index{i}{congruencesV@$\cV$-congruences|ii}, that is, congruences~$\bgq$ of~$\bA$ such that $\bA/{\bgq}$ belongs to~$\cV$. This approach is also widely used in Gorbunov~\cite{Gorb}\index{c}{Gorbunov, V.\,A.}. However, here all the structures in~$\cV$ need not have the same language. This will be useful for our extension of the Gr\"atzer-Schmidt\index{c}{Gr\"atzer, G.}\index{c}{Schmidt, E.\,T.} Theorem stated in Theorem~\ref{T:MindConcLift}. The relevant classes~$\cV$ will be called \emph{\gqvs}\index{i}{generalized quasivariety} (Definition~\ref{D:GQV}).

\begin{defn}\label{D:ResAssignm}
For a morphism $\gf\colon\bA\to\bB$ in~$\MIND$\index{s}{Mind@$\MIND$} and a congruence~$\bgb$ of~$\bB$, we define $(\Res\gf)(\bgb):=\bigl(\ga,\famm{R_\bga}{R\in\Rel(\bA)}\bigr)$\index{s}{Resf@$\Res\gf$|ii}, where\index{s}{aris@$\ari(s)$}
 \begin{align*}
 \ga&:=\setm{(x,y)\in A\times A}{(\gf(x),\gf(y))\in\gb}\,,\\
 R_\bga&:=\setm{\overrightarrow{x}\in A^{\ari(R)}}{\gf(\overrightarrow{x})\in R_\bgb}\,,
 &&\text{for each }R\in\Rel(\bA)\,.
 \end{align*}
\end{defn}

The verification of the following result is routine.

\begin{prop}\label{P:ResContrFunct}\hfill
\begin{description}
\item[\tui] The assignment~$\Res\gf$\index{s}{Resf@$\Res\gf$} maps $\Con\bB$\index{s}{conA@$\Con\bA$} to $\Con\bA$, for any\linebreak morphism $\gf\colon\bA\to\nobreak\bB$ in~$\MIND$\index{s}{Mind@$\MIND$}. Furthermore, this assignment preserves all meets and all nonempty directed joins in~$\Con\bB$\index{s}{conA@$\Con\bA$}.

\item[\tuii] The assignment $(\bA\mapsto\Con\bA$, $\gf\mapsto\Res\gf)$\index{s}{conA@$\Con\bA$}\index{s}{Resf@$\Res\gf$} defines a contravariant functor from~$\MIND$\index{s}{Mind@$\MIND$} to the category of all algebraic lattices\index{i}{lattice!algebraic} with maps preserving all meets and all nonempty directed joins.
\end{description}
\end{prop}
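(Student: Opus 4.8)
The plan is to prove both parts by direct computation from Definitions~\ref{D:Cong1stOrd} and~\ref{D:ResAssignm}, using the already-recalled description of congruence lattices as algebraic subsets of products of powersets.

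For part~(i), I would fix a morphism $\gf\colon\bA\to\bB$ in~$\MIND$\ and a congruence $\bgb=\famm{\gb,\famm{R_\bgb}{R\in\Rel(\bB)}}{}$ of~$\bB$, and write $(\Res\gf)(\bgb)=\famm{\ga,\famm{R_\bga}{R\in\Rel(\bA)}}{}$. The first task is to check the four conditions of Definition~\ref{D:Cong1stOrd}. Since $\ga=(\gf\times\gf)^{-1}\gb$ and $\gb$ is an equivalence relation on~$B$, $\ga$ is an equivalence relation on~$A$. Compatibility of~$\ga$ with each $f\in\Op(\bA)$ follows from the morphism identity $\gf\bigl(f^\bA(\overrightarrow{x})\bigr)=f^\bB(\gf(\overrightarrow{x}))$ together with compatibility of~$\gb$ with~$f^\bB$. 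The inclusion $R_\bga\subseteq A^{\ari(R)}$ is immediate, and $R^\bA\subseteq R_\bga$ holds because $\overrightarrow{x}\in R^\bA$ forces $\gf(\overrightarrow{x})\in R^\bB\subseteq R_\bgb$ (here $R\in\Rel(\bA)$ is a relation symbol of~$\bB$ as well, since $\Lg(\bA)\subseteq\Lg(\bB)$, so $R_\bgb$ is meaningful). Finally, condition~(iv) transports along~$\gf$: if $\overrightarrow{x}\in R_\bga$ and $(x_s,y_s)\in\ga$ for all~$s$, then $\gf(\overrightarrow{x})\in R_\bgb$ and $(\gf(x_s),\gf(y_s))\in\gb$ for all~$s$, so $\gf(\overrightarrow{y})\in R_\bgb$ by~(iv) for~$\bgb$, whence $\overrightarrow{y}\in R_\bga$. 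This shows $\Res\gf$ maps $\Con\bB$ into~$\Con\bA$.

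Still in part~(i), to obtain preservation of meets and nonempty directed joins I would invoke the fact recalled in Section~\ref{S:NonInd} that $\Con\bB$ is an algebraic subset of $\Pow(B\times B)\times\prod\famm{\Pow(B^{\ari(R)})}{R\in\Rel(\bB)}$, and likewise for~$\Con\bA$; hence arbitrary meets in these congruence lattices are computed as componentwise intersections (with the empty meet giving~$\one$), and nonempty directed joins as componentwise unions. Since the relevant preimage operations defining~$\Res\gf$ commute with arbitrary intersections and arbitrary unions, $\Res\gf$ commutes componentwise with all meets and with all nonempty directed joins; in particular it is isotone, so it carries directed families to directed families. (The empty join is not preserved, which is why nonemptiness is assumed: the preimage of the empty union of relation components is empty, not~$R^\bA$.)

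For part~(ii), the fact that each $\Con\bA$ is an algebraic lattice is again the statement recalled in Section~\ref{S:NonInd}. Contravariant functoriality is then routine: $\Res(\id_\bA)=\id_{\Con\bA}$ is immediate from Definition~\ref{D:ResAssignm}, and for composable morphisms $\gf\colon\bA\to\bB$, $\gy\colon\bB\to\bC$ (so $\gy\circ\gf$ is again a morphism of~$\MIND$, as $\Lg(\bA)\subseteq\Lg(\bB)\subseteq\Lg(\bC)$), the identity $\Res(\gy\circ\gf)=\Res\gf\circ\Res\gy$ is checked on each component by unwinding the definitions: the $A\times A$-component of both sides is $\setm{(x,y)}{(\gy\gf(x),\gy\gf(y))\in\gc}$, and for each $R\in\Rel(\bA)$ the $R$-component of both sides is $\setm{\overrightarrow{x}\in A^{\ari(R)}}{\gy\gf(\overrightarrow{x})\in R_\bgc}$. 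Combining this with part~(i) exhibits $(\bA\mapsto\Con\bA,\ \gf\mapsto\Res\gf)$ as a contravariant functor into the stated category. The proof is entirely elementary; the only points needing a little care are the bookkeeping around variable languages (that relation symbols of~$\bA$ are interpreted in~$\bB$ through $\Lg(\bA)\subseteq\Lg(\bB)$, and that $\Res\gf$ only constrains the components indexed by~$\Rel(\bA)$) and the observation that directed joins in these congruence lattices are just unions, so that nothing beyond the cited algebraic-subset description is required.
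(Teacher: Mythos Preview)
Your proof is correct and is exactly the routine verification the paper has in mind; the paper itself omits the proof entirely, stating only that ``the verification of the following result is routine.'' Your attention to the variable-language bookkeeping (that $\Rel(\bA)\subseteq\Rel(\bB)$ and that $\Res\gf$ only outputs components indexed by~$\Rel(\bA)$) is precisely the one non-automatic point in this otherwise mechanical check.
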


\begin{lem}\label{L:Ker2Res}
Let~$\bA$, $\bB$, and $\bC$ be first-order structures, together with morphisms $\gf\colon\bA\to\bB$ and $\gy\colon\bB\to\bC$ in~$\MIND$\index{s}{Mind@$\MIND$}. Then the equation $\Ker(\gy\circ\gf)=(\Res\gf)(\Ker\gy)$\index{s}{Resf@$\Res\gf$}\index{s}{Kerf@$\Ker\gf$} is satisfied.
\end{lem}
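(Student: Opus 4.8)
The statement is an identity between two congruences of $\bA$, so the proof is a direct unravelling of the definitions of $\Ker$ (Definition~\ref{D:KerFirsrtOrdHom}) and $\Res$ (Definition~\ref{D:ResAssignm}), checking the two components separately: the equivalence-relation part on $A\times A$, and the relational part $R_\bgq$ for each $R\in\Rel(\bA)$. First I would note that since $\gf\colon\bA\to\bB$ is a morphism in $\MIND$ we have $\Lg(\bA)\subseteq\Lg(\bB)$, and similarly $\Lg(\bB)\subseteq\Lg(\bC)$, so all the relevant symbols are available where they are needed; in particular $\Rel(\bA)\subseteq\Rel(\bB)$, so for $R\in\Rel(\bA)$ the set $R_{\Ker\gy}$ makes sense. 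Write $\Ker\gy=\bigl(\gq,\famm{R_\bgq}{R\in\Rel(\bB)}\bigr)$, so that $\gq=\setm{(x,y)\in B\times B}{\gy(x)=\gy(y)}$ and $R_\bgq=\setm{\overrightarrow{x}\in B^{\ari(R)}}{\gy(\overrightarrow{x})\in R^\bC}$.

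For the equivalence-relation component: by definition of $\Ker(\gy\circ\gf)$, a pair $(x,y)\in A\times A$ lies in the first component iff $(\gy\circ\gf)(x)=(\gy\circ\gf)(y)$, i.e.\ $\gy(\gf(x))=\gy(\gf(y))$, i.e.\ $(\gf(x),\gf(y))\in\gq$. On the other hand, by Definition~\ref{D:ResAssignm} applied to $\Ker\gy$, the first component of $(\Res\gf)(\Ker\gy)$ is exactly $\setm{(x,y)\in A\times A}{(\gf(x),\gf(y))\in\gq}$. These coincide. For the relational component, fix $R\in\Rel(\bA)$ of arity $n$ and $\overrightarrow{x}\in A^n$. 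By Definition~\ref{D:KerFirsrtOrdHom}, $\overrightarrow{x}$ lies in the $R$-component of $\Ker(\gy\circ\gf)$ iff $(\gy\circ\gf)(\overrightarrow{x})\in R^\bC$, i.e.\ $\gy(\gf(\overrightarrow{x}))\in R^\bC$, which by definition of $R_\bgq$ means $\gf(\overrightarrow{x})\in R_\bgq$. By Definition~\ref{D:ResAssignm}, this is precisely the condition for $\overrightarrow{x}$ to lie in the $R$-component of $(\Res\gf)(\Ker\gy)$, since the $R$-component there is $\setm{\overrightarrow{x}\in A^{\ari(R)}}{\gf(\overrightarrow{x})\in R_{\Ker\gy}}$ and $R_{\Ker\gy}=R_\bgq$. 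Hence both components agree and the two congruences are equal.

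There is essentially no obstacle here: the lemma is a bookkeeping identity, and the only thing to be mildly careful about is the language inclusions (so that every symbol referenced is in the appropriate $\Rel$ or $\Op$ set) and the notational convention $\gf(\overrightarrow{x})=(\gf(x_1),\dots,\gf(x_n))$ from Definition~\ref{D:KerFirsrtOrdHom}. I would also remark, without needing a separate argument, that $(\Res\gf)(\Ker\gy)$ is indeed a congruence of $\bA$ by Proposition~\ref{P:ResContrFunct}(i), so the equation is between two genuine elements of $\Con\bA$. The write-up can be kept to a few lines, presenting the two component-equalities as displayed chains of equivalences.
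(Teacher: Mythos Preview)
Your proof is correct: the direct component-by-component verification works without issue, and your attention to the language inclusions is appropriate.

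The paper, however, takes a slicker route. It observes that $\Ker\gy=(\Res\gy)(\zero_\bC)$ (this is immediate from Definitions~\ref{D:KerFirsrtOrdHom} and~\ref{D:ResAssignm}), and then invokes the contravariant functoriality of~$\Res$ established in Proposition~\ref{P:ResContrFunct}(ii):
\[
(\Res\gf)(\Ker\gy)=(\Res\gf)\bigl((\Res\gy)(\zero_\bC)\bigr)=\bigl(\Res(\gy\circ\gf)\bigr)(\zero_\bC)=\Ker(\gy\circ\gf)\,.
\]
This is a one-line argument that avoids unpacking the two components separately. Your approach is more self-contained and makes the identity transparent at the level of elements; the paper's approach shows that the lemma is really just the statement ``$\Ker$ is $\Res$ applied to the zero congruence'' combined with functoriality, which is conceptually cleaner and reuses machinery already in place.
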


\begin{proof}
As $\Ker\gy=(\Res\gy)(\zero_\bC)$\index{s}{Resf@$\Res\gf$}\index{s}{Kerf@$\Ker\gf$} and by the contravariance of the functor~$\Res$,\index{s}{Kerf@$\Ker\gf$}
 \[
 (\Res\gf)(\Ker\gy)=(\Res\gf)\circ(\Res\gy)(\zero_\bC)=
 \Res(\gy\circ\gf)(\zero_\bC)=\Ker(\gy\circ\gf)\,.
 \]
This concludes the proof.
\qed\end{proof}

\begin{defn}\label{D:GQV}
A \emph{\gqv}\index{i}{generalized quasivariety|ii} is a full subcategory~$\cV$ of~$\MIND$\index{s}{Mind@$\MIND$} such that:
\begin{description}
\item[\tui] For every morphism $\gf\colon\bA\to\bB$ in~$\cV$, the quotient $\bA/{\Ker\gf}$\index{s}{Kerf@$\Ker\gf$} is a member of~$\cV$.

\item[\tuii] For any first-order structures~$\bA$ and~$\bB$, if~$\bA$ embeds into~$\bB$, $\Lg(\bA)=\Lg(\bB)$, and $\bB\in\cV$, then $\bA\in\cV$.

\item[\tuiii] Any direct product of a nonempty collection of first-order structures in~$\cV$, all with the same language, belongs to~$\cV$.

\item[\tuiv] The quotient $\bA/\one_\bA$\index{s}{congspone@$\one_\bA$} belongs to~$\cV$, for each $\bA\in\cV$.

\item[\tuv] $\cV$ is closed under directed colimits in~$\MIND$\index{s}{Mind@$\MIND$}.
\end{description}

Then we set\index{s}{conA@$\Con\bA$}\index{s}{conAV@$\ConV\bA$, $\ConV f$|ii}
 \[
 \ConV\bA:=\setm{\bga\in\Con\bA}{\bA/\bga\in\cV}\,,
 \quad\text{for any first-order structure }\bA\,,
 \]
and we call the elements of~$\ConV\bA$\index{s}{conAV@$\ConV\bA$, $\ConV f$} the \emph{$\cV$-congruences}\index{i}{congruencesV@$\cV$-congruences} of~$\bA$. The \jzs\ of all compact elements of that lattice will be denoted by~$\ConcV\bA$\index{s}{compcongVA@$\ConcV\bA$, $\ConcV f$|ii}. We shall usually omit the superscript~$\cV$ in case~$\cV$ is closed under homomorphic images\index{s}{compcon1@$\Conc\bA$, $\Conc f$|ii}.
\end{defn}

The meaning of $\ConV\bA$\index{s}{conAV@$\ConV\bA$, $\ConV f$} that we use here is the same as the meaning of~$\Con_{\mathbf{K}}\cA$ introduced in~\cite[Section~1.4.2]{Gorb}\index{c}{Gorbunov, V.\,A.}.

\begin{exple}\label{Ex:QVar}
A \emph{quasivariety}\index{i}{quasivariety|ii} in a given language~$\scL$ is a class~$\cV$ of first-order structures on~$\scL$ which is closed under substructures, direct products, and directed colimits within the class of all models of~$\scL$ (cf. Gorbunov~\cite{Gorb}\index{c}{Gorbunov, V.\,A.}). Equivalently, $\cV$ is the class of all models, for a given language~$\scL$, that satisfy a given set of formulas each of the form\index{s}{conj@$\conj$}
 \[
 (\forall\overrightarrow{\vx})\Bigl(\conj_{i<k}\vE_i(\overrightarrow{\vx})\Rightarrow\vF(\overrightarrow{\vx})\Bigr)\,,
 \]
for integers $k>0$ and atomic formulas~$\vE_i$, $\vF$.

It is straightforward to verify that any quasivariety is also a \gqv\index{i}{generalized quasivariety}\ (for~(i), observe that~$\bA/{\Ker\gf}$\index{s}{Kerf@$\Ker\gf$} embeds into~$\bB$; for~(iv), observe that~$\bA/\one_\bA$\index{s}{congspone@$\one_\bA$} is the terminal object of~$\cV$, and so it is the product, indexed by the empty set, of a family of objects in~$\cV$.)
\end{exple}

\begin{exple}\label{Ex:1/2nonindexed}
The category~$\MIND$\index{s}{Mind@$\MIND$} is a \gqv\index{i}{generalized quasivariety}. More generally, let~$\scC$ be a class of languages which is closed under directed unions. Then the full subcategory~$\MIND_\scC$\index{s}{Mindsub@$\MIND_\scC$|ii} consisting of all first-order structures~$\bA$ such that $\Lg(\bA)$ belongs to~$\scC$ is a \gqv\index{i}{generalized quasivariety}. Particularly interesting examples are the following:
\begin{description}
\item[\tui] $\scC$ is the class of all languages without relation symbols. Then the objects of~$\MIND_\scC$\index{s}{Mindsub@$\MIND_\scC$} are all the \emph{algebras}\index{i}{algebra!universal} (in the sense of universal algebras).

\item[\tuii] $\scC$ is the class of all languages containing neither relation symbols nor constant symbols, and all whose operation symbols have arity one. Then the objects of~$\MIND_\scC$\index{s}{Mindsub@$\MIND_\scC$} are all the \emph{unary algebras}\index{i}{algebra!unary}.

\item[\tuiii] $\scC$ is the class of all languages containing neither constant symbols nor operation symbols. Then the objects of~$\MIND_\scC$\index{s}{Mindsub@$\MIND_\scC$} are all the \emph{relational systems}.
\end{description}
\end{exple}

Observe that for a \gqv\index{i}{generalized quasivariety}~$\cV$ and a first-order structure~$\bA$, the set $\ConV\bA$\index{s}{conAV@$\ConV\bA$, $\ConV f$} may very well be empty. Because of Definition~\ref{D:GQV}(iv), this cannot occur in case~$\bA\in\cV$. The following result is analogue to \cite[Corollary~1.4.11]{Gorb}\index{c}{Gorbunov, V.\,A.}.

\begin{lem}\label{L:ConGQVAlg}
Let~$\cV$ be a \gqv\index{i}{generalized quasivariety}\ and let~$\bA\in\cV$. Then $\ConV\bA$\index{s}{conAV@$\ConV\bA$, $\ConV f$} is an algebraic subset\index{i}{algebraic subset} of~$\Con\bA$\index{s}{conA@$\Con\bA$}, with the same bounds. In particular, it is an algebraic lattice\index{i}{lattice!algebraic}.
\end{lem}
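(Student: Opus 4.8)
The plan is to verify directly that $\ConV\bA$ is an algebraic subset of $\Con\bA$ in the sense of Section~\ref{Su:Posets}, namely that it contains the bounds $\zero_\bA$ and $\one_\bA$ of $\Con\bA$ and is closed under arbitrary meets and under nonempty directed joins. Two facts recalled before the statement will be used throughout: $\Con\bA$ is an algebraic lattice (Proposition~\ref{P:ResContrFunct}), and, being an algebraic subset of $\Pow(A\times A)\times\prod\famm{\Pow(A^{\ari(R)})}{R\in\Rel(\bA)}$, arbitrary meets and nonempty directed joins in $\Con\bA$ are computed componentwise. Once $\ConV\bA$ has been shown to be an algebraic subset of $\Con\bA$, the assertion that it is an algebraic lattice with the same bounds is immediate from the general fact, recalled in Section~\ref{Su:Posets}, that an algebraic subset of an algebraic lattice is an algebraic lattice under the induced order.

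For the bounds: $\bA/\zero_\bA$ is, up to the identification of $A/\id_A$ with $A$, the structure $\bA$ itself, so $\zero_\bA\in\ConV\bA$; and $\one_\bA\in\ConV\bA$ by Definition~\ref{D:GQV}(iv), which at the same time handles the meet of the empty family. Now let $\famm{\bga_i}{i\in I}$ be a \emph{nonempty} family in $\ConV\bA$ and let $\bga:=\bigwedge_{i\in I}\bga_i$, so that, componentwise, $\ga=\bigcap_{i\in I}\ga_i$ and $R_\bga=\bigcap_{i\in I}R_{\bga_i}$ for each $R\in\Rel(\bA)$. I would then check that the natural map $\bA/\bga\to\prod_{i\in I}\bA/\bga_i$, $x/\bga\mapsto\famm{x/\bga_i}{i\in I}$, is an embedding in $\MIND$: all the factors have language $\Lg(\bA)$ (so the product does too, and so does $\bA/\bga$), the map is one-to-one because $x\equiv y\pmod\bga$ iff $x\equiv y\pmod{\bga_i}$ for every $i$, and $\overrightarrow{x}\in R_\bga$ iff $\overrightarrow{x}\in R_{\bga_i}$ for every $i$, so the relational implication is actually an equivalence. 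By Definition~\ref{D:GQV}(iii) the product lies in $\cV$, and then by Definition~\ref{D:GQV}(ii) so does $\bA/\bga$; hence $\bga\in\ConV\bA$.

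For nonempty directed joins, let $\famm{\bga_i}{i\in I}$ be a directed family in $\ConV\bA$ and $\bga:=\bigvee_{i\in I}\bga_i$, so that $\ga=\bigcup_{i\in I}\ga_i$ and $R_\bga=\bigcup_{i\in I}R_{\bga_i}$ for each $R$. Using Proposition~\ref{P:DirColimMIND}, I would verify that $\bA/\bga=\varinjlim_{i\in I}\bA/\bga_i$ in $\MIND$, the transition morphisms being the canonical projections $\bA/\bga_i\onto\bA/\bga_j$ for $\bga_i\leq\bga_j$: the underlying set of the colimit is $A/\ga$ since the canonical map $A\to\varinjlim_i A/\ga_i$ is surjective and identifies $x$ with $y$ exactly when $(x,y)\in\ga_k$ for some $k$, i.e. when $(x,y)\in\ga$; the interpretations of operation and relation symbols in the colimit then agree with those in $\bA/\bga$ by the formulas of Proposition~\ref{P:DirColimMIND} and the componentwise description of $\bga$; and the language of the colimit is $\bigcup_{i\in I}\Lg(\bA)=\Lg(\bA)$. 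Definition~\ref{D:GQV}(v) then gives $\bA/\bga\in\cV$, i.e. $\bga\in\ConV\bA$, which finishes the proof.

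The argument is essentially routine; the two points deserving attention are that the meets and joins in question are genuinely the componentwise ones in $\Con\bA$ (so that the formula for $R_\bga$ used above is valid) and that every quotient occurring carries the \emph{same} language $\Lg(\bA)$, since Definition~\ref{D:GQV}(ii),(iii) require this. The one slightly technical step will be the identification $\bA/\bga=\varinjlim_{i\in I}\bA/\bga_i$ in $\MIND$ for a directed family, where the relational part has to be carried correctly through the colimit description of Proposition~\ref{P:DirColimMIND}.
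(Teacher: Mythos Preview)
Your proof is correct and follows essentially the same approach as the paper's: verify the bounds using $\bA\in\cV$ and Definition~\ref{D:GQV}(iv), close under nonempty meets via the diagonal embedding of $\bA/\bga$ into $\prod_i\bA/\bga_i$ and items (ii),(iii), and close under directed joins via the identification $\bA/\bga=\varinjlim_i\bA/\bga_i$ and item~(v). One minor citation slip: the fact that $\Con\bA$ is an algebraic lattice is recalled in the paragraph following Notation~\ref{Not:modNotation}, not in Proposition~\ref{P:ResContrFunct}.
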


\begin{proof}
As~$\bA$ belongs to~$\cV$, its zero congruence $\zero_\bA$\index{s}{congspzero@$\zero_\bA$} belongs to $\ConcV\bA$\index{s}{compcongVA@$\ConcV\bA$, $\ConcV f$}. Moreover, it follows from Definition~\ref{D:GQV}(iv) that~$\one_\bA$\index{s}{congspone@$\one_\bA$} belongs to $\ConV\bA$\index{s}{conAV@$\ConV\bA$, $\ConV f$}. If $\famm{\bga_i}{i\in I}$ is a nonempty family of elements in $\ConV\bA$\index{s}{conAV@$\ConV\bA$, $\ConV f$} and $\bga:=\bigwedge\famm{\bga_i}{i\in I}$, then the diagonal map embeds, with the same language, the structure~$\bA/\bga$ into the product $\prod\famm{\bA/\bga_i}{i\in I}$; hence, by~(ii) and~(iii) in Definition~\ref{D:GQV}, $\bA/\bga$ belongs to~$\cV$, and so $\bga\in\ConV\bA$\index{s}{conAV@$\ConV\bA$, $\ConV f$}. Therefore, $\ConV\bA$ is closed under arbitrary meets.

Now let~$I$ be a nonempty directed poset\index{i}{poset!directed} and let $\famm{\bga_i}{i\in I}$ be an isotone family of elements in~$\ConV\bA$\index{s}{conAV@$\ConV\bA$, $\ConV f$}; set $\bga:=\bigvee\famm{\bga_i}{i\in I}$. By using the explicit description of the colimit given in Proposition~\ref{P:DirColimMIND}, it is not hard to verify that $\bA/\bga=\varinjlim_{i\in I}(\bA/\bga_i)$, with the transition morphisms and limiting morphisms being the projection maps (cf. \cite[Proposition~1.4.9]{Gorb}\index{c}{Gorbunov, V.\,A.}). As all the structures $\bA/\bga_i$ belong to~$\cV$ and~$\cV$ is closed under directed colimits in~$\MIND$\index{s}{Mind@$\MIND$}, the structure~$\bA/\bga$ belongs to~$\cV$, and so~$\bga\in \ConV\bA$\index{s}{conAV@$\ConV\bA$, $\ConV f$}.
\qed\end{proof}

In particular, as $\ConV\bA$\index{s}{conAV@$\ConV\bA$, $\ConV f$} is an algebraic subset\index{i}{algebraic subset} of $\Con\bA$\index{s}{conA@$\Con\bA$}, which is an algebraic subset of\index{s}{aris@$\ari(s)$}
$\Pow(A\times A)\times\prod\famm{\Pow(A^{\ari(R)})}{R\in\Rel(\bA)}$, the following notation is well-defined.

\begin{notation}\label{Not:bvNotation}
Let~$\cV$ be a \gqv\index{i}{generalized quasivariety}\ and let~$\bA\in\cV$.
\begin{itemize}
\item For elements $x,y\in A$, we denote by $\bveA{x}{y}$\index{s}{bveA@$\bveA{x}{y}$|ii} the least congruence $\bga\in\ConV\bA$\index{s}{conAV@$\ConV\bA$, $\ConV f$} such that $x\equiv y\pmod{\bga}$.

\item For $R\in\Rel(\bA)$, say of arity~$n$, and $\overrightarrow{x}=(x_1,\dots,x_n)\in A^n$, we denote by $\bvrA{R(x_1,\dots,x_n)}$\index{s}{bvrA@$\bvrA{R(x_1,\dots,x_n)}$, $\bvrA{R\overrightarrow{x}}$|ii}, abbreviated $\bvrA{R\overrightarrow{x}}$, the least congruence $\bga\in\ConV\bA$\index{s}{conAV@$\ConV\bA$, $\ConV f$} such that $R(x_1,\dots,x_n)\pmod{\bga}$.
\end{itemize}
We refer to Notation~\ref{Not:modNotation} for the $\pmod{\bga}$ notation. We shall call the $\cV$-congruences\index{i}{congruencesV@$\cV$-congruences} of the form either $\bveA{x}{y}$ or $\bvrA{R\overrightarrow{x}}$ \emph{principal $\cV$-congruences}\index{i}{congruencesV@$\cV$-congruences}\index{i}{congruencesVp@$\cV$-congruences (principal ${}_{-}$)|ii} of~$\bA$. Again, we shall usually omit the superscript~$\cV$ in case~$\cV$ is closed under homomorphic images.
\end{notation}

\begin{lem}\label{L:ClosUnfRes}
Let $\cV$ be a \gqv\index{i}{generalized quasivariety}\ and let $\gf\colon\bA\to\bB$ be a morphism in~$\cV$. Then the image of $\ConV\bB$\index{s}{conAV@$\ConV\bA$, $\ConV f$} under $\Res\gf$\index{s}{Resf@$\Res\gf$} is contained in $\ConV\bA$\index{s}{conAV@$\ConV\bA$, $\ConV f$}.
\end{lem}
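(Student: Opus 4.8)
The plan is to reduce the statement to a single application of Definition~\ref{D:GQV}(i) via the kernel-composition identity of Lemma~\ref{L:Ker2Res}. Fix $\bgb\in\ConV\bB$; by definition this means that the quotient $\bB/\bgb$ belongs to~$\cV$. Let $\gp_\bgb\colon\bB\onto\bB/\bgb$ denote the canonical projection (cf. Definition~\ref{D:Quot1ordStruct}), which is a morphism in~$\MIND$ and whose kernel, by the remark following Definition~\ref{D:KerFirsrtOrdHom}, is exactly~$\bgb$.

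Next I would form the composite $\gy:=\gp_\bgb\circ\gf\colon\bA\to\bB/\bgb$, a morphism in~$\MIND$. Since $\cV$ is a \emph{full} subcategory of~$\MIND$ and both $\bA$ (because $\gf$ is a morphism in~$\cV$, so $\bA\in\cV$) and $\bB/\bgb$ lie in~$\cV$, the morphism $\gy$ is a morphism in~$\cV$. Now Lemma~\ref{L:Ker2Res} gives
\[
\Ker\gy=\Ker(\gp_\bgb\circ\gf)=(\Res\gf)(\Ker\gp_\bgb)=(\Res\gf)(\bgb)\,,
\]
and by Proposition~\ref{P:ResContrFunct}(i) the right-hand side is indeed a congruence of~$\bA$. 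Applying Definition~\ref{D:GQV}(i) to the morphism~$\gy$ in~$\cV$, the quotient $\bA/{\Ker\gy}=\bA/{(\Res\gf)(\bgb)}$ belongs to~$\cV$, that is, $(\Res\gf)(\bgb)\in\ConV\bA$. As $\bgb\in\ConV\bB$ was arbitrary, this proves that $(\Res\gf)``(\ConV\bB)\subseteq\ConV\bA$.

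There is no substantive obstacle here: the argument is a bookkeeping exercise once one has Lemma~\ref{L:Ker2Res}. The only points requiring a moment of care are the identification $\Ker\gp_\bgb=\bgb$ and the observation that fullness of~$\cV$ in~$\MIND$ is what upgrades the $\MIND$-morphism $\gp_\bgb\circ\gf$ between $\cV$-objects to a $\cV$-morphism, so that Definition~\ref{D:GQV}(i) actually applies.
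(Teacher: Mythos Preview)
Your proof is correct and follows essentially the same approach as the paper: compose~$\gf$ with the canonical projection $\gp\colon\bB\onto\bB/\bgb$, identify $\Ker(\gp\circ\gf)$ with $(\Res\gf)(\bgb)$ via Lemma~\ref{L:Ker2Res}, and apply Definition~\ref{D:GQV}(i). Your version is slightly more explicit about why fullness of~$\cV$ ensures that the composite is a morphism in~$\cV$, but the argument is the same.
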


\begin{proof}
Let $\bgb\in\ConV\bB$\index{s}{conAV@$\ConV\bA$, $\ConV f$} and denote by $\gp\colon\bB\onto\bB/\bgb$\index{s}{AtoonB@$f\colon A\onto B$} the canonical projection. Set $\bga:=\Ker(\gp\circ\gf)$\index{s}{Kerf@$\Ker\gf$}. By applying Definition~\ref{D:GQV}(i) to the composite map $\gp\circ\gf$, we obtain that $\bA/\bga$ belongs to~$\cV$, that is, $\bga\in\ConV\bA$\index{s}{conAV@$\ConV\bA$, $\ConV f$}. However, it follows from Lemma~\ref{L:Ker2Res} that $\bga=(\Res\gf)(\bgb)$\index{s}{Resf@$\Res\gf$}.
\qed\end{proof}

Let~$\cV$ be a \gqv\index{i}{generalized quasivariety}. It follows from Lemmas~\ref{L:ConGQVAlg} and~\ref{L:ClosUnfRes} that the assignment $(\bA\mapsto\ConV\bA$\index{s}{conAV@$\ConV\bA$, $\ConV f$}, $\gf\mapsto\Res\gf)$\index{s}{Resf@$\Res\gf$} defines a contravariant functor from~$\cV$ to the category of all algebraic lattices\index{i}{lattice!algebraic} with maps that preserve arbitrary meets and nonempty directed joins.

By the general theory of Galois connections \cite[Section~0.3]{Comp}\index{c}{Gierz, G.}\index{c}{Hofmann, K.\,H.}\index{c}{Keimel, K.}\index{c}{Lawson, J.\,D.}\index{c}{Mislove, M.}\index{c}{Scott, D.\,S.}, for every morphism $\gf\colon\bA\to\bB$ in~$\cV$, there exists a unique map\index{s}{conAV@$\ConV\bA$, $\ConV f$}
 \[
 \ConV\gf\colon\ConV\bA\to\ConV\bB
 \]
such that\index{s}{Resf@$\Res\gf$}
 \[
 (\ConV\gf)(\bga)\leq\bgb\text{ if{f} }\bga\leq(\Res\gf)(\bgb)\,,
 \text{ for all }(\bga,\bgb)\in(\ConV\bA)\times(\ConV\bB)\,.
 \]
As the map $\Res\gf$\index{s}{Resf@$\Res\gf$} preserves arbitrary meets and nonempty directed joins, the map $\ConV\gf$\index{s}{conAV@$\ConV\bA$, $\ConV f$} preserves arbitrary joins and it sends compact elements to compact elements. Hence the assignment $(\bA\mapsto\ConV\bA$, $\gf\mapsto\ConV\gf)$\index{s}{conAV@$\ConV\bA$, $\ConV f$} defines a (covariant) functor from~$\cV$ to the category of all algebraic lattices\index{i}{lattice!algebraic} with compactness-preserving complete \jh s.

\begin{lem}\label{L:ConcVProj}
Let $\bgq$ be a congruence of a first-order structure~$\bA$ and denote by $\gp\colon\bA\onto\bA/\bgq$\index{s}{AtoonB@$f\colon A\onto B$} the canonical projection. Then $(\ConV\gp)(\bga)=\bga\vee\bgq/\bgq$\index{s}{conAV@$\ConV\bA$, $\ConV f$} \pup{where the join $\bga\vee\bgq$ is evaluated in~$\ConV\bA$}\index{s}{conAV@$\ConV\bA$, $\ConV f$}, for each $\bga\in\ConV\bA$.
\end{lem}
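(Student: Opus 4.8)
The plan is to reduce everything to the order‑theoretic characterization of $\ConV\gp$ recalled just before the statement together with the Second Isomorphism Theorem (Lemma~\ref{L:SecIsomThm}). First I would make explicit the hypotheses that are implicit in the statement, namely that $\bA\in\cV$ and $\bgq\in\ConV\bA$ (so that $\gp$ is a morphism of~$\cV$ and $\ConV\gp$ is defined, $\ConV\bA$ and $\ConV(\bA/\bgq)$ are algebraic lattices by Lemma~\ref{L:ConGQVAlg}, and the join $\bga\vee\bgq$ evaluated in~$\ConV\bA$ makes sense). Writing $\bgc:=\bga\vee\bgq$ for that join, i.e.\ the least element of~$\ConV\bA$ lying above both $\bga$ and~$\bgq$, the goal becomes $(\ConV\gp)(\bga)=\bgc/\bgq$. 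From the defining property of $\ConV\gp$ (taking $\bgb:=(\ConV\gp)(\bga)$ and conversely) one gets for free that $(\ConV\gp)(\bga)$ is the \emph{least} $\bgd\in\ConV(\bA/\bgq)$ such that $\bga\leq(\Res\gp)(\bgd)$; so it suffices to show that $\bgc/\bgq$ is this least element.

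The key computational fact I would establish is: for every $\bgb'\in(\Con\bA)\upw\bgq$ one has $(\Res\gp)(\bgb'/\bgq)=\bgb'$. Indeed $\bgb'/\bgq$ is by definition the kernel of the canonical projection $\gp':\bA/\bgq\onto\bA/\bgb'$, so by Lemma~\ref{L:Ker2Res} $(\Res\gp)(\bgb'/\bgq)=\Ker(\gp'\circ\gp)$, and $\gp'\circ\gp$ is just the canonical projection $\bA\onto\bA/\bgb'$, whose kernel is~$\bgb'$. Combined with Lemma~\ref{L:SecIsomThm}, this identifies the restriction of $\Res\gp$ to $\Con(\bA/\bgq)$ with the inverse of the lattice isomorphism $\bgb'\mapsto\bgb'/\bgq$ from $(\Con\bA)\upw\bgq$ onto $\Con(\bA/\bgq)$; in particular $\Res\gp$ is injective on $\Con(\bA/\bgq)$, and by Lemma~\ref{L:ClosUnfRes} (surjectivity using $(\bA/\bgq)/(\bgb'/\bgq)\cong\bA/\bgb'\in\cV$) it restricts further to a bijection $\ConV(\bA/\bgq)\to(\ConV\bA)\upw\bgq$.

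Then I would verify the two halves. First, $\bgc/\bgq\in\ConV(\bA/\bgq)$ since $(\bA/\bgq)/(\bgc/\bgq)\cong\bA/\bgc\in\cV$ (Lemma~\ref{L:SecIsomThm} and $\bgc\in\ConV\bA$), and $(\Res\gp)(\bgc/\bgq)=\bgc\geq\bga$, so $\bgc/\bgq$ is one of the competing~$\bgd$'s; hence $(\ConV\gp)(\bga)\leq\bgc/\bgq$. Conversely, let $\bgd\in\ConV(\bA/\bgq)$ satisfy $\bga\leq(\Res\gp)(\bgd)=:\bgb'\in\ConV\bA$. Since $\bgq=\Ker\gp=(\Res\gp)(\zero_{\bA/\bgq})$ and $\zero_{\bA/\bgq}\leq\bgd$, isotonicity of $\Res\gp$ (Proposition~\ref{P:ResContrFunct}) gives $\bgq\leq\bgb'$; together with $\bga\leq\bgb'$ and minimality of~$\bgc$ in~$\ConV\bA$ this yields $\bgb'\geq\bgc$, and therefore $\bgd=\bgb'/\bgq\geq\bgc/\bgq$ (the equality by injectivity of $\Res\gp$, the inequality by isotonicity of $\bgb\mapsto\bgb/\bgq$ on $(\Con\bA)\upw\bgq$). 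Thus $\bgc/\bgq$ lies below every competing~$\bgd$ and is itself competing, so it is the least one and $(\ConV\gp)(\bga)=\bgc/\bgq=(\bga\vee\bgq)/\bgq$.

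The only point deserving care is bookkeeping about which ambient lattice each join or meet is taken in: Lemma~\ref{L:ConGQVAlg} tells us $\ConV\bA$ is a meet‑subsemilattice of $\Con\bA$ with the same bounds, so the argument above only ever needs the "least common upper bound in~$\ConV\bA$'' description of $\bga\vee\bgq$ and never an explicit formula for $\cV$‑joins; everything else is a routine manipulation of kernels and canonical projections. I expect no genuine obstacle here.
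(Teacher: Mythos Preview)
Your proposal is correct and follows essentially the same route as the paper: both reduce to the identity $(\Res\gp)(\bgb/\bgq)=\bgb$ for $\bgb\in(\ConV\bA)\upw\bgq$, from which the formula for $(\ConV\gp)(\bga)$ falls out via the adjunction defining $\ConV\gp$ and the Second Isomorphism Theorem. The only real difference is that the paper establishes this identity by a direct element-and-relation computation, whereas you obtain it more conceptually via Lemma~\ref{L:Ker2Res} (noting that $\bgb/\bgq=\Ker\gp'$ for $\gp'\colon\bA/\bgq\onto\bA/\bgb$, so $(\Res\gp)(\bgb/\bgq)=\Ker(\gp'\circ\gp)=\bgb$); you also spell out explicitly the ``it suffices'' step that the paper leaves to the reader.
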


\begin{proof}
Set $\bgb':=(\Res\gp)(\bgb/\bgq)$\index{s}{Resf@$\Res\gf$}, for each $\bgb\in(\ConV\bA)\upw\bgq$\index{s}{conAV@$\ConV\bA$, $\ConV f$}.
It suffices to prove that $\bgb=\bgb'$. For $x,y\in A$,
 \begin{align*}
 x\equiv y\pmod{\bgb'}& \Leftrightarrow\gp(x)\equiv\gp(y)\pmod{\bgb/\bgq}\\
 & \Leftrightarrow x\equiv y\pmod{\bgb}\,, 
 \end{align*}
while for each $R\in\Rel(\bA)$ and each $\overrightarrow{x}\in A^{\ari(R)}$\index{s}{aris@$\ari(s)$},
 \begin{align*}
 R\overrightarrow{x}\pmod{\bgb'}& \Leftrightarrow R\gp(\overrightarrow{x})\pmod{\bgb/\bgq}\\
 & \Leftrightarrow \gp(\overrightarrow{x})\in R_{\bgb/\bgq}\\
 & \Leftrightarrow R\overrightarrow{x}\pmod{\bgb}\,.
 \end{align*}
The conclusion follows immediately.
\qed\end{proof}

By the Second Isomorphism Theorem (Lemma~\ref{L:SecIsomThm})\index{i}{Isomorphism Theorem (Second ${}_{-}$)}, it follows that $\ConV(\bA/\bgq)$\index{s}{conAV@$\ConV\bA$, $\ConV f$} is isomorphic to the interval $(\ConV\bA)\upw\bgq$\index{s}{conAV@$\ConV\bA$, $\ConV f$}.

We will often find it more convenient to work with \emph{compact} congruences. For any~$\bA\in\nobreak\cV$, as~$\ConV\bA$\index{s}{conAV@$\ConV\bA$, $\ConV f$} is an algebraic lattice\index{i}{lattice!algebraic}, it is canonically isomorphic to the ideal\index{i}{ideal!of a poset} lattice of the \jzs\ $\ConcV\bA$\index{s}{compcongVA@$\ConcV\bA$, $\ConcV f$} of all compact elements of~$\ConV\bA$\index{s}{conAV@$\ConV\bA$, $\ConV f$}. For a morphism $\gf\colon\bA\to\bB$ in~$\cV$, we shall denote by~$\ConcV\gf$ the restriction of~$\ConV\gf$\index{s}{conAV@$\ConV\bA$, $\ConV f$} from~$\ConcV\bA$ to~$\ConcV\bB$\index{s}{compcongVA@$\ConcV\bA$, $\ConcV f$}. Therefore, the assignment $(\bA\mapsto\ConcV\bA$, $\gf\mapsto\ConcV\gf)$ defines a functor\index{s}{compcongV@$\ConcV$ functor|ii} from~$\cV$ to the category~$\SEM$\index{s}{Sem@$\SEM$|ii} of all \jzs s with \jzh s. We shall also write~$\Conc$\index{s}{compcon@$\Conc$ functor|ii} instead of~$\ConcV$\index{s}{compcongV@$\ConcV$ functor} in case~$\cV$ is closed under homomorphic images.

In the statement of the following lemma we use Notation~\ref{Not:bvNotation}.

\begin{lem}\label{L:ConcvonPpalCong}
The following statements hold, for any morphism $\gf\colon\bA\to\nobreak\bB$ in a \gqv\index{i}{generalized quasivariety}\ $\cV$:
\begin{description}
\item[\tui] The $\cV$-congruences\index{i}{congruencesV@$\cV$-congruences} of~$\bA$ are exactly the joins \pup{in~$\ConV\bA$}\index{s}{conAV@$\ConV\bA$, $\ConV f$} of principal $\cV$-congruences\index{i}{congruencesV@$\cV$-congruences}\index{i}{congruencesVp@$\cV$-congruences (principal ${}_{-}$)} of~$\bA$.

\item[\tuii] The compact $\cV$-congruences\index{i}{congruencesV@$\cV$-congruences} of~$\bA$ are exactly the finite joins \pup{in~$\ConV\bA$}\index{s}{conAV@$\ConV\bA$, $\ConV f$} of principal $\cV$-congruences\index{i}{congruencesV@$\cV$-congruences}\index{i}{congruencesVp@$\cV$-congruences (principal ${}_{-}$)} of~$\bA$.

\item[\tuiii] The equation $(\ConcV\gf)\bigl(\bveA{x}{y}\bigr)=\bveB{\gf(x)}{\gf(y)}$\index{s}{compcongVA@$\ConcV\bA$, $\ConcV f$} is satisfied, for all $x,y\in\nobreak A$.

\item[\tuiv] The equation $(\ConcV\gf)\bigl(\bvrA{R(x_1,\dots,x_n)}\bigr)=\bvrB{R(\gf(x_1),\dots,\gf(x_n))}$ is satisfied, for any $R\in\Rel(\bA)$, say of arity~$n$, and all $x_1,\dots,x_n\in A$.\index{s}{compcongVA@$\ConcV\bA$, $\ConcV f$}
\end{description}
\end{lem}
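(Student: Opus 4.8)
The plan is to carry out four short, essentially formal arguments, all resting on Lemma~\ref{L:ConGQVAlg} --- which gives that $\ConV\bA$ is an algebraic subset of $\Con\bA$ with the same bounds, so that arbitrary meets and \emph{nonempty directed} joins in $\ConV\bA$ are computed componentwise exactly as in $\Con\bA$ --- and on the Galois adjunction defining $\ConV\gf$ from $\Res\gf$. First I would record the two facts about principal $\cV$-congruences of a structure $\bA\in\cV$ that all four parts use. They exist: $\one_\bA$ is a $\cV$-congruence (Definition~\ref{D:GQV}(iv)) in which $x\equiv y$ (resp. $R\overrightarrow{x}$) holds, and since $\ConV\bA$ is closed under arbitrary meets, the meet of all $\cV$-congruences satisfying $x\equiv y$ (resp. $R\overrightarrow{x}$) is again one, hence equals $\bveA{x}{y}$ (resp. $\bvrA{R\overrightarrow{x}}$). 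They are compact in $\ConV\bA$: if $\bveA{x}{y}$ lies below the join in $\ConV\bA$ of some $X\subseteq\ConV\bA$, then, replacing $X$ by the directed family of its finite subjoins --- which has the same join and, being directed, is joined componentwise --- the pair $(x,y)$ lies in the union of the corresponding equivalence components, so $\bveA{x}{y}\leq\bgb$ for a single member $\bgb$ of that family; the same argument applies to $\bvrA{R\overrightarrow{x}}$.

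For part (i), given $\bga\in\ConV\bA$ I would let $\bgb\in\ConV\bA$ be the join of all principal $\cV$-congruences below $\bga$ --- that is, of the $\bveA{x}{y}$ with $(x,y)\in\ga$ together with the $\bvrA{R\overrightarrow{x}}$ with $\overrightarrow{x}\in R_\bga$, all of which do lie below $\bga$ since $\bga$ is itself a $\cV$-congruence satisfying the relevant atomic statement. Then $\bgb\leq\bga$ because $\bga$ is an upper bound, while $\bga\leq\bgb$ because $x\equiv y\pmod{\bveA{x}{y}}$ and $\bveA{x}{y}\leq\bgb$ force $(x,y)\in\gb$ whenever $(x,y)\in\ga$, and symmetrically $R_\bga\subseteq R_\bgb$; hence $\bga=\bgb$. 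The reverse inclusion --- every join of principal $\cV$-congruences is a $\cV$-congruence --- is immediate since $\ConV\bA$ is complete. Part (ii) follows: finite joins of principal $\cV$-congruences are compact, as principal ones are compact and the compact elements of a lattice are closed under finite joins; and if $\bga$ is compact, the representation from (i) exhibits $\bga$ as the join of the directed family of finite subjoins of the principal $\cV$-congruences below it, so compactness forces $\bga$ to equal one such finite join.

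For parts (iii) and (iv) I would invoke the adjunction directly: for every $\bgb\in\ConV\bB$ we have $(\ConV\gf)(\bveA{x}{y})\leq\bgb$ iff $\bveA{x}{y}\leq(\Res\gf)(\bgb)$; by Lemma~\ref{L:ClosUnfRes} the right-hand side is a $\cV$-congruence of $\bA$, and unwinding the definition of $\Res\gf$ this holds iff $x\equiv y\pmod{(\Res\gf)(\bgb)}$, iff $(\gf(x),\gf(y))\in\gb$, iff $\bveB{\gf(x)}{\gf(y)}\leq\bgb$. Hence $(\ConV\gf)(\bveA{x}{y})$ and $\bveB{\gf(x)}{\gf(y)}$ lie below exactly the same members of $\ConV\bB$ and therefore coincide; as both are compact, the equality also holds for $\ConcV\gf$, which is (iii). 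Part (iv) is the identical computation with $R\overrightarrow{x}$ in place of $x\equiv y$ and $R_\bgb$ in place of the equivalence component. I do not anticipate a genuine obstacle; the only point that deserves care is keeping straight that arbitrary joins in $\ConV\bA$ need not be componentwise whereas directed joins are, which is exactly what makes the compactness step --- and with it parts (ii)--(iv) --- come out cleanly.
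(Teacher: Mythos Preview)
Your proposal is correct and follows essentially the same approach as the paper's proof: both express every $\cV$-congruence as the join of the principal ones it contains, prove compactness of principal $\cV$-congruences via the componentwise description of directed joins, and handle (iii)--(iv) by the Galois adjunction between $\ConV\gf$ and $\Res\gf$. Your explicit citation of Lemma~\ref{L:ClosUnfRes} to ensure that $(\Res\gf)(\bgb)$ is a $\cV$-congruence is a small point of added care the paper leaves implicit.
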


\begin{proof}
(i). It is obvious that the following equation is satisfied (in $\ConV\bA$\index{s}{conAV@$\ConV\bA$, $\ConV f$}), for any $\bga\in\ConV\bA$\index{s}{conAV@$\ConV\bA$, $\ConV f$}:
 \[
 \bga=\bigvee\famm{\bveA{x}{y}}{(x,y)\in\ga}\vee
 \bigvee\famm{\bvrA{R\overrightarrow{x}}}{R\in\Rel(\bA),\ \overrightarrow{x}\in R_\bga}\,.
 \]
(ii). Let $\bD$ be a nonempty directed subset\index{i}{poset!directed} of $\ConV\bA$\index{s}{conAV@$\ConV\bA$, $\ConV f$}. The join~$\bgq$ of~$\bD$ in~$\Con\bA$\index{s}{conA@$\Con\bA$} is defined by the formulas
 \begin{align}
 \gq&=\bigcup\famm{\ga}{\bga\in\bD}\,,\label{Eq:whatsgq}\\
 R_\bgq&=\bigcup\famm{R_\bga}{\bga\in\bD}\,.\label{Eq:whatsRgq}
 \end{align}
As~$\ConV\bA$\index{s}{conAV@$\ConV\bA$, $\ConV f$} is closed under directed joins (cf. Lemma~\ref{L:ConGQVAlg}), $\bgq$ belongs to~$\ConV\bA$\index{s}{conAV@$\ConV\bA$, $\ConV f$}. Now let $x,y\in A$ such that $\bveA{x}{y}\leq\bgq$. This means that $(x,y)\in\gq$, thus, by~\eqref{Eq:whatsgq}, $(x,y)\in\ga$ for some~$\bga\in\bD$, so $\bveA{x}{y}\leq\bga$. Likewise, using~\eqref{Eq:whatsRgq}, we can prove that for each $R\in\Rel(\bA)$, say of arity~$n$, and each $x_1,\dots,x_n\in A$, $\bvrA{R\overrightarrow{x}}\leq\bgq$ implies that $\bvrA{R\overrightarrow{x}}\leq\bga$ for some $\bga\in\bD$. Therefore, \emph{all principal $\cV$-congruences of~$\bA$ are compact in~$\ConV\bA$}\index{s}{conAV@$\ConV\bA$, $\ConV f$}\index{i}{congruencesV@$\cV$-congruences}\index{i}{congruencesVp@$\cV$-congruences (principal ${}_{-}$)}, and thus so are all their finite joins. The converse follows immediately from~(i).

(iii). Set $\bga:=\bveA{x}{y}$. By the definition of the $\ConcV$\index{s}{compcongV@$\ConcV$ functor} functor, the following equivalences hold for each $\bgb\in\ConV\bB$\index{s}{conAV@$\ConV\bA$, $\ConV f$}:\index{s}{Resf@$\Res\gf$}
 \begin{align*}
 (\ConcV\gf)(\bga)\leq\bgb&\Longleftrightarrow\bga\leq(\Res\gf)(\bgb)\\
 & \Longleftrightarrow\bveA{x}{y}\leq(\Res\gf)(\bgb)\\
 & \Longleftrightarrow x\equiv y\pmod{(\Res\gf)(\bgb)}\\
 & \Longleftrightarrow\gf(x)\equiv\gf(y)\pmod{\bgb}\\
 & \Longleftrightarrow\bveB{\gf(x)}{\gf(y)}\leq\bgb\,; 
 \end{align*}
the desired conclusion follows.

The proof of~(iv) is similar to the proof of~(iii).
\qed\end{proof}

\section[Preservation of directed colimits for congruence lattices]{Preservation of small directed colimits by the relative compact congruence functor}\label{S:PresColimConcV}

The present section will be devoted to the proof of the following result, which does not seem to have appeared anywhere in print yet, even for the special case of algebraic systems\index{i}{algebraic system} as in Gorbunov~\cite{Gorb}\index{c}{Gorbunov, V.\,A.}.

\begin{thm}\label{T:ConcVPresDirColim}
The functor $\ConcV\colon\cV\to\SEM$\index{s}{compcongV@$\ConcV$ functor}\index{s}{Sem@$\SEM$} preserves all small directed colimits, for any \gqv\index{i}{generalized quasivariety}~$\cV$.
\end{thm}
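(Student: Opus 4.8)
The plan is to reduce everything to the explicit description of directed colimits in $\MIND$ given in Proposition~\ref{P:DirColimMIND}, together with the description of $\cV$-congruences via principal ones from Lemma~\ref{L:ConcvonPpalCong}. Fix a directed poset $I$ and a diagram $\famm{\bA_i,\gf_i^j}{i\leq j\text{ in }I}$ in~$\cV$ with colimit $\famm{\bA,\gf_i}{i\in I}$ (which lies in~$\cV$ by Definition~\ref{D:GQV}(v), and equals the colimit in $\MIND$ by Proposition~\ref{P:DirColimMIND}). Applying the functor $\ConcV$ yields a cocone $\famm{\ConcV\bA,\ConcV\gf_i}{i\in I}$ above $\famm{\ConcV\bA_i,\ConcV\gf_i^j}{i\leq j\text{ in }I}$ in~$\SEM$. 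Let $\famm{T,\gt_i}{i\in I}=\varinjlim\famm{\ConcV\bA_i,\ConcV\gf_i^j}{i\leq j\text{ in }I}$ in~$\SEM$ be the colimit taken there (which exists, since $\SEM$ is cocomplete, and is the colimit in $\SET$ on underlying sets by Example~\ref{Ex:QuasivarSpecial}(iii)); there is a unique $\jzh$ $\gl\colon T\to\ConcV\bA$ with $\gl\circ\gt_i=\ConcV\gf_i$ for all~$i$. The goal is to show $\gl$ is an isomorphism, and the natural way is to exhibit its inverse on generators.

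First I would prove surjectivity of $\gl$. By Lemma~\ref{L:ConcvonPpalCong}(ii) every compact $\cV$-congruence of~$\bA$ is a finite join of principal ones, so it suffices to hit each principal $\cV$-congruence $\bveA{x}{y}$ and $\bvrA{R(x_1,\dots,x_n)}$. Using Proposition~\ref{P:DirColimMIND}(i)--(iii) and the directedness of~$I$, for any finitely many elements of~$A$ and any relation symbol~$R$ we may find $i\in I$ and preimages $x',y'\in A_i$ (resp. $\overrightarrow{x}'\in A_i^n$) with $\gf_i(x')=x$, etc., and moreover (for the relation case, by Proposition~\ref{P:DirColimMIND}(iii), after possibly increasing~$i$) with $\overrightarrow{x}'\in R^{\bA_i}$ already witnessing the relation. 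Then Lemma~\ref{L:ConcvonPpalCong}(iii),(iv) give $(\ConcV\gf_i)(\bveAi{x'}{y'})=\bveA{x}{y}$ and similarly in the relational case, so $\bveA{x}{y}=\gl(\gt_i(\bveAi{x'}{y'}))$ lies in the image of~$\gl$. Hence $\gl$ is surjective.

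Next I would prove injectivity. The cleanest route is to construct a $\jzh$ $\gm\colon\ConcV\bA\to T$ with $\gm\circ\gl=\id_T$; since $\gl$ is already onto, this forces $\gl$ to be an isomorphism. To define $\gm$ on principal $\cV$-congruences of~$\bA$: given $x,y\in A$, choose $i$ and $x',y'\in A_i$ with $\gf_i(x')=x$, $\gf_i(y')=y$, and set $\gm(\bveA{x}{y}):=\gt_i(\bveAi{x'}{y'})$; analogously for $\bvrA{R\overrightarrow{x}}$. The main obstacle — and the part requiring genuine care — is well-definedness and that $\gm$ extends to a $\jzh$ on all of $\ConcV\bA$, i.e. that the assignment respects the join-relations holding among principal $\cV$-congruences. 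For well-definedness, if $(i,x',y')$ and $(j,x'',y'')$ are two choices, pick $k\geq i,j$; by Proposition~\ref{P:DirColimMIND}(iii) (the equivalence characterizing when images agree) one finds $k$ large enough that $\gf_i^k(x')=\gf_j^k(x'')$ and $\gf_i^k(y')=\gf_j^k(y'')$, and then $\ConcV\gf_i^k$ applied to both principal congruences agrees, so the two $\gt$-images coincide. For the join-relations: if $\bveA{x}{y}\leq\bigvee$ of finitely many other principal $\cV$-congruences of~$\bA$, one must descend the compactness witness to some~$\bA_i$; this uses that $\ConV\bA$ is the ideal lattice of $\ConcV\bA$ together with the description of $\bveA{x}{y}$ as the least $\cV$-congruence identifying $x,y$, plus Lemma~\ref{L:ConcVProj} to transport along the projections $\gf_i$. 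Concretely, one shows that any finite join relation in $\ConcV\bA$ is the image under some $\ConcV\gf_i$ of a finite join relation in $\ConcV\bA_i$ — this is where the bulk of the work lies, and it is essentially the ``finitely presented in $\SEM$'' argument applied levelwise. Once $\gm$ is shown to be a well-defined $\jzh$, the identities $\gm\circ\ConcV\gf_i=\gt_i$ hold on generators by construction, hence everywhere, and then $\gm\circ\gl=\id_T$ and $\gl\circ\gm=\id_{\ConcV\bA}$ follow from the uniqueness clauses in the two universal properties. This establishes that $\famm{\ConcV\bA,\ConcV\gf_i}{i\in I}=\varinjlim\famm{\ConcV\bA_i,\ConcV\gf_i^j}{i\leq j\text{ in }I}$ in~$\SEM$, which is the assertion of the theorem.
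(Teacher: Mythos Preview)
Your surjectivity argument is fine and matches the paper's. The gap is in the injectivity half, precisely at the step you flag as ``where the bulk of the work lies.'' You assert that a join relation $\bveA{x}{y}\leq\bigvee_k\bveA{u_k}{v_k}$ in $\ConcV\bA$ can be descended to some $\ConcV\bA_i$, but you give no mechanism for doing so. The tools you cite do not suffice: Lemma~\ref{L:ConcVProj} concerns canonical projections $\bA\onto\bA/\bgq$, not the colimit maps $\gf_i$ (which need not be surjective), and the ``finitely presented in $\SEM$'' observation lets you factor maps \emph{out} of a finite semilattice through a stage of a colimit, not reflect relations \emph{back} from the colimit. The difficulty is intrinsic: $\bveA{x}{y}$ is the least \emph{$\cV$-congruence} identifying $x$ and $y$, which may be strictly larger than the least ordinary congruence, and its membership is governed by the global condition $\bA/\bgq\in\cV$ rather than by any finitary Mal'cev-chain description. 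So there is no obvious compactness witness to descend.

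The paper supplies the missing idea, and it is not elementary bookkeeping. Given $(\ConcV\gf_i)(\bga)\leq(\ConcV\gf_i)(\bgb)$ with $\bga$ principal, set $\bgq_j:=(\ConcV\gf_i^j)(\bgb)$ for $j\geq i$, form the quotients $\bA_j/\bgq_j$, and take their directed colimit $\bB$. The point is that $\bB\in\cV$ by Definition~\ref{D:GQV}(v), so the kernel $\bgq$ of the induced map $\bA\to\bB$ is a $\cV$-congruence; one checks $\bgb\leq(\Res\gf_i)(\bgq)$, hence $(\ConcV\gf_i)(\bga)\leq\bgq$, hence $\bga\leq\Ker(\gy_i\circ\gp_i)$. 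Since $\bga$ is principal, say $\bga=\bveAi{x}{y}$, this means $\gy_i\gp_i(x)=\gy_i\gp_i(y)$ in the colimit $\bB$, and \emph{now} the explicit description of directed colimits (Proposition~\ref{P:DirColimMIND}) gives a finite stage $j$ at which the identification already holds, i.e., $(\ConcV\gf_i^j)(\bga)\leq\bgq_j=(\ConcV\gf_i^j)(\bgb)$. The closure of $\cV$ under directed colimits is used in an essential, not incidental, way here; without this quotient-and-recolimit construction your sketch does not go through.
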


\begin{proof}
We are given a directed colimit cocone of the form~\eqref{Eq:DirLimMIND} in~$\cV$. As~$\cV$ is closed under directed colimits in~$\MIND$\index{s}{Mind@$\MIND$}, \eqref{Eq:DirLimMIND} is also a directed colimit in~$\MIND$\index{s}{Mind@$\MIND$}, so we can take advantage of the explicit description of the colimit given in Proposition~\ref{P:DirColimMIND}.

It follows from Lemma~\ref{L:ConcvonPpalCong}(ii) that every element $\bga\in\ConcV\bA$\index{s}{compcongVA@$\ConcV\bA$, $\ConcV f$} is a finite join of principal $\cV$-congruences\index{i}{congruencesV@$\cV$-congruences}\index{i}{congruencesVp@$\cV$-congruences (principal ${}_{-}$)} of~$\bA$ (cf. Notation~\ref{Not:bvNotation}). As each such principal $\cV$-congruence\index{i}{congruencesVp@$\cV$-congruences (principal ${}_{-}$)} involves only a finite number of elements from~$A$, it follows that~$\bga$ involves only a finite number of parameters from~$A$. As~$A$ is the directed union of all the subsets $\gf_i``(A_i)$, there exists $i\in I$ such that~$\gf_i``(A_i)$ contains all those parameters. Now an immediate use of Lemma~\ref{L:ConcvonPpalCong}(iii,iv) shows that~$\bga$ belongs to the range of~$\ConcV\gf_i$\index{s}{compcongVA@$\ConcV\bA$, $\ConcV f$}. We have proved that $\ConcV\bA$\index{s}{compcongVA@$\ConcV\bA$, $\ConcV f$} is the union of all the ranges of the maps~$\ConcV\gf_i$.

By virtue of the characterization of directed colimits presented in Section~\ref{Su:DirColimFirstOrd} (applied to the category of all \jzs s with \jzh s), it remains to prove that for each $i\in I$ and all $\bga,\bgb\in\ConcV\bA_i$\index{s}{compcongVA@$\ConcV\bA$, $\ConcV f$},
 \begin{equation}\label{Eq:AssumptConcIncl}
 (\ConcV\gf_i)(\bga)\leq(\ConcV\gf_i)(\bgb)
 \end{equation}
implies that there exists $j\in I\upw i$ such that
 \[
 (\ConcV\gf_i^j)(\bga)\leq(\ConcV\gf_i^j)(\bgb)\,.
 \]
As~$\bga$ is a finite join of principal $\cV$-congruences\index{i}{congruencesV@$\cV$-congruences}\index{i}{congruencesVp@$\cV$-congruences (principal ${}_{-}$)} (cf. Lemma~\ref{L:ConcvonPpalCong}), we may assume in turn that~$\bga$ is principal.

We set $\bgq_j:=(\ConcV\gf_i^j)(\bgb)$\index{s}{compcongVA@$\ConcV\bA$, $\ConcV f$} and we denote by $\gp_j\colon\bA_j\onto\bA_j/\bgq_j$\index{s}{AtoonB@$f\colon A\onto B$} the canonical projection, for each $j\in I\upw i$. {}From $\bgq_k=(\ConcV\gf_j^k)(\bgq_j)$\index{s}{compcongVA@$\ConcV\bA$, $\ConcV f$} we get the inequality $\bgq_j\leq(\Res\gf_j^k)(\bgq_k)$\index{s}{Resf@$\Res\gf$}, hence, by using Lemma~\ref{L:Ker2Res}, $\bgq_j\leq\Ker(\gp_k\circ\gf_j^k)$\index{s}{Kerf@$\Ker\gf$}. Therefore, by Lemma~\ref{L:FirstIsomThm}, there exists a unique morphism\linebreak $\gy_j^k\colon\bA_j/\bgq_j\to\bA_k/\bgq_k$ such that $\gp_k\circ\gf_j^k=\gy_j^k\circ\gp_j$. Clearly, the family $\famm{\bA_j/\bgq_j,\gy_j^k}{j\leq k\text{ in }I\upw i}$ is an $(I\upw i)$-indexed diagram in~$\cV$. We form the directed colimit
 \[
 \famm{\bB,\gy_j}{j\in I\upw i}
 =\varinjlim\famm{\bA_j/\bgq_j,\gy_j^k}{j\leq k\text{ in }I\upw i}
 \]
in the category~$\MIND$\index{s}{Mind@$\MIND$}. As~$\cV$ is closed under directed colimits in~$\MIND$\index{s}{Mind@$\MIND$}, the cocone $\famm{\bB,\gy_j}{j\in I\upw i}$ is contained in~$\cV$. Denote by~$\gp\colon\bA\to\bB$ the unique morphism in~$\MIND$\index{s}{Mind@$\MIND$} such that $\gp\circ\gf_j=\gy_j\circ\gp_j$ for each $j\in I\upw i$. The situation is illustrated on Figure~\ref{Fig:Col2Colgq}.

\begin{figure}[htb]
 \[
 \def\labelstyle{\displaystyle}
 \xymatrix{
 \bA_i\ar[rr]^{\gf_i^j}\ar@{->>}[d]^{\gp_i}&&
 \bA_j\ar[rr]^{\gf_j}\ar@{->>}[d]^{\gp_j}&&
 \bA\ar[d]^{\gp}\\
 \bA_i/\bgq_i\ar[rr]^{\gy_i^j}&&\bA_j/\bgq_j\ar[rr]^{\gy_j}&&\bB
 }
 \]
\caption{A commutative diagram in a \gqv~$\cV$}
\label{Fig:Col2Colgq}
\end{figure}

Now we set $\bgq:=\Ker\gp$\index{s}{Kerf@$\Ker\gf$}, and we compute\index{s}{Resf@$\Res\gf$}
 \begin{align*}
 \bgb=\Ker\gp_i&\subseteq\Ker(\gy_i\circ\gp_i)
 &&(\text{use the easy direction of Lemma~\ref{L:FirstIsomThm}})\\
 &=\Ker(\gp\circ\gf_i)&&(\text{cf. Figure~\ref{Fig:Col2Colgq}})\\
 &=(\Res\gf_i)(\Ker\gp)&&(\text{use Lemma~\ref{L:Ker2Res}})\\
 &=(\Res\gf_i)(\bgq)\,,
 \end{align*}
thus $(\ConcV\gf_i)(\bgb)\subseteq\bgq$\index{s}{compcongVA@$\ConcV\bA$, $\ConcV f$}, and thus, by~\eqref{Eq:AssumptConcIncl}, $(\ConcV\gf_i)(\bga)\subseteq\bgq$, and hence, by using part of the calculation above, $\bga\subseteq(\Res\gf_i)(\bgq)=\Ker(\gy_i\circ\gp_i)$\index{s}{Resf@$\Res\gf$}\index{s}{Kerf@$\Ker\gf$}.

As~$\bga$ is principal, it has one of the forms $\bveAi{x}{y}$ (with $x,y\in A_i)$ or $\bvrAi{R\overrightarrow{x}}$ (with $R\in\Rel(\bA_i)$ and $x_1,\dots,x_{\ari(R)}\in A_i$)\index{s}{aris@$\ari(s)$}. In the first case, $\bga\subseteq\Ker(\gy_i\circ\gp_i)$\index{s}{Kerf@$\Ker\gf$} means that $\gy_i\circ\gp_i(x)=\gy_i\circ\gp_i(y)$. Thus, by the description of the directed colimit given in Proposition~\ref{P:DirColimMIND}, there exists $j\in I\upw i$ such that $\gy_i^j\circ\gp_i(x)=\gy_i^j\circ\gp_i(y)$, that is, $\gf_i^j(x)\equiv\gf_i^j(y)\pmod{\bgq_j}$, which means, by the definition of~$\bgq_j$ together with Lemma~\ref{L:ConcvonPpalCong}(iii), that $(\ConcV\gf_i^j)(\bga)\subseteq(\ConcV\gf_i^j)(\bgb)$\index{s}{compcongVA@$\ConcV\bA$, $\ConcV f$}. The proof in the second case, that is, $\bga=\bvrAi{R\overrightarrow{x}}$, is similar.
\qed\end{proof}

\section[Ideal-induced morphisms and projectability witnesses]{Ideal-induced morphisms and projectability witnesses in generalized quasivarieties}\label{S:MINDProjWit}

In view of further monoid-theoretical applications, we shall need to expand our scope slightly beyond the one of \jzs s, and we shall thus introduce the notion of an \emph{ideal-induced homomorphism}\index{i}{homomorphism!ideal-induced} in the context of \emph{\cm s}. Then, in Theorem~\ref{T:GQV2ProjWit}, we shall relate ideal-induced homomorphisms of \jzs s and projectability witnesses\index{i}{projectability witness} with respect to the relative congruence semilattice functor.

We endow every \cm\ $\bM$ with its \emph{algebraic} preordering\index{i}{algebraic preordering|ii}, defined by
 \[
 x\leq y\Longleftrightarrow(\exists z\in M)(x+z=y)\,,\quad\text{for all }x,y\in M\,.
 \]
For \cm s~$\bM$ and~$\bN$ with~$\bN$ \emph{conical}\index{i}{monoid!conical}, we shall say that a monoid homomorphism $\gf\colon\bM\to\bN$ is
\begin{description}
\item[---]\emph{ideal-induced}\index{i}{homomorphism!ideal-induced|ii} if~$\gf$ is surjective and
 \begin{equation}\label{Eq:gfIdInd}
 (\forall x,y\in M)\bigl(\gf(x)=\gf(y)\Rightarrow
 (\exists u,v\in\gf^{-1}\set{0})(x+u=y+v)\bigr)\,;
 \end{equation}
\item[---]\emph{weakly distributive}\index{i}{homomorphism!weakly distributive|ii} if
 \begin{multline*}
 (\forall z\in M)(\forall u,v\in N)\bigl(\gf(z)\leq u+v\Rightarrow\\
 (\exists x,y\in M)(z\leq x+y\text{ and }\gf(x)\leq u\text{ and }
 \gf(y)\leq v)\bigr)\,.
 \end{multline*}
\end{description}
An \emph{o-ideal}\index{i}{ideal!o-|ii} in a \cm\ $\bM$ is a nonempty subset~$I$ of~$M$ such that $x+y\in I$ if{f} $\set{x,y}\subseteq I$, for all $x,y\in M$. (\emph{In particular, in a \jzs, the o-ideals are exactly the ideals\index{i}{ideal!of a poset} in the usual sense}.) Then we denote by~$\bM/I$\index{s}{MonI@$\bM/I$|ii} the quotient of~$\bM$ under the monoid congruence~$\equiv_I$ defined by
 \[
 x\equiv_Iy\Longleftrightarrow(\exists u,v\in I)(x+u=y+v)\,,\quad
 \text{for all }x,y\in M\,.
 \]
Furthermore, we shall write $x/I$ instead of $x/{\equiv_I}$, for any $x\in M$. Obviously, the quotient monoid~$\bM/I$ is conical\index{i}{monoid!conical}.
The ideal-induced\index{i}{homomorphism!ideal-induced} homomorphisms from a \cm~$\bM$ to a conical\index{i}{monoid!conical}~\cm\ are exactly, up to isomorphism, the canonical projections $\bM\onto\bM/I$\index{s}{AtoonB@$f\colon A\onto B$}, for o-ideals~$I$\index{i}{ideal!o-} of~$\bM$. The following result provides a large supply of ideal-induced\index{i}{homomorphism!ideal-induced} \jzh s.

\begin{lem}\label{L:ConcpiIdInd}
Let~$\cV$ be a \gqv\index{i}{generalized quasivariety}, let $\bA,\bB\in\cV$ with the same language, and let $f\colon\bA\onto\bB$\index{s}{AtoonB@$f\colon A\onto B$} be a surjective homomorphism. Then the canonical homomorphism $\ConcV\gp\colon\ConcV\bA\to\ConcV\bB$\index{s}{compcongVA@$\ConcV\bA$, $\ConcV f$} is ideal-induced\index{i}{homomorphism!ideal-induced}.
\end{lem}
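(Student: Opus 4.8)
The plan is to reduce to the case where $f$ is a canonical projection, and then to read off the two clauses of the defining condition~\eqref{Eq:gfIdInd} for an ideal-induced homomorphism directly from the description of $\ConcV$ on canonical projections. Note first that, since $\jzs$s are conical commutative monoids in which the o-ideals are exactly the ideals, the statement we must prove is that $\ConcV f$ is a surjective homomorphism satisfying~\eqref{Eq:gfIdInd} with $+$ read as $\vee$ and $0$ as the zero congruence.

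First I would use the First Isomorphism Theorem (Lemma~\ref{L:FirstIsomThm}), applied with $\bgq:=\Ker f$: since $f$ is surjective and $\Lg(\bA)=\Lg(\bB)$, it factors as $f=\gy\circ\gp_0$, where $\gp_0\colon\bA\onto\bA/\bgq$ is the canonical projection and $\gy\colon\bA/\bgq\to\bB$ is an isomorphism in $\MIND$ (an embedding by Lemma~\ref{L:FirstIsomThm}, and surjective since $f$ is); as $\bB\in\cV$ this forces $\bA/\bgq\in\cV$, hence $\bgq\in\ConV\bA$ and $\gp_0$ is a morphism in $\cV$. Functoriality of $\ConcV$ gives $\ConcV f=(\ConcV\gy)\circ(\ConcV\gp_0)$ with $\ConcV\gy$ an isomorphism in $\SEM$. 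Since post-composing an ideal-induced homomorphism with an isomorphism again yields an ideal-induced homomorphism (injectivity of the isomorphism transports both surjectivity and the fibre over $0$), it suffices to prove that $\ConcV\gp_0$ is ideal-induced.

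Next I would identify $\ConcV\gp_0$ explicitly. By Lemma~\ref{L:ConcVProj}, $(\ConV\gp_0)(\bga)=(\bga\vee\bgq)/\bgq$ for each $\bga\in\ConV\bA$, and by the Second Isomorphism Theorem (Lemma~\ref{L:SecIsomThm}) the assignment $\bgr\mapsto\bgr/\bgq$ is a lattice isomorphism from $(\ConV\bA)\upw\bgq$ onto $\ConV(\bA/\bgq)$. Under this identification $\ConcV\gp_0$ becomes the map $\bga\mapsto\bga\vee\bgq$ carrying $\ConcV\bA$ onto the compact elements of the interval $(\ConV\bA)\upw\bgq$, and the zero of $\ConcV(\bA/\bgq)$ corresponds to $\bgq$; hence $(\ConcV\gp_0)^{-1}\{\boldsymbol{0}\}=\setm{\bgc\in\ConcV\bA}{\bgc\leq\bgq}=:J$. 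Surjectivity of $\ConcV\gp_0$ follows from Lemma~\ref{L:ConcvonPpalCong}: every element of $\ConcV(\bA/\bgq)$ is a finite join of principal $\cV$-congruences, and, $\gp_0$ being surjective, each such principal $\cV$-congruence of $\bA/\bgq$ is the image under $\ConcV\gp_0$ of a principal $\cV$-congruence of $\bA$ by Lemma~\ref{L:ConcvonPpalCong}(iii,iv), while $\ConcV\gp_0$ is a $\jzh$; so the image, being a $(\vee,0)$-subsemilattice containing all principal $\cV$-congruences, is all of $\ConcV(\bA/\bgq)$.

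Finally I would check~\eqref{Eq:gfIdInd}. Suppose $\bga,\bgb\in\ConcV\bA$ satisfy $(\ConcV\gp_0)(\bga)=(\ConcV\gp_0)(\bgb)$, i.e.\ $\bga\vee\bgq=\bgb\vee\bgq$ in $\ConV\bA$. Since $\ConV\bA$ is an algebraic lattice (Lemma~\ref{L:ConGQVAlg}), $\bgq$ is the directed join of the ideal $J$ of compact $\cV$-congruences below it. From $\bga\leq\bgb\vee\bgq=\bgb\vee\bigvee J$ and compactness of $\bga$ we obtain $\bgc_1\in J$ with $\bga\leq\bgb\vee\bgc_1$; symmetrically $\bgc_2\in J$ with $\bgb\leq\bga\vee\bgc_2$. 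Setting $\bgd:=\bgc_1\vee\bgc_2\in J=(\ConcV\gp_0)^{-1}\{\boldsymbol{0}\}$, we get $\bga\vee\bgd\leq\bgb\vee\bgd$ and $\bgb\vee\bgd\leq\bga\vee\bgd$, so $\bga\vee\bgd=\bgb\vee\bgd$, which is~\eqref{Eq:gfIdInd} with $u=v=\bgd$. This shows $\ConcV\gp_0$, hence $\ConcV f$, is ideal-induced. There is no real obstacle here; the only points requiring care are the bookkeeping in the reduction to a canonical projection and the correct use of the adjunction together with the Second Isomorphism identification of $\ConV(\bA/\bgq)$ with the interval $(\ConV\bA)\upw\bgq$.
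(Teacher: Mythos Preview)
Your proof is correct and follows essentially the same route as the paper's: reduce via the First Isomorphism Theorem to a canonical projection $\bA\onto\bA/\bgq$, use Lemma~\ref{L:ConcVProj} to identify $\ConcV\gp_0$ with $\bga\mapsto(\bga\vee\bgq)/\bgq$, get surjectivity from Lemma~\ref{L:ConcvonPpalCong}, and derive~\eqref{Eq:gfIdInd} from $\bga\vee\bgq=\bgb\vee\bgq$ by compactness. The paper's write-up is more compressed---it simply says that compactness of both $\bga$ and $\bgb$ yields a single compact $\bgx\leq\bgq$ with $\bga\vee\bgx=\bgb\vee\bgx$---whereas you spell out the two-step argument producing $\bgc_1,\bgc_2$ and then $\bgd=\bgc_1\vee\bgc_2$; but this is the same idea.
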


\begin{proof}
By the First Isomorphism Theorem (Lemma~\ref{L:FirstIsomThm})\index{i}{Isomorphism Theorem (First ${}_{-}$)}, we may assume that $\bB=\bA/\bgq$, with $f\colon\bA\onto\bA/\bgq$\index{s}{AtoonB@$f\colon A\onto B$} the canonical projection, for some $\bgq\in\ConV\bA$\index{s}{conAV@$\ConV\bA$, $\ConV f$}.
The surjectivity of $\ConcV f$\index{s}{compcongVA@$\ConcV\bA$, $\ConcV f$} follows immediately from the surjectivity of~$f$ together with Lemma~\ref{L:ConcvonPpalCong}. Let $\bga,\bgb\in\ConcV\bA$\index{s}{compcongVA@$\ConcV\bA$, $\ConcV f$} such that $(\ConcV f)(\bga)=(\ConcV f)(\bgb)$. By Lemma~\ref{L:ConcVProj}, this means that $\bga\vee\bgq=\bgb\vee\bgq$, thus, by the compactness of both~$\bga$ and~$\bgb$, there exists a compact $\cV$-congruence\index{i}{congruencesV@$\cV$-congruences} $\bgx\leq\bgq$ such that $\bga\vee\bgx=\bgb\vee\bgx$. As $(\ConcV f)(\bgx)=0$, this proves that the map~$\ConcV f$\index{s}{compcongVA@$\ConcV\bA$, $\ConcV f$} satisfies~\eqref {Eq:gfIdInd}.
\qed\end{proof}

\begin{thm}\label{T:GQV2ProjWit}
Let~$S$ be a \jzs, let~$\cV$ be a \gqv\index{i}{generalized quasivariety}, let $\bA\in\cV$, and let~$S$ be a \jzs. Then every ideal-induced\index{i}{homomorphism!ideal-induced} \jzh\ $\gf\colon\ConcV\bA\to S$\index{s}{compcongVA@$\ConcV\bA$, $\ConcV f$} has a projectability witness\index{i}{projectability witness} with respect to the~$\ConcV$\index{s}{compcongV@$\ConcV$ functor} functor.
\end{thm}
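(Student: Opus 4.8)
The plan is to exhibit the projectability witness explicitly, taking $\oll{\bA}$ to be a suitable quotient of $\bA$ and $a$ the canonical projection. Write $I:=\gf^{-1}\set{0}\subseteq\ConcV\bA$; since $\gf$ is a \jzh\ and every \jzs\ is conical, $I$ is an ideal of the \jzs\ $\ConcV\bA$. Let $\bgq$ be the join of $I$ in the algebraic lattice $\ConV\bA$; as $\ConV\bA$ is canonically isomorphic to the ideal lattice of $\ConcV\bA$, we have $\bgq\in\ConV\bA$, hence $\bA/\bgq\in\cV$, and $\setm{\bga\in\ConcV\bA}{\bga\leq\bgq}=I$. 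I would take $\oll{\bA}:=\bA/\bgq$ and $a:=\gp_\bgq\colon\bA\onto\bA/\bgq$ the canonical projection, which, being surjective, is an epimorphism in $\MIND$, hence in the full subcategory $\cV$.

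To check conditions (i), (ii), (iii) of Definition~\ref{D:ProjFunct}: by Lemma~\ref{L:ConcVProj}, $(\ConcV a)(\bga)=\bga\vee\bgq/\bgq$ for each $\bga\in\ConcV\bA$, so $(\ConcV a)^{-1}\set{0}=\setm{\bga\in\ConcV\bA}{\bga\leq\bgq}=I=\gf^{-1}\set{0}$. By Lemma~\ref{L:ConcpiIdInd}, $\ConcV a$ is ideal-induced, and $\gf$ is ideal-induced by hypothesis; an ideal-induced \jzh\ into a conical \cm\ identifies exactly the pairs congruent modulo the congruence $\equiv_J$ determined by its kernel ideal $J$. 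Since $\gf$ and $\ConcV a$ have the same kernel ideal $I$, they induce the same congruence on $\ConcV\bA$, so, both being surjective, there is a unique isomorphism $\eps\colon\ConcV(\bA/\bgq)\to S$ with $\eps\circ\ConcV a=\gf$; this yields (i), (ii), (iii).

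For (iv), suppose $f\colon\bA\to\bX$ is a morphism in $\cV$ and $\gh\colon\ConcV(\bA/\bgq)\to\ConcV\bX$ satisfies $\ConcV f=\gh\circ\ConcV a$. For $\bga\in I$ one has $(\ConcV a)(\bga)=0$, whence $(\ConcV f)(\bga)=\gh(0)=0$. By the Galois connection between $\ConcV f$ and $\Res f$, together with the identity $(\Res f)(\zero_\bX)=\Ker f$, this forces $\bga\leq\Ker f$; since $\Ker f\in\ConV\bA$ (Lemma~\ref{L:ClosUnfRes}) and $\bgq=\bigvee I$, we conclude $\bgq\leq\Ker f$. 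By the First Isomorphism Theorem (Lemma~\ref{L:FirstIsomThm}) there is a unique morphism $g\colon\bA/\bgq\to\bX$ in $\MIND$---hence in $\cV$, since $\cV$ is a full subcategory containing $\bA/\bgq$ and $\bX$---with $f=g\circ a$. Functoriality of $\ConcV$ gives $\ConcV f=(\ConcV g)\circ\ConcV a$; comparing with $\ConcV f=\gh\circ\ConcV a$ and cancelling the epimorphism $\ConcV a$ yields $\gh=\ConcV g$, completing the verification.

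The only genuinely delicate point---and the place where the argument could go wrong---is making sure that $I=\gf^{-1}\set{0}$ is exactly the set of compact $\cV$-congruences below $\bgq$, so that $\gf$ and $\ConcV a$ really do share the same kernel congruence; this rests on identifying $\ConV\bA$ with the ideal lattice of $\ConcV\bA$ and on the formula of Lemma~\ref{L:ConcVProj}. The other point requiring a little care is the implication ``$(\ConcV f)(\bga)=0\Rightarrow\bga\leq\Ker f$'' in step (iv), which is precisely where the Galois-connection description of the functor $\ConcV$ is used. Everything else is a routine chase against Definition~\ref{D:ProjFunct}.
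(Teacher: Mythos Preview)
Your proof is correct and follows essentially the same construction as the paper's: the same $\bgq$, the same $\oll{\bA}=\bA/\bgq$, and the same use of the First Isomorphism Theorem. Your argument is in fact slightly more streamlined in two places: you obtain the isomorphism~$\eps$ by observing that two ideal-induced surjections with the same kernel ideal factor through the same quotient (invoking Lemma~\ref{L:ConcpiIdInd}), whereas the paper verifies the order-isomorphism by hand; and for~(iv) you deduce $\bgq\leq\Ker f$ directly from the Galois adjunction, whereas the paper unpacks this into a pointwise verification on principal $\cV$-congruences via Lemma~\ref{L:ConcvonPpalCong}.
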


\begin{proof}
We set $\bgq:=\bigvee\famm{\bga\in\ConcV\bA}{\gf(\bga)=0}$\index{s}{compcongVA@$\ConcV\bA$, $\ConcV f$}.  As the join defining~$\bgq$ is nonempty directed, it follows from Lemma~\ref{L:ConGQVAlg} that it may be evaluated as well in~$\Con\bA$\index{s}{conA@$\Con\bA$} as in~$\ConV\bA$\index{s}{conAV@$\ConV\bA$, $\ConV f$}, so $\bgq\in\ConV\bA$\index{s}{conAV@$\ConV\bA$, $\ConV f$}. Furthermore,\index{s}{compcongVA@$\ConcV\bA$, $\ConcV f$}
 \begin{equation}\label{Eq:Char0Kergf}
 \gf(\bga)=0\ \Longleftrightarrow\ \bga\leq\bgq\,,\quad\text{for each }
 \bga\in\ConcV\bA\,.
 \end{equation}
As $\bgq\in\ConV\bA$\index{s}{conAV@$\ConV\bA$, $\ConV f$}, the structure $\oll{\bA}:=\bA/\bgq$ belongs to~$\cV$. The canonical projection $a\colon\bA\onto\bA/\bgq$\index{s}{AtoonB@$f\colon A\onto B$} is obviously an epimorphism. It follows from Lemma~\ref{L:SecIsomThm} that\index{s}{conAV@$\ConV\bA$, $\ConV f$}
 \[
 \ConV\oll{\bA}=\setm{\bga/\bgq}{\bga\in(\ConV\bA)\upw\bgq}\,,
 \]
and thus\index{s}{compcongVA@$\ConcV\bA$, $\ConcV f$}
 \[
 \ConcV\oll{\bA}=\setm{\bga\vee\bgq/\bgq}{\bga\in\ConcV\bA}\,,
 \]
where the joins $\bga\vee\bgq$ are evaluated in $\ConcV\bA$.
Furthermore, for all $\bga,\bgb\in\ConcV\bA$\index{s}{compcongVA@$\ConcV\bA$, $\ConcV f$}, $(\bga\vee\bgq)/\bgq\leq(\bgb\vee\bgq)/\bgq$ if{f} $\bga\leq\bgb\vee\bgq$, if{f} (by the compactness of~$\bga$) there exists $\bgc\leq\bgq$ in~$\ConcV\bA$\index{s}{compcongVA@$\ConcV\bA$, $\ConcV f$} such that $\bga\leq\bgb\vee\bgc$; this implies, by~\eqref{Eq:Char0Kergf}, that $\gf(\bga)\leq\gf(\bgb)$. Conversely, if $\gf(\bga)\leq\gf(\bgb)$, then, as~$\gf$ is ideal-induced\index{i}{homomorphism!ideal-induced}, there exists~$\bgc\in\ConcV\bA$\index{s}{compcongVA@$\ConcV\bA$, $\ConcV f$} such that $\bga\leq\bgb\vee\bgc$ and $\gf(\bgc)=0$; then from~$\gf(\bgc)=0$ it follows that $\bgc\leq\bgq$, thus $(\bga\vee\bgq)/\bgq\leq(\bgb\vee\bgq)/\bgq$. As~$\gf$ is surjective, this makes it possible to define a semilattice isomorphism $\eps\colon\ConcV\oll{\bA}\to S$\index{s}{compcongVA@$\ConcV\bA$, $\ConcV f$} by the rule
 \[
 \eps\bigl(\bga\vee\bgq/\bgq\bigr):=\gf(\bga)\,,\quad
 \text{for each }\bga\in\ConcV\bA\,.
 \]
It follows immediately from Lemma~\ref{L:ConcVProj} that $\gf=\eps\circ(\ConcV a)$\index{s}{compcongVA@$\ConcV\bA$, $\ConcV f$}. In order to prove that $(a,\eps)$ is a projectability witness\index{i}{projectability witness} for $\gf\colon\ConcV\bA\to S$\index{s}{compcongVA@$\ConcV\bA$, $\ConcV f$}, it remains to prove item~(iv) of Definition~\ref{D:ProjFunct}. Let~$f\colon\bA\to\bX$ be a morphism in~$\cV$ and let $\gh\colon\ConcV\oll{\bA}\to\ConcV\bX$\index{s}{compcongVA@$\ConcV\bA$, $\ConcV f$} such that
 \begin{equation}\label{Eq:ConcVfConcVagf}
 \ConcV f=\gh\circ(\ConcV a)\,.
 \end{equation}
For all $x,y\in A$,\index{s}{congspzero@$\zero_\bA$}
 \begin{align*}
 x\equiv y\pmod{\bgq}& \Leftrightarrow a(x)=a(y)\\
 & \Leftrightarrow \bveAb{a(x)}{a(y)}=\zero_{\oll{\bA}}
 &&(\text{because }\oll{\bA}\in\cV)\\
 & \Leftrightarrow (\ConcV a)\bigl(\bveA{x}{y}\bigr)=\zero_{\oll{\bA}}
 &&(\text{use Lemma~\ref{L:ConcvonPpalCong}(iii)})\\
 & \Rightarrow (\ConcV f)\bigl(\bveA{x}{y}\bigr)=\zero_\bX
 &&(\text{use~\eqref{Eq:ConcVfConcVagf}})\\
 & \Leftrightarrow \bveX{f(x)}{f(y)}=\zero_\bX
 &&(\text{use Lemma~\ref{L:ConcvonPpalCong}(iii)})\\
 & \Leftrightarrow f(x)=f(y)
 &&(\text{because }\bX\in\cV)\,,
 \end{align*}
while for each $R\in\Rel(\bA)$ and each $\overrightarrow{x}\in A^{\ari(R)}$\index{s}{aris@$\ari(s)$},
 \begin{align*}
 R\overrightarrow{x}\pmod{\bgq}&\Leftrightarrow a(\overrightarrow{x})\in R^{\oll{\bA}}\\
 & \Leftrightarrow \bvrAb{Ra(\overrightarrow{x})}=\zero_{\oll{\bA}}
 &&(\text{because }\oll{\bA}\in\cV)\\
 & \Leftrightarrow (\ConcV a)\bigl(\bvrA{R\overrightarrow{x}}\bigr)=\zero_{\oll{\bA}}
 &&(\text{use Lemma~\ref{L:ConcvonPpalCong}(iv)})\\
 & \Rightarrow (\ConcV f)\bigl(\bvrA{R\overrightarrow{x}}\bigr)=\zero_\bX
 &&(\text{use~\eqref{Eq:ConcVfConcVagf}})\\
 & \Leftrightarrow \bvrX{Rf(\overrightarrow{x})}=\zero_\bX
 &&(\text{use Lemma~\ref{L:ConcvonPpalCong}(iv)})\\
 & \Leftrightarrow f(\overrightarrow{x})\in R^\bX
 &&(\text{because }\bX\in\cV)\,.
 \end{align*}
This proves that $\bgq\leq\Ker f$\index{s}{Kerf@$\Ker\gf$}. Therefore, by Lemma~\ref{L:FirstIsomThm}, there exists a unique morphism $g\colon\oll{\bA}\to\bX$ in~$\MIND$\index{s}{Mind@$\MIND$} such that $f=g\circ a$. By using~\eqref{Eq:ConcVfConcVagf}, it follows that $\gh\circ(\ConcV a)=\ConcV f=(\ConcV g)\circ(\ConcV a)$\index{s}{compcongVA@$\ConcV\bA$, $\ConcV f$}, thus, as~$\ConcV a$ is surjective, $\gh=\ConcV g$\index{s}{compcongVA@$\ConcV\bA$, $\ConcV f$}.
\qed\end{proof}

\section{An extension of the L\"owenheim-Skolem Theorem}
\label{S:EltaryExt}

The main result of the present section, namely Proposition~\ref{P:ApproxMonIIWD}, will be used for verifying the L\"owenheim-Skolem Condition\index{i}{Lowenheim@L\"owenheim-Skolem Condition} in the proof of Theorem~\ref{T:MindConcLift}. Roughly speaking, it says that in many situations, if $\gf\colon A\Rightarrow B$\index{s}{AtorightarrowB@$f\colon A\Rightarrow B$} is a double arrow\index{i}{double arrow}, then there are enough small substructures~$U$ of~$A$ such that $\gf\res_U\colon U\Rightarrow B$\index{s}{AtorightarrowB@$f\colon A\Rightarrow B$}.

We shall use the following easy model-theoretical lemma.

\begin{lem}\label{L:ModThEltChain}
Let~$\gl$ be an infinite cardinal, let~$\scL$ be a $\gl$-small first-order language, and let~$I$ be a $\gl$-directed\index{i}{poset!directedl@$\gl$-directed} monotone $\gs$-complete\index{i}{poset!monotone $\gs$-complete} poset. Consider a $\gs$-continuous\index{i}{diagram!$\gs$-continuous} directed colimit cocone
 \begin{equation}\label{Eq:DirSystAiMod}
 \famm{\bA,\gf_i}{i\in I}=
 \varinjlim\famm{\bA_i,\gf_i^j}{i\leq j\text{ in }I}\,,
 \end{equation}
of models for~$\scL$ and $\scL$-homomorphisms, with $\card A_i<\gl$ for each $i\in I$. Then the set
 \[
 J:=\setm{i\in I}{\gf_i\text{ is an elementary embedding}}
 \]
is a $\gs$-closed cofinal\index{i}{subset!$\gs$-closed cofinal} subset of~$I$. In particular, $J$ is nonempty.
\end{lem}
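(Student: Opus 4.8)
The plan is to run a Löwenheim--Skolem / Tarski--Vaught argument: build, above a prescribed element of~$I$, an increasing $\omega$-chain along which the colimit $\overrightarrow{\bA}$ gradually ``absorbs'' the model-theoretic data, use monotone $\gs$-completeness to take the supremum of the chain, and use $\gs$-continuity of~$\overrightarrow{\bA}$ to recognise the resulting stage~$\bA_{i_\omega}$ as the colimit of the chain. The explicit description of directed colimits of models recalled in Section~\ref{Su:DirColimFirstOrd}, in particular equations~\eqref{Eq:ADirUngfiAi}, \eqref{Eq:gfix=gfiyimpl}, and~\eqref{Eq:RelatOnDirColim}, will be used throughout.

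\emph{Cofinality of~$J$.} Fix $i_0\in I$. I would construct an increasing sequence $i_0\les i_1\les\cdots$ in~$I$ so that for each~$n$: \tua{} whenever $x,y\in A_{i_n}$ satisfy $\gf_{i_n}(x)=\gf_{i_n}(y)$, one has $\gf_{i_n}^{i_{n+1}}(x)=\gf_{i_n}^{i_{n+1}}(y)$; \tub{} whenever an atomic $\scL$-formula~$\vF$ and a tuple~$\overrightarrow{a}$ from~$A_{i_n}$ satisfy $\bA\models\vF(\gf_{i_n}(\overrightarrow{a}))$, one has $\bA_{i_{n+1}}\models\vF(\gf_{i_n}^{i_{n+1}}(\overrightarrow{a}))$; and \tua{}${}'$ whenever $(\exists\overrightarrow{y})\psi(\overrightarrow{x},\overrightarrow{y})$ is an $\scL$-formula, $\overrightarrow{a}$ a tuple from~$A_{i_n}$, and $\bA\models(\exists\overrightarrow{y})\psi(\gf_{i_n}(\overrightarrow{a}),\overrightarrow{y})$, there is a tuple~$\overrightarrow{b}$ from~$A_{i_{n+1}}$ with $\bA\models\psi(\gf_{i_n}(\overrightarrow{a}),\gf_{i_{n+1}}(\overrightarrow{b}))$. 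Each such requirement asks only to pass above some $j\ges i_n$ (the existence of these~$j$ being furnished by~\eqref{Eq:gfix=gfiyimpl} for \tua{}, by~\eqref{Eq:RelatOnDirColim} for \tub{}, and by~\eqref{Eq:ADirUngfiAi} for \tua{}${}'$), and the total number of requirements at stage~$n$ is at most $\max(\aleph_0,\card\scL)\cdot\card(A_{i_n}^{<\go})<\gl$, using that~$\scL$ is $\gl$-small and $\card A_{i_n}<\gl$; hence $\gl$-directedness of~$I$ produces a single $i_{n+1}\ges i_n$ lying above all of them. Set $i_\omega:=\bigvee_{n<\go}i_n$, which exists since~$I$ is monotone $\gs$-complete. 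By $\gs$-continuity of~$\overrightarrow{\bA}$, $\famm{\bA_{i_\omega},\gf_{i_n}^{i_\omega}}{n<\go}=\varinjlim_n\bA_{i_n}$; in particular $A_{i_\omega}=\bigcup_n\gf_{i_n}^{i_\omega}(A_{i_n})$. A routine induction on the complexity of an $\scL$-formula~$\vF$ then shows that $\bA_{i_\omega}\models\vF(\overrightarrow{a})$ iff $\bA\models\vF(\gf_{i_\omega}(\overrightarrow{a}))$, for every tuple~$\overrightarrow{a}$ from~$A_{i_\omega}$: writing $\overrightarrow{a}=\gf_{i_n}^{i_\omega}(\overrightarrow{a}')$, the atomic and injectivity content comes from \tua{}, \tub{}, and the fact that $\gf_{i_{n+1}}^{i_\omega}$ is a homomorphism; the Boolean steps are immediate; and the existential step uses \tua{}${}'$ together with the induction hypothesis in its reflection direction. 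Thus $\gf_{i_\omega}$ is an elementary embedding, so $i_\omega\in J$ and $i_\omega\ges i_0$; hence $J$ is cofinal.

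\emph{$\gs$-closedness of~$J$.} Let $\famm{j_n}{n<\go}$ be increasing in~$J$ with supremum $j_\omega$ (which exists by monotone $\gs$-completeness). From $\gf_{j_n}=\gf_{j_{n+1}}\circ\gf_{j_n}^{j_{n+1}}$ and the elementarity of $\gf_{j_n},\gf_{j_{n+1}}$ it follows that each transition map $\gf_{j_n}^{j_{n+1}}$ is elementary, so $\famm{\bA_{j_n},\gf_{j_n}^{j_{n+1}}}{n<\go}$ is an elementary chain. By $\gs$-continuity, $\famm{\bA_{j_\omega},\gf_{j_n}^{j_\omega}}{n<\go}=\varinjlim_n\bA_{j_n}$, so the classical Tarski--Vaught elementary chain theorem (cf.~\cite{ChKe}) yields that each $\gf_{j_n}^{j_\omega}$ is elementary. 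Finally, for a tuple~$\overrightarrow{a}$ from~$A_{j_\omega}$, write $\overrightarrow{a}=\gf_{j_n}^{j_\omega}(\overrightarrow{a}')$; then $\bA_{j_\omega}\models\vF(\overrightarrow{a})$ iff $\bA_{j_n}\models\vF(\overrightarrow{a}')$ iff $\bA\models\vF(\gf_{j_n}(\overrightarrow{a}'))=\vF(\gf_{j_\omega}(\overrightarrow{a}))$, using successively that $\gf_{j_n}^{j_\omega}$ and $\gf_{j_n}$ are elementary. So $\gf_{j_\omega}$ is elementary, i.e.\ $j_\omega\in J$.

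\emph{Main obstacle.} The technical heart is the bookkeeping in the cofinality step: arranging the countably many stages so that at each stage the set of pending tasks (witness absorption, relation reflection, and collapse of pairs already identified in~$\bA$) is $\gl$-small --- which is exactly where the hypotheses ``$\scL$ is $\gl$-small'' and ``$\card A_i<\gl$'' enter, via the colimit description of Section~\ref{Su:DirColimFirstOrd} --- and then verifying that the induction over \emph{all} formula complexities genuinely closes at~$i_\omega$. (The degenerate case $\gl=\aleph_0$, where there are already $\aleph_0$ many formulas, would have to be excluded or handled separately; in the intended applications $\gl$ is uncountable.)
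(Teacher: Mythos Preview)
Your proof is correct (with the same caveat about $\gl=\aleph_0$ that also applies to the paper's argument, and which you flag). The overall strategy---build an $\omega$-chain above a given index, take its supremum, use $\gs$-continuity---is the same as the paper's, but the implementation differs.

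The paper proceeds in two stages. First it shows that $\ol{I}:=\setm{i\in I}{\gf_i\text{ is an embedding}}$ is $\gs$-closed cofinal, using a chain along which atomic formulas reflect (your conditions~\tua{} and~\tub{} only). Once inside~$\ol{I}$ it identifies each~$\bA_i$ with a substructure of~$\bA$ and then runs a \emph{Skolem hull} argument: pick a Skolem expansion~$\bA^*$ of~$\bA$, and build a chain $i_m$ so that $A_{i_{m+1}}$ contains the Skolem hull of~$A_{i_m}$; the supremum is then its own Skolem hull, hence elementary in~$\bA$. The $\gs$-closedness of~$J$ within~$\ol{I}$ is read off directly from the Elementary Chain Theorem applied to the union $\bA=\bigcup_{i\in\ol{I}}\bA_i$.

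You instead do everything in a single pass: you absorb witnesses for \emph{all} existential formulas (your~\tua{}${}'$) along the chain and then run a Tarski--Vaught induction on formula complexity at the limit. This avoids the detour through embeddings and the Skolem machinery, at the cost of carrying out the formula induction explicitly. The paper's modular approach buys a cleaner separation of concerns (atomic reflection first, then elementarity via Skolem hulls as a black box); your approach is arguably more self-contained and makes the role of $\gl$-directedness slightly more transparent, since all the bookkeeping is concentrated in one place.
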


\begin{proof}
We set $\ol{I}:=\setm{i\in I}{\gf_i\text{ is an embedding}}$.

\begin{claim}
The set~$\ol{I}$ is $\gs$-closed cofinal\index{i}{subset!$\gs$-closed cofinal} in~$I$.
\end{claim}

\begin{proof}
It is obvious that~$\ol{I}$ is $\gs$-closed in~$I$.

For elements $i,j\in I$, let $i\lessdot j$ hold if $i\leq j$ and for each atomic formula $\vF(\vx_1,\dots,\vx_n)$ of~$\scL$ and all elements $x_1,\dots,x_n\in A_i$, $\bA\models\vF(\gf_i(x_1),\dots,\gf_i(x_n))$ implies that $\bA_j\models\vF(\gf_i^j(x_1),\dots,\gf_i^j(x_n))$. It follows from~\eqref{Eq:DirSystAiMod} that for each $i\in I$, each atomic formula $\vF(\vx_1,\dots,\vx_n)$ of~$\scL$, and all elements $x_1,\dots,x_n\in A_i$, there exists $j\in I\upw i$ such that $\bA\models\vF(\gf_i(x_1),\dots,\gf_i(x_n))$ implies that $\bA_j\models\vF(\gf_i^j(x_1),\dots,\gf_i^j(x_n))$. As both~$\scL$ and~$A_i$ are $\gl$-small, we obtain, using the $\gl$-directedness assumption\index{i}{poset!directedl@$\gl$-directed} on~$I$ and repeating the argument above for all atomic formulas of~$\scL$ and all lists of elements of~$A_i$, that for each $i\in I$ there exists $j\in I$ such that $i\lessdot j$.

Hence, for each $i\in I$ there exists a sequence $\famm{i_m}{m<\go}$ of elements of~$I$ such that $i_0=i$ and $i_m\lessdot i_{m+1}$ for each $m<\go$. Set $j:=\bigvee\famm{i_m}{m<\go}$. Let $\vF(\vx_1,\dots,\vx_n)$ be an atomic formula of~$\scL$ and let $x_1,\dots,x_n\in A_j$ such that $\bA\models\vF(\gf_j(x_1),\dots,\gf_j(x_n))$. As the colimit~\eqref{Eq:DirSystAiMod} is $\gs$-continuous\index{i}{diagram!$\gs$-continuous}, $\bA_j=\varinjlim_{m<\go}\bA_{i_m}$, thus there are $m<\go$ and $y_1,\dots,y_n\in A_{i_m}$ such that $x_s=\gf_{i_m}^j(y_s)$ for each $s\in\set{1,\dots,n}$; hence
$\bA\models\vF(\gf_{i_m}(y_1),\dots,\gf_{i_m}(y_n))$. As $i_m\lessdot i_{m+1}$, we obtain that
 \[
 \bA_{i_{m+1}}\models
 \vF(\gf_{i_m}^{i_{m+1}}(y_1),\dots,\gf_{i_m}^{i_{m+1}}(y_n))\,,
 \]
and hence, applying the homomorphism~$\gf_{i_{m+1}}^j$, we obtain $\bA_j\models\vF(x_1,\dots,x_n)$. This completes the proof that~$j$ belongs to~$\ol{I}$. Therefore, $\ol{I}$ is cofinal in~$I$.
\qed\ Claim\end{proof}

As all $\gf_i$, for $i\in\ol{I}$, are embeddings, we may assume that they are inclusion maps and thus that $\bA=\bigcup\famm{\bA_i}{i\in\ol{I}}$. Now the statement that~$J$ is a $\gs$-closed subset of~$\ol{I}$ follows immediately from the Elementary Chain Theorem \cite[Theorem~3.1.9]{ChKe}\index{c}{Chang, C.\,C.}\index{c}{Keisler, H.\,J.}.

Further, let~$\scL^*$ be a Skolem expansion of~$\scL$ and let~$\bA^*$ be a corresponding Skolem expansion of the model~$\bA$ (cf. \cite[Section~3.3]{ChKe}\index{c}{Chang, C.\,C.}\index{c}{Keisler, H.\,J.}). For each $i\in\ol{I}$, we construct a sequence $\famm{i_m}{m<\go}$ of indices by setting $i_0:=i$ and letting~$i_{m+1}$ be any index in~$\ol{I}\upw i_m$ such that $A_{i_{m+1}}$ contains the Skolem hull of~$A_{i_m}$. The element $j:=\bigvee\famm{i_m}{m<\go}$ belongs to~$\ol{I}$ and~$A_j=\bigcup\famm{A_{i_m}}{m<\go}$ is its own Skolem hull, thus it is an elementary submodel of~$\bA$; so~$j\in J$. This proves that~$J$ is cofinal in~$\ol{I}$, and thus also in~$I$.
\qed\end{proof}

We now present a few simple monoid-theoretical applications of Lemma~\ref{L:ModThEltChain}.

\begin{prop}\label{P:ApproxMonIIWD}
Let~$\gl$ be an infinite cardinal, let~$I$ be a $\gl$-directed\index{i}{poset!directedl@$\gl$-directed} monotone $\gs$-complete\index{i}{poset!monotone $\gs$-complete} poset. 
Let~$\bM$ and~$\bN$ be \cm s and let $\gf\colon\bM\to\nobreak\bN$ be a monoid homomorphism. Consider a $\gs$-continuous\index{i}{diagram!$\gs$-continuous} directed colimit cocone
 \begin{equation}\label{Eq:DirSystMiIIWD}
 \famm{\bM,\gt_i}{i\in I}=
 \varinjlim\famm{\bM_i,\gt_i^j}{i\leq j\text{ in }I}\,,
 \end{equation}
with $\card M_i<\gl$ for each $i\in I$ and $\card N<\gl$.
Then the following statements hold:
\begin{description}
\item[\tui] If $\gf$ is ideal-induced\index{i}{homomorphism!ideal-induced}, then $I_{\mathrm{id}}:=\setm{i\in I}{\gf\circ\gt_i\text{ is ideal-induced}}$ is $\gs$-closed cofinal\index{i}{subset!$\gs$-closed cofinal} in~$I$;

\item[\tuii] If $\gf$ is weakly distributive\index{i}{homomorphism!weakly distributive}, then $I_{\mathrm{wd}}:=\setm{i\in I}{\gf\circ\gt_i\text{ is weakly distributive}}$ is $\gs$-closed cofinal\index{i}{subset!$\gs$-closed cofinal} in~$I$.
\end{description}
\end{prop}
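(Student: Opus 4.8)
The plan is to reduce both statements to the model-theoretic Lemma~\ref{L:ModThEltChain} by encoding the fixed monoid~$\bN$ and the composite maps into a single first-order language. Write $\overrightarrow{\bM}=\famm{\bM_i,\gt_i^j}{i\le j\text{ in }I}$. Let~$\scL$ be the (finite, hence $\gl$-small) language with a binary operation symbol~$+$, a constant symbol~$0$, a unary predicate symbol~$P$, and a unary operation symbol~$g$. For each $i\in I$, let~$\bM_i^*$ be the $\scL$-structure with universe the disjoint union $M_i\sqcup N$, where~$P$ is interpreted as~$N$, where~$+$ restricts to monoid addition on each of~$M_i$ and~$N$ and takes the fixed value~$0^{\bN}$ on every pair whose members lie in different summands (so~$+$ is total), where~$0$ names~$0^{\bM_i}$, and where~$g$ is interpreted as $\gf\circ\gt_i$ on~$M_i$ and, say, as the identity on~$N$. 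Then each $\gt_i^j\sqcup\id_N$ is an $\scL$-homomorphism, and, using the description of directed colimits of first-order structures recalled in Section~\ref{Su:DirColimFirstOrd} (for the operation~$g$: its colimit value at $\gt_i(x)$ is the image in $M\sqcup N$ of $(\gf\circ\gt_i)(x)\in N$, namely $(\gf\circ\gt_i)(x)=\gf(\gt_i(x))$ itself), one checks that $\famm{\bM^*,\gt_i\sqcup\id_N}{i\in I}=\varinjlim\famm{\bM_i^*,\gt_i^j\sqcup\id_N}{i\le j}$, where~$\bM^*$ has universe $M\sqcup N$ and~$g^{\bM^*}$ restricts to~$\gf$ on~$M$. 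This colimit cocone is $\gs$-continuous because~$\overrightarrow{\bM}$ is and the $N$-part of the construction is constant, and $\card(M_i\sqcup N)<\gl$ for each~$i$. Lemma~\ref{L:ModThEltChain} therefore gives that
\[
J:=\setm{i\in I}{\gt_i\sqcup\id_N\colon\bM_i^*\to\bM^*\text{ is an elementary embedding}}
\]
is a $\gs$-closed cofinal subset of~$I$.

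Next I would express "$\gf$ is ideal-induced" and "$\gf$ is weakly distributive" by first-order $\scL$-sentences true in~$\bM^*$. Surjectivity of~$\gf$ becomes $\forall y\,(P(y)\to\exists x\,(\neg P(x)\wedge g(x)=y))$, and condition~\eqref{Eq:gfIdInd} becomes the sentence asserting that for all $x,y$ outside~$P$ with $g(x)=g(y)$ there are $u,v$ outside~$P$ with $g(u)=g(0)$, $g(v)=g(0)$ and $x+u=y+v$ (since~$\gf$ is a monoid homomorphism, the term $g(0)$ evaluates to~$0^{\bN}$, so this is exactly "$u,v\in\gf^{-1}\set{0}$"). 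Weak distributivity, once the algebraic preordering $x\le y$ is spelled out as $\exists z\,(x+z=y)$ relativized to the relevant summand via~$P$, is likewise a first-order $\scL$-sentence. Since an elementary embedding preserves the truth value of every first-order sentence, each of these sentences holds in~$\bM_i^*$ for every $i\in J$, which says exactly that $\gf\circ\gt_i$ is ideal-induced (resp.\ weakly distributive). Hence $J\subseteq I_{\mathrm{id}}$ in case~(i) and $J\subseteq I_{\mathrm{wd}}$ in case~(ii); as~$J$ is cofinal, so are~$I_{\mathrm{id}}$ and~$I_{\mathrm{wd}}$.

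Finally, $I_{\mathrm{id}}$ and~$I_{\mathrm{wd}}$ are $\gs$-closed, and this needs only the $\gs$-continuity of~$\overrightarrow{\bM}$, not the model theory above. If $(i_m)_{m<\go}$ increases in~$I_{\mathrm{id}}$ with supremum~$i$ (which exists because~$I$ is monotone $\gs$-complete), then $\bM_i=\varinjlim_m\bM_{i_m}$, so $M_i=\bigcup_m\gt_{i_m}^i``(M_{i_m})$; surjectivity of $\gf\circ\gt_i$ is immediate from that of $\gf\circ\gt_{i_0}$, and given $x,y\in M_i$ with $(\gf\circ\gt_i)(x)=(\gf\circ\gt_i)(y)$ one pulls $x,y$ back into some~$\bM_{i_m}$, applies~\eqref{Eq:gfIdInd} there (using $\gf\circ\gt_{i_m}=\gf\circ\gt_i\circ\gt_{i_m}^i$), and pushes the resulting witnesses forward along~$\gt_{i_m}^i$. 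The same pull-back/push-forward argument, now using that each~$\gt_{i_m}^i$ is isotone for the algebraic preorderings, shows that~$I_{\mathrm{wd}}$ is $\gs$-closed. Being $\gs$-closed and containing the cofinal set~$J$, each of~$I_{\mathrm{id}}$ and~$I_{\mathrm{wd}}$ is $\gs$-closed cofinal (cf. Definition~\ref{D:glClosedCof}), as required.

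The step I expect to be the main obstacle is the bookkeeping in the first paragraph: verifying that the directed colimit of the~$\bM_i^*$ computed inside the category of $\scL$-structures is genuinely the structure in which~$g$ is interpreted as~$\gf$ (this is where the convention for~$+$ on ``mixed'' pairs, and the colimit formulas for operations and relations from Section~\ref{Su:DirColimFirstOrd}, must be handled with a little care) and that the $\gs$-continuity and $\gl$-smallness hypotheses of Lemma~\ref{L:ModThEltChain} survive the encoding. Everything after that is a routine transfer of first-order sentences along elementary embeddings together with the elementary $\gs$-continuity argument of the third paragraph.
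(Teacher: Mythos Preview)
Your proposal is correct and follows essentially the same approach as the paper: encode each composite $\gf\circ\gt_i$ as a single first-order structure on the disjoint union $M_i\sqcup N$, express ``ideal-induced'' and ``weakly distributive'' as first-order sentences of that language, and apply Lemma~\ref{L:ModThEltChain} to obtain the cofinal set~$J$ of elementary indices, while handling $\gs$-closedness directly from $\gs$-continuity. The only cosmetic difference is that the paper encodes the homomorphism via a binary relation symbol~$\vR$ (its graph) rather than your unary function symbol~$g$; both encodings work equally well, and your version arguably makes the first-order sentences slightly shorter. Your explicit treatment of $\gs$-closedness spells out what the paper dismisses in one line as ``straightforward.''
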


\begin{snote}
It is obvious that if $\gf$ is surjective, then the set
 \[
 I_{\mathrm{surj}}:=\setm{i\in I}{\gf\circ\gt_i\text{ is surjective}}
 \]
is $\gs$-closed cofinal\index{i}{subset!$\gs$-closed cofinal} in~$I$. Indeed, as~$I$ is $\gl$-directed\index{i}{poset!directedl@$\gl$-directed} and~$N$ is $\gl$-small, there exists $i\in I$ such that $\gf\circ\gt_i$ is surjective, and then $\gf\circ\gt_j$ is surjective for each $j\geq i$.
\end{snote}

\begin{proof}
As~\eqref{Eq:DirSystMiIIWD} is a $\gs$-continuous\index{i}{diagram!$\gs$-continuous} directed colimit cocone, it is straightforward to verify that the sets~$I_{\mathrm{id}}$ and~$I_{\mathrm{wd}}$ are both $\gs$-closed in~$I$. Now, in order to apply Lemma~\ref{L:ModThEltChain}, we encode every monoid homomorphism $\gy\colon\bX\to\bY$ (for \cm s~$\bX$ and $\bY$) by a single first-order structure~$\bA_\gy$, in such a way that~$\gy$ being ideal-induced\index{i}{homomorphism!ideal-induced} (resp., weakly distributive)\index{i}{homomorphism!weakly distributive} can be expressed by a first-order sentence. In order to do this, we define a new language~$\scL$ by
 \[
 \scL:=\set{+,0_{\vX},0_{\vY},\vX,\vY,\vR}\,,
 \]
where~$+$ is a binary operation symbol, $\vX$ and~$\vY$ are both (unary) predicate symbols, $\vR$ is a binary relation symbol, and~$0_{\vX}$ and~$0_{\vY}$ are both constant symbols. Define the universe of~$\bA_\gy$ as $(X\times\set{0})\cup(Y\times\set{1})$, interpret~$\vX$ by $X\times\set{0}$, $\vY$ by~$Y\times\set{1}$, and~$\vR$ by the set of all pairs of the form $((x,0),(\gy(x),1))$ for $x\in X$; furthermore, define the addition on~$\bA_\gy$ by setting
 \begin{align*}
 (x_0,0)+(x_1,0)&:=(x_0+x_1,0)\,,&&\text{for all }x_0,x_1\in X\,,\\
 (y_0,1)+(y_1,1)&:=(y_0+y_1,1)\,,&&\text{for all }y_0,y_1\in Y\,,\\
 (x,0)+(y,1)=(y,1)+(x,0)&:=(0_X,0)\,,&&\text{for all }(x,y)\in X\times Y\,.
 \end{align*}
Finally, we interpret $0_{\vX}$ by $(0_X,0)$ and~$0_{\vY}$ by $(0_Y,1)$. Then~$\gy$ being surjective is equivalent to~$\bA_\gy$ satisfying the first-order statement
 \[
 (\forall\vy)\bigl(\vY(\vy)\Rightarrow(\exists\vx)
 (\vX(\vx)\text{ and }\vR(\vx,\vy))\bigr)\,,
 \]
while $\gy$ being ideal-induced\index{i}{homomorphism!ideal-induced} is equivalent to~$\gy$ being surjective together with the first-order statement
 \begin{multline*}
 (\forall\vx_0,\vx_1,\vy)\Bigl(\bigl(\vX(\vx_0)\text{ and }\vX(\vx_1)\text{ and }\vY(\vy)
 \text{ and }\vR(\vx_0,\vy)\text{ and }\vR(\vx_1,\vy)\bigr)\Rightarrow\\
 (\exists\vu_0,\vu_1)\bigl(\vX(\vu_0)\text{ and }\vX(\vu_1)\text{ and }
 \vR(\vu_0,0_{\vY})\text{ and }\vR(\vu_1,0_{\vY})
 \text{ and }\vx_0+\vu_0=\vx_1+\vu_1\bigr)\Bigr)\,.
 \end{multline*}
Likewise, it is straightforward to verify that~$\gy$ being weakly distributive\index{i}{homomorphism!weakly distributive} is also a first-order property of~$\bA_\gy$.

Now assume, for example, that~$\gf$ is ideal-induced\index{i}{homomorphism!ideal-induced}. As $\bA_\gf=\varinjlim_{i\in I}\bA_{\gf\circ\gt_i}$ (with canonical transition morphisms and limiting morphisms), it follows from Lemma~\ref{L:ModThEltChain} that the set~$J$ of all $i\in I$ such that~$\gt_i$ defines an elementary embedding from $\bA_{\gf\circ\gt_i}$ to~$\bA_\gf$ is cofinal in~$I$. By the discussion above, this set contains~$I_{\mathrm{id}}$; hence the latter is also cofinal in~$I$. The argument for~$I_{\mathrm{wd}}$ is similar.
\qed\end{proof}

Of course, it would have been about as easy to prove Proposition~\ref{P:ApproxMonIIWD} directly, in each of the cases~(i) and~(ii). However, we hope that the above approach clearly illustrates the fact that many such statements can be obtained immediately from Lemma~\ref{L:ModThEltChain}.

\section{A diagram version of the Gr\"atzer-Schmidt Theorem}\label{S:GQV2Lard}

In the present section we shall establish a diagram extension of the Gr\"atzer-Schmidt\index{c}{Gr\"atzer, G.}\index{c}{Schmidt, E.\,T.} Theorem, namely Theorem~\ref{T:MindConcLift}.

We remind the reader that~$\SEM$\index{s}{Sem@$\SEM$} denotes the category of all \jzs s with \jzh s.
We also denote by~$\SEM^{\mathrm{surj}}$\index{s}{Sems@$\SEM^{\mathrm{surj}}$|ii} ($\SEM^{\mathrm{idl}}$\index{s}{Semi@$\SEM^{\mathrm{idl}}$|ii}, $\SEM^{\mathrm{wd}}$\index{s}{Semw@$\SEM^{\mathrm{wd}}$|ii}, respectively) the subcategory of~$\SEM$\index{s}{Sem@$\SEM$} whose objects are all \jzs s and whose morphisms are all surjective (ideal-induced\index{i}{homomorphism!ideal-induced}, weakly distributive\index{i}{homomorphism!weakly distributive}, respectively) \jzh s. The proof of the following lemma is a straightforward exercise. (We refer to Definition~\ref{D:ClosDirColim} for subcategories closed under small directed colimits.)

\begin{lem}\label{L:SEMvarClosColim}
The subcategories $\SEM^{\mathrm{surj}}$\index{s}{Sems@$\SEM^{\mathrm{surj}}$}, $\SEM^{\mathrm{idl}}$\index{s}{Semi@$\SEM^{\mathrm{idl}}$}, and $\SEM^{\mathrm{wd}}$\index{s}{Semw@$\SEM^{\mathrm{wd}}$} are closed under all small directed colimits within~$\SEM$\index{s}{Sem@$\SEM$} \pup{cf. Definition~\textup{\ref{D:ClosDirColim}}}.
\end{lem}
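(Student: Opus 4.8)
The plan is to unwind Definition~\ref{D:ClosDirColim} and verify each of the three closure statements by a routine directed-colimit computation, using the explicit description of directed colimits of $(\vee,0)$-semilattices. Since $(\vee,0)$-semilattices are the models of a set of universal Horn sentences (Example~\ref{Ex:QuasivarSpecial}(iii)), a small directed colimit in $\SEM$ coincides with the corresponding colimit in $\MOD_{\set{\vee,0}}$, hence is given by the formulas of Section~\ref{Su:DirColimFirstOrd}. Concretely, given a small directed poset $I$, diagrams $\overrightarrow{A}=\famm{A_i,\ga_i^j}{i\leq j\text{ in }I}$ and $\overrightarrow{B}=\famm{B_i,\gb_i^j}{i\leq j\text{ in }I}$ in $\SEM$ with colimit cocones $\famm{A,\ga_i}{i\in I}$ and $\famm{B,\gb_i}{i\in I}$, and a natural transformation $\famm{f_i}{i\in I}\colon\overrightarrow{A}\to\overrightarrow{B}$ with limit morphism $f\colon A\to B$, one has $A=\bigcup\famm{\ga_i``(A_i)}{i\in I}$ and $B=\bigcup\famm{\gb_i``(B_i)}{i\in I}$ (cf.~\eqref{Eq:ADirUngfiAi}), equality at the colimit is governed by~\eqref{Eq:gfix=gfiyimpl}, and $\gb_i\circ f_i=f\circ\ga_i$ for each $i$. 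What must be shown is that if every $f_i$ is surjective (resp. ideal-induced, resp. weakly distributive), then so is $f$.

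Each of the three properties is a ``finitary existential'' condition tested on finitely many elements at a time, so it transfers to the colimit by pulling those elements back to a common stage $i\in I$ (using directedness of $I$ together with~\eqref{Eq:ADirUngfiAi}--\eqref{Eq:gfix=gfiyimpl}), applying the hypothesis to $f_i$, and then pushing the witnesses forward along the $\ga_i$, $\gb_i$. For surjectivity: given $b\in B$, write $b=\gb_i(b_i)$, pick $a_i\in A_i$ with $f_i(a_i)=b_i$, and note $f(\ga_i(a_i))=b$. For ideal-inducedness: recall (see the discussion preceding Lemma~\ref{L:ConcpiIdInd}) that for $(\vee,0)$-semilattices, viewed as conical commutative monoids, being ideal-induced means being surjective and satisfying~\eqref{Eq:gfIdInd}; given $f(\bar x)=f(\bar y)$, represent $\bar x,\bar y$ by $x_i,y_i$ at some stage, pass to $j\geq i$ with $f_j(\ga_i^j(x_i))=f_j(\ga_i^j(y_i))$, apply~\eqref{Eq:gfIdInd} to $f_j$ to get $u_j,v_j$ in its kernel with $\ga_i^j(x_i)\vee u_j=\ga_i^j(y_i)\vee v_j$, and take images under $\ga_j$. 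For weak distributivity: given $f(\bar z)\leq u\vee v$, represent $\bar z,u,v$ at a common stage, pass to a $j$ where the inequality holds inside $B_j$, apply weak distributivity of $f_j$, and push the resulting decomposition forward.

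Alternatively, and perhaps more transparently, one may encode a $(\vee,0)$-homomorphism $\gf\colon M\to N$ by a single first-order structure $\bA_\gf$ in the spirit of the encoding used in the proof of Proposition~\ref{P:ApproxMonIIWD}, but taking $\gf$ as an operation symbol (legitimate since $\gf$ is a genuine map); then $\bA_f=\varinjlim_{i\in I}\bA_{f_i}$, and surjectivity, ideal-inducedness, and weak distributivity of $\gf$ are each expressible by a sentence of the form~\eqref{Eq:SpecialForm} in the language of $\bA_\gf$, with premises and conclusions conjunctions of equations, so they descend from the $\bA_{f_i}$ to $\bA_f$ by the preservation property recalled in Section~\ref{Su:DirColimFirstOrd}. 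Either way there is no genuine obstacle: the only point requiring (minimal) care is the choice of a stage $i$ to which all finitely many elements entering a given verification can be pulled back simultaneously, which is exactly what directedness of $I$ supplies. This is why the original text labels the statement a straightforward exercise.
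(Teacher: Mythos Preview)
Your proposal is correct and supplies precisely the routine verification that the paper omits: the paper gives no proof of this lemma, stating only that it ``is a straightforward exercise.'' Your element-chasing argument (pull finitely many elements back to a common stage via~\eqref{Eq:ADirUngfiAi}--\eqref{Eq:gfix=gfiyimpl}, apply the hypothesis on~$f_j$, push forward) is exactly the intended exercise, and the alternative encoding via sentences of the form~\eqref{Eq:SpecialForm} is a legitimate shortcut in the same spirit.
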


An \emph{algebra}\index{i}{algebra!universal} is a first-order structure~$\bA$ such that $\Rel(\bA)=\es$. The Gr\"atzer-Schmidt\index{c}{Gr\"atzer, G.}\index{c}{Schmidt, E.\,T.} Theorem~\cite[Theorem~10]{GrSc62}\index{c}{Gr\"atzer, G.}\index{c}{Schmidt, E.\,T.} states that every \jzs\ is isomorphic to~$\Conc\bA$\index{s}{compcon1@$\Conc\bA$, $\Conc f$}, for some algebra~$\bA$\index{i}{algebra!universal}. As the algebra~$\bA$ has the same congruences as the algebra\index{i}{algebra!universal} consisting of the universe of~$\bA$ endowed with the unary polynomials of~$\bA$, we may assume that~$\bA$ is a unary algebra\index{i}{algebra!unary}. Denote by $\MALG_1$\index{s}{Malg1@$\MALG_1$|ii} the full subcategory of~$\MIND$\index{s}{Mind@$\MIND$} consisting of all unary algebras\index{i}{algebra!unary}. Hence $\MALG_1$\index{s}{Malg1@$\MALG_1$} is a \gqv\index{i}{generalized quasivariety}\ (cf. Example~\ref{Ex:1/2nonindexed}(ii)). We shall call it the category of \emph{unary monotone-indexed algebras}\index{i}{algebra!monotone-indexed unary|ii}. The following result is a diagram extension of the Gr\"atzer-Schmidt\index{c}{Gr\"atzer, G.}\index{c}{Schmidt, E.\,T.} Theorem.

\begin{thm}\label{T:MindConcLift}
Let $P$ be a poset and let~$\bS=\famm{\bS_p,\gs_p^q}{p\leq q\text{ in }P}$ be a $P$-indexed diagram of \jzs s and \jzh s. If either~$P$ is finite or there are cardinals~$\gk$ and~$\gl$, with~$\gl$ regular, such that~$P$ and all~$S_p$, for $p\in P$, are $\gl$-small and the relation $(\gk,{<}\go,\gl)\rightarrow\gl$\index{s}{arr0free@$(\gk,{<}\go,\gl)\rightarrow\gr$} holds, then every $P$-indexed diagram of \jzs s and \jzh s can be lifted\index{i}{diagram!lifted}, with respect to the~$\Conc$\index{s}{compcon@$\Conc$ functor} functor, by some diagram of unary monotone-indexed algebras\index{i}{algebra!monotone-indexed unary}.
\end{thm}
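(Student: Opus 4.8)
The plan is to realize the statement as an instance of CLL (Lemma~\ref{L:CLL}) together with its lifter-free variant (Corollary~\ref{C:CLLnoLF}), applied to the following larder. Take $\cS:=\SEM$, $\cA:=\SEM$ with $\Phi$ the identity functor, $\cB:=\MALG_1$ with $\Psi:=\Conc$ (since $\MALG_1$ is closed under homomorphic images, $\Conc^{\MALG_1}=\Conc$), and let the double arrows be the ideal-induced $\vee,0$-homomorphisms, i.e. $\cS^\Rightarrow:=\SEM^{\mathrm{idl}}$. Fix a regular cardinal $\gl$ larger than $\card P$ and than all the $\card S_p$ (any such $\gl$ in the finite case, the given $\gl$ otherwise), let $\cA^\dagger=\cS^\dagger$ be the $\gl$-small $\vee,0$-semilattices and $\cB^\dagger$ the completely $\gl$-small unary algebras. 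First I would check, via Proposition~\ref{P:LR2Larder}, that $\Lambda:=(\cA,\cB,\cS,\cA^\dagger,\cB^\dagger,\cS^\Rightarrow,\Phi,\Psi)$ is a projectable $\gl$-larder.

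The left-larder axioms are routine: $\SEM$ is cocomplete, so it has all small directed colimits and all finite products, and $\Phi=\id$ preserves everything; $(\PROJ(\Phi,\cS^\Rightarrow))$ reduces to showing that every extended projection of $\SEM$ is ideal-induced, which holds for canonical projections from finite products by a direct computation and passes to their directed colimits in $\SEM^\two$ by a directed-colimit argument using the description of colimits of $\vee,0$-semilattices (Section~\ref{Su:DirColimFirstOrd}) and Lemma~\ref{L:SEMvarClosColim}. For the right larder, $(\PRES_\gl(\cB^\dagger,\Psi))$ follows because $\card\Conc B<\gl$ for $B$ completely $\gl$-small (compact congruences are finite joins of principal ones) and, $\gl$ being regular with $\SEM$ a finite-language variety, every $\gl$-small $\vee,0$-semilattice is $\gl$-presented, hence weakly $\gl$-presented (cf.\ the proof of Proposition~\ref{P:glPresMIND}). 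The L\"owenheim--Skolem condition $(\LSr_\gm(B))$ for $\gm=\cf(\gl)=\gl$ (a fortiori for $\gm=\aleph_0$) is the substantive point: I would present $B$ as the $\gl$-directed, monotone $\sigma$-complete union of its completely $\gl$-small subalgebras $U$, use that $\Conc$ preserves directed colimits (Theorem~\ref{T:ConcVPresDirColim}) so that $\Conc B=\varinjlim_U\Conc U$ along the inclusions, and apply Proposition~\ref{P:ApproxMonIIWD}(i) (ultimately Lemma~\ref{L:ModThEltChain}): an ideal-induced $\gy\colon\Conc B\Rightarrow S$ with $S$ $\gl$-small stays ideal-induced after precomposition with $\Conc$ of the inclusion of a $\sigma$-closed cofinal family of $U$'s; intersecting this (Proposition~\ref{P:ClCofClos}) with the cofinal family of $U$'s containing a given $\gm$-small monic family $\famm{\gc_i\colon C_i\mono B}{i\in I}$ yields the required $\gc\colon C\mono B$. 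Projectability of $\Lambda$ is exactly Theorem~\ref{T:GQV2ProjWit} for $\cV=\MALG_1$. Finally, the ``object'' hypothesis of CLL is the Gr\"atzer--Schmidt Theorem: for every $\vee,0$-semilattice $T$, in particular for every condensate $\xF(X)\otimes\overrightarrow{A}$, there is a unary algebra $B$ with $\Conc B\cong T$, and an isomorphism is an ideal-induced $\vee,0$-homomorphism, i.e.\ a double arrow.

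It then remains to deal with the indexing poset. If $P$ is a finite disjoint union of almost join-semilattices with zero, it has a $\gl$-lifter $(X,\bX)$ with $\bX$ lower finite by Corollary~\ref{C:LiftFinPos} (no large cardinal needed), and CLL (Lemma~\ref{L:CLL}, case~(i), $\gm=\aleph_0$) applied to $\overrightarrow{A}:=\overrightarrow{\bS}$ gives the lifting; if instead $P$ is an infinite $\gl$-small almost join-semilattice, then $\card P<\cf(\gl)$ and Corollary~\ref{C:CLLnoLF} applies, using $(\gk,{<}\go,\gl)\rightarrow\gl$. In either case projectability of $\Lambda$ turns the resulting double arrow $\overrightarrow{\chi}\colon\Psi\overrightarrow{B}\Rightarrow\Phi\overrightarrow{A}$ into an isomorphism $\Conc\overrightarrow{B}\cong\overrightarrow{\bS}$ with $\overrightarrow{B}$ a $P$-indexed diagram in $\MALG_1$. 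For a general poset $P$ — e.g.\ a finite poset that is not an almost join-semilattice — I would first embed $P$, via $p\mapsto P\dnw p$, into the $\gl$-small bounded lattice $L$ generated inside the down-set lattice $\mathcal{O}(P)$ by the principal down-sets together with $\es$ and $P$; a bounded lattice is an almost join-semilattice with zero. Using cocompleteness of $\SEM$, I would take the left Kan extension $\overrightarrow{\bS'}$ of $\overrightarrow{\bS}$ along $P\into L$; since $P\into L$ is fully faithful, $\overrightarrow{\bS'}$ restricts to $\overrightarrow{\bS}$ up to isomorphism, and since each value of $\overrightarrow{\bS'}$ is a $(<\gl)$-indexed colimit of $\gl$-small semilattices, $\overrightarrow{\bS'}$ is again a diagram of $\gl$-small semilattices. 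Lifting $\overrightarrow{\bS'}$ over $L$ by the previous case yields $\overrightarrow{B'}$ in $\MALG_1^L$ with $\Conc\overrightarrow{B'}\cong\overrightarrow{\bS'}$, and then $\overrightarrow{B}:=\overrightarrow{B'}\res_P$ satisfies $\Conc\overrightarrow{B}\cong\overrightarrow{\bS'}\res_P\cong\overrightarrow{\bS}$, as required.

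The step I expect to be the main obstacle is the verification of $(\LSr_\gm(B))$, together with the reconciliation of the choice ``double arrow $=$ ideal-induced homomorphism'' with the behaviour of $\Conc$ under subalgebra inclusions: one must show that an ideal-induced homomorphism out of $\Conc B$ remains ideal-induced after precomposition with $\Conc$ of the inclusion of cofinally many completely $\gl$-small subalgebras of $B$, which is precisely where the model-theoretic machinery of Lemma~\ref{L:ModThEltChain} and Proposition~\ref{P:ApproxMonIIWD} is needed, along with careful cardinal bookkeeping (regularity of $\gl$, and the fact that the completely $\gl$-small subalgebras of $B$ form a $\gl$-directed, monotone $\sigma$-complete poset). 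A secondary point needing care is checking that the reduction for posets that are not almost join-semilattices — embedding into a $\gl$-small lattice and left Kan extending — keeps the diagram $\gl$-small and genuinely restricts back to the original one.
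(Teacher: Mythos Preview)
Your proposal is correct and follows essentially the same route as the paper: the same larder $(\SEM,\MALG_1,\SEM,\cdot,\cdot,\SEM^{\mathrm{idl}},\id,\Conc)$, the same verification of the left larder, of $(\PRES_\gl)$, and of $(\LSr_\gl(B))$ via Proposition~\ref{P:ApproxMonIIWD} applied to the continuous directed colimit presentation of~$B$ by its completely $\gl$-small substructures (the paper indexes this explicitly by $[B\cup\Lg(B)]^{<\gl}$ as in Proposition~\ref{P:glPresMIND}), the same projectability via Theorem~\ref{T:GQV2ProjWit}, the same appeal to Gr\"atzer--Schmidt for the object hypothesis, and the same split into CLL with a lifter (finite case, via Corollary~\ref{C:LiftFinPos}) versus Corollary~\ref{C:CLLnoLF} (infinite case). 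The only cosmetic difference is the reduction for arbitrary~$P$: the paper passes to the \jzs\ $\ol{P}$ of finitely generated lower subsets of~$P$ and sets $\bS^*_X:=\varinjlim_{p\in X}\bS_p$, which is exactly your left Kan extension along $P\hookrightarrow\ol{P}$; your bounded lattice~$L$ works equally well but is a slightly larger target than needed.
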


\begin{proof}
Denote by $\ol{P}$ the \jzs\ of all finitely generated\index{i}{finitely generated!lower subset} lower subsets\index{i}{lower subset} of~$P$. As the category~$\SEM$\index{s}{Sem@$\SEM$} has all (not necessarily directed) colimits, we can extend the diagram~$\overrightarrow{\bS}$ to a $\ol{P}$-indexed diagram ${\overrightarrow{\bS}^*}$ in~$\SEM$\index{s}{Sem@$\SEM$} by setting
 \[
 \bS^*_X:=\varinjlim\famm{\bS_p}{p\in X}\,,\quad\text{for each }X\in\ol{P}\,,
 \]
with the canonical transition maps and limiting maps. Furthermore, $\ol{P}$ is finite in case~$P$ is finite, and $\card\ol{P}\leq\card P+\aleph_0$\index{s}{aleph0@$\aleph_{\ga}$}. In particular, the assumptions made for $P$, $\overrightarrow{\bS}$ remain valid for $\ol{P}$, ${\overrightarrow{\bS}^*}$, and so we may assume from the start that~$P$ is a \jzs. Set~$\gl:=\aleph_1$\index{s}{aleph0@$\aleph_{\ga}$} in case~$P$ is finite (in that case we require no relation of the form $(\gk,{<}\go,\gl)\rightarrow\gl$\index{s}{arr0free@$(\gk,{<}\go,\gl)\rightarrow\gr$}). We denote by $\MALG_1^{(\gl)}$\index{s}{Malg1l@$\MALG_1^{(\gl)}$|ii} (resp., $\SEM^{(\gl)}$)\index{s}{Seml@$\SEM^{(\gl)}$|ii} the class of all completely $\gl$-small unary algebras\index{i}{algebra!unary} (resp., the class of all $\gl$-small \jzs s).

\setcounter{nclaim}{0}
\begin{nclaim}\label{Cl:GrSchleftlard}
Denote by~$\Phi$ the identity functor on~$\SEM$\index{s}{Sem@$\SEM$}. Then the quadruple\index{s}{Semi@$\SEM^{\mathrm{idl}}$}
 \[
 (\SEM,\SEM,\SEM^{\mathrm{idl}},\Phi)
 \]
is a left larder\index{i}{larder!left}.
\end{nclaim}

\begin{proof}
The only not completely trivial statement that we need to verify is $(\PROJ(\Phi,\SEM^{\mathrm{idl}}))$\index{s}{Proj@$(\PROJ(\Phi,\cS^\Rightarrow))$}, that is, every extended projection of~$\SEM$\index{s}{Sem@$\SEM$} is ideal-induced\index{i}{homomorphism!ideal-induced}. As every projection in~$\SEM$\index{s}{Sem@$\SEM$} is, trivially, ideal-induced\index{i}{homomorphism!ideal-induced}, this follows immediately from Lemma~\ref{L:SEMvarClosColim}.
\qed\ Claim~\ref{Cl:GrSchleftlard}\end{proof}

\begin{nclaim}\label{Cl:GrSchrightlard}
The $6$-uple $(\MALG_1,\MALG_1^{(\gl)},\SEM,\SEM^{(\gl)},\SEM^{\mathrm{idl}},\Conc)$\index{s}{compcon@$\Conc$ functor}\index{s}{Sem@$\SEM$}\index{s}{Seml@$\SEM^{(\gl)}$}\index{s}{Semi@$\SEM^{\mathrm{idl}}$}\index{s}{Malg1l@$\MALG_1^{(\gl)}$}\index{s}{Malg1@$\MALG_1$} is a projectable right $\gl$-larder\index{i}{larder!right!projectable}.
\end{nclaim}

\begin{proof}
For each $\bA\in\MALG_1^{(\gl)}$\index{s}{Malg1l@$\MALG_1^{(\gl)}$}, the semilattice $\Conc\bA$\index{s}{compcon1@$\Conc\bA$, $\Conc f$} is $\gl$-small, thus, by Proposition~\ref{P:glPresMIND}, it is weakly $\gl$-presented\index{i}{presented!weakly $\gl$-} in~$\SEM$\index{s}{Sem@$\SEM$}. Hence the condition $(\PRES_\gl(\MALG_1^{(\gl)},\Conc))$\index{s}{Pres@$(\PRES_\gl(\cB^\dagger,\Psi))$} is satisfied.

Let~$\bB$ be a unary algebra\index{i}{algebra!unary}, we must verify that $(\LSr_\gl(\bB))$\index{s}{LSr@$(\LSr_\gm(B))$} is satisfied. Let~$\bS$ be a $\gl$-small \jzs, let~$I$ be a $\gl$-small set, and let $\famm{u_i\colon\bU_i\Rightarrow\bB}{i\in I}$\index{s}{AtorightarrowB@$f\colon A\Rightarrow B$} be a family of double arrows\index{i}{double arrow} with all the~$\bU_i$ completely $\gl$-small. As at the end of the proof of Proposition~\ref{P:glPresMIND}, we set $\Omega:=B\cup\Lg(\bB)$ (disjoint union) and we express~$\bB$ as a continuous\index{i}{diagram!continuous} directed colimit\index{s}{Malg1@$\MALG_1$}
 \[
 \bB=\varinjlim_{X\in[\Omega]^{<\gl}}{\bB_X}\quad\text{in }\MALG_1\,,
 \]
with all maps in the cocone being the corresponding inclusion maps. Then applying Theorem~\ref{T:ConcVPresDirColim} to the \gqv\index{i}{generalized quasivariety}\ $\cV:=\MALG_1$\index{s}{Malg1@$\MALG_1$} yields a continuous\index{i}{diagram!continuous} directed colimit\index{s}{compcon1@$\Conc\bA$, $\Conc f$}\index{s}{Sem@$\SEM$}
 \begin{equation}\label{Eq:ConcbBGS}
 \Conc\bB=\varinjlim_{X\in[\Omega]^{<\gl}}{\Conc\bB_X}\quad\text{in }\SEM\,.
 \end{equation}
(Due to an earlier introduced convention, we omit the~$\cV$ superscript in the~$\ConcV$ notation, as, here, $\cV$ is closed under homomorphic images.)
Denote by $\gb_X\colon\bB_X\into\nobreak\bB$\index{s}{AtoinB@$f\colon A\into B$} the inclusion map, for each $X\in[\Omega]^{<\gl}$. It follows from~\eqref{Eq:ConcbBGS} together with Proposition~\ref{P:ApproxMonIIWD} that the set
 \[
 J:=\setm{X\in[\Omega]^{<\gl}}{\gf\circ\Conc\gb_X\text{ is ideal-induced}}
 \]
is $\gs$-closed cofinal\index{i}{subset!$\gs$-closed cofinal} in~$[\Omega]^{<\gl}$. As there exists $X\in[\Omega]^{<\gl}$ such that~$\bB_X$ contains (as well for the universes as for the languages) all images $u_i``(\bU_i)$, it follows that such an~$X$ can be chosen in~$J$. Then~$\gb_X$ is monic, $u_i\utr\gb_X$ for each $i\in I$, and $\gf\circ\Conc\gb_X$\index{s}{compcon1@$\Conc\bA$, $\Conc f$} is ideal-induced\index{i}{homomorphism!ideal-induced}.

The projectability statement follows immediately from Theorem~\ref{T:GQV2ProjWit}.
\qed\ Claim~\ref{Cl:GrSchrightlard}\end{proof}

By the two claims above, it follows from Proposition~\ref{P:LR2Larder} (with $\cA^\dagger:=\SEM^{(\gl)}$\index{s}{Seml@$\SEM^{(\gl)}$}) that the $8$-uple\index{s}{Sem@$\SEM$}\index{s}{Semi@$\SEM^{\mathrm{idl}}$}\index{s}{Malg1l@$\MALG_1^{(\gl)}$}\index{s}{Malg1@$\MALG_1$}
 \[
 (\SEM,\MALG_1,\SEM,\SEM^{(\gl)},\MALG_1^{(\gl)},\SEM^{\mathrm{idl}},
 \Phi,\Conc)
 \]
is a $\gl$-larder\index{i}{larder}\index{s}{compcon@$\Conc$ functor}.

Now we first assume that~$P$ is finite. By Corollary~\ref{C:LiftFinPos}, there are $n<\go$ and a $\gl$-lifter\index{i}{lifter ($\gl$-)} $(X,\bX)$ of~$P$ such that $\card X=\gl^{+n}$. By the Gr\"atzer-Schmidt\index{c}{Gr\"atzer, G.}\index{c}{Schmidt, E.\,T.} Theorem, there exists $\bB\in\MALG_1$\index{s}{Malg1@$\MALG_1$} such that\index{s}{compcon1@$\Conc\bA$, $\Conc f$}\index{s}{FxX@$\xF(X)$}\index{s}{otimAS@$\bA\otimes\overrightarrow{S}$, $\gf\otimes\overrightarrow{S}$}
 \[
 \Conc\bB\cong\xF(X)\otimes\overrightarrow{\bS}\,.
 \]
By CLL\index{i}{Condensate Lifting Lemma (CLL)} (Lemma~\ref{L:CLL}), there exists a $P$-indexed diagram~$\overrightarrow{\bB}$ in~$\MALG_1$\index{s}{Malg1@$\MALG_1$} such that $\Conc\overrightarrow{\bB}\cong\overrightarrow{\bS}$\index{s}{compcon1@$\Conc\bA$, $\Conc f$}; that is, $\overrightarrow{\bS}$ can be lifted\index{i}{diagram!lifted} with respect to the~$\Conc$\index{s}{compcon@$\Conc$ functor} functor.

Finally assume that~$P$ is infinite. We do not have any theorem that ensures the existence of a $\gl$-lifter\index{i}{lifter ($\gl$-)} of~$P$, so instead of using CLL\index{i}{Condensate Lifting Lemma (CLL)} we invoke Corollary~\ref{C:CLLnoLF}, which yields again a morphism $\overrightarrow{\chi}\colon\Conc\overrightarrow{\bB}\Todot\overrightarrow{\bS}$\index{s}{AtoRightarrowdotB@$f\colon\xA\Todot\xB$}\index{s}{compcon1@$\Conc\bA$, $\Conc f$} for some diagram~$\overrightarrow{\bB}$ in~$\MALG_1$\index{s}{Malg1@$\MALG_1$}. We conclude the proof as above, by using projectability.
\qed\end{proof}

By using the comments preceding Corollary~\ref{C:CLLnoLF}, we obtain, in particular, the following extension of the Gr\"atzer-Schmidt\index{c}{Gr\"atzer, G.}\index{c}{Schmidt, E.\,T.} Theorem to diagrams of algebras.

\begin{cor}\label{C:MindConcLift}
Assume that the class of all Erd\H os\index{c}{Erdos@Erd\H os, P.} cardinals\index{i}{Erd\H{o}s cardinal} is proper. Then every poset-indexed diagram of \jzs s and \jzh s can be lifted\index{i}{diagram!lifted}, with respect to the~$\Conc$\index{s}{compcon@$\Conc$ functor} functor, by some diagram of monotone-indexed unary algebras\index{i}{algebra!monotone-indexed unary}.
\end{cor}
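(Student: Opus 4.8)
The plan is to deduce Corollary~\ref{C:MindConcLift} directly from Theorem~\ref{T:MindConcLift}; the only work is to manufacture, from the large cardinal hypothesis, cardinals $\gk$ and~$\gl$ that witness the assumptions of that theorem for an \emph{arbitrary} poset-indexed diagram $\overrightarrow{\bS}=\famm{\bS_p,\gs_p^q}{p\leq q\text{ in }P}$ of \jzs s and \jzh s. If $P$ is finite, Theorem~\ref{T:MindConcLift} applies verbatim and there is nothing more to do, so I would assume $P$ infinite.

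First I would put $\gm:=\card P+\sup_{p\in P}\card S_p+\aleph_0$ and $\gl:=\gm^+$. Then $\gl$ is a successor cardinal, hence regular, and $\card P<\gl$ while $\card S_p\leq\sup_{q\in P}\card S_q<\gl$ for each $p\in P$; thus $P$ and all the $S_p$ are $\gl$-small. Next I would set $\gq:=\max\set{\gl,\gl^+}=\gl^+$. Since the class of all Erd\H os cardinals is assumed to be proper, the Erd\H os cardinal of index~$\gq$ exists; call it~$\gk$, so that the partition relation $\gk\rightarrow(\gq)^{<\go}_2$ holds. By \cite[Theorem~45.2]{EHMR}, together with the discussion preceding Corollary~\ref{C:CLLnoLF} (applied with $\gr:=\gl$, for which indeed $\gq=\max\set{\gr,\gl^+}$), the relation $(\gk,{<}\go,\gl)\rightarrow\gl$ follows.

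With $\gl$ regular, $(\gk,{<}\go,\gl)\rightarrow\gl$, and $P$ and all the $S_p$ being $\gl$-small, Theorem~\ref{T:MindConcLift} then provides a $P$-indexed diagram~$\overrightarrow{\bB}$ of unary monotone-indexed algebras with $\Conc\overrightarrow{\bB}\cong\overrightarrow{\bS}$, which is exactly the assertion of the corollary. There is essentially no obstacle beyond bookkeeping of the cardinal parameters: all the combinatorial and categorical substance is already encapsulated in Theorem~\ref{T:MindConcLift} (through CLL and Corollary~\ref{C:CLLnoLF}), and the only additional ingredient is the standard reduction, recalled just before Corollary~\ref{C:CLLnoLF}, of the statement $(\gk,{<}\go,\gl)\rightarrow\gl$ to the existence of an Erd\H os cardinal of index $\gl^+$. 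The one point to be careful about is to take the index~$\gq$ at least $\gl^+$ (and at least~$\gl$, the role of~$\gr$), which is why $\gq:=\gl^+$ is the right choice.
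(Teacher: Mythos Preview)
Your argument is correct and follows exactly the route the paper intends: the corollary is an immediate application of Theorem~\ref{T:MindConcLift}, and you have simply made explicit the choice of~$\gl$ and~$\gk$ that the paper leaves implicit in its one-line reference to ``the comments preceding Corollary~\ref{C:CLLnoLF}.'' The bookkeeping (taking $\gl=\gm^+$ to ensure regularity and $\gl$-smallness, then invoking the Erd\H os cardinal of index~$\gl^+$ to obtain $(\gk,{<}\go,\gl)\rightarrow\gl$) is exactly what that reference unpacks to.
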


A similar proof, with~$\MALG_1$\index{s}{Malg1@$\MALG_1$} replaced by the \gqv\index{i}{generalized quasivariety}\ of all groupoids\index{i}{groupoid}, yields the following result, that extends Lampe's result from~\cite{Lamp82}\index{c}{Lampe, W.\,A.} (and its extension to one arrow in Lampe~\cite{Lamp05}\index{c}{Lampe, W.\,A.}) that every \jzus\ is isomorphic to the compact congruence semilattice of some groupoid\index{i}{groupoid}. This result was first established by the first author for diagrams indexed by finite posets, see Gillibert \cite[Corollary~7.10]{Gill1}\index{c}{Gillibert, P.}.

\begin{prop}\label{P:GpdConcLift}
Every diagram of \jzus s and \jzuh s, indexed by a finite poset, can be lifted\index{i}{diagram!lifted}, with respect to the~$\Conc$\index{s}{compcon@$\Conc$ functor} functor, by some diagram of groupoids\index{i}{groupoid}. Furthermore, if the class of all Erd\H os\index{c}{Erdos@Erd\H os, P.} cardinals\index{i}{Erd\H{o}s cardinal} is proper, then the finiteness assumption on the indexing poset is not needed.
\end{prop}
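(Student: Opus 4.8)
The plan is to imitate, essentially verbatim, the proof of Theorem~\ref{T:MindConcLift}, with the \gqv\ $\MALG_1$ of unary monotone-indexed algebras replaced by the full subcategory~$\cG$ of~$\MIND$ whose objects are all groupoids; as in Example~\ref{Ex:1/2nonindexed}, $\cG$ is a \gqv, and it is moreover closed under homomorphic images, so we drop the superscript in the~$\ConcV$ notation. Given a diagram $\overrightarrow{\bS}=\famm{\bS_p,\gs_p^q}{p\leq q\text{ in }P}$ of \jzus s and \jzuh s, I would first extend it along the canonical order-embedding of~$P$ into the \jzus\ $\ol{P}$ of all finitely generated lower subsets of~$P$ (using that the category of \jzus s with \jzuh s is cocomplete), exactly as in Theorem~\ref{T:MindConcLift}. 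Since $\ol{P}$ is finite whenever~$P$ is, $\card\ol{P}\leq\card P+\aleph_0$ in general, and~$\ol{P}$ is an \ajs\ with zero (for $X,Y\in\ol{P}$ the set $\ol{P}\Upw\set{X,Y}$ equals $\ol{P}\upw(X\cup Y)$, while $\ol{P}\dnw X$ is closed under finite unions), we may assume from the outset that~$P$ is a \jzus, hence an \ajs\ with zero. Then I would fix a regular cardinal~$\gl$ such that~$P$ and all the~$\bS_p$ are $\gl$-small: this is possible for every large enough~$\gl$ when~$P$ is finite, and, when the class of Erd\H os cardinals is proper, one may in addition fix a cardinal~$\gk$ with $(\gk,{<}\go,\gl)\rightarrow\gl$.

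Next I would rebuild the larder of Claims~\ref{Cl:GrSchleftlard} and~\ref{Cl:GrSchrightlard} with~$\cG$ in place of~$\MALG_1$. Let~$\cS$ be the category of all \jzus s with \jzuh s, let~$\cS^\Rightarrow$ be its subcategory of ideal-induced morphisms (note that a surjective \jzh\ between \jzus s automatically preserves~$1$, so this makes sense), put $\Phi:=\id_{\cS}$, and let~$\cS^\dagger$ (resp., $\cG^\dagger$) be the class of all $\gl$-small \jzus s (resp., all completely $\gl$-small groupoids). Then $(\cS,\cS,\cS^\Rightarrow,\Phi)$ is a left larder: only $(\PROJ(\Phi,\cS^\Rightarrow))$ needs checking, and, as in Claim~\ref{Cl:GrSchleftlard}, it follows from the facts that every projection in~$\cS$ is ideal-induced and that (the analogue of Lemma~\ref{L:SEMvarClosColim}) $\cS^\Rightarrow$ is closed under small directed colimits within~$\cS$. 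And $(\cG,\cG^\dagger,\cS,\cS^\dagger,\cS^\Rightarrow,\Conc)$ is a projectable right $\gl$-larder: for $\bB\in\cG^\dagger$ the \jzs\ $\Conc\bB$ is $\gl$-small, hence completely $\gl$-small in~$\MIND$, hence weakly $\gl$-presented by Proposition~\ref{P:glPresMIND}, which yields $(\PRES_\gl(\cG^\dagger,\Conc))$; the L\"owenheim-Skolem condition $(\LSr_\gl(\bB))$ follows, for each groupoid~$\bB$, from Theorem~\ref{T:ConcVPresDirColim} and Proposition~\ref{P:ApproxMonIIWD} exactly as in the proof of Claim~\ref{Cl:GrSchrightlard}; and projectability is supplied by Theorem~\ref{T:GQV2ProjWit}. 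By Proposition~\ref{P:LR2Larder} (with~$\cS^\dagger$ in the role of~$\cA^\dagger$) the resulting octuple is a $\gl$-larder, and, $\gl$ being regular, it is in particular a $(\gl,\aleph_0)$-larder.

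Finally, the liftability step. Since~$\cS$ has all nonempty finite products and all small directed colimits, the condensate $\xF(X)\otimes\overrightarrow{\bS}$ is again a \jzus, so by Lampe's theorem~\cite{Lamp82} there is a groupoid~$\bB$ with $\Conc\bB\cong\xF(X)\otimes\overrightarrow{\bS}$, and such an isomorphism is a double arrow (every isomorphism is ideal-induced). If~$P$ is finite, Corollary~\ref{C:LiftFinPos} yields a $\gl$-lifter $(X,\bX)$ of~$P$ with~$\bX$ lower finite, and CLL\ (Lemma~\ref{L:CLL}) then produces a $P$-indexed diagram~$\overrightarrow{\bB}$ in~$\cG$ and a double arrow $\overrightarrow{\chi}\colon\Conc\overrightarrow{\bB}\Todot\overrightarrow{\bS}$, which projectability upgrades to an isomorphism $\Conc\overrightarrow{\bB}\cong\overrightarrow{\bS}$. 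If~$P$ is infinite and the class of Erd\H os cardinals is proper, then~$\ol{P}$ is an \ajs\ with $\card\ol{P}<\cf(\gl)$, every \jzus\ is of the form $\Conc\bB$ for a groupoid~$\bB$ (again by Lampe), and the above $(\gl,\aleph_0)$-larder is projectable, so the hypotheses of Corollary~\ref{C:CLLnoLF} are met and it again produces a diagram~$\overrightarrow{\bB}$ in~$\cG$ with $\Conc\overrightarrow{\bB}\cong\overrightarrow{\bS}$. In either case~$\overrightarrow{\bS}$ has been lifted, with respect to the~$\Conc$\ functor, by a diagram of groupoids.

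The main obstacle here is not conceptual: the whole argument reduces to Lampe's object-level representation theorem plus the CLL machinery. The effort is organizational—one must make sure the condensate remains a \jzus\ (so that Lampe's theorem can be applied to it) and re-verify, one at a time, the larder conditions for~$\cG$, the least routine of them being the L\"owenheim-Skolem condition $(\LSr_\gl(\bB))$; all of these go through by the same reasoning already used for unary algebras in the proof of Theorem~\ref{T:MindConcLift}.
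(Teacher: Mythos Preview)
Your approach is precisely the one the paper indicates (``a similar proof, with~$\MALG_1$ replaced by the \gqv\ of all groupoids''), and the overall structure is correct. There is, however, a technical slip in your setup: you take~$\cS$ to be the category of \jzus s and declare~$\Conc$ a functor from~$\cG$ (all groupoids) to~$\cS$. But~$\Conc\bB$ need not have a largest element for an arbitrary groupoid~$\bB$---the full congruence is compact only in special cases---so $\Conc\colon\cG\to\cS$ is not well-defined as stated, and in particular the right-larder condition $(\PRES_\gl(\cG^\dagger,\Conc))$ and the L\"owenheim-Skolem verification (which passes through substructures of~$\bB$ whose~$\Conc$ need not have a unit) do not typecheck.

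The fix is painless: keep $\cS=\SEM$, $\Phi=\id_{\SEM}$, and $\Psi=\Conc\colon\cG\to\SEM$ exactly as in the proof of Theorem~\ref{T:MindConcLift}, and simply observe that the condensate $\xF(X)\otimes\overrightarrow{\bS}$, formed in~$\SEM$ from a diagram of \jzus s and \jzuh s, is itself a \jzus\ (finite products of \jzus s are \jzus s, and the transition morphisms of the colimit defining the condensate are built from the~$\gs_p^q$ and projections, all of which preserve~$1$), so Lampe's theorem applies to it. A smaller point in the same vein: $\ol{P}$ is a \jzs\ but not a \jzus\ in general (it has a unit only when~$P$ has a finite cofinal subset), so after the reduction you may only assume~$P$ is a \jzs---which is all the lifter machinery requires.
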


\begin{remk}\label{Rk:GpdConcLift}
By replacing $\MALG_1$\index{s}{Malg1@$\MALG_1$} and the category of all groupoids\index{i}{groupoid} by their full subcategories of members with $4$-permutable congruence lattices (i.e., $\ga\vee\gb=\ga\gb\ga\gb$ for all congruences~$\ga$ and~$\gb$ of the structure; the definition of \emph{$m$-permutable congruence lattice}\index{i}{permut@$m$-permutable congruence lattice|ii}, for an integer $m\geq2$, is similar), it is furthermore possible to strengthen Theorem~\ref{T:MindConcLift}, Corollary~\ref{C:MindConcLift}, and Proposition~\ref{P:GpdConcLift} by requiring all the unary algebras\index{i}{algebra!unary} (resp., groupoids\index{i}{groupoid}) to have $4$-permutable\index{i}{permut@$m$-permutable congruence lattice} congruence lattices. We need the easily verified fact that every homomorphic image of a congruence $4$-permutable algebra\index{i}{algebra!universal} is congruence $4$-permutable, together with the statement that every congruence $4$-permutable algebra\index{i}{algebra!universal} is a continuous\index{i}{diagram!continuous} directed colimit, in~$\MIND$\index{s}{Mind@$\MIND$}, indexed by some $[\Omega]^{<\gl}$, of $\gl$-small congruence $4$-permutable algebras\index{i}{algebra!universal}. The latter fact is established by proving an easy refinement of the proof of the final statement of Proposition~\ref{P:glPresMIND}. This in turn is used to prove the L\"owenheim-Skolem Condition\index{i}{Lowenheim@L\"owenheim-Skolem Condition} in an analogue of Theorem~\ref{T:MindConcLift} for congruence $4$-permutable\index{i}{permut@$m$-permutable congruence lattice} algebras\index{i}{algebra!universal}. We believe that the reader who beared with us up to now will have no difficulty in supplying the missing details.
\end{remk}

\section[Right $\aleph_0$-larders from first-order structures]{Right $\aleph_0$-larders from congruence-proper quasivarieties}\label{S:Rightal0lardGQV}

One of the caveats in extending Theorem~\ref{T:MindConcLift} to the case where $\gl=\aleph_0$\index{s}{aleph0@$\aleph_{\ga}$} is the impossibility to extend Proposition~\ref{P:ApproxMonIIWD} to that case. Hence, in order to obtain right $\aleph_0$-larders\index{s}{aleph0@$\aleph_{\ga}$}\index{i}{larder!right}, we shall need to strengthen the assumptions over the \gqv\index{i}{generalized quasivariety}~$\cV$. As the applications that we are having in mind deal with structures in a fixed language, we shall deal with \emph{quasivarieties}. This will lead to Theorem~\ref{T:1stordLardCtble}. We prove in Propositions~\ref{P:FinResBdCongPp} and~\ref{P:ResBd2LocFin} that the assumption of that theorem, namely that~$\cV$ be congruence-proper\index{i}{congruence-proper} and locally finite\index{i}{quasivariety!locally finite}, is verified in case~$\cV$ is a finitely generated\index{i}{quasivariety!finitely generated} quasivariety.

\begin{defn}\label{D:ResFin}
We say that a quasivariety~$\cV$ is
\begin{itemize}
\item \emph{congruence-proper}\index{i}{congruence-proper|ii} if $\ConV\bA$\index{s}{conAV@$\ConV\bA$, $\ConV f$} finite implies that~$\bA$ is finite \pup{i.e., it has finite universe}, for each $\bA\in\cV$;

\item \emph{strongly congruence-proper}\index{i}{strongly congruence-proper|ii} if it is congruence-proper and for each finite \jzs~$\bS$ there are only finitely many (up to isomorphism) $\bA\in\nobreak\cV$ such that $\ConV\bA\cong\bS$\index{s}{conAV@$\ConV\bA$, $\ConV f$}.
\end{itemize}
\end{defn}

We denote by~$\cV^\fin$\index{s}{Vf@$\cV^{\mathrm{fin}}$|ii} the class of all finite structures (i.e., structures with finite universe) in a quasivariety~$\cV$. In particular, $\SEM^\fin$\index{s}{Semf@$\SEM^{\mathrm{fin}}$|ii} is the class of all finite \jzs s. As usual, a first-order structure~$\bA$ is \emph{locally finite}\index{i}{algebraic system!locally finite|ii} if the subuniverse of~$\bA$ generated by any finite subset of~$A$ is finite, and a quasivariety~$\cV$ is locally finite\index{i}{quasivariety!locally finite|ii} if every member of~$\bA$ is locally finite.

\begin{thm}\label{T:1stordLardCtble}
Let~$\cV$ be a congruence-proper\index{i}{congruence-proper} and locally finite\index{i}{quasivariety!locally finite} quasivariety on a first-order language~$\scL$ with only finitely many relation symbols. Then the $6$-uple\index{s}{compcongV@$\ConcV$ functor}\index{s}{Semi@$\SEM^{\mathrm{idl}}$}\index{s}{Sem@$\SEM$}\index{s}{Semf@$\SEM^{\mathrm{fin}}$}\index{s}{Vf@$\cV^{\mathrm{fin}}$}
 \[
 (\cV,\cV^\fin,\SEM,\SEM^\fin,\SEM^{\mathrm{idl}},\ConcV)
 \]
is a projectable right $\aleph_0$-larder\index{s}{aleph0@$\aleph_{\ga}$}\index{i}{larder!right}\index{i}{larder!right!projectable}.
\end{thm}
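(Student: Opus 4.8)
The plan is to check the three requirements of Definition~\ref{D:RightLard} for $\Lambda:=(\cV,\cV^\fin,\SEM,\SEM^\fin,\SEM^{\mathrm{idl}},\ConcV)$ to be a projectable right $(\aleph_0,\aleph_0)$-larder (recall $\cf(\aleph_0)=\aleph_0$): the presentability condition $(\PRES_{\aleph_0}(\cV^\fin,\ConcV))$, the L\"owenheim--Skolem condition $(\LSr_{\aleph_0}(\bB))$ at every $\bB\in\cV$, and projectability. Two of these are short. For $(\PRES_{\aleph_0}(\cV^\fin,\ConcV))$: if $\bA\in\cV^\fin$, then since $\scL$ has only finitely many relation symbols and $A$ is finite, the lattice $\Con\bA$ --- a subset of $\Pow(A\times A)\times\prod_{R\in\Rel(\scL)}\Pow(A^{\ari(R)})$ --- is finite, so $\ConV\bA$ is finite and $\ConcV\bA=\ConV\bA$ is a finite \jzs. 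A finite \jzs is completely $\aleph_0$-small, hence weakly $\aleph_0$-presented in $\MIND$ by Proposition~\ref{P:glPresMIND}; as $\SEM$ is a full subcategory of $\MIND$ closed under directed colimits, it is weakly $\aleph_0$-presented in $\SEM$ as well. Projectability is exactly Theorem~\ref{T:GQV2ProjWit} (a quasivariety is a \gqv\ by Example~\ref{Ex:QVar}): every ideal-induced \jzh\ out of some $\ConcV\bC$, $\bC\in\cV$, into a \jzs has a projectability witness with respect to $\ConcV$.

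The substance lies in $(\LSr_{\aleph_0}(\bB))$, and it is here that both hypotheses on $\cV$ enter. Fix $\bB\in\cV$, a finite \jzs $\bS$, an ideal-induced \jzh\ $\gy\colon\ConcV\bB\Rightarrow\bS$, a finite set $I$, and monic objects $\gc_i\colon\bC_i\mono\bB$ of $\cV^\fin\dnw\bB$. Since $\cV$ is locally finite, $\bB$ is the directed union of its finite substructures, so by Theorem~\ref{T:ConcVPresDirColim} we have $\ConcV\bB=\varinjlim_\bC\ConcV\bC$ in $\SEM$ (over finite substructures $\bC\subseteq\bB$), each $\ConcV\bC$ finite. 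For such $\bC$ put $\gy_\bC:=\gy\circ\ConcV\gc_\bC$, where $\gc_\bC\colon\bC\hookrightarrow\bB$ is the inclusion. First I would choose, for each element of the finite \jzs $\bS$, a $\gy$-preimage in $\ConcV\bB$, write it (Lemma~\ref{L:ConcvonPpalCong}(ii)) as a finite join of principal $\cV$-congruences of $\bB$, collect the finitely many elements of $B$ occurring in these together with $\bigcup_{i\in I}\gc_i(C_i)$, and let $\bC^*$ be the substructure they generate; $\bC^*$ is finite by local finiteness, $\gc_i\utr\gc_{\bC^*}$ for each $i$, and by Lemma~\ref{L:ConcvonPpalCong}(iii,iv), $\gy_\bC$ is surjective for every finite substructure $\bC\supseteq\bC^*$.

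Next I would build finite substructures $\bC^*=\bD_0\subseteq\bD_1\subseteq\cdots$: given $\bD_n$, for each of the finitely many pairs $(\gx,\gx')$ in $\ConcV\bD_n$ with $\gy_{\bD_n}(\gx)=\gy_{\bD_n}(\gx')$, transport $\gx,\gx'$ into $\ConcV\bB$ and use that $\gy$ is ideal-induced to obtain $\bar\gu,\bar\gu'\in\ConcV\bB$ with $\gy(\bar\gu)=\gy(\bar\gu')=0$ and (image of $\gx)\vee\bar\gu=($image of $\gx')\vee\bar\gu'$; since $\ConcV\bB$ is the directed colimit of the $\ConcV\bC$, this equation and witnesses for $\bar\gu,\bar\gu'$ already live in a single finite substructure over $\bD_n$ (use the description of directed colimits in $\SEM$ from Section~\ref{Su:DirColimFirstOrd}), and I let $\bD_{n+1}$ be the join of $\bD_n$ with these finitely many substructures, again finite by local finiteness. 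Setting $\bD_\omega:=\bigcup_n\bD_n\in\cV$ and chasing the colimit $\ConcV\bD_\omega=\varinjlim_n\ConcV\bD_n$, one checks $\gy_{\bD_\omega}\colon\ConcV\bD_\omega\to\bS$ is ideal-induced (surjective since $\bD_0\supseteq\bC^*$). Now let $\bgq:=\bigvee\setm{\bga\in\ConcV\bD_\omega}{\gy_{\bD_\omega}(\bga)=0}\in\ConV\bD_\omega$; by Theorem~\ref{T:GQV2ProjWit} the induced isomorphism exhibits $\ConcV(\bD_\omega/\bgq)\cong\bS$, so $\ConV(\bD_\omega/\bgq)$, being the ideal lattice of the finite \jzs $\bS$, is finite, and congruence-properness forces $\bD_\omega/\bgq$ to be finite. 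Since $\bD_\omega=\bigcup_n\bD_n$, the canonical projection $\bD_n\onto\bD_\omega/\bgq$ is surjective for some $n=n_0$; writing $\bgq_{n_0}$ for the kernel of $\bD_{n_0}\hookrightarrow\bD_\omega\onto\bD_\omega/\bgq$, the First Isomorphism Theorem (Lemma~\ref{L:FirstIsomThm}) gives $\bD_{n_0}/\bgq_{n_0}\cong\bD_\omega/\bgq$, and a short diagram chase then identifies $\gy_{\bD_{n_0}}$ with an isomorphism composed with $\ConcV$ of the canonical surjection $\bD_{n_0}\onto\bD_{n_0}/\bgq_{n_0}$, which is ideal-induced by Lemma~\ref{L:ConcpiIdInd}. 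Taking $\bC:=\bD_{n_0}$ --- finite, containing $\bC^*$, hence with $\gc_i\utr\gc_\bC$ for all $i$ --- completes $(\LSr_{\aleph_0}(\bB))$.

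The main obstacle is precisely this last part of $(\LSr_{\aleph_0}(\bB))$: the straightforward $\omega$-iteration only yields a countable, a priori infinite, substructure $\bD_\omega$ with $\gy_{\bD_\omega}$ ideal-induced, and it is congruence-properness (applied to the finite-congruence-lattice quotient $\bD_\omega/\bgq$) that permits the descent back to a finite substructure, while local finiteness keeps all the intermediate $\bD_n$ finite. The only other point needing care is the directed-colimit bookkeeping when pulling relations and equations in $\ConcV\bB$ down to finite substructures, which is routine given Theorem~\ref{T:ConcVPresDirColim} and the colimit description in Section~\ref{Su:DirColimFirstOrd}.
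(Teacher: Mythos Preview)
Your proof is correct, but it takes a longer route than necessary for $(\LSr_{\aleph_0}(\bB))$. The paper applies congruence-properness \emph{directly} to~$\bB$: setting $\bgq:=\bigvee\setm{\bgb\in\ConcV\bB}{\gy(\bgb)=0}$, the projectability-witness argument gives $\ConcV(\bB/\bgq)\cong\bS$, so~$\bB/\bgq$ itself is finite. One then picks a finite set $F\subseteq B$ of representatives for~$B/\gq$, lets~$\bV$ be the (finite, by local finiteness) substructure generated by~$F\cup\bigcup_i\gc_i``(C_i)$, and shows directly---via the square with $\bV/(\Res v)(\bgq)\overset{\cong}{\to}\bB/\bgq$---that $\gy\circ\ConcV v$ is ideal-induced (Lemma~\ref{L:ConcpiIdInd} applied to~$\bV\onto\bV/(\Res v)(\bgq)$). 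No $\omega$-iteration is needed.

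Your detour through the chain $\bD_0\subseteq\bD_1\subseteq\cdots$ and~$\bD_\omega$ is essentially building, by hand and at a smaller scale, what the paper gets for free by quotienting~$\bB$ itself. The descent step you perform at the end (finding~$n_0$ with $\bD_{n_0}\onto\bD_\omega/\bgq$ surjective, then identifying $\gy_{\bD_{n_0}}$ with an isomorphism composed with $\ConcV$ of a projection) is exactly the paper's argument, just with~$\bD_\omega$ in place of~$\bB$. Since congruence-properness applies to~$\bB/\bgq$ regardless of the size of~$\bB$, there is no advantage to first cutting down to a countable~$\bD_\omega$; you can skip straight to the finite~$\bV$.
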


\begin{proof}
The statement $(\PRES_{\aleph_0}(\cV^\fin,\ConcV))$\index{s}{Pres@$(\PRES_\gl(\cB^\dagger,\Psi))$} follows from the finiteness of the collection of all relation symbols of~$\cV$, while the projectability statement follows from Theorem~\ref{T:GQV2ProjWit}.

We must verify $(\LSr_{\aleph_0}(\bB))$\index{s}{LSr@$(\LSr_\gm(B))$}, for~$\bB\in\cV$. Let~$\bS$ be a finite \jzs, let $\gf\colon\ConcV\bB\to\bS$\index{s}{compcongVA@$\ConcV\bA$, $\ConcV f$} be ideal-induced\index{i}{homomorphism!ideal-induced}, let~$n<\go$, let~$\bU_0$, \dots, $\bU_{n-1}$ be finite members of~$\cV$, and let $\famm{u_i\colon\bU_i\to\bB}{i<n}$ be a finite sequence of $\scL$-morphisms (for what follows we shall not require the~$u_i$s be monic). We set
 \[
 \bgq:=\bigvee\famm{\bgb\in\ConcV\bB}{\gf(\bgb)=0}\,.
 \]
As in the proof of Theorem~\ref{T:GQV2ProjWit}, $\bgq$ is a $\cV$-congruence\index{i}{congruencesV@$\cV$-congruences} of~$\bB$ and, setting $\bC:=\bB/\bgq$, there exists a unique isomorphism $\eps\colon\ConcV\bC\to\bS$\index{s}{compcongVA@$\ConcV\bA$, $\ConcV f$} such that $\eps(\bgb\vee\bgq/\bgq)=\gf(\bgb)$ for each $\bgb\in\ConcV\bB$\index{s}{compcongVA@$\ConcV\bA$, $\ConcV f$}. In particular, as $\ConcV\bC$\index{s}{compcongVA@$\ConcV\bA$, $\ConcV f$} is finite and~$\cV$ is congruence-proper\index{i}{congruence-proper}, $C$ is finite, thus so is~$B/\gq$. It follows that there exists a finite subset~$F$ of~$B$ such that
 \begin{equation}\label{Eq:FfillsB}
 B/\gq=\setm{x/\gq}{x\in F}\,.
 \end{equation}
As~$\cV$ is locally finite\index{i}{quasivariety!locally finite}, the subuniverse~$V$ of~$\bB$ generated by the subset\linebreak $F\cup\bigcup\famm{u_i``(U_i)}{i<n}$ is finite. Denote by~$\bV$ the corresponding member of~$\cV$ and by~$v\colon V\into B$\index{s}{AtoinB@$f\colon A\into B$} the inclusion map. In particular, $v$ is monic and $u_i\utr v$ for each $i<n$.

Set $\bgh:=(\Res v)(\bgq)$\index{s}{Resf@$\Res\gf$}, and denote by $\gp\colon\bB\onto\nobreak\bB/\bgq$\index{s}{AtoonB@$f\colon A\onto B$} and $\gr\colon\bV\onto\bV/\bgh$\index{s}{AtoonB@$f\colon A\onto B$} the canonical projections. It follows from Lemma~\ref{L:Ker2Res} that $\Ker(\gp\circ v)=(\Res v)(\Ker\gp)=(\Res v)(\bgq)=\bgh$\index{s}{Resf@$\Res\gf$}\index{s}{Kerf@$\Ker\gf$}, and thus, by the First Isomorphism Theorem (Lemma~\ref{L:FirstIsomThm})\index{i}{Isomorphism Theorem (First ${}_{-}$)}, there exists a unique embedding $\ol{v}\colon\bV/\bgh\into\bB/\bgq$\index{s}{AtoinB@$f\colon A\into B$} such that $\gp\circ v=\ol{v}\circ\gr$, as illustrated on the left hand side of Figure~\ref{Fig:gfConcVwid}.

\begin{figure}[htb]\index{s}{compcongVA@$\ConcV\bA$, $\ConcV f$}
 \[
 \def\labelstyle{\displaystyle}
 \xymatrix{
 \bV\ar@{_(->}[r]^v\ar@{->>}[d]_{\gr} & \bB\ar@{->>}[d]^{\gp} && \ConcV\bV
 \ar[rr]^{\ConcV v}\ar@{=>}[d]_{\ConcV\gr} && \ConcV\bB
 \ar@{=>}[d]_{\ConcV\gp}\ar@{=>}[rd]^{\gf}\\
 \bV/\bgh\ar@{^(->}[r]^{\ol{v}} & \bB/\bgq &&
 \ConcV(\bV/\bgh)\ar[rr]^{\ConcV\ol{v}}&&\ConcV(\bB/\bgq)
 \ar[r]_(.7){\cong}^(.6){\eps} & S
 }
 \]
\caption{Proving that $\gf\circ\ConcV v$ is ideal-induced}
\label{Fig:gfConcVwid}
\end{figure}

Now we consider the commutative diagram represented on the right hand side of Figure~\ref{Fig:gfConcVwid}. It follows from Lemma~\ref{L:ConcpiIdInd} that both morphisms~$\ConcV\gr$ and~$\ConcV\gp$\index{s}{compcongVA@$\ConcV\bA$, $\ConcV f$} are ideal-induced\index{i}{homomorphism!ideal-induced}; we mark ideal-induced\index{i}{homomorphism!ideal-induced} morphisms by double arrows\index{i}{double arrow} on Figure~\ref{Fig:gfConcVwid}. Furthermore, for each $b\in B$, there exists, by~\eqref{Eq:FfillsB}, $x\in F$ such that $x/\gq=b/\gq$. As~$F$ is contained in~$V$, the function~$\ol{v}$ is defined at~$x/\gh$ and $\ol{v}(x/\gh)=v(x)/\gq=x/\gq=b/\gq$, and hence~$\ol{v}$ is surjective. As~$\ol{v}$ is also an embedding, it is an isomorphism. Hence the map $\ConcV\ol{v}$\index{s}{compcongVA@$\ConcV\bA$, $\ConcV f$} is also an isomorphism. As~$\eps$ is also an isomorphism and~$\ConcV\gr$\index{s}{compcongVA@$\ConcV\bA$, $\ConcV f$} is ideal-induced\index{i}{homomorphism!ideal-induced}, it follows that the \jzh 
 \[
 \gf\circ\ConcV v=\eps\circ(\ConcV\ol{v})\circ(\ConcV\gr)
 \]
is ideal-induced\index{i}{homomorphism!ideal-induced}.
\qed\end{proof}

We shall now show a large class of quasivarieties (cf. Example~\ref{Ex:QVar}) that satisfy the assumptions of Theorem~\ref{T:1stordLardCtble}.

\begin{defn}\label{D:FinResBd}
Let~$\cV$ be a quasivariety on a first-order language~$\scL$. A member~$\bA$ of~$\cV$ is \emph{subdirectly irreducible}\index{i}{subdirectly irreducible (within a quasivariety)|ii} (with respect to~$\cV$) if~$\ConV\bA$\index{s}{conAV@$\ConV\bA$, $\ConV f$} has a smallest nonzero element. We say that~$\cV$ is \emph{finitely generated}\index{i}{quasivariety!finitely generated|ii} if it is the smallest quasivariety containing a given finite set of finite structures (not necessarily one finite structure---for example, $\bA$ may not belong to the quasivariety generated by~$\bA\times\bB$).
\end{defn}

\begin{lem}[folklore]\label{L:FinResBd}
A quasivariety~$\cV$ on a first-order language~$\scL$ is finitely generated\index{i}{quasivariety!finitely generated} if{f} it contains only finitely many subdirectly irreducible members \pup{up to isomorphism} and all these structures are finite.
\end{lem}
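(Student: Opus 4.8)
The plan is to prove the two implications separately, using the relative subdirect decomposition of the members of~$\cV$ for one direction, and, for the other, the standard description of a finitely generated quasivariety as the $\mathbf{SP}$-closure of its generators.

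For the implication from the finiteness condition to finite generation, suppose that $\cV$ has only finitely many subdirectly irreducible members up to isomorphism, all of them finite, and let $\cK$ be a set of representatives. First I would show that every $\bA\in\cV$ lies in $\mathbf{ISP}(\cK)$. By Lemma~\ref{L:ConGQVAlg} the lattice $\ConV\bA$ is algebraic, so $\zero_\bA$ is the meet of the set $\cM$ of all completely meet-irreducible elements of $\ConV\bA$. For each $\bgq\in\cM$, Lemma~\ref{L:SecIsomThm} identifies $\ConV(\bA/\bgq)$ with $(\ConV\bA)\upw\bgq$, which has a smallest nonzero element; hence $\bA/\bgq$ is subdirectly irreducible and therefore, up to isomorphism, a member of~$\cK$. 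Since the canonical morphism $\bA\to\prod_{\bgq\in\cM}\bA/\bgq$ has kernel $\bigwedge\cM=\zero_\bA$, it is an embedding by Lemma~\ref{L:FirstIsomThm}, so $\bA\in\mathbf{ISP}(\cK)$. Thus $\cV\subseteq\mathbf{ISP}(\cK)$, which is contained in the quasivariety generated by~$\cK$; since conversely $\cK\subseteq\cV$, that quasivariety equals~$\cV$, and as $\cK$ is a finite set of finite structures, $\cV$ is finitely generated.

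For the implication from finite generation to the finiteness condition, assume $\cV$ is the quasivariety generated by a finite set $\cK$ of finite structures. The first step is the reduction $\cV=\mathbf{ISP}(\cK)$: by Mal'cev's description of generated quasivarieties (Gorbunov~\cite{Gorb}) one has $\cV=\mathbf{ISPP}_{\mathrm U}(\cK)$ (possibly after adjoining trivial structures), and for a finite set of finite structures every ultraproduct of members of~$\cK$ is isomorphic to a member of~$\cK$ — here one partitions the index set by isomorphism type, selects the unique class lying in the ultrafilter, and uses that an ultrapower of a finite structure is isomorphic to the structure itself, since a map into a finite set is constant on a set in the ultrafilter. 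The second step: given a subdirectly irreducible $\bA\in\cV$, fix an embedding $\bA\into\prod_{i\in I}\bB_i$ with each $\bB_i\in\cK$, let $\bC_i\le\bB_i$ be the image of the $i$-th projection restricted to~$\bA$, and let $p_i\colon\bA\onto\bC_i$ be the resulting surjection. Then $\bA$ still embeds into $\prod_{i\in I}\bC_i$ via the~$p_i$, so $\bigwedge_{i\in I}\Ker p_i=\zero_\bA$, while each $\Ker p_i$ is a $\cV$-congruence because $\bA/\Ker p_i\cong\bC_i$ is a substructure of $\bB_i\in\cK\subseteq\cV$. Since $\ConV\bA$ has a smallest nonzero element $\bgm$, the set $I$ is nonempty (otherwise $\bA$ would be trivial, hence not subdirectly irreducible) and the kernels $\Ker p_i$ cannot all be nonzero, their meet being $\ge\bgm$; so $\Ker p_i=\zero_\bA$ for some~$i$, that is, $\bA\cong\bC_i$, a substructure of $\bB_i$. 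Therefore every subdirectly irreducible member of~$\cV$ is, up to isomorphism, a substructure of a member of~$\cK$, and there are only finitely many such, all finite.

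I expect the routine points to be the management of trivial structures (no trivial structure is subdirectly irreducible, so including empty products in $\mathbf{ISP}(\cK)$ changes nothing) and the two citable folklore inputs — the relative subdirect representation theorem and Mal'cev's $\mathbf{SPP}_{\mathrm U}$ description of generated quasivarieties — both available in Gorbunov~\cite{Gorb}. The one step that genuinely calls for care is the ultraproduct reduction $\mathbf{P}_{\mathrm U}(\cK)\subseteq\mathbf{I}(\cK)$ for a finite set of finite structures; this is where the proof should spend its (otherwise standard) effort.
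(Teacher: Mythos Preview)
Your proof is correct and follows the same classical route as the paper. For the converse (finitely many finite SIs $\Rightarrow$ finitely generated) both you and the paper invoke the relative Birkhoff subdirect decomposition; you spell out the details using the algebraicity of $\ConV\bA$ and the Second Isomorphism Theorem, while the paper simply points to \cite[Theorem~3.1.1]{Gorb}. For the forward direction the paper takes a one-line shortcut: it cites \cite[Corollary~3.1.6]{Gorb}, which already states that every relatively subdirectly irreducible member of a quasivariety generated by~$\cK$ embeds into some member of~$\cK$. Your argument---reducing $\mathbf{ISPP}_{\mathrm U}(\cK)$ to $\mathbf{ISP}(\cK)$ via finiteness of ultraproducts, then using the smallest nonzero $\cV$-congruence to force some projection kernel to be zero---is exactly a proof of that corollary. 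So the difference is only in how much is outsourced to Gorbunov: the paper cites the result, you reprove it.
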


\begin{proof}
Suppose first that~$\cV$ is generated by a finite set $\setm{\bA_i}{i<n}$ of finite structures. It follows from \cite[Corollary~3.1.6]{Gorb}\index{c}{Gorbunov, V.\,A.} that every relatively subdirectly irreducible member of~$\cV$ embeds into one of the~$\bA_i$. Hence, regardless of the cardinality of~$\scL$, the quasivariety~$\cV$ has only finitely many subdirectly irreducible members up to isomorphism, and these structures are all finite.

Conversely, if~$\cV$ has only finitely many subdirectly irreducible members and all these structures are finite, then it follows from the quasivariety analogue of Birkhoff's\index{c}{Birkhoff, G.} subdirect decomposition Theorem \cite[Theorem~3.1.1]{Gorb}\index{c}{Gorbunov, V.\,A.} that each member of~$\cV$ embeds into a product of those subdirectly irreducibles, and thus~$\cV$ is finitely generated\index{i}{quasivariety!finitely generated}.
\qed\end{proof}

\begin{remk}\label{Rk:FinResBd}
It follows easily from J\'onsson's\index{c}{Jonsson@J\'onsson, B.} Lemma that \emph{Every finitely generated variety of lattices}\index{i}{variety!finitely generated} (or, more generally, every finitely generated congruence-distributive\index{i}{variety!congruence-distributive} variety) \emph{has only finitely many subdirectly irreducible members; that is, it is also finitely generated as a quasivariety}. This situation is quite untypical: while there are many examples of finitely generated quasivarieties, the finitely generated\index{i}{variety!finitely generated} varieties that are also finitely generated\index{i}{quasivariety!finitely generated} as quasivarieties are not so widespread.

Let us for example consider the situation for \emph{groups}. A simple use of the fundamental structure theorem for finite abelian groups shows easily that \emph{Every finitely generated\index{i}{variety!finitely generated} variety of abelian groups is also a finitely generated quasivariety}. On the other hand, consider the \emph{quaternion group}\index{i}{quaternion group} $Q:=\set{1,-1,i,-i,j,-j,k,-k}$ with $i^2=j^2=k^2=ijk=-1$. The center of~$Q$ is $Z:=\set{1,-1}$. Now the subgroup
 \[
 Z_n:=\setm{(x_1,\dots,x_n)\in Z^n}{x_1\cdots x_n=1}
 \]
is normal in~$Q^n$, and it is not hard to verify that $Z^n/{Z_n}$ is the least nontrivial normal subgroup of~$Q^n/{Z_n}$; hence~$Q^n/{Z_n}$ is subdirectly irreducible. It belongs to the variety of groups generated by~$Q$, and it has order $2\cdot 4^n$. In particular, \emph{the variety of groups generated by~$Q$ contains infinitely many finite subdirectly irreducible members}. This example is inspired by the one, given in Neumann \cite[Example~51.33]{Neum}\index{c}{Neumann, H.}, of a monolithic (i.e., subdirectly irreducible), finite, non-critical group.
\end{remk}

The following result extends \cite[Lemma~3.8]{Gill1}\index{c}{Gillibert, P.}, with a similar proof.

\begin{prop}\label{P:FinResBdCongPp}
Let~$\cV$ be a finitely generated\index{i}{quasivariety!finitely generated} quasivariety on a first-order language~$\scL$. Then~$\cV$ is strongly congruence-proper\index{i}{strongly congruence-proper}.
\end{prop}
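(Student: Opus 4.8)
The plan is to derive both clauses in the definition of ``strongly congruence-proper'' from the structural characterization of finitely generated quasivarieties in Lemma~\ref{L:FinResBd}, combined with the subdirect decomposition of a structure whose congruence lattice is finite. Write $\cV$ as the quasivariety generated by finitely many finite structures $\bA_1,\dots,\bA_s$ on~$\scL$; by (the proof of) Lemma~\ref{L:FinResBd}, every subdirectly irreducible member of~$\cV$ embeds into one of the~$\bA_k$, so, putting $N:=\max_k\card A_k$, every subdirectly irreducible member of~$\cV$ has at most~$N$ elements. The heart of the argument will be the following claim: \emph{if $\bA\in\cV$ and $\ConV\bA$ is finite, then $\bA$ embeds into a structure of the form $\bA_1^{e_1}\times\cdots\times\bA_s^{e_s}$ with $e_1+\cdots+e_s\le\card\ConV\bA$}. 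To prove the claim I would let $M$ be the set of meet-irreducible elements of the finite lattice~$\ConV\bA$; by the standard fact that in a finite lattice every element, in particular the least one, is a meet of meet-irreducibles, we have $\zero_\bA=\bigwedge M$ in~$\ConV\bA$, so the canonical morphism $\gd\colon\bA\to\prod_{\bgq\in M}\bA/\bgq$ has kernel $\bigwedge M=\zero_\bA$ and hence, by the First Isomorphism Theorem (Lemma~\ref{L:FirstIsomThm}), is an embedding. For each $\bgq\in M$, the Second Isomorphism Theorem (Lemma~\ref{L:SecIsomThm}) gives $\ConV(\bA/\bgq)\cong(\ConV\bA)\upw\bgq$; since $\bgq$ is meet-irreducible in a finite lattice it has a unique upper cover, so this interval has a smallest nonzero element, that is, $\bA/\bgq$ is subdirectly irreducible in~$\cV$ and therefore embeds into some~$\bA_{i(\bgq)}$. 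Composing~$\gd$ with the product of these embeddings and regrouping the finitely many factors by isomorphism type yields the claim. (If $M=\es$ then $\zero_\bA=\one_\bA$, so $\card A\le 1$ and there is nothing to prove.)

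Congruence-properness is then immediate: if $\bA\in\cV$ has $\ConV\bA$ finite, the claim embeds~$\bA$ into a finite product of the~$\bA_k$, a structure with at most $N^{\card\ConV\bA}$ elements, so~$\bA$ is finite. For the remaining half I would fix a finite \jzs~$\bS$ and put $n:=\card\bS$. If $\bA\in\cV$ satisfies $\ConV\bA\cong\bS$, then $\ConV\bA$ is a finite lattice of cardinality~$n$, so by the claim $\bA$ embeds into one of the finitely many structures $\bC=\bA_1^{e_1}\times\cdots\times\bA_s^{e_s}$ with $e_1+\cdots+e_s\le n$. Each such~$\bC$ is finite, and any substructure of~$\bC$ is determined by its underlying set, so~$\bC$ has at most $2^{\card C}$ substructures and hence only finitely many up to isomorphism; taking the union over the finitely many admissible exponent tuples shows that~$\bA$ can only be one of finitely many isomorphism types.

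The one step that genuinely requires care is the passage from ``$\ConV\bA$ bounded'' to ``$\bA$ embeddable into a fixed finite structure'' — that is, identifying $\bA/\bgq$ with a subdirectly irreducible of~$\cV$ via Lemmas~\ref{L:FirstIsomThm} and~\ref{L:SecIsomThm} and then invoking Lemma~\ref{L:FinResBd}; everything else is routine bookkeeping. It is worth recording that finiteness of the language~$\scL$ is never used: a substructure of a finite structure is pinned down by its (finite) universe, no matter how many symbols~$\scL$ carries, so the count of isomorphism types in the last paragraph stays finite regardless.
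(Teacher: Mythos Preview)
Your proof is correct and follows essentially the same line as the paper's: both use the meet-irreducibles of the finite lattice $\ConV\bA$ to write $\zero_\bA$ as a finite meet, embed~$\bA$ diagonally into the product of the corresponding quotients, identify each quotient with a subdirectly irreducible (hence a substructure of one of the generating~$\bA_k$), and then count substructures of the finitely many possible target products. Your write-up is slightly more explicit (bounding the number of factors by $\card\ConV\bA$ and invoking Lemmas~\ref{L:FirstIsomThm} and~\ref{L:SecIsomThm} by name), and your closing remark that the cardinality of~$\scL$ plays no role is a correct and useful observation.
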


\begin{proof}
Denote by~$\cK$ a finite set of representatives of the subdirectly irreducible members of~$\cV$ modulo isomorphism.
Let~$\bS$ be a finite lattice and let~$\gi\colon\bS\to\ConV\bA$\index{s}{conAV@$\ConV\bA$, $\ConV f$} be an isomorphism, with $\bA\in\cV$.
We start with a standard argument giving Birkhoff's\index{c}{Birkhoff, G.} subdirect decomposition theorem in quasivarieties, see, for example, \cite[Theorem~3.1.1]{Gorb}\index{c}{Gorbunov, V.\,A.}. It is well-known that any element in an algebraic lattice\index{i}{lattice!algebraic} is a meet of completely \mirr\ elements\index{i}{irreducible!completely meet-} (see, for example, \cite[Theorem~I.4.25]{Comp}\index{c}{Gierz, G.}\index{c}{Hofmann, K.\,H.}\index{c}{Keimel, K.}\index{c}{Lawson, J.\,D.}\index{c}{Mislove, M.}\index{c}{Scott, D.\,S.}). By applying this to the zero element in the lattice~$\bS$, we see that if~$P$ denotes the set of all \mirr\ elements\index{i}{irreducible!meet-} of~$\bS$, the subset $\gi``(P)$ of~$\ConV\bA$\index{s}{conAV@$\ConV\bA$, $\ConV f$} meets to the zero congruence of~$\bA$, thus the diagonal map
 \[
 \bA\to\prod\famm{\bA/\gi(p)}{p\in P}\,,\quad
 x\mapsto\famm{x/{\gi(p)}}{p\in P}
 \]
is an embedding. As all subdirectly irreducible members of~$\cV$ are finite, all structures~$\bA/\gi(p)$ are finite, thus $\bA^*:=\prod\famm{\bA/\gi(p)}{p\in P}$ is finite, and thus $\bA$ is finite. Furthermore, if~$\cA$ denotes the class of all products of the form $\prod\famm{\bS_p}{p\in P}$ where all $\bS_p\in\cK$, $\bA^*$ embeds into some member of~$\cA$, hence so does~$\bA$. As~$\cA$ is finite and each member of~$\cA$ has only finitely many substructures, there are only finitely many possibilities for~$\bA$.
\qed\end{proof}

Recall that a \emph{variety} is a quasivariety closed under homomorphic images.

\begin{prop}[folklore]\label{P:ResBd2LocFin}
Let~$\cV$ be a finitely generated\index{i}{quasivariety!finitely generated}\index{i}{variety!finitely generated} \pup{quasi}variety on a first-order language~$\scL$. Then~$\cV$ is locally finite\index{i}{quasivariety!locally finite}.
\end{prop}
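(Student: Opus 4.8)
The plan is to reduce the statement to the single claim that, for every $m\in\go$, each member of $\cV$ generated by $m$ elements has cardinality at most a fixed finite number depending only on~$\cV$ and~$m$; local finiteness (in the sense of Proposition~\ref{P:ResBd2LocFin}) follows at once, since the subuniverse generated by a finite subset of any $\bB\in\cV$ is again a member of~$\cV$ (closed under substructures) and is $m$-generated.

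First I would treat the case where~$\cV$ is a quasivariety. By Lemma~\ref{L:FinResBd}, $\cV$ has, up to isomorphism, only finitely many subdirectly irreducible members, all of which are finite; fix a finite set~$\cK$ of representatives and set $\bA:=\prod\famm{\bS}{\bS\in\cK}$, a finite structure on~$\scL$ (a one-element structure in case $\cK=\es$). By the quasivariety form of Birkhoff's subdirect representation theorem \cite[Theorem~3.1.1]{Gorb}---already used in the proof of Proposition~\ref{P:FinResBdCongPp}---every $\bB\in\cV$ is a subdirect product of members of~$\cK$, hence embeds into a power $\bA^I$ of~$\bA$ for some set~$I$ (each factor $\bS\in\cK$ embeds into~$\bA$ as a factor of the product).

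The core step is then purely combinatorial. Let $\bB\le\bA^I$ be generated by elements $a_1,\dots,a_m$, and denote by~$\pi_i$ the $i$-th projection. Consider the map $\varphi\colon I\to A^m$, $i\mapsto(\pi_i(a_1),\dots,\pi_i(a_m))$; for each value~$\xi$ in the (finite) range of~$\varphi$ choose $i_\xi\in I$ with $\varphi(i_\xi)=\xi$. Since every element of~$\bB$ has the form $t^{\bA^I}(a_1,\dots,a_m)$ for some $\scL$-term~$t$, and projections commute with term evaluation, one checks that $\pi_i$ and $\pi_{i_{\varphi(i)}}$ agree on~$\bB$ for each $i\in I$; hence the map $x\mapsto\famm{\pi_{i_\xi}(x)}{\xi\in\rng\varphi}$ defines an injection from~$\bB$ into $\bA^{\rng\varphi}$. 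Consequently $\card B\le\card A^{\,\card A^m}<\aleph_0$. This proves that every $m$-generated member of~$\cV$ is finite, i.e.\ that a finitely generated quasivariety is locally finite.

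Finally, for the case where~$\cV$ is a finitely generated variety, generated by a finite set~$\cK$ of finite algebras, I would \emph{not} invoke Lemma~\ref{L:FinResBd} directly: as Remark~\ref{Rk:FinResBd} points out, such a~$\cV$ need not be finitely generated as a quasivariety. Instead, note that $\cV':=\mathbf{ISP}(\cK)$ is a finitely generated quasivariety (ultraproducts of finitely many finite structures lie in $\mathbf{I}(\cK)$), hence locally finite by the previous paragraph. Every $m$-generated member~$\bB$ of $\cV=\mathbf{HSP}(\cK)$ is a homomorphic image of some $\bC\in\mathbf{SP}(\cK)=\cV'$; choosing preimages of the $m$ generators of~$\bB$ and replacing~$\bC$ by the substructure they generate, we may assume~$\bC$ is $m$-generated, still lies in~$\cV'$, and still maps onto~$\bB$. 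By local finiteness of~$\cV'$, $\bC$ is finite, hence so is~$\bB$. The only points requiring care are the bookkeeping in this last reduction and checking that the ``distinct columns'' argument is unaffected by relation symbols (it is: generating a substructure and evaluating terms never involve the relations); I expect no genuine difficulty, consistent with the ``folklore'' label.
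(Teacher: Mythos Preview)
Your proof is correct but runs the reduction in the opposite direction from the paper. The paper first treats the variety case directly, via the standard description of free objects: if~$\cV$ is the variety generated by a finite structure~$\bA$, then the free $\cV$-object on~$n$ generators is the substructure of~$\bA^{A^n}$ generated by the~$n$ projections, hence finite. It then observes that a finitely generated quasivariety sits inside the finitely generated variety on the same generators, so the quasivariety case follows immediately. Your ``distinct columns'' argument is essentially the same combinatorial bound (the columns are exactly the points of~$A^m$, and you recover the same estimate $\card B\le\card A^{\card A^m}$), but you reach it through subdirect representation and Lemma~\ref{L:FinResBd} instead of through free objects. The paper's route is shorter, since the containment of a quasivariety in its generated variety is a one-liner and avoids both the subdirect decomposition and the $\mathbf{ISP}=\mathbf{ISPP_u}$ verification you need for the reverse reduction; your route has the mild virtue of never forming the enveloping variety and of making the cardinality bound fully explicit.
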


\begin{proof}
Of course, the result for quasivarieties follows from the result for varieties. If a variety~$\cV$ is generated by a structure~$\bA$, then the free $\cV$-object on~$n$ generators is the substructure of~$\bA^{A^n}$ generated by the projections, for each positive integer~$n$ (the usual proof of this fact is not affected by the possible presence of relations); in particular, if~$\bA$ is finite, then this structure is finite.
\qed\end{proof}

Other classes of locally finite\index{i}{quasivariety!locally finite}, strongly congruence-proper\index{i}{strongly congruence-proper} (quasi)varieties will be given in Section~\ref{S:FinGenAlg}.

\section{Relative critical points between quasivarieties}\label{S:CritQVar}

In this section we shall use the results of Section~\ref{S:Rightal0lardGQV} in order to relate, in Theorem~\ref{T:RelCritalephn}, the liftability of finite diagrams of \jzs s, with respect to the relative compact congruence semilattice functor\index{i}{relative!compact congruence semilattice functor}, in quasivarieties~$\cA$ and~$\cB$, with the relative critical point\index{i}{critical point!relative} $\critr(\cA;\cB)$\index{s}{critrAB@$\critr(\cA;\cB)$} introduced in Definition~\ref{D:RelCritPoint}. It will turn out that the latter lies below an aleph of finite index, the latter being smaller than the restricted Kuratowski index\index{i}{Kuratowski!restricted ${}_{-}$ index} (cf. Definition~\ref{D:KurInd0}) of the shape of a diagram liftable\index{i}{diagram!liftable} in~$\cA$ but not in~$\cB$. As a consequence, we obtain, under some finiteness assumptions on~$\cA$ and~$\cB$, that $\critr(\cA;\cB)$\index{s}{critrAB@$\critr(\cA;\cB)$} is always either smaller than~$\aleph_\go$\index{s}{aleph0@$\aleph_{\ga}$} or equal to~$\infty$ (Theorem~\ref{T:DichotCritPt}). Here, the symbol~$\infty$ denotes a ``number'' greater than all cardinal numbers. This extends to quasivarieties of algebraic systems\index{i}{algebraic system} a result obtained earlier by the first author on varieties of algebras\index{i}{algebra!universal} (cf. \cite[Corollary~7.13]{Gill1}\index{c}{Gillibert, P.}).

We call the \emph{relative compact congruence class}\index{i}{relative!compact congruence class|ii} of a \gqv\index{i}{generalized quasivariety}~$\cV$ the class of all \jzs s that are isomorphic to~$\ConcV\bA$\index{s}{compcongVA@$\ConcV\bA$, $\ConcV f$} for some~$\bA\in\cV$. We denote this class by~$\Concr\cV$\index{s}{compcongVcr@$\Concr\cV$|ii}.

\begin{defn}\label{D:RelCritPoint}
The \emph{relative critical point}\index{i}{critical point!relative|ii} between quasivarieties~$\cA$ and~$\cB$ is defined as\index{s}{critrAB@$\critr(\cA;\cB)$|ii}\index{s}{compcongVcr@$\Concr\cV$}
 \[
 \critr(\cA;\cB):=\min\setm{\card S}{\bS\in(\Concr\cA)\setminus(\Concr\cB)}
 \]
if $\Concr\cA\not\subseteq\Concr\cB$\index{s}{compcongVcr@$\Concr\cV$}, and $\infty$ otherwise.
\end{defn}

In particular, in case both~$\cA$ and~$\cB$ are varieties, $\critr(\cA;\cB)$\index{s}{critrAB@$\critr(\cA;\cB)$} is equal to the critical point\index{i}{critical point}~$\crit(\cA;\cB)$\index{s}{critAB@$\crit(\cA;\cB)$} as introduced in \cite[Definition~7.2]{Gill1}\index{c}{Gillibert, P.}.

\begin{thm}\label{T:RelCritalephn}
Let~$\cA$ and~$\cB$ be quasivarieties on \pup{possibly distinct} first-order languages with only finitely many relation symbols such that~$\cB$ is both congruence-proper\index{i}{congruence-proper} and locally finite\index{i}{quasivariety!locally finite}, and let~$P$ be an \ajs\index{i}{almost join-semilattice}. Assume that there exists a $P$-indexed diagram~$\overrightarrow{\bA}=\famm{\bA_p,\ga_p^q}{p\leq q\text{ in }P}$ of finite objects of~$\cA$ such that the diagram $\ConcA\overrightarrow{\bA}$\index{s}{compcongVA@$\ConcV\bA$, $\ConcV f$} has no lifting\index{i}{diagram!lifting}, with respect to~$\ConcB$\index{s}{compcon1@$\Conc\bA$, $\Conc f$}, in~$\cB$. The following statements hold:
\begin{description}
\item[\tui] Let $(X,\bX)$ be an $\aleph_0$-lifter\index{s}{aleph0@$\aleph_{\ga}$}\index{i}{lifter ($\gl$-)} of~$P$. Then $\critr(\cA;\cB)\leq\card X+\aleph_0$\index{s}{aleph0@$\aleph_{\ga}$}\index{s}{critrAB@$\critr(\cA;\cB)$}.

\item[\tuii] If~$P$ is finite nontrivial with zero, then $\critr(\cA;\cB)\leq\aleph_{\kur_0(P)-1}$\index{s}{aleph0@$\aleph_{\ga}$}\index{s}{critrAB@$\critr(\cA;\cB)$}\index{s}{kurP0@$\kur_0(P)$}.
\end{description}
\end{thm}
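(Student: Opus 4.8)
The plan is to deduce both statements from the Condensate Lifting Lemma (Lemma~\ref{L:CLL}), applied with $\gl=\gm=\aleph_0$ to a larder built from the relative compact congruence functors on $\cA$ and on $\cB$, together with the combinatorial results of Section~\ref{S:Pos2Lift}. I would begin by assembling the larder. Take $\cS:=\SEM$, let $\cS^\Rightarrow:=\SEM^{\mathrm{idl}}$ be the subcategory of all ideal-induced \jzh s, and put $\Phi:=\ConcA$, $\Psi:=\ConcB$, $\cA^\dagger:=\cA^\fin$, $\cB^\dagger:=\cB^\fin$, $\cS^\dagger:=\SEM^\fin$. The quadruple $(\cA,\SEM,\SEM^{\mathrm{idl}},\ConcA)$ is a left larder in the sense of Definition~\ref{D:LeftLard}: $(\CLOS(\cA))$ and $(\PROD(\cA))$ hold because $\cA$, being a quasivariety, is closed under small directed colimits and under direct products; $(\CONT(\ConcA))$ is Theorem~\ref{T:ConcVPresDirColim}; and $(\PROJ(\ConcA,\SEM^{\mathrm{idl}}))$ follows from the facts that $\ConcA$ carries a (surjective) projection to an ideal-induced homomorphism (Lemma~\ref{L:ConcpiIdInd}), that it preserves small directed colimits, and that $\SEM^{\mathrm{idl}}$ is closed under small directed colimits within $\SEM$ (Lemma~\ref{L:SEMvarClosColim}). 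On the other side, since $\cB$ is congruence-proper and locally finite with only finitely many relation symbols, Theorem~\ref{T:1stordLardCtble} gives that $(\cB,\cB^\fin,\SEM,\SEM^\fin,\SEM^{\mathrm{idl}},\ConcB)$ is a \emph{projectable} right $\aleph_0$-larder. As $\cA$ also has only finitely many relation symbols, $\ConcA$ sends finite structures to finite \jzs s, so $\ConcA``(\cA^\fin)\subseteq\SEM^\fin$; hence Proposition~\ref{P:LR2Larder} (with $\gl=\gm=\aleph_0$, which is regular) assembles these data into a $(\aleph_0,\aleph_0)$-larder $\Lambda:=(\cA,\cB,\SEM,\cA^\fin,\cB^\fin,\SEM^{\mathrm{idl}},\ConcA,\ConcB)$ at every object of $\cB$, which is projectable because its right half is (equivalently, by Theorem~\ref{T:GQV2ProjWit}).

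For (i): since $(X,\bX)$ is an $\aleph_0$-lifter of $P$, the poset $X$ is supported and $\bX^=$ is lower finite (Definition~\ref{D:Lifter}(i),(iii)). I would form the condensate $\bC:=\xF(X)\otimes\overrightarrow{\bA}\in\cA$. A routine cardinality computation — using that $X$ is supported, bounding the number of generators and of atoms occurring in the finitely presented pieces of $\xF(X)$ in terms of $\card X$, and using the finiteness of the entries $\bA_p$ and of the relation-symbol set of $\cA$ — gives $\card\ConcA\bC\le\card X+\aleph_0$. Now distinguish two cases. If $\ConcA\bC\notin\Concr\cB$, then, since $\ConcA\bC$ always lies in $\Concr\cA$, we get $\critr(\cA;\cB)\le\card\ConcA\bC\le\card X+\aleph_0$, as desired. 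If instead $\ConcA\bC\cong\ConcB B$ for some $B\in\cB$, the corresponding isomorphism, being ideal-induced, is a double arrow $\chi\colon\ConcB B\Rightarrow\ConcA\bigl(\xF(X)\otimes\overrightarrow{\bA}\bigr)$; applying CLL (case~(i), available since $\gm=\aleph_0$ and $\bX^=$ is lower finite) to $\Lambda$, to the $\aleph_0$-lifter $(X,\bX)$, and to the diagram $\overrightarrow{\bA}$ (all of whose entries lie in $\cA^\fin$) yields an object $\overrightarrow{\bB}$ of $\cB^P$ with a double arrow $\ConcB\overrightarrow{\bB}\Rightarrow\ConcA\overrightarrow{\bA}$, and then, by projectability of $\Lambda$, an object $\overrightarrow{\bB}'$ of $\cB^P$ with $\ConcB\overrightarrow{\bB}'\cong\ConcA\overrightarrow{\bA}$. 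This contradicts the hypothesis that $\ConcA\overrightarrow{\bA}$ has no lifting in $\cB$ with respect to $\ConcB$, so the second case is impossible and (i) follows.

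For (ii): since $P$ is finite, nontrivial and has a zero, it is not an antichain, so $n:=\kur_0(P)$ is a positive integer and the relation $(\aleph_{n-1},{<}\aleph_0)\leadsto P$ holds by definition of the restricted Kuratowski index (Definition~\ref{D:KurInd0}). Being an \ajs\ with zero, $P$ is trivially a finite disjoint union of \ajs s with zero, it is lower finite, and every element of $P$ has fewer than $\cf(\aleph_0)=\aleph_0$ upper covers; so the implication (iii)$\Rightarrow$(i) of Corollary~\ref{C:CharLift}, with $\gk:=\aleph_{n-1}$ and $\gl:=\aleph_0$, furnishes an $\aleph_0$-lifter $(X,\bX)$ of $P$ with $\card X=\aleph_{n-1}$. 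Part~(i) then gives
\[
\critr(\cA;\cB)\le\card X+\aleph_0=\aleph_{n-1}+\aleph_0=\aleph_{\kur_0(P)-1}\,,
\]
the last step because $\kur_0(P)\ge 1$. The new content of this argument is modest: the hard work is already packaged into CLL, into the right-larder verification of Theorem~\ref{T:1stordLardCtble} (whose sensitive point is the L\"owenheim--Skolem Condition), and into Corollary~\ref{C:CharLift}; what remains is the (easy) left-larder checks, the bookkeeping above, and the cardinality estimate $\card\ConcA\bC\le\card X+\aleph_0$, which I expect to be the most delicate of the routine steps.
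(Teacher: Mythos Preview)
Your proof is correct and follows essentially the same route as the paper's: assemble the $\aleph_0$-larder from the left larder $(\cA,\SEM,\SEM^{\mathrm{idl}},\ConcA)$ and the right larder supplied by Theorem~\ref{T:1stordLardCtble}, bound $\card\ConcA(\xF(X)\otimes\overrightarrow{\bA})$ by $\card X+\aleph_0$, and apply CLL (case~(i)) and projectability to derive a contradiction; for~(ii), produce the lifter of size $\aleph_{\kur_0(P)-1}$ and feed it into~(i). The only cosmetic differences are that the paper takes $\cA^\dagger:=\{\bA_p\mid p\in P\}$ rather than all of $\cA^\fin$, and for~(ii) it cites Lemma~\ref{L:Part2Lift} directly instead of going through Corollary~\ref{C:CharLift}; your choices work just as well. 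Your flagging of the cardinality estimate as the most delicate routine step is apt: the paper's one-line justification is that $\xF(X)\otimes\overrightarrow{\bA}$ is a directed colimit of at most $\card X+\aleph_0$ finite members of~$\cA$ (via the finite $\sor$-closed subsets of the supported poset~$X$), each with finite $\cA$-congruence lattice since $\cA$ has only finitely many relation symbols.
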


We refer to Definition~\ref{D:KurInd0} for the definition of the restricted Kuratowski index\index{i}{Kuratowski!restricted ${}_{-}$ index} $\kur_0(P)$\index{s}{kurP0@$\kur_0(P)$}.

\begin{proof}
We set $\cA^\dagger:=\setm{\bA_p}{p\in P}$, $\cB^\dagger:=\setm{\bB\in\cB}{\bB\text{ is finite}}$, $\cS:=\SEM$\index{s}{Sem@$\SEM$}, and we define $\cS^\dagger$ as the class of all finite \jzs s. Furthermore, we define $\cS^\Rightarrow$\index{s}{RightarrowCat@$\cS^\Rightarrow$} as the category of all \jzs s with ideal-induced\index{i}{homomorphism!ideal-induced} homomorphisms. It follows from Theorem~\ref{T:1stordLardCtble} together with the assumptions on~$\cB$ that\index{s}{compcongV@$\ConcV$ functor}\index{s}{aleph0@$\aleph_{\ga}$}
 \begin{equation}\index{i}{larder!right!projectable}\label{Eq:ProjLarderFinQVar}
 (\cB,\cB^\dagger,\cS,\cS^\dagger,\cS^\Rightarrow,\ConcB)
 \text{ is a projectable right $\aleph_0$-larder}.
 \end{equation}
Furthermore, $(\cA,\cS,\cS^\Rightarrow,\ConcA)$\index{s}{compcongV@$\ConcV$ functor} is a left larder\index{i}{larder!left}. (As the empty structure is not allowed in~$\cA$, projections in~$\cA$ are surjective homomorphisms, thus so are extended projections of~$\cA$, and thus, by Lemma~\ref{L:ConcpiIdInd}, $\ConcA$\index{s}{compcongV@$\ConcV$ functor} sends projections of~$\cA$ to ideal-induced\index{i}{homomorphism!ideal-induced} homomorphisms in~$\cS$. That~$\ConcA$\index{s}{compcongV@$\ConcV$ functor} preserves all small directed colimits follows from Theorem~\ref{T:ConcVPresDirColim}.) Therefore, by using Proposition~\ref{P:LR2Larder}, we obtain that $(\cA,\cB,\cS,\cA^\dagger,\cB^\dagger,\cS^\Rightarrow,\ConcA,\ConcB)$\index{s}{compcongV@$\ConcV$ functor} is a projectable $\aleph_0$-larder\index{s}{aleph0@$\aleph_{\ga}$}\index{i}{larder}.

Now suppose that the assumptions of~(i) are satisfied, set $\gk:=\card X+\aleph_0$\index{s}{aleph0@$\aleph_{\ga}$} and $\bA:=\xF(X)\otimes\overrightarrow{\bA}$\index{s}{FxX@$\xF(X)$}\index{s}{otimAS@$\bA\otimes\overrightarrow{S}$, $\gf\otimes\overrightarrow{S}$}. As the language of~$\cA$ has only finitely many relations, it follows from Lemma~\ref{L:ConcvonPpalCong}(i) that~$\ConA\bU$\index{s}{conAV@$\ConV\bA$, $\ConV f$} is finite for each finite $\bU\in\cA$. As~$\bA$ is the colimit of a diagram of at most~$\gk$ finite objects of~$\cA$, we get $\card\ConcA\bA\leq\gk$\index{s}{compcongVA@$\ConcV\bA$, $\ConcV f$}. Hence, in order to conclude the proof, it suffices to prove that~$\ConcA\bA$\index{s}{compcongVA@$\ConcV\bA$, $\ConcV f$} is not isomorphic to $\ConcB\bB$\index{s}{compcongVA@$\ConcV\bA$, $\ConcV f$}, for any object~$\bB$ of~$\cB$.

Suppose otherwise, and let $\chi\colon\ConcB\bB\to\ConcA\bA$\index{s}{compcongVA@$\ConcV\bA$, $\ConcV f$} be an isomorphism. By CLL\index{i}{Condensate Lifting Lemma (CLL)} (Lemma~\ref{L:CLL}), there exists a $P$-indexed diagram~$\overrightarrow{\bB}$ of~$\cB$ such that $\ConcB\overrightarrow{\bB}\cong\ConcA\overrightarrow{\bA}$\index{s}{compcongVA@$\ConcV\bA$, $\ConcV f$}. This contradicts the assumption on~$\overrightarrow{\bA}$.

Now suppose that the assumptions of~(ii) are satisfied, and set $n:=\kur_0(P)$\index{s}{kurP0@$\kur_0(P)$}. As $(\aleph_{n-1},{<}\aleph_0)\leadsto P$\index{s}{aleph0@$\aleph_{\ga}$}\index{s}{arr0x@$(\gk,{<}\gl)\leadsto P$}, it follows from Lemma~\ref{L:Part2Lift} that there exists a norm-covering\index{i}{norm-covering}~$X$ of~$P$ such that~$X$ is a lower finite\index{i}{poset!lower finite} \ajs\index{i}{almost join-semilattice}\ with zero, $\card X=\aleph_{n-1}$\index{s}{aleph0@$\aleph_{\ga}$}, and~$X$, together with the collection of all principal ideals\index{i}{ideal!of a poset}\index{i}{ideal!principal ${}_{-}$, of a poset} of~$X$, is an $\aleph_0$-lifter\index{s}{aleph0@$\aleph_{\ga}$}\index{i}{lifter ($\gl$-)} of~$P$. Now the conclusion follows from~(i) above.
\qed\end{proof}

In order to proceed, we prove a simple compactness result, similar, in formulation and proof, to the one of \cite[Theorem~7.11]{Gill1}\index{c}{Gillibert, P.}. Although this result can be extended to diagrams indexed by structures far more general than posets, we choose, for the sake of simplicity, to formulate it only in the latter context.

\begin{lem}\label{L:CompCatLift}
Let~$\cB$ and~$\cS$ be categories, let~$\Psi\colon\cB\to\cS$ be a functor, let~$P$ be a poset, and let $\overrightarrow{S}=\famm{S_p,\gs_p^q}{p\leq q\text{ in }P}$ be a diagram in~$\cS$. We make the following assumptions:
\begin{description}
\item[\textup{(L1)}] For each $p\in P$, there exists a finite set~$\cB_p$ of objects of~$\cB$ such that $(\forall X\in\Ob\cB)\bigl(\Psi(X)\cong S_p\Rightarrow (\exists Y\in\cB_p)(X\cong Y)\bigr)$.

\item[\textup{(L2)}] For all $p\leq q$ in~$P$, all $X\in\cB_p$, and all $Y\in\cB_q$, there are only finitely many morphisms from~$X$ to~$Y$ in~$\cB$.

\item[\textup{(L3)}] Each $S_p$, for $p\in P$, has only finitely many automorphisms.
\end{description}
If the restriction $\overrightarrow{S}\res_X$ has a lifting\index{i}{diagram!lifting} with respect to~$\Psi$, for each finite subset~$X$ of~$P$, then~$\overrightarrow{S}$ has a lifting\index{i}{diagram!lifting} with respect to~$\Psi$.
\end{lem}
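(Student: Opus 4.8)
The plan is to run a standard compactness (inverse-limit) argument over the directed poset $[P]^{<\go}$ of finite subsets of~$P$. First I would record the consequences of the finiteness hypotheses: by~(L1), for each $p\in P$ we are given the finite set~$\cB_p$; for $B\in\cB_p$ the collection of isomorphisms $\Psi(B)\to S_p$ is either empty or a torsor under $\operatorname{Aut}(S_p)$, hence finite by~(L3); and by~(L2), for $p\leq q$ in~$P$, $B\in\cB_p$, $B'\in\cB_q$ there are only finitely many morphisms $B\to B'$ in~$\cB$.

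For each finite $X\subseteq P$ I would let~$\cL(X)$ denote the set of all tuples $\bigl((B_p)_{p\in X},(\gf_p)_{p\in X},(\gb_p^q)_{p\leq q\text{ in }X}\bigr)$ with $B_p\in\cB_p$, with $\gf_p\colon\Psi(B_p)\to S_p$ an isomorphism, and with $\gb_p^q\colon B_p\to B_q$ a morphism of~$\cB$, subject to $\gb_p^p=\id_{B_p}$, to $\gb_q^r\circ\gb_p^q=\gb_p^r$ for all $p\leq q\leq r$ in~$X$, and to $\gf_q\circ\Psi(\gb_p^q)=\gs_p^q\circ\gf_p$ for all $p\leq q$ in~$X$. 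By the previous paragraph~$\cL(X)$ is finite. It is also nonempty: a lifting $\overrightarrow{B}'=\famm{B'_p,{\gb'}_p^q}{p\leq q\text{ in }X}$ of the restriction $\overrightarrow{S}\res_X$, with natural isomorphism $\overrightarrow{\gf}'\colon\Psi\overrightarrow{B}'\to\overrightarrow{S}\res_X$, comes with component isomorphisms $\Psi(B'_p)\cong S_p$, so~(L1) provides, for each $p\in X$, an object $B_p\in\cB_p$ and an isomorphism $\psi_p\colon B'_p\to B_p$; transporting the lifting along the~$\psi_p$ (setting $\gf_p:=\gf'_p\circ\Psi(\psi_p^{-1})$ and $\gb_p^q:=\psi_q\circ{\gb'}_p^q\circ\psi_p^{-1}$) yields, after a short diagram chase, an element of~$\cL(X)$. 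Finally, restriction of tuples defines maps $\gr_X^Y\colon\cL(Y)\to\cL(X)$ for $X\subseteq Y$, compatible with composition, so $\bigl(\cL(X),\gr_X^Y\bigr)$ is a projective system of nonempty finite sets indexed by the directed poset $[P]^{<\go}$.

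Next I would invoke the standard fact that the inverse limit of a projective system of nonempty finite sets over a directed index poset is nonempty; one way to see it is that $\prod_X\cL(X)$, with the product of the discrete topologies, is compact Hausdorff by Tychonoff's theorem, each coherence condition $\gr_X^Y(\ell_Y)=\ell_X$ cuts out a closed subset, and these closed subsets have the finite intersection property, since for finitely many of them one lets~$Z$ be the union of the finitely many finite sets involved, picks any element of the nonempty~$\cL(Z)$, and pushes it down by the maps~$\gr$. An element $(\ell_X)_X$ of this inverse limit then assembles into the desired data: reading off the $p$-component of $\ell_{\set{p}}$ gives an object~$B_p$ of~$\cB$ together with an isomorphism $\gf_p\colon\Psi(B_p)\to S_p$, and reading off the $(p,q)$-component of $\ell_{\set{p,q}}$ gives a morphism $\gb_p^q\colon B_p\to B_q$; coherence guarantees that these agree with the components appearing in every $\ell_X$. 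The identities $\gb_p^p=\id_{B_p}$, $\gb_q^r\circ\gb_p^q=\gb_p^r$, and $\gf_q\circ\Psi(\gb_p^q)=\gs_p^q\circ\gf_p$ hold because they hold inside $\cL(\set{p})$, $\cL(\set{p,q,r})$, and $\cL(\set{p,q})$ respectively; hence $\overrightarrow{B}:=\famm{B_p,\gb_p^q}{p\leq q\text{ in }P}$ is a diagram in~$\cB$ and $\overrightarrow{\gf}:=\famm{\gf_p}{p\in P}$ is a natural isomorphism $\Psi\overrightarrow{B}\to\overrightarrow{S}$, which is the required lifting.

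The only genuine obstacle here is bookkeeping rather than mathematics: one must set up $\cL(X)$ and the restriction maps with enough care that coherence of an inverse-limit element really does produce a functor on all of~$P$ (not merely on each finite piece), and one must check that~(L1)--(L3) are exactly what forces each~$\cL(X)$ to be finite, so that the compactness lemma applies. The hypothesis~(L3) plays only this auxiliary role of keeping the sets of local isomorphisms finite.
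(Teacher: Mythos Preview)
Your proof is correct. You and the paper both run a compactness argument over $[P]^{<\go}$, but you implement it differently: the paper fixes an ultrafilter~$\cU$ on $[P]^{<\go}$ containing every upper set $Q_X:=[P]^{<\go}\upw X$, chooses for each finite~$X$ a single lifting with $B_{X,p}\in\cB_p$, and then lets~$\cU$ ``vote'' among the finitely many candidates for~$B_p$, for~$\gb_p^q$, and for~$\eps_p$ separately; you instead package all the local data into the finite sets~$\cL(X)$, build the inverse system, and invoke nonemptiness of inverse limits of nonempty finite sets (equivalently, Tychonoff on a product of finite discrete spaces). These are the two standard incarnations of the same compactness principle. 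Your version has the minor advantage of making the coherence conditions explicit in the very definition of~$\cL(X)$, so the final assembly step is transparent; the paper's ultrafilter version is a bit shorter on the page and avoids introducing the auxiliary sets~$\cL(X)$, at the cost of verifying compatibilities (functoriality, naturality) one at a time after the selections are made. Neither approach needs anything the other does not.
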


\begin{proof}
For each $X\in[P]^{<\go}$, there are $\overrightarrow{B}_X=\famm{B_{X,p},\gb_{X,p}^{X,q}}{p\leq q\text{ in }X}$ in~$\cB^X$ and a natural equivalence $\famm{\eps_{X,p}}{p\in X}\colon\Psi\overrightarrow{B}_X\todot\overrightarrow{S}\res_X$\index{s}{AtorightarrowdotB@$f\colon\xA\todot\xB$}. It follows from~(L1) that we may assume that $B_{X,p}\in\cB_p$ for each $p\in X$.

Furthermore, we set $Q_X:=[P]^{<\go}\upw X$, and we fix an ultrafilter~$\cU$ on~$[P]^{<\go}$ such that $Q_X\in\cU$ for each $X\in[P]^{<\go}$.

Let $p\in P$. As~$Q_{\set{p}}$ belongs to~$\cU$ and is the disjoint union of all sets
 \[
 \setm{X\in Q_{\set{p}}}{B_{X,p}=B}\,,\quad\text{for }B\in\cB_p\,,
 \]
and as, by~(L1), $\cB_p$ is finite, there exists a unique~$B_p\in\cB_p$ such that
 \[
 R_p:=\setm{X\in Q_{\set{p}}}{B_{X,p}=B_p}\text{ belongs to }\cU\,.
 \]
Let $p\leq q$ in~$P$. As $R_p\cap R_q$ belongs to~$\cU$ and is the disjoint union of all sets
 \[
 \setm{X\in R_p\cap R_q}{\gb_{X,p}^{X,q}=\gb}\,,\quad\text{for }
 \gb\colon B_p\to B_q\text{ in }\cB\,,
 \]
and as, by~(L2), there are only finitely many such~$\gb$, there exists a unique morphism $\gb_p^q\colon B_p\to B_q$ in~$\cB$ such that
 \[
 \setm{X\in R_p\cap R_q}{\gb_{X,p}^{X,q}=\gb_p^q}
 \text{ belongs to }\cU\,.
 \]
Let $p\in P$. As~$R_p$ belongs to~$\cU$ and is the disjoint union of all sets
 \[
 \setm{X\in R_p}{\eps_{X,p}=\eps}\,,\quad\text{for isomorphisms }
 \eps\colon\Psi(B_p)\to S_p\,,
 \]
and as, by~(L3), there are only finitely many such~$\eps$, there exists a unique isomorphism $\eps_p\colon\Psi(B_p)\to S_p$ such that
 \[
 \setm{X\in R_p}{\eps_{X,p}=\eps_p}\text{ belongs to }\cU\,.
 \]
It is straightforward to verify that $\overrightarrow{B}:=\famm{B_p,\gb_p^q}{p\leq q\text{ in }P}$ is a $P$-indexed diagram in~$\cB$ and that $\famm{\eps_p}{p\in P}$ is a natural equivalence from~$\Psi\overrightarrow{B}$ to~$\overrightarrow{S}$.
\qed\end{proof}

Now we can improve the picture by the following dichotomy result, that extends \cite[Corollary~7.14]{Gill1}\index{c}{Gillibert, P.} from varieties to quasivarieties.

\begin{thm}[Dichotomy Theorem]\label{T:DichotCritPt}
\index{i}{Dichotomy Theorem|ii}
Let~$\cA$ and~$\cB$ be locally finite\index{i}{quasivariety!locally finite} quasivarieties on \pup{possibly distinct} first-order languages with only finitely many relation symbols such that~$\cB$ is strongly congruence-proper\index{i}{strongly congruence-proper}. If \linebreak $\Concr\cA\not\subseteq\Concr\cB$\index{s}{compcongVcr@$\Concr\cV$}, then $\critr(\cA;\cB)<\aleph_\go$\index{s}{aleph0@$\aleph_{\ga}$}\index{s}{critrAB@$\critr(\cA;\cB)$}.
\end{thm}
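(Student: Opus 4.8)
The plan is to reduce the statement to the finite case already settled by Theorem~\ref{T:RelCritalephn}(ii), the bridge being the compactness Lemma~\ref{L:CompCatLift}.

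First I would realize the non-representable semilattice as a directed colimit of finite pieces. Since $\Concr\cA\not\subseteq\Concr\cB$, fix $\bA\in\cA$ with $S:=\ConcA\bA\notin\Concr\cB$. Pick a point of~$A$, let $\bA_0$ be the subalgebra of~$\bA$ it generates (finite, as $\cA$ is locally finite), and let $P$ be the poset of all finite subalgebras of~$\bA$ containing~$\bA_0$, ordered by inclusion. As the subalgebra generated by the union of two finite subalgebras is finite, $P$ is a \js\ with zero~$\bA_0$ and is cofinal among all finite subalgebras of~$\bA$; hence $\bA$ is the directed colimit of the diagram $\overrightarrow{\bA}=\famm{\bA_p,\ga_p^q}{p\leq q\text{ in }P}$, in~$\MIND$ and therefore also in~$\cA$. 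Since $\cA$ has only finitely many relation symbols, each $\ConcA\bA_p$ is a finite \jzs, and Theorem~\ref{T:ConcVPresDirColim} applied to the quasivariety~$\cA$ shows that $\overrightarrow{S}:=\ConcA\overrightarrow{\bA}$ is a $P$-indexed diagram of finite \jzs s whose colimit is~$S$.

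Next I would observe that $\overrightarrow{S}$ has no lifting in~$\cB$ with respect to~$\ConcB$: a lifting $\overrightarrow{\bB}$ would yield, on passing to the directed colimit $\bB$ in~$\cB$ and using Theorem~\ref{T:ConcVPresDirColim} for~$\cB$, that $\ConcB\bB\cong\varinjlim\ConcB\bB_p\cong\varinjlim\ConcA\bA_p\cong S$, contradicting $S\notin\Concr\cB$. Then I would apply Lemma~\ref{L:CompCatLift} to $\cB$, the functor~$\ConcB$, the poset~$P$, and $\overrightarrow{S}$: condition (L1) is exactly the strong congruence-properness of~$\cB$ (combined with the finiteness of each $\ConcA\bA_p$), (L2) holds because there are only finitely many maps between any two finite sets, hence finitely many morphisms between any two finite members of~$\cB$, and (L3) holds because a finite \jzs\ has only finitely many automorphisms. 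By the contrapositive of Lemma~\ref{L:CompCatLift}, some restriction $\overrightarrow{S}\res_X$ to a finite subset $X\subseteq P$ has no lifting in~$\cB$.

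Finally I would replace $X$ by its closure $X'$ under the join operation of the \js~$P$, after adjoining~$\bA_0$; then $X'$ is a finite \js\ with zero, hence a finite \ajs\ with zero (for a finite \js, $P'\Upw Y$ is the principal upper set generated by $\bigvee Y$, and each principal ideal is closed under joins, so is a \js). If $X'$ is a singleton then $\ConcA\bA_0\notin\Concr\cB$ directly, whence $\critr(\cA;\cB)<\aleph_0$; otherwise $X'$ is finite, nontrivial, with zero, and $\overrightarrow{S}\res_{X'}=\ConcA(\overrightarrow{\bA}\res_{X'})$ still has no lifting in~$\cB$ (restricting such a lifting to~$X$ would lift $\overrightarrow{S}\res_X$). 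Since $\cB$ is congruence-proper and locally finite and both languages have finitely many relation symbols, Theorem~\ref{T:RelCritalephn}(ii) applied to $X'$ gives $\critr(\cA;\cB)\leq\aleph_{\kur_0(X')-1}<\aleph_\go$, since $\kur_0(X')$ is a positive integer. The main obstacle is the middle step: verifying that the hypotheses (L1)--(L3) of Lemma~\ref{L:CompCatLift} are genuinely satisfied (this is precisely where \emph{strong} congruence-properness, rather than mere congruence-properness, is indispensable) and arranging the reduction so that the resulting finite indexing poset is an \ajs\ with zero, as required by Theorem~\ref{T:RelCritalephn}(ii).
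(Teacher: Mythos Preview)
Your proof is correct and follows essentially the same route as the paper's: build the directed system of finite substructures, invoke Lemma~\ref{L:CompCatLift} (using strong congruence-properness for (L1)), and apply Theorem~\ref{T:RelCritalephn}(ii) to the resulting finite join-semilattice. The only organizational difference is that the paper argues by contrapositive---assuming $\critr(\cA;\cB)\geq\aleph_\go$ and lifting every finite $\vee,0$-subsemilattice restriction, then using Lemma~\ref{L:CompCatLift} in the forward direction to lift the whole diagram---whereas you argue directly, using the contrapositive of Lemma~\ref{L:CompCatLift} to extract one bad finite piece; the logical content is identical.
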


\begin{proof}
Suppose that $\critr(\cA;\cB)\geq\aleph_\go$\index{s}{aleph0@$\aleph_{\ga}$}\index{s}{critrAB@$\critr(\cA;\cB)$}. Given a structure~$\bA\in\cA$, we must find $\bB\in\cB$ such that $\ConcA\bA\cong\ConcB\bB$\index{s}{compcongVA@$\ConcV\bA$, $\ConcV f$}. Pick $a\in A$ and denote by~$P$ the set of all finite substructures of~$\bA$ containing~$a$ as an element. Observe that~$P$ is a \jzs. Set $\bA_p:=p$ and denote by $\ga_p^q$ the inclusion embedding from~$\bA_p$ into~$\bA_q$, for all $p\leq q$ in~$P$. Likewise, denote by~$\ga_p$ the inclusion embedding from~$\bA_p$ into~$\bA$. Set $\overrightarrow{\bA}:=\famm{\bA_p,\ga_p^q}{p\leq q\text{ in }P}$. It follows from the local finiteness of~$\cA$ that
 \begin{equation}\label{Eq:bApDirLimFinite}
 \famm{\bA,\ga_p}{p\in P}=\varinjlim\overrightarrow{\bA}\quad\text{in }\cA\,.
 \end{equation}
For each finite \jz-subsemilattice~$Q$ of~$P$, the restricted Kuratowski index\index{i}{Kuratowski!restricted ${}_{-}$ index}~$\kur_0(Q)$\index{s}{kurP0@$\kur_0(P)$} of~$Q$ is a non-negative integer, hence it follows from Theorem~\ref{T:RelCritalephn} that the diagram $\ConcA(\overrightarrow{\bA}\res_Q)$\index{s}{compcongVA@$\ConcV\bA$, $\ConcV f$} can be lifted\index{i}{diagram!lifted} with respect to the functor~$\ConcB$. As~$\cB$ is strongly congruence-proper\index{i}{strongly congruence-proper}, it follows from Lemma~\ref{L:CompCatLift} that the full diagram $\ConcA\overrightarrow{\bA}$\index{s}{compcongVA@$\ConcV\bA$, $\ConcV f$} can be lifted\index{i}{diagram!lifted} with respect to~$\ConcB$\index{s}{compcongV@$\ConcV$ functor}, that is, there are a $P$-indexed diagram~$\overrightarrow{\bB}$ of~$\cB$ and a natural equivalence $\ConcA\overrightarrow{\bA}\todot\ConcB\overrightarrow{\bB}$\index{s}{AtorightarrowdotB@$f\colon\xA\todot\xB$}\index{s}{compcongVA@$\ConcV\bA$, $\ConcV f$}. (\emph{Observe that this compactness argument requires the strong congruence-properness\index{i}{strongly congruence-proper} assumption on~$\cB$}.) Therefore, taking~$\bB:=\varinjlim\overrightarrow{\bB}$, we obtain, using~\eqref{Eq:bApDirLimFinite} together with Theorem~\ref{T:ConcVPresDirColim} (preservation of directed colimits by the~$\ConcA$ and~$\ConcB$ functors)\index{s}{compcongV@$\ConcV$ functor}, that $\ConcA\bA\cong\ConcB\bB$\index{s}{compcongVA@$\ConcV\bA$, $\ConcV f$}.
\qed\end{proof}

\begin{remk}\label{Rk:CongPperLocFin}
As stated in Propositions~\ref{P:FinResBdCongPp} and~\ref{P:ResBd2LocFin}, the assumptions on~$\cB$ in the statements of both Theorems~\ref{T:RelCritalephn} and~\ref{T:DichotCritPt}, that is, being both strongly congruence-proper\index{i}{strongly congruence-proper} and locally finite\index{i}{quasivariety!locally finite}, are satisfied in case~$\cB$ is a finitely generated\index{i}{quasivariety!finitely generated} quasivariety.
\end{remk}

\begin{remk}\label{Rk:RelCritalephn}
The proof of Theorem~\ref{T:RelCritalephn} shows that we can weaken the assumptions of that theorem, by requiring $\card\ConcA\bA\leq\aleph_{\kur_0(P)-1}$\index{s}{aleph0@$\aleph_{\ga}$}\index{s}{compcongVA@$\ConcV\bA$, $\ConcV f$}\index{s}{kurP0@$\kur_0(P)$}, for each nonempty finite product~$\bA$ of structures of the form~$\bA_p$ (indeed, if this holds, then the inequality $\card\ConcA\bigl(\xF(X)\otimes\overrightarrow{\bA}\bigr)\leq\aleph_{\kur_0(P)-1}$\index{s}{aleph0@$\aleph_{\ga}$}\index{s}{compcongVA@$\ConcV\bA$, $\ConcV f$}\index{s}{FxX@$\xF(X)$}\index{s}{otimAS@$\bA\otimes\overrightarrow{S}$, $\gf\otimes\overrightarrow{S}$}\index{s}{kurP0@$\kur_0(P)$} holds as well). In that case we no longer need the finiteness assumption about the relational part of~$\cA$. This holds, in particular, in case~$\ConcA\bA$\index{s}{compcongVA@$\ConcV\bA$, $\ConcV f$} is finite for each nonempty finite product~$\bA$ of structures of the form~$\bA_p$. This holds, for example, in case~$\cA$ is a variety of modular\index{i}{lattice!modular} lattices in which every finitely generated member has finite length.

A similar remark applies to Theorem~\ref{T:DichotCritPt}, with a slightly more awkward formulation. The assumption on~$\cA$ now states that each $\bA\in\cA$ is a directed colimit of a diagram, indexed by some \jzs, of structures in~$\cA$ any nonempty finite product of which has a finite congruence lattice (relatively to~$\cA$). In particular, this holds again in case~$\cA$ is a quasivariety of modular\index{i}{lattice!modular} lattices in which every finitely generated member has finite length.
\end{remk}

We conclude this section by the following analogue of \cite[Corollary~7.12]{Gill1}\index{c}{Gillibert, P.} for quasivarieties. Observe the different placement of the finiteness assumption: instead of dealing with finite semilattices that can be lifted\index{i}{diagram!lifted} from~$\cA$, we are dealing with the more restricted class of the $\ConcA\bA$\index{s}{compcongVA@$\ConcV\bA$, $\ConcV f$} for finite~$\bA\in\cA$.

\begin{cor}\label{C:RelCritalephn}
Let~$\cA$ and~$\cB$ be locally finite\index{i}{quasivariety!locally finite} quasivarieties on \pup{possibly distinct} first-order languages with only finitely many relation symbols such that~$\cB$ is strongly congruence-proper\index{i}{strongly congruence-proper}. The following statements are equivalent:
\begin{description}
\item[\tui] $\critr(\cA;\cB)>\aleph_0$\index{s}{aleph0@$\aleph_{\ga}$}\index{s}{critrAB@$\critr(\cA;\cB)$}.

\item[\tuii] For every diagram $\overrightarrow{\bA}$ of finite members of~$\cA$, indexed by a tree, there exists a diagram~$\overrightarrow{\bB}$ of~$\cB$ such that $\ConcA\overrightarrow{\bA}\cong\ConcB\overrightarrow{\bB}$.

\item[\tuiii] For every diagram $\overrightarrow{\bA}$ of finite members of~$\cA$, indexed by a finite chain, there exists a diagram~$\overrightarrow{\bB}$ of~$\cB$ such that $\ConcA\overrightarrow{\bA}\cong\ConcB\overrightarrow{\bB}$.
\end{description}
\end{cor}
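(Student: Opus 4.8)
The plan is to establish the cycle of implications $\tui\Rightarrow\tuii\Rightarrow\tuiii\Rightarrow\tui$. The middle implication $\tuii\Rightarrow\tuiii$ is immediate: a finite chain has a least element and all of its principal ideals are chains, so it is a tree, and $\tuii$ applies to it.

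For $\tuiii\Rightarrow\tui$ I would argue by contraposition. Assuming $\critr(\cA;\cB)\leq\aleph_0$, fix a countable $S\in(\Concr\cA)\setminus(\Concr\cB)$ and write $S\cong\ConcA\bA$ for some $\bA\in\cA$. Expressing~$\bA$ as the directed colimit of its finite subalgebras (local finiteness of~$\cA$) and invoking Theorem~\ref{T:ConcVPresDirColim}, one gets $S=\varinjlim_{i\in I}\ConcA\bA_i$ along a directed family of \emph{finite} members~$\bA_i$ of~$\cA$; here each $\ConcA\bA_i$ is finite because $\cA$ has finitely many relation symbols. Since~$S$ is countable, a routine closure argument produces a countable directed subfamily, and then a cofinal $\go$-chain within it, so an $\go$-chain $\langle\bA_{i_n}\rangle_{n<\go}$ of finite members of~$\cA$ with $\varinjlim_n\ConcA\bA_{i_n}\cong S$. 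By~$\tuiii$, the restriction of $\ConcA\langle\bA_{i_n}\rangle$ to every finite subchain of~$\go$ lifts, with respect to $\ConcB$, in~$\cB$; since every finite subset of~$\go$ lies in such a subchain, and since the hypotheses \textup{(L1)}--\textup{(L3)} of Lemma~\ref{L:CompCatLift} hold ($\cB$ is strongly congruence-proper and all $\ConcA\bA_{i_n}$ are finite), Lemma~\ref{L:CompCatLift} yields a diagram $\langle\bB_n\rangle_{n<\go}$ in~$\cB$ with $\ConcB\langle\bB_n\rangle\cong\ConcA\langle\bA_{i_n}\rangle$. Setting $\bB:=\varinjlim_n\bB_n$ in~$\cB$ (a quasivariety, hence closed under directed colimits) and applying Theorem~\ref{T:ConcVPresDirColim} once more gives $\ConcB\bB\cong S$, contradicting $S\notin\Concr\cB$.

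For $\tui\Rightarrow\tuii$, let $\overrightarrow{\bA}$ be a diagram of finite members of~$\cA$ indexed by a tree~$T$. As above, the hypotheses of Lemma~\ref{L:CompCatLift} are met by $\ConcA\overrightarrow{\bA}$, so it suffices to lift $\ConcA(\overrightarrow{\bA}\res_Y)$ in~$\cB$ for every finite $Y\subseteq T$. The key observation is that a finite subset of a tree is a \emph{forest}: for each $a\in Y$ the set $Y\dnw a$ is contained in the chain $T\dnw a$, hence is a chain; consequently $Y$ decomposes, as a poset, into the disjoint union of its connected components, each of which is a finite tree (it has a least element). It thus suffices to lift $\ConcA(\overrightarrow{\bA}\res_C)$ for each such finite tree~$C$. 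If~$C$ is trivial, then $\ConcA(\overrightarrow{\bA}\res_C)$ is a finite semilattice, hence of cardinality at most~$\aleph_0$, and it lies in $\Concr\cB$ by~$\tui$. If~$C$ is nontrivial, then it is a lower finite well-founded tree of cardinality at most~$\aleph_0$, so by Proposition~\ref{P:ktighttrees} it admits an $\aleph_0$-lifter $(X,\bX)$ with $\card X=\aleph_0$; were $\ConcA(\overrightarrow{\bA}\res_C)$ not liftable in~$\cB$, part~\tui\ of Theorem~\ref{T:RelCritalephn} would force $\critr(\cA;\cB)\leq\card X+\aleph_0=\aleph_0$, against~$\tui$. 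Reassembling the liftings over the components gives a lifting over~$Y$, completing the argument.

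The main obstacle is the step $\tui\Rightarrow\tuii$, and inside it the passage from an arbitrary tree to its finite sub-posets together with the recognition that these are forests; this is exactly what keeps the cardinal bound supplied by Proposition~\ref{P:ktighttrees} and Theorem~\ref{T:RelCritalephn} at~$\aleph_0$, rather than at some larger~$\aleph_n$ as would happen for general finite posets of higher Kuratowski index, and so lets us close the loop with hypothesis~$\tui$. The extraction of the countable $\go$-chain in $\tuiii\Rightarrow\tui$ is a standard L\"owenheim--Skolem-type closure argument and poses no genuine difficulty.
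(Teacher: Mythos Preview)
Your proof is correct and follows essentially the same cycle of implications as the paper, with only minor differences in packaging. In $\tui\Rightarrow\tuii$ you are actually more explicit than the paper: Lemma~\ref{L:CompCatLift} reduces to finite \emph{subsets} of the tree, which are forests rather than trees, and you handle the component decomposition (and the trivial singleton case, where Proposition~\ref{P:ktighttrees} does not apply) carefully, whereas the paper passes directly to ``finite tree~$P$'' without comment. In $\tuiii\Rightarrow\tui$ the paper first uses Lemma~\ref{L:ModThEltChain} to replace~$\bA$ by a countable substructure~$\bA_Y$ with $\ConcA\bA_Y\cong S$, and then writes~$\bA_Y$ as an $\omega$-chain of finite substructures; your direct extraction of a countable directed subfamily and then a cofinal $\omega$-chain from the full colimit is an equivalent L\"owenheim--Skolem manoeuvre and works just as well (the ``routine closure argument'' does need the finiteness of each~$\ConcA\bA_i$ to keep the kernel witnesses countable, which you have).
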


\begin{proof}
(i)$\Rightarrow$(ii). Assume that~(i) holds. By using again Lemma~\ref{L:CompCatLift}, it suffices to prove that for every diagram $\overrightarrow{\bA}$ of finite members of~$\cA$, indexed by a \emph{finite} tree~$P$, there exists a diagram~$\overrightarrow{\bB}$ of~$\cB$ such that $\ConcA\overrightarrow{\bA}\cong\ConcB\overrightarrow{\bB}$. By Proposition~\ref{P:ktighttrees}, $P$ has an $\aleph_0$-lifter\index{s}{aleph0@$\aleph_{\ga}$}\index{i}{lifter ($\gl$-)} $(X,\bX)$ with~$X$ at most countable. The conclusion follows immediately from Theorem~\ref{T:RelCritalephn}(i).

(ii)$\Rightarrow$(iii) is trivial.

(iii)$\Rightarrow$(i). It follows from Lemma~\ref{L:CompCatLift} that every diagram of finite \jzs s, indexed by the chain~$\go$ of all natural numbers, which has a lifting\index{i}{diagram!lifting} in~$\cA$, also has a lifting\index{i}{diagram!lifting} in~$\cB$.

Now let~$\bS$ be an at most countable \jzs\ in $\Concr\cA$. There exists $\bA\in\cA$ such that $\bS\cong\ConcA\bA$\index{s}{compcongVA@$\ConcV\bA$, $\ConcV f$}. We claim that~$\bA$ can be taken at most countable. We use the argument of the second part of the proof of Proposition~\ref{P:glPresMIND}. Fix $a\in A$ and denote by~$\bA_X$ the substructure of~$\bA$ generated by~$X\cup\set{a}$, for each $X\subseteq A$. By using the description of directed colimits given in Section~\ref{Su:DirColimFirstOrd}, we obtain the directed colimit representation\index{s}{aleph0@$\aleph_{\ga}$}
 \[
 \bA=\varinjlim_{X\in[A]^{\les\aleph_0}}{\bA_X}\quad\text{in }\cA\,,
 \]
with all maps in the cocone being the corresponding inclusion maps. By applying Theorem~\ref{T:ConcVPresDirColim}\index{s}{compcongV@$\ConcV$ functor}, we obtain\index{s}{aleph0@$\aleph_{\ga}$}
 \begin{equation}\label{Eq:ConcAbAal0}
 \ConcA\bA=\varinjlim_{X\in[A]^{\les\aleph_0}}{\ConcA\bA_X}
 \quad\text{in }\SEM\,.
 \end{equation}
As~$\bA$ is locally finite\index{i}{algebraic system!locally finite}, each~$A_X$ is at most countable. As the language of~$\bA$ has only finitely many relation symbols, it follows that each $\ConcA\bA_X$ is at most countable. Denote by $\ga_X\colon\bA_X\into\bA$\index{s}{AtoinB@$f\colon A\into B$} the inclusion map, for each $X\in[A]^{\les\aleph_0}$\index{s}{aleph0@$\aleph_{\ga}$}. By applying Lemma~\ref{L:ModThEltChain} (with $\gl:=\aleph_1$\index{s}{aleph0@$\aleph_{\ga}$}, $I:=[A]^{\les\aleph_0}$\index{s}{aleph0@$\aleph_{\ga}$}, and $\scL:=\set{\vee,\wedge,0}$) to~\eqref{Eq:ConcAbAal0}, we obtain that the set\index{s}{aleph0@$\aleph_{\ga}$}
 \[
 J:=\setm{X\in[A]^{\les\aleph_0}}{\ConcA\ga_X\text{ is an embedding}}
 \]
is cofinal in $[A]^{\les\aleph_0}$\index{s}{aleph0@$\aleph_{\ga}$}. On the other hand, as $\ConcA\bA\cong\bS$ is at most countable, there exists $X\in[A]^{\les\aleph_0}$\index{s}{aleph0@$\aleph_{\ga}$} such that $\ConcA\ga_X$ is surjective. There exists $Y\in J$ containing~$X$. As~$\ConcA\ga_Y$\index{s}{compcongVA@$\ConcV\bA$, $\ConcV f$} is an isomorphism, it follows that $\bS\cong\Conc\bA_Y$, which completes the proof of our claim.

{}From now on until the end of the proof we fix $\bA\in\cA$ at most countable such that $\ConcA\bA\cong\bS$. We must prove that $\ConcA\bA$ is isomorphic to $\ConcB\bB$ for some $\bB\in\cB$. We argue as at the end of the proof of \cite[Corollary~7.12]{Gill1}\index{c}{Gillibert, P.}: as~$\bA$ is at most countable and locally finite\index{i}{algebraic system!locally finite}, it is the directed union of a chain $\famm{\bA_n}{n<\go}$ of finite substructures, giving canonically an $\go$-indexed diagram~$\overrightarrow{\bA}$. Let an $\go$-indexed diagram~$\overrightarrow{\bB}$ in~$\cB$ lift, with respect to the functor~$\ConcB$, the diagram $\ConcA\overrightarrow{\bA}$. Setting $\bB:=\varinjlim\overrightarrow{\bB}$, it follows from Theorem~\ref{T:ConcVPresDirColim} (preservation of directed colimits by the~$\ConcA$ and~$\ConcB$ functors)\index{s}{compcongV@$\ConcV$ functor} that $\ConcA\bA\cong\ConcB\bB$\index{s}{compcongVA@$\ConcV\bA$, $\ConcV f$}.
\qed\end{proof}

\section[Finitely generated varieties of algebras]{Strong congruence-properness of certain finitely generated varieties of algebras}\label{S:FinGenAlg}
In view of Propositions~\ref{P:FinResBdCongPp} and~\ref{P:ResBd2LocFin}, it would have looked easier to formulate the Dichotomy Theorem (Theorem~\ref{T:DichotCritPt})\index{i}{Dichotomy Theorem} under the more restrictive assumption where~$\cB$ is a \emph{finitely generated quasivariety}\index{i}{quasivariety!finitely generated}, arguing that there are no ``natural'' examples of (quasi)varieties that are both strongly congruence-proper\index{i}{strongly congruence-proper} and locally finite\index{i}{quasivariety!locally finite} without being finitely generated\index{i}{quasivariety!finitely generated}. This impression (that the Dichotomy Theorem\index{i}{Dichotomy Theorem} is designed for finitely generated\index{i}{quasivariety!finitely generated} quasivarieties only) might be reinforced by the comments in Remark~\ref{Rk:FinResBd}, showing an example of a finitely generated\index{i}{variety!finitely generated} variety of groups which is not finitely generated\index{i}{quasivariety!finitely generated} as a quasivariety.

We shall now show some directions suggesting that this is not so. In what follows we shall focus on \emph{varieties of algebras of finite type}\index{i}{algebra!universal}: there are only finitely many operation (or constant) symbols and no relation symbols. A variety~$\cV$ of algebras\index{i}{algebra!universal} is \emph{congruence-modular}\index{i}{variety!congruence-modular|ii} if the congruence lattice of every member of~$\cV$ is modular\index{i}{lattice!modular}. For examples, varieties of \emph{groups} (or even \emph{loops}) or of \emph{modules} are congruence-modular. By using their commutator theory\index{i}{commutator theory} for congruence-modular varieties, Freese and McKenzie prove in \cite[Theorem~10.16]{FrMK87}\index{c}{Freese, R.}\index{c}{McKenzie, R.\,N.} the following remarkable result:

\begin{thm}\label{T:CMStrCongPp}
Let~$\bA$ be a finite algebra\index{i}{algebra!universal} such that the variety $\Var(\bA)$\index{s}{VarA@$\Var(\bA)$|ii} generated by~$\bA$ is congruence-modular\index{i}{variety!congruence-modular}, and let $\bB\in\Var(\bA)$. If $\Con\bB$\index{s}{conA@$\Con\bA$} has finite length~$n$, then $\card B\leq(\card A)^n$.
\end{thm}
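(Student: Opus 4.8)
The plan is to proceed in three stages: reduce to the case where $\bB$ is finite, set up a maximal chain in the (modular, finite-length) lattice $\Con\bB$, and bound $\card B$ by a product of $n$ factors each of size at most $\card A$, using the commutator theory of congruence-modular varieties.

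First I would reduce to $\bB$ finite. Since $\bA$ is finite and $\Var(\bA)$ is congruence-modular, $\Var(\bA)$ is residually small — in fact residually finite, with only finitely many subdirectly irreducible members, all of them finite; this is a basic structural consequence of commutator theory for finitely generated congruence-modular varieties, which I would quote. On the other hand, as $\Con\bB$ has finite length, the least congruence of $\bB$ is a meet of finitely many (completely) meet-irreducible congruences $\mu_1,\dots,\mu_d$, so $\bB$ embeds subdirectly into the finite product $\prod_{j=1}^{d}\bB/\mu_j$; each $\bB/\mu_j$ is subdirectly irreducible, hence finite by the previous remark, so $\bB$ is finite.

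Now fix a maximal chain $0=\theta_0\prec\theta_1\prec\cdots\prec\theta_n=1$ in $\Con\bB$. By telescoping, $\card B=\prod_{i=1}^{n}\card(B/\theta_{i-1})/\card(B/\theta_i)$, so it suffices to show that each ratio $r_i:=\card(B/\theta_{i-1})/\card(B/\theta_i)\le\card A$. To bound a single $r_i$ one works inside $\bB/\theta_{i-1}$ together with the atom $\theta_i/\theta_{i-1}$ of its congruence lattice, and splits into two cases according to the commutator: the prime quotient $[\theta_{i-1},\theta_i]$ is either abelian ($[\theta_i,\theta_i]\le\theta_{i-1}$) or non-abelian. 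In the abelian case, the fundamental theorem on abelian algebras in congruence-modular varieties presents the $\theta_i/\theta_{i-1}$-classes as a module over a finite ring $R$ attached to $\Var(\bA)$, the prime quotient corresponding to a simple module, i.e.\ a quotient $R/\mathfrak{m}$; tracing $R$ back to term operations of $\bA$ gives $\card(R/\mathfrak{m})\le\card A$, whence $r_i\le\card A$. In the non-abelian case, a Jónsson-style argument on the subdirectly irreducible quotient $\bB/\theta_{i-1}$ (exploiting the congruence-distributive behaviour of the non-abelian part, or directly the structure of its monolith) embeds the relevant prime quotient into a subalgebra of $\bA$, again yielding $r_i\le\card A$. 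Multiplying the $n$ estimates gives $\card B\le(\card A)^n$.

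The main obstacle is precisely this last per-prime-quotient estimate: making the bound $\card A$ work uniformly requires the heart of congruence-modular commutator theory (the term condition, the affine representation of abelian congruences, and the relevant Jónsson-type lemmas), and one must be careful because $\Con\bB$ need not be distributive, so that the entire representation-theoretic content is concentrated in these ratios $r_i$ rather than being visible in an irredundant subdirect decomposition. The reduction to finite $\bB$ and the telescoping bookkeeping, by contrast, are routine once residual smallness of finitely generated congruence-modular varieties is available.
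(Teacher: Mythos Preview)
The paper does not give a proof of this theorem: it simply quotes it as \cite[Theorem~10.16]{FrMK87} (Freese and McKenzie), so there is nothing to compare your argument against in the paper itself. Your outline is broadly the shape of the Freese--McKenzie proof---reduce to finite~$\bB$, telescope along a maximal chain in $\Con\bB$, and bound each prime quotient using the commutator dichotomy---and your acknowledgement that the real work lies in the per-prime-quotient estimate is accurate.

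There is, however, a genuine gap in your non-abelian step. You write ``a J\'onsson-style argument on the subdirectly irreducible quotient $\bB/\theta_{i-1}$'', but $\theta_{i-1}$ is merely a term in a maximal chain and need not be meet-irreducible, so $\bB/\theta_{i-1}$ is in general \emph{not} subdirectly irreducible. What the Freese--McKenzie argument actually uses is that in a modular lattice of finite length every prime quotient $\theta_{i-1}\prec\theta_i$ is projective to a prime quotient $\mu\prec\mu^*$ with~$\mu$ completely meet-irreducible; then $\bB/\mu$ is subdirectly irreducible with monolith $\mu^*/\mu$, and projectivity preserves the abelian/non-abelian type of the quotient (this is where congruence-modularity, via the commutator, is essential). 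Only after this reduction does the J\'onsson-type argument apply to show $\bB/\mu\in\mathrm{HS}(\bA)$ in the non-abelian case. Similarly, in the abelian case the bound $\card(R/\mathfrak{m})\le\card A$ does not come from ``tracing $R$ back to term operations'' but from analysing the monolith of a subdirectly irreducible with abelian monolith, again reached via projectivity. Your reduction to finite~$\bB$ is fine (residual smallness of finitely generated congruence-modular varieties suffices; the claim of ``only finitely many'' subdirectly irreducibles is unnecessary and can fail for infinite similarity type, but you do not need it).
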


It follows immediately that \emph{Every finitely generated congruence-modular variety of finite type is strongly congruence-proper}\index{i}{variety!finitely generated}\index{i}{variety!congruence-modular}\index{i}{strongly congruence-proper}. Even for varieties of groups, this result is not trivial, in particular in view of the example in Remark~\ref{Rk:FinResBd}. In Hobby and McKenzie \cite[Theorem~14.6]{HoMK88}\index{c}{Hobby, D.}\index{c}{McKenzie, R.\,N.}, Theorem~\ref{T:CMStrCongPp} is extended to the case where $\Var(\bA)$ \emph{omits the tame congruence theory types~$\mathbf{1}$ and~$\mathbf{5}$}\index{i}{tame congruence theory}. This holds, in particular, in case there is a nontrivial lattice identity satisfied by the congruence lattices of all members of~$\Var(\bA)$, cf. \cite[Theorem~9.18]{HoMK88}\index{c}{Hobby, D.}\index{c}{McKenzie, R.\,N.}. Therefore we obtain the following consequence of that result together with the Dichotomy Theorem\index{i}{Dichotomy Theorem} (Theorem~\ref{T:DichotCritPt}).

\begin{thm}\label{T:DichotNonType15}
Let~$\cA$ be a locally finite\index{i}{quasivariety!locally finite} quasivariety on a first-order language with only finitely many relation symbols and let~$\cB$ be a finitely generated\index{i}{variety!finitely generated} variety of algebras\index{i}{algebra!universal} with finite similarity type, omitting types~$\mathbf{1}$ and~$\mathbf{5}$\index{i}{tame congruence theory} \pup{this holds in case $\cB$ satisfies a nontrivial congruence identity}. If $\Concr\cA\not\subseteq\Concr\cB$\index{s}{compcongVcr@$\Concr\cV$}, then $\critr(\cA;\cB)<\aleph_\go$\index{s}{aleph0@$\aleph_{\ga}$}\index{s}{critrAB@$\critr(\cA;\cB)$}.
\end{thm}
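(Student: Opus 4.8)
The plan is to deduce Theorem~\ref{T:DichotNonType15} directly from the Dichotomy Theorem (Theorem~\ref{T:DichotCritPt}) by verifying that the hypotheses on~$\cB$ in that theorem are met. Recall that Theorem~\ref{T:DichotCritPt} requires~$\cB$ to be a locally finite quasivariety on a first-order language with only finitely many relation symbols that is moreover \emph{strongly congruence-proper}. Since~$\cB$ is here a finitely generated variety of algebras of finite similarity type, it has no relation symbols at all, and by Proposition~\ref{P:ResBd2LocFin} it is locally finite. Thus the only substantive point is to check that~$\cB$ is strongly congruence-proper (cf. Definition~\ref{D:FinResBd}); note that~$\cA$ already satisfies the hypotheses of Theorem~\ref{T:DichotCritPt} by assumption.

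First I would record that~$\cB$, being generated by a finite algebra~$\bA$ (write $\cB=\Var(\bA)$; if~$\cB$ is generated by finitely many finite algebras, replace~$\bA$ by their product, which still generates~$\cB$ as a variety), omits tame congruence theory types~$\mathbf{1}$ and~$\mathbf{5}$ by hypothesis, and that by \cite[Theorem~9.18]{HoMK88} this is implied by~$\cB$ satisfying a nontrivial congruence identity, which justifies the parenthetical remark. Next I would invoke the Hobby--McKenzie extension \cite[Theorem~14.6]{HoMK88} of the Freese--McKenzie bound (Theorem~\ref{T:CMStrCongPp}): since $\Var(\bA)$ omits types~$\mathbf{1}$ and~$\mathbf{5}$, there is a function bounding $\card B$ in terms of the length of $\Con\bB$ whenever $\bB\in\cB$ and $\Con\bB$ has finite length. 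Concretely, if $\Con\bB$ has finite length~$n$, then $\card B\leq(\card A)^n$.

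From this bound the strong congruence-properness of~$\cB$ follows by the argument sketched after Theorem~\ref{T:CMStrCongPp}: if $\Con\bB=\ConB\bB$ is finite, then it has finite length, so~$\bB$ is finite, which gives congruence-properness; and for a fixed finite join-semilattice~$\bS$, the length of any $\bB\in\cB$ with $\ConB\bB\cong\bS$ is bounded by $\card\bS$, hence $\card B\leq(\card A)^{\card S}$, so there are only finitely many such~$\bB$ up to isomorphism (finitely many algebras of bounded size in a finite signature). Therefore~$\cB$ is strongly congruence-proper. Applying Theorem~\ref{T:DichotCritPt} to the pair $(\cA,\cB)$ then yields that $\Concr\cA\not\subseteq\Concr\cB$ implies $\critr(\cA;\cB)<\aleph_\go$, which is exactly the assertion.

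The only genuine obstacle here is that the proof is not self-contained: it rests essentially on the deep commutator-theoretic and tame-congruence-theoretic input of \cite{FrMK87} and \cite{HoMK88}, which we quote as black boxes (Theorem~\ref{T:CMStrCongPp} and its type-$\mathbf{1},\mathbf{5}$-omitting extension). Given those, the remaining reasoning is the short bookkeeping above, transferring ``finite congruence lattice'' to ``finite algebra'' and then to ``finitely many algebras up to isomorphism'', and finally citing Theorem~\ref{T:DichotCritPt}. One minor care point worth stating explicitly is the reduction from ``finitely many finite generators'' to ``a single finite generator'' for a variety, which is immediate since a finite product of finite algebras is finite and generates the same variety.
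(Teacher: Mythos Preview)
Your proposal is correct and follows exactly the paper's approach: the paper does not give a separate proof environment for this theorem but derives it in the preceding paragraph from the Hobby--McKenzie extension \cite[Theorem~14.6]{HoMK88} of Theorem~\ref{T:CMStrCongPp} (yielding strong congruence-properness of~$\cB$) together with the Dichotomy Theorem~\ref{T:DichotCritPt}. Your write-up simply fills in the bookkeeping that the paper leaves implicit.
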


In particular, in case both~$\cA$ and~$\cB$ are finitely generated\index{i}{variety!finitely generated} varieties of either lattices, groups, loops, or modules (the latter on finite rings), $\Concr\cA\not\subseteq\Concr\cB$\index{s}{compcongVcr@$\Concr\cV$} implies that $\critr(\cA;\cB)<\aleph_\go$\index{s}{aleph0@$\aleph_{\ga}$}\index{s}{critrAB@$\critr(\cA;\cB)$}.

An example of a finitely generated\index{i}{variety!finitely generated} variety of algebras\index{i}{algebra!universal} with arbitrarily large (finite or infinite) simple algebras\index{i}{algebra!universal} (and thus non congruence-proper\index{i}{congruence-proper}), attributed to C. Shallon\index{c}{Shallon, C.}, is given in \cite[Exercise~14.9(4)]{HoMK88}\index{c}{Hobby, D.}\index{c}{McKenzie, R.\,N.}.

\section{A potential use of larders on non-regular cardinals}\label{S:NonRegLard}
Although we do not know of any ``concrete'' problem solved by $\gl$-larders\index{i}{larder} for non-regular~$\gl$, we shall outline in the present section a potential use which does not seem to follow from CLL\index{i}{Condensate Lifting Lemma (CLL)} applied to larders indexed by regular cardinals.

For a quasivariety~$\cV$ and an infinite cardinal~$\gl$, we denote by~$\cV^{(\gl)}$\index{s}{Vl@$\cV^{(\gl)}$, $\cV$ quasivariety|ii} the class of all $\gl$-small members of~$\cV$. We start with the following larderhood result.

\begin{thm}\label{T:Algorightlard}
Let~$\gl$ be an uncountable cardinal and let~$\cV$ be a quasivariety on a $\gl$-small first-order language~$\scL$\index{s}{compcongV@$\ConcV$ functor}. Then the $6$-uple\index{s}{Vl@$\cV^{(\gl)}$, $\cV$ quasivariety}\index{s}{Semi@$\SEM^{\mathrm{idl}}$}\index{s}{Seml@$\SEM^{(\gl)}$}\index{s}{Sem@$\SEM$}
 \[
 (\cV,\cV^{(\gl)},\SEM,\SEM^{(\gl)},\SEM^{\mathrm{idl}},\ConcV)
 \]
is a projectable right $\gl$-larder\index{i}{larder!right!projectable}.
\end{thm}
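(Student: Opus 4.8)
The plan is to verify the three properties defining a \emph{projectable right $\gl$-larder} in the sense of Definition~\ref{D:RightLard}, namely $(\PRES_\gl(\cV^{(\gl)},\ConcV))$, $(\LSr_{\cf(\gl)}(B))$ for every $B\in\cV$, and projectability; recall that a quasivariety is in particular a \gqv\ (Example~\ref{Ex:QVar}), so the machinery of Sections~\ref{S:ConFunct}--\ref{S:MINDProjWit} applies. Projectability is immediate: by Theorem~\ref{T:GQV2ProjWit} every ideal-induced \jzh\ $\gf\colon\ConcV C\to S$, with $C\in\cV$ and $S$ a \jzs, has a projectability witness with respect to the $\ConcV$ functor. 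For $(\PRES_\gl)$, the first step is a cardinality estimate: for $B\in\cV^{(\gl)}$, Lemma~\ref{L:ConcvonPpalCong}(ii) expresses every element of $\ConcV B$ as a finite join of principal $\cV$-congruences, of which there are at most $\max(\card B,\card\scL,\aleph_0)<\gl$ (here $\card\scL<\gl$ is used, together with the fact that the product of two infinite cardinals below~$\gl$ remains below~$\gl$); hence $\card\ConcV B<\gl$. Since the semilattice language is finite, $\ConcV B$ is then completely $\gl$-small, so by Proposition~\ref{P:glPresMIND} it is weakly $\gl$-presented in $\MIND$, and since $\SEM$ is a full subcategory of $\MIND$ closed under directed colimits (semilattice axioms being equational), $\ConcV B$ is weakly $\gl$-presented in $\SEM$ as well.

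The substantial part is $(\LSr_{\cf(\gl)}(B))$. Fix $B\in\cV$, a $\gl$-small \jzs\ $S$, an ideal-induced \jzh\ $\gy\colon\ConcV B\to S$, a $\cf(\gl)$-small set $I$, and a family $\famm{\gc_i\colon C_i\mono B}{i\in I}$ of monic objects in $\cV^{(\gl)}\dnw B$. Identifying each $C_i$ with its image in $B$, put $W:=\bigcup_{i\in I}\gc_i``(C_i)$; since $\card I<\cf(\gl)$ and $\card C_i<\gl$ for each $i$, the defining property of $\cf(\gl)$ gives $\card W<\gl$. Now I would set $\gk:=\bigl(\max(\card W,\card S,\card\scL,\aleph_0)\bigr)^+$, a \emph{regular} uncountable cardinal with $\gk\le\gl$, and let $P_0$ be the poset, ordered by inclusion, of all substructures $D$ of $B$ with $W\subseteq D$ and $\card D<\gk$. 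Because the substructure of $B$ generated by a subset $X$ has cardinality at most $\max(\card X,\card\scL,\aleph_0)$ (operation symbols having finite arity), $P_0$ is nonempty, is $\gk$-directed (a union of fewer than $\gk$ of its members still has size $<\gk$, $\gk$ being regular, hence generates a member of $P_0$), and is monotone $\gs$-complete; moreover $\bB=\varinjlim_{D\in P_0}\bD$ in $\cV$, this colimit is $\gs$-continuous, and applying $\ConcV$ and invoking Theorem~\ref{T:ConcVPresDirColim} yields a $\gs$-continuous directed colimit $\famm{\ConcV B,\ConcV\gi_D}{D\in P_0}=\varinjlim\famm{\ConcV D,\ConcV\gi_D^{D'}}{D\subseteq D'\text{ in }P_0}$, where $\gi_D\colon\bD\into\bB$ denotes inclusion.

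With this in place, I would apply Proposition~\ref{P:ApproxMonIIWD}(i) to the conical commutative monoids $\ConcV B$ and $S$ and the ideal-induced homomorphism $\gy$: since $\card\ConcV D\le\max(\card D,\card\scL,\aleph_0)<\gk$ for each $D\in P_0$ and $\card S<\gk$, all hypotheses hold with $\gk$ in the role of the proposition's cardinal, and the conclusion is that $\setm{D\in P_0}{\gy\circ\ConcV\gi_D\text{ is ideal-induced}}$ is $\gs$-closed cofinal in $P_0$, in particular nonempty. Pick $C$ in this set. Then $\gi_C\colon C\mono B$ is a monic object of $\cV^{(\gl)}\dnw B$ (as $\card C<\gk\le\gl$ and $C$ is a substructure of $B\in\cV$); since $W\subseteq C$, each $\gc_i$ corestricts to a morphism $C_i\to C$ over $B$, so $\gc_i\utr\gi_C$ for all $i\in I$; and $\gy\circ\ConcV\gi_C$ is ideal-induced, i.e.\ a morphism of $\SEM^{\mathrm{idl}}$. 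This establishes $(\LSr_{\cf(\gl)}(B))$ and finishes the proof.

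The main obstacle is the treatment of \emph{singular} $\gl$: for such $\gl$, neither $[B]^{<\gl}$ nor the poset of all $\gl$-small substructures of $B$ is $\gl$-directed, so the model-theoretic L\"owenheim--Skolem input (Lemma~\ref{L:ModThEltChain}, used inside Proposition~\ref{P:ApproxMonIIWD}) cannot be applied to them directly. The device above---descending to a regular cardinal $\gk\le\gl$ chosen large enough to absorb the data $(W,S,\scL)$---is what makes the argument go through; the only care needed is to keep all the relevant cardinalities ($\card W$, $\card S$, $\card\scL$, and the derived $\card\ConcV D$) strictly below $\gk$, which is why $\gk$ is taken to be the successor of their maximum with $\aleph_0$. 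When $\gl$ is regular one may of course take $\gk=\gl$ and $P_0=[B]^{<\gl}$ directly, recovering the usual L\"owenheim--Skolem-style verification.
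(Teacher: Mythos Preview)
Your proof is correct and follows essentially the same route as the paper: verify $(\PRES_\gl)$ via the cardinality bound on $\ConcV B$ and Proposition~\ref{P:glPresMIND}, obtain projectability from Theorem~\ref{T:GQV2ProjWit}, and handle $(\LSr_{\cf(\gl)}(B))$ by descending to a regular successor cardinal $\gk\le\gl$ large enough to absorb $\scL$, $S$, and the images of the $C_i$, then invoking Proposition~\ref{P:ApproxMonIIWD}(i) over a $\gk$-directed, $\gs$-complete poset of $\gk$-small substructures. The only cosmetic difference is that the paper indexes by $[B]^{<\gm}$ (with $\bB_X$ the generated substructure) rather than directly by substructures containing $W$, and is slightly less explicit about the choice of the auxiliary regular cardinal; your discussion of why singular $\gl$ forces this descent is a welcome clarification.
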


\begin{proof}
Similar to the proof of Claim~\ref{Cl:GrSchrightlard} within the proof of Theorem~\ref{T:MindConcLift}.
For each $\bA\in\cV^{(\gl)}$\index{s}{Vl@$\cV^{(\gl)}$, $\cV$ quasivariety}, the semilattice $\ConcV\bA$\index{s}{compcongVA@$\ConcV\bA$, $\ConcV f$} is $\gl$-small, thus, by Proposition~\ref{P:glPresMIND}, it is weakly $\gl$-presented\index{i}{presented!weakly $\gl$-} in~$\SEM$\index{s}{Sem@$\SEM$}. Hence the statement $(\PRES_{\gl}(\cV^{(\gl)},\ConcV))$\index{s}{Pres@$(\PRES_\gl(\cB^\dagger,\Psi))$} is satisfied.

Let~$\bB\in\cV$, we must verify that $(\LSr_{\gl}(\bB))$\index{s}{LSr@$(\LSr_\gm(B))$} is satisfied. Let~$\bS$ be a $\gl$-small \jzs, let~$I$ be a $\cf(\gl)$-small set, and let $\famm{u_i\colon\bU_i\Rightarrow\bB}{i\in I}$\index{s}{AtorightarrowB@$f\colon A\Rightarrow B$} be an $I$-indexed family of double arrows\index{i}{double arrow} with all the~$\bU_i$ being $\gl$-small. There exists a successor (thus regular uncountable) cardinal~$\gm\leq\gl$ such that~$\scL$, $S$, and all~$U_i$ are $\gm$-small. As at the end of the proof of Proposition~\ref{P:glPresMIND}, we express~$\bB$ as a continuous\index{i}{diagram!continuous} directed colimit
 \[
 \bB=\varinjlim_{X\in[B]^{<\gm}}{\bB_X}\quad\text{in }\cV
 \]
(here~$\bB_X$ is simply the $\cV$-substructure of~$\bB$ generated by~$X$ together with some fixed element of~$B$),
with all maps in the cocone being the corresponding inclusion maps. Then applying Theorem~\ref{T:ConcVPresDirColim} to~$\cV$ yields a continuous\index{i}{diagram!continuous} directed colimit\index{s}{compcongVA@$\ConcV\bA$, $\ConcV f$}\index{s}{Sem@$\SEM$}
 \begin{equation}\label{Eq:ConcbBAlgo}
 \ConcV\bB=\varinjlim_{X\in[B]^{<\gm}}{\ConcV\bB_X}\quad\text{in }\SEM\,.
 \end{equation}
Denote by $\gb_X\colon\bB_X\into\bB$\index{s}{AtoinB@$f\colon A\into B$} the inclusion map, for each $X\in[B]^{<\gm}$. It follows from~\eqref{Eq:ConcbBAlgo} together with Proposition~\ref{P:ApproxMonIIWD} that the set
 \[
 J:=\setm{X\in[B]^{<\gm}}{\gf\circ\ConcV\gb_X\text{ is ideal-induced}}
 \]
is $\gs$-closed cofinal\index{i}{subset!$\gs$-closed cofinal} in~$[B]^{<\gm}$. As there exists $X\in[B]^{<\gm}$ such that~$B_X$ contains all images $u_i``(U_i)$, it follows that such an~$X$ can be chosen in~$J$. Then~$\gb_X$ is monic, $u_i\utr\gb_X$ for each $i\in I$, and $\gf\circ\Conc\gb_X$ is ideal-induced\index{i}{homomorphism!ideal-induced}.

The projectability statement follows immediately from Theorem~\ref{T:GQV2ProjWit}.
\qed\end{proof}

By applying Theorem~\ref{T:Algorightlard} to the case $\gl:=\aleph_\go$\index{s}{aleph0@$\aleph_{\ga}$}, we obtain the following.

\begin{cor}\label{C:AlgoConcLift}
Let~$\cV$ be a quasivariety on an $\aleph_\go$-small\index{s}{aleph0@$\aleph_{\ga}$} first-order language~$\scL$ and suppose that there exists a diagram $\overrightarrow{\bS}=\famm{S_m,\gs_m^n}{m\leq n<\go}$, indexed by the chain~$\go$ of all natural numbers, of $\aleph_\go$-small\index{s}{aleph0@$\aleph_{\ga}$} \jzs s, that cannot be lifted\index{i}{diagram!lifted}, with respect to the~$\ConcV$\index{s}{compcongV@$\ConcV$ functor} functor, by any diagram in~$\cV$. Then there exists a \jzs\ with at most~$\aleph_\go$\index{s}{aleph0@$\aleph_{\ga}$} elements that is not isomorphic to $\ConcV\bB$\index{s}{compcongVA@$\ConcV\bA$, $\ConcV f$} for any $\bB\in\cV$.
\end{cor}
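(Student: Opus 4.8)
The plan is to apply the Condensate Lifting Lemma (Lemma~\ref{L:CLL}) to the given diagram $\overrightarrow{\bS}$, indexed by $P:=\go$, with cardinal parameter $\gl:=\aleph_\go$ (so that $\cf(\gl)=\aleph_0$), and to show that the resulting condensate $\bA:=\xF(X)\otimes\overrightarrow{\bS}$, for a suitable $\aleph_\go$-lifter $(X,\bX)$ of $\go$, is a \jzs\ with at most $\aleph_\go$ elements that is not isomorphic to $\ConcV\bB$ for any $\bB\in\cV$.

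First I would assemble the combinatorial and categorical data. The chain $\go$ is a nontrivial, lower finite, well-founded tree with $\card\go\leq\aleph_\go$, hence---being lower $\cf(\aleph_\go)$-small---it has, by Proposition~\ref{P:ktighttrees}, an $\aleph_\go$-lifter $(X,\bX)$ with $X$ a lower finite \ajs\ with zero and $\card X=\aleph_\go$; by part~(i) of Definition~\ref{D:Lifter}, $\bX^=$ is lower $\cf(\aleph_\go)$-small, i.e. lower finite. On the categorical side, since $\scL$ is $\aleph_\go$-small, Theorem~\ref{T:Algorightlard} shows that $(\cV,\cV^{(\aleph_\go)},\SEM,\SEM^{(\aleph_\go)},\SEM^{\mathrm{idl}},\ConcV)$ is a projectable right $\aleph_\go$-larder. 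I would combine this with the left larder $(\SEM,\SEM,\SEM^{\mathrm{idl}},\Phi)$, where $\Phi$ is the identity functor on $\SEM$ (this is the left larder used in the proof of Theorem~\ref{T:MindConcLift}), taking $\cA^\dagger:=\SEM^{(\aleph_\go)}$ so that $\Phi``(\cA^\dagger)\subseteq\SEM^{(\aleph_\go)}$; arguing as for Proposition~\ref{P:LR2Larder}, the octuple $(\SEM,\cV,\SEM,\SEM^{(\aleph_\go)},\cV^{(\aleph_\go)},\SEM^{\mathrm{idl}},\Phi,\ConcV)$ is then a projectable $(\aleph_\go,\aleph_0)$-larder. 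Here is the main subtlety: $\aleph_\go$ is \emph{singular}, so Proposition~\ref{P:LR2Larder} does not literally apply; one must instead observe directly that fusing a left larder with a right $(\gl,\cf(\gl))$-larder simply amounts to taking the conjunction of the conditions of Definition~\ref{D:Larder}, none of which uses regularity of $\gl$.

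Next I would run the argument by contradiction. Put $\bA:=\xF(X)\otimes\overrightarrow{\bS}$; since $\cA=\SEM$, $\bA$ is a \jzs\ and $\Phi(\bA)=\bA$. Suppose $\bA\cong\ConcV\bB$ for some $\bB\in\cV$. Any isomorphism $\ConcV\bB\to\bA$ is surjective with trivial fibres, hence ideal-induced, so it is a double arrow $\chi\colon\Psi(\bB)\Rightarrow\Phi\bigl(\xF(X)\otimes\overrightarrow{\bS}\bigr)$ of $\cS=\SEM$. Each $S_m$ is $\aleph_\go$-small, hence belongs to $\cA^\dagger$, and $\go$ has no maximal element, so all the hypotheses of Lemma~\ref{L:CLL} are met in case~(i) ($\gm=\aleph_0=\cf(\aleph_\go)$ and $\bX^=$ lower finite). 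As the larder is projectable, Lemma~\ref{L:CLL} produces a $\go$-indexed diagram $\overrightarrow{B}'$ in $\cV$ together with a natural equivalence $\Psi\overrightarrow{B}'\cong\Phi\overrightarrow{\bS}$, that is, $\ConcV\overrightarrow{B}'\cong\overrightarrow{\bS}$. This exhibits a lifting of $\overrightarrow{\bS}$ in $\cV$ with respect to $\ConcV$, contradicting the hypothesis. Hence $\bA\not\cong\ConcV\bB$ for every $\bB\in\cV$.

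It remains to bound $\card\bA$. The Boolean algebra $\rF(X)$ underlying $\xF(X)$ is generated by the $\card X=\aleph_\go$ elements $\tilde u$ ($u\in X$), so $\card\rF(X)\leq\aleph_\go$; by Proposition~\ref{P:FinRepresBP}, $\xF(X)$ is an $(\aleph_\go)^+$-small directed colimit of its compact subalgebras $\xF(X)_f$, $f\in\Sigma_{\xF(X)}$. Since the functor ${}_{-}\otimes\overrightarrow{\bS}$ preserves small directed colimits (Proposition~\ref{P:TensFunctorLim}), $\bA$ is a directed colimit of at most $\aleph_\go$ objects $\xF(X)_f\otimes\overrightarrow{\bS}=\prod\famm{S_{|u|}}{u\in\At\xF(X)_f}$, each of which is a finite product of $\aleph_\go$-small \jzs s and is therefore $\aleph_\go$-small because $\aleph_\go$ is a limit cardinal. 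As a directed colimit in $\SEM$ is a quotient of the disjoint union of its vertices, $\card\bA\leq\aleph_\go\cdot\aleph_\go=\aleph_\go$, which completes the proof.
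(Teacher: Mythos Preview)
Your proof is correct and follows essentially the same approach as the paper's: build an $\aleph_\go$-larder by combining the identity left larder on $\SEM$ with the right larder from Theorem~\ref{T:Algorightlard}, obtain an $\aleph_\go$-lifter $(X,\bX)$ of $\go$ via Proposition~\ref{P:ktighttrees}, and apply CLL to derive a contradiction from $\xF(X)\otimes\overrightarrow{\bS}\cong\ConcV\bB$. Your version is actually more careful than the paper's in two respects: you flag that Proposition~\ref{P:LR2Larder} is stated only for regular~$\gl$ and explain why the conclusion holds anyway (the proof is a trivial conjunction of conditions, none using regularity), and you spell out the cardinality bound on $\xF(X)\otimes\overrightarrow{\bS}$ via Propositions~\ref{P:FinRepresBP} and~\ref{P:TensFunctorLim}, whereas the paper simply asserts it.
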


\begin{proof}
Our proof will follow the lines of the one of Theorem~\ref{T:MindConcLift}.

\begin{claim}
Denote by~$\Phi$ the identity functor on~$\SEM$\index{s}{Sem@$\SEM$}. Then the quadruple\index{s}{Semi@$\SEM^{\mathrm{idl}}$}
 \[
 (\SEM,\SEM,\SEM^{\mathrm{idl}},\Phi)
 \]
is a left larder\index{i}{larder!left}.
\end{claim}

\begin{proof}
As the proof of Claim~\ref{Cl:GrSchleftlard} within the proof of Theorem~\ref{T:MindConcLift}.
\qed\ Claim\end{proof}

By the claim above together with Theorem~\ref{T:Algorightlard} and Proposition~\ref{P:LR2Larder} (with $\cA^\dagger:=\SEM^{(\aleph_\go)}$)\index{s}{Seml@$\SEM^{(\gl)}$}, we obtain that the $8$-uple\index{s}{compcongV@$\ConcV$ functor}\index{s}{Vl@$\cV^{(\gl)}$, $\cV$ quasivariety}\index{s}{Semi@$\SEM^{\mathrm{idl}}$}\index{s}{Seml@$\SEM^{(\gl)}$}\index{s}{Sem@$\SEM$}
 \[
 (\SEM,\cV,\SEM,\SEM^{(\aleph_\go)},\cV^{(\aleph_\go)},\SEM^{\mathrm{idl}},
 \Phi,\ConcV)
 \]
is an $\aleph_\go$-larder\index{s}{aleph0@$\aleph_{\ga}$}\index{i}{larder}.

By applying Proposition~\ref{P:ktighttrees} to $T:=\go$, we obtain an $\aleph_\go$-lifter\index{s}{aleph0@$\aleph_{\ga}$}~$(X,\bX)$\index{i}{lifter ($\gl$-)} of~$\go$ such that~$X$ is a lower finite\index{i}{poset!lower finite} \ajs\index{i}{almost join-semilattice}\ with zero and $\card X=\aleph_\go$\index{s}{aleph0@$\aleph_{\ga}$}. The \jzs\ $\bS:=\xF(X)\otimes\overrightarrow{\bS}$\index{s}{FxX@$\xF(X)$}\index{s}{otimAS@$\bA\otimes\overrightarrow{S}$, $\gf\otimes\overrightarrow{S}$} has at most $\aleph_\go$\index{s}{aleph0@$\aleph_{\ga}$} elements. If~$\bS$ is isomorphic to $\ConcV\bB$\index{s}{compcongVA@$\ConcV\bA$, $\ConcV f$} for some $\bB\in\cV$, then, by CLL\index{i}{Condensate Lifting Lemma (CLL)} (Lemma~\ref{L:CLL}), there exists an $\go$-indexed diagram~$\overrightarrow{\bB}$ in~$\cV$ such that $\Conc\overrightarrow{\bB}\cong\overrightarrow{\bS}$, a contradiction.
\qed\end{proof}

\chapter[Congruence-preserving extensions]{Congruence-permutable, congruence-preserving extensions of lattices}\label{Ch:CongPres}

\textbf{Abstract.} This chapter is intended to illustrate how to use CLL\index{i}{Condensate Lifting Lemma (CLL)} for solving an open problem of lattice theory, although the statement of that problem does not involve lifting\index{i}{diagram!lifting} diagrams with respect to functors. We set
 \[
 \ga\gb:=\setm{(x,z)\in X\times X}{(\exists y\in X)((x,y)\in\ga\text{ and }
 (y,z)\in\gb}\,,
 \]
for any binary relations~$\ga$ and~$\gb$ on a set~$X$, and we say that an algebra~$\bA$ is \emph{congruence-permutable}\index{i}{algebra!congruence-permutable|ii} if $\ga\gb=\gb\ga$ for any congruences~$\ga$ and~$\gb$ of~$\bA$. The problem in question is:
 \begin{quote}\normalsize\em
 Does every lattice of cardinality~$\aleph_1$\index{s}{aleph0@$\aleph_{\ga}$} have a congruence-permutable\index{i}{algebra!congruence-permutable}, congruence-preserving extension\index{i}{congruence-preserving extension}? 
 \end{quote}
This problem is part of Problem~4 in the survey paper T\r{u}ma and Wehrung~\cite{CLPSurv}\index{c}{Tuma@T\r{u}ma, J.}\index{c}{Wehrung, F.} but it was certainly known before.
Due to earlier work by Plo\v{s}\v{c}ica, T\r{u}ma, and Wehrung~\cite{PTW}\index{c}{Plo\v{s}\v{c}ica, M.}\index{c}{Tuma@T\r{u}ma, J.}\index{c}{Wehrung, F.}, the corresponding negative result for free lattices on~$\aleph_2$\index{s}{aleph0@$\aleph_{\ga}$} generators was already known. The countable case is still open, although it is proved in Gr\"atzer, Lakser, and Wehrung~\cite{GLWe}\index{c}{Gr\"atzer, G.}\index{c}{Lakser, H.}\index{c}{Wehrung, F.} that every countable, locally finite\index{i}{lattice!locally finite} (or even ``locally congruence-finite''\index{i}{lattice!locally congruence-finite}) lattice has a relatively complemented\index{i}{lattice!relatively complemented} (thus congruence-permutable\index{i}{algebra!congruence-permutable}) congruence-preserving extension\index{i}{congruence-preserving extension}. The finite case is solved in Tischendorf~\cite{Tisch}\index{c}{Tischendorf, M.}, where it is proved that every finite lattice embeds congruence-preservingly into some finite atomistic (thus congruence-permutable\index{i}{algebra!congruence-permutable}) lattice. This result is improved in Gr\"atzer and Schmidt~\cite{GrSc99}\index{c}{Gr\"atzer, G.}\index{c}{Schmidt, E.\,T.}, where the authors prove that every finite lattice embeds congruence-preservingly into some sectionally complemented\index{i}{lattice!sectionally complemented} finite lattice.

Most of Chapter~\ref{Ch:CongPres} will consist of checking one after another the various assumptions that need to be satisfied in order to be able to use CLL\index{i}{Condensate Lifting Lemma (CLL)}. Most of these verifications are elementary. In that sense (i.e., considering the verification of the assumptions underlying CLL\index{i}{Condensate Lifting Lemma (CLL)} as tedious but elementary), the hard core of the solution to the problem above consists of the unliftable\index{i}{unliftable diagram} family of squares\index{i}{square (shape of a diagram)} presented in Lemma~\ref{L:UnliftSqMetr}.

\section{The category of semilattice-metric spaces}\label{S:MetrSp}

In this section we shall introduce the category that will play the role of the~$\cS$ of the statement of CLL\index{i}{Condensate Lifting Lemma (CLL)} (Lemma~\ref{L:CLL}) and state its good behavior in terms of directed colimits and finitely presented\index{i}{presented!finitely} objects. We remind the reader that \emph{semilattice-valued distances}\index{i}{distance!semilattice-valued} have been introduced in Example~\ref{Ex:RTWe}.

\begin{defn}\label{D:MetrSp}
For a set~$A$, a \jzs\ $S$, and an $S$-valued distance\index{i}{distance!semilattice-valued} $\gd\colon A\times A\to S$, we shall say that the triple $\bA:=(A,\gd,S)$ is a \emph{semilattice-metric space}\index{i}{semilattice-metric!space|ii}, and we shall often write $\gd_\bA:=\gd$, $\tA:=S$. We say that~$\bA$ is \emph{finite} if both~$A$ and~$\tA$ are finite.

For semilattice-metric spaces\index{i}{semilattice-metric!space}~$\bA$ and~$\bB$, a \emph{morphism} from~$\bA$ to~$\bB$ is a pair $\xf=(f,\tf)$, where $f\colon A\to B$, $\tf\colon\tA\to\tB$ is a \jzh, and the equation $\gd_\bB(f(x),f(y))=\tf\bigl(\gd_\bA(x,y)\bigr)$ is satisfied for all $x,y\in A$. The composition of morphisms is defined by $\xg\circ\xf:=(g\circ f,\tg\circ\tf)$.

We denote by $\METR$\index{s}{Metr@$\METR$|ii} the category of all semilattice-metric\index{i}{semilattice-metric!space} spaces.
\end{defn}

The following lemma implies that both forgetful functors from~$\METR$\index{s}{Metr@$\METR$} to~$\SET$\index{s}{Set@\textbf{Set}} and~$\SEM$\index{s}{Sem@$\SEM$} preserve all small directed colimits. Its proof, although somewhat tedious, is straightforward, and we shall omit it.

\begin{lem}\label{L:FgtMetrPres}
Let $\famm{\bA_i,\xf_i^j}{i\leq j\text{ in }I}$ be a directed poset-indexed diagram in~$\METR$\index{s}{Metr@$\METR$}. We form the colimits\index{s}{Set@\textbf{Set}}\index{s}{Sem@$\SEM$}
 \begin{align*}
 \famm{A,f_i}{i\in I}&=\varinjlim\famm{A_i,f_i^j}{i\leq j\text{ in }I}\quad
 \text{in }\SET\,,\\
 \famm{\tA,\tf_i}{i\in I}&=\varinjlim\famm{\tA_i,\tf_i^j}{i\leq j\text{ in }I}\quad
 \text{in }\SEM\,. 
 \end{align*}
Then there exists a unique $\tA$-valued distance\index{i}{distance!semilattice-valued}~$\gd$ on~$A$ such that $\xf_i:=(f_i,\tf_i)$ is a morphism from~$\bA_i$ to~$\bA:=(A,\gd,\tA)$ for each $i\in I$. Furthermore,\index{s}{Metr@$\METR$}
 \[
 \famm{\bA,\xf_i}{i\in I}=\varinjlim\famm{\bA_i,\xf_i^j}{i\leq j\text{ in }I}\quad
 \text{in }\METR\,.
 \]
\end{lem}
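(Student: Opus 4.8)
The statement to prove is Lemma~\ref{L:FgtMetrPres}: given a directed poset-indexed diagram $\famm{\bA_i,\xf_i^j}{i\leq j\text{ in }I}$ in $\METR$, with the underlying $\SET$-colimit $\famm{A,f_i}{i\in I}$ of the carriers and the $\SEM$-colimit $\famm{\tA,\tf_i}{i\in I}$ of the semilattice parts, there is a unique $\tA$-valued distance $\gd$ on $A$ making each $\xf_i=(f_i,\tf_i)$ a morphism, and then $\famm{\bA,\xf_i}{i\in I}$ is the colimit in $\METR$. The approach is the routine ``construct the structure on the set-theoretic colimit, then verify the universal property'' argument, exactly parallel to Proposition~\ref{P:DirColimMIND} and to the discussion in Section~\ref{Su:DirColimFirstOrd}; the only twist is that the semilattice component lives in its own colimit rather than over $\SET$, but since $\SEM$-colimits are also computed over $\SET$ (Example~\ref{Ex:QuasivarSpecial}(iii) together with the general description in Section~\ref{Su:DirColimFirstOrd}), the two colimits interact transparently.

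\textbf{Construction of $\gd$.} First I would define $\gd$. Given $a,b\in A$, pick $i\in I$ and $x,y\in A_i$ with $f_i(x)=a$, $f_i(y)=b$ (possible since $A=\bigcup_{i}f_i``(A_i)$ by the directedness of $I$ and the description of $\varinjlim$ in $\SET$), and set $\gd(a,b):=\tf_i(\gd_{\bA_i}(x,y))$. I would check this is well-defined: if also $f_j(x')=a$, $f_j(y')=b$, choose $k\in I\upw\set{i,j}$ with $f_i^k(x)=f_j^k(x')$ and $f_i^k(y)=f_j^k(y')$ (available by the colimit description~\eqref{Eq:gfix=gfiyimpl}, after possibly enlarging $k$ twice); then applying the fact that $\xf_i^k$ is a $\METR$-morphism gives $\tf_i^k(\gd_{\bA_i}(x,y))=\gd_{\bA_k}(f_i^k(x),f_i^k(y))=\gd_{\bA_k}(f_j^k(x'),f_j^k(y'))=\tf_j^k(\gd_{\bA_j}(x',y'))$, and applying $\tf_k$ and using $\tf_i=\tf_k\circ\tf_i^k$ (coherence of the $\SEM$-cocone) yields $\tf_i(\gd_{\bA_i}(x,y))=\tf_j(\gd_{\bA_j}(x',y'))$. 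That $\gd$ is a semilattice-valued distance (symmetry, $\gd(a,a)=0$, triangle inequality $\gd(a,c)\leq\gd(a,b)\vee\gd(b,c)$) follows by choosing a common index $i$ where representatives of $a,b,c$ all exist, transporting the corresponding identities/inequalities in $S_i:=\tA_i$, and pushing them forward along the join-homomorphism $\tf_i$ (using that $\tf_i$ preserves $0$, $\vee$, and the algebraic order). By construction each $\xf_i$ satisfies $\gd(f_i(x),f_i(y))=\tf_i(\gd_{\bA_i}(x,y))$, i.e.\ is a $\METR$-morphism, and $\gd$ is clearly the unique distance with this property since every pair in $A$ is hit by some $f_i$.

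\textbf{Universal property.} Finally I would verify $\famm{\bA,\xf_i}{i\in I}=\varinjlim\famm{\bA_i,\xf_i^j}{i\leq j\text{ in }I}$ in $\METR$. The cocone condition $\xf_j\circ\xf_i^j=\xf_i$ (for $i\leq j$) splits into the $\SET$-part $f_j\circ f_i^j=f_i$ and the $\SEM$-part $\tf_j\circ\tf_i^j=\tf_i$, both of which hold by definition of the two colimits. Given any competing cocone $\famm{\bB,\xg_i}{i\in I}$ with $\xg_i=(g_i,\tg_i)$, the universal properties of $A$ in $\SET$ and of $\tA$ in $\SEM$ produce unique $g\colon A\to B$ with $g\circ f_i=g_i$ and unique $\tg\colon\tA\to\tB$ in $\SEM$ with $\tg\circ\tf_i=\tf_i$, wait — $\tg\circ\tf_i=\tg_i$. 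It remains to check $\xg:=(g,\tg)$ is a $\METR$-morphism, i.e.\ $\gd_\bB(g(a),g(a'))=\tg(\gd(a,a'))$ for all $a,a'\in A$: choosing $i$ and representatives $x,x'\in A_i$, the left side is $\gd_\bB(g_i(x),g_i(x'))=\tg_i(\gd_{\bA_i}(x,x'))$ (as $\xg_i$ is a morphism), and the right side is $\tg(\tf_i(\gd_{\bA_i}(x,x')))=\tg_i(\gd_{\bA_i}(x,x'))$, so they agree. Uniqueness of $\xg$ is immediate from uniqueness of $g$ and $\tg$ separately. There is no genuine obstacle here; the whole proof is a bookkeeping exercise, and the only point demanding slight care is the well-definedness of $\gd$, where one must remember to enlarge the index finitely many times to merge representatives — which is why the original text dismisses the lemma as ``somewhat tedious, but straightforward.''
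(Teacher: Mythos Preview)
Your proposal is correct and is exactly the standard argument the paper has in mind; the paper in fact omits the proof entirely, declaring it ``somewhat tedious, but straightforward,'' and your write-up is a faithful expansion of that omitted routine verification.
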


The proof of the following result goes along the lines of the proof of Proposition~\ref{P:glPresMIND}.

\begin{prop}\label{P:FPMetr}
Every semilattice-metric space\index{i}{semilattice-metric!space}~$\bA$ is a monomorphic\index{i}{monomorphic colimit} directed colimit of a diagram of finite semilattice-metric spaces. Furthermore, $\bA$ is finitely presented\index{i}{presented!finitely} in~$\METR$\index{s}{Metr@$\METR$} if{f} it is finite.
\end{prop}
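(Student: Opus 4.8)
The plan is to follow the template of the proof of Proposition~\ref{P:glPresMIND}, replacing the explicit description of directed colimits in $\MIND$ by the one for $\METR$ supplied by Lemma~\ref{L:FgtMetrPres}, and invoking two elementary facts about $\SEM$: every finitely generated $(\vee,0)$-semilattice is finite, and every finite $(\vee,0)$-semilattice is finitely presented in $\SEM$.

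\emph{First part (monomorphic directed colimit of finite objects).} Write $\bA=(A,\gd,\tA)$. Let $\Sigma$ be the set of all pairs $(X,T)$ where $X\in[A]^{<\go}$, $T$ is a finite $(\vee,0)$-subsemilattice of~$\tA$, and $\gd(X\times X)\subseteq T$, ordered by componentwise inclusion; for such a pair put $\bA_{(X,T)}:=(X,\gd\res_{X\times X},T)$, a finite semilattice-metric space, with the inclusion pair as transition morphism. Using that finitely generated $(\vee,0)$-semilattices are finite one checks that $\Sigma$ is nonempty and directed: given $(X_0,T_0),(X_1,T_1)$, take $X:=X_0\cup X_1$ and $T$ the $(\vee,0)$-subsemilattice generated by $T_0\cup T_1\cup\gd(X\times X)$. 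Moreover $\bigcup_{(X,T)\in\Sigma}X=A$ and $\bigcup_{(X,T)\in\Sigma}T=\tA$. By Lemma~\ref{L:FgtMetrPres}, the colimit of this diagram in~$\METR$ has underlying set~$A$, underlying semilattice~$\tA$, and the unique distance turning the inclusions into morphisms; since $\gd$ itself does so, that distance is~$\gd$, and the colimit is (up to the canonical identification) $\bA$. All transition and limiting morphisms are inclusion pairs, hence monic in~$\METR$ (injective maps are left-cancellable in both $\SET$ and $\SEM$), so the colimit is monomorphic.

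\emph{Finite implies finitely presented.} Let $\bA=(A,\gd,\tA)$ be finite and let $\famm{\bB,\xf_i}{i\in I}=\varinjlim\famm{\bB_i,\xf_i^j}{i\leq j\text{ in }I}$ in~$\METR$, with $\xf_i=(f_i,\tf_i)$; let $\xg=(g,\tg)\colon\bA\to\bB$. By Lemma~\ref{L:FgtMetrPres} the underlying colimit is that of the $\SET$-diagram $\famm{B_i,f_i^j}{}$ and of the $\SEM$-diagram $\famm{\tB_i,\tf_i^j}{}$. As $A$ is finite, $g$ factors as $g=f_{i_0}\circ g'$ for some $i_0$; as $\tA$ is finite hence finitely presented in~$\SEM$, $\tg$ factors as $\tg=\tf_{i_1}\circ\tg'$ for some $i_1\geq i_0$, and after composing $g'$ with $f_{i_0}^{i_1}$ we still have $g=f_{i_1}\circ g'$. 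For each of the finitely many pairs $(x,y)\in A\times A$, applying $\tf_{i_1}$ to $\gd_{\bB_{i_1}}(g'(x),g'(y))$ and to $\tg'(\gd(x,y))$ yields the equal elements $\gd_\bB(g(x),g(y))$ and $\tg(\gd(x,y))$; so by the equalizer property of the $\SEM$-colimit there is $j\geq i_1$ with $\tf_{i_1}^j\bigl(\gd_{\bB_{i_1}}(g'(x),g'(y))\bigr)=\tf_{i_1}^j\bigl(\tg'(\gd(x,y))\bigr)$ for all such pairs. Since $\xf_{i_1}^j$ is a morphism of~$\METR$, the left side equals $\gd_{\bB_j}\bigl(f_{i_1}^j g'(x),f_{i_1}^j g'(y)\bigr)$, so $(f_{i_1}^j\circ g',\tf_{i_1}^j\circ\tg')$ is a morphism $\bA\to\bB_j$ through which $\xg$ factors; this is clause~(i) of Definition~\ref{D:kapppres} for $\gk=\aleph_0$. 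Clause~(ii) is similar and easier, using finiteness of $A$ and finite presentability of $\tA$ in~$\SEM$.

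\emph{Finitely presented implies finite.} This is the argument of Corollary~\ref{C:FinPres=Comp}: by the first part, $\bA=\varinjlim\famm{\bA_i,\xf_i^j}{}$ monomorphically with each $\bA_i$ finite; finite presentability gives $i$ and $\xh\colon\bA\to\bA_i$ with $\id_\bA=\xf_i\circ\xh$, and monicity of $\xf_i$ then forces $\xh\circ\xf_i=\id_{\bA_i}$, so $\xf_i$ is an isomorphism and $\bA\cong\bA_i$ is finite. The only step needing genuine (if modest) attention is the reconciliation, in the second part, of the separately obtained factorizations on the set and semilattice components into a single $\METR$-morphism; this succeeds exactly because $A$ is finite, so only finitely many distance equations must be driven to hold simultaneously in some $\bB_j$. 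Everything else is a routine transcription of Proposition~\ref{P:glPresMIND} and Corollary~\ref{C:FinPres=Comp}.
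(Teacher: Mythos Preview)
Your proof is correct and follows essentially the same approach as the paper: the same indexing poset of pairs $(X,T)$ for the monomorphic colimit, the same Corollary~\ref{C:FinPres=Comp}-style argument for ``finitely presented implies finite'', and a spelled-out version of the Proposition~\ref{P:glPresMIND}-type argument for ``finite implies finitely presented'' (which the paper merely cites). The only difference is that you supply explicit details for directedness and for the factorization step, whereas the paper leaves these as routine.
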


\begin{proof}
Denote by~$I$ the set of all pairs $i=(X,\tX)$ such that~$X$ is a finite subset of~$A$, $\tX$ is a finite \jz-subsemilattice of~$\tA$, and $\setm{\gd_\bA(x,y)}{x,y\in X}\subseteq\tX$; order~$I$ by componentwise containment. Then set $\bX_i:=(X,\gd_\bA\res_{X\times X},\tX)$, and denote by~$\gx_i$ (resp., $\tilde{\gx}_i$) the inclusion map from~$X_i$ into~$A$ (resp., from~$\tX_i$ into~$\tA$). The pair $\bgx_i:=(\gx_i,\tilde{\gx}_i)$ is a monomorphism from~$\bX_i$ to~$\bA$ in~$\METR$\index{s}{Metr@$\METR$}. Define likewise a morphism $\bgx_i^j\colon\bX_i\to\bX_j$ in~$\METR$\index{s}{Metr@$\METR$}, for $i\leq j$ in~$I$. As every finitely generated \jzs\ is finite, it is straightforward to verify the statement
 \begin{equation}\label{Eq:MetrDirLimFin}
 \famm{\bA,\bgx_i}{i\in I}=\varinjlim\famm{\bX_i,\bgx_i^j}{i\leq j\text{ in }I}\,.
 \end{equation}
The first part of Proposition~\ref{P:FPMetr} follows.

The proof that~$\bA$ finite implies~$\bA$ finitely presented\index{i}{presented!finitely} is similar to the proof of the first part of Proposition~\ref{P:glPresMIND}. Conversely, suppose that~$\bA$ is finitely presented\index{i}{presented!finitely}. As~\eqref{Eq:MetrDirLimFin} is a monomorphic\index{i}{monomorphic colimit} directed colimit, we infer, as at the end of proof of Corollary~\ref{C:FinPres=Comp}, that~$\bA$ is isomorphic to one of the~$\bX_i$, thus~$\bA$ is finite.
\qed\end{proof}

\section{The category of all semilattice-metric covers}\label{S:MetrCov}

In this section we shall introduce the category that will play the role of~$\cB$ and the functor that will play the role of~$\Psi$ in the statement of CLL\index{i}{Condensate Lifting Lemma (CLL)} (Lemma~\ref{L:CLL}), and state their good behavior in terms of directed colimits and finitely presented\index{i}{presented!finitely} objects. Our~$\Psi$ is defined as a forgetful functor. The right $\aleph_0$-larder\index{s}{aleph0@$\aleph_{\ga}$}\index{i}{larder!right} in question will be stated in Proposition~\ref{P:MetrLarder}.

\goodbreak
\begin{defn}\label{D:MetrCov}
A \emph{semilattice-metric cover}\index{i}{semilattice-metric!cover|ii} is a quadruple $\bA:=(A^*,A,\gd,S)$, where
\begin{description}
\item[\tui] $A$ is a set and $A^*$ is a subset of $A$;

\item[\tuii] $S$ is a \jzs;

\item[\tuiii] $\gd$ is an $S$-valued distance\index{i}{distance!semilattice-valued} on~$A$;

\item[\tuiv] (\emph{Parallelogram Rule}\index{i}{Parallelogram Rule|ii}) For all $x,y,z\in A^*$, there exists $t\in A$ such that
 \[
 \gd(x,t)\leq\gd(y,z)\text{ and }\gd(t,z)\leq\gd(x,y)\,.
 \]
\end{description}
We shall often write $\gd_\bA:=\gd$ and $\tA:=S$. Observe that the triple $(A,\gd_\bA,\tA)$ is then a semilattice-metric space\index{i}{semilattice-metric!space}. We say that~$\bA$ is \emph{finite} if both~$A$ and~$\tA$ are finite.

For semilattice-metric covers\index{i}{semilattice-metric!cover}~$\bA$ and~$\bB$, a \emph{morphism} from~$\bA$ to~$\bB$ is a morphism $\xf\colon(A,\gd_\bA,\tA)\to(B,\gd_\bB,\tB)$ in~$\METR$\index{s}{Metr@$\METR$} such that $f``(A^*)\subseteq B^*$.

We denote by $\METR^*$\index{s}{Metrs@$\METR^*$|ii} the category of all semilattice-metric covers\index{i}{semilattice-metric!cover}.
\end{defn}
 
Our next result shows how to construct a right $\aleph_0$-larder\index{s}{aleph0@$\aleph_{\ga}$}\index{i}{larder!right} from semilattice-valued spaces and covers.

\begin{prop}\label{P:MetrLarder}
Denote by $\METR^{\mathrm{fin}}$\index{s}{Metrf@$\METR^{\mathrm{fin}}$|ii} \pup{resp., $\METR^{*\fin}$}\index{s}{Metrsf@$\METR^{*\mathrm{fin}}$|ii} the class of all finite objects in~$\METR$\index{s}{Metr@$\METR$} \pup{resp., $\METR^*$\index{s}{Metrs@$\METR^*$}}, by $\METR^\Rightarrow$\index{s}{METRarr@$\METR^\Rightarrow$|ii} the subcategory of~$\METR$\index{s}{Metr@$\METR$} consisting of all morphisms~$(f,\tf)$ with~$f$ surjective, and by $\Psi\colon\METR^*\to\METR$\index{s}{Metrs@$\METR^*$} the forgetful functor $\bA\mapsto\bA^\flat:=(A^*,\gd_\bA\res_{A^*\times A^*},\tA)$\index{s}{Aiiflat@$\bA^\flat$, $f^\flat$|ii}. Then the octuple $(\METR^*,\METR^{*\fin},\METR,\METR^{\mathrm{fin}},\METR^\Rightarrow,\Psi)$\index{s}{Metrf@$\METR^{\mathrm{fin}}$}\index{s}{Metr@$\METR$}\index{s}{Metrsf@$\METR^{*\mathrm{fin}}$} is a right $\aleph_0$-larder\index{s}{aleph0@$\aleph_{\ga}$}\index{i}{larder!right}.
\end{prop}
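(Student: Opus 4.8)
The plan is to check directly the two conditions defining a right $(\aleph_0,\aleph_0)$-larder (Definition~\ref{D:RightLard}) for the octuple $(\METR^*,\METR^{*\fin},\METR,\METR^{\mathrm{fin}},\METR^\Rightarrow,\Psi)$ at an arbitrary object $\bB$ of~$\METR^*$. Since the statement asserts only larderhood (not strongness or projectability), no directed-colimit axioms enter; this is the expected state of affairs at the index~$\aleph_0$, where the colimit-based arguments used for larger~$\gl$ (e.g. the use of Proposition~\ref{P:ApproxMonIIWD} in Section~\ref{S:Rightal0lardGQV}) are replaced by a plain finite construction.

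The condition $(\PRES_{\aleph_0}(\METR^{*\fin},\Psi))$ is immediate: if $\bB=(B^*,B,\gd_\bB,\tB)$ is a finite semilattice-metric cover, then $B^*\subseteq B$ and $\tB$ are finite, so $\Psi(\bB)=\bB^\flat$ is a finite semilattice-metric space, hence finitely presented in~$\METR$ by Proposition~\ref{P:FPMetr}, hence weakly $\aleph_0$-presented in~$\METR$ (as $\aleph_0$ is regular; cf. the remark following Definition~\ref{D:wkapppres}).

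The substance lies in $(\LSr_{\aleph_0}(\bB))$. Fix $\bB\in\METR^*$, a finite semilattice-metric space~$\bS$ with underlying set~$S$, a double arrow $\gy=(f,\tf)\colon\bB^\flat\Rightarrow\bS$ (so $f\colon B^*\onto S$), a finite set~$I$, and a family $\famm{\gc_i\colon\bC_i\mono\bB}{i\in I}$ of monic objects of $\METR^{*\fin}\dnw\bB$, say $\gc_i=(c_i,\tilde{c}_i)$ with $\bC_i=(C_i^*,C_i,\gd_{\bC_i},\tC_i)$ finite. I would build a finite sub-cover of~$\bB$ in four steps. (a) For each $s\in S$ pick $x_s\in B^*$ with $f(x_s)=s$, and set $C^*:=\setm{x_s}{s\in S}\cup\bigcup\famm{c_i``(C_i^*)}{i\in I}$, a finite subset of~$B^*$. (b) For each triple $(x,y,z)\in(C^*)^3$ pick, using the Parallelogram Rule in~$\bB$, a point $t_{x,y,z}\in B$ with $\gd_\bB(x,t_{x,y,z})\leq\gd_\bB(y,z)$ and $\gd_\bB(t_{x,y,z},z)\leq\gd_\bB(x,y)$, and set $C:=C^*\cup\bigcup\famm{c_i``(C_i)}{i\in I}\cup\setm{t_{x,y,z}}{x,y,z\in C^*}$, a finite subset of~$B$ containing~$C^*$. (c) Let $\tC$ be the $(\vee,0)$-subsemilattice of~$\tB$ generated by the finite set $\setm{\gd_\bB(u,v)}{u,v\in C}\cup\bigcup\famm{\tilde{c}_i``(\tC_i)}{i\in I}$; since a finitely generated \jzs\ is finite, $\tC$ is finite. (d) Put $\bC:=(C^*,C,\gd_\bB\res_{C\times C},\tC)$.

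It then remains to carry out a handful of routine verifications: that $\bC$ is a semilattice-metric cover, the only nonobvious clause being the Parallelogram Rule for~$C^*$, which holds because each witness $t_{x,y,z}$ was placed in~$C$ and the two relevant inequalities, valid in~$\tB$, are inherited by the $(\vee,0)$-subsemilattice~$\tC$; that the inclusion pair $\gc\colon\bC\mono\bB$ is a monomorphism in~$\METR^*$; that $\gc_i\utr\gc$ for each $i\in I$, the corestriction $(c_i,\tilde{c}_i)\colon\bC_i\to\bC$ being a well-defined morphism of~$\METR^*$ by virtue of the inclusions $c_i``(C_i^*)\subseteq C^*$, $c_i``(C_i)\subseteq C$, and $\tilde{c}_i``(\tC_i)\subseteq\tC$ arranged in steps (a)--(c); and finally that the underlying set map of $\gy\circ\Psi(\gc)$, namely the map $x\mapsto f(x)$ on~$C^*$, is onto~$S$, which is exactly what step (a) secures, so that $\gy\circ\Psi(\gc)$ is a morphism of~$\METR^\Rightarrow$. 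The only (very mild) obstacle is the bookkeeping in step (b) that guarantees survival of the Parallelogram Rule under passage to a finite sub-cover; everything else is a straightforward finiteness-and-closure argument.
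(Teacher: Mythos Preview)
Your proof is correct and follows essentially the same approach as the paper's: both verify $(\PRES_{\aleph_0})$ via Proposition~\ref{P:FPMetr}, and for $(\LSr_{\aleph_0}(\bB))$ both construct a finite sub-cover of~$\bB$ by first choosing a finite $C^*\subseteq B^*$ containing the images $c_i``(C_i^*)$ together with preimages witnessing surjectivity onto~$S$, then enlarging to a finite $C\supseteq C^*$ containing the images $c_i``(C_i)$ and Parallelogram-Rule witnesses for triples in~$C^*$, and finally taking~$\tC$ to be the finite \jz-subsemilattice of~$\tB$ generated by the relevant distances and the images~$\tilde{c}_i``(\tC_i)$. The verifications you list match the paper's almost verbatim.
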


\begin{proof}
The statement $(\PRES_{\aleph_0}(\METR^{*\fin},\Psi))$\index{s}{Pres@$(\PRES_\gl(\cB^\dagger,\Psi))$} follows from the second part of Proposition~\ref{P:FPMetr}.

It remains to check $(\LSr_{\aleph_0}(\bB))$\index{s}{LSr@$(\LSr_\gm(B))$}, for each semilattice-metric cover\index{i}{semilattice-metric!cover}~$\bB$. Let~$\bA$ be a finite semilattice-metric space\index{i}{semilattice-metric!space}. The triple $\Psi(\bB)=(B^*,\gd_\bB\res_{B^*\times B^*},\tB)$ is a semilattice-metric space\index{i}{semilattice-metric!space}. Let $\bgf\colon\Psi(\bB)\Rightarrow\bA$\index{s}{AtorightarrowB@$f\colon A\Rightarrow B$} be a morphism in $\METR^\Rightarrow$\index{s}{METRarr@$\METR^\Rightarrow$}; hence $\gf\colon B^*\to A$ is surjective. Let $n<\go$ and let $\xu_i\colon\bU_i\to\bB$, for $i<n$, be objects in the comma category~$\METR^{*\fin}\dnw\bB$\index{s}{Metrsf@$\METR^{*\mathrm{fin}}$}. This means that each~$\bU_i$ is a finite semilattice-metric cover\index{i}{semilattice-metric!cover} while $\xu_i\colon\bU_i\to\bB$ in~$\METR^*$\index{s}{Metrs@$\METR^*$}.

As $\gf``(B^*)=A$ and both~$A$ and $u_i``(U_i^*)$ are finite, with $u_i``(U_i^*)\subseteq B^*$ (for all $i<n$), there exists a finite subset~$V^*$ of~$B^*$ containing $\bigcup\famm{u_i``(U_i^*)}{i<n}$ such that $\gf``(V^*)=A$. As~$\bB$ is a semilattice-metric cover\index{i}{semilattice-metric!cover} and both~$V^*$ and $\bigcup\famm{u_i``(U_i)}{i<n}$ are finite subsets of~$B$, with $V^*\subseteq B^*$, there exists a finite subset~$V$ of~$B$ containing $V^*\cup\bigcup\famm{u_i``(U_i)}{i<n}$ such that
 \begin{equation}\label{Eq:VV*MetrCov}
 (\forall x,y,z\in V^*)(\exists t\in V)\bigl(\gd_\bB(x,t)\leq\gd_\bB(y,z)\text{ and }
 \gd_\bB(t,z)\leq\gd_\bB(x,y)\bigr)\,.
 \end{equation}
As both $\bigcup\famm{\tu_i``(\tU_i)}{i<n}$ and~$V$ are finite, there exists a finite \jz-sub\-semi\-lat\-tice~$\tV$ of~$\tB$ such that
 \[
 \bigcup\famm{\tu_i``(\tU_i)}{i<n}\cup\setm{\gd_\bB(x,y)}{x,y\in V}\subseteq\tV\,.
 \]
Denote by~$\gd_\bV$ the restriction of~$\gd_\bB$ from~$V\times V$ to~$\tV$. It follows from~\eqref{Eq:VV*MetrCov} that the quadruple $\bV:=(V^*,V,\gd_\bV,\tV)$ is a semilattice-metric cover\index{i}{semilattice-metric!cover}. Further, denote by $\xf_i\colon\bU_i\to\bV$ the restriction of~$\xu_i$ from~$\bU_i$ to~$\bV$, for each $i<n$, by~$v$ (resp., $\tv$) the inclusion map from~$V$ into~$B$ (resp., from~$\tV$ into~$\tB$), and set $\xv:=(v,\tv)$. As~$v$ and~$\tv$ are both one-to-one, $\xv$ is monic. Obviously, $\xu_i=\xv\circ\xf_i$, so $\xf_i\colon\xu_i\to\xv$ is an arrow in the comma category~$\METR^{*\fin}\dnw\bB$\index{s}{Metrsf@$\METR^{*\mathrm{fin}}$}. Furthermore, from $\gf``(V^*)=A$ it follows that $\bgf\circ\Psi(\xv)\in\METR^\Rightarrow$\index{s}{METRarr@$\METR^\Rightarrow$}.
\qed\end{proof}

\section{A family of unliftable squares of semilattice-metric spaces}\label{S:UnliftMetr}

In this section we shall introduce a family of diagrams in~$\METR$\index{s}{Metr@$\METR$}, indexed by the finite poset $\set{0,1}^2$, which are unliftable\index{i}{unliftable diagram} with respect to the functor~$\Psi$ defined in Proposition~\ref{P:MetrLarder}.

We shall say that a \emph{square}\index{i}{square (shape of a diagram)|ii} from a category~$\cC$ is a $\set{0,1}^2$-indexed diagram in~$\cC$, that is, an object of the category~$\cC^{\set{0,1}^2}$. A typical square in the category~$\METR$\index{s}{Metr@$\METR$} is represented on Figure~\ref{Fig:sqinMetr}.

\begin{figure}[htb]\index{i}{square (shape of a diagram)}
 \[
 \def\labelstyle{\displaystyle}
 \xymatrix{
 & \bA & \\
 \bA_1\ar[ur]^{\xg_1} & & \bA_2\ar[ul]_{\xg_2}\\
 & \bA_0\ar[ul]^{\xf_1}\ar[ur]_{\xf_2} &
 }
 \]
\caption{A square in the category~$\METR$}
\label{Fig:sqinMetr}
\end{figure}

Say that elements $x$ and $y$ in a poset with zero are \emph{orthogonal}\index{i}{orthogonal|ii} if there is no nonzero element~$z$ such that $z\leq x$ and $z\leq y$.

The following lemma is a special case of a more general statement, however it will be sufficient for our purposes.

\begin{lem}\label{L:UnliftSqMetr}
Let $\overrightarrow{\bA}$ be a square\index{i}{square (shape of a diagram)} in $\METR$\index{s}{Metr@$\METR$} labeled as on Figure~\textup{\ref{Fig:sqinMetr}}, with elements $0,1\in A_0$, $a_i\in A_i$ for $i\in\set{0,1,2}$, and $\ga,\gb\in\tA_0$, satisfying the following conditions:
\begin{description}
\item[\tui] All maps $f_1$, $f_2$, $g_1$, $g_2$ are inclusion maps.

\item[\tuii] The elements $\tf_i(\ga)$ and $\tf_i(\gb)$ are orthogonal\index{i}{orthogonal} in~$\tA_i$, for each $i\in\set{1,2}$.

\item[\tuiii] $\gd_{\bA_0}(0,a_0)\leq\ga$, while $\gd_{\bA_i}(a_i,1)\leq\tf_i(\ga)$ for each $i\in\set{1,2}$.

\item[\tuiv] $\gd_{\bA_0}(a_0,1)\leq\gb$, while $\gd_{\bA_i}(0,a_i)\leq\tf_i(\gb)$ for each $i\in\set{1,2}$.
\end{description}

If there exists a square\index{i}{square (shape of a diagram)}~$\overrightarrow{\bB}$ of semilattice-metric covers\index{i}{semilattice-metric!cover} together with a morphism $\overrightarrow{\bchi}\colon\Psi\overrightarrow{\bB}\Todot\overrightarrow{\bA}$\index{s}{AtoRightarrowdotB@$f\colon\xA\Todot\xB$} in $(\METR^\Rightarrow)^{\set{0,1}^2}$\index{s}{METRarr@$\METR^\Rightarrow$}, then $\gd_\bA(a_1,a_2)=0$.
\end{lem}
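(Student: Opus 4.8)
Assume that a lift $(\overrightarrow{\bB},\overrightarrow{\bchi})$ as in the statement exists; the plan is to transport the Parallelogram Rule from the cover $\bB_0$ down into $\bA$ and then use the orthogonality hypothesis~(ii) to collapse the relevant distances to zero. Write $\overrightarrow{\bB}=(\bB_0,\bB_1,\bB_2,\bB)$, with structure morphisms $\xf_i^{\bB}\colon\bB_0\to\bB_i$ and $\xg_i^{\bB}\colon\bB_i\to\bB$ for $i\in\set{1,2}$ and common composite $\xh^{\bB}:=\xg_1^{\bB}\circ\xf_1^{\bB}=\xg_2^{\bB}\circ\xf_2^{\bB}$, and $\overrightarrow{\bchi}=(\chi_0,\chi_1,\chi_2,\chi_{\bA})$, each $\chi_p=(\chi_p,\tilde\chi_p)$ with surjective underlying set-map $\chi_p$ (that is what membership in $\METR^\Rightarrow$ means). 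Naturality of $\overrightarrow{\bchi}$, together with hypothesis~(i) (all maps $f_i,g_i$ are inclusions), yields on the set level $\chi_i(f_i^{\bB}(x))=\chi_0(x)$ for $x\in B_0^*$ and $\chi_{\bA}(g_i^{\bB}(y))=\chi_i(y)$ for $y\in B_i^*$, and on the semilattice level $\tilde\chi_i\circ\tf_i^{\bB}=\tf_i\circ\tilde\chi_0$ and $\tilde\chi_{\bA}\circ\tg_i^{\bB}=\tg_i\circ\tilde\chi_i$. By surjectivity of $\chi_0,\chi_1,\chi_2$ choose $\ol{0},\ol{a}_0,\ol{1}\in B_0^*$ over $0,a_0,1$ and $\ol{a}_i\in B_i^*$ over $a_i$. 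Then $\chi_{\bA}(g_i^{\bB}(\ol{a}_i))=a_i$ (with $a_i$ viewed inside $A$ via $g_i$), and since morphisms of $\METR$ transport distances this gives $\gd_{\bA}(a_1,a_2)=\tilde\chi_{\bA}\bigl(\gd_{\bB}(g_1^{\bB}(\ol{a}_1),g_2^{\bB}(\ol{a}_2))\bigr)$; so it suffices to show this quantity is $0$.

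Next I apply the Parallelogram Rule in the cover $\bB_0$ to the triple $(\ol{0},\ol{a}_0,\ol{1})$ of elements of $B_0^*$: it yields $t\in B_0$ with $\gd_{\bB_0}(\ol{0},t)\leq\gd_{\bB_0}(\ol{a}_0,\ol{1})$ and $\gd_{\bB_0}(t,\ol{1})\leq\gd_{\bB_0}(\ol{0},\ol{a}_0)$. The images $f_1^{\bB}(t)\in B_1$ and $f_2^{\bB}(t)\in B_2$ — which in general do \emph{not} lie in $B_1^*$, $B_2^*$ — will serve as intermediate points.

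The crux is to show $\tilde\chi_1\bigl(\gd_{\bB_1}(\ol{a}_1,f_1^{\bB}(t))\bigr)=0$ and $\tilde\chi_2\bigl(\gd_{\bB_2}(f_2^{\bB}(t),\ol{a}_2)\bigr)=0$. For the first, I bound $\gd_{\bB_1}(\ol{a}_1,f_1^{\bB}(t))$ by the triangle inequality in $\bB_1$ in two ways, once through $f_1^{\bB}(\ol{1})$ and once through $f_1^{\bB}(\ol{0})$, and then apply the monotone $(\vee,0)$-homomorphism $\tilde\chi_1$. On the path through $\ol{1}$: $\gd_{\bB_1}(f_1^{\bB}(\ol{1}),f_1^{\bB}(t))=\tf_1^{\bB}(\gd_{\bB_0}(\ol{1},t))$ has $\tilde\chi_1$-image at most $\tf_1(\gd_{\bA_0}(0,a_0))\leq\tf_1(\ga)$ (using $\gd_{\bB_0}(\ol{1},t)\leq\gd_{\bB_0}(\ol{0},\ol{a}_0)$, the identity $\tilde\chi_1\circ\tf_1^{\bB}=\tf_1\circ\tilde\chi_0$, and hypothesis~(iii)), while $\gd_{\bB_1}(\ol{a}_1,f_1^{\bB}(\ol{1}))$ has $\tilde\chi_1$-image $\gd_{\bA_1}(a_1,1)\leq\tf_1(\ga)$ (both points are in $B_1^*$, plus hypothesis~(iii)); hence $\tilde\chi_1(\gd_{\bB_1}(\ol{a}_1,f_1^{\bB}(t)))\leq\tf_1(\ga)$. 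The path through $\ol{0}$ gives, in the same way from the other Parallelogram inequality and hypothesis~(iv), $\tilde\chi_1(\gd_{\bB_1}(\ol{a}_1,f_1^{\bB}(t)))\leq\tf_1(\gb)$. Since $\tf_1(\ga)$ and $\tf_1(\gb)$ are orthogonal in $\tA_1$ by hypothesis~(ii), the element $\tilde\chi_1(\gd_{\bB_1}(\ol{a}_1,f_1^{\bB}(t)))$ must be $0$. The estimate for $\tilde\chi_2(\gd_{\bB_2}(f_2^{\bB}(t),\ol{a}_2))$ is entirely symmetric (paths through $f_2^{\bB}(\ol{1})$ and $f_2^{\bB}(\ol{0})$, hypotheses~(iii) and~(iv), orthogonality in $\tA_2$).

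Finally, the triangle inequality in $\bB$ through $\xh^{\bB}(t)=g_1^{\bB}(f_1^{\bB}(t))=g_2^{\bB}(f_2^{\bB}(t))$ gives $\gd_{\bB}(g_1^{\bB}(\ol{a}_1),g_2^{\bB}(\ol{a}_2))\leq\tg_1^{\bB}(\gd_{\bB_1}(\ol{a}_1,f_1^{\bB}(t)))\vee\tg_2^{\bB}(\gd_{\bB_2}(f_2^{\bB}(t),\ol{a}_2))$; applying $\tilde\chi_{\bA}$, using $\tilde\chi_{\bA}\circ\tg_i^{\bB}=\tg_i\circ\tilde\chi_i$ and the two vanishing statements, we obtain $\gd_{\bA}(a_1,a_2)\leq\tg_1(0)\vee\tg_2(0)=0$, hence $\gd_{\bA}(a_1,a_2)=0$. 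The part needing genuine care is the crux step: since $f_i^{\bB}(t)$ need not lie in $B_i^*$, one must not try to read off $\tilde\chi_i$-images of distances involving it as $\gd_{\bA_i}$-values — only monotonicity and the fact that $\tilde\chi_i$ preserves $\vee$ and $0$ may be used — and it is precisely the availability of two different upper bounds, one on the ``$\ga$-side'' and one on the ``$\gb$-side'', that makes the orthogonality hypothesis bite. Everything else is routine use of the triangle inequality, distance transport along morphisms of $\METR$, and naturality of $\overrightarrow{\bchi}$; the genuinely hard (and separate) content is producing squares $\overrightarrow{\bA}$ satisfying (i)--(iv) with $\gd_{\bA}(a_1,a_2)\neq 0$, which is carried out in the sequel.
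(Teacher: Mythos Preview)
Your proof is correct and follows essentially the same approach as the paper's: lift the distinguished points via surjectivity of the~$\chi_p$, apply the Parallelogram Rule in~$\bB_0$ to obtain the auxiliary point~$t$, bound $\tilde\chi_i\bigl(\gd_{\bB_i}(\ol{a}_i,f_i^{\bB}(t))\bigr)$ by both~$\tf_i(\ga)$ and~$\tf_i(\gb)$ via the two triangle-inequality paths, invoke orthogonality, and finish with the triangle inequality in~$\bB$ through~$\xh^{\bB}(t)$. The only differences are notational (your $\xf_i^{\bB},\xg_i^{\bB},\ol{0},\ol{a}_0,\ol{1},t$ versus the paper's $\xu_i,\xv_i,\dzero,\da_0,\done,\dx$) and in the order of presentation of the two bounds.
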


We illustrate the assumptions of Lemma~\ref{L:UnliftSqMetr} on Figure~\ref{Fig:UnliftDiag}. Unlike what is suggested on the top part of that illustration, it will turn out that the liftability assumption implies that~$a_1$ and~$a_2$ are identified by the distance\index{i}{distance!semilattice-valued} function on~$\bA$.

\begin{figure}[htb]\index{i}{square (shape of a diagram)}
\includegraphics{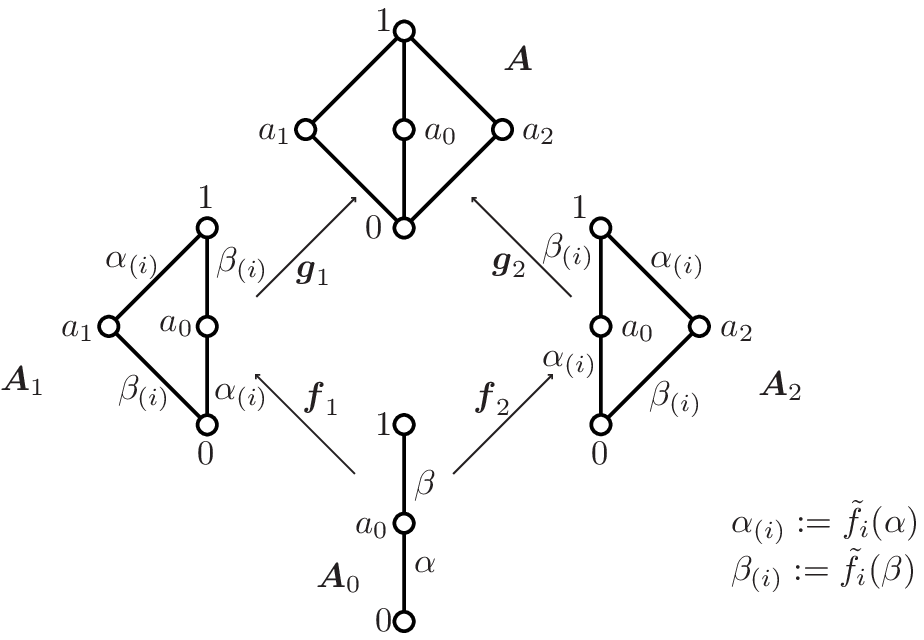}
\caption{A square in $\METR$ unliftable with respect to $\Psi$ and $\METR^\Rightarrow$}\label{Fig:UnliftDiag}
\end{figure}

\begin{proof}[Proof of Lemma~\ref{L:UnliftSqMetr}]
Represent~$\overrightarrow{\bB}$ as on the left hand side of Figure~\ref{Fig:sqinMetr*}, and set $\xw:=\xv_1\circ\xu_1=\xv_2\circ\xu_2$. Hence $\Psi\overrightarrow{\bB}$ is represented on the right hand side of Figure~\ref{Fig:sqinMetr*} (recall the flat notation, $\Psi(\bX)=\bX^\flat$\index{s}{Aiiflat@$\bA^\flat$, $f^\flat$}).

\begin{figure}[htb]\index{i}{square (shape of a diagram)}
 \[
 \def\labelstyle{\displaystyle}
 \xymatrix{
 & \bB & & & \bB^\flat &\\
 \bB_1\ar[ur]^{\xv_1} & & \bB_2\ar[ul]_{\xv_2} &
 \bB_1^\flat\ar[ur]^{\xv_1^\flat} & & \bB_2^\flat\ar[ul]_{\xv_2^\flat}\\
 & \bB_0\ar[ul]^{\xu_1}\ar[ur]_{\xu_2}\ar[uu]_{\xw} & & &
 \bB_0^\flat\ar[ul]^{\xu_1^\flat}\ar[ur]_{\xu_2^\flat}\ar[uu]_{\xw^\flat} &
 }
 \]
\caption{Squares in the categories $\METR^*$ and $\METR$}
\label{Fig:sqinMetr*}
\end{figure}

A morphism $\overrightarrow{\bchi}\colon\Psi\overrightarrow{\bB}\Todot\overrightarrow{\bA}$\index{s}{AtoRightarrowdotB@$f\colon\xA\Todot\xB$} in $(\METR^\Rightarrow)^{\set{0,1}^2}$\index{s}{METRarr@$\METR^\Rightarrow$} consists of a collection of double arrows\index{i}{double arrow} $\bchi_i\colon\bB_i^\flat\Rightarrow\bA_i$\index{s}{AtorightarrowB@$f\colon A\Rightarrow B$}\index{s}{Aiiflat@$\bA^\flat$, $f^\flat$}, for $i\in\set{0,1,2}$, and $\bchi\colon\bB^\flat\Rightarrow\bA$\index{s}{AtorightarrowB@$f\colon A\Rightarrow B$}, all in $\METR^\Rightarrow$\index{s}{METRarr@$\METR^\Rightarrow$}, such that the diagram represented in Figure~\ref{Fig:CubeMetr} commutes. Actually our proof will not require the top arrow~$\bchi\colon\bB^\flat\to\bA$\index{s}{Aiiflat@$\bA^\flat$, $f^\flat$} to be a double arrow.

\begin{figure}[htb]\index{i}{double arrow}\index{i}{cube (shape of a diagram)}
 \[
 \def\labelstyle{\displaystyle}
 \xymatrix{
 & \bA & \\
 \bA_1\ar[ur]^{\xg_1} & \bB^\flat\ar@{=>}[u]^{\bchi} & \bA_2\ar[ul]_{\xg_2}\\
 \\
 \bB_1^\flat\ar@{=>}[uu]^{\bchi_1}\ar[uur]|-(.3){\xv_1^\flat} &
 \bA_0\ar[uul]|-(.3){\xf_1}\ar[uur]|-(.3){\xf_2} &
 \bB_2^\flat\ar[uul]|-(.3){\xv_2^\flat}\ar@{=>}[uu]_{\bchi_2}\\
 &\bB_0^\flat\ar[ul]^{\xu_1^\flat}\ar@{=>}[u]_{\bchi_0}\ar[ur]_{\xu_2^\flat} &
 }
 \]
\caption{A cube of semilattice-metric spaces}
\label{Fig:CubeMetr}
\end{figure}
As $\chi_0$ is surjective, there are $\dzero,\done,\da_0\in B_0^*$ such that $\chi_0(\dzero)=0$, $\chi_0(\done)=1$, and $\chi_0(\da_0)=a_0$. For each $i\in\set{1,2}$, as~$\chi_i$ is surjective, there exists $\da_i\in B_i^*$ such that $\chi_i(\da_i)=a_i$. As~$\bB_0$ is a semilattice-metric cover\index{i}{semilattice-metric!cover}, it follows from the Parallelogram Rule\index{i}{Parallelogram Rule} that there exists $\dx\in B_0$ such that
 \begin{equation}\label{Eq:Pptiesdx}
 \gd_{\bB_0}(\dzero,\dx)\leq\gd_{\bB_0}(\da_0,\done)\text{ and }
 \gd_{\bB_0}(\dx,\done)\leq\gd_{\bB_0}(\dzero,\da_0)\,.
 \end{equation}
Let $i\in\set{1,2}$. As $\chi_iu_i(\dzero)=f_i\chi_0(\dzero)=f_i(0)=0$, we get
 \begin{align}
 \tilde{\chi}_i\gd_{\bB_i}(\da_i,u_i(\dzero))&=\gd_{\bA_i}(\chi_i(\da_i),\chi_iu_i(\dzero))
&&(\text{because }\set{\da_i,u_i(\dzero)}\subseteq B_i^*)\notag\\
 &=\gd_{\bA_i}(a_i,0)\notag\\
 &\leq\tf_i(\gb)\,.\label{Eq:1/2chileqcong}
 \end{align}
On the other hand,
 \begin{align}
 \tilde{\chi}_i\tu_i\gd_{\bB_0}(\da_0,\done)&=\tf_i\tilde{\chi}_0\gd_{\bB_0}(\da_0,\done)
 \notag\\
 &=\tf_i\gd_{\bA_0}(a_0,1)\notag&&(\text{because }\set{\da_0,\done}\subseteq B_0^*)\\
 &\leq\tf_i(\gb)\,.\label{Eq:2/2chileqcong}
 \end{align}
As the following inequalities hold (we use~\eqref{Eq:Pptiesdx}),
 \[
 \gd_{\bB_i}(u_i(\dzero),u_i(\dx))=\tu_i\bigl(\gd_{\bB_0}(\dzero,\dx)\bigr)
 \leq\tu_i\bigl(\gd_{\bB_0}(\da_0,\done)\bigr)\,,
 \]
we obtain the inequalities
 \begin{align}
 \gd_{\bB_i}(\da_i,u_i(\dx))
 &\leq\gd_{\bB_i}(\da_i,u_i(\dzero))\vee\gd_{\bB_i}(u_i(\dzero),u_i(\dx))\notag\\
 &\leq\gd_{\bB_i}(\da_i,u_i(\dzero))\vee\tu_i\bigl(\gd_{\bB_0}(\da_0,\done)\bigr)\,.
 \label{Eq:dBiaiuileqdB*ai0}
 \end{align}
Therefore, by applying the \jzh\ $\tilde{\chi}_i$ to~\eqref{Eq:dBiaiuileqdB*ai0} and by using~\eqref{Eq:1/2chileqcong} and~\eqref{Eq:2/2chileqcong}, we obtain the inequality
 \begin{equation}\label{Eq:chiidBiaiuix}
 \tilde{\chi}_i\gd_{\bB_i}(\da_i,u_i(\dx))\leq\tf_i(\gb)\,.
 \end{equation}
A similar proof, exchanging the roles of the elements~$0$ and~$1$ of~$A_0$ and of the elements~$\ga$ and~$\gb$ of~$\tA_0$, in the argument above, leads to the inequality
 \begin{equation}\label{Eq:chiidBiaiuix'}
 \tilde{\chi}_i\gd_{\bB_i}(\da_i,u_i(\dx))\leq\tf_i(\ga)\,.
 \end{equation}
As $\tf_i(\ga)$ and $\tf_i(\gb)$ are orthogonal\index{i}{orthogonal} in $\tA_i$, it follows from~\eqref{Eq:chiidBiaiuix} and~\eqref{Eq:chiidBiaiuix'} that the following equation is satisfied:
 \begin{equation}\label{Eq:chiidBiaiuix0}
 \tilde{\chi}_i\gd_{\bB_i}(\da_i,u_i(\dx))=0\,.
 \end{equation}
By applying $\tg_i$ to the equation~\eqref{Eq:chiidBiaiuix0} and using the equation $\tg_i\circ\tilde{\chi}_i=\tilde{\chi}\circ\tv_i$, we obtain the equation
 \[
 \tilde{\chi}\tv_i\gd_{\bB_i}(\da_i,u_i(\dx))=0\,,
 \]
that is (cf. Figure~\ref{Fig:sqinMetr*} for the definition of~$\xw$),
 \[
 \tilde{\chi}\gd_{\bB}(v_i(\da_i),w(\dx))=0\,.
 \]
This holds for each $i\in\set{1,2}$, hence, by using the triangular inequality,
 \[
 \tilde{\chi}\gd_{\bB}(v_1(\da_1),v_2(\da_2))=0\,.
 \]
As both elements $v_1(\da_1)$ and $v_2(\da_2)$ belong to~$B^*$, we obtain the equation
 \[
 \gd_{\bA}(\chi v_1(\da_1),\chi v_2(\da_2))=0\,.
 \]
As $(\chi\circ v_i)(\da_i)=(g_i\circ\chi_i)(\da_i)=g_i(a_i)=a_i$, for each $i\in\set{1,2}$, the conclusion of the lemma follows.
\qed\end{proof}

\section{A left larder involving algebras and semilattice-metric spaces}\label{S:MalgMetrLard}

The original problem that we wish to solve involves congruence-preserving extensions\index{i}{congruence-preserving extension} of \emph{algebras}\index{i}{algebra!universal}, that is, first-order structures without relation symbols (cf. Example~\ref{Ex:1/2nonindexed}). In the present section, we shall associate functorially, to every algebra~$\bA$\index{i}{algebra!universal}, a semilattice-metric space\index{i}{semilattice-metric!space}~$\bA^\natural$\index{s}{Aiinatural@$\bA^\natural$, $f^\natural$}, thus creating a left larder\index{i}{larder!left} (cf. Proposition~\ref{P:MalgMetrLeftLard}). Furthermore, the assignment $(\bA\mapsto\bA^\natural)$ turns quotients of algebras\index{i}{algebra!universal} to quotients of semilattice-metric spaces\index{i}{semilattice-metric!space} (Proposition~\ref{P:QuotMalgMetr}).

\begin{notation}\label{Not:MAlg}
We denote by $\MALG$\index{s}{Malg@$\MALG$|ii} the full subcategory of~$\MIND$\index{s}{Mind@$\MIND$} whose objects are all the \emph{algebras}\index{i}{algebra!universal}, that is, the first-order structures without relation symbols. Furthermore, for an algebra\index{i}{algebra!universal}~$\bA$, we set~$\bA^\natural:=(A,\gd_\bA,\Conc\bA)$\index{s}{compcon1@$\Conc\bA$, $\Conc f$}\index{s}{Aiinatural@$\bA^\natural$, $f^\natural$|ii}, where~$\gd_\bA(x,y)$ is defined as the congruence of~$\bA$ generated by the pair~$(x,y)$, for all $x,y\in A$. That is, using the notation of Section~\ref{S:ConFunct}, $\gd_\bA(x,y):=\left\Vert x=y\right\Vert_{\bA}$.

For a morphism $f\colon\bA\to\bB$ in~$\MALG$\index{s}{Malg@$\MALG$}, we set $f^\natural:=(f,\Conc f)$\index{s}{Aiinatural@$\bA^\natural$, $f^\natural$}, where $\Conc f$ is the natural \jzh\ from~$\Conc\bA$\index{s}{compcon1@$\Conc\bA$, $\Conc f$} to~$\Conc\bB$\index{s}{compcon1@$\Conc\bA$, $\Conc f$} (cf. Section~\ref{S:ConFunct}).
\end{notation}

\begin{lem}\label{L:DirMrpdMAlg}
The category~$\MALG$\index{s}{Malg@$\MALG$} has arbitrary small products \pup{indexed by nonempty sets}.
\end{lem}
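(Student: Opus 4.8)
The statement asserts that the category $\MALG$ of algebras (first-order structures without relation symbols), with morphisms in the sense of $\MIND$ (so that a morphism $\bA\to\bB$ can exist only when $\Lg(\bA)\subseteq\Lg(\bB)$), has arbitrary nonempty small products. The plan is to construct the product explicitly and verify that it satisfies the universal property, bearing in mind the subtlety that the index set must be nonempty so that a common language is available.

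First I would set up the candidate. Given a nonempty family $\famm{\bA_i}{i\in I}$ of algebras, I would let $\scL:=\bigcup\famm{\Lg(\bA_i)}{i\in I}$ be the common language, and form the underlying set $A:=\prod\famm{A_i}{i\in I}$ in $\SET$, with canonical projections $\gp_i\colon A\to A_i$. The key point is that $A$ carries a natural $\scL$-structure: for each operation symbol $f\in\scL$, say of arity $n$, and all $x_1,\dots,x_n\in A$, define $f^\bA(x_1,\dots,x_n)$ componentwise, using $f^{\bA_i}$ on the coordinates $i$ with $f\in\Lg(\bA_i)$ and some fixed default value (which must be specified, e.g. the $i$-th coordinate of some chosen element of $A$, using $I\neq\es$; in fact for the universal property it is cleanest to restrict to the substructure generated by needed elements, but one can also just fix default operations arbitrarily since morphisms out of an algebra only constrain the values of $\scL$-terms evaluated in the image) — actually the clean route is: define $f^\bA$ and $c^\bA$ componentwise on coordinates where the symbol is present and by an arbitrary fixed choice elsewhere; the resulting structure $\bA$ is an algebra with $\Lg(\bA)=\scL$, and each $\gp_i\colon\bA\to\bA_i$ is a morphism in $\MALG$ since $\Lg(\bA_i)\subseteq\scL$ and $\gp_i$ is, on the $\Lg(\bA_i)$-reduct, the usual product projection homomorphism.

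Next I would verify the universal property. Let $\bB$ be an algebra together with morphisms $\gf_i\colon\bB\to\bA_i$ in $\MALG$ for each $i\in I$. Each such morphism requires $\Lg(\bB)\supseteq\Lg(\bA_i)$, so $\Lg(\bB)\supseteq\scL=\Lg(\bA)$, and we may speak of a morphism $\bB\to\bA$. Define $\gf\colon B\to A$ by $\gf(x):=\famm{\gf_i(x)}{i\in I}$; this is the unique set-map with $\gp_i\circ\gf=\gf_i$. It remains to check $\gf$ is a morphism in $\MALG$, i.e. respects the $\Lg(\bA)=\scL$-structure; for a symbol $s\in\scL$ of arity $n$ and $x_1,\dots,x_n\in B$, the $i$-th coordinate of $\gf(s^\bB(\overrightarrow{x}))$ is $\gf_i(s^\bB(\overrightarrow{x}))$, which equals $s^{\bA_i}(\gf_i(x_1),\dots,\gf_i(x_n))$ whenever $s\in\Lg(\bA_i)$ (since $\gf_i$ is a homomorphism on the $\Lg(\bA_i)$-reduct), and this matches the $i$-th coordinate of $s^\bA(\gf(x_1),\dots,\gf(x_n))$ by the componentwise definition — provided we have chosen the default operations on the missing coordinates to also be computed compatibly, which we can arrange by simply adopting the convention that on a coordinate $i$ with $s\notin\Lg(\bA_i)$ the value is irrelevant and both sides use the same fixed default; the cleanest statement-of-record is to take $\bA$ to be the subalgebra of this structure generated by the union of the images $\gf_i``(B)$ over all possible cones, but since a single product object is wanted, the default-operations device suffices. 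Uniqueness of $\gf$ is immediate from uniqueness in $\SET$.

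The main obstacle is purely bookkeeping: unlike the classical case where all $\bA_i$ share a language, here one must manufacture a structure on the set-theoretic product in the union language $\scL$, and operation symbols missing from some factors have no canonical interpretation there. The honest fix (and the one I would write) is to note that what is really needed is that the set-theoretic product, with componentwise operations on each coordinate where the symbol is defined and an arbitrary fixed interpretation elsewhere, is an algebra of language $\scL$ for which the projections are $\MALG$-morphisms and which has the universal property; the arbitrary choices on absent coordinates do not interfere because no cone $\famm{\gf_i}{i\in I}$ constrains those coordinates — formally, one checks the homomorphism condition for $\gf$ coordinate by coordinate exactly as above. I expect this to be the place where the proof in the paper simply says ``straightforward,'' and indeed once the conventions are fixed the verification is routine; I would present it in two short paragraphs: construction of $\bA$ with the projections, then the cone-completion argument with the uniqueness remark.
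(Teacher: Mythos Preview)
Your construction has the language containment backwards. In $\MIND$ (hence in $\MALG$), a morphism $\bA\to\bB$ exists only when $\Lg(\bA)\subseteq\Lg(\bB)$, i.e.\ the language grows along arrows. Thus for the projections $\gp_i\colon\bA\to\bA_i$ to be morphisms you need $\Lg(\bA)\subseteq\Lg(\bA_i)$ for every~$i$, forcing $\Lg(\bA)\subseteq\bigcap_{i\in I}\Lg(\bA_i)$. With your choice $\Lg(\bA)=\bigcup_i\Lg(\bA_i)$, as soon as two factors have distinct languages the projection to one of them is simply not a morphism in $\MALG$, and no amount of default-value bookkeeping repairs this: the arrow does not exist. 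Your universal-property paragraph makes the same reversal (``$\gf_i\colon\bB\to\bA_i$ requires $\Lg(\bB)\supseteq\Lg(\bA_i)$'' should read $\Lg(\bB)\subseteq\Lg(\bA_i)$).

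The paper's proof takes $\scL:=\bigcap_{i\in I}\Lg(\bA_i)$, equips the set-theoretic product $A=\prod_iA_i$ with the componentwise $\scL$-operations (every symbol of $\scL$ is interpreted in every factor, so no defaults are needed), and then the projections are $\MALG$-morphisms because $\scL\subseteq\Lg(\bA_i)$. For the universal property, a cone $\gf_i\colon\bB\to\bA_i$ forces $\Lg(\bB)\subseteq\bigcap_i\Lg(\bA_i)=\scL$, so the tupling map $\gf\colon\bB\to\bA$ makes sense and is an $\scL$-homomorphism componentwise. Nonemptiness of~$I$ is needed so that this intersection is a set (a terminal object would need every possible symbol in its language). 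Once you flip union to intersection, the argument becomes entirely routine and the ``default operations'' complication disappears.
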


\begin{proof}
Let $I$ be a nonempty set and let $\famm{\bA_i}{i\in I}$ be an $I$-indexed family of algebras\index{i}{algebra!universal}. We form the intersection $\scL:=\bigcap\famm{\Lg(\bA_i)}{i\in I}$, the cartesian product $A:=\prod\famm{A_i}{i\in I}$, and for each $f\in\scL$, say of arity~$n$, we define $f^\bA\colon A^n\to A$ the usual way, that is,
 \[
 f^\bA\bigl((x_i^1\mid i\in I),\dots,\famm{x_i^n}{i\in I}\bigr):=
 \famm{f^{\bA_i}(x_i^1,\dots,x_i^n)}{i\in I}\,,
 \]
for all elements $\famm{x_i^k}{i\in I}$ (for $1\leq k\leq n$) in~$A$. The interpretations of the constants are defined similarly. This defines a first-order structure~$\bA:=\bigl(A,\famm{f^\bA}{f\in\scL}\bigr)$. It is straightforward to verify that~$\bA$, together with the canonical projections $\bA\onto\bA_i$\index{s}{AtoonB@$f\colon A\onto B$}, for $i\in I$, is the product of the family $\famm{\bA_i}{i\in I}$ in~$\MALG$\index{s}{Malg@$\MALG$}.
\qed\end{proof}

\begin{prop}\label{P:MalgMetrLeftLard}
Let $\Phi\colon\MALG\to\METR$\index{s}{Malg@$\MALG$}\index{s}{Metr@$\METR$} be the $\bX\mapsto\bX^\natural$\index{s}{Aiinatural@$\bA^\natural$, $f^\natural$} functor introduced in Notation~\textup{\ref{Not:MAlg}}. Then the quadruple $(\MALG,\METR,\METR^\Rightarrow,\Phi)$\index{s}{Malg@$\MALG$}\index{s}{Metr@$\METR$} is a left larder\index{i}{larder!left}.
\end{prop}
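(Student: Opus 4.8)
The plan is to verify, one at a time, the four defining conditions of a left larder from Definition~\ref{D:LeftLard}, namely $(\CLOS(\MALG))$, $(\PROD(\MALG))$, $(\CONT(\Phi))$, and $(\PROJ(\Phi,\METR^\Rightarrow))$; each of them reduces to a result already available in the excerpt, so the proof will be a matter of bookkeeping.

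For $(\CLOS(\MALG))$ and $(\PROD(\MALG))$ I would first observe that $\MALG=\MIND_\scC$ for $\scC$ the directed-union-closed class of all relation-free languages, so $\MALG$ is a \gqv\ (Example~\ref{Ex:1/2nonindexed}(i)) and in particular is closed under directed colimits in~$\MIND$; together with Proposition~\ref{P:DirColimMIND} this gives all small directed colimits in~$\MALG$. The existence of nonempty small products (hence of binary products) in~$\MALG$ is exactly Lemma~\ref{L:DirMrpdMAlg}.

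For $(\CONT(\Phi))$, given a small directed diagram $\famm{\bA_i,f_i^j}{i\leq j\text{ in }I}$ in~$\MALG$ with colimit $\famm{\bA,f_i}{i\in I}$, I would combine three preservation facts: by Proposition~\ref{P:DirColimMIND}, $\famm{A,f_i}{i\in I}$ is the colimit of the underlying sets; by Theorem~\ref{T:ConcVPresDirColim} applied to $\cV=\MALG$ (so that $\ConcV=\Conc$), $\famm{\Conc\bA,\Conc f_i}{i\in I}$ is the colimit of $\famm{\Conc\bA_i,\Conc f_i^j}{i\leq j\text{ in }I}$ in~$\SEM$; and Lemma~\ref{L:ConcvonPpalCong}(iii) gives $\Conc f_i\bigl(\gd_{\bA_i}(x,y)\bigr)=\gd_\bA(f_i(x),f_i(y))$, so each $f_i^\natural=(f_i,\Conc f_i)$ is a morphism in~$\METR$. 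Since every pair of elements of~$A$ lies in the image of some~$f_k$, the distance $\gd_\bA$ is the unique $\Conc\bA$-valued distance on~$A$ making all the $f_i^\natural$ morphisms, so Lemma~\ref{L:FgtMetrPres} identifies $\famm{\bA^\natural,f_i^\natural}{i\in I}$ with $\varinjlim\famm{\bA_i^\natural,(f_i^j)^\natural}{i\leq j\text{ in }I}$ in~$\METR$; hence $\Phi$ preserves small directed colimits.

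For $(\PROJ(\Phi,\METR^\Rightarrow))$, since $\METR^\Rightarrow$ consists of the morphisms $(f,\tf)$ with~$f$ surjective and $\Phi(f)=(f,\Conc f)$, it suffices to show that every extended projection~$f$ of~$\MALG$ has surjective underlying map. Projections in~$\MALG$ are surjective homomorphisms by the explicit product construction of Lemma~\ref{L:DirMrpdMAlg}, and writing $f$ as a limit of a natural transformation $\famm{f_i}{i\in I}$ between directed colimit diagrams $\varinjlim\overrightarrow{A}$ and $\varinjlim\overrightarrow{B}$ with each $f_i$ a projection, the explicit colimit description in~$\MIND$ (Proposition~\ref{P:DirColimMIND}) lets me lift any element of the target through some~$f_i$, so~$f$ is surjective and $\Phi(f)\in\METR^\Rightarrow$. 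These four verifications together make $(\MALG,\METR,\METR^\Rightarrow,\Phi)$ a left larder. The only delicate point is the surjectivity of projections, which tacitly uses that the factors of a product in~$\MALG$ are nonempty (compare the parenthetical remark in the proof of Theorem~\ref{T:RelCritalephn}); I expect this, rather than any genuine difficulty, to be where a little care is needed.
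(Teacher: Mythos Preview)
Your proposal is correct and follows essentially the same approach as the paper: verifying each of the four left-larder axioms by invoking Proposition~\ref{P:DirColimMIND}, Lemma~\ref{L:DirMrpdMAlg}, Theorem~\ref{T:ConcVPresDirColim} together with Lemma~\ref{L:FgtMetrPres}, and the surjectivity of projections in~$\MALG$. Your treatment of $(\CONT(\Phi))$ is in fact more explicit than the paper's (which simply asserts that the result follows from those ingredients), and your remark about nonemptiness of the factors being the only delicate point is on target.
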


\begin{proof}
The condition $(\CLOS(\MALG))$\index{s}{ClosA@$(\CLOS(\cA))$} follows from the closure of $\MALG$\index{s}{Malg@$\MALG$} under directed colimits within~$\MIND$\index{s}{Mind@$\MIND$} (this follows trivially from Proposition~\ref{P:DirColimMIND}). The condition $(\PROD(\MALG))$\index{s}{ProdA@$(\PROD(\cA))$} follows from Lemma~\ref{L:DirMrpdMAlg}. The condition $(\CONT(\Phi))$\index{s}{Cont@$(\CONT(\Phi))$} follows easily from the description of the directed colimits given in Proposition~\ref{P:DirColimMIND} and Lemma~\ref{L:FgtMetrPres}, together with the preservation of directed colimits by the~$\Conc$ functor (Theorem~\ref{T:ConcVPresDirColim}).

The projection $X\times Y\onto X$\index{s}{AtoonB@$f\colon A\onto B$} is surjective, for all nonempty sets~$X$ and~$Y$. It follows easily that the map~$f$ is surjective for each extended projection $\xf\colon\bX\to\bY$ in~$\MALG$\index{s}{Malg@$\MALG$}. The condition~$(\PROJ(\Phi,\METR^\Rightarrow))$\index{s}{Proj@$(\PROJ(\Phi,\cS^\Rightarrow))$} follows.
\qed\end{proof}

Our next lemma will make it possible to define \emph{quotients} of semilattice-metric spaces\index{i}{semilattice-metric!space}. The proof is straightforward and we shall omit it. We refer to Section~\ref{S:MINDProjWit} for the definition of the quotient of a commutative monoid by an o-ideal\index{i}{ideal!o-}.

\begin{lem}\label{L:QuotMetr}
Let $\bA$ be a semilattice-metric space\index{i}{semilattice-metric!space} and let $I$ be an ideal of~$\tA$\index{i}{ideal!of a poset}. Then the binary relation
 \[
 \ga:=\setm{(x,y)\in A\times A}{\gd_\bA(x,y)\in I}
 \]
is an equivalence relation on~$A$, and there exists a unique $\tA/I$-valued distance\index{i}{distance!semilattice-valued}~$\gd$ on~$A/\ga$ such that
 \[
 \gd(x/\ga,y/\ga)=\gd_\bA(x,y)/I\,,\quad\text{for all }x,y\in A\,.
 \]
Denote by~$\bA/I$\index{s}{AoverImetr@$\bA/I$ ($\bA\in\METR$, $I$ ideal of~$\tA$)|ii} the semilattice-metric space\index{i}{semilattice-metric!space}~$(A/\ga,\gd,\tA/I)$. Then the pair $(\gp,\tilde{\gp})$, where~$\gp$ \pup{resp., $\tilde{\gp}$}denotes the canonical projection from~$A$ onto~$A/\ga$ \pup{resp., from~$\tA$ onto~$\tA/I$}, is a double arrow\index{i}{double arrow} from~$\bA$ onto~$\bA/I$ in~$\METR$\index{s}{Metr@$\METR$}.
\end{lem}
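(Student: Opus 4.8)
The plan is to verify, one after another, the three assertions of the statement; all of them reduce to elementary manipulations with the distance axioms of Example~\ref{Ex:RTWe} and with the description of the monoid quotient $\tA/I$ and of $\equiv_I$ recalled in Section~\ref{S:MINDProjWit} (recall that an ideal of a \jzs\ is exactly an o-ideal, i.e., a nonempty lower subset closed under finite joins, and in particular it contains~$0$).

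First I would check that $\ga$ is an equivalence relation on~$A$: reflexivity follows from $\gd_\bA(x,x)=0\in I$; symmetry from $\gd_\bA(x,y)=\gd_\bA(y,x)$; and transitivity from the triangular inequality $\gd_\bA(x,z)\leq\gd_\bA(x,y)\vee\gd_\bA(y,z)$ together with the fact that $I$ is a lower subset closed under finite joins. Next, for the existence and uniqueness of~$\gd$, I would show that $x\mathrel{\ga}x'$ and $y\mathrel{\ga}y'$ imply $\gd_\bA(x,y)\equiv_I\gd_\bA(x',y')$: setting $u:=\gd_\bA(x,x')\vee\gd_\bA(y,y')\in I$, two applications of the triangular inequality (once in each direction) yield $\gd_\bA(x,y)\vee u=\gd_\bA(x',y')\vee u$, whence $\gd_\bA(x,y)/I=\gd_\bA(x',y')/I$. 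Thus the rule $\gd(x/\ga,y/\ga):=\gd_\bA(x,y)/I$ is well-defined on $(A/\ga)\times(A/\ga)$, and it is the unique map satisfying the displayed equation because $\gp$ is surjective. That $\gd$ is a $\tA/I$-valued distance then follows by applying the canonical \jzh\ $\tilde{\gp}\colon\tA\onto\tA/I$ (which is in particular isotone) to each of the three distance axioms for~$\gd_\bA$.

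Finally, $(\gp,\tilde{\gp})$ is a morphism from~$\bA$ to~$\bA/I$ in~$\METR$: indeed $\tilde{\gp}$ is a \jzh, and the required identity $\gd_{\bA/I}(\gp(x),\gp(y))=\tilde{\gp}\bigl(\gd_\bA(x,y)\bigr)$ is precisely the defining equation of~$\gd$; since both $\gp$ and $\tilde{\gp}$ are surjective, $(\gp,\tilde{\gp})$ lies in~$\METR^\Rightarrow$, i.e., it is a double arrow onto~$\bA/I$. None of these steps presents a genuine obstacle; the only point requiring a moment's care is the compatibility computation $\gd_\bA(x,y)\equiv_I\gd_\bA(x',y')$, that is, the fact that the distance descends to the monoid congruence~$\equiv_I$.
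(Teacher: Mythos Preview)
Your proof is correct and complete; the paper itself omits the proof entirely, noting only that it ``is straightforward and we shall omit it'', so your verification is exactly the kind of routine check the authors had in mind. One minor remark: by the paper's definition of~$\METR^\Rightarrow$ (Proposition~\ref{P:MetrLarder}), only the first component~$\gp$ of the pair is required to be surjective for it to be a double arrow, so your observation that~$\tilde{\gp}$ is also surjective is true but more than needed.
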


In the context of Lemma~\ref{L:QuotMetr}, we shall say that the semilattice-metric space\index{i}{semilattice-metric!space}~$\bA/I$ is the \emph{quotient} of~$\bA$ by the ideal\index{i}{ideal!of a poset}~$I$, and that the pair $(\gp,\tilde{\gp})$ is the \emph{canonical projection} from~$\bA$ onto~$\bA/I$.

Quotients of semilattice-metric spaces\index{i}{semilattice-metric!space} and quotients of algebras\index{i}{algebra!universal} are related by the following consequence of Lemma~\ref{L:ConcVProj}.

\begin{prop}\label{P:QuotMalgMetr}
Let $\ga$ be a congruence of an algebra\index{i}{algebra!universal}~$\bA$, and set $I:=(\Conc\bA)\dnw\ga$\index{s}{compcon1@$\Conc\bA$, $\Conc f$}. Then the semilattice-metric spaces\index{i}{semilattice-metric!space} $(\bA/\ga)^\natural$ and $\bA^\natural/I$\index{s}{Aiinatural@$\bA^\natural$, $f^\natural$} are isomorphic.
\end{prop}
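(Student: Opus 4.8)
The plan is to produce an explicit isomorphism $(\id,\varepsilon)$ in $\METR$ from $\bA^\natural/I$ onto $(\bA/\ga)^\natural$, where $\varepsilon$ is an isomorphism of \jzs s from $\Conc\bA/I$ onto $\Conc(\bA/\ga)$. First I would pin down the two underlying sets. By Lemma~\ref{L:QuotMetr}, the underlying set of $\bA^\natural/I$ is $A/\gb$, where $\gb=\setm{(x,y)\in A\times A}{\gd_\bA(x,y)\in I}$; since $\gd_\bA(x,y)$ is, by definition, the least congruence of~$\bA$ containing $(x,y)$, and $I=(\Conc\bA)\dnw\ga$, the condition $\gd_\bA(x,y)\in I$ amounts to $(x,y)\in\ga$. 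Hence $\gb=\ga$, so both $\bA^\natural/I$ and $(\bA/\ga)^\natural$ have underlying set $A/\ga$, and I take $f:=\id_{A/\ga}$.

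Next I would build~$\varepsilon$. Let $\gp\colon\bA\onto\bA/\ga$ be the canonical projection. As $\MALG$ is a \gqv\ closed under homomorphic images, Lemma~\ref{L:ConcpiIdInd} shows that $\Conc\gp\colon\Conc\bA\to\Conc(\bA/\ga)$ is ideal-induced, and Lemma~\ref{L:ConcVProj} gives $(\Conc\gp)(\gx)=\gx\vee\ga/\ga$ for each $\gx\in\Conc\bA$. From this last formula, $(\Conc\gp)(\gx)=\zero_{\bA/\ga}$ holds if{f} $\gx\vee\ga=\ga$, i.e.\ if{f} $\gx\leq\ga$, i.e.\ if{f} $\gx\in I$; thus the o-ideal $(\Conc\gp)^{-1}\set{0}$ equals~$I$. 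Since ideal-induced homomorphisms into a conical monoid are, up to isomorphism, the canonical projections modulo an o-ideal (equivalently, via the First Isomorphism Theorem, Lemma~\ref{L:FirstIsomThm}, applied to the semilattice homomorphism $\Conc\gp$ and the congruence $\equiv_I$), there is a unique isomorphism $\varepsilon\colon\Conc\bA/I\overset{\cong}{\to}\Conc(\bA/\ga)$ satisfying $\varepsilon(\gx/I)=(\Conc\gp)(\gx)=\gx\vee\ga/\ga$ for all $\gx\in\Conc\bA$.

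It then remains to check the metric-compatibility of $(\id_{A/\ga},\varepsilon)$. For $x,y\in A$, Lemma~\ref{L:QuotMetr} gives $\gd_{\bA^\natural/I}(x/\ga,y/\ga)=\gd_\bA(x,y)/I$, so
\[
\varepsilon\bigl(\gd_{\bA^\natural/I}(x/\ga,y/\ga)\bigr)=(\Conc\gp)\bigl(\gd_\bA(x,y)\bigr)
=\gd_{\bA/\ga}(\gp(x),\gp(y))=\gd_{\bA/\ga}(x/\ga,y/\ga)\,,
\]
the middle equality being Lemma~\ref{L:ConcvonPpalCong}(iii). This is exactly the condition for $(\id_{A/\ga},\varepsilon)$ to be a morphism in $\METR$ from $\bA^\natural/I$ to $(\bA/\ga)^\natural$; since both components are bijective (and $\varepsilon$ is a semilattice isomorphism), this morphism is an isomorphism, which finishes the proof.

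I expect the only point needing care to be the identification of the kernel o-ideal of $\Conc\gp$ with~$I$, which rests on the explicit formula of Lemma~\ref{L:ConcVProj} together with the observation that $\gx\vee\ga/\ga$ is the zero congruence of $\bA/\ga$ precisely when $\gx\leq\ga$; everything else is routine unwinding of the definitions of $\METR$, of the quotient of a semilattice-metric space by an ideal, and of the functor $\bX\mapsto\bX^\natural$.
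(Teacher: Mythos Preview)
Your proof is correct and follows essentially the same approach as the paper: both construct the explicit isomorphism $(\id_{A/\ga},\varepsilon)$ with $\varepsilon(\gx/I)=(\gx\vee\ga)/\ga$ and verify the metric compatibility. The only cosmetic difference is that you obtain~$\varepsilon$ by observing that $\Conc\gp$ is ideal-induced with kernel o-ideal~$I$ (via Lemmas~\ref{L:ConcpiIdInd} and~\ref{L:ConcVProj}), whereas the paper cites Lemma~\ref{L:ConcVProj} together with the Second Isomorphism Theorem directly; likewise, for the metric check you invoke Lemma~\ref{L:ConcvonPpalCong}(iii) where the paper unwinds the formula for $\gd_{\bA/\ga}$ by hand.
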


\begin{proof}
It follows from the definition of the ${}^\natural$ operator that\index{s}{Aiinatural@$\bA^\natural$, $f^\natural$}
 \[
 (\bA/\ga)^\natural=\bigl(A/\ga,\gd_{\bA/\ga},\Conc(\bA/\ga)\bigr)\,,
 \]
where, by using the Second Isomorphism Theorem (Lemma~\ref{L:SecIsomThm})\index{i}{Isomorphism Theorem (Second ${}_{-}$)},
 \[
 \gd_{\bA/\ga}(x/\ga,y/\ga)=\Vert x/\ga=y/\ga\Vert_{\bA/\ga}=
 \bigl(\ga\vee\Vert x=y\Vert_\bA\bigr)/\ga=\bigl(\ga\vee\gd_\bA(x,y)\bigr)/\ga\,,
 \]
for any $x,y\in A$. On the other hand, the equivalence~$\ga'$ on~$A$ determined, as in Lemma~\ref{L:QuotMetr}, by the ideal\index{i}{ideal!of a poset}~$I$ and the distance\index{i}{distance!semilattice-valued}~$\gd_\bA$ satisfies
 \[
 (x,y)\in\ga'\Leftrightarrow\gd_\bA(x,y)\in I\Leftrightarrow\Vert x=y\Vert_\bA\leq\ga
 \Leftrightarrow(x,y)\in\ga\,,
 \]
for any $x,y\in A$; thus $\ga=\ga'$. Therefore\index{s}{compcon1@$\Conc\bA$, $\Conc f$},
 \[
 \bA^\natural/I=\bigl(A/\ga,\gd,(\Conc\bA)/I\bigr)\,,
 \]
where\index{s}{Aiinatural@$\bA^\natural$, $f^\natural$}
 \[
 \gd(x/\ga,y/\ga):=\gd_\bA(x,y)/I\,,\quad\text{for all }x,y\in A\,.
 \]
Now it follows from Lemma~\ref{L:ConcVProj} together with the Second Isomorphism Theorem (Theorem~\ref{L:SecIsomThm})\index{i}{Isomorphism Theorem (Second ${}_{-}$)} that the assignment $(\gx/I\mapsto(\ga\vee\gx)/\ga)$ defines an isomorphism $\gf\colon(\Conc\bA)/I\to\Conc(\bA/\ga)$\index{s}{compcon1@$\Conc\bA$, $\Conc f$}. As, by definition, $\gf\bigl(\gd_\bA(x,y)/I\bigr)=\bigl(\ga\vee\gd_\bA(x,y)\bigr)/\ga$ for any $x,y\in A$, the pair $(\id_{A/\ga},\gf)$ is an isomorphism from~$\bA^\natural/I$ onto~$(\bA/\ga)^\natural$\index{s}{Aiinatural@$\bA^\natural$, $f^\natural$}. 
\qed\end{proof}

\section{CPCP-retracts and CPCP-extensions}\label{S:CPCP}

In this section we finally solve the problem stated at the beginning of Chapter~\ref{Ch:CongPres}, in the negative (Corollary~\ref{C:NoCPCP}). In fact, we prove a stronger negative statement (Theorem~\ref{T:NoCPCP}). This statement involves the notion of a CPCP-retract\index{i}{CPCP-retract}.

\begin{defn}\label{D:CPCPRetr}
A semilattice-metric space\index{i}{semilattice-metric!space}~$\bA$ is a \emph{CPCP-retract}\index{i}{CPCP-retract|ii} if there are a semilattice-metric cover\index{i}{semilattice-metric!cover}~$\bB$ and a double arrow\index{i}{double arrow} $\bchi\colon\bB^\flat\Rightarrow\bA$\index{s}{AtorightarrowB@$f\colon A\Rightarrow B$}\index{s}{Aiiflat@$\bA^\flat$, $f^\flat$}.
\end{defn}

\begin{prop}\label{P:QuotCPCP}
Let~$\bA$ be a semilattice-metric space\index{i}{semilattice-metric!space} and let~$I$ be an ideal\index{i}{ideal!of a poset} of~$\tA$. If~$\bA$ is a CPCP-retract\index{i}{CPCP-retract}, then so is~$\bA/I$.
\end{prop}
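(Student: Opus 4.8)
The plan is to chase the relevant objects through the quotient construction for both semilattice-metric spaces (Lemma~\ref{L:QuotMetr}) and semilattice-metric covers, so that a double arrow $\bchi\colon\bB^\flat\Rightarrow\bA$ witnessing that $\bA$ is a CPCP-retract induces a double arrow $\bB'^\flat\Rightarrow\bA/I$ for a suitable quotient cover $\bB'$ of $\bB$. First I would pull $I$ back along $\tilde{\chi}\colon\tilde{B}\to\tilde{A}$: set $J:=\tilde{\chi}^{-1}(I)$, an ideal of $\tilde{B}$. The idea is that $J$ should be the ``metric ideal'' by which we quotient $\bB$, and that the data of a semilattice-metric cover passes to the quotient. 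So I would first verify that if $\bB=(B^*,B,\gd_\bB,\tilde{B})$ is a semilattice-metric cover, then $\bB/J:=\bigl((B^*/{\sim_J}),(B/{\sim_J}),\gd,\tilde{B}/J\bigr)$ is again a semilattice-metric cover, where $\sim_J$ is the equivalence relation from Lemma~\ref{L:QuotMetr} applied to the underlying space $(B,\gd_\bB,\tilde{B})$ and $B^*/{\sim_J}$ denotes the image of $B^*$ under the canonical projection. The only nonroutine point is the Parallelogram Rule (Definition~\ref{D:MetrCov}(iv)): given classes $\bar x,\bar y,\bar z$ of elements $x,y,z\in B^*$, pick a witness $t\in B$ for $x,y,z$ in $\bB$, and then $\bar t$ works for $\bar x,\bar y,\bar z$ in $\bB/J$, since $\gd$ on the quotient is the composite of $\gd_\bB$ with the projection $\tilde{B}\onto\tilde{B}/J$, which is a $(\vee,0)$-homomorphism and hence isotone. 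Note $\Psi(\bB/J)=(\bB/J)^\flat$ is exactly $\bB^\flat/J'$ where $J'$ is $J$ viewed as an ideal of $\tilde{B}$ restricted to the sub-distance on $B^*$; this compatibility is what makes the argument go.

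Next I would produce the map. The canonical projection $\bp_J\colon\bB\to\bB/J$ in $\METR^*$ restricts to a double arrow $\bp_J^\flat\colon\bB^\flat\Rightarrow\bB^\flat/J$ in $\METR^\Rightarrow$ (the underlying set map is surjective, being a quotient map). Meanwhile $\bchi\colon\bB^\flat\Rightarrow\bA$ is a double arrow with semilattice component $\tilde{\chi}\colon\tilde{B}\to\tilde{A}$; since $J=\tilde{\chi}^{-1}(I)$, the universal property of the quotient semilattice gives a factorization $\tilde{\chi}=\bar{\tilde{\chi}}\circ\tilde{\bp}_J$ where $\bar{\tilde{\chi}}\colon\tilde{B}/J\to\tilde{A}/I$, and correspondingly the set map $\chi$ descends to $\bar\chi\colon B^*/{\sim_J}\to A/{\sim_I}$ because $\gd_\bB(x,y)\in J$ implies $\gd_\bA(\chi x,\chi y)=\tilde{\chi}\gd_\bB(x,y)\in I$. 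One checks that $\bar{\bchi}:=(\bar\chi,\bar{\tilde{\chi}})\colon(\bB/J)^\flat\to\bA/I$ is a morphism in $\METR$ (the distance-compatibility equation is inherited from the one for $\bchi$ by applying the projections), and that $\bar\chi$ is surjective because $\chi$ is, so $\bar{\bchi}\in\METR^\Rightarrow$, i.e.\ it is a double arrow. Taking $\bB':=\bB/J$, we obtain a semilattice-metric cover together with a double arrow $\bB'^\flat\Rightarrow\bA/I$, so $\bA/I$ is a CPCP-retract.

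The main obstacle I expect is purely bookkeeping rather than conceptual: one must be careful that the two notions of ``quotient by an ideal'' — the one for the ambient space $(B,\gd_\bB,\tilde{B})$ from Lemma~\ref{L:QuotMetr} and the induced one on the flattened cover $\bB^\flat$ — agree on the nose, so that $\Psi(\bB/J)$ really equals $(\bB^\flat)/\,\tilde{\chi}^{-1}(I)$ with the distance restricted to $B^*$; and that the equivalence relation $\sim_J$ restricted to $B^*$ is the same whether computed inside $\bB$ or inside $\bB^\flat$ (it is, since $\gd_\bB$ restricts to $\gd_{\bB^\flat}$ on $B^*\times B^*$). Once those identifications are in place the diagram chase is immediate. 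I would also remark, for the record, that the construction is functorial enough that this lemma is exactly the ingredient needed later to turn the unliftable squares of Lemma~\ref{L:UnliftSqMetr} into an object-level counterexample via CLL.
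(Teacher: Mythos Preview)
Your argument is correct, but it is considerably more work than needed. The paper's proof is a one-liner: by Lemma~\ref{L:QuotMetr}, the canonical projection $\bgp\colon\bA\to\bA/I$ is itself a double arrow in~$\METR$ (its set component is surjective), so given any witness $\bchi\colon\bB^\flat\Rightarrow\bA$, the composite $\bgp\circ\bchi\colon\bB^\flat\Rightarrow\bA/I$ is again a double arrow, and the \emph{same} cover~$\bB$ witnesses that~$\bA/I$ is a CPCP-retract. There is no need to quotient~$\bB$, to pull~$I$ back along~$\tilde\chi$, or to verify that the Parallelogram Rule descends.

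Your route---forming $\bB/J$ with $J=\tilde\chi^{-1}(I)$ and checking that it is still a cover---does go through, and it produces a cover whose flattening maps to~$\bA/I$ via an induced~$\bar\bchi$. What it buys you is a ``minimal'' witness tailored to the quotient; what it costs is the bookkeeping you flagged (compatibility of the two quotient constructions, descent of the Parallelogram Rule). For the purposes of this proposition none of that is needed: the definition of CPCP-retract only asks for the \emph{existence} of some cover and some double arrow, and since~$\METR^\Rightarrow$ is a subcategory, double arrows compose.
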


\begin{proof}
The canonical projection $\bgp\colon\bA\to\bA/I$ is a double arrow\index{i}{double arrow} in~$\METR$\index{s}{Metr@$\METR$}. It follows immediately that if~$\bB$ is a semilattice-metric cover\index{i}{semilattice-metric!cover} and $\bchi\colon\bB^\flat\Rightarrow\bA$\index{s}{AtorightarrowB@$f\colon A\Rightarrow B$}, then $\bgp\circ\bchi\colon\bB^\flat\Rightarrow\bA/I$\index{s}{Aiiflat@$\bA^\flat$, $f^\flat$}.
\qed\end{proof}

\begin{defn}\label{D:CPCPext}
An algebra\index{i}{algebra!universal}~$\bB$ is a \emph{CP-extension}\index{i}{CP-extension|ii} of an algebra\index{i}{algebra!universal}~$\bA$ if $A\subseteq B$, $\Lg(\bA)\subseteq\Lg(\bB)$, the inclusion mapping $e\colon A\into B$\index{s}{AtoinB@$f\colon A\into B$} is a morphism in~$\MALG$\index{s}{Malg@$\MALG$}, and~$\Con e$ is an isomorphism from~$\Con\bA$\index{s}{conA@$\Con\bA$} onto~$\Con\bB$\index{s}{conA@$\Con\bA$}. In other words, every congruence of~$\bA$ extends to a unique congruence of~$\bB$.

We say that a CP-extension\index{i}{CP-extension}~$\bB$ of~$\bA$ is a \emph{CPCP-extension}\index{i}{CPCP-extension|ii} of~$\bA$ if for all $x,y,z\in A$ there exists $t\in B$ such that $(x,t)\in\left\Vert y=z\right\Vert_\bB$ and $(t,z)\in\left\Vert x=y\right\Vert_\bB$.
\end{defn}

In particular, if~$\bB$ is a CP-extension\index{i}{CP-extension} of~$\bA$ and any two congruences of~$\bB$ permute (i.e., $\ga\gb=\gb\ga$ for all $\ga,\gb\in\Con\bB$\index{s}{conA@$\Con\bA$}), then~$\bB$ is a CPCP-extension\index{i}{CPCP-extension} of~$\bA$.

\begin{prop}\label{P:aroundCP}
If an algebra\index{i}{algebra!universal}~$\bA$ has a CPCP-extension\index{i}{CPCP-extension}, then the sem\-i\-lat\-tice-metric space\index{i}{semilattice-metric!space}~$\bA^\natural$\index{s}{Aiinatural@$\bA^\natural$, $f^\natural$} is a CPCP-retract\index{i}{CPCP-retract}.
\end{prop}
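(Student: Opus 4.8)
The plan is to build the required semilattice-metric cover directly out of the CPCP-extension. Let $\bB$ be a CPCP-extension of $\bA$, so that $A\subseteq B$, $\Lg(\bA)\subseteq\Lg(\bB)$, the inclusion map $e\colon\bA\into\bB$ is a morphism in $\MALG$, and $\Con e$ (the ``extension'' map sending a congruence of $\bA$ to its unique congruence extension in $\bB$) is an isomorphism from $\Con\bA$ onto $\Con\bB$. Since principal congruences are compact, setting $\gd_\bB(x,y):=\left\Vert x=y\right\Vert_{\bB}$ defines a $\Conc\bB$-valued distance on $B$, and I claim that
\[
 \bD:=\bigl(A,B,\gd_\bB,\Conc\bB\bigr)
\]
is a semilattice-metric cover. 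Conditions~(i)--(iii) of Definition~\ref{D:MetrCov} are immediate from $A\subseteq B$ and the fact that $\gd_\bB$ is a distance. For the Parallelogram Rule~(iv), let $x,y,z\in A$; by the very definition of a CPCP-extension there is $t\in B$ with $(x,t)\in\left\Vert y=z\right\Vert_{\bB}$ and $(t,z)\in\left\Vert x=y\right\Vert_{\bB}$, which says exactly that $\gd_\bB(x,t)\leq\gd_\bB(y,z)$ and $\gd_\bB(t,z)\leq\gd_\bB(x,y)$. Hence $\bD$ is an object of $\METR^*$, and $\Psi(\bD)=\bD^\flat=\bigl(A,\gd_\bB\res_{A\times A},\Conc\bB\bigr)$.

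Next I would exhibit a double arrow $\bchi\colon\bD^\flat\Rightarrow\bA^\natural$. Since $\Con e$ is by definition the extension map we have $\Res e\circ\Con e=\id_{\Con\bA}$, and as $\Con e$ is bijective its inverse is the restriction map $\Res e\colon\Con\bB\to\Con\bA$ (Definition~\ref{D:ResAssignm}, which for the relation-free structures at hand is simply $\bgb\mapsto\bgb\cap(A\times A)$). Being a lattice isomorphism between algebraic lattices, $\Res e$ preserves arbitrary joins and carries compact congruences to compact congruences, so it restricts to a $(\vee,0)$-isomorphism $\tilde\chi\colon\Conc\bB\to\Conc\bA$, in particular a \jzh. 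To see that $\bchi:=(\id_A,\tilde\chi)$ is a morphism from $\bD^\flat$ to $\bA^\natural$ in $\METR$, I must check $\gd_\bA(x,y)=\tilde\chi\bigl(\gd_\bB(x,y)\bigr)$, i.e.\ $\left\Vert x=y\right\Vert_{\bA}=\Res e\bigl(\left\Vert x=y\right\Vert_{\bB}\bigr)$, for all $x,y\in A$. The inequality ``$\leq$'' holds because $\Res e\bigl(\left\Vert x=y\right\Vert_{\bB}\bigr)$ is a congruence of $\bA$ containing $(x,y)$; conversely $\Con e\bigl(\left\Vert x=y\right\Vert_{\bA}\bigr)$ is a congruence of $\bB$ that restricts to $\left\Vert x=y\right\Vert_{\bA}$, hence contains $(x,y)$, hence contains $\left\Vert x=y\right\Vert_{\bB}$, and applying the monotone map $\Res e$ gives $\left\Vert x=y\right\Vert_{\bA}\geq\Res e\bigl(\left\Vert x=y\right\Vert_{\bB}\bigr)$.

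Since $\id_A$ is a bijection and $\tilde\chi$ is a $(\vee,0)$-isomorphism, $\bchi$ is in fact an isomorphism in $\METR$; in particular its first component is surjective, so $\bchi$ is a double arrow, i.e.\ a morphism of $\METR^\Rightarrow$. By Definition~\ref{D:CPCPRetr} this witnesses that $\bA^\natural$ is a CPCP-retract, which is what we wanted. The only point requiring genuine (if brief) argument, as opposed to unravelling definitions, is the identification $\left\Vert x=y\right\Vert_{\bA}=\Res e\bigl(\left\Vert x=y\right\Vert_{\bB}\bigr)$ --- that under a congruence-preserving extension the principal congruences of the subalgebra correspond to those of the extension; the verification that $\bD$ satisfies the Parallelogram Rule and that $\Res e$ restricts to a \jzh\ on compacts is then routine.
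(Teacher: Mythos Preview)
Your proof is correct and follows essentially the same approach as the paper's. The paper constructs the same cover $\bC=(A,B,\gd_\bB,\Conc\bB)$ and takes the double arrow $(\id_A,(\Conc e)^{-1})$; your $\tilde\chi=\Res e\res_{\Conc\bB}$ is exactly $(\Conc e)^{-1}$ since $\Con e$ and $\Res e$ are mutually inverse isomorphisms here, and you have simply spelled out more of the verifications (the Parallelogram Rule and the identity $\Vert x=y\Vert_\bA=\Res e(\Vert x=y\Vert_\bB)$, which the paper leaves implicit via Lemma~\ref{L:ConcvonPpalCong}(iii)).
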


\begin{proof}
Let~$\bB$ be a CPCP-extension\index{i}{CPCP-extension} of~$\bA$ and denote by $e\colon A\into B$\index{s}{AtoinB@$f\colon A\into B$} the inclusion mapping. Set $\gd_\bB(x,y):=\left\Vert x=y\right\Vert_\bB$, for all $x,y\in B$. Then $\bC:=(A,B,\gd_\bB,\Conc\bB)$\index{s}{compcon1@$\Conc\bA$, $\Conc f$} is a semilattice-metric cover\index{i}{semilattice-metric!cover}. By assumption, $\Conc e$ is an isomorphism from~$\Conc\bA$ onto~$\Conc\bB$\index{s}{compcon1@$\Conc\bA$, $\Conc f$}. Then $(\id_A,(\Conc e)^{-1})\colon\bC^\flat\Rightarrow\bA^\natural$\index{s}{AtorightarrowB@$f\colon A\Rightarrow B$}\index{s}{Aiinatural@$\bA^\natural$, $f^\natural$}\index{s}{Aiiflat@$\bA^\flat$, $f^\flat$}.
\qed\end{proof}

Now we reach the main theorem of Chapter~\ref{Ch:CongPres}.

\begin{thm}\label{T:NoCPCP}
Let $\cV$ be a nondistributive\index{i}{distributive!non-${}_{-}$ variety} variety of lattices and let~$\bF$ be a free bounded lattice on at least~$\aleph_1$\index{s}{aleph0@$\aleph_{\ga}$} generators within~$\cV$. Then~$\bF^\natural$\index{s}{Aiinatural@$\bA^\natural$, $f^\natural$} is not a CPCP-retract\index{i}{CPCP-retract}. Consequently, $\bF$ has no CPCP-extension\index{i}{CPCP-extension}.
\end{thm}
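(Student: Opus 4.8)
The plan is to assume that $\bF^\natural$ is a CPCP-retract and reach a contradiction, by producing a square of semilattice-metric spaces that is simultaneously of the form $\Phi\overrightarrow{L}$ for a $\set{0,1}^2$-indexed diagram $\overrightarrow{L}$ of finite bounded lattices in $\cV$ and is unliftable with respect to $\Psi$ in the sense of Lemma~\ref{L:UnliftSqMetr}, then applying CLL to $\overrightarrow{L}$. First I would reduce to the case where $\bF$ is the free bounded lattice on exactly $\aleph_1$ generators within $\cV$: if $\bF$ has more generators it has this lattice as a quotient, and by Propositions~\ref{P:QuotMalgMetr} and~\ref{P:QuotCPCP} the property of $\bF^\natural$ being a CPCP-retract passes to quotients of the form $\bF^\natural/I$. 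Since $\bF$ is free, it surjects onto every bounded lattice $\bG$ in $\cV$ generated by at most $\aleph_1$ elements; applying the same two propositions once more, if $\bF^\natural$ is a CPCP-retract then so is $\bG^\natural$ for every such $\bG$. In particular, for any $\set{0,1}^2$-indexed diagram $\overrightarrow{L}$ of finite bounded lattices in $\cV$ and any norm-covering $X$ of $\set{0,1}^2$ with $\card X\le\aleph_1$, the condensate $\xF(X)\otimes\overrightarrow{L}$, which by Proposition~\ref{P:FinRepresBP} is a directed colimit of at most $\aleph_1$ finite structures and hence a bounded lattice in $\cV$ of cardinality at most $\aleph_1$, has $\bigl(\xF(X)\otimes\overrightarrow{L}\bigr)^\natural$ a CPCP-retract.

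Next I would set up the categorical machinery. Proposition~\ref{P:MalgMetrLeftLard} provides the left larder $(\MALG,\METR,\METR^\Rightarrow,\Phi)$ and Proposition~\ref{P:MetrLarder} the right $\aleph_0$-larder $(\METR^*,\METR^{*\fin},\METR,\METR^{\fin},\METR^\Rightarrow,\Psi)$; taking $\cA^\dagger$ to be the class of finite bounded lattices in $\cV$ (so that $\Phi``(\cA^\dagger)\subseteq\METR^{\fin}$, a finite lattice having finite $\Conc$) and applying Proposition~\ref{P:LR2Larder} with $\gl=\gm=\aleph_0$ yields an $\aleph_0$-larder $\Lambda$. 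Because $\set{0,1}^2$ is a finite \ajs\ with zero and $\kur_0(\set{0,1}^2)\le\dim(\set{0,1}^2)=2$, the relation $(\aleph_1,{<}\aleph_0)\leadsto\set{0,1}^2$ holds in ZFC; so by Lemma~\ref{L:Part2Lift} there is an $\aleph_0$-lifter $(X,\bX)$ of $\set{0,1}^2$ with $\card X=\aleph_1$, with $X$ a lower finite \ajs\ with zero and $\bX$ the set of all principal ideals of $X$. In particular $\bX^=$ is lower finite, which places us in case~(i) of CLL.

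The main obstacle is the construction of the diagram $\overrightarrow{L}$. Using that $\cV$ is nondistributive, fix $\bW\in\set{\bM_3,\bN_5}$ with $\bW\in\cV$ and build, out of copies of $\bW$, a $\set{0,1}^2$-indexed diagram $\overrightarrow{L}$ of finite bounded lattices in $\cV$, all transition maps being $\set{0,1}$-embeddings, such that $\Phi\overrightarrow{L}=\overrightarrow{L}^\natural$ meets conditions (i)--(iv) of Lemma~\ref{L:UnliftSqMetr}: there are elements $0,1,a_0$ in the bottom lattice $L_{(0,0)}$ and $a_1\in L_{(1,0)}$, $a_2\in L_{(0,1)}$, together with compact congruences $\ga,\gb\in\Conc L_{(0,0)}$ dominating the relevant distances, whose images in $\Conc L_{(1,0)}$ and $\Conc L_{(0,1)}$ are orthogonal, while in the top lattice $L_{(1,1)}$ one has $\gd_{L_{(1,1)}}(a_1,a_2)\neq 0$. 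The delicate balance is that the two orthogonality requirements are exactly what, by Lemma~\ref{L:UnliftSqMetr}, would force $a_1$ and $a_2$ to be identified by the distance in any lifting; so one must arrange the nondistributive kernel (the simplicity of $\bM_3$, or the monolith of $\bN_5$) so that $a_1$ and $a_2$ nonetheless stay separated at the top. Granting such an $\overrightarrow{L}$, Lemma~\ref{L:UnliftSqMetr} shows that there is no square $\overrightarrow{\bB}$ of semilattice-metric covers together with a morphism $\Psi\overrightarrow{\bB}\Todot\overrightarrow{L}^\natural$ in $(\METR^\Rightarrow)^{\set{0,1}^2}$.

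Finally I would combine these. Assuming for contradiction that $\bF^\natural$ is a CPCP-retract, the first paragraph gives that $\Phi\bigl(\xF(X)\otimes\overrightarrow{L}\bigr)=\bigl(\xF(X)\otimes\overrightarrow{L}\bigr)^\natural$ is a CPCP-retract, so there are a semilattice-metric cover $\bC$ and a double arrow $\chi\colon\Psi(\bC)=\bC^\flat\Rightarrow\Phi\bigl(\xF(X)\otimes\overrightarrow{L}\bigr)$. Applying CLL (Lemma~\ref{L:CLL}), case~(i), to $\Lambda$ at $\bC$, with poset $\set{0,1}^2$, lifter $(X,\bX)$, diagram $\overrightarrow{L}$ (whose three non-maximal vertices lie in $\cA^\dagger$), and the double arrow $\chi$, we obtain a $\set{0,1}^2$-indexed diagram $\overrightarrow{\bB}$ in $\METR^*$ and a double arrow $\overrightarrow{\chi}\colon\Psi\overrightarrow{\bB}\Todot\Phi\overrightarrow{L}$ in $\METR^{\set{0,1}^2}$, contradicting the previous paragraph. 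Hence $\bF^\natural$ is not a CPCP-retract. The last assertion is then immediate from Proposition~\ref{P:aroundCP}: a CPCP-extension of $\bF$ would make $\bF^\natural$ a CPCP-retract, so $\bF$ has no CPCP-extension.
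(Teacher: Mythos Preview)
Your proposal is correct and follows essentially the same route as the paper's proof: reduce via Propositions~\ref{P:QuotMalgMetr} and~\ref{P:QuotCPCP} to the condensate $\xF(X)\otimes\overrightarrow{L}$, assemble the $\aleph_0$-larder from Propositions~\ref{P:MalgMetrLeftLard} and~\ref{P:MetrLarder} through Proposition~\ref{P:LR2Larder}, get an $\aleph_0$-lifter of $\set{0,1}^2$ of size~$\aleph_1$ from Lemma~\ref{L:Part2Lift}, and combine CLL with Lemma~\ref{L:UnliftSqMetr}. The one step you leave as ``granting such an $\overrightarrow{L}$'' is precisely what the paper fills in: it exhibits two explicit squares (one built from~$\bM_3$ when $\bM_3\in\cV$, the other from~$\bN_5$ otherwise), with~$\bA_0$ a four-element Boolean lattice, and verifies directly that $\Conc\bA_0\cong\Conc\bA_1\cong\Conc\bA_2\cong\set{0,1}^2$ with~$\ga,\gb$ corresponding to the two atoms while $\gd_\bA(a_1,a_2)\ne0$.
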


Consequently, $\bF$ has no congruence-preserving\index{i}{congruence-preserving extension}, congruence-permutable\index{i}{algebra!congruence-permutable} extension.

\begin{proof}
It suffices to prove that there exists a bounded lattice~$\bL$ of cardinality at most~$\aleph_1$\index{s}{aleph0@$\aleph_{\ga}$} within~$\cV$ such that~$\bL^\natural$\index{s}{Aiinatural@$\bA^\natural$, $f^\natural$} is not a CPCP-retract\index{i}{CPCP-retract}. Indeed, $\bL$ is a quotient of~$\bF$, thus, by Proposition~\ref{P:QuotMalgMetr}, $\bL^\natural$\index{s}{Aiinatural@$\bA^\natural$, $f^\natural$} is a quotient of~$\bF^\natural$\index{s}{Aiinatural@$\bA^\natural$, $f^\natural$}. Hence, by Proposition~\ref{P:QuotCPCP}, if~$\bF^\natural$\index{s}{Aiinatural@$\bA^\natural$, $f^\natural$} were a CPCP-retract\index{i}{CPCP-retract}, then so would be~$\bL^\natural$\index{s}{Aiinatural@$\bA^\natural$, $f^\natural$}. This proves our claim.

As~$\cV$ is a nondistributive\index{i}{distributive!non-${}_{-}$ variety} lattice variety, either the five-element nonmodular\index{i}{lattice!modular} lattice~$\bN_5$ or the five-element modular\index{i}{lattice!modular} nondistributive lattice~$\bM_3$ belong to~$\cV$. Label the elements of~$\bM_3$ and~$\bN_5$ as on Figure~\ref{Fig:M3N5}.

\begin{figure}[htb]
\includegraphics{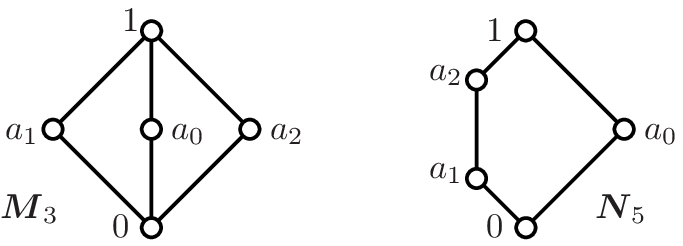}
\caption{Labeling the lattices $\bM_3$ and $\bN_5$}\label{Fig:M3N5}
\end{figure}

Now we shall construct a square\index{i}{square (shape of a diagram)} of lattices $\bA_0$, $\bA_1$, $\bA_2$, $\bA$ with inclusion maps $f_i\colon\bA_0\into\bA_i$\index{s}{AtoinB@$f\colon A\into B$}, $g_i\colon\bA_i\into\bA$\index{s}{AtoinB@$f\colon A\into B$}, as follows. If~$\bM_3$ belongs to~$\cV$, let $\overrightarrow{\bA}$ be the diagram on the left hand side of Figure~\ref{Fig:M3N5diagram}. If~$\bM_3$ does not belong to~$\cV$, let $\overrightarrow{\bA}$ be the diagram on the right hand side of Figure~\ref{Fig:M3N5diagram}. Observe that~$\overrightarrow{\bA}$ is always a square\index{i}{square (shape of a diagram)} of finite lattices in~$\cV$.

\begin{figure}[htb]\index{i}{square (shape of a diagram)}
\includegraphics{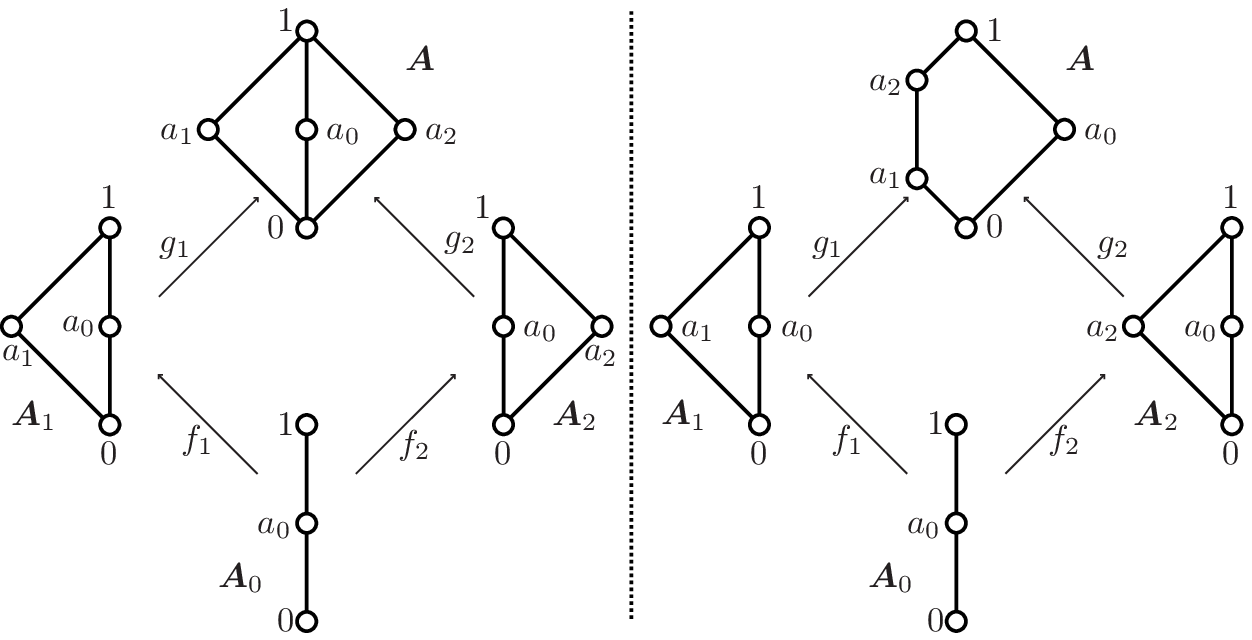}
\caption{Two squares of finite lattices}\label{Fig:M3N5diagram}
\end{figure}

It follows from the comments following Definition~\ref{D:KurInd} that $\kur(\set{0,1}^2)\leq 2$\index{s}{kurP@$\kur(P)$} (in fact $\kur(\set{0,1}^2)=2$\index{s}{kurP@$\kur(P)$}), thus the relation $(\aleph_1,{<}\aleph_0)\leadsto\set{0,1}^2$\index{s}{aleph0@$\aleph_{\ga}$}\index{s}{arr0x@$(\gk,{<}\gl)\leadsto P$} holds. As this argument relies on results from our paper~\cite{GiWe1}\index{c}{Gillibert, P.}\index{c}{Wehrung, F.}, we shall present a direct proof.

Let $F\colon[\go_1]^{<\go}\to[\go_1]^{<\go}$ be an isotone map. It follows easily from Kuratowski's characterization of~$\aleph_1$\index{s}{aleph0@$\aleph_{\ga}$} given in Kuratowski~\cite{Kura51}\index{c}{Kuratowski, C.} (see also \cite[Theorem~46.1]{EHMR}\index{c}{Erd\H{o}s, P.}\index{c}{Hajnal, A.}\index{c}{Mate@M\'at\'e, A.}\index{c}{Rado, R.}) that there are distinct $\ga_0,\ga_1\in\go_1\setminus F(\set{0})$ such that $\ga_0\notin F(\set{0,\ga_1})$ and $\ga_1\notin F(\set{0,\ga_0})$. Pick $\ga\in\go_1\setminus\bigl(\set{\ga_0,\ga_1}\cup F(\set{\ga_0})\cup F(\set{\ga_1})\bigr)$. Then the map
 \[
 f\colon\set{0,1}^2\to\go_1\,,\quad(0,0)\mapsto 0\,,\ (1,0)\mapsto\ga_0\,,\ 
 (0,1)\mapsto\ga_1\,,\ (1,1)\mapsto\ga
 \]
is one-to-one, and
 \[
 F(f``(\set{0,1}^2\dnw p))\cap f``(\set{0,1}^2\dnw q)\subseteq f``(\set{0,1}^2\dnw p)\,,
 \]
for all $p,q\in\set{0,1}^2$ such that $p\leq q$. This completes the proof of our claim.

Hence, by Lemma~\ref{L:Part2Lift}, the square\index{i}{square (shape of a diagram)}~$\set{0,1}^2$ has an $\aleph_0$-lifter\index{s}{aleph0@$\aleph_{\ga}$}~$(X,\bX)$\index{i}{lifter ($\gl$-)} where~$X$ has cardinality~$\aleph_1$\index{s}{aleph0@$\aleph_{\ga}$}. As~$\overrightarrow{\bA}$ is a diagram of bounded lattices and $0,1$-lattice embeddings in~$\cV$, the condensate\index{i}{condensate} $\bL:=\xF(X)\otimes\overrightarrow{\bA}$\index{s}{FxX@$\xF(X)$}\index{s}{otimAS@$\bA\otimes\overrightarrow{S}$, $\gf\otimes\overrightarrow{S}$} is a bounded lattice in~$\cV$. It has cardinality at most $\aleph_1$\index{s}{aleph0@$\aleph_{\ga}$}---one can prove that it is exactly~$\aleph_1$\index{s}{aleph0@$\aleph_{\ga}$} but this will not matter here.

We apply CLL\index{i}{Condensate Lifting Lemma (CLL)} (Lemma~\ref{L:CLL}) to the $\aleph_0$-larder\index{s}{aleph0@$\aleph_{\ga}$}\index{i}{larder} given by Propositions~\ref{P:MetrLarder} and~\ref{P:MalgMetrLeftLard} (via the trivial Proposition~\ref{P:LR2Larder}), namely\index{s}{Malg@$\MALG$}\index{s}{Metr@$\METR$}\index{s}{Metrsf@$\METR^{*\mathrm{fin}}$}\index{s}{Metrs@$\METR^*$}
 \[
 (\MALG,\METR^*,\METR,\MALG^\fin,\METR^{*\fin},
 \METR^\Rightarrow,\Phi,\Psi)\,,
 \]
where $\MALG^\fin$\index{s}{Malgf@$\MALG^{\mathrm{fin}}$|ii} denotes the class of all finite algebras\index{i}{algebra!universal} and~$\Phi$ and~$\Psi$ are the functors introduced in Sections~\ref{S:MalgMetrLard} and~\ref{S:MetrCov}, respectively. Suppose that~$\bL^\natural$\index{s}{Aiinatural@$\bA^\natural$, $f^\natural$} is a CPCP-retract\index{i}{CPCP-retract}. By definition, this means that there are a semilattice-metric cover\index{i}{semilattice-metric!cover}~$\bB$ and a double arrow\index{i}{double arrow} $\bchi\colon\Psi(\bB)\Rightarrow\Phi(\xF(X)\otimes\overrightarrow{\bA})$\index{s}{AtorightarrowB@$f\colon A\Rightarrow B$}\index{s}{FxX@$\xF(X)$}\index{s}{otimAS@$\bA\otimes\overrightarrow{S}$, $\gf\otimes\overrightarrow{S}$}. Now it follows from CLL\index{i}{Condensate Lifting Lemma (CLL)} (Lemma~\ref{L:CLL}) that there are a square\index{i}{square (shape of a diagram)}~$\overrightarrow{\bB}$ from~$\METR^*$\index{s}{Metrs@$\METR^*$} and a double arrow\index{i}{double arrow} $\overrightarrow{\bchi}\colon\Psi\overrightarrow{\bB}\Rightarrow\Phi\overrightarrow{\bA}$\index{s}{AtorightarrowB@$f\colon A\Rightarrow B$}.

However, $\Phi\overrightarrow{\bA}$ is a diagram of semilattice-metric spaces\index{i}{semilattice-metric!space} of the sort described in the assumptions of Lemma~\ref{L:UnliftSqMetr}, with $\ga:=\left\Vert0=a_0\right\Vert_{\bA_0}$ and $\gb:=\left\Vert a_0=1\right\Vert_{\bA_0}$: for example, both $\xf_i=\Conc f_i$ are isomorphisms, with $\Conc\bA_0\cong\Conc\bA_1\cong\Conc\bA_2\cong\set{0,1}^2$\index{s}{compcon1@$\Conc\bA$, $\Conc f$}, and~$\ga$ and~$\gb$ correspond, through those isomorphisms, to the atoms~$(1,0)$ and~$(0,1)$ of the square\index{i}{square (shape of a diagram)}~$\set{0,1}^2$, which are orthogonal\index{i}{orthogonal} in~$\set{0,1}^2$.

On the other hand, $\gd_\bA(a_1,a_2)\neq0$ in both cases: a contradiction.
\qed\end{proof}

As the free bounded lattice on~$\aleph_1$\index{s}{aleph0@$\aleph_{\ga}$} generators within~$\cV$ is a homomorphic image of the free lattice on~$\aleph_1$\index{s}{aleph0@$\aleph_{\ga}$} generators within~$\cV$, the result of Theorem~\ref{T:NoCPCP} applies to the latter lattice as well. This comment also applies to the following immediate corollary.

\begin{cor}\label{C:NoCPCP}
Let $\cV$ be a nondistributive\index{i}{distributive!non-${}_{-}$ variety} variety of lattices. Then the free lattice \pup{resp. free bounded lattice} on~$\aleph_1$\index{s}{aleph0@$\aleph_{\ga}$} generators within~$\cV$ has no congruence-permutable\index{i}{algebra!congruence-permutable}, congruence-preserving extension\index{i}{congruence-preserving extension}.
\end{cor}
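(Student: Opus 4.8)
\textbf{Proof plan for Corollary~\ref{C:NoCPCP}.}
The plan is to deduce the corollary immediately from Theorem~\ref{T:NoCPCP} together with the elementary observation that CPCP-extensions (and hence the absence thereof) behave well under homomorphic images. First I would recall that if $\bG$ is a free lattice (resp.\ free bounded lattice) on $\aleph_1$ generators within $\cV$ and $\bF$ is the free bounded lattice on $\aleph_1$ generators within $\cV$, then $\bF$ is a homomorphic image of $\bG$: indeed, a free lattice on $\aleph_1$ generators maps onto $\bF$ by sending the generators to the generators of $\bF$ (using the lattice operations of $\bF$), while a free bounded lattice on $\aleph_1$ generators \emph{is} $\bF$ (up to isomorphism). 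So it suffices to prove the statement for $\bF$, i.e.\ to show $\bF$ has no congruence-permutable, congruence-preserving extension. But this is exactly the final assertion of Theorem~\ref{T:NoCPCP}.

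Alternatively, and to make the deduction genuinely about the free \emph{lattice} (not the bounded one) in a self-contained way, I would argue as follows. Let $\bG$ be the free $\cV$-lattice on $\aleph_1$ generators and suppose, for contradiction, that $\bG$ has a congruence-permutable, congruence-preserving extension $\bH$. By Definition~\ref{D:CPCPext}, $\bH$ is then a CPCP-extension of $\bG$, so by Proposition~\ref{P:aroundCP} the semilattice-metric space $\bG^\natural$ is a CPCP-retract. Now $\bF$ is a quotient of $\bG$, say $\bF \cong \bG/\ga$ for a suitable congruence $\ga$ of $\bG$; setting $I := (\Conc\bG)\dnw\ga$, Proposition~\ref{P:QuotMalgMetr} gives $\bF^\natural \cong \bG^\natural/I$, and then Proposition~\ref{P:QuotCPCP} shows that $\bF^\natural$ is also a CPCP-retract. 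This contradicts the first assertion of Theorem~\ref{T:NoCPCP} (since $\bF$ is a free bounded lattice on $\aleph_1\ge\aleph_1$ generators within $\cV$). Hence $\bG$ has no such extension.

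The main obstacle here is essentially nil: the corollary is a routine packaging of Theorem~\ref{T:NoCPCP} via the three stability lemmas (Propositions~\ref{P:QuotMalgMetr}, \ref{P:QuotCPCP}, \ref{P:aroundCP}), all of which are already established in the excerpt. The only point requiring a word of care is the passage from the free lattice to the free bounded lattice: one must check that the free bounded $\cV$-lattice on $\aleph_1$ generators really is a quotient of the free $\cV$-lattice on $\aleph_1$ generators, which follows because the inclusion of the signatures (lattice into bounded-lattice) yields, by freeness, a surjection in the reverse direction onto the reduct, and then the bounds are already present. All the genuine mathematical content — the unliftable squares of Lemma~\ref{L:UnliftSqMetr}, the larder verifications, and the application of CLL — has been absorbed into Theorem~\ref{T:NoCPCP}, so nothing further is needed.
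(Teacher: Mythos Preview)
Your proposal is correct and follows essentially the same approach as the paper: the corollary is stated there as immediate, with only the one-line remark (just before the corollary) that the free bounded lattice on~$\aleph_1$ generators is a homomorphic image of the free lattice on~$\aleph_1$ generators, so Theorem~\ref{T:NoCPCP} transfers via the quotient argument (Propositions~\ref{P:aroundCP}, \ref{P:QuotMalgMetr}, \ref{P:QuotCPCP}) exactly as you spell out in your second paragraph.
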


\begin{remk}\label{Rk:NoCPCP}
It is not hard to verify that the class of all lattices with $3$-permutable\index{i}{permut@$m$-permutable congruence lattice} congruences is closed under directed colimits and finite direct products (the verification for finite direct products requires congruence-distributivity). As $\bA_0$, $\bA_1$, $\bA_2$, and~$\bA$ are all bounded lattices with $3$-permutable\index{i}{permut@$m$-permutable congruence lattice} congruences while the~$f_i$ and~$g_i$ are $0,1$-lattice homomorphisms, it follows that the condensate $\bL:=\xF(X)\otimes\overrightarrow{\bA}$\index{s}{FxX@$\xF(X)$}\index{s}{otimAS@$\bA\otimes\overrightarrow{S}$, $\gf\otimes\overrightarrow{S}$} of the proof of Theorem~\ref{T:NoCPCP} is also a bounded lattice with $3$-permutable\index{i}{permut@$m$-permutable congruence lattice} congruences. Nevertheless, this lattice has no CPCP-extension.
\end{remk}

\chapter{Larders from von Neumann regular rings}\label{Ch:RegRngLard}

\textbf{Abstract.} The assignment that sends a regular ring\index{i}{regular ring}~$R$ to its lattice of all principal right ideals\index{i}{ideal (in a ring)!right} can be naturally extended to a functor, denoted by~$\LL$\index{s}{LLR@$\LL(R)$, $\LL(f)$} (cf. Section~\ref{Su:PartFunctSol}). An earlier occurrence of a condensate-like\index{i}{condensate} construction is provided by the proof in Wehrung~\cite[Theorem~9.3]{CXCoord}. This construction turns the non-liftability of a certain $0,1$-lattice endomorphism from~$\bM_\go$\index{s}{Momeg@$\bM_\omega$} (cf. Example~\ref{Ex:Momega}) to a non-coordinatizable\index{i}{lattice!coordinatizable}, $2$-distributive\index{i}{lattice!$2$-distributive} complemented\index{i}{lattice!complemented} modular\index{i}{lattice!modular} lattice, of cardinality~$\aleph_1$\index{s}{aleph0@$\aleph_{\ga}$}, with a spanning\index{i}{spanning (sublattice)}~$\bM_\go$\index{s}{Momeg@$\bM_\omega$}. Thus the idea to adapt the functor~$\LL$\index{s}{LL@$\LL$ functor} to our larder\index{i}{larder} context is natural. The present chapter is designed for this goal. In addition, it will pave the categorical way for solving, in the second author's paper~\cite{Banasch2}\index{c}{Wehrung, F.}, a 1962 problem by J\'onsson\index{c}{Jonsson@J\'onsson, B.}.

\section{Ideals of regular rings and of lattices}\label{S:IdRegRings}
In the present section we shall establish a few basic facts about regular rings\index{i}{regular ring} without unit and their ideals, some of which, although they are well-known in the unital case, have not always, to our knowledge, appeared anywhere in print in the non-unital case.

All our rings will be associative but not necessarily unital. Likewise, our ring homomorphisms will not necessarily preserve the unit even if it exists. An element~$b$ in a ring~$R$ is a \emph{quasi-inverse}\index{i}{quasi-inverse|ii} of an element~$a$ if $a=aba$. We say that~$R$ is (von~Neumann) \emph{regular}\index{i}{regular ring|ii} if every element of~$R$ has a quasi-inverse in~$R$. A reference for unital regular rings\index{i}{regular ring} is Goodearl's monograph~\cite{Good91}\index{c}{Goodearl, K.\,R.}.

We start with the following well-known fact, which is contained, in the regular case, in \cite[Section~VI.4]{Maed}\index{c}{Maeda, F.}. However, the result is valid for general rings. We include a proof for convenience.

\begin{lem}[folklore]\label{L:ayxbxy}
Let~$a$ and~$b$ be idempotent elements in a ring~$R$. Then the following are equivalent:
\begin{description}
\item[\tui] $aR$ and~$bR$ are isomorphic as \pup{non necessarily unital} right $R$-modules.

\item[\tuii] There are mutually quasi-inverse\index{i}{quasi-inverse} elements $x,y\in R$ such that $a=yx$ and $b=xy$.

\item[\tuiii] There are $x,y\in R$ such that $a=yx$ and $b=xy$.
\end{description}
\end{lem}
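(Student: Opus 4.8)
The plan is to prove the cycle (i)$\Rightarrow$(iii)$\Rightarrow$(ii)$\Rightarrow$(i), the middle implication being the only one with any content.

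First, for (i)$\Rightarrow$(iii): suppose $\gf\colon aR\to bR$ is an isomorphism of right $R$-modules. Set $x:=\gf(a)$ and $y:=\gf^{-1}(b)$. Since $a$ is idempotent, $a\in aR$, so $x=\gf(a)\in bR$, whence $bx=x$; similarly $ay=y$. From $\gf(a)=x=x\cdot 1$... — but there is no unit, so instead I would argue directly: for any $r\in R$, $\gf(ar)=\gf(a)r=xr$, and in particular $\gf(a)=\gf(a\cdot a)=\gf(a)a=xa$, so $x=xa$. Likewise $y=yb$. Then $xy=\gf(a)\gf^{-1}(b)=\gf\bigl(a\gf^{-1}(b)\bigr)=\gf\bigl(\gf^{-1}(b)\bigr)=b$, using $a\gf^{-1}(b)=\gf^{-1}(b)$ (as $\gf^{-1}(b)\in aR$). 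Symmetrically $yx=\gf^{-1}\bigl(b\gf(a)\bigr)=\gf^{-1}(\gf(a))=a$. This gives (iii).

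Next, (iii)$\Rightarrow$(ii): given $x,y$ with $a=yx$, $b=xy$, I would replace $x$ by $x':=xyx$ and $y$ by $y':=yxy$ and check that $x',y'$ are mutually quasi-inverse and still produce $a,b$. Indeed $x'y'x'=xyx\cdot yxy\cdot xyx=x(yx)(yx)(yx)y\cdot?$— more cleanly: since $a=yx$ is idempotent, $yxyx=yx$, and since $b=xy$ is idempotent, $xyxy=xy$. Then $x'y'x'=(xyx)(yxy)(xyx)=x(yxyx)(yxyx)y\cdot$ wait — compute step by step: $x'y'=xyx\cdot yxy=x(yxyx)y=x(yx)y=(xyx)y=x(yxy)\cdot$ hmm, let me just record the clean facts $x'y'=xy=b$ and $y'x'=yx=a$ (each follows from the two idempotence relations by a short associativity computation), and then $x'y'x'=b x'=xy\cdot xyx=(xyxy)x=xyx=x'$ and symmetrically $y'x'y'=y'$. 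So $x',y'$ are mutually quasi-inverse with $y'x'=a$, $x'y'=b$, giving (ii).

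Finally, (ii)$\Rightarrow$(i) — in fact (iii)$\Rightarrow$(i) directly suffices: from $a=yx$ and $b=xy$ with $a,b$ idempotent, define $\gf\colon aR\to bR$ by $ar\mapsto xar=xyxr$ — better, note $xaR=xyxR\subseteq bR$ and $ybR\subseteq aR$, and the maps $r\mapsto xr$ (restricted to $aR$) and $r\mapsto yr$ (restricted to $bR$) are mutually inverse right $R$-module homomorphisms, since for $ar\in aR$ we have $y(x(ar))=(yx)(ar)=a(ar)=a r$ (using $aa=a$) and symmetrically on $bR$. Hence $aR\cong bR$.

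I do not expect any real obstacle here; the only mild subtlety is the absence of a unit, which forces one to avoid writing things like "$\gf(a)=\gf(a)\cdot 1$" and instead to exploit idempotence of $a$ and $b$ everywhere to reconstruct the relevant identities. The replacement trick $x\rightsquigarrow xyx$ in (iii)$\Rightarrow$(ii) is the standard device and needs only the two idempotence relations $yxyx=yx$ and $xyxy=xy$; writing out those associativity computations is the bulk of the (still routine) work.
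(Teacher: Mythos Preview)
Your proof is correct and essentially the same as the paper's, just with the cycle ordered differently. The paper proves (i)$\Rightarrow$(ii)$\Rightarrow$(iii)$\Rightarrow$(i): having built $x=\gf(a)$ and $y=\gf^{-1}(b)$ it observes (as you did) that $x=xa$ and $bx=x$, so immediately $xyx=bx=x$ and likewise $yxy=y$, getting (ii) without the replacement trick. You instead route through (iii) and then upgrade to (ii) via $x\mapsto xyx$, $y\mapsto yxy$; this costs a few extra lines but has the small advantage that your (iii)$\Rightarrow$(ii) is a clean standalone implication, whereas in the paper's cycle that direction passes through (i). The (iii)$\Rightarrow$(i) step is identical in both.
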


\begin{proof}
(i)$\Rightarrow$(ii). Let $f\colon aR\to bR$ be an isomorphism of right $R$-modules. The element $x:=f(a)$ belongs to~$bR$ and $x=f(aa)=f(a)a=xa$ belongs to~$Ra$, so $x\in bRa$. Likewise, the element $y:=f^{-1}(b)$ belongs to~$aRb$. Furthermore,
 \[
 a=f^{-1}f(a)=f^{-1}(x)=f^{-1}(bx)=f^{-1}(b)x=yx\,,
 \]
likewise $b=xy$. Finally, $xyx=bx=x$ (we use $xy=b$ and $x\in bR$), likewise $yxy=y$.

(ii)$\Rightarrow$(iii) is trivial. Finally, assume that~(iii) holds. For each $t\in aR$, the element $xt=xat=xyxt=bxt$ belongs to $bR$. It follows that the assignment $(t\mapsto xt)$ defines a homomorphism $f\colon aR\to bR$ of right $R$-modules. Likewise, the assignment $(t\mapsto yt)$ defines a homomorphism $g\colon bR\to aR$ of right $R$-modules. {}From $gf(a)=yxa=a^2=a$ it follows that $g\circ f=\id_{aR}$; likewise $f\circ g=\id_{bR}$, so~$f$ and~$g$ are mutually inverse.
\qed\end{proof}

\begin{cor}\label{C:Iidrng2st}
Let~$I$ be a two-sided ideal\index{i}{ideal (in a ring)!two-sided} of a ring~$R$ and let $a$ and~$b$ be idempotent elements of~$R$ such that~$aR$ and~$bR$ are isomorphic as right $R$-modules. Then $a\in I$ if{f} $b\in I$.
\end{cor}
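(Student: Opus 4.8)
The plan is to derive Corollary~\ref{C:Iidrng2st} directly from Lemma~\ref{L:ayxbxy}, using only the defining property of a two-sided ideal. By symmetry (swapping the roles of $a$ and $b$, and of $x$ and $y$), it suffices to prove one implication, say that $a\in I$ implies $b\in I$.

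First I would invoke the equivalence (i)$\Leftrightarrow$(iii) of Lemma~\ref{L:ayxbxy}: since $aR$ and $bR$ are isomorphic as right $R$-modules and $a,b$ are idempotent, there exist $x,y\in R$ such that $a=yx$ and $b=xy$. Then, assuming $a\in I$, I would write $b=xy=x(yx)y\cdot$ --- wait, more cleanly: observe that $b = xy = x(ab')$ is not yet what we want, so instead use the cleaner route through condition (ii). Namely, take $x,y$ to be \emph{mutually quasi-inverse}, so that in addition $xyx=x$ and $yxy=y$. Then $b = xy = (xyx)y = x(yx)y = xay$. Since $a\in I$ and $I$ is a two-sided ideal, $xay\in I$, hence $b\in I$. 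This is the whole argument for the forward implication.

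For the reverse implication, I would note that the hypotheses are symmetric in $a$ and $b$: an isomorphism $aR\cong bR$ gives an isomorphism $bR\cong aR$, and applying the same reasoning with the roles of $a$ and $b$ interchanged yields that $b\in I$ implies $a\in I$. Thus $a\in I$ if and only if $b\in I$.

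There is essentially no obstacle here; the only point requiring a moment's care is to make sure we extract from Lemma~\ref{L:ayxbxy} the \emph{strong} form (ii), with mutually quasi-inverse $x,y$, rather than merely (iii), so that the identity $b=xay$ holds and lets us absorb $a\in I$ on both sides. I expect the proof to be three or four lines.
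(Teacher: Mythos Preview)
Your proof is correct and takes essentially the same approach as the paper: both invoke part~(ii) of Lemma~\ref{L:ayxbxy} to obtain mutually quasi-inverse $x,y$ with $a=yx$, $b=xy$, and then use the two-sided ideal property. The only cosmetic difference is that the paper first observes $y=yxy=ay\in I$ and then $b=xy\in I$, whereas you collapse this into the single identity $b=xay\in I$; these are the same argument.
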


\begin{proof}
It follows from Lemma~\ref{L:ayxbxy} that there are mutually quasi-inverse\index{i}{quasi-inverse} elements $x,y\in R$ such that $a=yx$ and $b=xy$. If $a\in I$, then $y=yxy=ay$ also belongs to~$I$, hence $b=xy$ belongs to~$I$.
\qed\end{proof}

For a regular ring\index{i}{regular ring} $R$, we set\index{s}{LLR@$\LL(R)$, $\LL(f)$|ii}
 \[
 \LL(R):=\setm{xR}{x\in R}=\setm{xR}{x\in R,\ x^2=x}\,.
 \]
Although our rings may not be unital, it is the case that every element~$a$ in a regular ring\index{i}{regular ring}~$R$ belongs to $aR$: indeed, if~$b$ is a quasi-inverse\index{i}{quasi-inverse} of~$a$, then $a=aba\in aR$.

The following result is proved in Fryer and Halperin \cite[Section~3.2]{FrHa56}\index{c}{Fryer, K.\,D.}\index{c}{Halperin, I.}.

\begin{prop}\label{P:+capinL(R)}
Let $R$ be a regular ring\index{i}{regular ring} and let $a$, $b$ be idempotent elements in~$R$.
Furthermore, let $u$ be a quasi-inverse\index{i}{quasi-inverse} of $b-ab$. Then the following
statements hold:
\begin{description}
\item[\tui] Put $c:=(b-ab)u$. Then $aR+bR=(a+c)R$.

\item[\tuii] Suppose that $b^2=b$ and put $d:=u(b-ab)$. Then
$aR\cap bR=(b-bd)R$.

\item[\tuiii] If $aR\subseteq bR$, then $bR=aR\oplus(b-ab)R$.
\end{description}
Consequently, $\LL(R)$\index{s}{LLR@$\LL(R)$, $\LL(f)$}, partially ordered by containment, is a sectionally complemented\index{i}{lattice!sectionally complemented} sublattice of the lattice of all right ideals\index{i}{ideal (in a ring)!right} of~$R$. In particular, it is modular\index{i}{lattice!modular}.
\end{prop}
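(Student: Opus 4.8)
The plan is to establish the three displayed identities (i), (ii), (iii) by direct computation with idempotents and quasi-inverses, following Fryer and Halperin \cite[Section~3.2]{FrHa56}, and then to read off the ``Consequently'' clause formally.

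First I would record the elementary facts about $c=(b-ab)u$, where $u$ is a quasi-inverse of $b-ab$. From $(b-ab)u(b-ab)=b-ab$ one gets that $c$ is idempotent, and since $a^2=a$ one has $ac=a(b-ab)u=(ab-a^2b)u=0$. For part~(i) the inclusion $(a+c)R\subseteq aR+bR$ is immediate, since $cR\subseteq(b-ab)R\subseteq aR+bR$; for the reverse inclusion I would use $ac=0$ to get $a(a+c)=a$, hence $aR\subseteq(a+c)R$, and then the identity $(a+c)(b-ab)=a(b-ab)+c(b-ab)=c(b-ab)=b-ab$ (the last equality because $c(b-ab)=(b-ab)u(b-ab)$) to get $b-ab\in(a+c)R$, whence $b=ab+(b-ab)\in(a+c)R$ and $bR\subseteq(a+c)R$. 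For part~(iii), $bR=aR+(b-ab)R$ holds because $b=ab+(b-ab)$ with $ab\in aR\subseteq bR$, and the sum is direct since any $z\in aR\cap(b-ab)R$ satisfies $az=z$ (as $z\in aR$ and $a^2=a$) while $az\in a(b-ab)R=\{0\}$, forcing $z=0$. For part~(ii), with $d:=u(b-ab)$, I would first note that $d$ is idempotent and $(b-ab)d=b-ab$; then $(b-bd)R\subseteq bR$ is clear, $(b-bd)R\subseteq aR$ follows from $a(b-bd)=ab-abd=b-bd$ (using $abd=bd-b+ab$, itself a consequence of $bd-abd=(b-ab)d=b-ab$), and conversely, for $z\in aR\cap bR$ one has $az=bz=z$ and $(b-ab)z=bz-a(bz)=z-az=0$, so $(b-bd)z=z-bu(b-ab)z=z$, giving $z\in(b-bd)R$.

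With (i)--(iii) in hand, the final sentence is formal: (i) and (ii) say that $\LL(R)$ is closed under the operations $+$ and $\cap$ of the lattice of all right ideals of $R$, hence is a sublattice; that ambient lattice (the submodule lattice of $R_R$) is modular, and modularity passes to sublattices; $\LL(R)$ contains the zero ideal $0R$; and (iii) exhibits, for $aR\subseteq bR$ in $\LL(R)$, the member $(b-ab)R\in\LL(R)$ as a sectional complement of $aR$ in $bR$. Thus $\LL(R)$ is a sectionally complemented, modular lattice. The only mildly delicate point is the bookkeeping behind the identities $abd=bd-b+ab$ and $a(b-bd)=b-bd$ in part~(ii) (and, to a lesser extent, $(a+c)(b-ab)=b-ab$ in part~(i)); there is no conceptual obstacle, the whole content being the Fryer--Halperin manipulation of idempotents.
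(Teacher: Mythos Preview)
Your argument is correct and complete. The paper does not give its own proof here but simply cites Fryer and Halperin \cite[Section~3.2]{FrHa56}, so you are supplying exactly the computation that the paper defers to that reference; there is nothing to compare.
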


A lattice is \emph{coordinatizable}\index{i}{lattice!coordinatizable|ii} if it is isomorphic to~$\LL(R)$\index{s}{LLR@$\LL(R)$, $\LL(f)$} for some regular ring\index{i}{regular ring}~$R$. Hence every coordinatizable lattice is sectionally complemented\index{i}{lattice!sectionally complemented} and
modular\index{i}{lattice!modular}. The converse is false, the smallest counterexample being the nine-element lattice of length two~$\bM_7$.

The following consequence of Proposition~\ref{P:+capinL(R)} is observed in Micol's thesis~\cite{Micol}\index{c}{Micol, F.}.

\begin{cor}\label{C:Lfunctor}
Let $R$ and $S$ be regular rings\index{i}{regular ring} and let $f\colon R\to S$ be a ring
homomorphism. Then there exists a unique map
$\LL(f)\colon\LL(R)\to\LL(S)$\index{s}{LLR@$\LL(R)$, $\LL(f)$|ii} such that $\LL(f)(xR)=f(x)S$ for each $x\in R$, and~$\LL(f)$ is a $0$-lattice homomorphism.

Furthermore, the assignment $(R\mapsto\LL(R)$, $f\mapsto\LL(f))$\index{s}{LL@$\LL$ functor|ii} defines a functor from the category~$\REG$\index{s}{Reg@$\REG$|ii} of all regular rings\index{i}{regular ring} and ring homomorphisms to the category~$\SCML$\index{s}{Scml@$\SCML$|ii} of all sectionally complemented\index{i}{lattice!sectionally complemented} modular\index{i}{lattice!modular} lattices and $0$-lattice homomorphisms.
\end{cor}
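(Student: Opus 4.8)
The plan is to build $\LL(f)$ on objects first, then check it is a $0$-lattice homomorphism, then verify functoriality, and finally observe that the target category is the right one. None of the steps is deep; the only point that needs attention is the transport of the witness data of Proposition~\ref{P:+capinL(R)} along $f$.

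First I would settle \textbf{well-definedness and uniqueness.} Since every element of $\LL(R)$ has the form $xR$, any map with $\LL(f)(xR)=f(x)S$ is unique, and for existence it suffices to show $xR=yR$ implies $f(x)S=f(y)S$. As $x\in xR=yR$, there is $r\in R$ with $x=yr$, so $f(x)=f(y)f(r)\in f(y)S$ and hence $f(x)S\subseteq f(y)S$; by symmetry equality holds. This same computation shows $\LL(f)$ is isotone. I would also record here the two facts that do the real work later: a ring homomorphism sends idempotents to idempotents (apply $f$ to $x^2=x$) and preserves the quasi-inverse relation (apply $f$ to $aba=a$).

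Next, \textbf{$\LL(f)$ preserves $0$, $\vee$, and $\wedge$.} We have $0R=\set{0}$, the least element of $\LL(R)$, and $f(0)=0$, which handles $0$. For joins and meets, recall from Proposition~\ref{P:+capinL(R)} that $\LL(R)$ is a sublattice of the lattice of all right ideals, so $\vee=+$ and $\wedge=\cap$ there (and likewise in $\LL(S)$). Given $P,Q\in\LL(R)$, write $P=aR$, $Q=bR$ with $a,b$ idempotent, pick a quasi-inverse $u$ of $b-ab$, and set $c:=(b-ab)u$ and $d:=u(b-ab)$. Then $f(a),f(b)$ are idempotent and $f(u)$ is a quasi-inverse of $f(b)-f(a)f(b)$, while $f(c)=\bigl(f(b)-f(a)f(b)\bigr)f(u)$ and $f(d)=f(u)\bigl(f(b)-f(a)f(b)\bigr)$ are exactly the elements ``$c$'' and ``$d$'' attached to the pair $\bigl(f(a),f(b)\bigr)$ and the quasi-inverse $f(u)$ in $S$. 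Applying Proposition~\ref{P:+capinL(R)}(i) in $R$ and then in $S$ gives
\[
\LL(f)(aR\vee bR)=\LL(f)\bigl((a+c)R\bigr)=f(a+c)S=\bigl(f(a)+f(c)\bigr)S=f(a)S\vee f(b)S\,,
\]
and Proposition~\ref{P:+capinL(R)}(ii) similarly gives $\LL(f)(aR\wedge bR)=f(b-bd)S=f(a)S\wedge f(b)S$. Hence $\LL(f)$ is a $0$-lattice homomorphism.

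Finally, \textbf{functoriality and the codomain.} From $\id_R(x)R=xR$ we get $\LL(\id_R)=\id_{\LL(R)}$, and for $f\colon R\to S$, $g\colon S\to T$ we compute $\LL(g\circ f)(xR)=g(f(x))T=\LL(g)\bigl(\LL(f)(xR)\bigr)$ for each $x\in R$, so $\LL(g\circ f)=\LL(g)\circ\LL(f)$. By Proposition~\ref{P:+capinL(R)}, each $\LL(R)$ is a sectionally complemented modular lattice, hence an object of $\SCML$, and we have just shown each $\LL(f)$ is a morphism of $\SCML$; this gives the functor $\REG\to\SCML$. The main obstacle, such as it is, is purely bookkeeping in the middle step: one must check that $f$ carries the explicit witnesses $u$, $c$, $d$ of Proposition~\ref{P:+capinL(R)} to valid witnesses in $S$, which is immediate once preservation of idempotency and of the quasi-inverse relation is noted.
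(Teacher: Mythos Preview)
Your proof is correct and follows exactly the line the paper indicates: the paper does not spell out a proof but simply states that the result is a consequence of Proposition~\ref{P:+capinL(R)} (and attributes the observation to Micol's thesis), which is precisely what you carry out in detail by transporting the explicit witnesses $u$, $c$, $d$ along~$f$.
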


The following useful result is folklore.

\begin{lem}\label{L:Persp2Iso}
Let $R$ be a \pup{not necessarily unital} ring, let $M$ be a right $R$-module, and let~$A$ and~$B$ be submodules of~$M$. If~$A$ and~$B$ are perspective in the submodule lattice~$\Sub M$ of all submodules of~$M$, then~$A$ and~$B$ are isomorphic. If $A\cap B=\set{0}$, then the converse holds.
\end{lem}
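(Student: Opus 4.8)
<br>

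The plan is to prove both directions directly, working with the submodule lattice $\Sub M$ and exploiting the correspondence between perspectivity and module isomorphism.

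For the first implication, suppose $A$ and $B$ are perspective in $\Sub M$. By definition there is a submodule $C$ of $M$ with $A \vee C = B \vee C$ and $A \wedge C = B \wedge C$; that is, $A + C = B + C$ and $A \cap C = B \cap C$. I would then appeal to the classical isomorphism $A/(A\cap C) \cong (A+C)/C$ (the Second Isomorphism Theorem for modules, which holds for non-unital rings without change) and, symmetrically, $B/(B\cap C) \cong (B+C)/C$. Since $A + C = B + C$ and $A\cap C = B\cap C$, the right-hand sides agree and the common left-hand denominators agree, so $A/(A\cap C) \cong B/(B\cap C)$. This is not yet $A \cong B$, and here is where some care is needed: to pass from the quotients to $A$ and $B$ themselves one wants a ``common complement'' argument. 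Actually the cleanest route is different: perspectivity with axis $C$ gives that the projection $M \to (A+C)/C$ restricts to isomorphisms on both $A/(A\cap C)$ and $B/(B\cap C)$, but to recover $A\cong B$ I would instead use that one may assume, by replacing $C$ with $C' := C \cap (A + B)$ (which still witnesses perspectivity inside $A+B$, since $A,B \le A+B$), that $A + C = B + C = A + B$ and $A \cap C = B \cap C$. Then inside the modular lattice interval $[0, A+B]$ the element $C$ is a complement of both $A$ and $B$; but I do not immediately get an isomorphism from this alone. So the honest approach for the first direction: use the map $\pi\colon A+B \to (A+B)/C$; its restriction to $A$ has kernel $A\cap C$ and image $(A+C)/C = (A+B)/C$, and likewise for $B$. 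Thus $A \twoheadrightarrow (A+B)/C$ with kernel $A\cap C = B\cap C =: D$, and $B \twoheadrightarrow (A+B)/C$ with kernel $D$. This gives $A/D \cong B/D$ but the claim ``$A\cong B$'' in the full generality stated must be using that these quotients, glued along the common $D$, actually force $A\cong B$ — which is false in general (e.g. $A = \mathbb{Z}$, $B = \mathbb{Z}$ embedded differently). So I suspect the intended reading is simply that $A\cong B$ as asserted follows because $D = A\cap C = B\cap C$ is a common submodule and both $A$ and $B$ are extensions of the \emph{same} $(A+B)/C$ by the \emph{same} $D$ via the \emph{restrictions of the single ambient inclusion}, and one builds the isomorphism $A\to B$ explicitly: for $a\in A$ write $a = b + c$ with $b\in B$, $c\in C$ (possible since $A \le A+B = B+C$), and send $a \mapsto b$; this is well-defined modulo $D$ and extends. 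I would verify that $a\mapsto b$ is a well-defined $R$-module isomorphism $A\to B$ by checking that if $a = b+c = b'+c'$ then $b - b' = c' - c \in B\cap C = A\cap C \le A$; combined with a symmetric argument one gets a two-sided inverse. This explicit ``parallelogram'' isomorphism is the key step.

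For the converse, assume $A\cap B = \{0\}$ and let $\gf\colon A \to B$ be an $R$-module isomorphism. Set $C := \{\, a - \gf(a) \mid a\in A \,\}$, the graph-type submodule of $M$; it is a submodule since $\gf$ is an $R$-linear map. I would then check $A + C = A + B$: indeed $b = \gf(a)$ for some $a$, and $\gf(a) = a - (a - \gf(a)) \in A + C$, so $B \le A + C \le A + B$; symmetrically $A + C = B + C = A + B$. Next, $A\cap C = \{0\}$: if $a - \gf(a) \in A$ then $\gf(a) \in A\cap B = \{0\}$, so $a = 0$; symmetrically $B\cap C = \{0\}$. Hence $A\vee C = B\vee C$ and $A\wedge C = \{0\} = B\wedge C$, so $A$ and $B$ are perspective with axis $C$.

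The main obstacle is the first implication: the naive appeal to the isomorphism theorems yields only $A/(A\cap C) \cong B/(B\cap C)$, and I must instead produce the explicit parallelogram isomorphism $A\to B$, $a\mapsto b$ where $a = b+c$ with $b\in B$ and $c\in C$, and carefully check it is well-defined and bijective using the equalities $A+C = B+C$ and $A\cap C = B\cap C$ (no hypothesis $A\cap B = \{0\}$ is available in that direction, so one really does need all of these). The converse is routine graph-submodule bookkeeping. I would present the argument in the order: (1) fix notation and recall $\Sub M$ is modular; (2) prove ``perspective $\Rightarrow$ isomorphic'' via the explicit map; (3) prove ``isomorphic and $A\cap B=\{0\}$ $\Rightarrow$ perspective'' via the graph submodule.
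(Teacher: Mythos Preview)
Your converse is exactly the paper's: set $C:=\{a-f(a)\mid a\in A\}$ and check that $A\oplus C=B\oplus C$.

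The forward direction has a genuine gap. Your map $a\mapsto b$, where $a=b+c$ with $b\in B$ and $c\in C$, is the right idea, but under the general hypothesis $A+C=B+C$ and $A\cap C=B\cap C=:D$ the element~$b$ is determined only modulo~$D$, so you do not get a well-defined map $A\to B$; your ``combined with a symmetric argument one gets a two-sided inverse'' does not repair this. In fact the forward implication is \emph{false} for general perspectivity in $\Sub M$: with $R=\mathbb{Z}$, $M=\mathbb{Z}/4\oplus\mathbb{Z}/2$, $A=\mathbb{Z}/4\oplus 0$, $B=\{(0,0),(2,0),(0,1),(2,1)\}$, and $C=\langle(1,1)\rangle$, one has $A+C=B+C=M$ and $A\cap C=B\cap C=\{(0,0),(2,0)\}$, yet $A\cong\mathbb{Z}/4\not\cong(\mathbb{Z}/2)^2\cong B$. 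So the difficulty you kept circling back to is real, and the patch you sketched cannot work.

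The paper's proof simply opens with ``If $C$ is a submodule of~$M$ such that $A\oplus C=B\oplus C$'', i.e.\ it takes $D=0$ from the start; then~$b$ is the unique element of $(a+C)\cap B$, and your parallelogram map is a well-defined isomorphism for the trivial reason you already noted. This common-complement reading is all the applications require: they take place in $\LL(R)$, which is sectionally complemented modular, and the paper's remark in Section~1-2.3 says that in such lattices any perspectivity may be witnessed by a common complement. So the fix is to read ``perspective'' here as ``having a common complement''; once you do, your argument and the paper's coincide.
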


\begin{proof}
If $C$ is a submodule of~$M$ such that~$A\oplus C=B\oplus C$, then the map~$f\colon A\to B$ that to each $a\in A$ associates the unique element of $(a+C)\cap B$ is an isomorphism. Conversely, if $A\cap B=\set{0}$ and $f\colon A\to B$ is an isomorphism, then, setting $C:=\setm{x-f(x)}{x\in A}$, we obtain that $A\oplus C=B\oplus C$.
\qed\end{proof}

Observe, in the second part of the proof above, that if~$A$ and~$B$ are finitely generated, then so is~$C$. This will be used (for unital~$R$) in case~$M=R_R$, that is, $R$ viewed as a right module over itself.

The following result is observed in the unital case in \cite[Lemma~4.2]{WeURP}\index{c}{Wehrung, F.}, however the argument presented there works in general.

\begin{lem}\label{L:NeuIdIsCl}
Let $R$ be a regular ring\index{i}{regular ring}. Then an ideal\index{i}{ideal!of a poset}~$\bI$ of~$\LL(R)$\index{s}{LLR@$\LL(R)$, $\LL(f)$} is neutral\index{i}{ideal!neutral} if{f} it is closed under isomorphism, that is, $X\in\bI$ and $X\cong Y$ \pup{as right $R$-modules} implies that $Y\in\bI$, for all $X,Y\in\LL(R)$.
\end{lem}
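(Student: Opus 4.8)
The statement to prove is Lemma~\ref{L:NeuIdIsCl}: for a regular ring~$R$, an ideal $\bI$ of $\LL(R)$ is neutral if and only if it is closed under isomorphism of right $R$-modules. By the last displayed characterization before Lemma~\ref{L:NeuIdIsCl} (namely the equivalences from \cite[Theorem~III.13.20]{Birk79}, recalled in Section~\ref{Su:Posets}, using that $\LL(R)$ is sectionally complemented and modular by Proposition~\ref{P:+capinL(R)}), an ideal $\bI$ of $\LL(R)$ is neutral if and only if it is distributive, if and only if $X\sim Y$ and $X\in\bI$ imply $Y\in\bI$ for all $X,Y\in\LL(R)$, where $\sim$ denotes perspectivity. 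So the whole task reduces to showing that, within $\LL(R)$, \emph{perspectivity of two principal right ideals and $R$-module isomorphism of those ideals define the same closure condition on ideals of $\LL(R)$}.

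First I would record the easy direction. If $\bI$ is closed under perspectivity, then it is closed under isomorphism: given $X\in\bI$ and $X\cong Y$ in $\LL(R)$, set $Z:=X$; since $\LL(R)$ is sectionally complemented there is a complement, but more directly I use Lemma~\ref{L:Persp2Iso}. The subtlety is that Lemma~\ref{L:Persp2Iso} gives perspectivity from isomorphism only under the hypothesis $X\cap Y=\set{0}$. To handle arbitrary $Y$, I would pass through a third element: by Proposition~\ref{P:+capinL(R)}(i) the join $X\vee Y$ exists in $\LL(R)$, and inside the right module $M:=R_R$ one can choose, for an isomorphism $f\colon X\to Y$, a ``diagonal'' submodule. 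The cleanest route is: since $\LL(R)$ is sectionally complemented and modular, pick $W\in\LL(R)$ with $W\oplus X=W\oplus Y$ built as in the proof of Lemma~\ref{L:Persp2Iso} from $f$ — here one must check $W$ is again a \emph{principal} right ideal (equivalently finitely generated), which follows from the remark immediately after the proof of Lemma~\ref{L:Persp2Iso} applied with $M=R_R$ after first replacing $Y$ by an isomorphic copy meeting $X$ trivially (and such a copy exists because $\LL(R)$ is sectionally complemented, so one can find a copy of $Y$ inside a section disjoint from $X$, using Lemma~\ref{L:ayxbxy} to relocate idempotents). Then $X\sim Y$, so $Y\in\bI$. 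This gives closure under perspectivity $\Rightarrow$ closure under isomorphism trivially, and the converse direction of this paragraph is what needs the work: closure under isomorphism $\Rightarrow$ closure under perspectivity. But that direction is immediate from the \emph{first} part of Lemma~\ref{L:Persp2Iso}: $X\sim Y$ implies $X\cong Y$ as right $R$-modules, with no disjointness hypothesis needed.

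Putting the pieces together: closure under isomorphism implies closure under perspectivity by the first sentence of Lemma~\ref{L:Persp2Iso}; closure under perspectivity implies closure under isomorphism by the relocation-plus-diagonal argument of the previous paragraph; and closure under perspectivity is exactly neutrality by \cite[Theorem~III.13.20]{Birk79} together with sectional complementation and modularity of $\LL(R)$ (Proposition~\ref{P:+capinL(R)}). Hence neutral $\Leftrightarrow$ closed under isomorphism, as desired.

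The main obstacle I anticipate is the module-theoretic bookkeeping in the forward direction: one must produce, from a bare $R$-module isomorphism $X\cong Y$ between two principal right ideals, a \emph{principal} common complement, and this requires first moving $Y$ by a perspectivity (or by an inner isomorphism coming from Lemma~\ref{L:ayxbxy}) to a copy $Y'$ with $X\cap Y'=\set{0}$, then invoking the constructive half of Lemma~\ref{L:Persp2Iso} and the finitely-generated remark after it, and finally chaining $Y\sim Y'\sim X$. Everything else is a direct appeal to already-established facts (Proposition~\ref{P:+capinL(R)}, Lemma~\ref{L:Persp2Iso}, Lemma~\ref{L:ayxbxy}, Corollary~\ref{C:Iidrng2st}, and the Birkhoff characterization of neutral ideals in sectionally complemented modular lattices).
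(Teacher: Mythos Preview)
Your framework is correct: by Proposition~\ref{P:+capinL(R)} the lattice $\LL(R)$ is sectionally complemented modular, so by the Birkhoff characterization recalled in Section~\ref{Su:Posets}, neutrality of~$\bI$ is equivalent to closure under perspectivity. And you are right that closure under isomorphism implies closure under perspectivity immediately from the first part of Lemma~\ref{L:Persp2Iso}. So one direction is fine.

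The gap is in the other direction. To pass from ``closed under perspectivity'' to ``closed under isomorphism'' you propose to \emph{relocate} $Y$ to a perspective copy $Y'$ with $X\cap Y'=\set{0}$, and then invoke the converse half of Lemma~\ref{L:Persp2Iso}. But you give no construction of such a~$Y'$, and in general none exists: if $R$ is unital and $X=Y=R$, the only element of $\LL(R)$ perspective to~$R$ is~$R$ itself, so $Y'\cap X=R\neq\set{0}$. Lemma~\ref{L:ayxbxy} does not help---it characterizes isomorphism, not disjointness from a prescribed~$X$. So the ``relocation'' step is unjustified, and the chain $Y\sim Y'\sim X$ never gets started.

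The paper sidesteps this by \emph{decomposing} $Y$ rather than relocating it. Using sectional complementation, write $Y=(X\cap Y)\oplus Y'$ in $\LL(R)$. Then $X\cap Y\in\bI$ by downward closure, and $Y'\cap X=Y'\cap(X\cap Y)=\set{0}$ automatically. The isomorphism $X\cong Y$ carries $Y'$ back to a right ideal $X'\subseteq X$ with $X'\cong Y'$; since $X'\cap Y'\subseteq X\cap Y\cap Y'=\set{0}$, Lemma~\ref{L:Persp2Iso} gives $X'\sim Y'$. Now $X'\in\bI$ (downward closure), so $Y'\in\bI$ (perspectivity-closure), hence $Y=(X\cap Y)\vee Y'\in\bI$. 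No relocation is needed: the disjointness you were trying to manufacture comes for free from the sectional complement.
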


\begin{proof}
If~$\bI$ is closed under isomorphism, then it is closed under perspectivity (cf. Lemma~\ref{L:Persp2Iso}), thus, as~$\LL(R)$\index{s}{LLR@$\LL(R)$, $\LL(f)$} is sectionally complemented\index{i}{lattice!sectionally complemented} modular\index{i}{lattice!modular}, $\bI$ is neutral\index{i}{ideal!neutral} (cf. Section~\ref{Su:Posets}).

Conversely, assume that~$\bI$ is neutral\index{i}{ideal!neutral} and let $X,Y\in\LL(R)$\index{s}{LLR@$\LL(R)$, $\LL(f)$} such that $X\cong Y$ and $X\in\bI$. It follows from Proposition~\ref{P:+capinL(R)} that there exists $Y'\in\LL(R)$ such that $(X\cap Y)\oplus Y'=Y$. As~$X$ belongs to~$\bI$, so does~$X\cap Y$. As $X\cong Y$ and~$Y'$ is a right ideal\index{i}{ideal (in a ring)!right} of~$Y$, there exists a right ideal\index{i}{ideal (in a ring)!right}~$X'$ of~$X$ such that $X'\cong Y'$. As $X'\cap Y'\subseteq X\cap Y\cap Y'=\set{0}$, it follows from Lemma~\ref{L:Persp2Iso} that $X'\sim Y'$. {}From $X'\subseteq X$ and $X\in\bI$ it follows that $X'\in\bI$, but $X'\sim Y'$ and~$\bI$ is a neutral\index{i}{ideal!neutral} ideal of~$\LL(R)$\index{s}{LLR@$\LL(R)$, $\LL(f)$}, thus $Y'\in\bI$. Therefore, $Y=(X\cap Y)+Y'$ belongs to~$\bI$.
\qed\end{proof}

The following result follows from the proof of \cite[Lemma~2]{FaUt63}\index{c}{Faith, C.}\index{c}{Utumi, Y.}. It enables to reduce many problems about regular rings\index{i}{regular ring} to the unital case. We include a proof for convenience.

\begin{lem}[Faith and Utumi]\label{L:FaithUt}
Let~$R$ be a regular ring\index{i}{regular ring}. Then every finite subset~$X$ of~$R$ is contained into~$eRe$, for some idempotent $e\in R$.
\end{lem}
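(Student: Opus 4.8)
The plan is to construct an idempotent $e\in R$ that acts as a two-sided identity on~$X$, i.e.\ $ex=xe=x$ for all $x\in X$; then $x=exe\in eRe$ for each $x\in X$, which is exactly the assertion. I would build such an~$e$ in two stages, a one-sided version followed by a symmetrization.

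\textbf{Stage 1 (one-sided absorption).} First I would prove the auxiliary statement: \emph{for every finite subset~$Y$ of a regular ring~$R$ there is an idempotent $e\in R$ such that $ey=y$ for all $y\in Y$}, by induction on~$\card Y$. The case $Y=\es$ is settled by $e:=0$. For a single element~$a$, if~$b$ is a quasi-inverse of~$a$ then $e:=ab$ is idempotent and $ea=aba=a$. For the inductive step, let~$e$ be idempotent with $ea_i=a_i$ for $i\le n$, set $b:=a_{n+1}-ea_{n+1}$ (so $eb=0$), and pick an idempotent~$f$ with $fR=bR$; since $f\in bR$ we get $ef=0$, whence $e':=e+f-fe$ is idempotent by the usual computation. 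One then checks directly that $e'a_i=a_i$ for $i\le n$, and, writing $a_{n+1}=b+ea_{n+1}$ and using $fb=b$, that $e'a_{n+1}=a_{n+1}$.

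\textbf{Stage 2 (symmetrization).} Applying Stage~1 to~$X$ gives an idempotent~$e_1$ with $e_1x=x$ for all $x\in X$. The opposite ring~$R^{\op}$ is again regular, so applying Stage~1 inside~$R^{\op}$ to the finite set $X\cup\set{e_1}$ yields an idempotent $e_2\in R$ with $xe_2=x$ for all $x\in X$ and, crucially, $e_1e_2=e_1$. Now set $e:=e_1+e_2-e_2e_1$. Using $e_1e_2=e_1$ (hence $e_1e_2e_1=e_1$ and $e_2e_1e_2=e_2e_1$), a direct expansion gives $e^2=e$; moreover, for $x\in X$ we have $ex=e_1x+e_2x-e_2e_1x=x+e_2x-e_2x=x$ and $xe=xe_1+xe_2-xe_2e_1=xe_1+x-xe_1=x$. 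Therefore $x=exe\in eRe$ for every $x\in X$, as desired.

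The argument is elementary throughout; the only points that need a little care are the orthogonalization $ef=0$ in the inductive step of Stage~1 and the verification that $e_1+e_2-e_2e_1$ is idempotent, which is precisely where the normalization $e_1e_2=e_1$ — obtained by including~$e_1$ in the second finite set — is used. I do not foresee any real obstacle beyond this bookkeeping.
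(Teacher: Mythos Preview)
Your proof is correct and follows essentially the same two-stage strategy as the paper: first obtain a one-sided identity on the finite set, then symmetrize via an idempotent of the form $e_1+e_2-e_2e_1$ (the paper writes $f+g-fg$, with the roles reversed). The only difference is cosmetic: where you carry out Stage~1 by an explicit induction, the paper invokes Proposition~\ref{P:+capinL(R)}(i) (applied in~$R$ and in~$R^{\op}$) to produce idempotents~$f,g$ with $RX\subseteq Rf$ and $XR+fR\subseteq gR$; the inclusion of~$f$ in the second sum plays exactly the role of your inclusion of~$e_1$ in the second finite set, ensuring the normalization needed for the symmetrized element to be idempotent.
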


\begin{proof}
It follows from Proposition~\ref{P:+capinL(R)}(i), applied to the opposite ring of~$R$, that there exists an idempotent $f\in R$ such that $RX\subseteq Rf$. It follows from Proposition~\ref{P:+capinL(R)}(i), applied to~$R$, that there exists an idempotent $g\in R$ such that $XR+fR\subseteq gR$. Set $e:=f+g-fg$. Then $e^2=e$ while $fe=f$ and $eg=g$, so $X\subseteq Rf=Rfe\subseteq Re$ and $X\subseteq gR=egR\subseteq eR$, and so $X\subseteq eRe$.
\qed\end{proof}

The following result is observed, in the unital case, in the proof of \cite[Theorem~4.3]{WeURP}\index{c}{Wehrung, F.}. The non-unital case requires a non-trivial use of Lemma~\ref{L:FaithUt}, that we present here.

\begin{lem}\label{L:xRyxRNeutr}
Let~$R$ be a regular ring\index{i}{regular ring} and let~$\bI$ be a neutral\index{i}{ideal!neutral} ideal of~$\LL(R)$\index{s}{LLR@$\LL(R)$, $\LL(f)$}. Then $xR\in\bI$ implies that $yxR\in\bI$, for all $x,y\in R$.
\end{lem}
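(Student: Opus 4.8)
The statement to prove is: for a regular ring $R$ and a neutral ideal $\bI$ of $\LL(R)$, if $xR\in\bI$ then $yxR\in\bI$, for all $x,y\in R$. The plan is to reduce to the unital case via Lemma~\ref{L:FaithUt}, then exploit Lemma~\ref{L:NeuIdIsCl} (neutral $=$ closed under isomorphism of right modules).

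First I would apply Lemma~\ref{L:FaithUt} to the finite subset $\{x,y\}$ of $R$: there is an idempotent $e\in R$ with $x,y\in eRe$. Then $eRe$ is a \emph{unital} regular ring, with unit $e$, and the right ideals $xR$, $yxR$, $x(eRe)=x\cdot eRe$, etc., need to be compared carefully. I would check that for $a\in eRe$ one has $aR=a(eRe)\oplus a(1-e)R$ in an appropriate sense — more precisely, since $a=ae$, the right ideal $aR$ of $R$ restricts well: $aR\cap eR = a(eRe)$ as a right $eRe$-module, and I would verify that $aR$ and $a(eRe)R$ determine each other up to the relevant module-isomorphism data. The cleanest route: observe that $yxR$ is isomorphic, as a right $R$-module, to a right-module quotient/sub of $xR$ via left multiplication by $y$, so it suffices to find a right ideal $Z\subseteq xR$ with $yxR\cong Z$ as right $R$-modules, and then invoke Lemma~\ref{L:NeuIdIsCl}: $xR\in\bI$ forces $Z\in\bI$, hence $yxR\in\bI$.

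To produce such a $Z$: the map $\lambda_y\colon xR\to yxR$, $xr\mapsto yxr$, is a surjective homomorphism of right $R$-modules. Its kernel $K$ is a submodule of $xR$; since $R$ is regular, $xR$ is a direct summand of $R_R$ and hence a regular module, so $K$ is a direct summand of $xR$, say $xR=K\oplus Z$. Then $\lambda_y$ restricts to an isomorphism $Z\xrightarrow{\sim}yxR$ of right $R$-modules. Now $Z$ is a finitely generated submodule of $xR$, hence $Z\in\LL(R)$ and $Z\subseteq xR$, so $xR\in\bI$ gives $Z\in\bI$; by Lemma~\ref{L:NeuIdIsCl}, $Z\cong yxR$ implies $yxR\in\bI$. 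This argument in fact does not even need Lemma~\ref{L:FaithUt} — it works directly over $R$ — so I would present the direct version and keep Faith–Utumi in reserve only if a subtlety about $\LL(R)$ being closed under the relevant operations forces the unital reduction.

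The main obstacle I anticipate is making rigorous, in the non-unital setting, the three facts used implicitly: (a) that $xR$ is a direct summand of $R_R$ (this follows from Proposition~\ref{P:+capinL(R)}(iii) or directly from $x=xbx$ with $b$ a quasi-inverse, giving the idempotent $xb$ and $xR=xbR$); (b) that a submodule of a regular module is again a direct summand when it is a kernel of a module map between finitely generated projective modules — here I would argue that $K=\ker\lambda_y$, being a direct summand complemented inside the regular ring structure, lies in $\LL(R)$, using that $yxR$ is itself of the form $zR$ for an idempotent $z$ and splitting the short exact sequence $0\to K\to xR\to yxR\to 0$ of projective right $R$-modules; and (c) the identification of ``neutral'' with ``closed under module isomorphism'', which is exactly Lemma~\ref{L:NeuIdIsCl} and needs no further work. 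Steps (a) and (c) are routine; step (b) is where I would spend most of the care, and if the splitting is awkward to establish element-wise over a non-unital $R$, the fallback is to pass to $eRe$ via Lemma~\ref{L:FaithUt}, do the splitting there (classical regular-ring module theory, cf.\ \cite{Good91}), and then transfer back using that $aR\mapsto a(eRe)$ preserves and reflects both containment in $\bI$ and module isomorphism.
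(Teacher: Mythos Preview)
Your proposal is correct and follows essentially the same approach as the paper: left multiplication $\lambda_y\colon xR\to yxR$, split off the kernel to get $Z\subseteq xR$ with $Z\cong yxR$, then invoke Lemma~\ref{L:NeuIdIsCl}. The paper treats your ``fallback'' as the primary route: it uses Lemma~\ref{L:FaithUt} from the outset to obtain an idempotent $e$ with $x,y\in eRe$, then identifies the kernel explicitly as $xR\cap(e-y'y)R$ (where $y'$ is a quasi-inverse of $y$), which lies in $\LL(R)$ by Proposition~\ref{P:+capinL(R)} and hence admits a sectional complement $Z$---this is exactly the rigorous execution of your step~(b), and your instinct that Faith--Utumi is the right tool there is spot on.
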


\begin{proof}
Let $y'$ be a quasi-inverse\index{i}{quasi-inverse} of~$y$. It follows from Lemma~\ref{L:FaithUt} that there exists an idempotent element~$e$ of~$R$ such that $\set{x,y}\subseteq eRe$. It follows from Proposition~\ref{P:+capinL(R)} that $Y:=xR\cap(e-y'y)R$ belongs to~$\LL(R)$. As $Y\subseteq xR$ and~$\LL(R)$ is sectionally complemented\index{i}{lattice!sectionally complemented}, there exists $Z\in\LL(R)$\index{s}{LLR@$\LL(R)$, $\LL(f)$} such that $Y\oplus Z=xR$. Denote by $f\colon xR\onto yxR$\index{s}{AtoonB@$f\colon A\onto B$} the left multiplication by~$y$, and let~$t$ be an element of $\ker f:=f^{-1}\set{0}$\index{s}{kerf@$\ker f$|ii}. Then $(e-y'y)t=et-y'yt=et-0=t$, thus $t\in Y$. Conversely, let $t\in Y$. There exists $t'\in R$ such that $t=(e-y'y)t'$, so $yt=y(e-y'y)t'=(ye-yy'y)t'=(y-y)t'=0$, thus $t\in\ker f$. Consequently, $Y=\ker f$, and so~$f$ induces an isomorphism from~$Z$ onto~$yxR$. {}From $Z\subseteq xR$ it follows that $Z\in\bI$, and thus, by Lemma~\ref{L:NeuIdIsCl}, $yxR\in\bI$.
\qed\end{proof}

The following result is stated, in the unital case, in \cite[Theorem~4.3]{WeURP}\index{c}{Wehrung, F.}. The proof presented there, taking for granted the results of Corollary~\ref{C:Iidrng2st}, Lemma~\ref{L:NeuIdIsCl}, and Lemma~\ref{L:xRyxRNeutr}, trivially extends to the non-unital case. We denote by~$\Id R$\index{s}{IdR@$\Id R$, $R$ ring|ii} the lattice of all two-sided ideals\index{i}{ideal (in a ring)!two-sided} of a ring~$R$, and by $\NId L$\index{s}{IdN@$\NId L$, $L$ lattice|ii} the lattice of all neutral\index{i}{ideal!neutral} ideals of a lattice~$L$ (that it is indeed a lattice follows from \cite[Theorem~III.2.9]{GLT2}, see also \cite[Theorem~259]{LTF}\index{c}{Gr\"atzer, G.}).

\begin{prop}\label{P:NIdequiv2Id}
Let $R$ be a regular ring\index{i}{regular ring}. Then one can define mutually inverse lattice isomorphisms $\gf\colon\NId\LL(R)\to\Id R$\index{s}{LLR@$\LL(R)$, $\LL(f)$}\index{s}{IdR@$\Id R$, $R$ ring}\index{s}{IdN@$\NId L$, $L$ lattice} and $\gy\colon\Id R\to\NId\LL(R)$\index{s}{LLR@$\LL(R)$, $\LL(f)$} by the following rule:
 \begin{align*}
 \gf(\bI)&:=\setm{x\in R}{xR\in\bI}\,,&&\text{for each }\bI\in\NId\LL(R)\,;\\
 \gy(I)&:=\LL(R)\dnw I\,,&&\text{for each }I\in\Id R\,. 
 \end{align*}
\end{prop}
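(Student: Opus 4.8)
The plan is to verify that the two maps $\gf$ and $\gy$ are well-defined, order-preserving in both directions, and mutually inverse; once this is done, the fact that they are lattice isomorphisms is automatic, since order-isomorphisms between lattices preserve joins and meets.

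First I would check that $\gy$ is well-defined, i.e., that $\LL(R)\dnw I$ is a \emph{neutral} ideal of $\LL(R)$ for each two-sided ideal $I$ of $R$. That it is an ideal of $\LL(R)$ follows from Proposition~\ref{P:+capinL(R)}: if $aR, bR\subseteq I$ (with $a,b$ idempotent and in $I$, which we may assume since $a=aba$ etc.), then $aR+bR=(a+c)R$ with $c=(b-ab)u\in I$ because $I$ is two-sided, so $a+c\in I$; downward closure is clear since $I$ is a right ideal. Neutrality then follows from Lemma~\ref{L:NeuIdIsCl} together with Corollary~\ref{C:Iidrng2st}: if $xR\in\LL(R)\dnw I$ and $xR\cong yR$ as right $R$-modules, then picking idempotent generators $e,f$ with $eR=xR$, $fR=yR$ we get $eR\cong fR$, hence $e\in I$ iff $f\in I$ by Corollary~\ref{C:Iidrng2st}; since $e\in I$ (as $xR\subseteq I$ and $e\in xR$), also $f\in I$, so $yR=fR\subseteq I$.

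Next I would check that $\gf$ is well-defined, i.e., that $\gf(\bI)=\setm{x\in R}{xR\in\bI}$ is a two-sided ideal of $R$ for each neutral ideal $\bI$ of $\LL(R)$. Closure under right multiplication is immediate: if $xR\in\bI$ and $r\in R$ then $xrR\subseteq xR$, so $xrR\in\bI$. Closure under left multiplication is exactly Lemma~\ref{L:xRyxRNeutr}: $xR\in\bI$ implies $yxR\in\bI$ for all $y\in R$. Closure under addition uses Proposition~\ref{P:+capinL(R)}(i) again: if $xR,yR\in\bI$ then $(x+y)R\subseteq xR+yR=(a+c)R\in\bI$ (writing $xR=aR$, $yR=bR$ with idempotents and applying (i)), so $x+y\in\gf(\bI)$. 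Both $\gf$ and $\gy$ are evidently isotone with respect to containment.

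Finally I would verify $\gf\circ\gy=\id$ and $\gy\circ\gf=\id$. For $I\in\Id R$: $\gf(\gy(I))=\setm{x\in R}{xR\in\LL(R)\dnw I}=\setm{x\in R}{xR\subseteq I}$. If $x\in I$ then (since $R$ is regular) $x\in xR\subseteq I$ because $xR=xbR\subseteq I$ using $x=xbx$ and $I$ two-sided; conversely $xR\subseteq I$ gives $x\in xR\subseteq I$. So $\gf(\gy(I))=I$. For $\bI\in\NId\LL(R)$: $\gy(\gf(\bI))=\LL(R)\dnw\gf(\bI)=\setm{yR}{(\exists x\in\gf(\bI))(yR\subseteq xR)}$. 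Since each element of $\LL(R)$ has an idempotent generator, it suffices to note $eR\in\gy(\gf(\bI))$ iff $eR\subseteq xR$ for some idempotent $x\in\gf(\bI)$, i.e.\ $xR\in\bI$; but $eR\subseteq xR$ together with $xR\in\bI$ and $\bI$ an ideal gives $eR\in\bI$, and conversely $eR\in\bI$ means $e\in\gf(\bI)$, so we may take $x=e$. Hence $\gy(\gf(\bI))=\bI$.

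I do not expect a genuine obstacle here; the potentially delicate points — neutrality of $\gy(I)$ and left-ideal closure of $\gf(\bI)$ — are precisely what Lemmas~\ref{L:NeuIdIsCl} and~\ref{L:xRyxRNeutr} (and Corollary~\ref{C:Iidrng2st}) were set up to handle, including the non-unital case via Lemma~\ref{L:FaithUt}. The main care needed is simply to keep consistently passing between a principal right ideal and an idempotent generator of it, and to use that in a regular ring every $x$ lies in $xR$.
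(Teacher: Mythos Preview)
Your proposal is correct and follows precisely the approach the paper indicates: the paper does not give a detailed proof, but states that the argument from \cite[Theorem~4.3]{WeURP} ``trivially extends to the non-unital case'' once Corollary~\ref{C:Iidrng2st}, Lemma~\ref{L:NeuIdIsCl}, and Lemma~\ref{L:xRyxRNeutr} are in hand --- exactly the three ingredients you invoke for neutrality of $\gy(I)$ and two-sidedness of $\gf(\bI)$. One cosmetic point: $\LL(R)\dnw I$ here means $\setm{J\in\LL(R)}{J\subseteq I}$ (treating $I$ as a right ideal), so your rewriting as $\setm{yR}{(\exists x\in\gf(\bI))(yR\subseteq xR)}$ is an equivalent reformulation rather than the definition, but this does not affect the argument.
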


\begin{lem}\label{L:L(R/I)}
Let $R$ be a regular ring\index{i}{regular ring} and let~$\bI$ be a neutral\index{i}{ideal!neutral} ideal of~$\LL(R)$\index{s}{LLR@$\LL(R)$, $\LL(f)$}. Set
$I:=\setm{x\in R}{xR\in\bI}$. Then~$I$ is a two-sided ideal\index{i}{ideal (in a ring)!two-sided} of~$R$ and there exists a unique map $\gy\colon\LL(R)/\bI\to\LL(R/I)$\index{s}{LLR@$\LL(R)$, $\LL(f)$} such that
 \begin{equation}\label{Eq:FactL/I}
 \gy(xR/\bI)=(x+I)(R/I)\,,\quad\text{for each }x\in R\,,
 \end{equation}
and $\gy$ is a lattice isomorphism.
\end{lem}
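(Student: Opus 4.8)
The statement is a ``Third Isomorphism Theorem'' for the functor $\LL$ on regular rings, asserting that $\LL$ carries the lattice congruence $\equiv_{\bI}$ on $\LL(R)$ to the ring-theoretic quotient $\LL(R/I)$. First I would verify that $I=\setm{x\in R}{xR\in\bI}$ is a two-sided ideal of $R$: this is exactly $\gf(\bI)$ from Proposition~\ref{P:NIdequiv2Id}, so by that proposition (whose hypothesis is that $\bI$ is a neutral ideal of $\LL(R)$) $I$ is a two-sided ideal. Since $I\in\Id R$, the quotient ring $R/I$ is again regular (a quotient of a regular ring is regular, cf. Example~\ref{Ex:RegRngSpecial} — regularity is preserved by surjective homomorphisms), so $\LL(R/I)$ makes sense. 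Denote by $\gp\colon R\onto R/I$ the canonical projection; by Corollary~\ref{C:Lfunctor} there is a $0$-lattice homomorphism $\LL(\gp)\colon\LL(R)\to\LL(R/I)$ with $\LL(\gp)(xR)=(x+I)(R/I)$ for each $x\in R$.

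The core is to show that $\Ker\LL(\gp)$, in the sense of the lattice congruence, is exactly $\equiv_{\bI}$; combined with surjectivity of $\LL(\gp)$ this gives the desired factorization $\gy$ through $\LL(R)/\bI$ and its bijectivity. Surjectivity of $\LL(\gp)$ is immediate since every element of $R/I$ has the form $x+I$. For the kernel: I would prove that for $x,y\in R$ one has $xR/\bI=yR/\bI$ if and only if $(x+I)(R/I)=(y+I)(R/I)$. The ``only if'' direction follows because $\LL(\gp)$ is an isotone map between the relevant lattices that kills every element of $\bI$ (indeed $\LL(\gp)(zR)=0$ whenever $zR\in\bI$, since then $z\in I$), hence factors through $\equiv_{\bI}$; here I would invoke that $\bI$ is a \emph{distributive} (equivalently neutral) ideal of the sectionally complemented modular lattice $\LL(R)$, so that $\equiv_{\bI}$ is genuinely a lattice congruence (cf. Section~\ref{Su:Posets}), and that the natural map $\LL(R)/\bI\to\LL(R/I)$ sending $xR/\bI\mapsto (x+I)(R/I)$ is well-defined and isotone. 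For the ``if'' direction — the heart of the matter — suppose $(x+I)(R/I)=(y+I)(R/I)$ in $\LL(R/I)$. By Proposition~\ref{P:+capinL(R)} applied in $R/I$, I may replace $x,y$ by idempotents $e,f\in R$ with $\gp(e)R/I=(x+I)(R/I)$ and $\gp(f)R/I=(y+I)(R/I)$, and $\gp(e)(R/I)=\gp(f)(R/I)$. Then I want to compare $eR$ and $fR$ up to elements of $\bI$. Set $Y:=eR\cap fR\in\LL(R)$ (by Proposition~\ref{P:+capinL(R)}). Using sectional complementation, write $eR=Y\oplus Z$ with $Z\in\LL(R)$; I claim $Z\in\bI$. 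Indeed $\gp(Z)\subseteq \gp(e)(R/I)\cap\gp(f)(R/I)=\gp(e)(R/I)$, and more to the point $\gp(e)(R/I)=\gp(f)(R/I)$ forces $\gp(Z)\subseteq\gp(Y)$, hence $\gp(Z)=\gp(Z)\cap\gp(Y)=0$; a right ideal $zR$ of $R$ with $\gp(z)\in I$ i.e. $zR/I=0$ has $z\in I$, so $zR\in\bI$, giving $Z\subseteq I$ and thus $Z\in\bI$ since $\bI$ is a lower subset. By symmetry, writing $fR=Y\oplus Z'$ we get $Z'\in\bI$. Therefore $eR/\bI=(Y\oplus Z)/\bI=Y/\bI=(Y\oplus Z')/\bI=fR/\bI$, which is what we needed (after transporting back to $xR/\bI$ and $yR/\bI$, using that $xR/\bI=eR/\bI$ because $xR$ and $eR$, having equal images under $\LL(\gp)$ which we already treated in the easy direction, differ by elements of $\bI$ — more carefully, $xR$ and $eR$ generate the same right ideal of $R/I$, and by the same sectional-complement argument applied to $xR$ and $eR$ inside $R$ one gets $xR/\bI=eR/\bI$; in fact since $\gp(e)(R/I)=(x+I)(R/I)$ by construction this is the $x$-with-$y$ case of the claim we are proving, applied with one of the two already known to be idempotent, which is the cleanest way to organize it).

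Once $\Ker\LL(\gp)=\equiv_{\bI}$ is established, the factorization is automatic: the surjective lattice homomorphism $\LL(\gp)$ induces a lattice isomorphism $\gy\colon\LL(R)/\bI\xrightarrow{\ \cong\ }\LL(R/I)$ with $\gy(xR/\bI)=\LL(\gp)(xR)=(x+I)(R/I)$, which is precisely~\eqref{Eq:FactL/I}; uniqueness of $\gy$ follows from surjectivity of the canonical projection $\LL(R)\onto\LL(R)/\bI$ together with the fact that elements of the form $xR/\bI$ generate (indeed exhaust) $\LL(R)/\bI$. The main obstacle I anticipate is the ``if'' direction of the kernel computation: lifting the equality $\gp(e)(R/I)=\gp(f)(R/I)$ in $\LL(R/I)$ back to a relation between $eR$ and $fR$ modulo $\bI$ in $\LL(R)$. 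The subtlety is that $\gp$ need not map $\LL(R)$ onto $\LL(R/I)$ in a way that is compatible with intersections on the nose; the fix is to work with the meet $Y=eR\cap fR$ computed in $\LL(R)$, push it forward, and exploit that $\bI$ being neutral makes $\equiv_{\bI}$ respect the lattice operations — together with the elementary observation that $zR\in\bI$ as soon as $z\in I$, which ultimately rests on Lemma~\ref{L:xRyxRNeutr} and Lemma~\ref{L:NeuIdIsCl} governing how neutral ideals of $\LL(R)$ interact with two-sided ideals of $R$. The rest is routine bookkeeping with Proposition~\ref{P:+capinL(R)}.
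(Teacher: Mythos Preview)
Your approach is correct in outline, but you are working much harder than necessary. The paper's proof exploits a structural fact you invoke only implicitly: in a sectionally complemented lattice, \emph{every congruence is determined by the congruence class of zero}. Since both $\equiv_{\bI}$ and $\Ker\LL(p)$ (where $p\colon R\onto R/I$ is the canonical projection) are congruences on the sectionally complemented lattice $\LL(R)$, it suffices to check that their zero classes agree, i.e., that $\bI=\LL(p)^{-1}\{0\}$. This is a one-line chain of equivalences: $xR\in\bI$ iff $x\in I$ iff $(x+I)(R/I)=0$ iff $\LL(p)(xR)=0$. That simultaneously gives well-definedness of $\gy$ and injectivity; surjectivity is immediate from surjectivity of $\LL(p)$.

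Your argument instead proves the full equality of congruences directly: given $\LL(p)(eR)=\LL(p)(fR)$, you decompose $eR=(eR\cap fR)\oplus Z$, push forward to see $\LL(p)(Z)=0$, pull back to get $Z\in\bI$, and conclude $eR\equiv_{\bI}fR$. This is valid, and it is essentially a by-hand reproof of the ``determined by zero class'' principle in this particular instance---but it costs you the extra bookkeeping with idempotents, the slightly tangled reduction from general $x,y$ to idempotent $e,f$, and some notational friction (you write $\gp(Z)$ for what should be $\LL(\gp)(Z)$, and the phrase ``$\gp(z)\in I$'' is a type error---you mean $z\in I$). The paper's route buys concision; yours buys a self-contained argument that does not rely on remembering the general congruence-theoretic fact about sectionally complemented lattices.
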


\begin{proof}
It follows from Proposition~\ref{P:NIdequiv2Id} that~$I$ is a two-sided ideal\index{i}{ideal (in a ring)!two-sided} of~$R$. Denote by $p\colon R\onto R/I$\index{s}{AtoonB@$f\colon A\onto B$} and $\gp\colon\LL(R)\onto\LL(R)/\bI$\index{s}{LLR@$\LL(R)$, $\LL(f)$}\index{s}{AtoonB@$f\colon A\onto B$} the respective canonical projections. Then the required condition~\eqref{Eq:FactL/I} is equivalent to $\gy\circ\gp=\LL(p)$\index{s}{LLR@$\LL(R)$, $\LL(f)$}. The uniqueness statement on~$\gy$ follows from the surjectivity of~$\gp$. As~$\LL(R)$\index{s}{LLR@$\LL(R)$, $\LL(f)$} is sectionally complemented\index{i}{lattice!sectionally complemented}, every congruence of~$\LL(R)$\index{s}{LLR@$\LL(R)$, $\LL(f)$} is determined by the congruence class of zero, which is a neutral\index{i}{ideal!neutral} ideal; hence, in order to prove that there exists a map~$\gy$ satisfying~\eqref{Eq:FactL/I} and that this map is a lattice embedding, it suffices to prove that $\gp^{-1}\set{0}=\LL(p)^{-1}\set{0}$\index{s}{LLR@$\LL(R)$, $\LL(f)$}. For each $x\in R$, $\gp(xR)=0$ if{f} $xR\in\bI$, if{f} $x\in I$, if{f} $(x+I)(R/I)=0$, that is, $\LL(p)(xR)=0$, which proves our claim. Finally, the surjectivity of~$\gy$ follows from the surjectivity of~$\LL(p)$ together with the equation $\gy\circ\gp=\LL(p)$\index{s}{LLR@$\LL(R)$, $\LL(f)$}.
\qed\end{proof}

The following result is folklore. We include a proof for convenience.

\begin{prop}\label{P:LLpresDirLim}
The functor~$\LL$\index{s}{LL@$\LL$ functor} preserves all small directed colimits.
\end{prop}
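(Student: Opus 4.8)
The plan is to reduce the statement to the already-proven fact (Section~\ref{Su:DirColimFirstOrd}, specialized to regular rings via Example~\ref{Ex:RegRngSpecial}) that the underlying-set functor on regular rings creates and preserves small directed colimits, combined with the explicit description of~$\LL(R)$ as the set of principal right ideals~$xR$ for $x\in R$. Concretely, let $\famm{R_i,\gf_i^j}{i\leq j\text{ in }I}$ be a directed poset-indexed diagram of regular rings, with colimit cocone $\famm{R,\gf_i}{i\in I}=\varinjlim\famm{R_i,\gf_i^j}{i\leq j\text{ in }I}$ in~$\REG$; I must show that $\famm{\LL(R),\LL(\gf_i)}{i\in I}=\varinjlim\famm{\LL(R_i),\LL(\gf_i^j)}{i\leq j\text{ in }I}$ in~$\SCML$. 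Since $\SCML$ is a subcategory of the category of all $(\vee,\wedge,0)$-semilattices (and the relevant colimits in~$\SCML$ are computed as in that larger category, the colimit of a diagram of sectionally complemented modular lattices remaining sectionally complemented modular because these properties are preserved under directed colimits), it suffices by the criterion recalled in Section~\ref{Su:DirColimFirstOrd} to verify the two conditions: (1) $\LL(R)=\bigcup\famm{\LL(\gf_i)``(\LL(R_i))}{i\in I}$, and (2) for each $i\in I$ and $X,Y\in\LL(R_i)$, $\LL(\gf_i)(X)\leq\LL(\gf_i)(Y)$ in~$\LL(R)$ implies that there exists $j\geq i$ with $\LL(\gf_i^j)(X)\leq\LL(\gf_i^j)(Y)$ in~$\LL(R_j)$.

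For (1): any element of~$\LL(R)$ has the form $xR$ for some $x\in R$, and by~\eqref{Eq:ADirUngfiAi} there are $i\in I$ and $x_i\in R_i$ with $x=\gf_i(x_i)$; then $xR=\gf_i(x_i)R=\LL(\gf_i)(x_iR_i)$ by the defining property of~$\LL(\gf_i)$ in Corollary~\ref{C:Lfunctor}, so $xR$ lies in the range of $\LL(\gf_i)$. For (2): write $X=xR_i$ and $Y=yR_i$ with $x,y\in R_i$. The inequality $\gf_i(x)R\subseteq\gf_i(y)R$ means $\gf_i(x)\in\gf_i(y)R$, i.e.\ there is $z\in R$ with $\gf_i(x)=\gf_i(y)z$. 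Using~\eqref{Eq:ADirUngfiAi} again, pick $k\geq i$ and $z_k\in R_k$ with $z=\gf_k(z_k)$; then $\gf_i(x)=\gf_i(y)\gf_k(z_k)=\gf_k(\gf_i^k(y)z_k)$, while also $\gf_i(x)=\gf_k(\gf_i^k(x))$, so by~\eqref{Eq:gfix=gfiyimpl} there is $j\geq k$ with $\gf_i^j(x)=\gf_k^j(\gf_i^k(y)z_k)=\gf_i^j(y)\gf_k^j(z_k)$. Hence $\gf_i^j(x)\in\gf_i^j(y)R_j$, i.e.\ $\LL(\gf_i^j)(X)=\gf_i^j(x)R_j\subseteq\gf_i^j(y)R_j=\LL(\gf_i^j)(Y)$, as required.

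The remaining point is to justify that the colimit in~$\SCML$ of the diagram $\famm{\LL(R_i),\LL(\gf_i^j)}{i\leq j}$ is computed the same way as in the category of $(\vee,\wedge,0)$-semilattices, so that the set-level criterion of Section~\ref{Su:DirColimFirstOrd} genuinely applies; here I would invoke the general principle (as in Section~\ref{Su:DirColimFirstOrd} and Example~\ref{Ex:QuasivarSpecial}) that a directed colimit of lattices with~$0$ is again a lattice with~$0$, and that sectional complementedness and modularity, being expressible by sentences of the form~\eqref{Eq:SpecialForm}, persist under directed colimits. Since $\LL(\gf_i^j)$ and $\LL(\gf_i)$ are $0$-lattice homomorphisms (Corollary~\ref{C:Lfunctor}), the colimit of the~$\LL(R_i)$ in the category of $0$-lattices already lands in~$\SCML$, and a directed colimit computed in $\SCML$ coincides with the one computed in the ambient category; the verification of (1) and (2) above then exactly says that this colimit object is $\LL(R)$ with limiting maps $\LL(\gf_i)$. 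I expect no real obstacle: the only mild subtlety is keeping the bookkeeping of the indices $i\le k\le j$ straight in step (2) and confirming that no unital hypothesis is needed anywhere—indeed every $a$ in a regular ring lies in $aR$, so the description $\LL(R)=\setm{xR}{x\in R}$ is valid without a unit.
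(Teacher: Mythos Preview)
Your argument is correct and follows essentially the same route as the paper: both apply the Section~\ref{Su:DirColimFirstOrd} characterization of directed colimits to the cocone $\famm{\LL(R),\LL(\gf_i)}{i\in I}$, first checking the union condition and then the kernel/reflection condition. The only difference is in the second step: the paper invokes the fact that in a sectionally complemented lattice each congruence is determined by its zero class, reducing to $\LL(\gf_i)^{-1}\set{0}=\bigcup_{j\geq i}\LL(\gf_i^j)^{-1}\set{0}$ (which is immediate from $\ker\gf_i=\bigcup_{j\geq i}\ker\gf_i^j$), whereas you verify order-reflection directly by lifting the witness~$z$ of $\gf_i(x)=\gf_i(y)z$ back through the ring colimit---slightly more bookkeeping, but no appeal to that extra structural fact.
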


\begin{proof}
Let $\famm{R,f_i}{i\in I}=\varinjlim\famm{R_i,f_i^j}{i\leq j\text{ in }I}$ be a directed colimit cocone in~$\REG$\index{s}{Reg@$\REG$}. It follows from the results of Section~\ref{Su:DirColimFirstOrd} that the following statements hold:
\begin{align}
R&=\bigcup\famm{\rng f_i}{i\in I}\,,\label{Eq:RascupRifi}\\
\ker f_i&=\bigcup\famm{\ker f_i^j}{j\in I\upw i}\,,&&\text{for each }i\in I\,.\label{Eq:KerfiascupKerfifi}
\end{align}
It follows from~\eqref{Eq:RascupRifi} that $\LL(R)=\bigcup\famm{\rng \LL(f_i)}{i\in I}$\index{s}{LLR@$\LL(R)$, $\LL(f)$}. Further, for each $i\in I$, as the lattice~$\LL(R_i)$\index{s}{LLR@$\LL(R)$, $\LL(f)$} is sectionally complemented\index{i}{lattice!sectionally complemented}, every congruence of~$\LL(R_i)$\index{s}{LLR@$\LL(R)$, $\LL(f)$} is determined by the congruence class of zero (cf. \cite[Section~III.3]{GLT2}, see also \cite[Theorem~272]{LTF}\index{c}{Gr\"atzer, G.}), thus, in order to verify the equation\index{s}{LLR@$\LL(R)$, $\LL(f)$}\index{s}{Kerf@$\Ker\gf$}
 \[
 \Ker\LL(f_i)=\bigcup\famm{\Ker\LL(f_i^j)}{j\in I\upw i}
 \]
(where we set $\Ker g:=\setm{(x,y)\in D\times D}{g(x)=g(y)}$, for each function~$g$ with domain~$D$)\index{s}{Kerf@$\Ker\gf$}, it is sufficient to verify the equation\index{s}{LLR@$\LL(R)$, $\LL(f)$}
 \[
 \LL(f_i)^{-1}\set{0}=\bigcup\famm{\LL(f_i^j)^{-1}\set{0}}{j\in I\upw i}\,.
 \]
However, this follows immediately from~\eqref{Eq:KerfiascupKerfifi}. By the results of Section~\ref{Su:DirColimFirstOrd}, we obtain that\index{s}{LLR@$\LL(R)$, $\LL(f)$}
 \begin{equation*}
 \famm{\LL(R),\LL(f_i)}{i\in I}=\varinjlim\famm{\LL(R_i),\LL(f_i^j)}{i\leq j\text{ in }I}\,.
 \end{equation*}
This concludes the proof.
\qed\end{proof}

\section{Right larders from regular rings}\label{S:RRLarder}
In the present section we shall construct many right larders\index{i}{larder!right} from categories of regular rings\index{i}{regular ring} satisfying a few additional closure properties (cf. Theorem~\ref{T:RR2Lard}). All the larders considered in this section will be projectable, and our first lemma deals with projectability witnesses\index{i}{projectability witness}.

\begin{lem}\label{L:RRProjWit}
Let $R$ be a regular ring\index{i}{regular ring} and let~$L$ be a sectionally complemented\index{i}{lattice!sectionally complemented} modular\index{i}{lattice!modular} lattice. Then every surjective $0$-lattice homomorphism $\gf\colon\LL(R)\onto L$\index{s}{LLR@$\LL(R)$, $\LL(f)$}\index{s}{AtoonB@$f\colon A\onto B$} has a projectability witness\index{i}{projectability witness} of the form $(a,\eps)$, where~$a$ is a surjective ring homomorphism with domain~$R$.
\end{lem}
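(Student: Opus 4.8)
The plan is to build the projectability witness explicitly out of the structure theory of Section~\ref{S:IdRegRings}, so no genuinely new ideas are needed. First I would use that $L$, hence also $\LL(R)$ (Proposition~\ref{P:+capinL(R)}), is sectionally complemented and modular, so that a congruence of $\LL(R)$ is determined by its $0$-class. Applying this to the kernel congruence of the surjective $0$-lattice homomorphism $\gf$, its $0$-class is the ideal $\bI:=\gf^{-1}\set{0}$, which is therefore a \emph{neutral} ideal of $\LL(R)$; moreover $\gf$ factors as $\gf=\ol{\gf}\circ\gp$, where $\gp\colon\LL(R)\onto\LL(R)/\bI$ is the canonical projection and $\ol{\gf}\colon\LL(R)/\bI\to L$ is a lattice isomorphism (surjectivity of $\ol{\gf}$ from that of $\gf$, injectivity from $\bI$ being exactly the relevant $0$-class). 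Next, by Proposition~\ref{P:NIdequiv2Id}, the two-sided ideal $I:=\setm{x\in R}{xR\in\bI}$ of $R$ satisfies $\bI=\LL(R)\dnw I$. I would then set $\oll{R}:=R/I$, which is again a regular ring (homomorphic images of regular rings are regular), and let $a\colon R\onto\oll{R}$ be the canonical projection, which is an epimorphism in $\REG$ (surjective homomorphisms are epimorphisms there). By Lemma~\ref{L:L(R/I)} there is a lattice isomorphism $\gy\colon\LL(R)/\bI\to\LL(\oll{R})$ with $\gy\circ\gp=\LL(a)$. Putting $\eps:=\ol{\gf}\circ\gy^{-1}\colon\LL(\oll{R})\to L$ gives an isomorphism in $\SCML$ with $\eps\circ\LL(a)=\ol{\gf}\circ\gy^{-1}\circ\gy\circ\gp=\ol{\gf}\circ\gp=\gf$, which is items~(i)--(iii) of Definition~\ref{D:ProjFunct} with respect to the functor $\LL$.

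For item~(iv), I would take a morphism $f\colon R\to X$ in $\REG$ and a morphism $\gh\colon\LL(\oll{R})\to\LL(X)$ with $\LL(f)=\gh\circ\LL(a)$, and first show that $I\subseteq\ker f$. Indeed, for $x\in I$ we have $xR\in\bI$, hence $\LL(a)(xR)=0$, so $\LL(f)(xR)=\gh(\LL(a)(xR))=0$, i.e.\ $f(x)X=\set{0}$; since $X$ is regular, $f(x)\in f(x)X$, whence $f(x)=0$. Therefore $f$ factors uniquely through $a$, say $f=g\circ a$ for a ring homomorphism $g\colon\oll{R}\to X$. Then $\LL(g)\circ\LL(a)=\LL(f)=\gh\circ\LL(a)$, and $\LL(a)$ is surjective because every element of $\oll{R}=R/I$ has the form $x+I$; hence $\gh=\LL(g)$. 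This completes the verification that $(a,\eps)$ is a projectability witness for $\gf$, and $a$ is by construction a surjective ring homomorphism with domain $R$.

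I do not anticipate a real obstacle: the argument is an assembly of facts already at hand. The points that deserve a careful word rather than a computation are that the $0$-class of the kernel congruence of $\gf$ really is a neutral ideal (this is exactly where sectional complementation of $\LL(R)$ is used, via Proposition~\ref{P:+capinL(R)}) and hence that $\ol{\gf}$ is a well-defined lattice isomorphism, and the compatibility between the lattice-side quotient $\LL(R)/\bI$ and the ring-side quotient $\LL(R/I)$. That compatibility is precisely the content of Lemma~\ref{L:L(R/I)}, whose isomorphism $\gy$ intertwines the two canonical projections, so the only thing to be careful about when writing this out is to invoke it in the correct direction.
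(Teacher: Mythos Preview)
Your proposal is correct and follows essentially the same approach as the paper: factor $\gf$ through $\LL(R)/\bI$ via the neutral ideal $\bI=\gf^{-1}\set{0}$, use Proposition~\ref{P:NIdequiv2Id} and Lemma~\ref{L:L(R/I)} to identify $\LL(R)/\bI$ with $\LL(R/I)$, and then verify item~(iv) by showing $I\subseteq\ker f$ and using surjectivity of $\LL(a)$. Your justification of $I\subseteq\ker f$ (via $f(x)\in f(x)X$ for regular~$X$) is in fact slightly more explicit than the paper's, which simply asserts the containment.
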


\begin{proof}
As the ideal $\bI:=\gf^{-1}\set{0}$ is neutral\index{i}{ideal!neutral} in~$\LL(R)$ and~$\LL(R)$ is sectionally complemented\index{i}{lattice!sectionally complemented} modular\index{i}{lattice!modular}, $\gf$ induces an isomorphism $\gb\colon\LL(R)/\bI\onto L$\index{s}{AtoonB@$f\colon A\onto B$}. By Proposition~\ref{P:NIdequiv2Id}, $I:=\setm{x\in R}{xR\in\bI}$ is a two-sided ideal\index{i}{ideal (in a ring)!two-sided} of~$R$. Denote by $\gp\colon\LL(R)\onto\LL(R)/\bI$\index{s}{AtoonB@$f\colon A\onto B$} and $a\colon R\onto R/I$\index{s}{AtoonB@$f\colon A\onto B$} the respective canonical projections. By Lemma~\ref{L:L(R/I)}, there exists a unique lattice isomorphism $\ga\colon\LL(R/I)\onto\LL(R)/\bI$\index{s}{AtoonB@$f\colon A\onto B$} such that $\gp=\ga\circ\LL(a)$. Hence $\eps:=\gb\circ\ga$ is an isomorphism from~$\LL(R/I)$\index{s}{LLR@$\LL(R)$, $\LL(f)$} onto~$L$, and $\gf=\eps\circ\LL(a)$\index{s}{LLR@$\LL(R)$, $\LL(f)$}.

In order to prove that $(a,\eps)$ is a projectability witness\index{i}{projectability witness} for $\gf\colon\LL(R)\onto L$\index{s}{LLR@$\LL(R)$, $\LL(f)$}\index{s}{AtoonB@$f\colon A\onto B$}, it remains to prove that for every regular ring\index{i}{regular ring}~$X$, every ring homomorphism $f\colon R\to\nobreak X$, and every $0$-lattice homomorphism $\gh\colon\LL(R/I)\to\LL(X)$\index{s}{LLR@$\LL(R)$, $\LL(f)$} such that $\LL(f)=\gh\circ\LL(a)$, there exists a ring homomorphism $g\colon R/I\to X$ such that $f=g\circ a$ and $\gh=\LL(g)$. The assumption $\LL(f)=\gh\circ\LL(a)$ means that
 \[
 \gh\bigl((x+I)(R/I)\bigr)=f(x)X\,,\qquad\text{for each }x\in R\,.
 \]
In particular, $\ker a=I\subseteq\ker f$, thus there exists a unique ring homomorphism $g\colon R/I\to X$ such that $f=g\circ a$. It follows that
 \[
 \gh\circ\LL(a)=\LL(f)=\LL(g)\circ\LL(a)\,,
 \]
but~$\LL(a)$ is surjective, thus $\gh=\LL(g)$\index{s}{LLR@$\LL(R)$, $\LL(f)$}.
\qed\end{proof}

In Theorem~\ref{T:RR2Lard}, for a class~$\cC$ of structures and an infinite cardinal~$\gl$, we denote by~$\cC^{(\gl)}$\index{s}{Cl@$\cC^{(\gl)}$|ii} the class of all members of~$\cC$ with $\gl$-small universe. Our next result will provide us with a large class of right $\gl$-larders\index{i}{larder!right}.

\begin{thm}\label{T:RR2Lard}
Let~$\cR$ be a full subcategory of the category~$\REG$\index{s}{Reg@$\REG$} of regular rings\index{i}{regular ring} and ring homomorphisms and let~$\gl$ be an infinite cardinal. We assume the following:
\begin{description}
\item[\tui] $\cR$ is closed under small directed colimits and homomorphic images.

\item[\tuii] Every $\gl$-small subset~$X$ of a member~$R$ of~$\cR$ is contained in some $\gl$-small member of~$\cR$ contained in~$R$.
\end{description}
Denote by $\SCML^\onto$\index{s}{Scmlo@$\SCML^\onto$|ii}\index{s}{AtoonB@$f\colon A\onto B$} the subcategory of~$\SCML$\index{s}{Scml@$\SCML$} with the same objects and whose arrows are the surjective lattice homomorphisms. Then the $6$-uple\index{s}{LL@$\LL$ functor}\index{s}{Scml@$\SCML$}\index{s}{Scmll@$\SCML^{(\gl)}$}\index{s}{AtoonB@$f\colon A\onto B$}
 \[
 (\cR,\cR^{(\gl)},\SCML,\SCML^{(\gl)},\SCML^\onto,\LL)
 \]
is a projectable right $\gl$-larder\index{i}{larder!right!projectable}.
\end{thm}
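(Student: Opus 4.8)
The statement to prove is that the $6$-uple $(\cR,\cR^{(\gl)},\SCML,\SCML^{(\gl)},\SCML^\onto,\LL)$ is a projectable right $\gl$-larder. By Definition~\ref{D:RightLard}, this amounts to verifying four things: the presentation condition $(\PRES_\gl(\cR^{(\gl)},\LL))$, the relative L\"owenheim-Skolem condition $(\LSr_{\cf(\gl)}(R))$ at every object $R\in\cR$, the projectability clause, and (since ``right $\gl$-larder'' abbreviates ``right $(\gl,\cf(\gl))$-larder'') there is nothing further to check beyond these. The plan is to treat these one at a time, each following a pattern already set up earlier in the excerpt (compare the proof of Claim~\ref{Cl:GrSchrightlard} inside the proof of Theorem~\ref{T:MindConcLift}, and the proof of Theorem~\ref{T:Algorightlard}).

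\textbf{Step 1: the presentation condition.} First I would observe that for $R\in\cR^{(\gl)}$, the lattice $\LL(R)$ has underlying set of cardinality at most $\card R<\gl$, hence is $\gl$-small; I then need to know that a $\gl$-small sectionally complemented modular lattice is weakly $\gl$-presented in $\SCML$. The cleanest route is to note that $\SCML$ is (a full subcategory closed under directed colimits of) a variety of algebras in a finite language --- sectionally complemented modular lattices with a chosen Skolem-type sectional complement operation, or more simply work inside $\MIND$/$\MALG$ --- so by Proposition~\ref{P:glPresMIND} (description of weakly $\gk$-presented structures) a completely $\gl$-small such lattice is weakly $\gl$-presented. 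Since the language is finite, ``$\gl$-small'' and ``completely $\gl$-small'' coincide for $\gl$ infinite, which gives $(\PRES_\gl(\cR^{(\gl)},\LL))$.

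\textbf{Step 2: the L\"owenheim-Skolem condition.} This is the substantive step and I expect it to be the main obstacle. Fix $R\in\cR$, a $\gl$-small $L\in\SCML^{(\gl)}$, a $\cf(\gl)$-small set $I$, and a family $\famm{\gc_i\colon \bL_i\mono \LL(R)}{i\in I}$ of monic objects of $\cR^{(\gl)}\dnw \LL(R)$ --- wait, more precisely the $\cB^\dagger$ here is $\cR^{(\gl)}$ and the $\gy\colon\LL(R)\Rightarrow L$ is a surjective homomorphism. Following the template of Theorem~\ref{T:Algorightlard}: pick a successor cardinal $\gm\le\gl$ with $L$, all the source rings $C_i$, and the relevant data $\gm$-small; express $R$ as a continuous directed colimit $R=\varinjlim_{X\in[R]^{<\gm}}R_X$ where $R_X$ is chosen by hypothesis~(ii) to be a $\gm$-small member of $\cR$ containing $X$ (one must check this can be done compatibly, i.e. that $[R]^{<\gm}$ can be reindexed so $X\mapsto R_X$ is isotone and cofinal --- a routine absorption argument using that $\gm$ is regular). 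Apply Proposition~\ref{P:LLpresDirLim} to get $\LL(R)=\varinjlim_{X}\LL(R_X)$ in $\SCML$ (indeed in $\SEM$/$\SET$). Now the point is that the set $J:=\setm{X}{\gy\circ\LL(\gb_X)\text{ is surjective}}$ is $\gs$-closed cofinal in $[R]^{<\gm}$: surjectivity of $\gy\circ\LL(\gb_X)$ is an absorption-type property and, because $L$ is $\gm$-small and $\LL$ preserves the relevant colimit, once $\LL(R_X)$ surjects onto $L$ all larger $X$ do too; cofinality is immediate and $\gs$-closure follows from continuity of the colimit. Finally pick $X\in J$ large enough that $R_X$ contains all the images $\gc_i``(C_i)$ (possible since $I$ is $\cf(\gl)$-small $\le\gm$ and each $C_i$ is $\gm$-small); then $\gb_X\colon R_X\mono R$ is monic in $\cR^{(\gl)}\dnw R$ (using that $\cR$ is full and $R_X\subseteq R$), $\gc_i\utr\gb_X$ for each $i$, and $\gy\circ\LL(\gb_X)$ is surjective, i.e. a morphism of $\SCML^\onto$. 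The main delicacy here, as opposed to the Theorem~\ref{T:Algorightlard} case, is that the ``double arrows'' of $\SCML$ are the \emph{surjections} rather than something congruence-theoretic, so I only need the surjectivity of $\gy\circ\LL(\gb_X)$, not an ``ideal-induced''-type condition; this actually makes the argument shorter, and I should remark that no analogue of Proposition~\ref{P:ApproxMonIIWD} is needed --- plain surjectivity is inherited along the colimit directly.

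\textbf{Step 3: projectability.} This is exactly Lemma~\ref{L:RRProjWit}: every surjective $0$-lattice homomorphism $\gy\colon\LL(R)\onto L$ with $L\in\SCML$ has a projectability witness $(a,\eps)$ where $a\colon R\onto R/\bgI$ is a surjective ring homomorphism; since $\cR$ is closed under homomorphic images (hypothesis~(i)), $R/\bgI\in\cR$, so the witness lives in $\cR$ as required by Definition~\ref{D:RightLard}'s projectability clause. Combining Steps~1--3 gives the conclusion. I would close by noting that hypothesis~(i) also secretly underwrites Step~2's use of $\varinjlim$ (closure under directed colimits keeps $R=\varinjlim R_X$ inside $\cR$) and that hypothesis~(ii) is exactly what makes the $R_X$ available, so both hypotheses are used essentially.
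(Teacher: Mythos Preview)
Your proof is essentially correct, but it takes a longer route than the paper's. Two comments on the comparison.

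First, on Step~2: you mimic the proof of Theorem~\ref{T:Algorightlard}, setting up a continuous directed colimit $R=\varinjlim_{X\in[R]^{<\gm}}R_X$ and arguing about a $\gs$-closed cofinal subset where surjectivity holds. This works, but it is overkill here, as you yourself half-notice at the end of Step~2. The paper proceeds much more directly: it first uses the structure of Lemma~\ref{L:RRProjWit} to reduce to the case $L=\LL(R/J)$ and $\gy=\LL(p)$ for the canonical projection $p\colon R\onto R/J$; then, since $\LL(R/J)$ is $\gl$-small, one simply picks a $\gl$-small subset $V\subseteq R$ containing $\bigcup_i u_i``(U_i)$ together with one representative $x\in R$ for each element $(x+J)(R/J)$ of~$\LL(R/J)$, and applies hypothesis~(ii) \emph{once} to get $R'\in\cR^{(\gl)}$ with $V\subseteq R'\subseteq R$. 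Surjectivity of $\gy\circ\LL(v)$ is then immediate from the choice of~$V$. No colimit system, no absorption argument, no cofinality reasoning. What your approach buys is that it avoids the reduction step via Lemma~\ref{L:RRProjWit}; what the paper's approach buys is a one-line verification of~$(\LSr_{\cf(\gl)}(R))$ with no indexing gymnastics.

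Second, two minor technical points on your Step~2 as written. Your choice of a \emph{successor} cardinal $\gm\leq\gl$ fails when $\gl=\aleph_0$ (there is no successor cardinal $\leq\aleph_0$); you should say ``regular'' instead, since $\aleph_0$ is regular and the theorem does cover that case (cf.\ the locally matricial example following the theorem). Also, hypothesis~(ii) only promises a $\gl$-small member of~$\cR$ containing a given $\gl$-small subset, not a $\gm$-small one for $\gm<\gl$; your $R_X$ can therefore be $\gl$-small but not $\gm$-small, which is fine for the conclusion (you only need $R_X\in\cR^{(\gl)}$) but means your ``routine absorption argument'' to make $X\mapsto R_X$ isotone needs a bit of care, since $|R_X|$ may exceed $|X|$. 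None of this breaks the argument, but it does make the paper's direct route look more attractive.
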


\begin{proof}
For each $R\in\cR^{(\gl)}$, the lattice~$\LL(R)$ is $\gl$-small, thus, by Proposition~\ref{P:glPresMIND} applied within the category of all lattices with zero with $0$-lattice homomorphisms, $\LL(R)$\index{s}{LLR@$\LL(R)$, $\LL(f)$} is weakly $\gl$-presented\index{i}{presented!weakly $\gl$-}. This completes the proof of~$(\PRES_\gl(\cR^{(\gl)},\LL))$\index{s}{Pres@$(\PRES_\gl(\cB^\dagger,\Psi))$}. The projectability statement follows trivially from Lemma~\ref{L:RRProjWit} together with the closure of~$\cR$ under homomorphic images.

It remains to verify $(\LSr_{\cf(\gl)}(R))$\index{s}{LSr@$(\LSr_\gm(B))$}, for each object~$R\in\cR$. Let $L$ be a $\gl$-small sectionally complemented\index{i}{lattice!sectionally complemented} modular\index{i}{lattice!modular} lattice, let~$I$ be a $\cf(\gl)$-small set, let $\famm{u_i\colon U_i\to R}{i\in I}$ be an $I$-indexed family of ring homomorphisms with all $\card U_i<\gl$, and let $\gf\colon\LL(R)\onto L$\index{s}{LLR@$\LL(R)$, $\LL(f)$}\index{s}{AtoonB@$f\colon A\onto B$} be a surjective $0$-lattice homomorphism. We shall construct a monomorphism $v\colon V\mono R$\index{s}{AtomonoB@$f\colon A\mono B$}, with a $\gl$-small object~$V$ of~$\cR$, with all $u_i\utr v$ and $\gf\circ\LL(v)$\index{s}{LLR@$\LL(R)$, $\LL(f)$} surjective. The first paragraph of the proof of Lemma~\ref{L:RRProjWit} shows that we may assume that $L=\LL(R/J)$\index{s}{LLR@$\LL(R)$, $\LL(f)$}, for some two-sided ideal\index{i}{ideal (in a ring)!two-sided}~$J$ of~$R$, and $\gf=\LL(p)$\index{s}{LLR@$\LL(R)$, $\LL(f)$}, where~$p\colon R\onto R/J$\index{s}{AtoonB@$f\colon A\onto B$} denotes the canonical projection. {}From $\card I<\cf(\gl)$ and all $\card U_i<\gl$ it follows that the set $U:=\bigcup\famm{u_i``(U_i)}{i\in I}$ is $\gl$-small. As\index{s}{LLR@$\LL(R)$, $\LL(f)$}
 \[
 \LL(R/J)=\setm{(x+J)(R/J)}{x\in R}
 \]
is also $\gl$-small, there exists a $\gl$-small subset~$V$ of~$R$ containing $U$ such that\index{s}{LLR@$\LL(R)$, $\LL(f)$}
 \begin{equation}\label{Eq:L(X/I)full}
 \LL(R/J)=\setm{(x+J)(R/J)}{x\in V}\,.
 \end{equation}
By assumption, there exists a $\gl$-small $R'\in\cR$ contained in~$R$ such that $V\subseteq R'$. Denote by~$v\colon R'\into R$\index{s}{AtoinB@$f\colon A\into B$} the inclusion map. {}From $u_i``(U_i)\subseteq R'$ it follows that $u_i\utr v$, for each $i\in I$. As the range of~$v$ contains~$V$, the range of $\gf\circ\LL(v)$\index{s}{LLR@$\LL(R)$, $\LL(f)$} contains all elements of the form $(x+J)(R/J)$, where $x\in V$, and thus, by~\eqref{Eq:L(X/I)full}, $\gf\circ\LL(v)$\index{s}{LLR@$\LL(R)$, $\LL(f)$} is surjective.
\qed\end{proof}

The assumptions of Theorem~\ref{T:RR2Lard}, about the category~$\cR$, are satisfied in two noteworthy cases:

\begin{itemize}
\item $\cR$ is the category of all regular rings\index{i}{regular ring}. We obtain that the $6$-uple\index{s}{LL@$\LL$ functor}\index{s}{Reg@$\REG$}\index{s}{Regl@$\REG^{(\gl)}$}\index{s}{Scml@$\SCML$}\index{s}{Scmlo@$\SCML^\onto$}\index{s}{Scmll@$\SCML^{(\gl)}$}\index{s}{AtoonB@$f\colon A\onto B$}
 \[
 (\REG,\REG^{(\gl)},\SCML,\SCML^{(\gl)},\SCML^\onto,\LL)
 \]
is a projectable right $\gl$-larder\index{i}{larder!right!projectable} for each uncountable cardinal~$\gl$. This is used, for $\gl:=\aleph_1$\index{s}{aleph0@$\aleph_{\ga}$}, in the second author's paper~\cite{Banasch2}\index{c}{Wehrung, F.}, to solve a 1962 problem due to J\'onsson\index{c}{Jonsson@J\'onsson, B.}, by finding a non-coordinatizable\index{i}{lattice!coordinatizable} sectionally complemented\index{i}{lattice!sectionally complemented} modular\index{i}{lattice!modular} lattice, of cardinality~$\aleph_1$\index{s}{aleph0@$\aleph_{\ga}$}, with a large J\'onsson\index{c}{Jonsson@J\'onsson, B.} four-frame.

\item $\cR$ is the category~$\mathbf{LocMat}_\FF$\index{s}{LocMat@$\mathbf{LocMat}_\FF$|ii} of all rings that are \emph{locally matricial} over some field~$\FF$. By definition, a ring is matricial over~$\FF$ if it is a finite direct product of full matrix rings over~$\FF$, and locally matricial over~$\FF$ if it is a directed colimit of matricial rings over~$\FF$. Then Theorem~\ref{T:RR2Lard} yields that the $6$-uple\index{s}{LL@$\LL$ functor}\index{s}{Scml@$\SCML$}\index{s}{Scmlo@$\SCML^\onto$}\index{s}{Scmll@$\SCML^{(\gl)}$}\index{s}{LocMatgl@$\mathbf{LocMat}^{(\gl)}_\FF$|ii}\index{s}{AtoonB@$f\colon A\onto B$}
 \[
 (\mathbf{LocMat}_\FF,\mathbf{LocMat}_\FF^{(\gl)},\SCML,\SCML^{(\gl)},
 \SCML^\onto,\LL)
 \]
is a projectable right $\gl$-larder\index{i}{larder!right!projectable}, for every infinite cardinal~$\gl$ such that $\cf(\gl)>\card\FF$ (in particular, in case~$\FF$ is finite, every infinite cardinal~$\gl$ works).
\end{itemize}

\chapter{Discussion}\label{Ch:Discussion}

The discussion undertaken, in Chapter~\ref{Ch:RegRngLard}, about the functor~$\LL$\index{s}{LL@$\LL$ functor} on regular rings\index{i}{regular ring}, can be mimicked for the functor~$\VV$\index{s}{VV@$\VV$ functor} (\emph{nonstable K-theory}\index{i}{nonstable K- (or K$_0$-) theory}) introduced in Example~\ref{Ex:nsKth}, restricted to (von Neumann) regular rings\index{i}{regular ring}. It is a fundamental open problem in the theory of regular rings\index{i}{regular ring} whether every conical\index{i}{monoid!conical} refinement\index{i}{monoid!refinement} monoid, of cardinality at most~$\aleph_1$\index{s}{aleph0@$\aleph_{\ga}$}, is isomorphic to~$\VV(R)$\index{s}{VV@$\VV$ functor} for some regular ring\index{i}{regular ring}~$R$, cf. Goodearl~\cite{Good95}\index{c}{Goodearl, K.\,R.}, Ara~\cite{Ara08}\index{c}{Ara, P.}. Due to counterexamples developed in Wehrung~\cite{NonMeas}\index{c}{Wehrung, F.}, the situation is hopeless in cardinality~$\aleph_2$\index{s}{aleph0@$\aleph_{\ga}$} or above. Recent advances on those matters, for more general classes of rings such as \emph{exchange rings} but also for \emph{C*-algebras}, can be found in Wehrung~\cite{VLift}\index{c}{Wehrung, F.}.

Now let us discuss some open problems. In our mind, the most fundamental open problem raised by the statement of CLL\index{i}{Condensate Lifting Lemma (CLL)} is the extension of the class of posets~$P$, or even categories that are not posets, for which a weak form of CLL\index{i}{Condensate Lifting Lemma (CLL)} could hold. For example, we establish in Corollary~\ref{C:CLLnoLF} a weak form of CLL\index{i}{Condensate Lifting Lemma (CLL)}, valid for any \ajs\index{i}{almost join-semilattice}\ (assuming large enough cardinals), regardless of the existence of a lifter\index{i}{lifter ($\gl$-)}.

\begin{problem}\label{Pb:CLLBowTie}
Is there a weak form of CLL\index{i}{Condensate Lifting Lemma (CLL)} that would work for an arbitrary poset, or even just the rightmost poset of Figure~\ref{Fig:posets}, page~\pageref{Fig:posets}?
\end{problem}

This problem is related to the representation problem of distributive algebraic lattices\index{i}{lattice!algebraic}\index{i}{lattice!distributive} as congruence lattices of members of a congruence-distributive\index{i}{variety!congruence-distributive} variety---for example, now that lattices are ruled out~\cite{CLP}\index{c}{Wehrung, F.}, \emph{majority algebras}\index{i}{algebra!majority}. Is every distributive algebraic lattice\index{i}{lattice!algebraic}\index{i}{lattice!distributive} isomorphic to the congruence lattice of some majority algebra\index{i}{algebra!majority}? If this were the case, then, because of the results of~\cite{Bowtie}\index{c}{Tuma@T\r{u}ma, J.}\index{c}{Wehrung, F.}, the somewhat loosely formulated Problem~\ref{Pb:CLLBowTie} could have no reasonable positive answer.

In our next two problems, we ask whether the assumptions of local finiteness and strong congruence-properness\index{i}{strongly congruence-proper} can be removed from the statements of both Theorem~\ref{T:RelCritalephn} and Theorem~\ref{T:DichotCritPt}. Positive results in that direction can be found in the first author's paper~\cite{Gill3}\index{c}{Gillibert, P.}.

\begin{problem}\label{Pb:RelCritalephn}
Let~$\cA$ and~$\cB$ be quasivarieties in finite \pup{possibly different} languages, and let~$P$ be a nontrivial finite \ajs\index{i}{almost join-semilattice}\ with zero. Prove that if there exists a $P$-indexed diagram~$\overrightarrow{\bA}=\famm{\bA_p,\ga_p^q}{p\leq q\text{ in }P}$ of finite members of~$\cA$ such that $\ConcA\overrightarrow{\bA}$\index{s}{compcongVA@$\ConcV\bA$, $\ConcV f$} has no lifting\index{i}{diagram!lifting}, with respect to~$\ConcB$, in~$\cB$, then $\critr(\cA;\cB)\leq\aleph_{\kur_0(P)-1}$\index{s}{aleph0@$\aleph_{\ga}$}\index{s}{critrAB@$\critr(\cA;\cB)$}\index{s}{kurP0@$\kur_0(P)$}.
\end{problem}

The following problem asks for an ambitious generalization of Theorem~\ref{T:DichotCritPt} (the Dichotomy Theorem)\index{i}{Dichotomy Theorem}: it asks not only for relaxing the assumptions on the quasivarieties~$\cA$ and~$\cB$, but also for an improvement of the cardinality bound. It can also be viewed as an (ultimate?) recasting of the Critical Point Conjecture formulated in T\r{u}ma\index{c}{Tuma@T\r{u}ma, J.} and Wehrung\index{c}{Wehrung, F.}~\cite{CLPSurv}. For a partial positive solution, see Gillibert~\cite{Gill5}\index{c}{Gillibert, P.}.

\begin{problem}\label{Pb:DichotCritPt}
Let~$\cA$ and~$\cB$ be quasivarieties on finite \pup{possibly different} languages. Prove that if $\Concr\cA\not\subseteq\Concr\cB$\index{s}{compcongVcr@$\Concr\cV$}, then $\critr(\cA;\cB)\leq\aleph_2$\index{s}{aleph0@$\aleph_{\ga}$}\index{s}{critrAB@$\critr(\cA;\cB)$}.
\end{problem}

Our next problem is formulated in the same context as Problems~\ref{Pb:RelCritalephn} and~\ref{Pb:DichotCritPt}.

\begin{problem}\label{Pb:AbsoluteCP}
Is there a recursive algorithm that, given finitely generated\index{i}{quasivariety!finitely generated} quasivarieties~$\cA$ and~$\cB$ in finite languages, outputs a code for their relative critical point\index{i}{critical point!relative}~$\critr(\cA;\cB)$\index{s}{critrAB@$\critr(\cA;\cB)$} (e.g., the pair $(0,n)$ if the critical point\index{i}{critical point} is~$n$ and the pair $(1,n)$ if the critical point\index{i}{critical point} is~$\aleph_n$\index{s}{aleph0@$\aleph_{\ga}$})?
\end{problem}

To illustrate the extent of our ignorance with respect to Problem~\ref{Pb:AbsoluteCP}, we do not even know whether there are finitely generated\index{i}{variety!finitely generated} lattice varieties~$\cA$ and~$\cB$ such that $\crit(\cA;\cB)$\index{s}{critAB@$\crit(\cA;\cB)$} is equal to~$\aleph_1$\index{s}{aleph0@$\aleph_{\ga}$} in one universe of set theory but to~$\aleph_2$\index{s}{aleph0@$\aleph_{\ga}$} in another one. We do not even know whether the critical point\index{i}{critical point} between two finitely generated\index{i}{variety!finitely generated} lattice varieties is \emph{absolute} (in the set-theoretical sense)!

Our next problem asks for a functorial extension of the Pudl\'ak-T\r{u}ma Theorem~\cite{PuTu80}\index{c}{Pudl\'ak, P.}\index{c}{Tuma@T\r{u}ma, J.}, which states that every finite lattice embeds into some finite partition lattice. It involves the formulation of lattice embeddings into partition lattices \emph{via} the semilattice-valued distances\index{i}{distance!semilattice-valued} introduced in J\'onsson~\cite{Jons53}\index{c}{Jonsson@J\'onsson, B.}. We remind the reader that~$\METR$\index{s}{Metr@$\METR$} denotes the category of all semilattice-metric spaces\index{i}{semilattice-metric!space} (cf. Definition~\ref{D:MetrSp}) and $\METR^\fin$\index{s}{Metrf@$\METR^{\mathrm{fin}}$} denotes the full subcategory of all finite members of~$\METR$\index{s}{Metr@$\METR$}. Further, we denote by $\SEM^\fin$\index{s}{Semf@$\SEM^{\mathrm{fin}}$} (resp., $\SEM^{\mathrm{fin,inj}}$\index{s}{Sems@$\SEM^{\mathrm{fin,inj}}$|ii}) the category of all  finite \jzs s with \jzh s (resp., finite \jzs s with \jze s) and by $\Pi\colon\METR^\fin\to\SEM^\fin$\index{s}{Semf@$\SEM^{\mathrm{fin}}$} the forgetful functor.

\begin{problem}\label{Pb:DiagrPuTu}
Does there exist a functor $\Gamma\colon\SEM^{\mathrm{fin,inj}}\to\METR^\fin$\index{s}{Semfi@$\SEM^{\mathrm{fin,inj}}$} such that~$\gd_{\Gamma(S)}$ is a surjective V-distance, for each finite \jzs~$S$, and $\Pi\circ\Gamma$ is isomorphic to the identity?
\end{problem}

In his paper~\cite{Plos08}\index{c}{Plo\v{s}\v{c}ica, M.}, Plo\v{s}\v{c}ica extends the methods introduced in \cite{CLP}\index{c}{Wehrung, F.} (for solving CLP\index{i}{Congruence Lattice Problem (CLP)}) and~\cite{Ruzi08}\index{c}{Ruzicka@R\r{u}\v{z}i\v{c}ka, P.} (for improving the cardinality bound) to prove that if~$F$ denotes the free object, in the variety of all bounded lattices generated by~$\bM_3$, on~$\aleph_2$\index{s}{aleph0@$\aleph_{\ga}$} generators, then for every positive integer~$m$ and every lattice~$L$ with $m$-permutable\index{i}{permut@$m$-permutable congruence lattice} congruences, the congruence lattices of~$F$ and of~$L$ are not isomorphic. We ask whether there is a similar strengthening of Corollary~\ref{C:NoCPCP}:

\begin{problem}\label{Pb:NoCPCP}
Denote by~$F$ the free object on~$\aleph_1$\index{s}{aleph0@$\aleph_{\ga}$} generators in a nondistributive\index{i}{distributive!non-${}_{-}$ variety} variety~$\cV$ of (bounded) lattices and let~$m$ be a positive integer. Prove that~$F$ has no congruence-preserving extension\index{i}{congruence-preserving extension} to a lattice with $m$-permutable congruences\index{i}{permut@$m$-permutable congruence lattice}.
\end{problem}

The answer to Problem~\ref{Pb:NoCPCP} is known to be positive in case~$\cV$ contains as a member either~$\bM_3$ or a few of the successors of~$\bN_5$ with respect to the variety order (cf. Gillibert~\cite{Gill4}), but it is not known, for example, in case~$\cV$ is the variety generated by~$\bN_5$.

Our next problem asks for a nearly as strong as possible diagram extension of the Gr\"atzer-Schmidt\index{c}{Gr\"atzer, G.}\index{c}{Schmidt, E.\,T.} Theorem~\cite{GrSc62}\index{c}{Gr\"atzer, G.}\index{c}{Schmidt, E.\,T.}. A partial positive answer is given in Theorem~\ref{T:MindConcLift}.

\begin{problem}\label{Pb:GS}
Does there exist a functor~$\Gamma$, from \jzs s with \jze s, to the category~$\MIND$\index{s}{Mind@$\MIND$} of all monotone-indexed structures, such that $\Conc\circ\Gamma$ is isomorphic to the identity?
\end{problem}

\begin{problem}\label{Pb:gooplift}
Let~$\gl$ be an infinite cardinal. Is every $\gl$-liftable\index{i}{liftable!$\gl$-${}_{-}$ poset} poset well-founded\index{i}{poset!well-founded}?
\end{problem}

By Lemma~\ref{L:NonWFgo+1}, Problem~\ref{Pb:gooplift} amounts to determining whether $(\go+1)^{\op}$\index{s}{omega1op@$(\omega+1)^{\op}$} has a $\gl$-lifter\index{i}{lifter ($\gl$-)}. By Corollary~\ref{C:LiftnotWFrestr}, this does not hold for all values of~$\gl$, in particular in case $\gl=(2^{\aleph_0})^+$\index{s}{aleph0@$\aleph_{\ga}$}.

For further comments about Problem~\ref{Pb:gooplift}, see Remark~\ref{Rk:CLLnoLF}.

A related problem is the following.

\begin{problem}\label{Pb:GenLifter}
Let~$P$ be a poset and let~$\gl$ be an infinite cardinal. If~$P$ has a $\gl$-lifter\index{i}{lifter ($\gl$-)}, does it have a $\gl$-lifter\index{i}{lifter ($\gl$-)} $(X,\bX)$ such that~$X$ is a lower finite\index{i}{poset!lower finite} \ajs\index{i}{almost join-semilattice}\ and~$\bX$ is the collection of all extreme\index{i}{ideal!extreme} ideals of~$X$?
\end{problem}

If~$P$ is well-founded\index{i}{poset!well-founded}, then so is the set of all extreme\index{i}{ideal!extreme} ideals of~$X$ (partially ordered by containment). In particular, if Problems~\ref{Pb:gooplift} and~\ref{Pb:GenLifter} both have a positive answer, then every $\gl$-liftable\index{i}{liftable!$\gl$-${}_{-}$ poset} poset is well-founded and it has a $\gl$-lifter\index{i}{lifter ($\gl$-)} $(X,\bX)$ with~$X$ a lower finite\index{i}{poset!lower finite} \ajs\index{i}{almost join-semilattice}\ and~$\bX$, being the collection of all extreme\index{i}{ideal!extreme} ideals of~$X$, is also well-founded\index{i}{poset!well-founded}.

Open problems related to those on our list can be found in various items from our bibliography, for example \cite{Gill1,Gill2, GiWe1, CLPSurv, Bowtie, Ultra, RetrLift, CXCoord,CLP}\index{c}{Gillibert, P.}\index{c}{Tuma@T\r{u}ma, J.}\index{c}{Wehrung, F.}.

\printindex{s}{Symbol Index}
\printindex{i}{Subject Index}
\printindex{c}{Author index}

\listoffigures


\begin{thebibliography}{99}

\bibitem{AbJu}
S. Abramsky and A. Jung\index{c}{Abramsky, S.}\index{c}{Jung, A.}, \emph{Domain theory}. Handbook of logic in computer science, Vol.~\textbf{3}, 1--168, Handb. Log. Comput. Sci., \textbf{3}, Oxford Univ. Press, New York, 1994.

\bibitem{AdRo}
J. Ad\'amek and J. Rosick\'y\index{c}{Ad\'amek, J.}\index{c}{Rosick\'y, J.},
``Locally Presentable and Accessible Categories''.
London Mathematical Society Lecture Note Series~\textbf{189}. 
Cambridge University Press, Cambridge, 1994. xiv+316~p. ISBN: 0-521-42261-2

\bibitem{Ara08}
P. Ara\index{c}{Ara, P.},
\emph{The realization problem for von~Neumann regular rings\index{i}{regular ring}}. Ring theory 2007, 21--37, World Sci. Publ., Hackensack, NJ, 2009.

\bibitem{ArGo11}
P. Ara and K.\,R. Goodearl\index{c}{Ara, P.}\index{c}{Goodearl, K.\,R.},
\emph{Leavitt path algebras of separated graphs}, preprint. Available online at \texttt{http://arxiv.org/abs/1004.4979}\,.

\bibitem{Berg74}
G.\,M. Bergman\index{c}{Bergman, G.\,M.},
\emph{Coproducts and some universal ring constructions},
Trans. Amer. Math. Soc.~\textbf{200} (1974), 33--88.

\bibitem{BeDi78}
G.\,M. Bergman and W. Dicks\index{c}{Bergman, G.\,M.}\index{c}{Dicks, W.}, \emph{Universal derivations and universal ring constructions}, Pacific J. Math.~\textbf{79} (1978), 293--337.

\bibitem{Birk79}
G. Birkhoff\index{c}{Birkhoff, G.},
``Lattice Theory''.
Corrected reprint of the 1967 third edition. American
Mathematical Society Colloquium  Publications, Vol. \textbf{25}.
American Mathematical Society, Providence, R.I., 1979. vi+418~p.
ISBN: 0-8218-1025-1

\bibitem{ChKe}
C.\,C. Chang and H.\,J. Keisler\index{c}{Chang, C.\,C.}\index{c}{Keisler, H.\,J.}, ``Model Theory. Third edition''. Studies in Logic and the Foundations of Mathematics~\textbf{73}. North-Holland Publishing Co., Amsterdam, 1990. xvi+650~p. ISBN: 0-444-88054-2

\bibitem{DePa}
K.\,J. Devlin and J.\,B. Paris\index{c}{Devlin, K.\,J.}\index{c}{Paris, J.\,B.},
\emph{More on the free subset problem}, Ann. Math. Logic~\textbf{5} (1972-1973), 327--336.

\bibitem{EHMR}
P. Erd\H{o}s, A. Hajnal, A. M\'at\'e, and R. Rado\index{c}{Erd\H{o}s, P.}\index{c}{Hajnal, A.}\index{c}{Mate@M\'at\'e, A.}\index{c}{Rado, R.},
``Combinatorial Set Theory: Partition Relations for Cardinals''.
Studies in Logic and the Foundations of Mathematics~\textbf{106}. North-Holland Publishing Co., Amsterdam, 1984. 347~p. ISBN: 0-444-86157-2

\bibitem{FaUt63}
C. Faith and Y. Utumi\index{c}{Faith, C.}\index{c}{Utumi, Y.},
\emph{On a new proof of Litoff's theorem},
Acta Math. Acad. Sci. Hungar.~\textbf{14} (1963), 369--371.

\bibitem{FLT}
R. Freese, W.\,A. Lampe, and W. Taylor\index{c}{Freese, R.}\index{c}{Lampe, W.\,A.}\index{c}{Taylor, W.},
\emph{Congruence lattices of algebras of fixed similarity type. I}, Pacific J. Math. \textbf{82} (1979), 59--68.

\bibitem{FrMK87}
R. Freese and R.\,N. McKenzie\index{c}{Freese, R.}\index{c}{McKenzie, R.\,N.},
``Commutator Theory for Congruence Modular Varieties''. 
London Mathematical Society Lecture Note Series, \textbf{125}. Cambridge University Press, Cambridge, 1987. iv+227~p. ISBN: 0-521-34832-3\,. Out of print, available online at \texttt{http://www.math.hawaii.edu/\~{}ralph/Commutator/}

\bibitem{FrHa56}
K.\,D. Fryer and I. Halperin\index{c}{Fryer, K.\,D.}\index{c}{Halperin, I.},
\emph{The von~Neumann coordinatization theorem for complemented\index{i}{lattice!complemented} modular\index{i}{lattice!modular} lattices}, Acta Sci. Math. (Szeged) \textbf{17} (1956),
203--249.

\bibitem{GaUl}
P. Gabriel and F. Ulmer\index{c}{Gabriel, P.}\index{c}{Ulmer, F.}, ``Lokal Pr\"asentierbare Kategorien'' (German). Lecture Notes in Mathematics \textbf{221}. Springer-Verlag, Berlin - New York, 1971. v+200~p.

\bibitem{Comp}
G. Gierz, K.\,H. Hofmann, K. Keimel, J.\,D. Lawson, M. Mislove, and
D.\,S. Scott\index{c}{Gierz, G.}\index{c}{Hofmann, K.\,H.}\index{c}{Keimel, K.}\index{c}{Lawson, J.\,D.}\index{c}{Mislove, M.}\index{c}{Scott, D.\,S.}, ``Continuous Lattices and Domains.'' Encyclopedia of Mathematics and its Applications, \textbf{93}. Cambridge University Press, Cambridge, 2003. xxxvi+591~p. ISBN: 0-521-80338-1

\bibitem{GillTh}
P. Gillibert\index{c}{Gillibert, P.}, ``Points critiques de couples de vari\'et\'es d'alg\`ebres'' (French). Doc\-to\-rat de l'U\-ni\-ver\-si\-t\'e de Caen, December~8, 2008. Available online at \texttt{http://tel.archives-ouvertes.fr/tel-00345793}\,.

\bibitem{Gill1}
P. Gillibert\index{c}{Gillibert, P.}, \emph{Critical points of pairs of varieties of algebras}, Internat. J. Algebra Comput.~\textbf{19}, no.~1 (2009), 1--40.

\bibitem{Gill2}
P. Gillibert,
\emph{Critical points between varieties generated by subspace lattices of vector spaces}, J. Pure Appl. Algebra~\textbf{214}, no.~8 (2010), 1306--1318.

\bibitem{Gill3}
P. Gillibert,
\emph{The possible values of critical points between varieties of lattices}, preprint~2010. Available online at \texttt{http://hal.archives-ouvertes.fr/hal-00468048}\,.

\bibitem{Gill4}
P. Gillibert,
\emph{Categories of partial algebras for critical points between varieties of algebras}, preprint~2010. Available online at \texttt{http://hal.archives-ouvertes.fr/hal-00544601}\,.

\bibitem{Gill5}
P. Gillibert\index{c}{Gillibert, P.},
\emph{The possible values of critical points between strongly congruence-proper varieties of algebras}, preprint~2011. Available online at \texttt{http://hal.archives-ouvertes.fr/hal-00587671}\,.

\bibitem{GiWe1}
P. Gillibert\index{c}{Gillibert, P.} and F. Wehrung\index{c}{Wehrung, F.},
\emph{An infinite combinatorial statement with a poset parameter}, Combinatorica, to appear. Available online at \texttt{http://hal.archives-ouvertes.fr/hal-00364329}\,.

\bibitem{Good91}
K.\,R. Goodearl\index{c}{Goodearl, K.\,R.},
``Von Neumann Regular Rings''.
Second edition.  Robert E. Krieger
Publishing Co., Inc., Malabar, FL, 1991.
xviii+412~p. ISBN: 0-89464-632-X

\bibitem{Good95}
K.\,R. Goodearl\index{c}{Goodearl, K.\,R.},
\emph{Von Neumann regular rings\index{i}{regular ring} and direct sum decomposition problems}, Abelian
groups and modules (Padova, 1994), Math. Appl.~\textbf{343}, 249--255, Kluwer Acad. Publ., Dordrecht, 1995.

\bibitem{Gorb}
V.\,A. Gorbunov\index{c}{Gorbunov, V.\,A.},
``Algebraic Theory of Quasivarieties''.
Translated from the Russian. Siberian School of Algebra and Logic. Consultants Bureau, New York, 1998. xii+298~p. ISBN: 0-306-11063-6

\bibitem{GrUA}
G. Gr\"atzer\index{c}{Gr\"atzer, G.},
``Universal Algebra''. D. Van Nostrand Co., Inc., Princeton, N.J.-Toronto, Ont.-London 1968, xvi+368~p.

\bibitem{GLT2}
G. Gr\"atzer\index{c}{Gr\"atzer, G.},
``General Lattice Theory. Second edition''. New appendices by the
author with B.\,A. Davey, R. Freese, B. Ganter,
M. Greferath, P. Jipsen, H.\,A. Priestley, H. Rose, E.\,T. Schmidt,
S.\,E. Schmidt, F. Wehrung, and R. Wille. Birkh\"auser Verlag, Basel,
1998. xx+663~p. ISBN: 3-7643-5239-6 (Basel), 0-8176-5239-6 (Boston).

\bibitem{LTF}
G. Gr\"atzer\index{c}{Gr\"atzer, G.},
``Lattice Theory: Foundation''. Birkh\"auser Verlag, Basel,
2011. xxix+613~p. ISBN: 978-3-0348-0017-4

\bibitem{GLWe}
G. Gr\"atzer, H. Lakser, and F. Wehrung\index{c}{Gr\"atzer, G.}\index{c}{Lakser, H.}\index{c}{Wehrung, F.}, \emph{Congruence amalgamation of lattices}, Acta Sci. Math. (Szeged)~\textbf{66}, no.~1-2 (2000), 3--22. 

\bibitem{GrSc62}
G. Gr\"atzer and E.\,T. Schmidt\index{c}{Gr\"atzer, G.}\index{c}{Schmidt, E.\,T.},
\emph{Characterizations of congruence lattices of abstract algebras}, Acta Sci. Math. (Szeged)~\textbf{24} (1963), 34--59.

\bibitem{GrSc99}
G. Gr\"atzer and E.\,T. Schmidt\index{c}{Gr\"atzer, G.}\index{c}{Schmidt, E.\,T.},
\emph{Congruence-preserving extensions of finite lattices to sectionally complemented\index{i}{lattice!sectionally complemented} lattices}, Proc. Amer. Math. Soc.~\textbf{127}, no.~7 (1999), 1903--1915. 

\bibitem{HoMK88}
D. Hobby and R.\,N. McKenzie\index{c}{Hobby, D.}\index{c}{McKenzie, R.\,N.},
``The Structure of Finite Algebras''. Contemporary Mathematics, \textbf{76}. American Mathematical Society, Providence, RI, 1988. xii+203~p. ISBN: 0-8218-5073-3 .
 
\bibitem{John86}
P.\,T. Johnstone\index{c}{Johnstone, P.\,T.}, ``Stone Spaces''. Reprint of the 1982 edition. Cambridge Studies in Advanced Mathematics~\textbf{3}. Cambridge University Press, Cambridge, 1986. xxii+370~p. ISBN: 0-521-33779-8

\bibitem{John02}
P.\,T. Johnstone\index{c}{Johnstone, P.\,T.},
``Sketches of an Elephant: a Topos Theory Compendium. Vol.~1". Oxford Logic Guides~\textbf{43}. The Clarendon Press, Oxford University Press, New York, 2002. xxii+468+71~p. ISBN: 0-19-853425-6

\bibitem{Jons53}
B. J\'onsson\index{c}{Jonsson@J\'onsson, B.},
\emph{On the representation of lattices}, Math. Scand.~\textbf{1} (1953), 193--206.

\bibitem{Jons62}
B. J\'onsson\index{c}{Jonsson@J\'onsson, B.},
\emph{Representations of relatively complemented\index{i}{lattice!relatively complemented} modular\index{i}{lattice!modular} lattices},
Trans. Amer. Math. Soc. \textbf{103} (1962), 272--303.

\bibitem{Koep84}
P. Koepke\index{c}{Koepke, P.},
\emph{The consistency strength of the free-subset property for $\go\sb\go$}, J. Symbolic Logic~\textbf{49} (1984), no.~4, 1198--1204.

\bibitem{Koep89}
P. Koepke\index{c}{Koepke, P.},
\emph{On the free subset property at singular cardinals},
Arch. Math. Logic~\textbf{28} (1989), no.~1, 43--55.

\bibitem{KoSh00}
P. Komj\'ath and S. Shelah\index{c}{Komj\'ath, P.}\index{c}{Shelah, S.},
\emph{Two consistency results on set mappings}, J. Symbolic Logic~\textbf{65} (2000), 333--338.

\bibitem{Kura51}
C. Kuratowski\index{c}{Kuratowski, C.},
\emph{Sur une caract\'erisation des alephs},
Fund. Math. \textbf{38} (1951), 14--17.

\bibitem{Lamp82}
W.\,A. Lampe\index{c}{Lampe, W.\,A.},
\emph{Congruence lattices of algebras of fixed similarity type.
II}, Pacific J. Math. \textbf{103} (1982), 475--508.

\bibitem{Lamp05}
W.\,A. Lampe\index{c}{Lampe, W.\,A.},
\emph{Simultaneous congruence representations: a special case},
Algebra Universalis~\textbf{54}, no.~2 (2005), 249--255.

\bibitem{McLa}
S. Mac Lane\index{c}{Mac Lane, S.}, ``Categories for the Working
Mathematician''. 2nd ed. Graduate Texts in Mathematics \textbf{5},
Springer-Verlag, New York - Berlin - Heidelberg, 1998. xii+314~p. ISBN: 0-387-98403-8

\bibitem{Maed}
F. Maeda\index{c}{Maeda, F.},
``Kontinuierliche Geometrien''. (German) 
Die Grundlehren der mathematischen Wissenschaften in Einzeldarstellungen mit besonderer Ber\"ucksichtigung der Anwendungsgebiete, Bd.~\textbf{95}. Springer-Verlag, Berlin-G\"ottingen-Heidelberg, 1958. x+244~p. ISBN: 978-1-114-52944-1

\bibitem{Micol}
F.~Micol\index{c}{Micol, F.},
``On representability of $\ast$-regular rings\index{i}{regular ring} and modular\index{i}{lattice!modular} ortholattices''.
PhD thesis, TU Darmstadt, January 2003. Available online at\newline \texttt{http://elib.tu-darmstadt.de/diss/000303/diss.pdf}\,.

\bibitem{Neum}
H. Neumann\index{c}{Neumann, H.},
``Varieties of Groups''. Springer-Verlag New York, Inc., New York 1967. x+192~p.

\bibitem{Plos08}
M. Plo\v{s}\v{c}ica\index{c}{Plo\v{s}\v{c}ica, M.},
\emph{Congruence lattices of lattices with $m$-permutable congruences}, Acta Sci. Math. (Szeged)~\textbf{74}, no.~1-2 (2008), 23--36.

\bibitem{PTW}
M. Plo\v{s}\v{c}ica, J. T\r{u}ma, and F. Wehrung\index{c}{Plo\v{s}\v{c}ica, M.}\index{c}{Tuma@T\r{u}ma, J.}\index{c}{Wehrung, F.},
\emph{Congruence lattices of free lattices in nondistributive\index{i}{distributive!non-${}_{-}$ variety} varieties}, Colloq. Math. \textbf{76}, no.~2 (1998), 269--278.

\bibitem{Pudl85}
P. Pudl\'ak\index{c}{Pudl\'ak, P.},
\emph{On congruence lattices of lattices}, 
Algebra Universalis~\textbf{20}, no.~1 (1985), 96--114.

\bibitem{PuTu80}
P. Pudl\'ak and J. T\r{u}ma\index{c}{Pudl\'ak, P.}\index{c}{Tuma@T\r{u}ma, J.},
\emph{Every finite lattice can be embedded in a finite partition lattice}, 
Algebra Universalis~\textbf{10}, no.~1 (1980), 74--95.

\bibitem{Ruzi1}
P. R\r{u}\v{z}i\v{c}ka\index{c}{Ruzicka@R\r{u}\v{z}i\v{c}ka, P.},
\emph{Lattices of two-sided ideals\index{i}{ideal (in a ring)!two-sided} of locally matricial algebras and the $\Gamma$-invariant problem}, Israel J. Math.~\textbf{142} (2004), 1--28.

\bibitem{Ruzi2}
P. R\r{u}\v{z}i\v{c}ka\index{c}{Ruzicka@R\r{u}\v{z}i\v{c}ka, P.},
\emph{Liftings of distributive\index{i}{lattice!distributive} lattices by locally matricial algebras with respect to the $\mathrm{Id_c}$ functor}, Algebra Universalis~\textbf{55}, no.~2-3 (2006), 239--257.

\bibitem{Ruzi08}
P.  R\r{u}\v{z}i\v{c}ka\index{c}{Ruzicka@R\r{u}\v{z}i\v{c}ka, P.},
\emph{Free trees and the optimal bound in Wehrung's theorem}, Fund. Math.~\textbf{198}, no.~3 (2008), 217--228.
 
\bibitem{RTW}
P. R\r{u}\v{z}i\v{c}ka\index{c}{Ruzicka@R\r{u}\v{z}i\v{c}ka, P.}, J. T\r{u}ma\index{c}{Tuma@T\r{u}ma, J.}, and F. Wehrung\index{c}{Wehrung, F.},
\emph{Distributive congruence lattices of congruence-permutable algebras},
J. Algebra \textbf{311}, no.~1 (2007), 96--116.

\bibitem{Schm81}
E.\,T. Schmidt\index{c}{Schmidt, E.\,T.},
\emph{The ideal lattice of a distributive\index{i}{lattice!distributive} lattice with $0$ is the congruence lattice of a lattice}, Acta Sci. Math. (Szeged)~\textbf{43}, no. 1-2 (1981), 153--168.

\bibitem{Tisch}
M. Tischendorf\index{c}{Tischendorf, M.},
\emph{The representation problem for algebraic\index{i}{lattice!algebraic}\index{i}{lattice!distributive} distributive lattices}.
PhD thesis, Darmstadt, 1992.

\bibitem{Tuma}
J. T\r{u}ma\index{c}{Tuma@T\r{u}ma, J.},
\emph{On the existence of simultaneous representations},
Acta Sci. Math. (Szeged) \textbf{64} (1998), 357--371.

\bibitem{CLPSurv}
J. T\r{u}ma\index{c}{Tuma@T\r{u}ma, J.} and F. Wehrung\index{c}{Wehrung, F.},
\emph{A survey of recent results on congruence lattices of lattices},
Algebra Universalis \textbf{48}, no.~4 (2002), 439--471.

\bibitem{Bowtie}
J. T\r{u}ma and F. Wehrung\index{c}{Tuma@T\r{u}ma, J.}\index{c}{Wehrung, F.},
\emph{Congruence lifting\index{i}{diagram!lifting} of diagrams of finite Boolean semilattices requires large congruence varieties}, Internat. J. Algebra Comput. \textbf{16}, no. 3 (2006), 541--550.

\bibitem{NonMeas}
F. Wehrung\index{c}{Wehrung, F.},
\emph{Non-measurability properties of interpolation vector spaces},
Israel J. Math. \textbf{103}, no.~1 (1998), 177--206.

\bibitem{WeURP}
F. Wehrung\index{c}{Wehrung, F.},
\emph{A uniform refinement property for congruence lattices},
Proc. Amer. Math. Soc. \textbf{127}, no.~2 (1999), 363--370.

\bibitem{WReg}
F. Wehrung\index{c}{Wehrung, F.},
\emph{Representation of algebraic distributive lattices\index{i}{lattice!algebraic}\index{i}{lattice!distributive} with $\aleph_1$\index{s}{aleph0@$\aleph_{\ga}$} compact elements as ideal lattices of regular rings\index{i}{regular ring}}, Publ. Mat. (Barcelona) \textbf{44}, no.~2 (2000), 419--435.

\bibitem{Ultra}
F. Wehrung\index{c}{Wehrung, F.},
\emph{Distributive semilattices as retracts of ultraboolean ones:
functorial inverses without adjunction}, J. Pure Appl. Algebra
\textbf{202}, no.~1--3 (2005), 201--229.

\bibitem{RetrLift}
F. Wehrung\index{c}{Wehrung, F.},
\emph{Lifting retracted diagrams with respect to projectable
functors}, Algebra Universalis \textbf{54}, no.~3 (2005),
 349--371.
 
\bibitem{CXCoord}
F. Wehrung\index{c}{Wehrung, F.},
\emph{Von~Neumann coordinatization is not first-order}, J. Math. Log.~\textbf{6},  no.~1 (2006), 1--24.
 
\bibitem{CLP}
F. Wehrung\index{c}{Wehrung, F.},
\emph{A solution to Dilworth's congruence lattice problem}\index{i}{Congruence Lattice Problem (CLP)}, Adv. Math.~\textbf{216}, no.~2 (2007), 610--625.

\bibitem{PosetMeas}
F. Wehrung\index{c}{Wehrung, F.},
\emph{Poset representations of distributive\index{i}{distributive!semilattice} semilattices}, Internat. J. Algebra Comput. \textbf{18}, no.~2 (2008), 321--356.

\bibitem{Banasch2}
F. Wehrung,
\emph{A non-coordinatizable\index{i}{lattice!coordinatizable} sectionally complemented\index{i}{lattice!sectionally complemented} modular\index{i}{lattice!modular} lattice with a large J\'onsson four-frame}, Adv. in Appl. Math., to appear. Available online at \texttt{http://hal.archives-ouvertes.fr/hal-00462951}\,.

\bibitem{CombSurv}
F. Wehrung,
\emph{Infinite combinatorial issues raised by lifting problems in universal algebra}, preprint 2010. Order, to appear. Available online at \texttt{http://hal.archives-ouvertes.fr/hal-00509814}\,.

\bibitem{VLift}
F. Wehrung\index{c}{Wehrung, F.},
\emph{Lifting defects for nonstable K${}_0$-theory of exchange rings and C*-algebras}, preprint 2011. Available online at \texttt{http://hal.archives-ouvertes.fr/hal-00559268}\,.

\end{thebibliography}
\end{document}